\documentclass[11pt]{amsart}

\usepackage{amsmath}
\usepackage{amsfonts}
\usepackage{amssymb}
\usepackage{graphicx}
\usepackage{mathrsfs}
\usepackage{pb-diagram}
\usepackage{epstopdf}
\usepackage{amsmath,amsfonts,amsthm,enumerate,amscd,latexsym,curves}
\usepackage{bbm}
\usepackage[mathscr]{eucal}
\usepackage{epsfig,epsf,epic}
\usepackage{caption}
\usepackage[font=small]{subcaption}
\usepackage{multirow}
\usepackage{color}
\usepackage[all]{xy}
\usepackage{fourier}
\usepackage{tikz}
\usetikzlibrary{fit,matrix}
\usetikzlibrary{matrix,decorations.pathreplacing,calc,positioning}
\usepackage{tikz-cd}
\usepackage{calc}
\usepackage{float}
\usepackage{comment}
\usepackage{longtable}
\usepackage{adjustbox}
\usepackage{enumitem}
\usepackage{systeme}
\usepackage{mathtools}
\usepackage{array,booktabs,colortbl} 
\usepackage{collcell}
\usepackage{xcolor}
\usepackage{cancel}
\usepackage{lipsum}
\usepackage{arydshln}
\usepackage[linktocpage]{hyperref}
\hypersetup{
    colorlinks=true,
    linkcolor=blue,
    filecolor=magenta,      
    urlcolor=cyan,
}
\usepackage{rotating}
\usepackage{bm}
\usepackage{stmaryrd} 
\usepackage{blkarray} 
\usepackage{stackrel}

\graphicspath{ {images/} }

\definecolor{colorG}{rgb}{0, 0.5, 0.0} 
\definecolor{colorH}{rgb}{0.7, 0.7, 0.0} 
\definecolor{colorF}{rgb}{1.0, 0.5, 0.0} 
\definecolor{colorR}{rgb}{0.4, 0.5, 0.13} 
\definecolor{colorT}{rgb}{0.7, 0.5, 0.13} 
\definecolor{colorQ}{rgb}{0.6, 0.3, 0} 
\definecolor{rosepink}{rgb}{1.0, 0.4, 0.8}

\captionsetup{font=small,labelfont=small}
\captionsetup[subfigure]{font=scriptsize,labelfont=scriptsize}

\addtolength{\textwidth}{+4cm} \addtolength{\textheight}{+2cm}
\hoffset-2cm \voffset-1cm \setlength{\parskip}{5pt}
\setlength{\parskip}{5pt}

\newtheorem{thm}{Theorem}[section]
\newtheorem{lemma}[thm]{Lemma}

\newtheorem{cor}[thm]{Corollary}
\newtheorem{prop}[thm]{Proposition}
\newtheorem{defn}[thm]{Definition}
\newtheorem{example}[thm]{Example}
\newtheorem{remark}[thm]{Remark}

\newtheorem{notation}[thm]{Notation}

\numberwithin{equation}{section}

\newcommand{\bb}{\boldsymbol{b}}

\newcommand{\AI}{A_\infty}
\newcommand{\CC}{\mathcal{C}}
\newcommand{\Hom}{{\rm Hom}}

\newcommand{\OL}[1]{\overline{#1}}

\newcommand{\CP}{\mathcal{P}}

\newcommand{\cF}{\mathcal{F}}

\newcommand{\Z}{\mathbb{Z}}
\newcommand{\R}{\mathbb{R}}
\newcommand{\C}{\mathbb{C}}

\newcommand{\cpp}{\mathcal{P}}
\newcommand{\bzz}{\mathbb{Z}}
\newcommand{\bff}{\mathbb{F}}

\newcommand{\bpp}{\mathbb{P}}
\newcommand{\bll}{\mathbb{L}}
\newcommand{\cff}{\mathcal{F}}

\newcommand{\fm}{\mathfrak{m}}
\newcommand{\fp}{\mathfrak{p}}
\newcommand{\fxx}{\mathfrak{X}}
\newcommand{\bcc}{\mathbb{C}}
\newcommand{\Tri}{\operatorname{Tri}}
\newcommand{\Trilf}{\operatorname{Tri}^{\operatorname{lf}}}
\newcommand{\CM}{\operatorname{CM}}
\newcommand{\CMlf}{\operatorname{CM}^{\operatorname{lf}}}
\newcommand{\dep}{\operatorname{depth}}
\newcommand{\ext}{\operatorname{Ext}}

\newcommand{\ho}{\operatorname{Hom}}

\newcommand{\cok}{\operatorname{coker}}

\newcommand{\id}{\operatorname{id}}
\newcommand{\len}{\operatorname{Length}}
\newcommand{\red}{\operatorname{Red}}

\newcommand{\MF}{\mathrm{MF}}
\newcommand{\WF}{\mathcal{WF}}

\newcommand{\bL}{\mathbb{L}}

\begingroup\lccode`~=`& \lowercase{\endgroup
\newcommand{\smat}[1]{%
  \let~=&
  \begin{smallmatrix}#1\end{smallmatrix}
}}

\begingroup\lccode`~=`& \lowercase{\endgroup
\newcommand{\spmat}[1]{%
  \left(
  \let~=&
  \begin{smallmatrix}#1\end{smallmatrix}
  \right)
}}

\setcounter{tocdepth}{1}

\newcommand{\muprime}{\mu'}   
\newcommand{\mupprime}{\mu''}  
\newcommand{\muppprime}{\mu'''} 
\newcommand{\operatorm}{m} 
\newcommand{\LocalF}{\mathcal{F}^\mathbb{L}} 
\newcommand{\LocalPhi}{\varPhi^\mathbb{L}} 
\newcommand{\LocalPsi}{\varPsi^\mathbb{L}} 

\makeatletter
\def\mathunderbar#1{\underline{\sbox\tw@{$#1$}\dp\tw@\z@\box\tw@}}
\makeatother
\usepackage[normalem]{ulem}

\begin{document}

\title[HMS of indecomposable Cohen-Macaulay modules for degenerate cusps]{Homological mirror symmetry of
indecomposable Cohen-Macaulay modules \\ for some degenerate cusp singularities}
\author[Cho]{Cheol-Hyun Cho}
\address{Department of Mathematical Sciences, Research Institute in Mathematics\\ Seoul National University\\ Gwanak-gu\\Seoul \\ South Korea}
\email{chocheol@snu.ac.kr}
\author[Jeong]{Wonbo Jeong}
\address{Department of Mathematical Sciences, Research Institute in Mathematics\\ Seoul National University\\ Gwanak-gu\\Seoul \\ South Korea}
\email{wonbo.jeong@gmail.com}
\author[Kim]{Kyoungmo Kim}
\address{Department of Mathematical Sciences\\ Seoul National University\\ Gwanak-gu\\Seoul \\ South Korea}
\email{kyoungmo@snu.ac.kr}
\author[Rho]{Kyungmin Rho}
\address{Department of Mathematical Sciences\\ Seoul National University\\ Gwanak-gu\\Seoul \\ South Korea}
\email{leftwing@snu.ac.kr}

\begin{abstract}
Burban-Drozd showed that the degenerate cusp singularities have tame Cohen-Macaulay representation type, and classified all indecomposable Cohen-Macaulay modules over them. One of their main example is the non-isolated singularity $W=xyz$. On the other hand,  Abouzaid-Auroux-Efimov-Katzarkov-Orlov showed that $W=xyz$ is mirror to a pair of pants. In this paper, we investigate homological mirror symmetry of these indecomposable Cohen-Macaulay modules for $xyz$.

Namely, we show that closed geodesics (with a flat $\C$-bundle) of a hyperbolic pair of pants  have a
one-to-one correspondence with indecomposable Cohen-Macaulay modules for $xyz$ with multiplicity one that are locally free on the punctured spectrum.
In particular, this correspondence is established first by a geometric $\AI$-functor from the Fukaya category of the pair of pants to the matrix factorization category of $xyz$, and next by the correspondence between Cohen-Macaulay modules and matrix factorizations due to Eisenbud.
For the latter, we compute explicit Macaulayfications of modules from Burban-Drozd's classification and find a canonical form of the
corresponding matrix factorizations. In the sequel, we will show that indecomposable modules with higher multiplicity correspond to twisted complexes of closed geodesics.

We also find mirror images of  rank $1$ indecomposable Cohen-Macaulay modules (of band type) over the singularity $W = x^{3} + y^{2} - xyz$ as closed loops
in the orbifold sphere $\mathbb{P}^1_{3,2,\infty}$.

%
%

\end{abstract}

\maketitle
\tableofcontents

\section{Introduction}

Given a polynomial $f \in \C[[x_1,\ldots,x_n]]=:S$, maximal Cohen-Macaulay modules
of $A=S/(f)$ are related to matrix factorizations of the polynomial $f$ by the theorem of Eisenbud \cite{E80}.
A matrix factorization of $f$ is given by a pair of  matrices $(\varphi,\psi) \in \operatorname{Mat}_{n \times n}(S)^{\times 2}$ such that  $ \varphi \cdot \psi = \psi \cdot \varphi = f \cdot 1_{n \times n}$ for some $n \in \mathbb{N}$.  

The Classification problem of maximal Cohen-Macaulay modules were systematically studied by
Auslander \cite{Au78}, \cite{Au86}, \cite{Au87},  Buchweitz \cite{Buc87}, Esnault \cite{Esn85} and many others.
Kn\"orrer \cite{KN87} and  Buchweitz-Greuel-Schreyer \cite{BGS87} have shown that
there are only finitely many isomorphism classes of indecomposable Cohen Macaulay $A$-modules
if and only if $A$ is an isolated simple hypersurface singularity. In this case $A$ is said to have finite CM-representation type. 

When there are infinitely many indecomposable Cohen-Macaulay $A$-modules, it is said to have tame representation
type  if indecomposable representations of any fixed rank form a finite set of at most 1-parameter families. It is called wild otherwise.

Recently, Burban and Drozd \cite{BD17} carried out a systematic study for an important class of non-isolated Gorenstein singularities called degenerate cusps, and show that they are of tame CM-representation type. 
The following are main examples of such singularities 
$$\C[[x,y,z]]/(xyz),\;\;\; \C[[x,y,z]]/(x^3+y^2-xyz).$$

Burban and Drozd developed a method to translate the classification problem of Cohen-Macaulay modules into a new class of linear algebra problem called decorated bunch of chains. In particular,  Burban and Drozd found a complete list of Cohen Macaulay modules that are locally free on the punctured spectrum. The list is given by a combinatorial data called ``band datum" $(w_{BD},\lambda,\mu)$ where $\lambda \in \bcc^*$ is called an eigenvalue, $\mu \in \mathbb{N}$ is called a multiplicity, and $w_{BD} \in \Z^{6\tau}$ for $\tau \in \mathbb{N}$  is a collection of positive integers 
\begin{equation}\label{eq:bw}
 ((a_1, b_1, c_1, d_1, e_1, f_2), (a_2, b_2,c_2,d_2,e_2, f_3), \ldots, (a_\tau, b_\tau, c_\tau, d_\tau, e_\tau, f_1))
 \end{equation}
with  $\min(f_i, a_i)=\min(b_i, c_i)=\min(d_i, e_i)=1$ for each $i$. We remark that the stable category $\underline{\operatorname{CM}}^{\operatorname{lf}}(A)$ of such locally free modules forms a Hom finite category and in fact a Calabi-Yau 1-category.  Also, Burban-Drozd showed that any indecomposable Cohen-Macaulay module falls into either
band or string type.

The main theme of this paper is to investigate homological mirror symmetry of these indecomposable Cohen-Macaulay modules.
Mirror symmetry is a conjectural relation between symplectic geometry of a space or a Landau-Ginzburg model and
complex geometry of another model. Here, Landau-Ginzburg model is nothing but a singularity $W:Y \to \C$.

We are interested in a version of homological mirror symmetry which is a (derived or $\AI$)-equivalence of categories between a symplectic manifold $X$ and a Landau-Ginzburg model given by a polynomial  $W:\C^n \to \C$. 
For $X$, we consider a Fukaya $\AI$-category, whose objects are Lagrangian submanifolds,  morphisms are Lagrangian Floer complexes
and $\AI$-operation is defined using the count of $J$-holomorphic discs. For $W:\C^n \to \C$, we consider the derived category of matrix factorization, or equivalently that of maximal Cohen-Macaulay modules. Orlov showed that these are equivalent to  the derived category of singularity, which is defined as a quotient of derived category of coherent sheaves on the singular fiber by the full subcategory of perfect complexes. Then the homological mirror symmetry claims that the split-closed derived Fukaya category of $X$ (wrapped version if $X$ is non-compact) is equivalent to the derived category of matrix factorizations of $W$.

For the case of $W= xyz$, its mirror is given by a pair of pants $\mathcal{P}$ (a Riemann surface obtained from $\mathbb{P}^1$ by removing three points, say $0,1,\infty$). Homological mirror symmetry has been proved by Abouzaid-Auroux-Efimov-Katzarkov-Orlov \cite{AAEKO} by comparing split generators of these two categories. Namely, two factorizations $x \cdot (yz),  y \cdot (xz)$ are known to split generate the $B$-model derived category, and two non-compact curves connecting two punctures (say $0,1$ and $1,\infty$) are known to split generate the wrapped Fukaya category of $\mathcal{P}$. Derived equivalence was proved by investigating possible $\AI$-structures that can be put on these two split generators.

For the case of $W=x^3+y^2-xyz$, its mirror is given by an orbifold $\mathbb{P}^1_{3,2,\infty}$, which is $\mathbb{P}^1$ with $\Z_3, \Z_2$ orbifold points and a puncture. Various versions of homological mirror symmetry between them has been proved (Lekili-Perutz \cite{LP12},
Lekili-Ueda \cite{LU18} for the compact Fukaya category and  Cho-Choa-Jung \cite{CHL} for the wrapped case).

Based on these mirror equivalences, we may ask the following natural questions.
What are mirrors of indecomposable Cohen-Macaulay modules of $W$? 
Are the mirrors given by geometric Lagrangians irreducible?
What are geometric meanings of the band datum and of band and string type?
Why does indecomposable representations of tame type appear as 1-parameter families?

Although derived HMS equivalence has been established, these questions cannot be answered directly
as there does not exist a direct explicit functor between the category of Cohen-Macaulay modules and the mirror Fukaya category. 

In this paper, we will answer the above questions for the singularities $W=xyz$ and $x^3+y^2-xyz$.

To explain our results, let us first recall one of the main classification theorem of Burban-Drozd.
\begin{thm}[\cite{BD17} Theorem 9.2]
Let $A = k[[x,y,z]]/(xyz)$, $(w_{BD},\lambda,\mu)$ be a band datum, and 
$\tilde{M}(w,\lambda,\mu)$ be the associated torsion-free $A$-module (see Definition \ref{defn:module}).
\begin{enumerate}
\item The module $M(w,\lambda,\mu):= \tilde{M}(w,\lambda,\mu)^\dagger$ obtained from the Macaulayfication
is an indecomposable maximal Cohen-Macaulay module over $A$, and locally free of rank $\mu\tau$  on punctured spectrum.
\item Any indecomposable object of $\operatorname{CM}^{\operatorname{lf}}(A)$ is isomorphic to some module  $M(w,\lambda,\mu)$
\item $M(w_1,\lambda_1,\mu_1) = M(w_2,\lambda_2,\mu_2)$ if and only if $\mu_1=\mu_2, \lambda_1 = \lambda_2$ and 
$w_1$ is a cyclic shift of $w_2$.
\end{enumerate}
\end{thm}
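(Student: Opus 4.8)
The plan is to carry out, for the specific ring $A=k[[x,y,z]]/(xyz)$, the reduction-to-matrix-problems machinery of Burban and Drozd for non-isolated surface singularities. First I would set up the conductor square. Let $\tilde A = k[[y,z]]\times k[[x,z]]\times k[[x,y]]$ be the normalization (the three coordinate planes), let $I=(xy,yz,zx)\subseteq A$ be the conductor ideal, so that $A/I$ is the coordinate cross of three lines through the origin and $\tilde A/I$ is its partial normalization, and note that $A$ is the fibre product of $\tilde A \to \tilde A/I \leftarrow A/I$. There is a triple functor $\mathbb{T}$ sending a torsion-free (resp.\ maximal Cohen--Macaulay) $A$-module $N$ to $\bigl(\tilde A\otimes_A N,\; N/IN,\; \theta_N\bigr)$, where $\theta_N$ records the canonical gluing of the two over $\tilde A/I$. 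The key structural step is that $\mathbb{T}$ is an equivalence onto a category $\operatorname{Tri}(A)$ of triples cut out by suitable fullness/surjectivity conditions, and that it restricts to an equivalence of the stable category $\underline{\operatorname{CM}}^{\operatorname{lf}}(A)$ with the full subcategory of ``locally free'' triples; I would prove this via the adjunction between $\mathbb{T}$ and the pullback functor, verifying exactness and essential surjectivity by a local analysis at each plane and along the singular axes, and --- the genuinely delicate point for a \emph{non-isolated} singularity --- pinning down exactly which triples have a Cohen--Macaulay (not merely torsion-free) preimage.

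Second, I would linearize the triples. Each factor of $\tilde A$ is regular, so a Cohen--Macaulay $\tilde A$-module is free and $\tilde A\otimes_A N$ is recorded by three ranks; all the content sits in the gluing $\theta_N$, a system of $\tilde A/I$-module isomorphisms. Since $\tilde A/I$ is a product of node rings $k[[u,v]]/(uv)$, classifying such gluings is precisely a \emph{decorated bunch of chains} matrix problem attached to the incidence of the three planes along the three axes, and I would invoke (or reprove in this small case) the tame classification of its indecomposables: every indecomposable is a \emph{band} or a \emph{string}, a band being indexed by a primitive cyclic word together with a Jordan block $(\lambda,\mu)\in \C^*\times\mathbb{N}$, a string by a finite non-periodic word. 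The primitive cyclic words are exactly the $6\tau$-tuples $w_{BD}$ of \eqref{eq:bw}, with the conditions $\min(f_i,a_i)=\min(b_i,c_i)=\min(d_i,e_i)=1$ encoding the reduced choice of a cyclic representative; the data $(\lambda,\mu)$ is the Jordan data, which explains why tame-type families occur in one-parameter families.

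Third, I would translate back. From a band datum $(w_{BD},\lambda,\mu)$ one builds the corresponding triple, hence the torsion-free module $\tilde M(w,\lambda,\mu)$ of Definition \ref{defn:module}; its Macaulayfication $M(w,\lambda,\mu)=\tilde M(w,\lambda,\mu)^\dagger$ is Cohen--Macaulay, and since $(-)^\dagger$ changes $\tilde M(w,\lambda,\mu)$ only at the closed point, local freeness on the punctured spectrum is preserved, with generic rank equal to the total number of strands of the band, $\mu\tau$. The functor $(-)^\dagger$ is additive and reflects isomorphisms on $\operatorname{CM}(A)$, with essential image all of $\operatorname{CM}^{\operatorname{lf}}(A)$, so it transports the indecomposability of the band to $M(w,\lambda,\mu)$ and, conversely, makes band triples exhaust the locally free indecomposables (strings fail to be locally free on the punctured spectrum), giving (1) and (2). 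For (3), a cyclic shift of $w$ visibly yields an isomorphic triple and hence an isomorphic module; the converse is the statement that the primitive cyclic word, $\lambda$ and $\mu$ form a complete and non-redundant invariant of a band triple, combined with the faithfulness of $\mathbb{T}$ on $\operatorname{CM}^{\operatorname{lf}}(A)$.

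The main obstacle, and where I expect most of the work to lie, is the first step: establishing that $\mathbb{T}$ is an equivalence onto the right subcategory and correctly isolating the Cohen--Macaulay triples --- the classical Milnor-square argument for reduced curves does not transfer verbatim because $xyz$ is non-isolated and $\dim A = 2$ --- together with the combinatorial bookkeeping identifying the abstract bands with the explicit $6\tau$-tuples and their $\min$-conditions in \eqref{eq:bw}.
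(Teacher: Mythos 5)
The theorem you are asked about is not proved in this paper: it is imported as a citation of Burban--Drozd, Theorem~9.2 of \cite{BD17}, and the paper only \emph{recalls} the supporting machinery in Section~2 (the category $\Tri(A)$, the equivalence functor, Macaulayfication, and the classification by band data) and restates the result again for modified band data in Section~7. So there is no in-paper proof to compare against; what follows is a comparison with the argument of \cite{BD17} as the paper summarizes it.

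Your top-level outline matches that argument: reduce to a category of triples, linearize to a decorated-bunch-of-chains matrix problem, classify indecomposables as bands and strings, and translate back via Macaulayfication. However, the triple functor you write down is wrong for this situation, and the error is structural rather than cosmetic. You propose $\mathbb{T}(N)=\bigl(\tilde A\otimes_A N,\; N/IN,\;\theta_N\bigr)$, the naive conductor-square (Milnor-square) triple from the theory of reduced \emph{curve} singularities. For $A=k[[x,y,z]]/(xyz)$, which has Krull dimension~$2$, the functor that Burban--Drozd actually use, and which the paper records as Theorem~\ref{BurbanDrozdMainTheorem}, sends a maximal Cohen--Macaulay module $M$ to
$\bigl(R\boxtimes_A M,\; Q(\overline{A})\otimes_A M,\;\theta_M\bigr)$,
where $R\boxtimes_A M=(R\otimes_A M)^\dagger$ is the Macaulayfication of the pullback to the normalization $R$, and the second entry lives over the total quotient ring $Q(\overline{A})$ of $\overline{A}=A/I$, not over $\overline{A}$ itself. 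Both modifications are essential: $R\otimes_A M$ need not be maximal Cohen--Macaulay over $R$, and $M/IM$ can have $\overline{A}$-torsion, so without these changes the proposed functor does not land in a category of the required form and is not an equivalence. You flag this as ``the main obstacle,'' but it is not merely a verification to be supplied --- it changes the definition of the objects. Once the functor is corrected (and the corresponding ``locally free'' condition on triples, namely that $\theta$ be an isomorphism, is imposed), the rest of your sketch --- reduction to the decorated-bunch-of-chains problem via Example~\ref{example:xyzTri}, the band/string dichotomy, and the uniqueness of primitive cyclic words giving part~(3) --- is consistent with \cite{BD17}.
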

\begin{remark}
Here, we use a modified band word $w= \left(l_1,m_1,n_1,l_2,m_2,n_2,\dots,l_\tau,m_\tau,n_\tau\right)$ given by $3\tau$ integers (defined in Definition \ref{defn:mbd}), instead of $w_{BD}$ given by $6\tau$ integers with some conditions.
\end{remark}

The $A$-module $\tilde{M}(w,\lambda,\mu)$ is explicitly defined from a band datum,
but  this may not be maximal Cohen-Macaulay.  Then a process called Macaulayfication extends $\tilde{M}(w,\lambda,\mu)$ to the actual maximal Cohen-Macaulay module $M(w,\lambda,\mu) \in  \mathrm{CM}(A)$.

Our first main result is a systematic computation of the Macaulayfication of $A$-modules $\tilde{M}(w,\lambda,\mu)$.
  This process turns out to be complicated, as one needs to find  all Macaulayfying elements for each band data.
For this purpose, we introduce the notion of a generator diagram, which is quite useful to carry out this process.
This will be explained in Section \ref{sec:higherrank} which is rather technical and lengthy.

Once we extend $\tilde{M}(w,\lambda,\mu)$ to the corresponding maximal Cohen-Macaulay modules $M(w,\lambda,\mu)$, their associated matrix factorizations of $W$ can be obtained by finding their resolutions.
We show that for each band datum, they have the following canonical form for the case of $\mu=1$.
\begin{thm}\label{thm:1}
For the Cohen-Macaulay module $M(w,\lambda,1)$,  the corresponding matrix factorization of $xyz$ is given
as follows (Theorem \ref{thm:MFFromModule}).
\begin{equation}\label{eq:can}
\varphi\left(w',\lambda,1\right) :=
\begin{psmallmatrix}
z & -y^{m_1'-1} & 0 & 0 & \displaystyle\cdots & 0 & -\lambda^{-1}x^{-l_1'} \\[2mm]
-y^{-m_1'} & x & -z^{n_1'-1} & 0 & \displaystyle\cdots & 0 & 0 \\[2mm]
0 & -z^{-n_1'} & y & -x^{l_2'-1} & \displaystyle\cdots & 0 & 0 \\[0mm]
0 & 0 & -x^{-l_2'} & z & \ddots & \vdots & \vdots \\[0mm]
\vdots & \vdots & \vdots & \ddots & \ddots & -y^{m_\tau'-1} & 0 \\[2mm]
0 & 0 & 0 & \displaystyle\cdots & -y^{-m_\tau'} & x & -z^{n_\tau'-1} \\[2mm]
-\lambda x^{l_1'-1} & 0 & 0 & \displaystyle\cdots & 0 & -z^{-n_\tau'} & y
\end{psmallmatrix}_{3\tau\times3\tau}
\end{equation}
where $x^a$, $y^a$ or $z^a$ is regarded as zero if $a<0$.
Here, we write only one factor $\varphi$ and its counterpart $\psi$ can be computed using adjoint matrix (Corollary
\ref{cor:Psi}).

Here, exponents of entries can be put into a word  $w'= \left(l_1',m_1',n_1',\ldots,l_\tau',m_\tau',n_\tau'\right)$,  which is defined from $w$ by the following rule (Definition \ref{def:ConversionFromBandtoLoop}):  Given the modified band word $w$, we  first define the {\em sign word}  $\delta(w)$ and the correction word $\varepsilon(w)$, both in $\mathbb{Z}^{3\tau}$. 
For the sign word $\delta = (\delta_1,\ldots, \delta_{3\tau})$, we set 
$$
\delta_j :=
\left\{
\begin{array}{cl}

\setlength\arraycolsep{0pt}
\begin{array}{l}
0 \\ \\ \\
\end{array} 

&

\setlength\arraycolsep{0pt}
\begin{array}{l}
\text{if} \\ \\ \\
\end{array} 

\quad \left\{
                         \begin{array}{l}
                         w_j<0, \text{ or} \\
                         w_j=0 \text{ and at least one of the first non-zero entries adjacent to the} \\
                         \hspace{11mm}\text{string of $0$s containing $w_j$ (exists and) is negative,}
                         \end{array}
                         \right. \\[6mm]
1 & \text{otherwise}.
\end{array}
\right.
$$
We define the correction word $\varepsilon = (\varepsilon_1,\ldots,\varepsilon_{3\tau})$ by setting   $\varepsilon_j := -1 + \delta_{j-1} + \delta_{j} + \delta_{j+1}$ for all $j \in \Z_{3\tau}$.

Finally, we define  $w':= w + \varepsilon(w)$.
\end{thm}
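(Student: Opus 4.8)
The plan is to pass from the module side to the matrix-factorization side through an explicit free resolution over $S=\C[[x,y,z]]$ together with Eisenbud's theorem. Starting from the combinatorial definition of the torsion-free module $\tilde M(w,\lambda,1)$ attached to the band datum (Definition~\ref{defn:module}), I would first carry out the Macaulayfication $M(w,\lambda,1)=\tilde M(w,\lambda,1)^\dagger$ using the generator diagram of Section~\ref{sec:higherrank}. Since $A$ is Gorenstein of dimension $2$, the Macaulayfication only adjoins finitely many new generators, raising the depth from $1$ to $2$; the generator diagram is designed exactly to locate these Macaulayfying elements and the $S$-relations they satisfy. The outcome is an explicit presentation $S^{3\tau}\xrightarrow{\varphi} S^{3\tau}\to M(w,\lambda,1)\to 0$, and because $M(w,\lambda,1)$ is maximal Cohen--Macaulay over $A$, Eisenbud's theorem \cite{E80} guarantees that $\varphi$ extends to a matrix factorization $(\varphi,\psi)$ of $xyz$ with $\psi$ recovered from $\varphi$ as in Corollary~\ref{cor:Psi}. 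It then remains to bring $\varphi$, by an $S$-linear change of the two bases, into the displayed form \eqref{eq:can}, and to read off the rule $w\mapsto w'$.

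For the shape of $\varphi$, I would organize the $3\tau$ generators of $M(w,\lambda,1)$ along a $3\tau$-cycle: consecutive generators straddle the coordinate hyperplanes of $xyz=0$ cyclically, so the diagonal entry of $\varphi$ in positions $3i-2,3i-1,3i$ is forced to be $z,x,y$ respectively, and each off-diagonal pair $\{\,\varphi_{j,j+1},\varphi_{j+1,j}\,\}$ is a pair of monomials $\{-v^{\,k-1},-v^{-k}\}$ in the remaining coordinate $v$, with the exponent $k$ governed by the corresponding entry of the band word. The eigenvalue $\lambda$ enters only in the two corner entries because the band is a \emph{cycle}: traversing it once, the gluing of the normalization pieces is twisted by $\lambda$ at a single place, which is exactly the closing edge $(3\tau,1)$. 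Establishing this skeleton is largely a matter of normalizing the generator diagram; the substantive point is the exact value of each exponent, that is, the passage from $w$ to $w'$.

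The claim $w'=w+\varepsilon(w)$ is the arithmetic heart of the argument. In the raw module $\tilde M(w,\lambda,1)$ the exponent at the edge between generators $j$ and $j+1$ is read directly off $w$, but the Macaulayfication lengthens each generator along its hyperplane, and the amount by which the $j$-th exponent grows is controlled by whether the transitions at positions $j-1$, $j$ and $j+1$ are ``tight'' or ``loose''. I would make this precise by identifying a transition as tight---contributing no extra power---exactly when the relevant entry of $w$ is negative, or is a zero whose adjacent block of zeros is closed off, on the relevant side, by a negative entry; this is the content of the two clauses defining $\delta_j$. Each loose transition at a neighbour of $j$ then forces one extra power into the $j$-th exponent, and subtracting the single baseline shift gives $\varepsilon_j=\delta_{j-1}+\delta_j+\delta_{j+1}-1$, hence $w'=w+\varepsilon(w)$. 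The main obstacle is exactly this bookkeeping, together with the treatment of strings of zeros in $w$: a zero entry is borderline, and whether it behaves as tight or loose at a neighbouring position is determined not locally but by the nearest genuine transition on the appropriate side, which is why the definition of $\delta_j$ must look beyond $w_j$. One has to check that the generator diagram is consistent around such blocks and that the resulting exponents are $\ge 1$ precisely where \eqref{eq:can} displays an honest monomial and $\le 0$ (hence zero) precisely where it displays $0$.

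Finally, once $\varphi$ is in the form \eqref{eq:can}, verifying that $(\varphi,\psi)$ is a matrix factorization of $xyz$, i.e.\ $\varphi\psi=\psi\varphi=xyz\cdot 1$, is a direct computation: in each row and column exactly one of each pair $\{y^{m_i'-1},y^{-m_i'}\}$, $\{z^{n_i'-1},z^{-n_i'}\}$, $\{x^{l_i'-1},x^{-l_i'}\}$ is non-zero (since $v^{k-1}v^{-k}$ is always regarded as zero), so the off-diagonal entries of the product cancel in pairs while each diagonal entry collects $x\cdot y\cdot z$. Alternatively one can reverse the logic---first check by hand that \eqref{eq:can} is a matrix factorization, then identify $\cok\varphi$ with $M(w,\lambda,1)$ by computing its band datum via Burban--Drozd's correspondence---but the Macaulayfication route is the one that also explains where the correction word $\varepsilon(w)$ comes from.
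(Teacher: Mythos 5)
Your outline — Macaulayfy $\tilde M(w,\lambda,1)$ via the generator diagram, present $M(w,\lambda,1)$ as $S^{3\tau}\xrightarrow{\varphi}S^{3\tau}\twoheadrightarrow M_S$, then read off the matrix factorization via Eisenbud — is the same route the paper takes in Section~\ref{sec:higherrank}. But it is a plan, not a proof: the decisive step, showing that the off-diagonal exponents of the presentation matrix end up being $w_j'=w_j+\varepsilon_j$ with $\varepsilon_j=\delta_{j-1}+\delta_j+\delta_{j+1}-1$, is asserted (``each loose transition at a neighbour of $j$ forces one extra power'') but never carried out, and this is exactly where the weight of the argument lies.

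The correction word does not emerge from a single local tight/loose dichotomy. In the paper it accumulates over three distinct stages of matrix surgery on the naive presentation of $M_0=\langle G_1,\dots,G_{3\tau}\rangle_A$: adjoining the $H_{\jmath_\nu}$ generators whenever $\delta$ changes from $0$ to $1$ (Step 3, which already shifts some exponents by one), adjoining the Macaulayfying elements $F_{\imath_\nu}$ (Step 4), and then a cascade of row and column operations needed to cancel the auxiliary $\zeta_{\imath,a,b}^c$ terms that the last step introduces — with the bookkeeping controlled by the matrix expansion and reduction Lemmas~\ref{lem:MatrixExpansionLemma} and~\ref{lem:MatrixReductionLemma}. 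The uniform cases $w\ge 0$ and $w\le 0$ require a separate argument (Step 2), since there no $F_\imath$ or $H_\jmath$ appear and yet $\varepsilon\equiv\pm 2$. You also need to show that the $F_\imath$ exhaust the Macaulayfying elements; the paper establishes this only after the fact, by computing $\det\varphi=x^\tau y^\tau z^\tau u$ with $u$ a unit of $S$ (Proposition~\ref{prop:DetAndAdjOfPhiS}), which forces $\operatorname{coker}\underline{\varphi}$ to be maximal Cohen--Macaulay and hence equal to $M$. None of these mechanisms appear in your proposal, so the central exponent formula remains unjustified.
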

\begin{remark}
For the case of higher multiplicity ($\mu >1$), we will show in the sequel that matrix factorizations still has 
a canonical form whose diagonal blocks are of the above form, and off-diagonal blocks illustrate the iterated cone structures.
\end{remark}

Surprisingly, the canonical form for the band data $w$ is best written in terms of another combinatorial data $w'$, which we will call {\em mirror loop data} as they turn out to describe Lagrangian loops in the mirror pair of pants.   What is happening here is that even though the band data provide generators of $A$-module, Macaulayfication adds new generators to this module, and this is reflected on the entries of corresponding matrix factorizations.
These exponents on matrix factorizations will be interpreted as winding numbers of a loop in the pair of pants under homological mirror symmetry.

Now, we turn to mirror side of symplectic geometry of the pair of pants $\mathcal{P}$. We consider its Fukaya $\AI$-category, whose objects are oriented  immersed curves  in $\mathcal{P}$ equipped with flat complex line bundles. First, closed connected oriented immersed curves can be described by their free homotopy class $[S^1,\CP]$ up to Lagrangian isotopy, and hence it is natural to describe them using 
the fundamental group of $\mathcal{P}$:
$$
\pi_1\left(\mathcal{P}\right) = \left<\alpha,\beta,\gamma\left|\alpha\beta\gamma=1\right.\right>,
$$
where $\alpha$, $\beta$ and $\gamma$ are the based loops in $\mathcal{P}$ shown in Figure \ref{fig:F1}. 
The set of conjugacy classes in $\pi_1(\mathcal{P})$ equals $\left[S^1,\mathcal{P}\right]$. 
\begin{defn}\label{defn:lpwd}
Given a word (which will be called a \emph{loop word} of \emph{length} $3\tau$)
$$
w' = \left(l_1',m_1',n_1',l_2',m_2',n_2',\dots,l_\tau',m_\tau',n_\tau'\right)\in \mathbb{Z}^{3\tau}
$$
with  $\tau\in\mathbb{Z}_{\ge1}$, $l_i'$, $m_i'$, $n_i'\in\mathbb{Z}$ for $i\in\left\{1,\dots,\tau\right\}$,
we define the associated free homotopy class in $\left[S^1,\mathcal{P}\right]$ as
$$
L\left[w'\right]:=\alpha^{l_1'}\beta^{m_1'}\gamma^{n_1'}\alpha^{l_2'}\beta^{m_2'}\gamma^{n_2'}\cdots\alpha^{l_\tau'}\beta^{m_\tau'}\gamma^{n_\tau'}.
$$
We say that two loop words $w'$ and $w_*'$ are \emph{equivalent} if $L\left[w'\right] = L\left[w_*'\right]$
(Definition \ref{def:LoopWord}).
\end{defn}
Here the prime symbol is used to help to distinguish the loop word and the band word. A flat complex line bundle can be
described by its total holonomy $\lambda' \in \C^*$ up to gauge equivalences.
Additional choice of $\muprime \in \mathbb{N}$ define a loop datum $(w',\lambda',\muprime)$.
\begin{figure}[h]
\begin{subfigure}{0.43\textwidth}
\includegraphics[width=50mm]{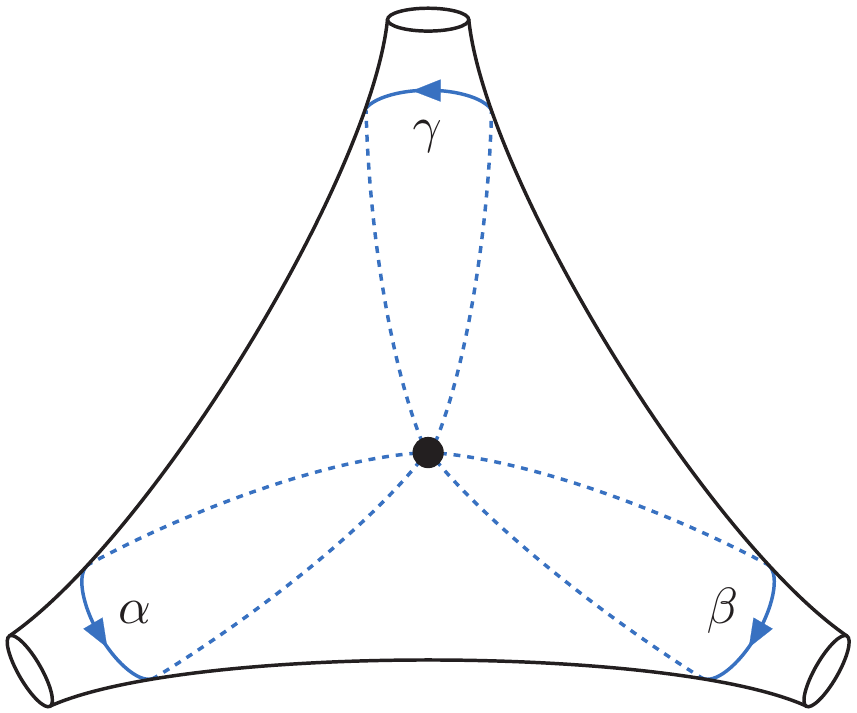}
\centering
\caption{Generators of $\pi_1(\mathcal{P})$}
\label{fig:gen}
\end{subfigure}
\begin{subfigure}{0.43\textwidth}
\includegraphics[width=50mm]{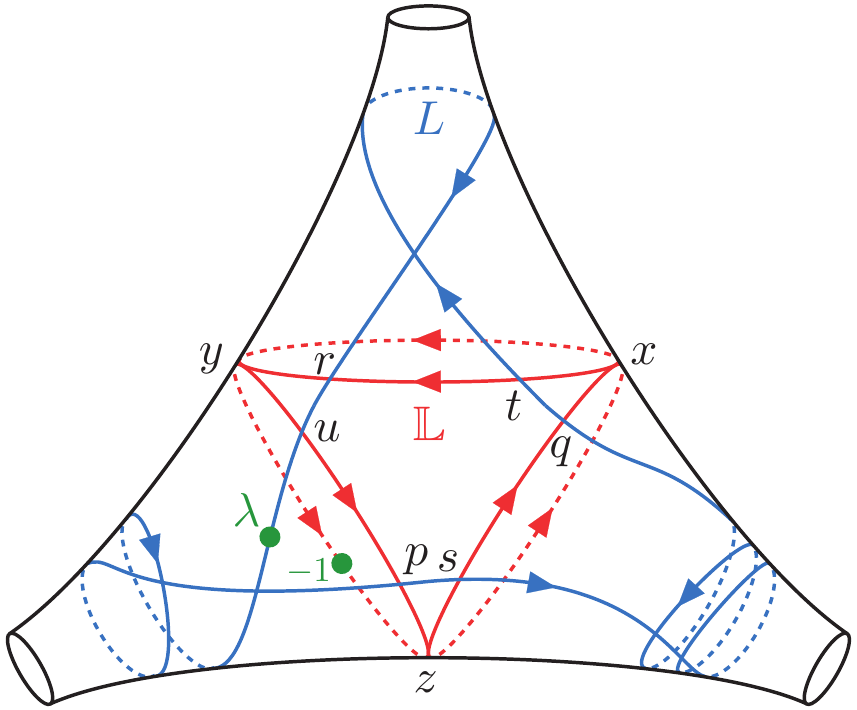}
\centering
\caption{Immersed Lagrangian for the loop data $( (3,-2,2), \lambda', 1 )$}
\label{fig:loopex}
\end{subfigure}
\centering
\caption{Fundamental group and loop data}
\label{fig:F1}
\end{figure}

We will assign an object of the Fukaya category for each loop datum $(w',\lambda',1)$
 by an immersed Lagrangian loop $L\left(w',\lambda'\right)$ (of homotopy class $L\left[w'\right]$) that winds around each puncture as described by the loop word $w'$, with holonomy $\lambda' \in \C^*$ for the case of $\muprime=1$.
If $\muprime \neq 1$, 
we will consider $\muprime$-times iterated cones of $L\left(w', \lambda'\right)$ with itself.
See Figure \ref{fig:F1} (b) for the case of $L\left((3,-2,2),\lambda'\right)$.

There are in fact several issues that we need to address, which are responsible for Appendix \ref{sec:normal}
and \ref{sec:T=1}.

First, different loop words may correspond to the same free homotopy class, due to the relation $\alpha\beta\gamma=1$ in 
the fundamental group. For the mirror correspondences, we would like to choose unique representatives in
each homotopy class. We will introduce the notion of a normal loop word (see Definition \ref{defn:normal1} for the case of $\alpha^{l}\beta^{m}\gamma^n$ and Definition \ref{defn:normal2} in general), and show that
there is a unique normal representative in each  class of $[S^1,\CP]$(except the loops that wind around single punctures) (Proposition \ref{prop:normalnormal}). Normality will play another 
important role in homological mirror symmetry in that matrix factorizations corresponding to normal loops will be always of the canonical form (Theorem \ref{thm:MFFromLag}). All detailed proofs are done in Appendix
\ref{sec:normal}.

Second, closed immersed Lagrangian loops that we consider in this paper are not exact Lagrangians
(Lagrangian arcs that connect punctures are still exact).
Hence, a priori,  we need to work with the Novikov field $\Lambda$ as  a coefficient of the Fukaya category $\mathcal{WF}(\CP)$.
Here the Novikov field is
$$\Lambda = \left\{ \left.\sum_{i=0}^\infty a_i T^{\lambda_i} \right| a_i \in \C, \lim_{i \to \infty} \lambda_i = \infty \right\}.$$ 
But we would like to use $\C$ as a coefficient, or equivalently we want to evaluate $T=1$, but such a process is not
well-defined for infinite sums in $\Lambda$. 
In this paper, we will choose our immersed Lagrangians in $\mathcal{P}$ to be regular curves (Definition \ref{def:regular}) so that we can
evaluate $T=1$ for the resulting matrix factorizations (Theorem \ref{thm:T=1Homotopy}). We remark that evaluation $T=1$ fails if multiples of $L$ and $\bL$ are in the same
homotopy class, and bound a cylinder, but this can be avoided by making a small new intersection between $L$ and $\bL$ in such
a case (Lemma \ref{lem:Regular}).

Third, Lagrangian isotopy (which is not a Hamiltonian isotopy) of curves does not give equivalences in Fukaya category in general,
especially because our closed Lagrangians are not exact. Thus if we move Lagrangian curves in its homotopy class, they are not in general equivalent.  But we show that  after the evaluation of $T=1$, the resulting matrix factorizations under Lagrangian isotopy are homotopic to each other (Theorem \ref{thm:T=1Homotopy}). Therefore, we can use Lagrangian isotopy of curves for mirror symmetry purposes.
Detailed proofs regarding the second and third issues are done in Appendix \ref{sec:T=1}.

Let us finally discuss homological mirror symmetry between $xyz$ and a pair of pants $\CP$.
We claim that Cohen-Macaulay modules for the band data $(w,\lambda,\mu)$ are
mirror to the Lagrangian object $L(w',\lambda',\muprime)$ for the corresponding loop data, where
we should set $\lambda' := \left(-1\right)^{w_1+w_4+\cdots+w_{3\tau-2}+\tau}\lambda \in \C^*$.
It is difficult to prove this using the derived equivalence of \cite{AAEKO} since we would need to express every object
in terms of split generators. 

Instead, we will use the explicit geometric functor, called localized mirror functor 
$$\mathcal{F}^\bL : \mathcal{WF}(\mathcal{P}) \to \mathcal{MF}(xyz)$$ defined 
by the first author with Hong and Lau \cite{CHL}.  
We consider the Seidel Lagrangian $\bL$, which is an immersed Lagrangian that is first introduced by Seidel to
investigate mirror symmetry of Riemann surfaces and their quotients (see  \cite{Se}, \cite{Ef}).
The Lagrangian $\bL$ (see Figure \ref{fig:F1}b) has three self-intersections labeled by $X,Y,Z$, which bounds
a triangle $XYZ$. It is shown in \cite{CHL} that the linear combination $b = xX +yY + zZ$ solves the weak Maurer-Cartan equation in the 
sense of Fukaya-Oh-Ohta-Ono (\cite{FOOO}) with a disc potential function $W = xyz$. 
(For this we need to equip $\bL$ with $\C$-bundle with $(-1)$-holonomy, or a non-trivial spin structure)

Then, localized mirror functor $\mathcal{F}^\bL$ is canonically defined in the sense of curved Yoneda embedding.
In particular, given any immersed Lagrangian $L$ in $\mathcal{P}$, its mirror matrix factorization can be computed
from $\AI$-operations of the Fukaya category, which in our case of $\mathcal{P}$, boils down to 
the count of immersed polygons bounded by $L$ and $\bL$.
Here, the target category $ \mathcal{MF}(xyz)$  is an $\AI$-version of the homotopy category $\underline{MF}(xyz)$,
and the functor is an $\AI$-functor.(see \cite{CHL} for more details) Also the functor induces an derived equivalence between two categories.

Now, let us state the precise homological mirror symmetry between $xyz$ and $\mathcal{P}$.
What we will establish is the following commutative diagram. 
\begin{equation}\label{dia:main}
\begin{tikzcd}[ampersand replacement=\&, sep=20pt] 
  \&
  \underline{\operatorname{MF}}(xyz)
    \arrow[dl, swap, "\mathrel{\rotatebox{48}{$\simeq$}}", shift right,
           "\begin{matrix}
              \operatorname{coker} \\
              \text{(Eisenbud, 1980)}
            \end{matrix}\hspace{4mm}"]
  \&
  \\[15mm]
  \underline{\operatorname{CM}}(A)
    \arrow[ur, swap, shift right,
           "\text{resolve}"]
  \&
  \&
  WF\left(\mathcal{P}\right)
    \arrow[ul, swap, "\mathrel{\rotatebox{-48}{$\simeq$}}",
           "\quad
            \begin{matrix}
              \text{localized mirror functor}
                       \end{matrix}"]
  \\[5mm]
  \text{\color{black} band data}
    \arrow[rr, leftrightarrow, color=black,
           "\text{conversion formula}"]
    \arrow[u, swap, color=black, swap,
           "\begin{matrix}
              \text{classification Thm} \\
              \text{(}\color{black}\text{Burban-Drozd)}
            \end{matrix}"]
  \&
  \&
  \text{\color{black} loop data}
    \arrow[u, swap, color=black,
           "\begin{matrix}
              \text{classify objects} \\
            \end{matrix}"]
\end{tikzcd}
\end{equation}

From the band data of Burban-Drozd (which parametrizes indecomposable Cohen-Macaulay modules), we obtain the corresponding matrix factorizations in Theorem \ref{thm:1}, which gives the composition of two upward arrows on the left hand side
(we perform Macaulayfication and find its resolution).

We first define the corresponding combinatorial data in geometry.
\begin{defn}
A loop data $(w',\lambda',\mu')$ consists of a normal loop word $w'$ (defined in Definition \ref{defn:normal1}, \ref{defn:normal2})
which is not periodic, a holonomy $\lambda' \in \C^*$ and the multiplicity $\mu' \in \mathbb{N}$.
Here $w'$ is periodic if $w' = \left(\tilde{w}'\right)^{\#N}$ for some normal loop word $\tilde{w}'$ and $N \geq 2$.
\end{defn}
If a word is periodic, it corresponds to decomposable objects both in algebra and geometry and hence periodic words are omitted from
both the band and loop data.

We show that rather technical normality condition guarantees that we have a unique representative in each (essential) homotopy class.
\begin{thm}
An essential loop in the free homotopy class $[S^1,\CP]$ has a unique normal representative (up to overall shifting of words).
Here a loop is essential if it is not of type $\alpha^l,\beta^m,\gamma^n$ that wind around each puncture
for some $l,m,n \in \Z$. Conversely, if a loop word is normal, then it is essential (Proposition \ref{prop:normalnormal}).
\end{thm}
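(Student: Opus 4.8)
The plan is to prove existence and uniqueness of the normal representative separately, and to establish the equivalence (normal $\iff$ essential) as a bookkeeping corollary of the normal-form algorithm.

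\textbf{Existence.}  Given an essential free homotopy class, I would first fix any cyclically reduced word in the free group $\langle \alpha,\beta,\gamma\rangle$ (modulo the relation $\alpha\beta\gamma=1$) representing it; since the class is essential, such a word genuinely involves at least two of the three generators, so after conjugation (= cyclic shifting) it can be written as a cyclic product of blocks $\alpha^{l_i}\beta^{m_i}\gamma^{n_i}$ as in Definition~\ref{defn:lpwd}, possibly with some exponents zero.  Then I would run a normalization procedure: each time a block violates the normality condition of Definition~\ref{defn:normal1} or \ref{defn:normal2}, apply the substitution coming from $\alpha\beta\gamma=1$ (e.g.\ replacing an offending $\gamma^{n}$ by $\beta^{-1}\alpha^{-1}$ times an adjacent correction, absorbing signs and merging neighbouring powers of the same generator) to push the word toward normal form.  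The key point is to exhibit a strictly decreasing complexity measure --- presumably the total word length $\sum_i(|l_i'|+|m_i'|+|n_i'|)$, or that together with the number of violating positions as a lexicographic pair --- so the procedure terminates, and to check that the terminal word is cyclically reduced, nonperiodic (periodicity is preserved by the moves so it cannot be created), and represents the same conjugacy class.  This also shows essential $\Rightarrow$ has a normal representative.

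\textbf{Uniqueness.}  I would argue that two normal loop words in the same conjugacy class differ only by a cyclic shift.  The cleanest route is geometric: realize the pair of pants as (a quotient of) a hyperbolic surface, so each essential free homotopy class contains a unique closed geodesic, and observe that a normal loop word is exactly a combinatorial encoding of the cyclic sequence of intersections of that geodesic with the three cuffs/seams of a fixed pair-of-pants decomposition (the exponents $l_i',m_i',n_i'$ recording signed winding around the three punctures between consecutive seam crossings).  Normality is designed precisely to rule out the ``backtracking'' configurations (a strand that crosses a seam and immediately comes back) that would let one simplify the word, so the normal word is forced to be the reduced edge-path of the geodesic, which is unique up to choice of starting edge, i.e.\ up to cyclic shift.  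Alternatively one can give a purely combinatorial Newman/diamond-lemma argument: show the rewriting system above is confluent, so the normal form is unique; but I would prefer the geodesic argument as it is shorter and conceptually matches the paper's use of hyperbolic geometry.

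\textbf{Converse.}  For ``normal $\Rightarrow$ essential'', I would simply observe that a word of the form $\alpha^{l}$, $\beta^{m}$, or $\gamma^{n}$ (with the other two generators absent) fails the normality condition: such a word has blocks with two of the three exponents equal to zero, and the adjacency/sign conditions in Definition~\ref{defn:normal2} are violated (or, in the degenerate $\tau=1$ case handled by Definition~\ref{defn:normal1}, these are explicitly excluded).  Hence a normal word cannot represent a peripheral class, so it is essential.

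\textbf{Main obstacle.}  The delicate part is the termination/confluence of the normalization moves: the substitutions from $\alpha\beta\gamma=1$ can momentarily \emph{increase} the naive word length (replacing one letter by two) before cancellations kick in, so finding the right monovariant --- and checking that every normality violation admits a length-nonincreasing move after the obvious cancellations --- is where the real work lies.  I expect this is exactly why the detailed proof is deferred to Appendix~\ref{sec:normal}, and I would organize that appendix around a finite case analysis of the local patterns $(\ldots,\delta_{j-1},\delta_j,\delta_{j+1},\ldots)$ that the sign word can take.
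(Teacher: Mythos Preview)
Your existence sketch is in the right spirit but underspecified compared to the paper's Appendix~\ref{sec:normal}.  The paper does not use a single monovariant; it proceeds in three stages (weak-normal $\to$ almost-normal $\to$ normal), each with its own invariant: first total length (via Lemma~\ref{lem:MainLemmaForNormalLoopWord}, removing subwords of shape $(0,0)$, $(1,1)$, $(0,-1,\dots,-1,0)$, $(1,2,\dots,2,1)$), then the number of nonpositive entries, then the number of positive entries.  The point is that the local moves at later stages \emph{preserve} the earlier normality levels (Lemmas preceding Propositions~\ref{prop:WeakNormal}--\ref{prop:NormalExistence}), so there is no back-and-forth.  Your single complexity measure $\sum(|l_i'|+|m_i'|+|n_i'|)$ does not obviously decrease under the move $(a,0,b)\mapsto(a+1,1,b+1)$ when $a,b\le -1$, so you would need to discover this layered structure anyway.

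Your uniqueness argument has a genuine gap.  You assert that a normal loop word is ``exactly a combinatorial encoding of the cyclic sequence of intersections of the geodesic with the three seams,'' but this identification is precisely what needs to be proved, and the specific normality conditions of Definition~\ref{defn:normal2} (e.g.\ forbidding $(0,-1,\dots,-1,0)$ but not $(0,-1,\dots,-1,-2)$) do not transparently correspond to ``no backtracking.''  Moreover, within the paper's logic this would be circular: the correspondence between normal words and closed geodesics is stated as a \emph{corollary} of the theorem you are proving.  The paper's actual argument (Subsection~\ref{subsec:unique}) is purely algebraic: it passes to the free group $\langle\alpha,\gamma\rangle$ via $\beta=\alpha^{-1}\gamma^{-1}$, computes the reduced form of $L'[w']$ explicitly case-by-case on the sign of $m_i'$ (Lemma~\ref{lem:WordWord}), shows the first letter is determined, and then handles cyclic ambiguity via explicit decompositions of reduced words (Lemma~\ref{lem:CyclicAndConjugate}, Proposition~\ref{prop:NormalUniqueness}).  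No hyperbolic geometry enters.

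Your converse is essentially what the paper does (Corollary~\ref{cor:NormalEssential}), using minimality of the normal length.
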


On the other hand, essential non-periodic loop classes in $[S^1,\CP]$ can be represented by  closed geodesics.
\begin{cor}
There is a one-to-one correspondence between closed geodesics in hyperbolic pair of pants $\CP$ with three cusps and normal loop words
which are not periodic.
\end{cor}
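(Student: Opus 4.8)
The plan is to combine the preceding theorem on normal loop words with the classical uniformization of the thrice-punctured sphere. Recall that the essential non-periodic free homotopy classes in $[S^1,\CP]$ are exactly those that admit a unique normal loop word representative (up to cyclic shift) by the preceding proposition. So it suffices to establish a bijection between such classes and closed geodesics in the hyperbolic pair of pants.

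First I would fix a complete finite-area hyperbolic metric on $\CP$ with three cusps; since $\CP$ is the thrice-punctured sphere, such a metric exists and is unique up to isometry, and $\CP$ is identified with $\mathbb{H}/\Gamma$ for a Fuchsian group $\Gamma \cong \pi_1(\CP) = \langle \alpha,\beta,\gamma \mid \alpha\beta\gamma = 1\rangle$ in which $\alpha,\beta,\gamma$ are represented by parabolic elements. Then I would invoke the standard fact from the theory of hyperbolic surfaces: a free homotopy class of loops in $\mathbb{H}/\Gamma$ contains a closed geodesic if and only if the corresponding conjugacy class in $\Gamma$ is hyperbolic (rather than parabolic or elliptic), and in that case the closed geodesic is unique within the class. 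On the thrice-punctured sphere there are no elliptic elements; the parabolic conjugacy classes are exactly the powers of the peripheral loops $\alpha^l, \beta^m, \gamma^n$ together with the identity, which are precisely the classes we excluded by the word "essential." Hence every essential class contains a closed geodesic, and it is unique; conversely, every closed geodesic represents a unique nontrivial non-peripheral free homotopy class.

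The last point to address is periodicity: a closed geodesic is by convention a primitive loop (a $1$-periodic immersion $S^1 \to \CP$ traversed once), so the relevant conjugacy classes on the geodesic side are the non-proper-powers in $\Gamma$, which correspond on the combinatorial side exactly to the non-periodic normal loop words --- a word being a proper power $(\tilde w')^{\#N}$ is equivalent, under the isomorphism $L[-]$, to the corresponding conjugacy class being an $N$-th power. Putting these together: essential non-periodic classes $\leftrightarrow$ non-peripheral primitive hyperbolic conjugacy classes $\leftrightarrow$ closed geodesics, and the first set is in bijection with non-periodic normal loop words (up to shift) by the cited proposition. Composing these bijections gives the corollary.

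The main obstacle, such as it is, is purely expository rather than mathematical: one must be careful about the two different equivalence relations being quotiented by --- cyclic shift of words versus conjugacy in $\pi_1$ versus free homotopy of loops --- and verify that the map $w' \mapsto L[w']$ intertwines "shift-equivalence of normal words" with "conjugacy of the associated hyperbolic elements," so that the uniqueness of the normal representative matches the uniqueness of the closed geodesic. Since all three of these identifications are already in place (the normality proposition handles words vs. homotopy classes, and the classical conjugacy/geodesic correspondence is standard), the proof is essentially a citation assembled from Proposition \ref{prop:normalnormal} and the uniformization of $\CP$; I would keep it to a few lines.
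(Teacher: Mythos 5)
Your proof is correct and follows essentially the same route as the paper: invoke the uniformization of the thrice-punctured sphere and the classical correspondence between hyperbolic conjugacy classes and closed geodesics, observe that the parabolic classes are exactly the peripheral $\alpha^l,\beta^m,\gamma^n$ and that there are no elliptics, identify essential classes with hyperbolic ones, and handle primitivity by matching non-periodic words to primitive geodesics. Your version is a bit more explicit about the chain of bijections and the bookkeeping between cyclic-shift, conjugacy, and free homotopy, but the argument is the same.
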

\begin{proof}
It is well-known that for the complete hyperbolic manifold $\CP$, there is a one to one correspondence between hyperbolic (non-periodic) elements of $[S^1,\CP]$ and  closed geodesics in $\CP$.
Note that the classes $\alpha^l,\beta^m,\gamma^n$ are parabolic elements (winding
around cusps) and are not realized by closed geodesics. Also there are no elliptic elements as $\CP$ has no orbifold point.
Thus,  essential elements  are exactly hyperbolic elements in $[S^1,\CP]$. 
Here, a closed geodesic is defined as a non-periodic one and thus we have the desired correspondence.
\end{proof}

We are ready to state the mirror symmetry correspondence between band and loop data.
A band datum $\left(w,\lambda,\mu\right)$ is said to \emph{degenerate} if   $w=(0,0,0)$ and $\lambda=1$.
A loop datum $\left(w',\lambda',\mu' \right)$ is said to \emph{degenerate} if   $w' = (2,2,2)$ and $\lambda'=-1$.
They are called \emph{non-degenerate} otherwise.

\begin{thm}\label{thm:c}
There is a bijection between non-degenerate band data and  non-degenerate loop data.

More precisely, given a band data $(w,\lambda,\mu)$, the corresponding loop data $(w',\lambda',\mu')$ is defined as
$$
\begin{cases}
w' &=w+\epsilon(w)  \\
\lambda' &= \left(-1\right)^{w_1+w_4+\cdots+w_{3\tau-2}+\tau}\lambda \in \C^* \\
\mu' &= \mu
\end{cases}
$$
where $\epsilon(w)$ is defined in Theorem \ref{thm:1}, and  $w'$ is always a normal loop word.
The inverse conversion formula  is defined in Definition \ref{def:ConversionFromLooptoBand} and proved
in Proposition \ref{prop:ConversionFormulaInverseToEachOther}.
\end{thm}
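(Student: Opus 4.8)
The plan is to establish the bijection by exhibiting the two conversion formulas explicitly and showing that they are mutually inverse, then separately checking that they respect the equivalence relations on both sides (cyclic shift of band words versus the normal-word representative of a homotopy class) and the degeneracy locus. First I would invoke Theorem \ref{thm:1}, which already tells us that for $\mu=1$ the matrix factorization attached to $M(w,\lambda,1)$ via Macaulayfication and resolution is the canonical form $\varphi(w',\lambda,1)$ with $w'=w+\varepsilon(w)$; the sign bookkeeping between $\lambda$ and $\lambda'=(-1)^{w_1+w_4+\cdots+w_{3\tau-2}+\tau}\lambda$ is forced by comparing the off-diagonal corner entries $-\lambda^{-1}x^{-l_1'}$, $-\lambda x^{l_1'-1}$ of \eqref{eq:can} with the holonomy data that the localized mirror functor $\mathcal{F}^\bL$ reads off from the immersed Lagrangian $L(w',\lambda')$ (Theorem \ref{thm:MFFromLag}). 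Because $\mathcal{F}^\bL$ is an $\AI$-equivalence onto $\mathcal{MF}(xyz)$ and the canonical form on the $B$-side matches on the nose, the map $(w,\lambda,\mu)\mapsto(w',\lambda',\mu')$ with $\mu'=\mu$ lands in genuine loop data provided $w'$ is normal and non-periodic.

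The key intermediate step is therefore: \emph{$w+\varepsilon(w)$ is always a normal loop word}. I would prove this directly from the definition of normality (Definitions \ref{defn:normal1}, \ref{defn:normal2}) by analyzing the correction word $\varepsilon_j=-1+\delta_{j-1}+\delta_j+\delta_{j+1}$. The sign word $\delta$ takes values in $\{0,1\}$, so $\varepsilon_j\in\{-1,0,1,2\}$; one checks case by case — according to the local pattern of signs of $(w_{j-1},w_j,w_{j+1})$, including the subtle case where $w_j=0$ and $\delta_j$ is governed by the nearest nonzero neighbor across a string of zeros — that the resulting exponent $w_j'=w_j+\varepsilon_j$ never produces the forbidden local configurations that normality rules out (e.g. a $\pm1$ exponent sandwiched the wrong way, or a sign reversal that a single application of $\alpha\beta\gamma=1$ could shorten). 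This is the combinatorial heart of the argument and is where the definition of $\delta$ was reverse-engineered to make things work, so it is mostly a finite verification once the cases are organized by the position of zero-strings.

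Next I would define the inverse map (this is Definition \ref{def:ConversionFromLooptoBand}): given a normal loop word $w'$, recover $\delta$ from the signs and zero-patterns of $w'$ itself — the point being that normality is exactly the condition guaranteeing that the same rule applied to $w'$ reproduces the $\delta$ one would have gotten from $w$ — then set $w=w'-\varepsilon$ and $\lambda=(-1)^{w_1+w_4+\cdots+w_{3\tau-2}+\tau}\lambda'$, $\mu=\mu'$. One must verify that $w=w'-\varepsilon$ is a legitimate modified band word, i.e. satisfies the minimality constraints $\min(f_i,a_i)=\min(b_i,c_i)=\min(d_i,e_i)=1$ of \eqref{eq:bw} after translation through Definition \ref{defn:mbd}; again this follows from the value constraints on $\varepsilon_j$. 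That the two maps compose to the identity in both directions is then the assertion of Proposition \ref{prop:ConversionFormulaInverseToEachOther}: in one direction it is immediate since $\delta$ only depends on signs and the corrections $\varepsilon_j$ live in a range small enough not to change the sign pattern; in the other direction it is the same statement read backwards, using normality to pin down $\delta$.

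Finally I would check compatibility with equivalences and degeneracy. A cyclic shift of $w$ by one triple $(l_i,m_i,n_i)$ induces a cyclic shift of $\delta$, hence of $\varepsilon$, hence of $w'$ by the same triple, and leaves the product $w_1+w_4+\cdots+w_{3\tau-2}+\tau$ unchanged modulo $2$ only up to the corresponding shift of which entries are summed — one verifies the sign $\lambda'$ transforms consistently with the "overall shifting of words" equivalence on normal loop words from Proposition \ref{prop:normalnormal}. Periodicity is preserved because $\varepsilon$ is computed locally and periodically. For the degeneracy locus, a direct computation: $w=(0,0,0)$ gives $\delta=(1,1,1)$ (by the zero-string clause, since there are no nonzero entries — here one uses the convention that an all-zero word has $\delta\equiv1$), hence $\varepsilon=(2,2,2)$ and $w'=(2,2,2)$, and $\lambda'=(-1)^{0+1}\lambda=-\lambda$, so $\lambda=1$ corresponds to $\lambda'=-1$; thus the degenerate band datum maps exactly to the degenerate loop datum and non-degenerate maps to non-degenerate. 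The main obstacle is the normality verification in the second paragraph: everything else is bookkeeping, but showing $w+\varepsilon(w)$ dodges every forbidden pattern requires a careful and somewhat tedious case split on the local sign/zero structure of $w$, which is precisely why the paper isolates the definitions of $\delta$ and $\varepsilon$ and defers the full proof to Appendix \ref{sec:normal}.
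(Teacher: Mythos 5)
Your proposal follows essentially the same route as the paper: the substantive work is (i) the case analysis showing $w'=w+\varepsilon(w)$ is always a normal loop word (Proposition~\ref{prop:ConvertedToNormalForm}), and (ii) the verification that the two conversion formulas recover each other by matching the sign words $\delta(w)$ and $\delta'(w')$ (Proposition~\ref{prop:ConversionFormulaInverseToEachOther}); you correctly identify both of these and the degeneracy computation $(0,0,0)\mapsto(2,2,2)$, $\lambda\mapsto-\lambda$.

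Two small points worth flagging, neither of which undermines the argument. First, your appeal to the localized mirror functor and Theorem~\ref{thm:MFFromLag} to justify the sign in $\lambda'$ is motivation rather than proof: Theorem~\ref{thm:c} is a purely combinatorial statement (the $\lambda'$ formula is simply part of Definition~\ref{def:ConversionFromBandtoLoop}), and the geometric interpretation is what Theorem~\ref{thm:MainThoerem} later supplies, so leaning on it here would be circular as written. Second, there is no constraint to verify on $w=w'-\varepsilon'(w')$: modified band words are unconstrained elements of $\Z^{3\tau}$, and the constraints $\min(f_i,a_i)=\min(b_i,c_i)=\min(d_i,e_i)=1$ on the Burban--Drozd sextuples are automatic once one passes back through the formulas $f_i=\max\{0,l_i\}+1$, $a_i=\max\{0,-l_i\}+1$, etc.\ of Section~\ref{sec:mbd}, so that check is vacuous. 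With those caveats, the shape of the argument matches the paper's.
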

This gives the correspondence in the bottom of the diagram \eqref{dia:main}.

Now, we are ready to state our main theorem on homological mirror symmetry.
We will restrict ourselves to the case of $\mu=1$ in this paper, and the cases of
$\mu >1$ will be proved in the sequel.

For a loop data $(w',\lambda',1)$, we will define a closed immersed Lagrangian $L(w',\lambda',1)$
in the homotopy class $L\left[w'\right]$ with $\C$-bundle of holonomy $\lambda' \in \C^*$.

\begin{thm}\label{thm:intromain}
The homological mirror symmetry diagram \eqref{dia:main} commutes (Theorem \ref{thm:MainThoerem}).

Namely, given a non-degenerate band data $(w,\lambda,1)$ and the corresponding non-degenerate loop data $(w', \lambda', 1)$ (from
Theorem \ref{thm:c}), we have
\begin{enumerate}
\item Maximal Cohen-Macaulay module $M\left(w,\lambda,1\right)$ corresponds to the matrix factorization $\varphi\left(w',\lambda,1\right)$ under Eisenbud's theorem, i.e.,
$$
M\left(w,\lambda,1\right) \cong \operatorname{coker}\underline{\varphi}\left(w',\lambda,1\right) \quad\text{in}\
\ \underline{\operatorname{CM}}(A),
$$
\item Immersed Lagrangian $L\left(w',\lambda',1\right)$ corresponds to the matrix factorization $\varphi\left(w',\lambda,1\right)$ under the localized
mirror functor, i.e.,
$$
\LocalF\left(L\left(w',\lambda',1\right)\right)\ \cong \varphi\left(w',\lambda,1\right)\quad
\text{in}\ \ \underline{\operatorname{MF}}(xyz).
$$
\end{enumerate}
\end{thm}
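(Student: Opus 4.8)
\textbf{Proof strategy for Theorem \ref{thm:intromain}.}
The plan is to prove the two statements (1) and (2) separately, since each side of the mirror diagram is computed by an essentially independent mechanism, and then to observe that they produce literally the same matrix $\varphi(w',\lambda,1)$ of the form \eqref{eq:can}, so that commutativity of \eqref{dia:main} follows by transitivity of the three equivalences. For (1), I would start from the explicit torsion-free module $\tilde M(w,\lambda,1)$ attached to the band datum in Definition \ref{defn:module}, run the Macaulayfication procedure of Section \ref{sec:higherrank} to obtain $M(w,\lambda,1)=\tilde M(w,\lambda,1)^\dagger$, and then build a free resolution over $S=\C[[x,y,z]]$. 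The key bookkeeping device is the generator diagram: the original band generators give a ``$3\tau$-cycle'' of relations of the visible form (the super/sub-diagonal $z,x,y$ pattern), and each Macaulayfying element adds exactly one new generator whose relation shifts the exponent of the adjacent monomial by the amount recorded in $\varepsilon(w)$. I would show by a local (cyclic) induction along the word that after all Macaulayfying elements are adjoined, the presentation matrix is exactly $\varphi(w',\lambda,1)$ with $w'=w+\varepsilon(w)$, and that the ``wrap-around'' corner entries pick up the factor $\lambda^{\pm1}$ together with the sign $(-1)^{w_1+w_4+\cdots+\tau}$; periodicity of $w$ is excluded precisely so that the resulting cokernel is indecomposable. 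That $(\varphi,\psi)$ with $\psi=\mathrm{adj}$-type partner is a genuine matrix factorization of $xyz$ is the content of Corollary \ref{cor:Psi}, and Eisenbud's theorem \cite{E80} then gives $M(w,\lambda,1)\cong\operatorname{coker}\underline\varphi(w',\lambda,1)$ in $\underline{\operatorname{CM}}(A)$.

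For (2), I would compute $\LocalF(L(w',\lambda',1))$ directly from the definition of the localized mirror functor of \cite{CHL}: the functor sends an immersed Lagrangian $L$ equipped with $\C$-bundle to the matrix factorization obtained by counting immersed polygons in $\mathcal P$ bounded by $L$ and the Seidel Lagrangian $\bL$ with corners weighted by $x,y,z$ at $X,Y,Z$. Here one uses that $w'$ is a \emph{normal} loop word (Theorem \ref{thm:c}), so $L(w',\lambda',1)$ is in the ``standard position'' relative to $\bL$; by Theorem \ref{thm:MFFromLag} normality already guarantees the output is of the canonical form \eqref{eq:can}, so what remains is to identify the exponents and coefficients. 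Concretely, near the three punctures $L$ winds $l_i',m_i',n_i'$ times, and each strand of $L$ passing through one of the three ``sectors'' of $\mathcal P$ cut out by $\bL$ contributes one generator and two bigon/triangle contributions $z,x$ or $y$ whose powers are read off from how many times the neighbouring strands wrap; the holonomy $\lambda'\in\C^*$ enters as the monodromy around the single corner where $L$ closes up, and the sign bookkeeping of the non-trivial spin structure on $\bL$ (the $(-1)$-holonomy) converts $\lambda'$ back to $\lambda=(-1)^{-(w_1+w_4+\cdots+\tau)}\lambda'$. Evaluation $T=1$ is legitimate because $L(w',\lambda',1)$ is chosen to be a regular curve (Definition \ref{def:regular}, Lemma \ref{lem:Regular}), so only finitely many polygons contribute to each entry and Theorem \ref{thm:T=1Homotopy} applies; hence $\LocalF(L(w',\lambda',1))\cong\varphi(w',\lambda,1)$ in $\underline{\operatorname{MF}}(xyz)$.

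Finally I would assemble the commuting diagram. Statements (1) and (2) say that both $\operatorname{resolve}(M(w,\lambda,1))$ and $\LocalF(L(w',\lambda',1))$ are isomorphic, in $\underline{\operatorname{MF}}(xyz)$, to the \emph{same} explicit object $\varphi(w',\lambda,1)$; combined with Eisenbud's equivalence $\operatorname{coker}\colon\underline{\operatorname{MF}}(xyz)\xrightarrow{\ \simeq\ }\underline{\operatorname{CM}}(A)$ and the bijection of data in Theorem \ref{thm:c}, this is exactly the assertion that \eqref{dia:main} commutes on objects. Burban--Drozd's classification theorem and the classification of essential non-periodic loop classes (Proposition \ref{prop:normalnormal} and its corollary) guarantee that these objects exhaust the indecomposables on both sides, so the square is a bijection at the level of isomorphism classes.

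\textbf{Main obstacle.} I expect the hard part to be step (1): controlling the Macaulayfication combinatorially. The module $\tilde M(w,\lambda,1)$ is not maximal Cohen-Macaulay, and the set of Macaulayfying elements one must adjoin depends on the word $w$ in a way that is not uniform — this is precisely why Section \ref{sec:higherrank} is ``technical and lengthy.'' Matching the resulting presentation matrix entry-by-entry to \eqref{eq:can}, including getting the sign word $\delta(w)$, the correction word $\varepsilon(w)$, and the corner sign $(-1)^{w_1+w_4+\cdots+\tau}$ exactly right (rather than up to units, which would only give an abstract iso and not the stated canonical form), requires a careful case analysis according to the signs of consecutive entries of $w$ and the behaviour at strings of zeros. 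The geometric side (2), once normality and regularity are in place, is more mechanical: the polygon count is local near each puncture and the canonical shape is already known from Theorem \ref{thm:MFFromLag}, so the only subtlety there is the holonomy/spin-structure sign, which dovetails with the sign appearing in (1) and fixes the relation $\lambda'=(-1)^{w_1+w_4+\cdots+\tau}\lambda$.
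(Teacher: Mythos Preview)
Your proposal is correct and follows essentially the same approach as the paper: the proof of Theorem~\ref{thm:MainThoerem} simply defers (1) to Theorem~\ref{thm:MFFromModule} (Macaulayfication via generator diagrams, then free resolution) and (2) to Theorem~\ref{thm:MFFromLag} (polygon counting for the normal Lagrangian), exactly as you outline. Two small points of difference in execution: for (1) the paper does not argue by a ``local cyclic induction'' but instead builds the resolution in four explicit steps, using matrix-expansion and matrix-reduction lemmas (Lemmas~\ref{lem:MatrixExpansionLemma} and~\ref{lem:MatrixReductionLemma}) to enlarge by the extra generators $H_{\jmath_\nu}$ and the Macaulayfying elements $F_{\imath_\nu}$ and then shrink back to size $3\tau\times 3\tau$; and for (2) the Lagrangian-side matrix in Theorem~\ref{thm:MFFromLag} is not literally $\varphi(w',\lambda,1)$ but differs from it by sign changes in some off-diagonal entries (e.g.\ $y^{-m_i'}$ versus $-y^{-m_i'}$, and $(-x)^{l_i'-1}$ versus $x^{l_i'-1}$), and it is the diagonal sign isomorphism absorbing these that forces the relation $\lambda'=(-1)^{l_1+\cdots+l_\tau+\tau}\lambda$, rather than the spin structure on $\bL$ alone.
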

In particular, the image of the localized mirror functor $\mathcal{F}^\bL\big( L(w',\lambda',1)\big)$
  is always of the canonical form except for the Lagrangian defined from the loop word $(2, 2, \cdots, 2)$, which gives a matrix factorization homotopic to canonical form.
Here, we are using the definition that the loop word for a loop data is normal. For non-normal loop, the corresponding matrix factorization may not be of a canonical form.
   
\begin{cor}
There is  a one-to-one correspondence between indecomposable   Cohen-Macaulay modules
over $\C[[x,y,z]]/(xyz)$ of multiplicity one that are locally free on the punctured spectrum and  closed geodesics of hyperbolic pair of pants with three cusps under homological mirror symmetry.
\end{cor}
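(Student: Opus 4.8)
The plan is to obtain this correspondence purely formally, by composing bijections that are already in place, so that no new symplectic or homological input is needed beyond Theorems~\ref{thm:1}, \ref{thm:c} and~\ref{thm:intromain}, the Burban--Drozd classification quoted above, the Corollary identifying closed geodesics with non-periodic normal loop words, and the appendix results on normality and on the $T=1$ evaluation. First I would fix $\mu=1$ and read off from the Burban--Drozd classification (parts (1)--(3)) that isomorphism classes of indecomposable objects of $\operatorname{CM}^{\operatorname{lf}}(A)$ of multiplicity one are in bijection with pairs consisting of a cyclic-shift class of a non-periodic modified band word $w$ and an eigenvalue $\lambda\in\C^{*}$, periodic words being discarded as they give decomposable modules. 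Next, Theorem~\ref{thm:c} converts each non-degenerate band datum $(w,\lambda,1)$ bijectively into a non-degenerate loop datum $(w',\lambda',1)$ with $w'=w+\varepsilon(w)$ a normal loop word and $\lambda'=(-1)^{w_{1}+w_{4}+\cdots+w_{3\tau-2}+\tau}\lambda$, and this bijection intertwines cyclic shifts on the two sides; moreover the \emph{same} conversion formula already sends the degenerate band datum $((0,0,0),1)$ to the degenerate loop datum $((2,2,2),-1)$ (when $w\equiv 0$ there are no nonzero entries, so $\delta\equiv(1,1,1)$ and $\varepsilon(w)=(2,2,2)$), so there is in fact no separate case to handle. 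Finally, Proposition~\ref{prop:normalnormal} says that cyclic-shift classes of non-periodic normal loop words are exactly the essential (i.e.\ non-peripheral), non-periodic free homotopy classes in $[S^{1},\CP]$, and by the Corollary above these are precisely the classes realised by closed geodesics of the hyperbolic pair of pants with three cusps; recording the datum $\lambda'\in\C^{*}$ amounts to equipping such a geodesic with a flat $\C$-line bundle of that total holonomy. Composing these bijections (together with $\lambda\mapsto\lambda'$, which for fixed $w$ is a bijection of $\C^{*}$) already yields a bijection between the two sides of the statement.

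It then remains to see that this bijection is the one \emph{induced by homological mirror symmetry}, i.e.\ that it is realised by the functors of diagram~\eqref{dia:main}. Here I would invoke Theorem~\ref{thm:intromain}: its part~(2) identifies $\mathcal{F}^{\bL}\!\bigl(L(w',\lambda',1)\bigr)$ with the canonical matrix factorization $\varphi(w',\lambda,1)$ in $\underline{\operatorname{MF}}(xyz)$ (for the word $(2,\dots,2)$ only up to homotopy, which is still an isomorphism in the homotopy category), while its part~(1) identifies $\operatorname{coker}\underline{\varphi}(w',\lambda,1)$ with $M(w,\lambda,1)$ in $\underline{\operatorname{CM}}(A)$ via Eisenbud's equivalence. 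Hence the composite equivalence
\[
WF(\CP)\ \xrightarrow{\ \mathcal{F}^{\bL}\ }\ \underline{\operatorname{MF}}(xyz)\ \xrightarrow{\ \operatorname{coker}\ }\ \underline{\operatorname{CM}}(A),\qquad L(w',\lambda',1)\ \longmapsto\ \varphi(w',\lambda,1)\ \longmapsto\ M(w,\lambda,1),
\]
sends the Fukaya object representing a closed geodesic (with its chosen flat $\C$-bundle) to exactly the indecomposable Cohen--Macaulay module attached to the matching band datum. Surjectivity of $w\mapsto M(w,\lambda,1)$ onto all indecomposable $\operatorname{CM}^{\operatorname{lf}}(A)$-modules of multiplicity one is Burban--Drozd~(2), and injectivity on isomorphism classes is Burban--Drozd~(3), so the assignment is genuinely one-to-one.

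The part I expect to be most delicate is not any single estimate but the bookkeeping that keeps all the equivalence relations aligned along the chain, together with the well-definedness of the geometric side. One must check that a cyclic shift of $w$ forces a cyclic shift of $w'$ (immediate from the local nature of $\varepsilon(w)$), that a cyclic shift of a normal loop word is conjugation in $\pi_{1}(\CP)$ and hence preserves the free homotopy class and the underlying geodesic, and that isomorphism of Cohen--Macaulay modules, homotopy of matrix factorizations, and quasi-isomorphism of Fukaya objects correspond to one another under the two equivalences --- all routine given the cited results. For well-definedness, a closed geodesic is not literally an object of $WF(\CP)$: one uses the prescribed regular, normal representative $L(w',\lambda',1)$ of its free homotopy class (Definition~\ref{def:regular}) together with Theorem~\ref{thm:T=1Homotopy}, which ensures both that $L(w',\lambda',1)$ defines an object over $\C$ after evaluating $T=1$ and that its image under $\mathcal{F}^{\bL}$ is, up to homotopy, independent of the Lagrangian isotopy class of the representative --- so that ``the geodesic'' really does determine a well-defined module. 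I would close with the remark that the correspondence comes out clean precisely because the degenerate peripheral data are excluded on both sides: the parabolic classes $\alpha^{l},\beta^{m},\gamma^{n}$ winding around single cusps are exactly the non-normal loop words and never occur as loop data, and on the algebra side they correspond to string-type modules that fail to be locally free on the punctured spectrum, so they never enter the statement.
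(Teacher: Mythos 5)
Your proposal follows the same chain of bijections that the paper leaves implicit: Burban--Drozd classification to band data, the conversion formula of Theorem~\ref{thm:c} to loop data, Proposition~\ref{prop:normalnormal} together with the earlier corollary to closed geodesics, and Theorem~\ref{thm:intromain} to certify that the bijection is realized by the functors of diagram~\eqref{dia:main}. The bookkeeping about cyclic shifts, holonomies, and the choice of regular normal representative via Theorem~\ref{thm:T=1Homotopy} is all in order, and your reading that the ``closed geodesics'' must carry flat $\mathbb{C}$-line bundles is the intended one (the abstract and the statement of Theorem~1.4 make this explicit even though the corollary's wording suppresses it).

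Two points in your discussion deserve correction. First, your claim that there is ``no separate case to handle'' for the degenerate datum overstates what the combinatorial observation buys you. Your computation $\delta\equiv(1,1,1)$, $\varepsilon(w)=(2,2,2)$, $\lambda'=-1$ is right, so the conversion \emph{formula} does send $((0,0,0),1)$ to $((2,2,2),-1)$. But Theorem~\ref{thm:c} and Theorem~\ref{thm:intromain} are stated only for non-degenerate data, and Remark~\ref{rem:DegenerateLoopLength1} shows that the reduction of $\mathcal{F}^{\bL}\bigl(L((2,2,2),\lambda')\bigr)$ to canonical form explicitly requires $\lambda'\neq-1$. For $\lambda'=-1$ one must separately verify that the resulting $4\times4$ factorization is null-homotopic (matching $M((0,0,0),1,1)\cong A$, the zero object of $\underline{\operatorname{CM}}(A)$); the paper defers exactly this to the sequel (item~(3) of the remark following Theorem~\ref{thm:MainThoerem}). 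So the degenerate pair does require a separate homological argument, not just a combinatorial one.

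Second, the closing remark conflates two different exclusions and contains a genuine error. The ``degenerate'' datum in the paper's sense is $(2,2,2)$ with $\lambda'=-1$, which \emph{is} a hyperbolic (commutator) class realized by a closed geodesic --- it is not peripheral. The peripheral/parabolic classes $\alpha^l,\beta^m,\gamma^n$ are the non-essential ones with no normal representative, and these do not correspond to string-type modules: under $\mathcal{F}^{\bL}$ a small loop around a puncture, which can be arranged to miss $\bL$ entirely, is sent to the \emph{zero} matrix factorization. String data correspond instead to non-compact Lagrangian arcs between punctures (as the paper says in the remark after Theorem~\ref{thm:MainThoerem}), not to peripheral loops. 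The bijection is clean not because parabolics correspond to non-locally-free modules, but because parabolics are annihilated by the localized mirror functor and hence never produce an object of $\underline{\operatorname{CM}}(A)$ at all.
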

\begin{remark}
We conjecture that such a correspondence holds for other degenerate cusps as well if its mirror is
given by a hyperbolic orbi-surface. We check the corresponding statement for the singularity $x^3+y^2-xyz$
for rank one (hence multiplicity one) indecomposable modules (Lemma \ref{lem:32InfiniteGeodesic}).
\end{remark}

So far, we have considered the case of multiplicity one ($\mu=1$) (the notion of multiplicity was defined by Burban-Drozd).
For the band data with $\mu >1$, the corresponding Lagrangian object is not entirely geometric, and 
  only exists as a twisted complex in wrapped Fukaya category of $\mathcal{P}$.
In particular, this shows that the mirror of indecomposable Cohen-Macaulay modules for $xyz$ are not necessarily 
realized by geometric Lagrangians.  We postpone the discussion of the cases $\mu >1$ to the sequel.

There are another class of  indecomposable Cohen-Macaulay modules, namely the ones that are not locally free on the punctured spectrum.
Burban-Drozd  showed that such modules correspond to string data. String data should correspond under mirror symmetry to
non-compact Lagrangian immersions (connecting punctures).
We omit detailed discussion of them as it can be handled in a similar way.
The name of band and string data in Burban-Drozd came from
the properties of the corresponding combinatorial data, and under mirror symmetry they correspond to immersed circle and line respectively.
Therefore, the mirror Lagrangian also looks like a band ($S^1$) or a string ($\mathbb{R}$).

The homological mirror symmetry correspondence that we established in this paper can be used to study Cohen-Macaulay modules as well.
We will show how to transform some of the properties of Lagrangians to that of the modules in the sequel.

Lastly, we discuss another singularity $W = x^3+ y^2 -xyz$ among degenerate cusps.
Let us first explain the relation between $W$ and the mirror orbifold sphere $\mathbb{P}^1_{3,2,\infty}$ following \cite{CCJ}.
For the Fermat polynomial $F_{p,q} = x^{p} + y^{q}$ (assume that $p$ and $q$ are relatively prime), we denote by $M_{F_{p,q}}$ its Milnor fiber
(given by $x^p+y^q=1$). The maximal diagonal symmetry group $G_{F_{p,q}}= \Z_p\times \Z_q$ acts on the Milnor fiber  $M_{F_{p,q}}$ 
and the quotient space $M_{F_{p,q}}/G_{F_{p,q}}$ is an orbifold $\mathbb{P}^1_{p,q,\infty}$. 

 \begin{figure}[h]
\begin{subfigure}[t]{0.43\textwidth}
\raisebox{3ex}{\includegraphics[scale=0.55]{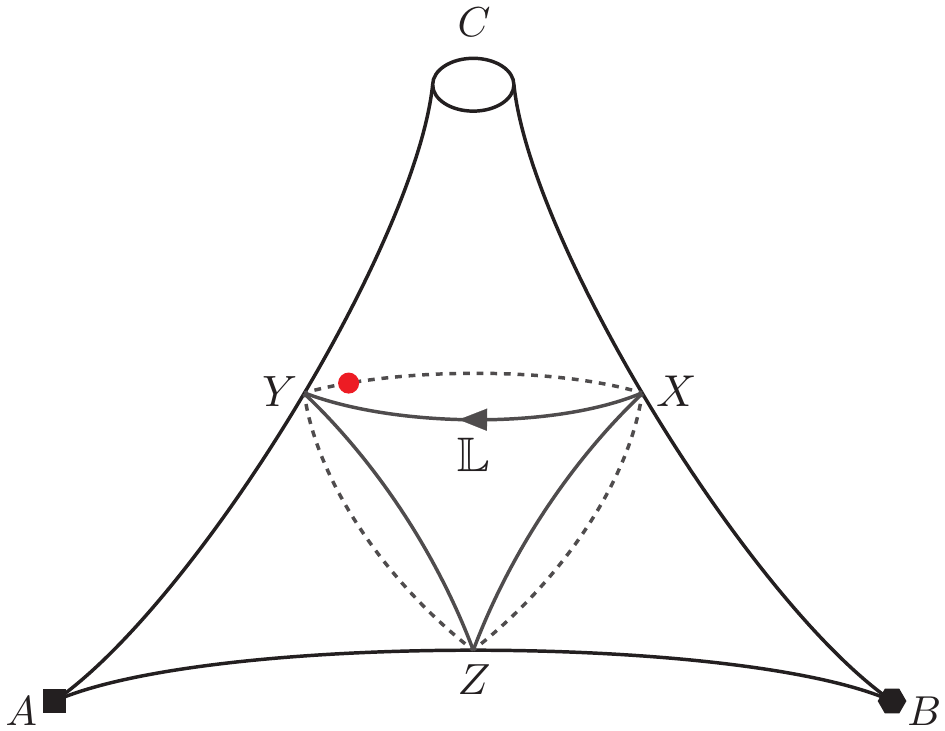}}
\centering
\caption{$M_{F_{3,2}}/G_{F_{3,2}}$ and the Seidel Lagrangian $\bL$}
\label{fig:Fquot}
\end{subfigure}
\begin{subfigure}[t]{0.43\textwidth}
\raisebox{4ex}{\includegraphics[scale=0.55]{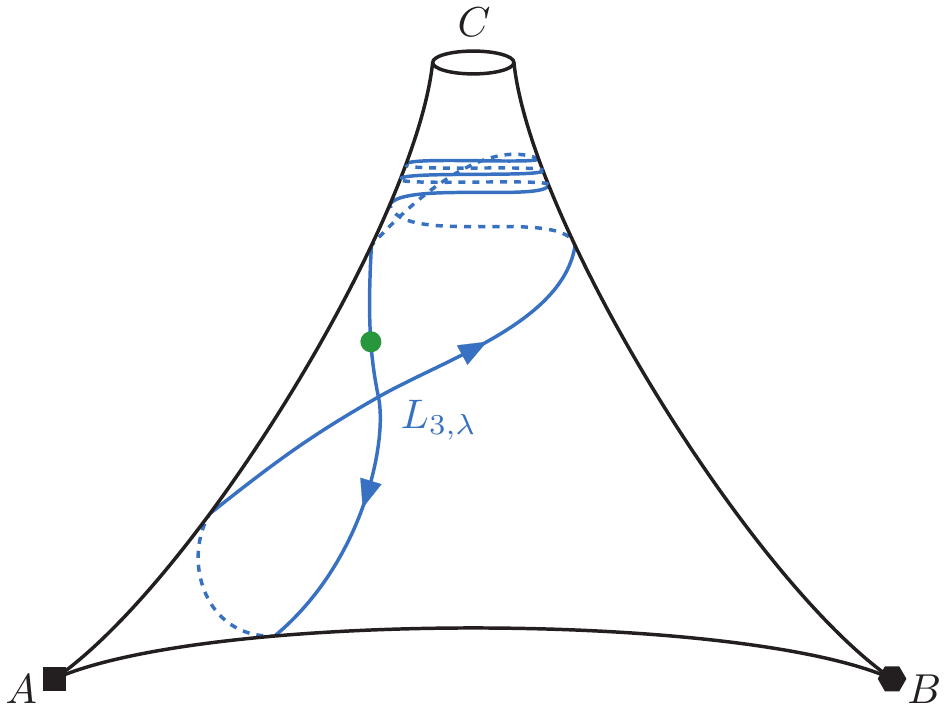}}
\centering
\caption{$L_{3,\lambda}$ in $M_{F_{3,2}}/G_{F_{3,2}}$}
\end{subfigure}
\centering
\caption{}
\label{fig:f32}
\end{figure}
For the Seidel Lagrangian $\bL$ in $\mathbb{P}^1_{p,q,\infty}$, the disc potential function $W_\bL$ (counting polygons with $X,Y,Z$-corners passing through a generic point) is nothing but $x^p+y^q-xyz$ (if we set $T=1$).
Localized mirror functor $$\mathcal{F}^\bL:\WF(M_{F_{p,q}}/G_{F_{p,q}}) \to \mathcal{MF}(x^p+y^q-xyz),$$
is shown to be a quasi-isomorphism. We refer readers to \cite{CCJ} for the chain and loop type case, and related Berglund-H\"ubsch homological mirror symmetry
 
 We first recall the following classification result of Burban and Drozd.

\begin{prop}[\cite{BD17} Proposition 5.3]\label{BD23}
For the ring $A = \C [[x,y,z]] / (x^3+y^2-xyz)$, the classification of maximal Cohen-Macaulay A-modules of rank one is the following.
\begin{enumerate}
\item There exist exactly one maximal Cohen-Macaulay $A$-module  of rank one which is not locally free on the punctured spectrum, 
given by the conductor ideal $I = \langle x, y \rangle \subset  A$.

\item A rank one maximal Cohen-Macaulay $A$-module which is locally free on the punctured spectrum, is either regular or has the following form:
\begin{itemize}
\item $I_{m,\lambda} = \langle x^{m+1}, yx^{m-1} + \lambda (xz - y)^{m} \rangle \subset A$
\item $J_{m,\lambda} = \langle x^{m+1}, y^{m} + \lambda x^{m-1} (xz - y) \rangle \subset A$
\end{itemize}
where $\lambda \in \C^{*}$ for $m \in \Z_{\geq 2}$  or  $\lambda \in \C^{*} \setminus \{ 1 \}$ for $m = 1$.
\end{enumerate}
\end{prop}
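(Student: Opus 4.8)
The plan is to run the Burban--Drozd reduction to a matrix problem, made completely explicit for this ring, and then match the resulting normal forms with the ideals in the statement.

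First I would pin down the normalization and the conductor. Setting $t:=y/x\in\operatorname{Frac}(A)$ and dividing $y^{2}=xyz-x^{3}$ by $x^{2}$ gives the monic relation $t^{2}=tz-x$, so $t$ is integral over $A$ and $x=t(z-t)$, $y=t^{2}(z-t)$. One checks that $\widetilde A:=A[t]\cong\C[[z,t]]$ is the normalization; in particular $\widetilde A$ is \emph{regular} of dimension $2$. The conductor $\mathfrak c=(\widetilde A:A)$ equals $\langle x,y\rangle$: the inclusion $\langle x,y\rangle\subseteq\mathfrak c$ is immediate from $xt=y$ and $yt=zy-x^{2}$, while the reverse inclusion follows because $z^{k}t\notin A$ for every $k\ge0$, which one sees by comparing lowest-degree homogeneous parts in $\C[[z,t]]$. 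Hence $A/\mathfrak c\cong\C[[z]]$ and $\widetilde A/\mathfrak c\cong R:=\C[[z,t]]/(t(z-t))$, a node with branches $V(t)$ and $V(z-t)$, and $A$ is the conductor pullback $A=\C[[z]]\times_{R}\widetilde A$. This already yields part (1): $\mathfrak c=\langle x,y\rangle\cong\operatorname{Hom}_{A}(\widetilde A,A)$ is maximal Cohen--Macaulay (since $\widetilde A$ is a maximal Cohen--Macaulay $A$-module, being regular), and it is not locally free on the punctured spectrum because at the generic point $\mathfrak p=\langle x,y\rangle$ of the singular line the local ring $A_{\mathfrak p}$ is non-regular and $\mathfrak c_{\mathfrak p}$ is its (non-free) maximal ideal; a short argument over the one-dimensional ring $A_{\mathfrak p}$ shows this is the only rank-one maximal Cohen--Macaulay module failing to be locally free on the punctured spectrum.

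Next I would reduce part (2) to linear algebra over the discrete valuation ring $\C[[z]]$. A rank-one maximal Cohen--Macaulay $A$-module is reflexive, hence isomorphic to a fractional ideal; since $\widetilde A$ is regular one may rescale it to an $A$-submodule $I\subseteq\widetilde A$ with $\widetilde A\,I=\widetilde A$ and $\mathfrak c\subseteq I$, and then $I$ is recovered from the $\C[[z]]$-submodule $\overline I:=I/\mathfrak c\subseteq R$ through the conductor pullback. Since $t^{k}=z^{k-1}t$ in $R$ for $k\ge1$, we have $R\cong\C[[z]]\oplus\C[[z]]\,t$ as a $\C[[z]]$-module, i.e.\ a free rank-two module over $\C[[z]]$ on which multiplication by $t$ is given by $\left(\begin{smallmatrix}0&0\\ 1&z\end{smallmatrix}\right)$, with eigenvalues $0$ and $z$, one per branch. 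Classifying the modules $I$ thus becomes: classify $\C[[z]]$-submodules $\overline I\subseteq\C[[z]]^{2}$ with $R\,\overline I=R$, modulo the residual action of $\widetilde A^{\times}$ on $R$, subject to the constraints imposed by "$I$ is maximal Cohen--Macaulay" and "$I$ is locally free on the punctured spectrum" (finite colength, and the correct behaviour along each branch). Over a discrete valuation ring a submodule of $\C[[z]]^{2}$ has elementary divisors $(z^{a},z^{b})$; after normalizing by $\widetilde A^{\times}$ the surviving moduli is a single scalar in $\C^{*}$ together with the choice of which branch carries the larger divisor. This produces exactly the two one-parameter families, with $m$ essentially the gap $|b-a|$ (equivalently $m+1$ a colength) and $\lambda\in\C^{*}$ the residual scalar, while the trivial normal form accounts for the regular module $A$.

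Finally I would match the normal forms with the stated generators and identify the exceptions. Using $xz-y=x(z-t)$ and $x=t(z-t)$ one rewrites $I_{m,\lambda}$ and $J_{m,\lambda}$ inside $\widetilde A$, verifies $\mathfrak c\subseteq I_{m,\lambda},J_{m,\lambda}$, computes the images $\overline{I_{m,\lambda}},\overline{J_{m,\lambda}}\subseteq\C[[z]]^{2}$, and identifies them with the normal forms of the previous paragraph; this simultaneously gives the non-isomorphisms $I_{m,\lambda}\not\cong I_{m',\lambda'}$, $J_{m,\lambda}\not\cong J_{m',\lambda'}$, and $I_{m,\lambda}\not\cong J_{m',\lambda'}$ unless the parameters agree, and shows that the sole degeneracy---where the module becomes isomorphic to one already on the list---occurs at $m=1,\ \lambda=1$, forcing the restriction $\lambda\in\C^{*}\setminus\{1\}$ in that case. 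I expect the genuine obstacle to be the middle step: verifying that the maximal Cohen--Macaulay and punctured-spectrum conditions translate into exactly the combinatorial constraints claimed, that the $\widetilde A^{\times}$-normalization leaves precisely one scalar $\lambda$, and that the list is complete without repetitions---this is where all of the decorated-bunch-of-chains bookkeeping of \cite{BD17} is concentrated. If desired, this step may simply be invoked from \cite{BD17}, leaving only the explicit normalization, conductor, and generator computations of the first and last steps to be carried out here.
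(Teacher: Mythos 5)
The paper does not prove this statement: it is quoted verbatim from \cite{BD17} (Proposition 5.3), and immediately afterwards the paper only records, in one sentence, that Burban--Drozd work via the normalization $R=\C[[u,v]]$ with $u=y/x$, $v=(xz-y)/x$. Your proposal reconstructs exactly that approach: your $t$ is their $u$ and your $z-t$ is their $v$, the conductor square $A=\C[[z]]\times_{\widetilde A/\mathfrak c}\widetilde A$ is the Milnor-square reduction underlying their category $\Tri(A)$, and the elementary-divisor normal form over the DVR $\C[[z]]$ is precisely the decorated-chain bookkeeping. So you are not taking a different route; you are re-deriving the cited result along the same lines.

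Two caveats worth flagging. First, your claim in part (1) that ``a short argument over the one-dimensional ring $A_{\mathfrak p}$ shows this is the only one'' is doing real work: $A_{\mathfrak p}$ is a non-henselian $A_1$-type curve singularity, and while its henselization has three indecomposable maximal Cohen--Macaulay modules (the ring and the two branch ideals), one must argue that only one non-free one descends and that it is exactly $\mathfrak c_{\mathfrak p}$; the Burban--Drozd string/band dichotomy supplies this, but your sketch leaves it implicit. Second, you correctly identify that the hard content --- translating ``maximal Cohen--Macaulay, locally free on the punctured spectrum'' into the precise combinatorial constraints on the pair of elementary divisors and the residual scalar, and showing the list has no repetitions beyond the $(m,\lambda)=(1,1)$ degeneracy where $I_{1,1}=J_{1,1}=x\langle x,z\rangle$ collapses onto something already present --- is exactly what is packed into \cite{BD17}; deferring it is reasonable given that the paper itself does the same, but the proposal as written is a strategy, not a complete proof.
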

Burban-Drozd considered the normalization $R = \C[[u,v]]$ of $A$, where $u = \frac{y}{x}, v=\frac{xz-y}{x}$
and found exact relations between $A$-modules and $R$-modules for the classification.
 
Let us discuss homological mirror symmetry for these modules. The module corresponding to the conductor ideal $I$ is given by a non-compact Lagrangian  (see Figure \ref{fig:A2K}).
We also find immersed Lagrangians that correspond to the remaining rank one maximal Cohen-Macaulay modules, which are locally free on the punctured spectrum.
\begin{thm} \label{thm:J} 
Let $L_{m,\lambda}$ be an immersed curve in $\mathbb{P}^1_{3,2,\infty}$ that wind the puncture $m$ times and wind the $\Z/3$-orbifold point once as in Figure \ref{fig:f32}(b), equipped complex line bundle with holonomy $\lambda$.
Then, under the localized mirror functor $\cF^{\bL}$, we have the mirror symmetry correspondences
$$\begin{cases}
L_{m,\lambda} & \mapsto J_{m,-\lambda}, \\
\overline{L}_{m,\lambda} &\mapsto I_{m,-\frac{1}{\lambda}}
\end{cases}$$
where $\overline{L}_{m,\lambda}$ is the orientation reversal of $L_{m,\lambda}$ (Theorem \ref{thm:32InfiniteCorrespondence}).
\end{thm}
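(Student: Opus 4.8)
The plan is to compute the image $\cF^{\bL}(L_{m,\lambda})$ directly as a matrix factorization of $W = x^3+y^2-xyz$ over $\C[[x,y,z]]$, and then identify its cokernel with the ideal $J_{m,-\lambda}$ (resp. $I_{m,-1/\lambda}$) as an $A$-module via Eisenbud's correspondence. First I would fix a concrete model of the orbifold $\mathbb{P}^1_{3,2,\infty} = M_{F_{3,2}}/G_{F_{3,2}}$ together with the Seidel Lagrangian $\bL$ with its $(-1)$-holonomy/nontrivial spin structure, exactly as in Figure \ref{fig:Fquot}, and realize $L_{m,\lambda}$ as the explicit immersed curve of Figure \ref{fig:f32}(b): winding $m$ times around the puncture and once around the $\Z/3$-orbifold point, decorated by the flat line bundle of holonomy $\lambda$. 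The functor $\cF^{\bL}$ is the curved Yoneda embedding with respect to the weak bounding cochain $b = xX+yY+zZ$, so $\cF^{\bL}(L_{m,\lambda})$ is the $\Z/2$-graded free module $CF(L_{m,\lambda},\bL)$ equipped with the differential $m_1^{b}(\cdot) = \sum_k m_{k+1}(b,\dots,b,\cdot)$, which in this surface (orbifold) setting reduces to a signed, holonomy-weighted count of immersed polygons with corners at $X,Y,Z$ on $\bL$ and one input/output corner on $L_{m,\lambda}$.

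The core computation is therefore combinatorial: enumerate the intersection points $L_{m,\lambda}\cap\bL$ (these will be grouped in a pattern dictated by the $m$-fold winding around the puncture and the single winding around the orbifold point) and, for each ordered pair, list the rigid immersed triangles/polygons bounded by the two curves passing through the required $X,Y,Z$ corners, tracking (i) the area exponent of $T$ — which we set to $1$ once we check the Lagrangian is regular in the sense of Definition \ref{def:regular}, so that no infinite sums occur — (ii) the orientation sign from the spin structures, and (iii) the holonomy factor, a monomial in $\lambda^{\pm 1}$ coming from how many times the polygon boundary traverses $L_{m,\lambda}$. Assembling these into a matrix $\varphi$ (with its partner $\psi$ from the adjugate, as in Corollary \ref{cor:Psi}), I expect to land on a $\mu=1$ canonical form of the type in Theorem \ref{thm:1}, adapted to the orbifold potential $x^3+y^2-xyz$; the exponents $3,2$ enter through the $\Z/3$- and $\Z/2$-orbifold corners, while the integer $m$ controls the size/period of the block and the powers of $x$. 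For the orientation-reversed curve $\overline{L}_{m,\lambda}$ one reruns the same polygon count with reversed boundary orientation, which swaps the roles of $\varphi$ and $\psi$ (up to the standard shift) and inverts the holonomy weight, producing the matrix factorization whose cokernel is $I_{m,-1/\lambda}$; this explains the asymmetric appearance of $-\lambda$ versus $-1/\lambda$ in the statement.

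The final step is the algebraic identification. Having the explicit $\varphi = \varphi(w',\lambda,1)$-type matrix, I would compute $\cok\underline{\varphi}$ presentation-theoretically and match it with the generators $\langle x^{m+1},\, y^{m}+\lambda x^{m-1}(xz-y)\rangle$ of $J_{m,-\lambda}$ — here it is cleanest to pass through Burban–Drozd's normalization $R=\C[[u,v]]$, $u=y/x$, $v=(xz-y)/x$, and recognize the cokernel of $\varphi$ as the $A$-module whose $R$-module data matches the one attached to $J_{m,-\lambda}$ in Proposition \ref{BD23}; then Eisenbud's theorem upgrades the matrix-factorization-level isomorphism to $\underline{\CM}(A)$. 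The sign normalization $\lambda \mapsto -\lambda$ is forced by the $(-1)$-holonomy on $\bL$ and should fall out of bookkeeping the spin signs in the triangle count, parallel to the factor $(-1)^{w_1+\cdots+\tau}$ in Theorem \ref{thm:c}.

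\medskip

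The main obstacle I anticipate is the polygon enumeration near the two orbifold points: immersed discs are allowed to cover the $\Z/3$- and $\Z/2$-points, and one must correctly account for orbifold sectors (fractional angles) and for discs of higher multiplicity there, which is precisely where spurious contributions or sign errors tend to creep in; relatedly, verifying regularity of $L_{m,\lambda}$ (so that $T=1$ is legitimate and Lagrangian isotopy invariance of Theorem \ref{thm:T=1Homotopy} applies, possibly after the small-perturbation trick of Lemma \ref{lem:Regular}) must be done with some care. Once the polygon count is pinned down, matching with Proposition \ref{BD23} via the normalization $R$ is essentially a routine, if slightly lengthy, computation.
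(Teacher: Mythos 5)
Your plan for $L_{m,\lambda}$ tracks the paper's: count holonomy-weighted polygons to get an explicit $2\times 2$ matrix factorization (the paper's Proposition~\ref{mirror32}), then identify its cokernel with $J_{m,-\lambda}$. Two cautions on details. First, the resulting matrix entries are nothing like the clean canonical form of Theorem~\ref{thm:1}, which is specific to $xyz$; for $x^3+y^2-xyz$ the entries involve binomial sums and a unit denominator $(1-r)$ coming from a geometric series of polygons wrapping the orbifold point, and there is no canonical-form shortcut to lean on. Second, the paper identifies $\cok\underline{P_1}$ with $J_{m,-\lambda}$ not via the normalization $R=\C[[u,v]]$ as you suggest, but elementarily (Lemma~\ref{lem:J}): one notes that the two generators of $J_{m,-\lambda}$ form a regular sequence in $\C[[x,y,z]]$, so there is exactly one nontrivial relation modulo $W$, and one checks by a direct (if messy) computation that that relation is precisely a column of $P_1$. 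Your $R$-route is a legitimate alternative but would require reconstructing the $\mathrm{Tri}(A)$-data from a $2\times 2$ matrix, which is not less work.

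For $\overline{L}_{m,\lambda}$ you propose re-running the polygon count with reversed orientation. The paper deliberately avoids this ("due to computational difficulty") and instead invokes a general structural fact — orientation reversal of a Lagrangian is sent by $\cF^{\bL}$ to the shift, i.e.\ the Auslander--Reiten translation, of the mirror (Lemma 10.2 of \cite{CCJ}) — and then proves algebraically (Proposition~\ref{prop:AR}) that $J_{m,\lambda}[1]\simeq I_{m,1/\lambda}$, by exhibiting presentations of both as ideals whose periodic resolutions use the same pair of matrices $(P_1,P_2)$ in swapped order. Your intuition that reversal "swaps $\varphi$ and $\psi$ and inverts the holonomy" is exactly the content of that lemma, so you have the right idea, but implementing it by a fresh disc count is strictly harder than citing the AR-translation fact and doing the purely commutative-algebra check. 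If you do go the re-counting route, be aware that the holonomy inversion $\lambda\mapsto 1/\lambda$ does not simply drop out: all the geometric series denominators change, so you would be verifying a nontrivial identity between two infinite sums rather than just flipping two matrices.
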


Here is the brief outline of the paper.
In Section 2, we give a brief introduction to algebraic preliminaries for geometry oriented readers and recall Burban-Drozd's work on
the classification of Cohen-Macaulay modules of degenerate cusps.
In Section 3, we give a brief introduction to Fukaya category of a surface and localized mirror functor.
In Section 4, we introduce the generator diagram for Macaulayfications of modules, and use it to 
find explicit rank one Cohen-Macaulay modules as well as their corresponding matrix factorizations 
from Burban-Drozd's combinatorial data. In this process, we find rank one conversion formula from the
band data to the loop data.
In Section 5, we define immersed Lagrangians for the rank one cases and introduce the notion of normal loop word
to find the unique representative in each homotopy class, and also for the computation of mirror functor.
For the normal rank one immersed Lagrangians,  we find the corresponding matrix factorizations under the localized mirror functor.
In Section 6, we compare the results of previous two sections to establish homological mirror symmetry for rank one modules.
In Section 7, we define general loop data, and modified version of band data of Burban-Drozd for better mirror comparison.
In Section 8, we establish general conversion formula, and an explicit inverse conversion formula, and state our main theorem.
In Section 9, we use generator diagram to perform Macaulayfication of modules from all band data of multiplicity one. As a result,
we find the canonical form of the corresponding matrix factorizations.
In Section 10, we define general immersed Lagrangians in pair of pants, and compute the mirror matrix factorizations  under the localized mirror functor.
In Section 11, we discuss the singularity $x^3+y^2-xyz$ and find Lagrangians corresponding to rank one maximal
Cohen-Macaulay modules that are classified by Burban-Drozd.
In Appendix A, we prove that normal condition on loop word indeed gives a unique representative in each essential homotopy class.
In Appendix B, we prove that localized mirror functor (after the evaluation $T=1$) takes isotopic Lagrangians to homotopic matrix factorizations.

\subsection{Acknowledgement}
We would like to thank Igor Burban and Yuriy Drozd for their interests and encouragements. We would like to thank Youngjin Bae
for the helpful discussion on Reidemeister moves.

%



\section{Preliminaries on Algebra}
In this section, we give a gentle introduction to the necessary algebraic notions for geometry minded readers.
We also recall the work of Burban and Drozd from \cite{BD17} on the  classification of maximal Cohen-Macaulay modules over degenerate cusps. They introduced the notions of {\em  bunch of decorated chains}, their category of representations $\text{rep}(\fxx_A)$
and another equivalent category  $\Tri(A)$.  They showed that locally free maximal Cohen-Macaulay modules for $xyz$ are
classified by a band data, which in turn produces an element of $\text{rep}(\fxx_A)$ or $\Tri(A)$.

\subsection{Algebraic preliminaries}

%
%

Recall that for a local ring $(A, \fm)$, the inverse system $(A/\fm^n)$ gives the {\em completion} $\hat{A} = \varprojlim(A/\fm^n)$ together with the canonical map $\iota_A : A\rightarrow \hat{A}$ induced from the canonical projection $A\rightarrow A/\fm^n$ for each $n\in\bzz_{>0}$. 
A local ring $(A, \fm)$ is said to be {\em complete} if the canonical map $\iota_A$ is an isomorphism. We are interested in a complete reduced Noetherian local ring with the residue field $\bcc$. For example, the power series ring $\bcc[[x_1, \cdots, x_n]]$ and its quotient ring $\bcc[[x_1, \cdots, x_n]]/I$, where $I$ is a radical ideal, are complete reduced local ring with the maximal ideal $(x_1, \cdots, x_n)$. Note that each of them has the residue field $\bcc$. Also we recall some notion of dimension.

\begin{defn}
        The {\em  Krull dimension} $\dim_{Kr}(A)$ of a local ring $(A, \fm)$ is defined as the maximal length of chains of prime ideals.
\end{defn}

For example, the Krull dimension of $\bcc[[x]]$ is $1$ since it has only one non-zero prime ideal $(x)$. More generally, the Krull dimension of the power series ring $\bcc[[x_1, \cdots, x_n]]$ is $n$. The following example is our main object.


\begin{example}
        The Krull dimension of $A = \bcc[[x, y, z]]/(xyz)$ is $2$. Indeed, suppose that $\fp_0\subsetneq \fp_1 \subsetneq \cdots \subsetneq \fp_r$ be a maximal chain of prime ideals of $A$. Then, since $xyz=0$ in $A$, one of $x, y$, and $z$ should be in $\fp_0$. Without loss of generality, we may assume $x\in \fp_0$. Then the chain can be viewed as a chain of prime ideals of $A/(x)\cong \bcc[[y, z]]$. Hence $\dim_{Kr}(A) = \dim_{Kr}(\bcc[[y, z]]) = 2$.
\end{example}

There is also a notion of algebraic dimension,  {\em  depth}. 
\begin{defn}
        Let $(A, \fm)$ be a local ring. A sequence of elements $(f_1, \cdots, f_r)$ is said to be {\em  regular} if the following are satisfied.
        \begin{itemize}
                \item $f_1$ is neither a unit nor a zero divisor in $A$.
                \item For each $i \geq 2$, $f_i$ is neither a unit nor a zero divisor in $A/\left(f_1, \cdots, f_{i-1}\right)$.
        \end{itemize}
        The {\em  depth} of $A$ is the maximal length of regular sequences of $A$ and is denoted by $\dep(A)$.
\end{defn}

There is a ring whose Krull dimension and depth are not equal. If the Krull dimension and depth of a ring are the same, the ring is called a Cohen-Macaulay ring.
\begin{defn}
        A local ring $(A, \fm)$ is said to be {\em  Cohen-Macaulay} if $\dim_{Kr}(A)=\dep(A)$.
\end{defn}

\begin{example}
        For example, a sequence $(x+y+z, y-z)$ in $\bcc[[x, y, z]]/(xyz)$ is regular. Hence $\dep(A)\geq 2$. It is known that $\dep(A)\leq\dim_{Kr}(A)$. Hence $\dep(A) = 2$. Thus the ring $\bcc[[x, y, z]]/(xyz)$ is Cohen-Macaulay.
\end{example} 

\begin{remark}
        It is known that a local ring $(A, \fm)$ with a residue field $\bcc$ is Cohen-Macaulay if and only if $\ext^i_A(\bcc, A)=0$ for $i<\dep(A)$.
\end{remark}

\begin{defn}
        Let $A$ be a local ring with a residue field $\bcc$. A finitely generated $A$-module $M$ is said to be {\em  maximal Cohen-Macaulay} if $\ext^i_A(\bcc, M)=0$ for $i<\dep(A)$.
\end{defn}

For a ring $A$, denote by $\text{Mod}(A)$ the category of $A$-modules. Denote by $\CM(A)$ the full subcategory of $\text{Mod}(A)$ whose objects are maximal Cohen-Macaulay modules.

We are interested in the case that $A$ is a non-isolated singularity. 
\begin{defn}
        Let $(A, \fm)$ be a local ring with a residue field $\bcc$ and $\fp$ be a prime ideal of $A$ so that there is an associated local ring $A_\fp$ with a maximal ideal $\fp A_\fp$. The prime ideal $\fp$ is said to be {\em  regular} if $\dim_\bcc \fm_\fp/\fm_\fp^2 = \dim_{Kr} A_\fp$. Otherwise, it is said to be {\em  singular}.
        
        Suppose that in a local ring $(A, \fm)$, the maximal ideal $\fm$ is singular. The ring $(A, \fm)$ is called an {\em  isolated singularity} if every non-maximal prime ideal is regular. Otherwise it is called a {\em  non-isolated singularity}.
\end{defn}

\begin{example}
        The maximal ideal $\fm = (x, y, z)$ of the ring $A = \bcc[[x, y, z]]/(xyz)$ is singular because $$\dim_\bcc \fm/\fm^2 = \dim_\bcc ((x, y, z)/(xyz))/((x^2, y^2, z^2, xy, yz, xz)/(xyz)) = \dim_\bcc (x, y, z)/(x^2, y^2, z^2, xy, yz, xz) = 3,$$ while $\dim_{Kr} A_\fm = \dim_{Kr} A = 2$.
        
        The ideal $\fp = (x, y)$ is non-maximal prime ideal, but is singular. Indeed, $$\dim_\bcc \fp/\fp^2 = \dim_\bcc (x, y)/(x^2, xy, y^2) = 2,$$ while $\dim_{Kr} A_\fp = 1$. Hence the ring $\bcc[[x, y, z]]/(xyz)$ is non-isolated singularity.
\end{example}

\subsection{The category $\Tri(A)$}

Let $A$ be a ring. Then the set of all non zero-divisors $S$ becomes a multiplicative set. The localization of $A$ with respect to $S$, which is denoted by $Q(A)$, is called the {\em  total ring} of $A$.
\begin{defn}\label{def:AlgebraSetting}
        Let $A$ be a ring and $Q(A)$ be the total ring of $A$. The {\em  normalization} of $A$ is the integral closure of $A$ in $Q(A)$. Let $R$ be the normalization of $A$. The {\em  conductor ideal} of $R$ is defined as $$\text{ann}(R/A) = \{r\in R : rR\subseteq A\}.$$
\end{defn}

From now on, let $(A, \fm)$ be a reduced complete Cohen-Macaulay ring of Krull dimension two which is non-isolated singularity. Also, we denote by $\overline{A}, \overline{R}$ the quotient ring $A/I$ and $R/I$, where $I$ is the conductor ideal of the normalization $R$ of $A$. 

\begin{example}
        The ring $A=\bcc[[x, y, z]]/\left(xyz\right)$ is a reduced complete Cohen-Macaulay ring of Krull dimension two. Let $R$ be a product ring $\bcc[[x_1, y_1]]\times \bcc[[y_2, z_2]] \times \bcc[[z_3, x_3]]$ and consider an embedding $\iota : A\rightarrow R$ which is given by $\iota(x) = x_1+x_3, \iota(y) = y_1+y_2, \iota(z) = z_2+z_3$. Then $R$ is integral over $\iota(A)$. For example, $x_1$ is a root of a monic polynomial $t^2-(x_1+x_3)t = 0$, where $x_1+x_3 = \iota(x)\in \iota(A)$. Since each component of $R$ is integrally closed, $R$ is a normalization of $A$.
        
        The conductor ideal $I$ is, as an $A$-module, $\left(xy, yz, xz\right)$. As an $R$-module, it is given by $\left(x_1y_1, y_2z_2, x_3z_3\right)$.      Therefore, $\overline{A} = A/I = \bcc[[x]]\times\bcc[[y]]\times\bcc[[z]]$ and $Q(\overline{A}) = \bcc((x))\times \bcc((y)) \times \bcc((z))$. Similarly, we have
        $$Q(\overline{R}) = \bcc((x_1))\times\bcc((x_3))\times\bcc((y_1))\times \bcc((y_2)\times \bcc((z_2))\times \bcc((z_3)).$$
\end{example}

We recall  the category $\Tri(A)$ introduced in \cite{BD17} which is  equivalent to $\CM(A)$.
\begin{defn}
        An object of $\mathrm{Tri}(A)$ is a triple $(\tilde{M}, V, \theta)$ consisting of the following data.
        \begin{itemize}
                \item A maximal Cohen-Macaulay $R$-module $\tilde{M}$.
                \item A finitely generated $Q(\overline{A})$-module $V$.
                \item A surjective $Q(\overline{R})$-module homomorphism $$\theta : Q(\overline{R})\otimes_{Q(\overline{A})} V \rightarrow Q(\overline{R})\otimes_R\tilde{M}$$ such that the following induced $Q(\overline{A})$-module homomorphism is injective: $$V\rightarrow Q(\overline{R})\otimes_{Q(\overline{A})} V \xrightarrow{\theta} Q(\overline{R})\otimes_R\tilde{M}$$
        \end{itemize}
        A morphism between two triples $(\tilde{M}, V, \theta)$ and $(\tilde{M}', V', \theta')$ is a pair of morphisms $(\varphi : \tilde{M}\rightarrow\tilde{M}', \psi : V\rightarrow V')$ with a suitable commutative diagram.
\end{defn}

\begin{example}\label{example:xyzTri}
        
        The category $\Tri(A)$ for $A=\bcc[[x, y, z]]/(xyz)$ is computed as follows. It is known that any maximal Cohen-Macaulay $R$-module $\tilde{M}$ is of the form $\bcc[[x_1, y_1]]^a\times \bcc[[y_2, z_2]]^b \times \bcc[[z_3, x_3]]^c$ for some nonnegative integers $a, b, c$ and a finitely generated $Q(\overline{A})$-module $V$ is of the form $\bcc((x))^d\times \bcc((y))^e\times\bcc((z))^f$ for nonnegative integers $d, e, f$. Thus the morphism $\theta : Q(\overline{R})\otimes_{Q(\overline{A})} V \rightarrow Q(\overline{R})\otimes_R\tilde{M}$ is written as 
        \begin{align*}
                \theta : &\bcc((x_1))^d\times \bcc((x_3))^d\times\bcc((y_1))^e\times \bcc((y_2))^e\times \bcc((z_2))^f\times \bcc((z_3))^f \\
                \rightarrow &\bcc((x_1))^a\times\bcc((x_3))^c\times\bcc((y_1))^a\times\bcc((y_2))^b\times \bcc((z_2))^b\times\bcc((z_3))^c.
        \end{align*}
        This morphism is the direct sum of six morphisms \begin{align*}
                &\theta^x_1 : \bcc((x_1))^d\rightarrow \bcc((x_1))^a, \quad \theta^x_3 : \bcc((x_3))^d \rightarrow \bcc((x_3))^c \\
                &\theta^y_1 : \bcc((y_1))^e\rightarrow \bcc((y_1))^a, \quad \theta^y_2 : \bcc((y_2))^e \rightarrow \bcc((y_2))^b \\
                &\theta^z_2 : \bcc((z_2))^f \rightarrow \bcc((z_2))^b, \quad \theta^z_3 : \bcc((z_3))^f \rightarrow \bcc((z_3))^c.
        \end{align*}
        Hence the morphism $\theta$ can be identified with a collection of six matrices $\Theta^x_1, \Theta^x_3, \Theta^y_1, \Theta^y_2, \Theta^z_2,$ and $\Theta^z_3$ over a field $\bcc((t))$      satisfying the following conditions : 
        \begin{itemize}
                \item each matrices $\Theta^x_1, \Theta^x_3, \Theta^y_1, \Theta^y_2, \Theta^z_2$ and $\Theta^z_3$ have full row-rank.
                \item induced matrices $\begin{pmatrix} \Theta^x_1 \\ \Theta^x_3 \end{pmatrix}, \begin{pmatrix} \Theta^y_1 \\ \Theta^y_3 \end{pmatrix}$ and $ \begin{pmatrix} \Theta^z_2 \\ \Theta^z_3 \end{pmatrix}$ have full column-rank.
        \end{itemize}
        These data may be illustrated as in the diagram below.
        \begin{center}
                \begin{tikzcd}[arrow style=tikz,>=stealth,row sep=4em]
                        & \circ x \arrow[rr, "\theta^x_3"] \arrow[ld, "\theta^x_1"'] & & \bullet \text{3} &  \\
                        \bullet \text{1} & & & & \circ z \arrow[lu, "\theta^z_3"'] \arrow[ld, "\theta^z_2"] \\
                        & \circ y \arrow[lu, "\theta^y_1"] \arrow[rr, "\theta^y_2"'] & & \bullet \text{2} & 
                \end{tikzcd}
        \end{center}
\end{example}

\subsection{Categorical equivalence between $\CM\boldsymbol{(A)}$ and $\Tri\boldsymbol{(A)}$}\label{sec:maci}
In this subsection, we recall a construction of a functor from the category $\CM(A)$ to $\Tri(A)$ introduced in \cite{BD17}, which gives a categorical equivalence.

The first step is the {\em  Macaulayfication}. This is a functor $(-)^\dagger$ from $A$-mod to $\CM(A)$. The following definition is borrowed from \cite{HB93}.

\begin{prop}\cite{HB93} For a Noetherian local Cohen-Macaulay ring $A$ the following hold.
        \begin{enumerate}
                \item There is a unique, up to isomorphism, $A$-module $K_A$ called the {\em  canonical module} such that $$\ext^i_A(\bcc, K_A) = \begin{cases} 0 \text{ if } i\neq \dim_{Kr}(A)\\ \bcc \text{ if } i=\dim_{Kr}(A)\end{cases}.$$
                \item For any finitely generated module $M$, the module $M^\dagger := \mathrm{Hom}_A(\mathrm{Hom}_A(M, K_A), K_A)$ is a maximal Cohen-Macaulay module over $A$ and  $M^\dagger$ is called the Macaulayfication of $M$. This gives a functor from $A$-mod to $\CM(A)$
        \end{enumerate}
\end{prop}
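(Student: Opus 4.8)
The plan is to reduce to the case where $A$ is a quotient of a regular local ring and then to build the canonical module by a change-of-rings $\ext$. Since every ring considered here is complete, Cohen's structure theorem lets me write $A = S/I$ with $(S,\fn)$ regular local (a power series ring over $\bcc$) of Krull dimension $n$; set $d := \dim_{Kr}(A)$ and $c := n - d$. Because $A$ is Cohen--Macaulay, $I$ is a perfect ideal of height $c$, so $\mathrm{pd}_S A = c$ and $\ext^q_S(A, S) = 0$ for every $q \neq c$. I would then \emph{define}
$$
K_A := \ext^c_S(A, S),
$$
which is a finitely generated $A$-module (it is annihilated by $I$ and is a subquotient of a finite free $S$-module). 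I should note that for a general Noetherian local Cohen--Macaulay ring the existence of a canonical module is an extra hypothesis, equivalent to being a quotient of a Gorenstein ring; this is exactly where completeness of $A$ is used.

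For part (1), I would invoke the change-of-rings spectral sequence for $S \twoheadrightarrow A$, namely $E_2^{p,q} = \ext^p_A\big(\bcc, \ext^q_S(A, S)\big) \Longrightarrow \ext^{p+q}_S(\bcc, S)$, which comes from the adjunction $\Hom_S(-, -) = \Hom_A(-, \Hom_S(A, -))$ together with the fact that $\Hom_S(A, -)$ carries injectives to injectives. The vanishing of $\ext^q_S(A,S)$ for $q \neq c$ collapses it to $\ext^p_A(\bcc, K_A) \cong \ext^{p+c}_S(\bcc, S)$ for all $p$. Resolving $\bcc$ over the regular ring $S$ by the Koszul complex on a regular system of parameters and using its self-duality gives $\ext^j_S(\bcc, S) = 0$ for $j \neq n$ and $\ext^n_S(\bcc, S) \cong \bcc$; hence $\ext^p_A(\bcc, K_A)$ vanishes unless $p + c = n$, i.e. $p = d$, where it equals $\bcc$. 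This is precisely the stated property, and it shows $K_A \neq 0$. For uniqueness I would run the same collapsed spectral sequence for an arbitrary finitely generated $A$-module $M$, getting $\ext^{d-i}_A(M, K_A) \cong \ext^{n-i}_S(M, S)$, and combine this with local duality over the regular ring $S$ (that is, $\ext^{n-i}_S(M, S) \cong H^i_{\fm}(M)^{\vee}$ with $(-)^{\vee} = \Hom_A(-, E_A(\bcc))$) to obtain Grothendieck local duality $H^i_{\fm}(M)^{\vee} \cong \ext^{d-i}_A(M, K_A)$. Specializing to $M = A$ and $i = d$ yields $K_A \cong H^d_{\fm}(A)^{\vee}$, an object intrinsic to $A$, so any module with the stated $\ext$-characterization is isomorphic to it. (Equivalently, the $\ext$-property forces $K_A$ to be maximal Cohen--Macaulay of finite injective dimension and type one, whose uniqueness is classical.)

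For part (2), set $(-)^{*} := \Hom_A(-, K_A)$, so $(-)^\dagger = (-)^{**}$. Given a free presentation $A^a \to A^b \to M \to 0$ of a finitely generated $M$, applying $(-)^{*}$ yields $0 \to M^{*} \to K_A^b \to K_A^a$; writing $C \subseteq K_A^a$ for the image and applying the depth lemma to $0 \to M^{*} \to K_A^b \to C \to 0$ (using $\dep K_A = d$ and $\dep C \geq 1$) gives $\dep M^{*} \geq \min(d, 2)$. In the case relevant to this paper, $d = 2$, this already makes $M^{*}$ maximal Cohen--Macaulay; and since $(-)^{*}$ restricts to an exact contravariant self-duality of $\CM(A)$ — for maximal Cohen--Macaulay $N$ one has $\ext^{>0}_A(N, K_A) \cong H^{<d}_{\fm}(N)^{\vee} = 0$ together with the biduality $N \xrightarrow{\sim} N^{**}$ — it follows that $M^\dagger = (M^{*})^{*}$ is again maximal Cohen--Macaulay. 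As $(-)^\dagger$ is the composite of the contravariant functor $\Hom_A(-, K_A)$ with itself, it is a covariant functor from $A\text{-mod}$ to $\CM(A)$ preserving identities and compositions. For arbitrary Krull dimension the same argument works after cutting by a maximal $A$-regular sequence, over which $K_A$ specializes to the injective hull of the residue field of the resulting Artinian ring and $(-)^{*}$ becomes Matlis duality; that bookkeeping I would defer to \cite{HB93}.

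I expect the main obstacle to be Grothendieck local duality: both the uniqueness in (1) and the maximal Cohen--Macaulayness of $M^\dagger$ in (2) really rest on it, whereas the existence statement and the $\ext$-computation are essentially mechanical once the change-of-rings spectral sequence collapses. The one point needing genuine care beyond the two-dimensional situation actually used here is verifying that $(-)^\dagger$ lands in $\CM(A)$ in higher dimension, where one must pass to the Artinian quotient and invoke the exactness and perfectness of Matlis duality.
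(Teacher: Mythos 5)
The paper does not prove this proposition; it only cites [HB93], so there is no ``paper's proof'' to compare against. Your argument is essentially the standard one from that reference and is correct in the only case the paper actually uses, namely $\dim A = 2$. The identification $K_A = \operatorname{Ext}^c_S(A,S)$ via Cohen's structure theorem, the collapse of the change-of-rings spectral sequence $\operatorname{Ext}^p_A(\bcc, \operatorname{Ext}^q_S(A,S)) \Rightarrow \operatorname{Ext}^{p+q}_S(\bcc,S)$ to $\operatorname{Ext}^p_A(\bcc, K_A) \cong \operatorname{Ext}^{p+c}_S(\bcc,S)$, the Koszul self-duality computation of $\operatorname{Ext}^{\bullet}_S(\bcc,S)$, and the depth estimate $\dep \operatorname{Hom}_A(M,K_A) \ge \min(2,\dim A)$ are all sound; in two dimensions that last bound immediately makes $M^\dagger$ maximal Cohen--Macaulay. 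Your observation that completeness (or, more generally, being a quotient of a Gorenstein ring) is needed for $K_A$ to exist is also correct.

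Where the proposal has a genuine gap is the closing claim that the general-dimension case follows ``after cutting by a maximal $A$-regular sequence.'' This cannot be made to work, because the unrestricted statement is in fact false. Take $A = k[[x,y,z,w]]$ (regular of dimension $4$, so $K_A = A$), and let $M$ be the second syzygy of the residue field $k$ in its Koszul resolution. A second syzygy over a regular local ring is reflexive, so $M^\dagger = \operatorname{Hom}_A(\operatorname{Hom}_A(M,A),A) \cong M$; yet by Auslander--Buchsbaum $\dep M = \dim A - \operatorname{pd}_A M = 4 - 2 = 2 < \dim A$, so $M^\dagger$ is not maximal Cohen--Macaulay. The correct general statement is that $M^\dagger$ satisfies Serre's condition $S_2$ (because $\dep_{\fp} \operatorname{Hom}(N,K_A)_{\fp} \ge \min(2,\dim A_\fp)$ for all primes $\fp$), and $S_2$ coincides with maximal Cohen--Macaulay over a Cohen--Macaulay local ring precisely when $\dim A \le 2$. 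So the proposition as quoted should either carry the hypothesis $\dim A \le 2$ (which holds throughout the paper) or weaken the conclusion to $S_2$; the regular-sequence reduction you sketch cannot repair the unrestricted version.
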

The functor $(-)^\dagger$  is left adjoint to the forgetful functor from $\CM(A)$ to $A$-mod. More precisely, for any finitely generated $A$-module $M$ and any maximal Cohen-Macaulay $A$-module $N$, we have a natural isomorphism $$\ho_{A-\mathrm{mod}}(M, N)\cong\ho_{\CM(A)}(M^\dagger, N).$$

Under the situation of \ref{def:AlgebraSetting}, we have two natural functors
\begin{itemize}
        \item $\CM(A)\rightarrow \CM(R)$, sending a maximal Cohen-Macaulay module $M$ over $A$ to the Macaulayfication $R\boxtimes_A M = (R\otimes_A M)^\dagger$.
        \item $\CM(A)\rightarrow Q(\overline{A})-\mathrm{mod}$, sending $M$ to the tensor product $Q(\overline{A})\otimes_A M$.
\end{itemize}
Then there is a natural $Q(\overline{R})$-module homomorphism $$\theta_M : Q(\overline{R})\otimes_{Q(\overline{A})}(Q(\overline{A})\otimes_A M)=Q(\overline{R})\otimes_R(R\otimes_A M)\rightarrow Q(\overline{R})\otimes_R (R\otimes_A M)^\dagger.$$ We get a triple $(R\boxtimes_A M, Q(\overline{A})\otimes_A M, \theta_M)$. The next result shows that this is an object of $\Tri(A)$.

\begin{lemma}\cite[Lemma 3.2]{BD17}
        The morphism $\theta_M$ is surjective. Moreover, the following canonical morphism of $Q(\overline{A})$-modules is injective:   
        $$\tilde{\theta}_M : Q(\overline{A})\otimes_A M \rightarrow Q(\overline{R})\otimes_A M\xrightarrow{\theta_M} Q(\overline{R})\otimes_R (R\boxtimes_A M)$$
\end{lemma}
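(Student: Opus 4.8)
The statement to be proved is that for a maximal Cohen-Macaulay $A$-module $M$, the canonical map $\theta_M : Q(\overline R)\otimes_R (R\otimes_A M)\to Q(\overline R)\otimes_R (R\boxtimes_A M)$ is surjective, and that the induced map $\tilde\theta_M : Q(\overline A)\otimes_A M\to Q(\overline R)\otimes_R(R\boxtimes_A M)$ is injective. The plan is to reduce both assertions to localization statements at the conductor locus, exploiting that $R\boxtimes_A M = (R\otimes_A M)^\dagger$ agrees with $R\otimes_A M$ away from a codimension-$\ge 2$ locus and that $Q(\overline R)$, $Q(\overline A)$ are the relevant total rings supported precisely there.

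\textbf{Step 1: Surjectivity of $\theta_M$.} First I would recall that the Macaulayfication map $R\otimes_A M\to (R\otimes_A M)^\dagger$ is an isomorphism after localizing at any prime where $R\otimes_A M$ is already maximal Cohen-Macaulay; since $R$ is normal (a product of regular two-dimensional rings in our situation) and $M$ is MCM over $A$, the cokernel and kernel of this map are supported on a closed subset of codimension $\ge 2$ in $\Spec R$, i.e. a finite set of closed points. The conductor ideal $I$ cuts out a one-dimensional subscheme, so $Q(\overline R) = Q(R/I)$ is a localization in which those codimension-$\ge 2$ primes become irrelevant: tensoring the exact sequence $R\otimes_A M\to (R\otimes_A M)^\dagger\to C\to 0$ with $Q(\overline R)$ over $R$ kills $C$ because $C$ is supported away from $V(I)$ while $Q(\overline R)$ is supported on $V(I)$. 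Hence $\theta_M$ is surjective. I would make this precise by noting $Q(\overline R)$ is a finite product of fields of the form $\bcc((t))$ (as computed in the excerpt), each corresponding to a minimal prime of $\overline R$; it suffices to check the localization of $C$ at each such prime vanishes, which follows from the codimension count.

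\textbf{Step 2: Injectivity of $\tilde\theta_M$.} The composite $Q(\overline A)\otimes_A M\to Q(\overline R)\otimes_A M\xrightarrow{\theta_M} Q(\overline R)\otimes_R(R\boxtimes_A M)$ should be analyzed by factoring the first arrow. Since $\overline R$ is module-finite over $\overline A$ and $Q(\overline A)\to Q(\overline R)$ is faithfully flat (it is an inclusion of a product of fields into a product of fields in which each factor of $Q(\overline A)$ embeds into a product of factors of $Q(\overline R)$, with each such embedding nonzero), the map $Q(\overline A)\otimes_A M\to Q(\overline R)\otimes_A M = Q(\overline R)\otimes_{Q(\overline A)}(Q(\overline A)\otimes_A M)$ is injective. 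Then injectivity of $\tilde\theta_M$ reduces to injectivity of $\theta_M$ restricted to the image, which by Step 1's identification on the $V(I)$-locus amounts to the statement that $Q(\overline R)\otimes_R (R\otimes_A M)\to Q(\overline R)\otimes_R (R\otimes_A M)^\dagger$ has kernel meeting the submodule $Q(\overline R)\otimes_{Q(\overline A)}(Q(\overline A)\otimes_A M)$ trivially. The kernel of $R\otimes_A M\to (R\otimes_A M)^\dagger$ is the largest submodule supported in codimension $\ge 2$; after tensoring with $Q(\overline R)$ (supported on $V(I)$) this kernel vanishes just as in Step 1, so $\theta_M$ is in fact injective on the relevant piece. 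Combining, $\tilde\theta_M$ is injective.

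\textbf{Main obstacle.} The delicate point is the bookkeeping of supports: one must argue carefully that $\ker$ and $\cok$ of the Macaulayfication map are supported in codimension $\ge 2$ on $\Spec R$ \emph{and} that $V(I)\subset\Spec R$ is purely one-dimensional with no embedded components at the bad points, so that tensoring with $Q(\overline R)$ genuinely annihilates those modules. This uses that $A$ is Cohen-Macaulay of dimension two and that $R$ is its normalization with $I$ the conductor — properties guaranteed in the standing hypotheses of the excerpt (reduced, complete, CM, dimension two, non-isolated). I would phrase the codimension estimate via depth: for an MCM module over the two-dimensional normal ring $R$, the Macaulayfication changes nothing, and in general $M^\dagger/M$ and the torsion of $M$ both have support of dimension $\le \dim A - 2 = 0$, while $\overline R$ has pure dimension one; a point (dimension $0$) cannot lie in the support of a module that becomes invertible after inverting all of $\overline R$'s non-zerodivisors. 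Everything else is formal manipulation with tensor products and the adjunction characterizing $(-)^\dagger$.
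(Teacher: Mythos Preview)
The paper does not give its own proof of this lemma; it is simply quoted from \cite[Lemma 3.2]{BD17} and used as input for the equivalence $\CM(A)\simeq\Tri(A)$. So there is no in-paper argument to compare your proposal against.

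That said, your strategy is the right one and is essentially how the proof goes in \cite{BD17}: the Macaulayfication map $R\otimes_A M \to (R\otimes_A M)^\dagger$ has kernel and cokernel of finite length (supported only at maximal ideals, since over a two-dimensional Cohen-Macaulay ring the Macaulayfication modifies a module only in codimension $\ge 2$), while $Q(\overline{R})$ is obtained by inverting all non-zerodivisors of $\overline{R}=R/I$, a ring of pure dimension one. Hence tensoring with $Q(\overline{R})$ annihilates both the kernel and cokernel, giving surjectivity (and in fact bijectivity) of $\theta_M$. For the injectivity of $\tilde\theta_M$ one then combines this with the faithful flatness of $Q(\overline{A})\hookrightarrow Q(\overline{R})$, exactly as you outline. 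Your identification of the ``main obstacle'' --- verifying that $V(I)$ is purely one-dimensional so that the finite-length pieces really die --- is indeed the only place requiring care, and the standing hypotheses (reduced, complete, Cohen-Macaulay of dimension two) ensure it.
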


Since the construction is natural to the module $M$, it defines a functor $\bff : \CM(A)\rightarrow \Tri(A)$. The main result in Section 3 of \cite{BD17} is that this functor is an equivalence of categories.

\begin{thm}\cite[Theorem 3.5]{BD17}\label{BurbanDrozdMainTheorem}
        The functor $\bff : \mathrm{CM}(A)\rightarrow \mathrm{Tri}(A)$, which sends a maximal Cohen-Macaulay module $M$ to a triple $(R\boxtimes_A M, Q(\overline{A})\otimes_A M, \theta_M)$, is an equivalence of categories.
\end{thm}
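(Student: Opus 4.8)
The plan is to construct an explicit quasi-inverse $\bgg\colon\Tri(A)\to\CM(A)$ by a Milnor-type fibre-product (``conductor patching'') construction, and then to verify that $\bff$ and $\bgg$ are mutually inverse up to natural isomorphism. The starting point is that the square of rings
\[
\begin{CD}
A @>>> R \\
@VVV @VVV \\
\overline{A} @>>> \overline{R}
\end{CD}
\]
is Cartesian: since the conductor ideal $I$ is simultaneously an ideal of $A$ and of $R$, and is the largest common ideal, one has $A=\{r\in R:\bar r\in\operatorname{im}(\overline{A}\to\overline{R})\}$. After inverting non-zero-divisors, $Q(\overline{A})\hookrightarrow Q(\overline{R})$ becomes a faithfully flat inclusion of products of Laurent-series fields, and this is precisely the gluing datum recorded by a triple, now localized to the generic points of the (codimension one) conductor locus.

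On objects I would set
\[
\bgg(\tilde{M},V,\theta):=\tilde{M}\times_{Q(\overline{R})\otimes_R\tilde{M}}V
=\bigl\{\,m\in\tilde{M}\ :\ \text{the image of }m\text{ in }Q(\overline{R})\otimes_R\tilde{M}\text{ lies in }\operatorname{im}(V)\,\bigr\},
\]
where $V\hookrightarrow Q(\overline{R})\otimes_R\tilde{M}$ is the injection supplied by the triple axioms. Write $M:=\bgg(\tilde{M},V,\theta)$; it is a submodule of the finitely generated $A$-module $\tilde{M}$, hence finitely generated, and it is maximal Cohen-Macaulay. Indeed, since $R$ is module-finite over $A$, the $R$-module $\tilde{M}$ has depth $2$ as an $A$-module; the quotient $\tilde{M}/M$ embeds into the $Q(\overline{A})$-module $\bigl(Q(\overline{R})\otimes_R\tilde{M}\bigr)/\operatorname{im}(V)$, and any $Q(\overline{A})$-module has no $\fm$-torsion (on each factor of $Q(\overline{A})=\bcc((x))\times\bcc((y))\times\bcc((z))$ one of $x,y,z$ acts invertibly), so $\dep_A(\tilde{M}/M)\ge1$; applying the depth inequality to $0\to M\to\tilde{M}\to\tilde{M}/M\to0$ gives $\dep_A M\ge2$, hence $M\in\CM(A)$ because $\dim_{Kr}A=2$. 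A morphism of triples carries $M$ into $M'$ by its compatibility square, so $\bgg$ acts on morphisms by restriction, and functoriality is immediate.

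It then remains to produce the two natural isomorphisms. For $\bgg\circ\bff\cong\operatorname{id}_{\CM(A)}$: given $M\in\CM(A)$, the canonical map $M\to R\boxtimes_A M$ is injective, since it factors the inclusion $M\hookrightarrow Q(A)\otimes_A M=Q(R)\otimes_R(R\boxtimes_A M)$ ($M$ is torsion-free over the reduced ring $A$ and $Q(R)=Q(A)$); its image lies inside the fibre product $\bgg\bff(M)$; and the reverse inclusion $\bgg\bff(M)\subseteq\operatorname{im}(M)$ is the technical core, amounting to the assertion that the conductor square stays Cartesian ``with coefficients in $M$''. For $\bff\circ\bgg\cong\operatorname{id}_{\Tri(A)}$: given $(\tilde{M},V,\theta)$ with $M$ the fibre product, surjectivity of $\theta$ ensures that $M$ is large enough that the Macaulayfication over the regular ring $R$ of $R\otimes_A M$ recovers $\tilde{M}$, so $R\boxtimes_A M\cong\tilde{M}$; the injectivity axiom on $V$ identifies $Q(\overline{A})\otimes_A M$ with $V$ (localize the inclusion $M\hookrightarrow\tilde{M}$ along the conductor); and $\theta_M$ then corresponds to $\theta$ on unwinding the definitions. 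Full faithfulness of $\bff$ is a formal consequence once both composites have been identified with the identity.

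The step I expect to be the genuine obstacle is the reconstruction $\bgg\bff(M)\cong M$, i.e.\ that $M$ is \emph{exactly} the fibre product of its own images in $R\boxtimes_A M$ and in $Q(\overline{A})\otimes_A M$. This is where the ring-theoretic input really enters: one needs a careful analysis of $\operatorname{coker}(M\to R\boxtimes_A M)$ as an $\overline{A}$-module and of its behaviour under localization to $Q(\overline{A})$, using in particular that $I\cdot(R\otimes_A M)$ already lies in the image of $M$ (since $ir\otimes m=1\otimes(ir)m$ for $i\in I$, $r\in R$). One must also stay attentive throughout to the fact that modules such as $Q(\overline{R})\otimes_R\tilde{M}$ are not finitely generated over $A$ or $R$, so injectivity and surjectivity statements have to be argued directly rather than read off from Noetherian finiteness. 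Alternatively, much of the bookkeeping can be subsumed into the general Milnor-patching formalism for maximal Cohen-Macaulay modules over a conductor square, the only genuinely new ingredients being the insertion of Macaulayfication on the $R$-side (because $R\otimes_A M$ need not itself be MCM) and the localization of the patching data to the total rings.
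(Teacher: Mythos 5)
This theorem is cited from Burban--Drozd \cite[Theorem 3.5]{BD17}; the paper itself supplies no proof, so there is no internal argument to compare against. Your proposal is, however, a faithful sketch of the strategy Burban and Drozd actually use: a Milnor-type patching over the conductor square, with the quasi-inverse $\bgg$ defined as a fibre product and the Macaulay condition verified by a depth argument. The individual steps you do spell out are sound. The conductor square is Cartesian; $\tilde M$ has $A$-depth $2$ because $R$ is module-finite over $A$ and $\tilde M$ is MCM over $R$; $\tilde M/\bgg(\tilde M,V,\theta)$ embeds into the $Q(\overline A)$-module $(Q(\overline R)\otimes_R\tilde M)/\operatorname{im}(V)$, whose $\fm$-torsion vanishes because $\fm$ is not contained in any minimal prime of $\overline A$; and the depth lemma then gives $\dep_A\bgg(\tilde M,V,\theta)\ge 2$. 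The identification $Q(A)=Q(R)$ used in the unit map is correct since $A\subset R\subset Q(A)$ with the same non-zero-divisors, and MCM modules over the reduced ring $A$ are indeed torsion-free, so $M\hookrightarrow Q(A)\otimes_A M$ is injective as you assert.

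That said, what you present is a sketch, not a proof. The two hardest steps are only named, not argued: (i) the reconstruction $\bgg\bff(M)\cong M$, which as you note is the assertion that the conductor square remains Cartesian with coefficients in $M$, and is precisely where one must work to show that the Macaulayfication on the $R$-side does not destroy the patching (Burban--Drozd's Lemma 3.2, recalled in the paper, already records the surjectivity of $\theta_M$, but the converse inclusion needs a genuine argument using $I\cdot(R\otimes_A M)\subseteq \operatorname{im}(M)$ together with properties of Macaulayfication); and (ii) the identification $\bff\bgg\cong\operatorname{id}_{\Tri(A)}$, in particular that $R\boxtimes_A M\cong\tilde M$, which requires that the Macaulayfication over the regular ring $R$ collapses the cokernel $\tilde M/RM$ --- this is not automatic and uses the triple axioms in a specific way. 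So the proposal correctly identifies the plan and the key difficulties, but does not yet discharge them. For a citation-level theorem this is acceptable as an outline; it would not stand alone as a proof.
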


Now recall two full subcategories $\CMlf(A)$ of $\CM(A)$ and $\Trilf(A)$ of $\Tri(A)$.
\begin{defn}
        A maximal Cohen-Macaulay $A$-module $M$ is said to be {\em locally free on the punctured spectrum}  if the localization $M_\fp$ is a free $A_\fp$-module for any non-maximal prime ideal $\fp$. We denote by $\mathrm{CM}^{\operatorname{lf}}(A)$ the full subcategory of $\mathrm{CM}(A)$ consisting of such $A$-modules. 
        \end{defn}
We abbreviate it as locally free from now on.  Similarly, an object $(\tilde{M}, V, \theta)$ in $\Tri(A)$ is locally free if the morphism $\theta$ is an isomorphism.   It is shown in Theorem 3.9 \cite{BD17} that the functor $\bff$ induces an equivalence between full subcategories of locally free objects between $\CMlf(A)$ and $\Trilf(A)$.

\subsection{Burban \& Drozd's classification} We have mentioned that for a ring $A = \bcc[[x, y, z]]/(xyz)$, an obejct of $\Tri(A)$ is determined by the $6$ matrices $(\Theta^x_1, \Theta^x_3, \Theta^y_1, \Theta^y_2, \Theta^z_2, \Theta^z_3)$ in Example \ref{example:xyzTri}. By Theorem \ref{BurbanDrozdMainTheorem}, maximal Cohen-Macaulay $A$-modules are equivalent to this collection of matrices. In particular, locally free maximal Cohen-Macaulay $A$-modules are determined by six nonsingular square matrices. By choosing appropriate basis for each $\bcc((t))$-vector spaces, one can transform these matrices into a canonical form, which is given as follows.
 \begin{defn}\label{defn:bd}
        A {\em  band data} consists of 
        \begin{itemize}
                \item positive integers $\tau$ and $\mu$,
                \item a nonzero complex number $\lambda\in\bcc^*$,
                \item a collection of positive integers $\omega = ((a_1, b_1, c_1, d_1, e_1, f_2), (a_2, b_2, c_2, d_2, e_2, f_3), \cdots, (a_\tau, b_\tau, c_\tau, d_\tau, e_\tau, f_1))$ such that $\text{min}(f_i, a_i) = \text{min}(b_i, c_i) = \text{min}(d_i, e_i) = 1$ for all $i$.
        \end{itemize}
        
        Given band data $(\tau, \lambda, \mu, \omega)$, define the corresponding canonical form as follows.
        \begin{itemize}
                \item Let $I_r$ be the $r\times r$ identity matrix and $J_r(\lambda)$ be the Jordan block of size $r\times r$ with the eigenvalue $\lambda$.
                \item Consider the following diagonal matrices $$A_i:=t^{a_i}I_\mu, B_i:=t^{b_i}I_\mu, C_i:=t^{c_i}I_\mu, D_i:=t^{d_i}I_\mu, E_i:=t^{e_i}I_\mu, F_i:=t^{f_i}I_\mu, H=t^{f_1}J_\mu(\lambda).$$
                \item Then define the following matrices
                \begin{align*}
                        &\Theta^x_1 = \text{diag}(A_1, A_2, \cdots, A_\tau), \quad \Theta^y_1 = \text{diag}(B_1, B_2, \cdots, B_\tau), \quad \Theta^y_2 = \text{diag}(C_1, C_2, \cdots, C_\tau), \\
                        &\Theta^z_2 = \text{diag}(D_1, D_2, \cdots, D_\tau), \quad \Theta^z_3 = \text{diag}(E_1, E_2, \cdots, E_\tau),
                \end{align*} and a block matrix
                \begin{math}
                        \Theta^x_3:=
                        \begin{pmatrix}
                                0 & F_2 & 0 &\cdots & 0 \\
                                0 & 0 & F_3 & \cdots & 0 \\
                                \vdots & \vdots & \vdots & \ddots & \vdots \\
                                0 & 0 & 0 & \cdots & F_\tau \\
                                H & 0 & 0 & \cdots & 0
                        \end{pmatrix}
                \end{math}
        \end{itemize}
        
\end{defn}
The maximal Cohen-Macaulay $A$-module corresponding to a band data is described explicitly in Definition \ref{defn:module}.
%
%
%

We are interested in indecomposable maximal Cohen-Macaulay modules. They correspond to a special class of band data, called non-periodic band data.
\begin{defn}
        Let $(\tau, \lambda, \mu, \omega)$ be a band data. It is said to be {\em  periodic} if there is some $1<r<\tau$ such that $(a_i, b_i, c_i, d_i, e_i, f_{i+1}) = (a_{i+r}, b_{i+r}, c_{i+r}, d_{i+r}, e_{i+r}, f_{i+r+1})$ for each $1\leq i\leq \tau$, where indices are considered as an element of $\bzz_\tau$.
\end{defn}

\begin{thm}\cite[Theorem 8.2]{BD17}\label{thm:BD}
        Let $A$ be a ring $\bcc[[x, y, z]]/(xyz)$. There is a one to one correspondence between the set of isomorphism classes of indecomposable objects of $\mathrm{Tri}^{\operatorname{lf}}(A)$ and the set of non-periodic band data.
\end{thm}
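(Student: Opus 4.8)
The plan is to establish the correspondence by reducing the classification of indecomposable objects in $\Tri^{\operatorname{lf}}(A)$ to a matrix problem over the field $\bcc((t))$, and then to recognize that matrix problem as (a variant of) the classical problem of classifying indecomposable representations of the cycle quiver, whose solution is given by band (and string) modules. Concretely, by Example \ref{example:xyzTri} and the locally free hypothesis, an object of $\Tri^{\operatorname{lf}}(A)$ is recorded by six \emph{invertible} square matrices $(\Theta^x_1, \Theta^x_3, \Theta^y_1, \Theta^y_2, \Theta^z_2, \Theta^z_3)$ over $\bcc((t))$, arranged along the hexagonal diagram with three ``bullet'' vertices carrying $\bcc[[x_1,y_1]]$-, $\bcc[[y_2,z_2]]$-, $\bcc[[z_3,x_3]]$-lattices and three ``circle'' vertices carrying $\bcc((x))$-, $\bcc((y))$-, $\bcc((z))$-vector spaces. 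A morphism is a compatible collection of base changes; at each circle vertex one may change basis over $\bcc((t))$ freely, while at each bullet vertex one must respect the two incident lattices, so the allowed column/row operations are constrained by the valuations of the lattice generators. First I would make precise that the moduli of such data, modulo these transformations, is exactly a \emph{bunch of chains} problem in the sense of Bondarenko, and invoke (or reprove in this special case) the theorem that indecomposables for a bunch of chains are classified by \emph{bands} and \emph{strings}.

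Next I would cut out the locally free (equivalently, band) part. The point is that local freeness forces each $\Theta$ to be square and nonsingular, which rules out string data (strings necessarily produce a non-isomorphism $\theta$, hence a module that is not locally free on the punctured spectrum — this is exactly the dichotomy recorded after \eqref{eq:bw} and in the discussion of $\CMlf$ vs.\ $\Trilf$). So the indecomposable locally free objects correspond precisely to the band combinatorics: a cyclic word in the arrows of the hexagon, together with a choice of ``continuous parameter'' which, after putting the single non-diagonal block $\Theta^x_3$ into Jordan form relative to the others, becomes an eigenvalue $\lambda \in \bcc^*$ and a Jordan block size $\mu$; the discrete combinatorics of how the word wraps and how the valuations are distributed is recorded by $\tau$ and by the tuple $\omega = ((a_i,b_i,c_i,d_i,e_i,f_{i+1}))_{i}$ with the normalization $\min(f_i,a_i)=\min(b_i,c_i)=\min(d_i,e_i)=1$ (this normalization is just the statement that one of the two lattice exponents at each bullet vertex can be scaled to $1$). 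I would then verify that the canonical form of Definition \ref{defn:bd} is achieved — i.e.\ that every invertible six-tuple can be brought by the allowed transformations to block-diagonal/Jordan shape — and that two canonical forms are isomorphic iff the band data agree up to the evident symmetry; the ``up to cyclic shift'' ambiguity in Theorem \ref{thm:BD} is absorbed here into the definition of band data as an element of $\bzz_\tau$-indexed tuples, while \emph{non-periodicity} is exactly the indecomposability condition (a periodic word gives a direct sum of shorter bands, and conversely an imprimitive Jordan structure decomposes).

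I expect the main obstacle to be the faithful translation of the lattice constraints at the three bullet vertices into a clean combinatorial matrix problem, and in particular keeping track of exactly which row and column operations are permitted — this is where the asymmetry between circle vertices (full $\GL$ over $\bcc((t))$) and bullet vertices (operations that preserve a pair of lattices, hence ``triangular up to valuation'' operations) must be handled carefully, and where one must check that no additional continuous invariants beyond $\lambda$ survive. Once the problem is identified with a bunch of chains, the classification by bands is classical, so the real content is the reduction and the bookkeeping that the band data of Definition \ref{defn:bd} is neither redundant nor missing cases. A secondary technical point is checking that the passage between $\Tri(A)$ and the matrix data is functorial enough that ``indecomposable object'' matches ``indecomposable matrix datum'', which follows from Theorem \ref{BurbanDrozdMainTheorem} together with the Krull--Schmidt property of $\CM(A)$ over the complete local ring $A$.
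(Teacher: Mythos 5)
Your sketch addresses a result the paper cites from Burban--Drozd \cite{BD17} (their Theorem 8.2) without proof, so there is no in-paper argument to compare against. The broad strategy you describe is indeed theirs: convert an object of $\Trilf(A)$ into a matrix problem over $\bcc((t))$, classify indecomposables by bands and strings, note that local freeness ($\theta$ an isomorphism) singles out bands, and transfer indecomposability through the equivalence of Theorem \ref{BurbanDrozdMainTheorem} using Krull--Schmidt.

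The genuine gap is the step where you declare the matrix problem to be ``exactly a bunch of chains problem in the sense of Bondarenko'' and conclude that ``the classification by bands is classical.'' It is not a classical bunch of chains. It is the \emph{decorated} bunch of chains that Burban and Drozd introduce in \cite{BD17} precisely because Bondarenko's theory does not apply: the circle vertices allow arbitrary $\bcc((t))$-base change, but the bullet vertices carry $\bcc[[t]]$-lattices, so only valuation-respecting transformations are allowed there, and this lattice constraint is the ``decoration.'' You flag this correctly as ``the main obstacle,'' but then treat it as bookkeeping that can be handled by adapting a known theorem; in fact Burban--Drozd had to build the representation theory of decorated bunches of chains from scratch and prove the band/string dichotomy in that new setting, and that construction carries essentially all the content of the classification. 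Until one proves (not merely ``invokes or reproves in this special case'') a bands-and-strings theorem for decorated bunches of chains, the argument is incomplete. A smaller imprecision: being indexed by $\bzz_\tau$ does not make a tuple shift-invariant, so the cyclic-shift ambiguity is not ``absorbed'' into the definition of band data; the bijection should be read up to cyclic equivalence of the word $\omega$, as in \cite{BD17} Theorem 9.2(3).
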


\begin{cor}
        Locally free maximal Cohen-Macaulay modules over $\bcc[[x, y, z]]/(xyz)$ are classified by band data.
\end{cor}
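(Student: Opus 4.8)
The plan is to deduce the corollary formally from the equivalence $\bff$, the matrix description of $\Trilf(A)$ for this particular $A$, and the Krull--Schmidt property of $\CMlf(A)$.

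First I would invoke Theorem \ref{BurbanDrozdMainTheorem} together with its locally free refinement (Theorem 3.9 of \cite{BD17}), which says that $\bff$ restricts to an equivalence $\CMlf(A)\xrightarrow{\ \sim\ }\Trilf(A)$. Thus it suffices to classify objects of $\Trilf(A)$ for $A=\bcc[[x,y,z]]/(xyz)$, which by Example \ref{example:xyzTri} is the same as classifying tuples of six nonsingular square matrices $(\Theta^x_1,\Theta^x_3,\Theta^y_1,\Theta^y_2,\Theta^z_2,\Theta^z_3)$ over the field $\bcc((t))$, up to the simultaneous base changes of the $\bcc((t))$-vector spaces attached to the vertices of the diagram there. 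This is precisely the ``decorated bunch of chains'' matrix problem of Burban--Drozd, whose solution states that every such tuple is equivalent to a direct sum of the canonical forms of Definition \ref{defn:bd}; by Theorem \ref{thm:BD} the indecomposable summands correspond bijectively to non-periodic band data.

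Next I would observe that since $A$ is a complete Noetherian local ring, the category of finitely generated $A$-modules, hence its full subcategory $\CMlf(A)$, is a Krull--Schmidt category: idempotents split, and every object decomposes into a finite direct sum of objects with local endomorphism rings, uniquely up to isomorphism and permutation of the summands. (Equivalently, one may transport this along $\bff$ from the analogous statement for $\Trilf(A)$, where it follows from the matrix problem.) Combining the two previous paragraphs, every locally free maximal Cohen--Macaulay $A$-module is isomorphic to a finite direct sum of the indecomposable modules attached to non-periodic band data, uniquely up to reordering of the summands; that is, such modules are classified by finite multisets of non-periodic band data, which is the assertion.

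The only genuine content here is the reduction of the six-matrix tuple to its canonical form, but this is the matrix problem already resolved in \cite{BD17} and recorded in Theorem \ref{thm:BD}, so for us it is a formal corollary. The one step that deserves a word of care is the appeal to Krull--Schmidt, which is what lets us pass from the classification of \emph{indecomposable} objects to the classification of all locally free maximal Cohen--Macaulay modules.
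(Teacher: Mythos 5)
Your argument is correct and is essentially the same deduction the paper has in mind: restrict the equivalence $\bff$ to locally free objects, use the matrix description of $\Trilf(A)$ from Example \ref{example:xyzTri}, and invoke the classification of indecomposables by non-periodic band data from Theorem \ref{thm:BD}. The paper leaves the Krull--Schmidt step implicit, and you are right that it is the one point that actually needs to be said to pass from indecomposables to all objects; your observation that $\CMlf(A)$ inherits Krull--Schmidt from $A$-mod because it is closed under direct summands (or, equivalently, by transporting along $\bff$) closes that gap cleanly.
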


\subsection{Matrix Factorization} 
The Eisenbud's theorem \cite{E80} says that for a hypersurface singularity, maximal Cohen-Macaulay modules are equivalent to matrix factorizations. In this subsection we recall basic definitions and properties of matrix factorizations. We refer readers to the book of Yoshino \cite{Yo} for more details.

Let $S$ be a regular local ring and $W$ be a nonzero-divisor in $S$.

\begin{defn}\label{defn:mf}
        A {\em  matrix factorization} of $W$ is a $\bzz_2$-graded free $S$-module $X=X^0\oplus X^1$ with an odd degree map $d : X\rightarrow X$ such that $d^2 = W\id_{X}$. A {\em morphism} from $(X, d_X)$ and $(Y, d_Y)$ is an $S$-module homomorphism from $X$ to $Y$. A morphism $f$ is decomposed into an {\em odd part} $f^0$ and an {\em even part} $f^1$ so that the set of morphisms $\operatorname{Hom}^{DG}_{\operatorname{MF}}((X, d_X), (Y, d_Y))$ has a natrual $\bzz_2$-graded $S$-module structure. A {\em differential} $d$ on the hom module is given by $$d(f)=d_Y\circ f + (-1)^{|f|} f \circ d_X,$$ where $f$ is homogeneous of degree $|f|$.
\end{defn}

From the definition, we have three categories for matrix factorization as follows. 
\begin{itemize}
        \item The DG-category $\mathrm{MF}^{DG}(S, W)$, whose morphism set is given as $\operatorname{Hom}^{DG}_{\mathrm{MF}}((X, d_X), (Y, d_Y))$.
        \item The ordinary category $\mathrm{MF}(S, W)$, whose morphism set is given as $Z^0(\operatorname{Hom}^{DG}_{\mathrm{MF}}((X, d_X), (Y, d_Y)))$.
        \item The homotopy category $\underline{\mathrm{MF}}(S, W)$, whose morphism set is given as $H^0(\operatorname{Hom}^{DG}_{\mathrm{MF}}((X, d_X), (Y, d_Y)))$.
\end{itemize}

\begin{remark}
        Note that any matrix factorization $(X, d)$ is isomorphic to $(S^n\oplus S^n = S^n_{even}\oplus S^n_{odd} , \varphi : S^n_{even}\rightarrow S^n_{odd}, \psi : S^n_{odd}\rightarrow S^n_{even})$ for some $n$. We will denote this matrix factorization by $(\varphi, \psi)$ and $n$ is called the {\em rank} of it . Also note that for two matrix factorizations $(\varphi, \psi), (\varphi', \psi')$, a morphism is given by a pair of matrices $(\alpha, \beta)$ satisfying $\beta\circ\varphi = \varphi'\circ\alpha$ and $\alpha\circ\psi = \psi'\circ\beta$.
\end{remark}

%
%
%

\begin{center}
        $\begin{matrix}
                \begin{tikzcd}
                        S^{n} \arrow[r, "\varphi"] \arrow[d, "\alpha"] & S^{n} \arrow[r, "\psi"] \arrow[d, "\beta"] & S^{n} \arrow[d, "\alpha"] \\
                        S^{n'} \arrow[r, "\varphi'"] & S^{n'} \arrow[r, "\psi'"]& S^{n'} 
                \end{tikzcd}
                \\[10mm]
                {\scriptstyle \text{A morphism of matrix factorizations}}
        \end{matrix}
        \qquad
        \begin{matrix}
                \begin{tikzcd}
                        & S^{n} \arrow[r, "\varphi"] \arrow[dl, "\xi"'] \arrow[d, "\alpha-\alpha'"] & S^{n} \arrow[r, "\psi"] \arrow[dl, "\eta"] \arrow[d, "\beta-\beta'"] & S^{n}  \arrow[dl, "\xi"] \\
                        S^{n'} \arrow[r, "\psi'"] & S^{n'} \arrow[r, "\varphi'"] & S^{n'} &
                \end{tikzcd}
                \\[10mm]
                {\scriptstyle \text{A homotopy between morphisms}}
        \end{matrix}$
\end{center}

Let us  consider a quotient ring $A = S/\left(W\right)$. Given a matrix factorization $\left(\varphi, \psi\right)$, we have 
an induced $2$-periodic acyclic chain complex of $A$-modules (see section 7.2.2 in \cite{Yo}).
$$
\cdots \rightarrow A^n \xrightarrow{\underline{\varphi}} A^n \xrightarrow{\underline{\psi}}  A^n \xrightarrow{\underline{\varphi}} A^n \xrightarrow{\underline{\psi}}       A^n \xrightarrow{\underline{\varphi}} A^n \rightarrow \cdots
$$
The cokernel $\cok\underline{\varphi}$ is a Cohen-Macaulay $A$-module, and it defines a functor 
$$\cok : \mathrm{MF}(W) \rightarrow \CM(A)$$
$$\hspace{14.5mm}\left(\varphi,\psi\right) \mapsto \cok\underline{\varphi}$$
Conversely,   Theorem 6.1 of \cite{E80} states that for any $A$-module $M$, its minimal free resolution is eventually periodic with periodicity $2$. Moreover, the theorem also tells that the minimal free resolution is $2$-periodic itself if and only if the module $M$ is maximal Cohen-Macaulay. In this case, the resulting resolution gives rise to a matrix factorization. Thus the functor $\cok$ is essentially surjective.

The stable category of Cohen-Macaulay modules is similarly defined as follows.
\begin{defn}
        For two Cohen-Macaulay $A$-modules $M$ and $N$, consider the set $I\left(M,N\right)$
        of morphisms $f : M\rightarrow N$ that passes through a projective $A$-module $P$
        (namely, there are morphisms $ g : M \rightarrow P$, $h : P \rightarrow N$ satisfying $f=  h \circ g$). Then $I\left(M,N\right)$ is an ideal of $\operatorname{Hom}_{\operatorname{CM}(A)}\left(M,N\right)$,
which allows us to define
$$\mathrm{Hom}_{\underline{\CM}(A)}\left(M, N\right) := \mathrm{Hom}_{\mathrm{CM}(A)}\left(M, N\right)/I\left(M, N\right).$$
The stable category $\underline{\mathrm{CM}}(A)$ of $\operatorname{CM}(A)$ is the category whose objects are the same as $\mathrm{CM}(A)$ and the hom set between two objects $M$, $N$ is given by $\ho_{\underline{\mathrm{CM}}(A)}\left(M, N\right)$.
\end{defn}

Now the theorem of Eisenbud can be stated as follows.
\begin{thm}(Eisenbud's matrix factorization theorem \cite{E80})\label{thm:Eisenbud}
        The induced functor
        $$\cok : \underline{\MF}(W) \rightarrow \underline{\mathrm{CM}}(A)$$
        is an equivalence of categories.
\end{thm}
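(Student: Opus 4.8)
The plan is to show, in turn, that $\cok\colon\underline{\MF}(W)\to\underline{\CM}(A)$ is essentially surjective, full, and faithful, all of it resting on a single construction: to a rank-$n$ matrix factorization $X=(\varphi,\psi)$ attach the doubly infinite $2$-periodic complex
\[
C_X\colon\ \cdots \xrightarrow{\underline{\psi}} A^n \xrightarrow{\underline{\varphi}} A^n \xrightarrow{\underline{\psi}} A^n \xrightarrow{\underline{\varphi}} A^n \longrightarrow \cdots ,
\]
which is acyclic because $\underline{\varphi}\,\underline{\psi}=\underline{\psi}\,\underline{\varphi}=0$ and $W$ is a nonzero-divisor in $S$, and whose right truncation is a free $A$-resolution of $M:=\cok\underline{\varphi}$ (so $\Omega^2_A M\cong M$). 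Essential surjectivity is essentially the Eisenbud periodicity statement recalled above, but I would redo the mechanism because the other two parts reuse it: starting from $M\in\CM(A)$ one has $\dep_S M=\dim S-1$, hence $\operatorname{pd}_S M=1$ by Auslander--Buchsbaum; a minimal $S$-free resolution $0\to S^n\xrightarrow{\varphi}S^m\to M\to 0$ exists, and the null-homotopy of multiplication by $W$ (which kills $M$) yields $\psi\colon S^m\to S^n$ with $\varphi\psi=W\id$, $\psi\varphi=W\id$; a rank count over $\operatorname{Frac}(S)$ forces $m=n$, so $(\varphi,\psi)\in\MF(W)$ and $\cok\underline{\varphi}=M\otimes_S A=M$.

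For fullness, fix $X=(\varphi,\psi)$, $Y=(\varphi',\psi')$ with cokernels $M,N$. Given an arbitrary $A$-linear $\bar f\colon M\to N$, I would lift it by the comparison theorem to a chain map between the $2$-periodic $A$-resolutions of $M$ and $N$ coming from $C_X$, $C_Y$; then, using $\Omega^2M\cong M$ and the uniqueness of chain lifts up to homotopy, rectify this chain map to a genuinely $2$-periodic one, i.e.\ to a pair of $A$-matrices $(\bar\alpha,\bar\beta)$ with $\bar\beta\,\underline{\varphi}=\underline{\varphi'}\,\bar\alpha$ and $\bar\alpha\,\underline{\psi}=\underline{\psi'}\,\bar\beta$. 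Lifting $\bar\alpha,\bar\beta$ to $S$-matrices and absorbing the resulting $W$-divisible discrepancies $\beta\varphi-\varphi'\alpha$, $\alpha\psi-\psi'\beta$ by the standard correction produces an honest $\MF(W)$-morphism $(\alpha,\beta)\colon X\to Y$ with $\cok$-image $\bar f$, so $\cok$ is full onto $\Hom_{\CM(A)}(M,N)$ and a fortiori onto $\Hom_{\underline{\CM}(A)}(M,N)$.

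For faithfulness, I would realize every free $A$-module $A^k$ as the cokernel of the contractible factorization $T=(W\id_k,\id_k)$, which is a zero object of $\underline{\MF}(W)$ with explicit null-homotopy $(h^0,h^1)=(\id_k,0)$ of $\id_T$. If $(\alpha,\beta)\colon X\to Y$ has $\cok$-image $\bar f$ vanishing in $\underline{\CM}(A)$, factor $\bar f$ as $M\xrightarrow{g}A^k\xrightarrow{h}N$, lift $g$ and $h$ to $\MF(W)$-morphisms $\tilde g\colon X\to T$ and $\tilde h\colon T\to Y$ by the fullness step, and conclude from the same uniqueness-up-to-homotopy principle that $(\alpha,\beta)$ is homotopic to $\tilde h\tilde g$, which factors through the zero object $T$ and is therefore null-homotopic. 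Combined with essential surjectivity, this gives the asserted equivalence.

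The hard part is the bookkeeping in the middle step: turning a chain map of $2$-periodic $A$-complexes into a bona fide morphism of matrix factorizations over $S$, and checking that "chain homotopy of $A$-resolutions" and "homotopy in $\MF(W)$" correspond — this is where one must be careful that the ad hoc periodic rectification is compatible with homotopies. If that direct route becomes unwieldy, I would instead prove full faithfulness homologically: identify $\Hom_{\underline{\MF}}(X,Y)\cong\widehat{\ext}{}^0_A(M,N)$ by computing Tate cohomology from the complete resolution $C_X$, and then invoke $\widehat{\ext}{}^0_A(M,N)\cong\Hom_{\underline{\CM}(A)}(M,N)$, which holds because $M$ is maximal Cohen--Macaulay over the Gorenstein ring $A=S/(W)$.
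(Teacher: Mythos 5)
The paper does not actually prove this theorem: it is stated with a citation to \cite{E80}, and in the preceding paragraph only the essential-surjectivity half is indicated, via Eisenbud's result that minimal $A$-resolutions are eventually $2$-periodic. Your reconstruction, using Auslander--Buchsbaum to get $\operatorname{pd}_S M=1$ and then the null-homotopy of $W\cdot\id$ to produce $\psi$, is the standard textbook argument and is correct in outline. Two of the steps you gesture at deserve to be written out, since that is where the bookkeeping you worry about actually lives.

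In the fullness step, once $\beta\varphi=\varphi'\alpha$ holds, the companion identity $\alpha\psi=\psi'\beta$ is automatic: $\varphi'(\alpha\psi-\psi'\beta)=\beta\varphi\psi-\varphi'\psi'\beta=W\beta-W\beta=0$, and $\varphi'$ is injective over $S$ because $\det\varphi'$ is a unit times a power of the nonzero-divisor $W$. Hence only one $W$-divisible discrepancy needs absorbing; writing $\beta\varphi-\varphi'\alpha=Wh=\varphi'\psi'h$ and replacing $\alpha$ by $\alpha+\psi'h$ does it, and this does not perturb the induced map $\cok\underline{\varphi}\to\cok\underline{\varphi'}$ since that map is governed by $\bar\beta$ alone. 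In the faithfulness step, the ``uniqueness-up-to-homotopy principle'' you invoke is precisely the claim that two $\MF(W)$-morphisms inducing \emph{literally equal} maps $M\to N$ (not merely stably equal) are homotopic. To see it: $\bar\beta-\bar\beta'=\underline{\varphi'}\,\bar\zeta$ over $A$; lift $\zeta$ to $S$, absorb the $W$-error into $\eta:=\zeta+\psi'h$ so that $\beta-\beta'=\varphi'\eta$, deduce $\alpha-\alpha'=\eta\varphi$ from injectivity of $\varphi'$ again, and observe $(0,\eta)$ is the required homotopy. With this in hand, your factorization through the contractible object $T=(W\,\id_k,\id_k)$ (whose null-homotopy you wrote down correctly) closes faithfulness. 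The Tate-cohomology fallback via $\widehat{\ext}{}^0_A$ is also valid --- it is Buchweitz's viewpoint --- but be aware that it invokes, rather than reproves, that $A$ is Gorenstein and that maximal Cohen--Macaulay modules are high syzygies; in the hypersurface setting those follow from the same Auslander--Buchsbaum computation, so the direct route is hardly longer.
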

Instead of $\underline{\MF}(W)$, we will work with the following ($\AI$-analogue of) dg-category of matrix factorizations, to which $\AI$-functor from the Fukaya category is defined. One can find the definitions of the $\AI$-category and functor in the next section.
\begin{defn}
The $\AI$-category $\mathcal{MF}(W)$ has the same set of objects as $\mathrm{MF}(W)$, and its $\Z/2$-graded hom space is defined as
$$\Hom_{\mathcal{MF}(W)}\left(\left(\varphi',\psi'\right),\left(\varphi,\psi\right)\right) := \Hom_{\mathrm{MF}(W)}^{\Z/2}\left(\left(\varphi,\psi\right),\left(\varphi',\psi'\right)\right)$$
with $\AI$-operations $\operatorm_k$ $\left(k=1,2,\dots\right)$:
$$\operatorm_1\left(\left(\alpha,\beta\right)\right) = Df = d \circ f - (-1)^{|f|} f \circ d,\quad \operatorm_2(f_1, f_2): = (-1)^{|f_1|} f_1 \circ f_2$$
and $\operatorm_k = 0$ for $k \neq 1,2$. Here, a morphism defined in Definition \ref{defn:mf} is of even degree and odd degree morphism is defined in a similar way but with a condition $\alpha\varphi_1 = \psi_2\beta$ and $\beta\psi_1 = \varphi_2\alpha$.
\end{defn}

\section{Preliminaries on Geometry}
Let us recall our geometric setup of Fukaya category and localized mirror functor.
We refer readers to Fukaya-Oh-Ohta-Ono \cite{FOOO}, Seidel \cite{Se}, Akaho-Joyce \cite{AJ} for general definitions and properties of Fukaya category, and \cite{CHL} for localized mirror functor formalism.

In this paper, a symplectic manifold $(M,\omega)$ is given by a punctured Riemann surface $\Sigma$ with an area form $\omega$ on it.
In particular, many operations on the Fukaya category can be explained combinatorially as counts of suitable (immersed) polygons
(instead of counting solutions of $J$-holomorphic curve equation).

An object of Fukaya category of $\Sigma$ will be given by oriented immersed curves $\iota : L\rightarrow M$, 
which automatically satisfies the Lagrangian condition ($\iota^*(\omega)=0, \dim(L)=\frac{1}{2}\dim(M)$). We will call $L$ an {\em  immersed Lagrangian}. Our Lagrangian is always oriented, and hence we will omit it from now on.
(In fact, we will only consider  regular immersed curves; see Definition \ref{def:regular}). 
We allow non-compact Lagrangians that start and end at punctures (morphisms between them will be defined as in  wrapped Fukaya category). Our Fukaya category is defined over the Novikov field $\Lambda$ where
$$\Lambda := \Big\{ \sum_{i=0}^\infty a_i T^{\lambda_i} \Big| a_i \in \C, \lim_{i \to \infty} \lambda_i = \infty \Big\}.$$
This was introduced to handle infinite sums whose energy (exponent of T) of summands approach infinity.
Note that $M$ is an exact symplectic manifold $\omega = d\theta$, and if we consider exact Lagrangians $\iota: L \to M$ only (with $\iota^* \theta = d f_L$ for some function $f_L: L \to \R$), we can work with $\C$-coefficients.
With exact Lagrangians, a $J$-holomorphic curve with prescribed inputs and an output has a fixed energy, and
hence its count is finite from the Gromov-Compactness theorem.

 But compact immersed Lagrangians that we are interested in are {\em not} exact, hence we need to work with $\Lambda$ a priori.
The energy filtration of $\Lambda$ is used to run Maurer-Cartan formalism as well as the localized mirror functor.

To compare with the matrix factorizations over $\C$, we will make an evaluation $T=1$ later.
In general, an evaluation $T=1$ for an element of $\Lambda$ does not make sense.
But for regular immersed curves, even though they are not exact, we will be able to make the evaluation $T=1$
(see Appendix \ref{sec:T=1}).

Let us recall the definition (and convention) of an $\AI$-category over the field $\Lambda$.

\begin{defn}\label{defn:AinftyCategory}
An $\AI$-category $\CC$ over $\Lambda$ consists of a collection of objects $Ob(\CC)$, a (graded) $\Lambda$-module $\Hom(A_{1},A_{2})$ for $A_{1},A_{2} \in Ob(\CC)$, and a set of $\AI$-operations $\left\{ \operatorm_{k} \right\}_{k \geq 1}$ where
$$\operatorm_{k} : \Hom(A_{1},A_{2}) \otimes \cdots \otimes \Hom(A_{k},A_{k+1}) \to \Hom(A_{1},A_{k+1}).$$
They satisfy $\AI$-relations
$$\sum_{p,q} (-1)^{\dagger} \operatorm_{n-q+1} \left (f_{1},\ldots,f_{p}, \operatorm_{q} \left( f_{p+1}, \ldots, f_{p+q} \right),f_{p+q+1}, \ldots, f_{n} \right)=0$$
for any fixed $k \geq 1$ and possible $p,q \geq 1$. Here, $\dagger = |a_{1}| + \cdots + |a_{p}| - p$ is related to the grading of inputs.

An $\AI$-functor $\mathcal{F}$ between two $\AI$-categories $\mathcal{A}$, $\mathcal{B}$ consists of maps $\left\{\mathcal{F}_{k} \right\}_{k \geq 0}$ where
$$\mathcal{F}_{0} : Ob(\mathcal{A}) \to Ob(\mathcal{B})$$
and for $k \geq 1$,
$$\mathcal{F}_{k} :  \Hom_{\mathcal{A}}(A_{1},A_{2}) \otimes \cdots \otimes \Hom_{\mathcal{A}}(A_{k},A_{k+1}) \to \Hom_{\mathcal{B}}(\mathcal{F}_{0}(A_{1}),\mathcal{F}_{0}(A_{k+1})).$$
They satisfy similar $\AI$-relations
\begin{align*}
&\sum_{t, i_{1} + \cdots + i_{t}=n} \operatorm_{t} \left (\mathcal{F}_{i_{1}} \left( a_{1}, \ldots, a_{i_{1}} \right), \ldots, \mathcal{F}_{i_{t}} \left( a_{i_{t-1} + 1}, \ldots, a_{n} \right) \right)\\
&=\sum_{p,q} (-1)^{\dagger} \mathcal{F}_{n-q+1} \left (a_{1}, \ldots, a_{p}, \operatorm_{q} \left( a_{p+1}, \ldots, a_{p+q} \right), \ldots, a_{n} \right).
\end{align*}
\end{defn}

We will only consider a countable family of regular immersed Lagrangians on $\Sigma$ as objects of Fukaya category.
Without loss of generality, we may assume that these curves intersect transversely away from self intersection points and there are no triple (or higher) intersections.

Since $M$ is non-compact and has cylindrical ends toward the punctures, we choose a Hamiltonian function $H$ on $M$ which is quadratic at infinity (to define wrapped Fukaya category). Let $\phi_H$ be a time one map of its Hamiltonian vector field $X_H$.
Often we will simply write $L$ instead of writing the immersion $\iota:L \to M$ for convenience.
\begin{defn}
For two immersed oriented Lagrangians $L_1,L_2$ that intersect transversely, 
$CF^*(L_1,L_2)$ is a $\Z/2$-graded vector space over $\Lambda$ generated by $\phi_H(L_1)\cap L_2$. 
Here an intersection $p \in CF^*(L_1,L_2)$ is odd
if the orientation of $T_pL_1 \oplus T_pL_2$ agrees with  that of $T_pM$ and it is even otherwise.
The self Hom space $CF^*(L, L)$ is a $\Z/2$-graded vector space over $\Lambda$ generated by $\phi_H(L)\cap L$. 
\end{defn}
It is well-known how to define a differential and $\AI$-operations in general.
We will add another object, compact immersed Lagrangian in the pair of pants, called Seidel Lagrangian $\bL$.
To run the Maurer-Cartan theory, we can work with the following Morse complex version of self-Hom space 
instead of Hamiltonian perturbation. We refer readers to  Seidel \cite{Se} for more details, and in particular the sign convention.

Let $\iota : L \rightarrow M$ be a regular {\em compact} Lagrangian immersion.
We fix a Morse function on $L$ so that its critical points are away from immersed points, and intersection points with other Lagrangians, and denote by $C^{*}_{Morse}(L)$ the resulting Morse complex of $L$
over $\Lambda$. Let us explain the association of two immersed generators for each self intersection point $p \in M$ of $\iota$.
For two branches $\tilde{L}_1,\tilde{L}_2$ of $\iota(L)$ in the neighborhood of $p$,
we can associate two local Floer generators $p \in CF(\tilde{L}_1,\tilde{L}_2)$, $\overline{p} \in CF(\tilde{L}_2, \tilde{L}_1)$.
We may choose branches so that $p$ is odd, and $\overline{p}$ is even. 
For the Seidel Lagrangian $\bL$, we define 
$$CF^*(\bL,\bL): =  C^{*}_{Morse}(\bL) \bigoplus_{p} (  \Lambda \langle p \rangle \oplus \Lambda \langle \overline{p} \rangle)$$
where the sum is over the self intersection points of $\bL$.

If $L_0, \ldots, L_k$ are immersed Lagrangians and for $w_i \in CF^*(L_{i-1}, L_{i})$ for $i=1,\cdots k$ given by
transverse intersections or immersed generators, 
an $\AI$-operation $m_k(w_1,\ldots, w_k)$ is defined by counting immersed rigid (holomorphic) polygons with convex corners at $w_1,\cdots, w_k, w_0$ with $w_0 \in CF^{|w_0|}(L_k,L_0)$, 
contributing a term $\pm T^{\omega(u)} \cdot \overline{w}_0 \in CF^{1-|w_0|}(L_0,L_k)$. We take the sum  over all such $u$ and $w_0$.  
If a polygon has a non-convex corner, it is not difficult to show that such a polygon comes with at least one parameter family of  holomorphic polygons and thus it is not rigid.



Let $\mathcal{P}$ be a pair of pants which is a three-punctured sphere $\bpp^1\setminus\{a, b, c\}$ and $\bL$ be the Seidel Lagrangian in $\mathcal{P}$ as in Figure \ref{fig:loopex}. Since $\bL$ has $3$ self-intersections, a Floer complex of $\bL$ has $8$ generators
$$CF^{*}(\bL,\bL) = \langle e, X, Y, Z, \OL{X}, \OL{Y}, \OL{Z}, p \rangle$$
where $e$ and $p$ are a minimum and a maximum of the chosen Morse function on $\bL$.

\begin{thm}\cite{CHL}
Assume that the areas of two triangles bounded by $\bL$ in $\mathcal{P}$ are the same.
A linear combination $b = xX + yY + zZ \in CF^{*}(\bL,\bL) \otimes_\Lambda \Lambda \langle x,y,z \rangle$  is
a weak bounding cochain. Namely, we have
\begin{equation} \label{eqn:wu}
\sum_{i=0}^{\infty} \operatorm_{k}(b,\ldots,b) = xyz \cdot e.
\end{equation}
\end{thm}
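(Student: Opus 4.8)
The plan is to verify the weak Maurer-Cartan equation \eqref{eqn:wu} by a direct combinatorial count of immersed holomorphic polygons bounded by $\bL$ with corners at the three self-intersections $X,Y,Z$. Since $b = xX+yY+zZ$ is a sum of odd-degree generators, the expression $\sum_k \operatorm_k(b,\dots,b)$ expands into a sum over all $k\ge 1$ and all monomials in $x,y,z$ of the contributions of rigid polygons whose corners (read cyclically) spell out the corresponding sequence of $X$'s, $Y$'s, and $Z$'s, each weighted by $\pm T^{\text{area}}$ and outputting some generator of $CF^*(\bL,\bL)$. The claim is that the only surviving contributions are those producing the output $e$ with total coefficient $xyz$. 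First I would recall (from \cite{CHL} or \cite{Se}, with the chosen perturbation/Morse model of $CF^*(\bL,\bL)$) that the relevant polygons are the immersed discs in $\CP$ with convex corners at the $X,Y,Z$-points; by the remark at the end of Section 3, any polygon with a non-convex corner moves in a positive-dimensional family and hence is not rigid, so only convex-cornered discs count.

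The key geometric input is the explicit picture of $\bL$ in the pair of pants: $\bL$ has exactly three self-intersection points and bounds exactly two embedded triangles, one "on each side," and by hypothesis these two triangles have equal area. Each of these two triangles has its three corners at $X$, $Y$, $Z$ (in one of the two cyclic orders), so each contributes a term $\pm T^{A}\, e$ to $\operatorm_3(b,b,b)$, where $A$ is the common area and the corner labels multiply to $xyz$. I would then check that, with the standard sign and orientation conventions of \cite{CHL}, the two triangle contributions come with the same sign, so they add to $2\,T^A xyz \cdot e$; after rescaling (absorbing $T^A$ into the normalization, i.e. eventually setting $T=1$, or more precisely working with the convention in \cite{CHL} that already produces the coefficient $1$) this gives the desired $xyz\cdot e$ on the right-hand side. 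The next step is to argue that these two triangles are the \emph{only} rigid polygons with all corners among $\{X,Y,Z\}$: any larger immersed polygon bounded by $\bL$ with convex $X,Y,Z$-corners would either wrap around a puncture (hence pick up a non-convex corner or fail to close up) or sweep out a one-parameter family, so it is not rigid. This is where I would lean on the low genus and the specific combinatorics of $\bL$ in $\CP$ — essentially the same computation already performed in \cite{CHL}, so I would cite it rather than redo it.

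Finally I would handle the bookkeeping that turns "two triangles" into the precise statement $\sum_k \operatorm_k(b,\dots,b) = xyz\cdot e$: namely, confirm that $\operatorm_1(b) = 0$ (no rigid bigons with a single $X$, $Y$, or $Z$ corner), $\operatorm_2(b,b)=0$ (no rigid triangles with only two special corners and a third output corner — the output would have to be $\OL X,\OL Y$ or $\OL Z$, and one checks no such disc exists, or it does but cancels), and that $\operatorm_k(b,\dots,b)=0$ for $k\ge 4$ by the rigidity/family argument above, together with the fact that any disc contributing to $\operatorm_k$ with $k\ge 4$ would need $\ge 4$ convex corners among only three available intersection points, forcing a repeated visit and hence extra moduli. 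Assembling these vanishings with the $k=3$ computation yields \eqref{eqn:wu}.

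\textbf{Main obstacle.} The hard part is not the triangle count itself — that is essentially a picture — but rigorously ruling out \emph{all} other rigid immersed polygons bounded by $\bL$ (for every $k$ and every monomial in $x,y,z$), and getting the signs to work out so that the two triangles reinforce rather than cancel. Both of these are exactly the points treated carefully in \cite{CHL}, so in practice the proof reduces to invoking that reference after setting up the matching conventions; the genuine content here is identifying $W=xyz$ as the resulting disc potential, which follows once the two equal-area triangles are identified.
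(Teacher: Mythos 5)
The paper does not actually prove this theorem; it is cited from \cite{CHL} with only the one-line gloss that "the only nontrivial $\AI$-operation is $m_3(X,Y,Z)$, which counts the front triangle ... passing through $e$" — i.e., a \emph{single} triangle. Your overall strategy (enumerate rigid immersed $X,Y,Z$-cornered polygons bounded by $\bL$ and match the output against $xyz\cdot e$) is right, but there are concrete gaps. First, you assert that \emph{both} triangles contribute to the $e$-coefficient with the same sign, giving $2T^A\,xyz\cdot e$, and then wave this away by "rescaling/normalization." No normalization turns $2$ into $1$, and a generic point constraint on $\partial\bL$ (which is what the $e$-coefficient requires) lies on the boundary of exactly one of the two triangles, so only one contributes. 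The factor of $2$ in your count is simply wrong. Second, you never mention the non-trivial spin structure (equivalently the $(-1)$-holonomy on $\bL$), which the paper explicitly flags as necessary; without it the signs do not work out.

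The more serious structural gap is that your proof never uses the equal-area hypothesis. A proof of a theorem that never invokes one of its stated assumptions is almost certainly missing the hard part. In \cite{CHL} the equal-area condition is exactly the $\iota$-invariance of $\bL$ under the anti-symplectic involution exchanging the two triangles, and it is what forces the cancellations in the \emph{other} output channels ($\OL X, \OL Y, \OL Z$) — precisely the step you hedge with "one checks no such disc exists, or it does but cancels." That hedge is the core of the weak Maurer--Cartan verification, not a footnote, and it is where the two triangles (with opposite signs from the involution) enter and cancel when, and only when, their areas are equal. Your treatment reduces this to an assertion. Finally, the reason $m_k(b,\dots,b)=0$ for $k\ge 4$ in the pair of pants is that any larger immersed polygon bounded by $\bL$ must enclose a puncture, and hence has infinite symplectic area and does not exist as a Floer disc — not "extra moduli" or "failure to close up." As written, your argument identifies the potential correctly but leaves the genuine content (the sign conventions, the role of the involution/equal-area assumption, and the vanishing of the non-unit outputs) to a citation you have not matched against the claim.
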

The only nontrivial $\AI$-operation is $m_k(X,Y,Z)$ which counts the front triangle bounded by $\bL$
passing through $e$, and we have the mirror potential $W=xyz$ from the pair of pants $\mathcal{P}$.

For weakly unobstructed Lagrangian $\bL$ in symplectic manifold $M$, localized mirror functor formalism \cite{CHL} gives an $\AI$-functor from Fukaya category of $M$ to the matrix factorization category of $W^\bL$ 
(here we follow the convention in \cite{CHLnc}).

\begin{thm}\cite{CHL}\label{thm:lmf}
Let $W^{\mathbb{L}}$ be the disc potential of $\mathbb{L}$. The localized mirror functor $\LocalF : \mathcal{WF}(X) \to \mathcal{MF}(W^{\bL})$ is defined as follows.
\begin{itemize}
\item For a given Lagrangian $L$,  mirror object $\LocalF(L) $
is  given by the following matrix factorization $M_L$ 
$$\big(CF(L,\mathbb{L}), -\operatorm_{1}^{0,b}\big), \textrm{where} \;\;\operatorm_1^{0,b}(x) =\sum_{l=0}^{\infty} \operatorm_k(x,\underbrace{b, \ldots, b}_{l}).$$
\item Higher component of the $\AI$-functor 
$$\LocalF_k : CF(L_{1},L_{2}) \otimes \cdots \otimes CF(L_{k},L_{k+1}) \to \mathcal{MF}(M_{L_{1}},M_{L_{k+1}})$$
is given by
$$\LocalF_k\left(f_{1}, \ldots,f_{k}\right) := \operatorm_{k+1}^{0,\dots,0,b}\left(f_1,\dots,f_k,\bullet\right)
= \sum_{l=0}^{\infty} \operatorm_{k+1+l}(f_{1},\ldots,f_{k},\bullet,\underbrace{b, \ldots, b}_{l}).$$
Here the input $\bullet$ is an element of $M_{L_{k+1}} = CF(L_{k+1},\mathbb{L})$.
\end{itemize}
Then, $\LocalF$ defines an $A_{\infty}$-functor, which is cohomologically injective on $\bL$.
\end{thm}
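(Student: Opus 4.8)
The plan is to reconstruct the localized mirror functor of \cite{CHL} in the present setting via the curved Yoneda formalism; the only inputs needed are the weak Maurer--Cartan equation \eqref{eqn:wu}, i.e.\ $\sum_{k}\operatorm_k(b,\dots,b)=W^{\bL}\cdot e$, strict unitality of $e\in CF^{\ast}(\bL,\bL)$ in the model used (arranged in the Morse/minimal model, or handled via a homotopy unit), the $\AI$-relations of $\mathcal{WF}(\mathcal{P})$, and the fact that in $\mathcal{MF}(W^{\bL})$ only $\operatorm_1$ (the matrix-factorization differential $Df=d\circ f-(-1)^{|f|}f\circ d$) and $\operatorm_2$ (signed composition) are nonzero. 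I take $L$ weakly unobstructed with vanishing disc potential — automatic for the exact Lagrangians and for the immersed loops we use once they carry the appropriate holonomy — so the target is $\mathcal{MF}(W^{\bL})$ itself.

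First I would check that $\LocalF(L)=(CF(L,\bL),-\operatorm_1^{0,b})$ is a matrix factorization of $W^{\bL}$. Applying the $\AI$-relation of $\mathcal{WF}(\mathcal{P})$ to the string $(x,b,b,\dots,b)$, with $x\in CF(L,\bL)$, and sorting the nested terms by which consecutive sub-block of inputs carries the inner operation, yields
$$\sum_{k,l\ge 0}\operatorm_{1+l}\bigl(\operatorm_{1+k}(x,b^{\,k}),b^{\,l}\bigr)\;+\;(\text{terms whose inner operation is a pure block of }b\text{'s})=0 .$$
The pure-$b$ inner operations re-sum, by \eqref{eqn:wu}, to $W^{\bL}\cdot e$ inserted immediately to the right of $x$, and strict unitality ($\operatorm_2(x,e)=(-1)^{|x|}x$ and $\operatorm_{2+l}(x,e,b^{\,l})=0$ for $l\ge 1$) collapses this to $\pm W^{\bL}x$. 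Hence $(\operatorm_1^{0,b})^{2}=\pm W^{\bL}\cdot\id$, and the sign conventions — the overall $-$ sign together with the unit sign — are chosen precisely so that $(-\operatorm_1^{0,b})^{2}=W^{\bL}\cdot\id$; the underlying $\Z/2$-graded module is free over the coefficient ring, graded in $x,y,z$ by the $X,Y,Z$-weights appearing in $b$.

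Next I would verify the $\AI$-functor relations for $\{\LocalF_k\}$ by a bookkeeping argument: substituting $\LocalF_k(f_1,\dots,f_k)(\bullet)=\sum_{l\ge 0}\operatorm_{k+1+l}(f_1,\dots,f_k,\bullet,b^{\,l})$ and using $\operatorm^{\mathcal{MF}}_t=0$ for $t\neq 1,2$, each term on either side of the relation becomes some $\pm\operatorm_a$ evaluated on $(f_1,\dots,f_n,\bullet,b,\dots,b)$ with a definite nesting; the nestings whose inner operation lies among the $f$'s reproduce exactly the terms $\LocalF_{n-q+1}(\dots,\operatorm_q(f_{p+1},\dots,f_{p+q}),\dots)$ on the right, and the nestings whose inner operation is a pure block of $b$'s give, via \eqref{eqn:wu} and unitality, exactly the $W^{\bL}$-contributions already built into $\operatorm_1^{\mathcal{MF}}$ (through $d^2=W^{\bL}$) and $\operatorm_2^{\mathcal{MF}}$, so the functor relation reduces identically to an $\AI$-relation of $\mathcal{WF}(\mathcal{P})$. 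For cohomological injectivity on $\bL$: unitality gives $\LocalF_1(e)(\bullet)=\operatorm_2(e,\bullet)=\bullet$, so $\LocalF_1(e)=\id_{M_{\bL}}$, and then $[\LocalF_1]\colon HF(\bL,\bL)\to H^{\ast}\bigl(\Hom_{\mathcal{MF}}(M_{\bL},M_{\bL})\bigr)$ is a unital $\Z/2$-graded algebra map; since $\bL$ bounds no bigons, $HF(\bL,\bL)=CF(\bL,\bL)$, and one finishes by a finite check — either computing $\Hom_{\mathcal{MF}}(M_{\bL},M_{\bL})$ and its cohomology and seeing that the images of $e,X,Y,Z,\OL X,\OL Y,\OL Z,p$ are linearly independent, or invoking the non-degenerate Frobenius pairing on $CF(\bL,\bL)$ together with $\LocalF_1(e)=\id$ and pairing-compatibility to conclude $\ker[\LocalF_1]=0$.

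I expect the main obstacle to be the Koszul-sign bookkeeping in the functor relations in the curved ($b$-deformed) setting, together with securing strict unitality of $e$ in whatever model is used; once these are pinned down, the rest is formal re-summation of the $\AI$-relations. The unrelated issue of working over $\Lambda$ and evaluating $T=1$ does not arise here — it is addressed separately in Appendix \ref{sec:T=1}.
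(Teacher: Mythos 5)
The paper cites Theorem \ref{thm:lmf} from \cite{CHL} without giving a proof of its own, so there is no in-paper argument to compare against; your reconstruction follows the broad outline of the proof in \cite{CHL} (weak Maurer--Cartan equation, $\AI$-relations, unitality, packaged as a curved Yoneda functor), and identifying those as the only needed inputs is correct.

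Two steps in your sketch are off. The claim that the overall minus sign is ``chosen precisely so that $(-\operatorm_1^{0,b})^{2}=W^{\bL}\cdot\id$'' cannot be right: negation is killed by squaring, so $(-\operatorm_1^{0,b})^{2}=(\operatorm_1^{0,b})^{2}$ identically, and the sign of the square is fixed by the $\AI$-sign conventions and the unit axiom alone. The leading minus in the definition is there so that the \emph{functor} relations come out compatible with the target conventions $Df=d\circ f-(-1)^{|f|}f\circ d$ and $\operatorm_2(f_1,f_2)=(-1)^{|f_1|}f_1\circ f_2$, not to repair the sign of $d^2$. Separately, your collapse of the pure-$b$ terms uses strict unitality of $e$, in particular $\operatorm_{2+l}(x,e,b^{\,l})=0$ for $l\ge 1$; in the Morse/immersed model used here and in \cite{CHL} the unit is only a homotopy unit, so these are not literal identities and the re-summation has to run through the homotopy-unit apparatus (or a strictly unital minimal model), which is exactly where the bookkeeping in the cited proof actually has content. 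Your cohomological-injectivity argument via $\LocalF_1(e)=\id_{M_{\bL}}$ likewise needs this repaired before it can be made precise.
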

\begin{figure}[h]
\includegraphics[scale=0.6]{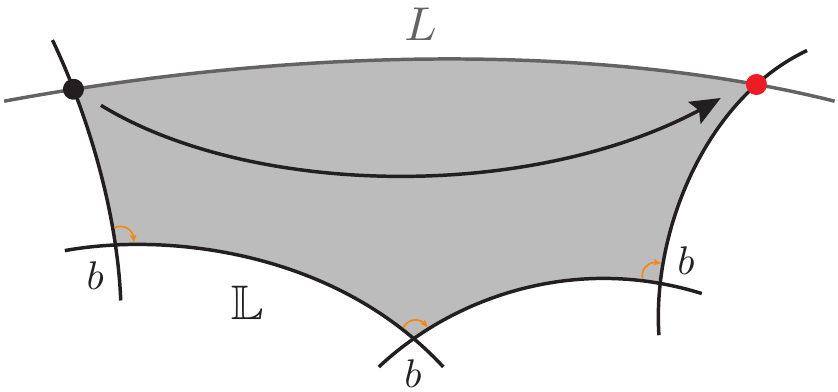}
\centering
\caption{Reading off entries of matrix factorization from decorated strips}
\label{fig:strip}
\end{figure}
The above theorem prescribes how to find the mirror matrix factorization from geometry.
Given a curve $L$, take the $\Z/2$-graded vector space generated by the intersection $L \cap \mathbb{L}$.
Count decorated strips bounded by $L$ and $\mathbb{L}$ as in Figure \ref{fig:strip}. Here decorated means that
we allow strips to have arbitrary many corners (of $X,Y,Z$), and the resulting count records the labels of them (by $x,y,z$).
By index reasons, each strip should map an even (resp. odd) intersection to an odd (resp. even) intersection.
If we record all these data in two matrices, they become the matrix factorization of the disc potential function $W_\bL$
corresponding to $L$.

In our case, a disc potential of Seidel Lagrangian $\bL$ is $W^{\bL} = xyz$, and three simplest non-compact Lagrangians
connecting different punctures are mapped to the three factorizations $x \cdot yz, y \cdot xz, z \cdot zy$.
From this, $\LocalF$ becomes an quasi-equivalence
recovering the results of \cite{AAEKO} for $\mathcal{P}$.
\begin{thm}
$\WF(\mathcal{P})$ is derived equivalent to $\MF(xyz)$.
\end{thm}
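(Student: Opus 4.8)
The statement to prove is the equivalence $\WF(\mathcal{P}) \simeq \MF(xyz)$, and the plan is to deduce it from the localized mirror functor $\LocalF : \WF(\mathcal{P}) \to \mathcal{MF}(xyz)$ of Theorem \ref{thm:lmf} together with the split-generation results of Abouzaid-Auroux-Efimov-Katzarkov-Orlov \cite{AAEKO}. First I would record that $\LocalF$ is an honest $\AI$-functor and that it is cohomologically injective on the Seidel Lagrangian $\bL$ (this is the last assertion of Theorem \ref{thm:lmf}). The key computational input is to identify the images under $\LocalF$ of a convenient set of objects on the $A$-side. Following the discussion after Theorem \ref{thm:lmf}, one takes the three simplest non-compact Lagrangians $L_1, L_2, L_3$ joining distinct punctures of $\mathcal{P}$; counting the decorated strips bounded by each $L_i$ and $\bL$ (as in Figure \ref{fig:strip}) shows that $\LocalF(L_i)$ is, up to isomorphism in $\underline{\MF}(xyz)$, one of the rank-one factorizations $x\cdot(yz)$, $y\cdot(xz)$, $z\cdot(xy)$. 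These computations are purely combinatorial counts of immersed bigons/triangles and are routine, so I would only sketch one case and assert the others by the evident symmetry of $\mathcal{P}$ and $\bL$.

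Next I would invoke the two generation statements. On the $B$-side, \cite{AAEKO} shows that the matrix factorizations $x\cdot(yz)$ and $y\cdot(xz)$ (equivalently any two of the three) split-generate $\underline{\MF}(xyz)$. On the $A$-side, the two non-compact arcs connecting $\{0,1\}$ and $\{1,\infty\}$ split-generate $\WF(\mathcal{P})$ (again by \cite{AAEKO}, or by the stop-removal/Viterbo-type arguments available for punctured surfaces). Since $\LocalF$ is an $\AI$-functor sending a split-generating set to a split-generating set, to conclude that $\LocalF$ is a quasi-equivalence it suffices to check that $\LocalF$ is a quasi-isomorphism on hom-complexes between these generators, i.e.\ that for $i,j \in \{1,2,3\}$ the chain map
$$
\LocalF_1 : CF^*(L_i, L_j) \longrightarrow \Hom_{\mathcal{MF}(xyz)}\big(\LocalF(L_i), \LocalF(L_j)\big)
$$
induces an isomorphism on cohomology. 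Both sides are finite-dimensional and can be computed explicitly: the left side is wrapped Floer cohomology of two arcs in the pair of pants, and the right side is $\Hom$ in the homotopy category of matrix factorizations between two of the standard rank-one factorizations of $xyz$, which is a short linear-algebra computation. Matching the two (including the product structure, which follows from the $\AI$-functor property once $\LocalF_1$ is a quasi-isomorphism) gives that $\LocalF$ is fully faithful on the subcategory generated by $\{L_1,L_2,L_3\}$, hence on all of $\WF(\mathcal{P})$ after passing to split-closed derived (or twisted-complex) completions; essential surjectivity then follows from split-generation on the $B$-side.

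The main obstacle I expect is \emph{not} the split-generation inputs (these are quoted from \cite{AAEKO}) but rather the careful bookkeeping needed to make $\LocalF_1$ a genuine quasi-isomorphism with the correct signs and the correct coefficient ring. Concretely: (i) the Fukaya category is a priori defined over the Novikov field $\Lambda$ while $\MF(xyz)$ is over $\C$, so one must justify the evaluation $T=1$ — for the non-compact exact arcs this is harmless (each holomorphic polygon has fixed energy), so here the issue is mild, but it must be stated; (ii) one must verify that the polygon counts defining $\LocalF_1$ on the relevant arcs terminate and reproduce exactly the morphism spaces of matrix factorizations, matching gradings ($\Z/2$) and the geometric versus algebraic sign conventions from \cite{CHL}, \cite{CHLnc}; and (iii) one should confirm that split-closure is respected, i.e.\ that $\WF(\mathcal{P})$ and $\MF(xyz)$ are taken in the same (split-closed, pre-triangulated) sense so that generation by a finite set implies the equivalence. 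I would handle (i)--(iii) by citing the general formalism of \cite{CHL} for (i),(ii) and by recalling the standard fact that a split-generating-set-preserving $\AI$-functor that is fully faithful on a split-generating set is a quasi-equivalence for (iii), leaving the one explicit strip count and the one explicit $\underline{\MF}$-hom computation as the only hands-on verifications.
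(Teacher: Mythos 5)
Your proposal matches the paper's approach: the paper (immediately before the theorem statement) cites that $\LocalF$ sends the three simplest non-compact arcs to the factorizations $x\cdot yz$, $y\cdot xz$, $z\cdot xy$, and that combined with the split-generation results of \cite{AAEKO} this makes $\LocalF$ a quasi-equivalence. Your additional bookkeeping about $T=1$, signs, and split-closure is sound and fills in details the paper leaves implicit (deferring to \cite{CHL}), but the route is the same.
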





\section{Matrix Factorizations Arising from Modules: Rank $1$ Case}\label{sec:mod1}
Burban-Drozd classified maximal Cohen-Macaulay $A$-modules for $A=\left.\mathbb{C}[[x,y,z]]\right/(xyz)$ by the band data
in Theorem \ref{thm:BD}. In fact, a given band data produces an $A$-module $\tilde{M}$ which may {\em not} be maximal Cohen-Macaulay,
but it is known that any Noetherian $A$-module $\tilde{M}$ can be extended to a maximal Cohen-Macaulay module $\tilde{M}^\dagger$ that is called Macaulayfication of $\tilde{M}$. Once we obtain the maximal Cohen-Macaulay module, we can apply the Eisenbud's theorem to obtain the corresponding matrix factorizations.

Therefore, we need to carry out Macaulayfication of $A$-modules corresponding to band data, which turns out to
be quite subtle process. In this section,  we   introduce a combinatorial method to carry out  Macaulayfication for all $A$-modules in the list of Burban-Drozd. Namely we will introduce what we call a generator diagram and explain how to perform Macaulayfication using such a diagram.
In this section, we give a gentle introduction to this method by explaining the rank one ($\tau\mu=1$) cases in the list.
Higher rank cases are considerably more complicated, and will be discussed in Section \ref{sec:higherrank}.

%
%
%

\subsection{Macaulayfication and Macaulayfying Elements}

Recall from Section \ref{sec:maci} that Macaulayfication of $\tilde{M}$ is defined as  $\tilde{M}^\dagger := \ho_A(\ho_A(\tilde{M}, K_A), K_A)$ for the canonical module $K_A$.
It is more convenient to find its Macaulayfication using  \emph{Macaulayfying elements}.
%

\begin{defn}
For an $A$-submodule $\tilde{M}$ of a free $A$-module $A^r$, if there is an element $F\in A^r\setminus\tilde{M}$ such that $xF$, $yF$, $zF\in \tilde{M}$, we call it a \emph{Macaulayfying element} of $\tilde{M}$ in $A^r$.
\end{defn}

\begin{prop}\label{prop:Macaulayfying}
For an $A$-submodule $\tilde{M}$ of a free $A$-module $A^r$, the following hold:
\begin{enumerate}
\item $\tilde{M}$ is maximal Cohen-Macaulay if and only if there is no Macaulayfying element of $\tilde{M}$ in $A^r$. We have $\tilde{M}^\dagger \cong \tilde{M}$ in this case.
\item $\tilde{M}^\dagger \cong \left<\tilde{M},F\right>_A^\dagger$ holds for any Macaulayfying element $F$ of $\tilde{M}$ in $A^r$.
\end{enumerate}
\end{prop}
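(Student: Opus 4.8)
The plan is to reduce both assertions to a single fact: the Macaulayfication functor $(-)^{\dagger}$ is left adjoint to the forgetful functor $\CM(A)\to A\text{-mod}$, as recalled just above, so that the natural map $\eta_{\tilde M}\colon \tilde M\to \tilde M^{\dagger}$ realizes $\tilde M^{\dagger}$ as the "universal" maximal Cohen--Macaulay module receiving $\tilde M$. Concretely, I would first establish that $\eta_{\tilde M}$ is injective and that its cokernel is supported on the maximal ideal. This follows because $A$ is reduced of dimension two, so the non-zero-divisors of $A$ act injectively; tensoring the defining formula $\tilde M^{\dagger}=\ho_A(\ho_A(\tilde M,K_A),K_A)$ with $Q(A)$ (or localizing at any non-maximal prime) gives an isomorphism, since over such localizations $\tilde M$ is already maximal Cohen--Macaulay of depth equal to dimension and double-dualizing into $K_A$ changes nothing. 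Hence $\tilde M^{\dagger}/\tilde M$ is a finite-length module, i.e. annihilated by a power of $\fm=(x,y,z)$.

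For \emph{(1)}, suppose $\tilde M$ is maximal Cohen--Macaulay. Then $\eta_{\tilde M}$ is an isomorphism (apply the universal property to $\id\colon \tilde M\to\tilde M$ and to $\eta$ itself; both composites are identities by uniqueness), so $\tilde M^{\dagger}\cong\tilde M$. Moreover there can be no Macaulayfying element: if $F\in A^{r}\setminus\tilde M$ had $xF,yF,zF\in\tilde M$, then $\langle\tilde M,F\rangle_A/\tilde M$ would be a non-zero submodule of $A^{r}/\tilde M$ killed by $\fm$, hence a copy of $\bcc$; but $A^{r}/\tilde M$ has depth $\geq 1$ when $\tilde M$ is maximal Cohen--Macaulay of full rank in $A^{r}$ — equivalently $\ho_A(\bcc,A^{r}/\tilde M)=0$ — using the depth lemma applied to $0\to\tilde M\to A^{r}\to A^{r}/\tilde M\to 0$ together with $\dep(\tilde M)=\dep(A^{r})=2$. (One must be slightly careful here: this uses that $\tilde M$ has rank $r$, i.e. $A^{r}/\tilde M$ is torsion; in the situations of the paper $\tilde M$ is a full-rank submodule, and I would state the proposition under that hypothesis or note that a Macaulayfying element forces a torsion quotient anyway since $xF,yF,zF\in\tilde M$ makes $F$ torsion modulo $\tilde M$.) Conversely, if $\tilde M$ is not maximal Cohen--Macaulay, then $\eta_{\tilde M}$ is not surjective, and since $\tilde M^{\dagger}$ embeds in $A^{r}$ as well (being a second syzygy / reflexive, one can realize $\tilde M^{\dagger}\hookrightarrow A^{r}$ extending the inclusion of $\tilde M$), any element of $\tilde M^{\dagger}\setminus\tilde M$ whose $\fm$-multiples land in $\tilde M$ — which exists because $\tilde M^{\dagger}/\tilde M$ is finite length and nonzero, so it has a nonzero socle element — is a Macaulayfying element.

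For \emph{(2)}, let $F$ be a Macaulayfying element and set $\tilde M'=\langle\tilde M,F\rangle_A$. The inclusions $\tilde M\subseteq\tilde M'\subseteq A^{r}$ induce, after applying $(-)^{\dagger}$, maps $\tilde M^{\dagger}\to \tilde M'^{\dagger}$ and I claim this is an isomorphism. One way: $\tilde M'/\tilde M$ is annihilated by $\fm$ hence finite length, so $Q(A)\otimes_A\tilde M\xrightarrow{\sim}Q(A)\otimes_A\tilde M'$; applying the functor $\ho_A(\ho_A(-,K_A),K_A)$, which only depends on the module up to finite-length modification in dimension two (precisely because $\ho_A(\bcc,K_A)=0$ as $\dim K_A=2>0$, so finite-length modules are killed by $\ho_A(-,K_A)$, and $\ho_A(-,K_A)$ of the two modules agree), yields $\tilde M^{\dagger}\cong\tilde M'^{\dagger}$. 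Alternatively, and perhaps cleaner to write: by the universal property, a map $\tilde M^{\dagger}\to N$ for $N\in\CM(A)$ is the same as a map $\tilde M\to N$; such a map extends uniquely over $\tilde M'=\tilde M+AF$ because the obstruction to extending lives in an $\ext$ group against a finite-length module and $N$ has positive depth, so $\ho_{A\text{-mod}}(\tilde M,N)\cong\ho_{A\text{-mod}}(\tilde M',N)$, whence $\tilde M^{\dagger}$ and $\tilde M'^{\dagger}$ corepresent the same functor.

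The main obstacle I anticipate is the careful handling of the depth/finite-length bookkeeping: showing $\ho_A(-,K_A)$ (equivalently $(-)^{\dagger}$) is insensitive to adjoining finitely many Macaulayfying elements really is the crux, and it rests on the vanishing $\ho_A(\bcc,K_A)=\ext^0=0$ coming from $\dim_{Kr}A=2$, plus the fact that the relevant cokernels are finite length. Everything else is formal manipulation of the adjunction. I would organize the write-up so that the finite-length lemma ("$(-)^{\dagger}$ kills finite-length modifications") is isolated first, and then (1) and (2) fall out in a few lines each.
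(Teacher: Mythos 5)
Your proposal is correct but takes a genuinely different route from the paper's. The paper proves part (1) via local cohomology: it cites \cite{BD08} Corollary 2.23 for the equivalence ``$\tilde M$ is MCM iff $H^0_{\fm}(\tilde M)=H^1_{\fm}(\tilde M)=0$,'' then uses the long exact sequence of $0\to\tilde M\to A^r\to A^r/\tilde M\to 0$ and the fact that $A^r$ is MCM to reduce this to the vanishing of $H^0_{\fm}(A^r/\tilde M)$, and identifies that module with the classes of Macaulayfying elements; for the rest it simply cites \cite{BD08} Theorem 2.18 and \cite{BD17} Lemma 1.5. You instead work from the adjunction $(-)^\dagger\dashv$ forgetful together with the depth lemma and the vanishing $\Ext^{<2}_A(\bcc,K_A)=0$, isolating the cleaner-to-reuse statement that $(-)^\dagger$ is insensitive to finite-length modifications --- which is, in substance, what the cited Lemma 1.5 of \cite{BD17} supplies. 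Your version is more self-contained, at the cost of being longer; the paper's is terser but outsourced.

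Two points in your write-up deserve tightening. First, the parenthetical justification ``being a second syzygy / reflexive, one can realize $\tilde M^\dagger\hookrightarrow A^r$'' is thinner than the rest of the argument: the cleaner route is to apply the adjunction to the inclusion $\tilde M\hookrightarrow A^r$ (noting $A^r\in\CM(A)$) to obtain a unique $\iota\colon\tilde M^\dagger\to A^r$ with $\iota\circ\eta$ the inclusion, and then observe that $\ker\iota$ meets $\eta(\tilde M)$ trivially, hence injects into the finite-length module $\tilde M^\dagger/\eta(\tilde M)$; since $\tilde M^\dagger$ is MCM it has depth $\ge 1$ and no nonzero finite-length submodule, so $\iota$ is injective. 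Second, the full-rank caveat is unnecessary: the depth lemma applied to $0\to\tilde M\to A^r\to A^r/\tilde M\to 0$ with $\dep(\tilde M)=\dep(A^r)=2$ already gives $\dep(A^r/\tilde M)\ge 1$ (and the quotient being zero is a non-issue), so the socle is trivial regardless of any rank hypothesis. With those two repairs the argument is complete.
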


\begin{proof}
(1) Recall from Corollary 2.23 of \cite{BD08} that $\tilde{M}$ is maximal Cohen-Macaulay if and only
if $H_{\left\{\mathrm{m}\right\}}^i \left(\tilde{M}\right)=0$ for $i=0,1$, where $\mathrm{m} = \left(x,y,z\right)$
is the maximal ideal of $A$. By the long exact sequence
$$
\begin{tikzcd}[column sep = 10pt] 
  0 \arrow[r] & H_{\left\{\mathrm{m}\right\}}^0 \left(\tilde{M}\right) \arrow[r] & H_{\left\{\mathrm{m}\right\}}^0 \left(A^r \right) \arrow[r] & H_{\left\{\mathrm{m}\right\}}^0 \left(A^r \left/ \tilde{M} \right.\right) \arrow[r] & H_{\left\{\mathrm{m}\right\}}^1 \left(\tilde{M}\right) \arrow[r] & H_{\left\{\mathrm{m}\right\}}^1 \left(A^r \right) \arrow[r] & \cdots
\end{tikzcd}
$$
and the fact that $A^r$ is maximal Cohen-Macaulay, it is also equivalent to
say that$$
H_{\left\{\mathrm{m}\right\}}^0 \left(A^r \left/ \tilde{M} \right.\right)
\cong \left\{ \left[F\right]\in A^r \left/\tilde{M}\right.
~\left| ~\mathrm{m}^t \left[F\right] = 0 ~\text{for some} ~t\in\mathbb{Z}_{\ge1}\right.\right\}
$$
is trivial, which proves the first part. See Theorem 2.18 of \cite{BD08} for the second statement.

(2) See Lemma 1.5 of \cite{BD17}.
\end{proof}

We will compute the Macaulayfication of an $A$-submodule $\tilde{M}$ of a free $A$-module $A^r$ by finding all Macaulayfying elements of $\tilde{M}$ in $A^r$.

\subsection{Generator Diagram and Macaulayfying Elements}

Here we illustrate a method to find Macaulayfying elements of an $A$-submodule of $A^1$ (which is also an ideal of $A$). For this, we arrange the monomials in $A$ in a lattice form as in Figure \ref{fig:LatticeDiagram}, called the \emph{lattice diagram} for $A$.
\begin{figure}[h]
\includegraphics[scale=0.6]{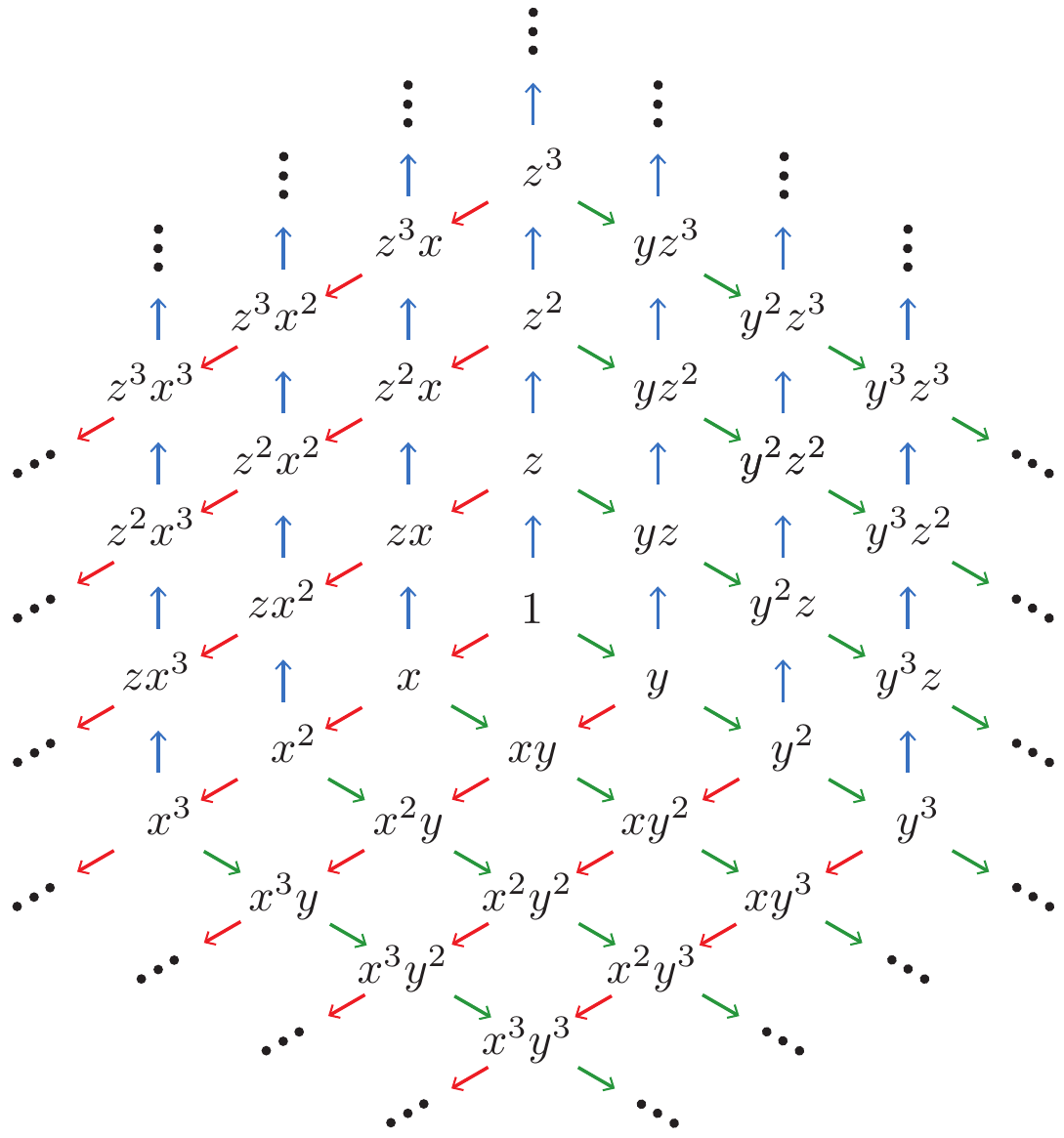}
\centering
\caption{Lattice diagram for $A=\left.\mathbb{C}[[x,y,z]]\right/(xyz)$}
\label{fig:LatticeDiagram}
\end{figure}

The colored arrows represent relations between them as elements of an $A$-module. Namely, the red, green and blue arrows indicate how each element changes when it is multiplied by $x$, $y$ and $z$, respectively. If there is no corresponding arrow, this means that the element becomes zero.

Now, let us explain how to find the Macaulayfication of the following $A$-submodule $\tilde{M}$ of $A^1$ generated by three elements
as an example.
$$
\tilde{M} = \left<zx^2+x^2y, xy^2+y^2z, yz^2+z^2x\right>_A.
$$
We can express $\tilde{M}$ on the lattice by denoting  its$A$-generators, which is described in Figure \ref{fig:DegenerateCaseGeneratorDiagram}.(a). We call it a \emph{generator diagram} of $\tilde{M}$.
\begin{figure}[h]
     \centering
     \begin{subfigure}[b]{0.45\textwidth}
         \includegraphics[scale=0.6]{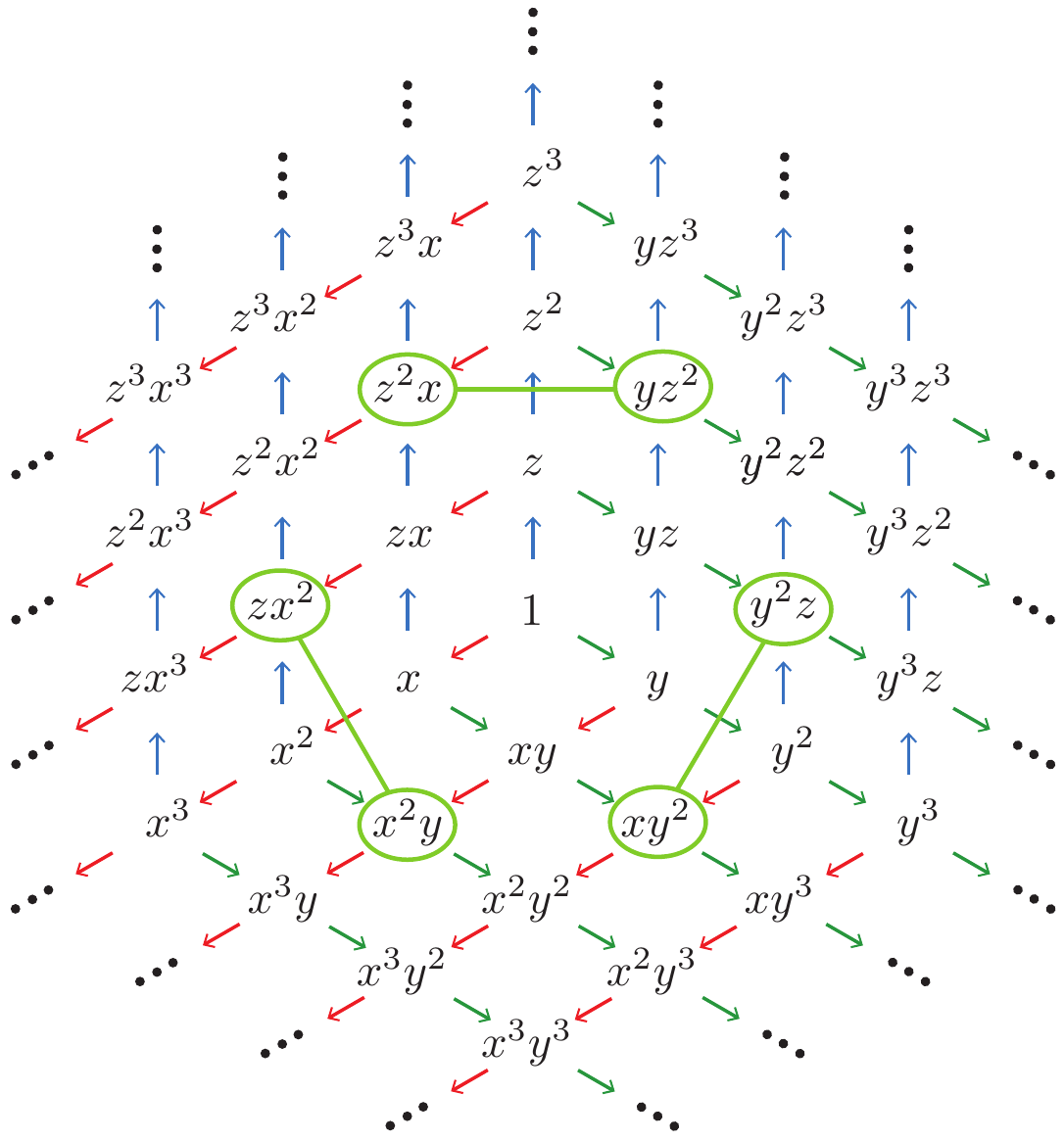}
         \caption{$\tilde{M}=\left<zx^2+x^2y, xy^2+y^2z, yz^2+z^2x\right>_A$}
         \label{fig:DegenerateCaseGeneratorDiagram1}
     \end{subfigure}
     \qquad
     \begin{subfigure}[b]{0.45\textwidth}
         \includegraphics[scale=0.6]{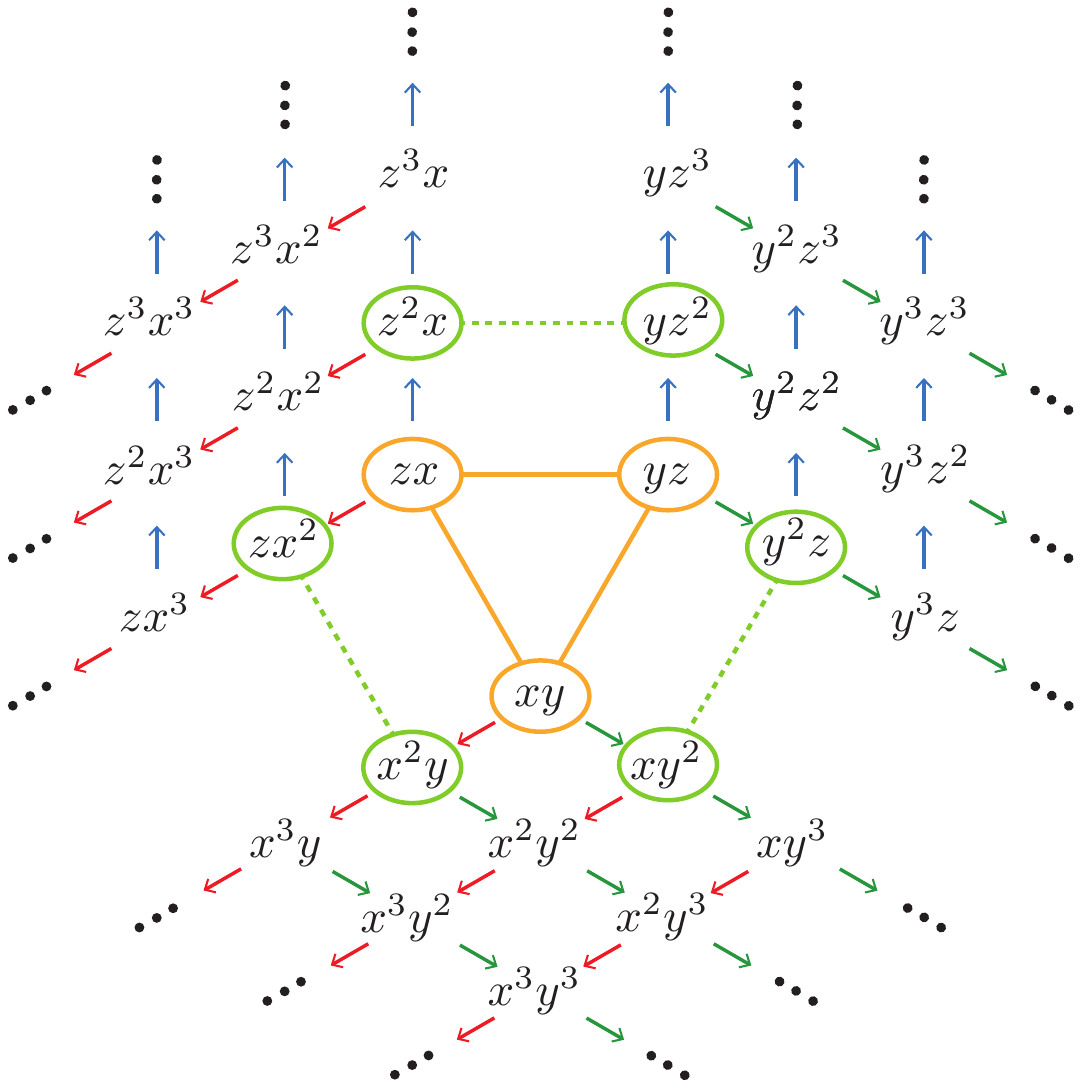}
         \caption{$\tilde{M}^\dagger=\left<xy+yz+zx\right>_A$}
         \label{fig:DegenerateCaseGeneratorDiagram2}
     \end{subfigure}
        \caption{Generator diagram in the degenerate case}
        \label{fig:DegenerateCaseGeneratorDiagram}
\end{figure}

In Figure \ref{fig:DegenerateCaseGeneratorDiagram}, each $A$-generator is marked with two (light green) circles with an edge between them.
Note that from these three generators, it is not hard to describe  the module $\tilde{M}$ as a $\mathbb{C}$-vector space:
Just multiply $x, y$ or $z$ to the generators repeatedly, which corresponds to moving the two circles (connected with an edge) along the red, green and blue arrows. A pair of circles may become one or disappear in this process, if the corresponding element after multiplication lies in the ideal $(xyz)$. Thus, we may  draw the generators only to describe $\tilde{M}$.

Let us show that $F:=xy+yz+zx\in A$ is an Macaulayfying element of $\tilde{M}$.
The element $F$ is specified in the right side of the picture by the three yellow circles and edges.
Clearly $F$ is not in $\tilde{M}$.
We can easily read from the diagram that $x \cdot F, y \cdot F, z \cdot F$ becomes three generators
$ zx^2+x^2y, xy^2+y^2z,  yz^2+z^2x$ of $\tilde{M}$ (by moving these circles and edges in three directions).
 This means that 
 $F$ is a Macaulayfying element of $\tilde{M}$ in $A^1$. 
 
 We add $F$ to $\tilde{M}$ to obtain $\left<\tilde{M},F\right>_A = \left<F\right>_A$ as
 original generators are all generated by $F$. It is easy to check that there are no more  Macaulayfying elements of $\left<F\right>_A$. 
 Therefore, Proposition \ref{prop:Macaulayfying} implies that $\tilde{M}^\dagger \cong \left<\tilde{M},F\right>_A^\dagger = \left<F\right>_A^\dagger \cong \left<F\right>_A$. Furthermore, it turns out that the map $A \rightarrow\left<F\right>_A$, $a\mapsto aF$ is an $A$-module isomorphism, which forces $\tilde{M}^\dagger \cong A$ as $A$-modules.

\subsection{Matrix Factorizations}\label{subsec:MFFromRank1Module}
Let us describe the Macaulayfication and the corresponding matrix factorization of the following $A$-modules:
\begin{defn}\label{defn:modrank1}
Rank $1$ (modified) band data is given by $l, m, n \in \mathbb{Z}$ and  $\lambda\in\mathbb{C}^*$, denoted as
$$\left(\left(l,m,n\right),\lambda,1\right)$$
We define the corresponding $A$-module as
$$
\tilde{M}\left(\left(l,m,n\right),\lambda,1\right) = \left<
x^2y^2, y^2z^2, z^2x^2, \lambda zx^{l^++2}+x^{l^-+2}y, xy^{m^++2}+y^{m^-+2}z, yz^{n^++2}+z^{n^-+2}x\right>_A
$$
where $a^+, a^-$ (satisfying $a = a^+ - a^-$) for $a\in\mathbb{Z}$ are defined as
$$
a^+:=\max\left\{0,a\right\} \quad \text{and} \quad a^- := \max\left\{0,-a\right\}
$$
We denote its Macaulayfication by 
$$ M\left(\left(l,m,n\right),\lambda,1\right) = \tilde{M}\left(\left(l,m,n\right),\lambda,1\right)^\dagger$$
\end{defn}
\begin{remark}
A general definition of modified band data, and its corresponding module will be given in Section \ref{defn:mbd}.
It is called modified as we replace sextuple $(a_1,b_1,c_1,d_1,e_1,f_1)$ of the band data in Definition \ref{defn:bd}  by the triple $(l,m,n)$.
\end{remark}

Now we compute the explicit Macaulayfication $M\left(\left(l,m,n\right),\lambda,1\right)$ using the generator diagram.
It turns out that we need to split into multiple cases according to the signs of $l$, $m$ and $n$.
We discuss each case separately and also find the corresponding matrix factorization as well.
\\
\\
\noindent\textbf{Case 1}:  $l=m=n=0$ and $\lambda=1$.
This is the "\emph{degenerate}" case. The other cases will be called non-degenerate.
This is the module that appeared in the previous subsection.
$$
M\left(\left(0,0,0\right),1,1\right) = \left<zx^2+x^2y, xy^2+y^2z, yz^2+z^2x\right>_A^\dagger = \left<xy+yz+zx\right>_A \cong A
$$
  It gives a trivial matrix factorization $(xyz)\cdot1 = 1\cdot (xyz)=xyz$ where we have $\operatorname{coker}\mathunderbar{\left(xyz\right)} = \operatorname{coker}\mathunderbar{\left(0\right)}\cong A$ as $A$-modules.
\\

\begin{figure}[h]
     \centering
     \begin{subfigure}[t]{0.45\textwidth}
         \includegraphics[scale=0.4]{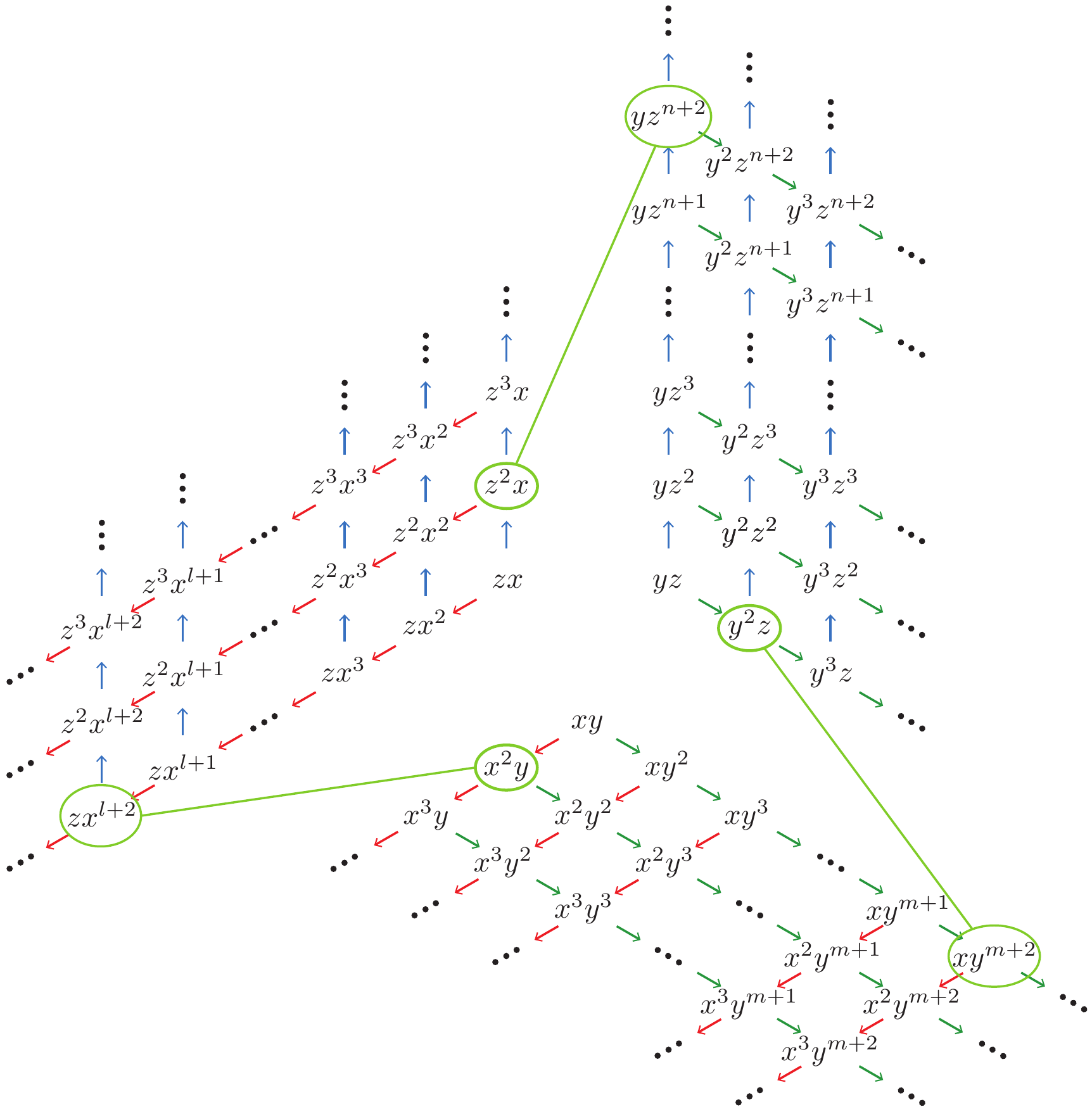}
         \caption{$l\ge0$, $m\ge0$, $n\ge0$}
         \label{fig:Case1GeneratorDiagram}
     \end{subfigure}
     \begin{subfigure}[t]{0.45\textwidth}
     \raisebox{4ex}{
         \includegraphics[scale=0.4]{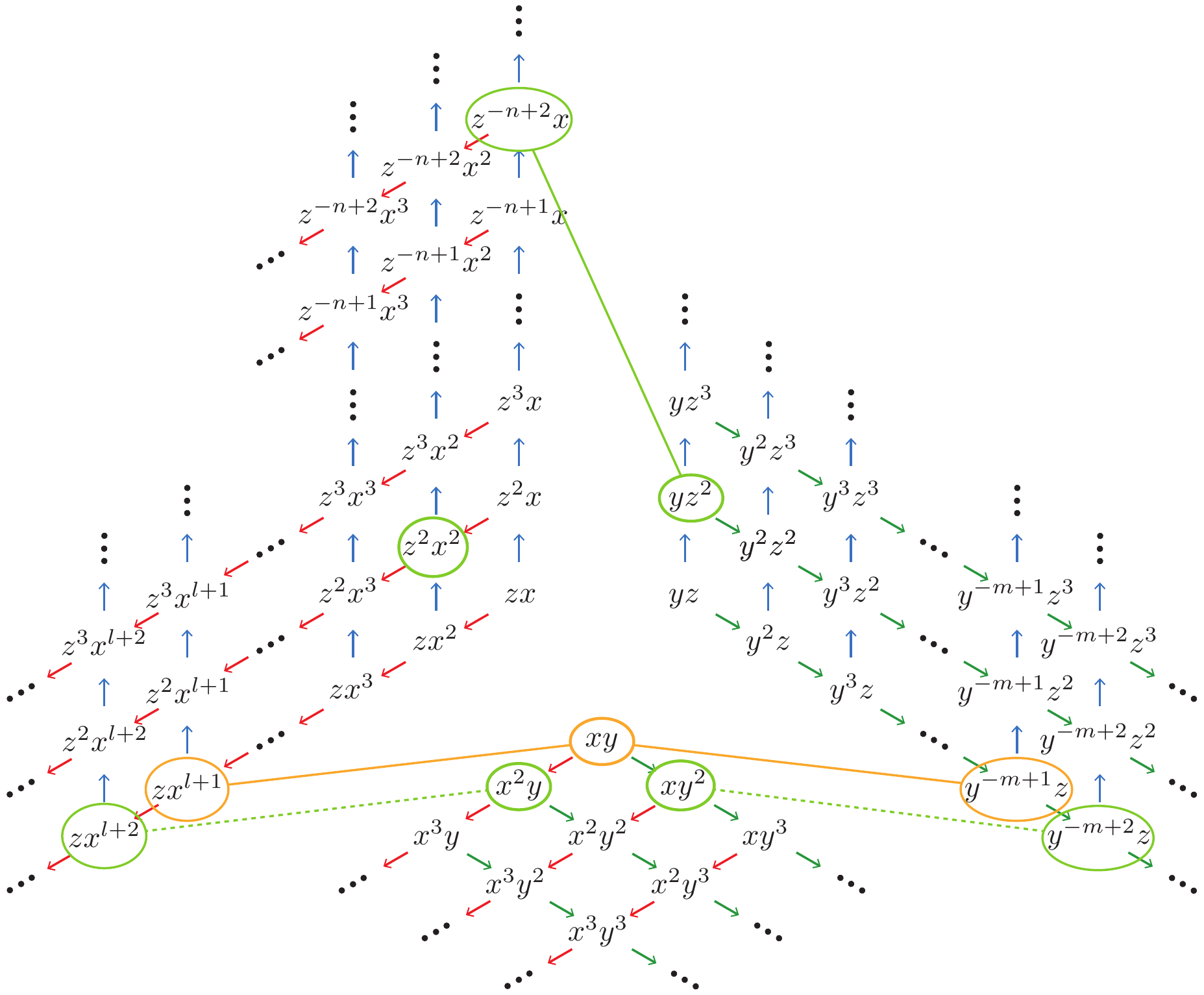}}
         \caption{$l>0$, $m<0$, $n\le 0$}
         \label{fig:Case2GeneratorDiagram}
     \end{subfigure}\\
     \begin{subfigure}[t]{0.5\textwidth}
     \hspace{1cm}
         \includegraphics[scale=0.4]{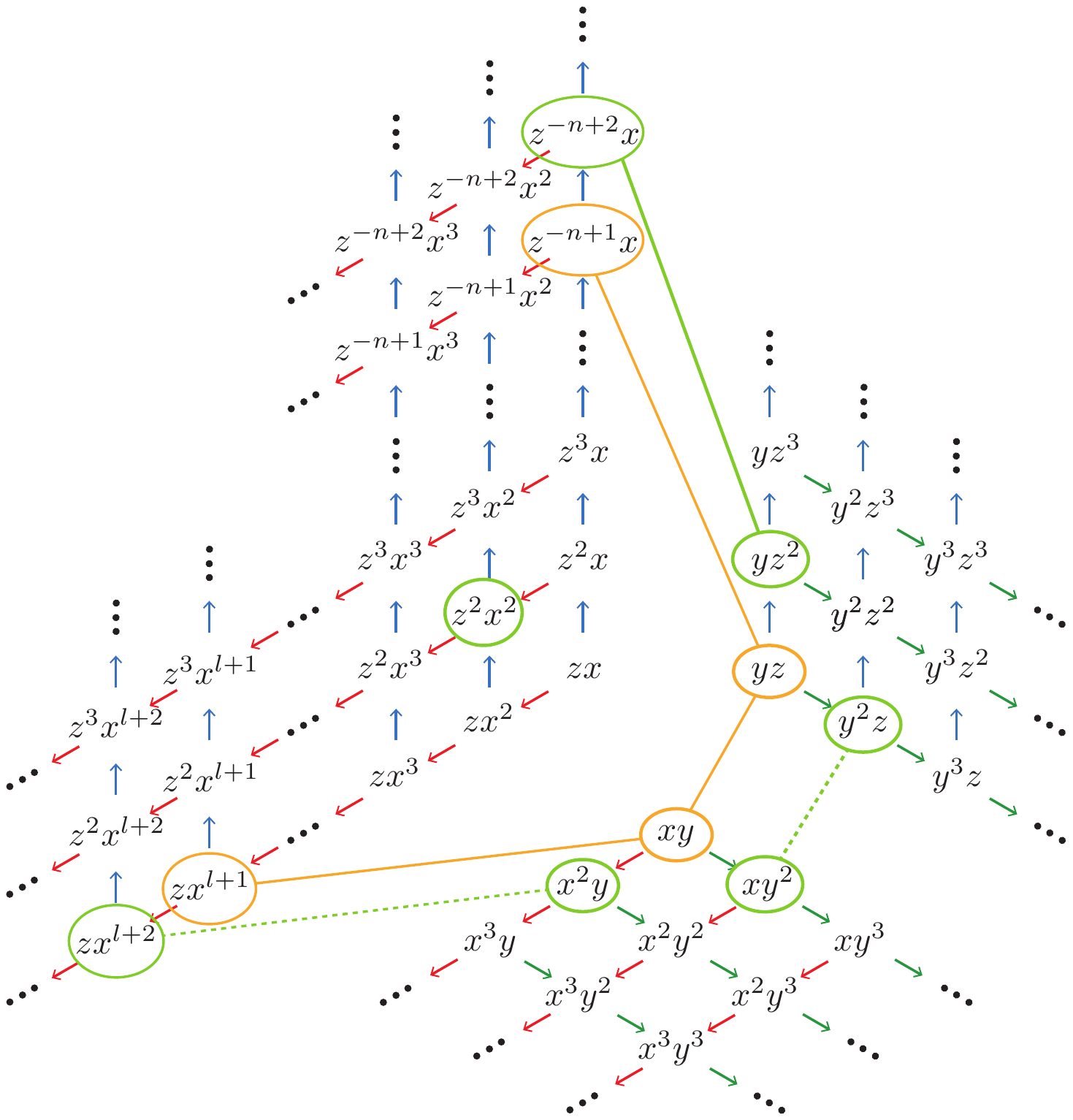}
         \caption{$l>0$, $m=0$, $n<0$}
         \label{fig:Case3GeneratorDiagram}
         \centering
     \end{subfigure}
        \caption{Generator diagram in some cases}
        \label{fig:CaseGeneratorDiagram}
\end{figure}

\noindent\textbf{Case 2}: $l,m,n\ge0$ and nondegenerate.

We have $l^+ = l$, $m^+ = m$, $n^+ = n$ and $l^- = m^- = n^- = 0$ and hence
$$
\tilde{M}\left(\left(l,m,n\right),\lambda,1\right) = \left<\lambda zx^{l+2}+x^2y, xy^{m+2}+y^2z, yz^{n+2}+z^2x\right>_A.
$$
The corresponding generator diagram is shown in Figure \ref{fig:Case1GeneratorDiagram}. Note that there are no Macaulayfying elements of $\tilde{M}\left(\left(l,m,n\right),\lambda,1\right)$ in $A^1$ and therefore it is already maximal Cohen-Macaulay. That is,
$$
M\left(\left(l,m,n\right),\lambda,1\right) = \left<\lambda zx^{l+2}+x^2y, xy^{m+2}+y^2z, yz^{n+2}+z^2x\right>_A.
$$

We find a matrix factorization arising from this module. Denoting $G_1 := \lambda zx^{l+2}+x^2y$, $G_2 := xy^{m+2}+y^2z$ and $G_3 := yz^{n+2}+z^2x$, then with the help of the generator diagram, we can find following $3$ relations (over $A$) among them:

$$
  \left\{
  \setlength\arraycolsep{1pt}
  \begin{array}{rrcrrcrrcc}
       z     & G_1 &   &         &     & - & \lambda x^{l+1} & G_3 & = & 0 \\
    -y^{m+1} & G_1 & + &       x & G_2 &   &                 &     & = & 0 \\
             &     & - & z^{n+1} & G_2 & + &               y & G_3 & = & 0
  \end{array}
  \right.
$$
From this we get matrix factors (over $\mathbb{C}[[x,y,z]]$)
$$
\varphi = 
\begin{pmatrix}
z & - y^{m+1} & 0 \\
0 & x & - z^{n+1} \\
- \lambda x^{l+1} & 0 & y
\end{pmatrix}
\quad\text{and}\quad
\psi = \left(1-\lambda x^l y^m z^n\right)^{-1}
\begin{pmatrix}
xy & y^{m+2} & y^{m+1} z^{n+1} \\
\lambda z^{n+1} x^{l+1} & yz & z^{n+2} \\
\lambda x^{l+2} & \lambda x^{l+1} y^{m+1} & zx
\end{pmatrix}
$$
of $xyz$ which satisfy $\varphi \psi = \psi \varphi = xyz I_3$. Here we computed $\psi$ using
the matrix adjoint $\operatorname{adj}\varphi$ of $\varphi$ which satisfies the relation $\varphi \cdot
\operatorname{adj}\varphi = \operatorname{adj}\varphi \cdot \varphi = \left(\det\varphi\right)I_3$.

Then we have
$$
\operatorname{coker}\mathunderbar{\varphi} \cong M\left(\left(l,m,n\right),\lambda,1\right)
$$
as $A$-modules. Indeed we will prove this rigorously in Theorem \ref{thm:MFFromModule} in a more general situation.
\\
\\
\noindent\textbf{Case 3}: $l>0$, $m<0$ and $n \le 0$.

We have
$$
\tilde{M}\left(\left(l,m,n\right),\lambda,1\right) = \left<z^2 x^2, \lambda zx^{l+2}+x^2y, xy^2+y^{-m+2}z, yz^{2}+z^{-n+2}x\right>_A.
$$
and the generator diagram given in Figure \ref{fig:Case2GeneratorDiagram}. It is enough to add one Macaulayfying element $\lambda zx^{l+1} + xy + y^{-m+1} z$ of $\tilde{M}\left(\left(l,m,n\right),\lambda,1\right)$ in $A^1$, which is marked in the picture by yellow circles and edges among them: 
$$
M\left(\left(l,m,n\right),\lambda,1\right) = \left<\lambda zx^{l+1} + xy + y^{-m+1} z, yz^2 + z^{-n+2}x,
z^2 x^2\right>_A
$$
where the order of the generators have been adjusted to obtain some desired form of matrix factorization. Note also that if $l=1$ or $n=0$, the third generator is redundant. But we do not exclude it to consistently get a $3\times3$ matrix in any cases below.

We can find relations among the above generators as before which gives the matrix factors
$$
\varphi = 
\begin{pmatrix}
z & 0 & 0 \\
-y^{-m} & x & 0 \\
-\lambda x^{l-1} & -z^{-n} & y
\end{pmatrix}
\quad\text{and}\quad
\psi =
\begin{pmatrix}
xy & 0 & 0 \\
y^{-m+1} & yz & 0 \\
y^{-m}z^{-n}+\lambda x^l & z^{-n+1} & zx
\end{pmatrix}.
$$
of $xyz$ satisfying $\operatorname{coker}\mathunderbar{\varphi} \cong M\left(\left(l,m,n\right),\lambda,1\right)$
as $A$-modules.
Also, when $l>0$, $m<0$ and $n>0$, one can compute its Macaulayfication in a similar way.
\\
\\
\noindent\textbf{Case 4}: $l>0$, $m=0$ and $n<0$.

The module and the generator diagram are given respectively by
$$
\tilde{M}\left(\left(l,0,n\right),\lambda,1\right) = \left<z^2 x^2, \lambda zx^{l+2}+x^2y, xy^2+y^2z, yz^2+z^{-n+2}x\right>_A.
$$
and Figure \ref{fig:Case3GeneratorDiagram}. Here we have a Macaulayfying element $\lambda zx^{l+1} + xy + yz + z^{-n+1}x$ of $\tilde{M}\left(\left(l,0,n\right),\lambda,1\right)$ in $A^1$ and no more, which implies
$$
M\left(\left(l,0,n\right),\lambda,1\right) = \left<\lambda zx^{l+1} + xy + yz + z^{-n+1}x, yz^2 + z^{-n+2}x, z^2 x^2\right>_A.
$$
This gives the matrix factors of $xyz$ as
$$
\varphi = 
\begin{pmatrix}
z & 0 & 0 \\
-1 & x & 0 \\
-\lambda x^{l-1} & -z^{-n} & y
\end{pmatrix}
\quad\text{and}\quad
\psi =
\begin{pmatrix}
xy & 0 & 0 \\
y & yz & 0 \\
z^{-n} + \lambda x^l & z^{-n+1} & zx
\end{pmatrix}.
$$
satisfying $\operatorname{coker}\mathunderbar{\varphi} \cong M\left(\left(l,0,n\right),\lambda,1\right)$
as $A$-modules.
\\
\\
\noindent\textbf{The other cases}: 
\begin{itemize}
\item The case $l,m,n\le 0$ and nondegenerate can be treated in the same way as in Case 2.
\item When the word $(l,m,n)$ contains two nonzero elements with different signs, it is further subdivided. In the case $(0,-,+)$ and $(-,+,0)$, there appears $+ \rightarrow 0 \rightarrow -$ in cyclic order. These cases are equivalent to Case 4 (for $(+,0,-)$) if we change the order of $x$, $y$ and $z$ cyclically (and the auxiliary parameter $\lambda$ should be handled appropriately), and we may rotate the corresponding generator diagram accordingly.
\item 
In the remaining cases, the word has a subword of the form $+\rightarrow -$ in cyclic order. In those cases, we can cycle the order of $x$, $y$ and $z$ and make some modifications to proceed as in Case 3.
\end{itemize}

\subsection{Correction number and a uniform expression of matrix factorizations}
Let us define the notion of `\emph{correction number}' which enables us to write matrix factorizations in the above multiple cases into a single formula.  This correction number turns out to be the data needed for mirror symmetry correspondence as well.

\begin{defn}
Define the \emph{correction number} $\varepsilon$ of the band word $(l,m,n)$ by
$$
\varepsilon := \varepsilon(l,m,n):=
\left\{
\setlength\arraycolsep{0pt}
\begin{array}{ccl}
2 & \quad\text{if} & \quad l,m,n\ge0, \\[2mm]
1 & \quad\text{if} & \quad \left\{
                           \setlength\arraycolsep{1pt}
                           \begin{array}{cccc}
                           l>0, & m>0, & n<0  & \text{or} \\
                           l>0, & m<0, & n>0  & \text{or} \\
                           l<0, & m>0, & n>0, &
                           \end{array}
                           \right. \\[6mm]
0 & \quad\text{if} & \quad \left\{
                           \setlength\arraycolsep{1pt}
                           \begin{array}{ccccc}
                           l>0,   & m\le0, & n\le0, & (m,n)\ne(0,0)  & \text{or} \\
                           l\le0, & m>0,   & n\le0, & (n,l)\ne(0,0)  & \text{or} \\
                           l\le0, & m\le0, & n>0,   & (l,m)\ne(0,0), &
                           \end{array}
                           \right. \\[6mm]
-1 & \quad\text{if} & \quad l,m,n\le0,\ (l,m,n)\ne(0,0,0)
\end{array}
\right.
$$
\end{defn}

\begin{prop}\label{prop:Rank1MFfromModule}
We add the correction number to define a new triple $(l', m', n')$ from $(l,m,n)$;
$$ \big(l', m', n'\big) = \big(l,m,n\big) + \big(\varepsilon(l,m,n),\varepsilon(l,m,n),\varepsilon(l,m,n)\big).$$
Then, for each non-degenerate modified band datum $\left((l,m,n),\lambda,1\right)$, 
the Cohen-Macaulay module
$$M\left((l,m,n),\lambda,1\right)$$
is equivalent to the following matrix factorization $(\varphi,\psi)$ over $\mathbb{C}[[x,y,z]]$ under Eisenbud's theorem.
$$
\varphi := 
\begin{pmatrix}
z & - y^{m'-1} & -\lambda^{-1}x^{-l'} \\
-y^{-m'} & x & - z^{n'-1} \\
- \lambda x^{l'-1} & -z^{-n'} & y
\end{pmatrix}
\quad\text{and}
$$
$$
\psi :=
u^{-1}
\begin{pmatrix}
xy & y^{m'} + \lambda^{-1}z^{-n'}x^{-l'} & \lambda^{-1}x^{-l'+1} + y^{m'-1}z^{n'-1} \\
y^{-m'+1} + \lambda z^{n'-1}x^{l'-1} & yz & z^{n'} + \lambda^{-1} x^{-l'}y^{-m'} \\
\lambda x^{l'} + y^{-m'}z^{-n'} & z^{-n'+1}+\lambda x^{l'-1}y^{m'-1} & zx
\end{pmatrix}
$$
where
$$
u:=1 - \lambda x^{l'-2}y^{m'-2}z^{n'-2} - \lambda^{-1} x^{-l'-1}y^{-m'-1}z^{-n'-1}.
$$
Here, we are using a non-standard notation that {\em $x^a$, $y^a$ or $z^a$ is considered as zero if $a<0$}.

Namely, we have  $$\varphi \cdot \psi = \psi  \cdot \varphi = xyz  \cdot I_3,  \;\;\;\;  \operatorname{coker}\mathunderbar{\varphi} \cong M\left(\left(l,m,n\right),\lambda,1\right)  \;\;\;\; \textrm{as} \; A\textrm{-modules}
$$

\end{prop}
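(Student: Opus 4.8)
The plan is to prove the two displayed assertions separately: first that $(\varphi,\psi)$ is a matrix factorization of $xyz$ over $\mathbb{C}[[x,y,z]]$, and then that $\operatorname{coker}\underline{\varphi}\cong M\big((l,m,n),\lambda,1\big)$ as $A$-modules.

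For the first assertion I would take $\psi$ to be $u^{-1}\operatorname{adj}(\varphi)$ (the displayed matrix being this adjugate written out, with the zero-convention applied to the entries of $\varphi$ before forming cofactors), so that, from $\varphi\cdot\operatorname{adj}(\varphi)=\operatorname{adj}(\varphi)\cdot\varphi=(\det\varphi)\,I_3$, everything reduces to the scalar identity $\det\varphi=u\cdot xyz$ together with $u$ being a unit. Expanding the $3\times3$ determinant, the three ``diagonal-adjacent'' products $z^{n'-1}z^{-n'}$, $y^{m'-1}y^{-m'}$ and $x^{l'-1}x^{-l'}$ each vanish, since in every one of them one exponent is negative and the convention kills that factor; what remains is
\[
\det\varphi \;=\; xyz \;-\; \lambda\,x^{l'-1}y^{m'-1}z^{n'-1}\;-\;\lambda^{-1}x^{-l'}y^{-m'}z^{-n'}.
\]
Going through the finitely many sign patterns of $(l,m,n)$ --- equivalently the four values $\varepsilon\in\{-1,0,1,2\}$ of the correction number --- one checks that each of the two correction monomials is either zero or is $xyz$ times the monomial appearing in $u$, which yields $\det\varphi=u\cdot xyz$. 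Finally $u$ is a unit: by the non-degeneracy hypothesis each of its two correction summands is either zero or lies in the maximal ideal $(x,y,z)$, so $u\in 1+(x,y,z)$; the one place this could fail, $l=m=n=0$ with $\lambda=1$, is exactly the excluded degenerate datum. Then $\varphi\psi=\psi\varphi=(\det\varphi)u^{-1}I_3=xyz\,I_3$, and since $\operatorname{adj}(\varphi)$ has polynomial entries while $u^{-1}$ is a power series, $\psi$ indeed has entries in $\mathbb{C}[[x,y,z]]$.

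For the second assertion the plan is to reduce to the explicit computations of Subsection \ref{subsec:MFFromRank1Module}. In the signs $l,m,n\ge 0$ (Case~2), $l>0,m<0,n\le0$ (Case~3) and $l>0,m=0,n<0$ (Case~4), and --- after a cyclic relabelling of $x,y,z$ together with the corresponding rescaling of $\lambda$ --- in all remaining patterns, a matrix factorization $(\varphi_{\mathrm{case}},\psi_{\mathrm{case}})$ was produced there whose cokernel over $A$ is $M\big((l,m,n),\lambda,1\big)$; that identification rests on computing the Macaulayfication of $\tilde M\big((l,m,n),\lambda,1\big)$ from the generator diagram via Proposition \ref{prop:Macaulayfying} and on Eisenbud's equivalence (Theorem \ref{thm:Eisenbud}). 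It therefore remains to check, sign pattern by sign pattern, that the uniform $\varphi$ of the statement equals $\varphi_{\mathrm{case}}$ up to invertible row and column operations over $\mathbb{C}[[x,y,z]]$ --- an isomorphism of matrix factorizations, which does not alter the cokernel in $\underline{\operatorname{CM}}(A)$; here one must handle with care the degenerate sub-cases in which an entry collapses to $0$ or $1$ (for instance $l'=1$, or $m=0$, $n=0$). Alternatively one may simply invoke the general Theorems \ref{thm:1} and \ref{thm:MFFromModule}, of which this Proposition is the $\tau=1$ instance.

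The determinant identity of the first part is routine once the vanishing of the diagonal-adjacent cross terms is noticed. I expect the real difficulty to lie in the second part: carrying out the Macaulayfication correctly in every sign pattern means finding \emph{all} Macaulayfying elements (it is easy to overlook some) and then certifying, via Proposition \ref{prop:Macaulayfying}(1), that none remain, and the zero-convention bookkeeping in the boundary sub-cases is delicate.
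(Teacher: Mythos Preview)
Your proposal is correct and matches the paper's approach: the paper defers the rigorous proof to the general $\tau\ge 1$ case in Section~\ref{sec:higherrank}, where the factor $\psi$ is obtained exactly as you describe via $\psi=u^{-1}\operatorname{adj}\varphi$ with $\det\varphi=xyz\cdot u$ (Proposition~\ref{prop:DetAndAdjOfPhiS} and Corollary~\ref{cor:Psi}), and the cokernel identification is established by the Macaulayfication-and-resolution procedure whose rank-$1$ instances are precisely the case computations of Subsection~\ref{subsec:MFFromRank1Module} that you propose to complete. One minor simplification: once the correction number is inserted and the zero-convention is applied, the uniform $\varphi$ is literally \emph{equal} to the case-specific matrices of Subsection~\ref{subsec:MFFromRank1Module}, so no row or column operations are needed.
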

\begin{defn}
The matrix $\varphi$  is called the
\emph{canonical form} of matrix factors arising from the band datum $\left(\left(l,m,n\right),\lambda,1\right)$
and denote it by $\varphi\left(\left(l',m',n'\right),\lambda,1\right)$.
\end{defn}
These definitions and propositions will be generalized into the higher rank case in Section \ref{sec:higherrank}
and the rigorous proof will be given there. 

If one of the entry in $\varphi$ or $\psi$ is in $\mathbb{C}^*$, then such a matrix factorization can be further reduced to the size $2 \times 2$
which describes the actual indecomposable matrix factorizations. (In the case 3 above, some of module generators were redundant).
In this way, we recover the following theorem of Burban and Drozd, and we leave as an exercise.
\begin{thm}\cite[Proposition 9.6]{BD17}
Let $M$ be a  rank one object of $\mathrm{CM}^{\operatorname{lf}}(A)$ which is not isomorphic to $A$ itself. Then its minimal number of generators is equal to two or three.
        \begin{enumerate}
                \item Assume $M$ is generated by two elements. Then there exist a bijection $\{u, v, w\}\rightarrow \{x, y, z\}$, a pair of integers $p, q \geq 1$ and $\lambda\in\bcc^*$ such that $M=\cok(A^2\xrightarrow{\theta_i((p, q), \lambda))}A^2)$ for some $1\leq i \leq 3$, where
                $$\theta_1((p, q), \lambda) = \begin{pmatrix} u & 0 \\ v^p+\lambda w^q & vw\end{pmatrix}, \quad \theta_2((p, q), \lambda) = \begin{pmatrix} \lambda u +v^pw^q & w^{q+1} \\ v^{p+1} & vw \end{pmatrix}$$ and $\theta_3((p, q), \lambda) = \theta_1((p, q), \lambda)^{tr}$.
                \item Assume $M$ is generated by three elements. Then there exist a bijection $\{u, v, w\}\rightarrow \{x, y, z\}$, a triple of integers $m, n, l\geq 1$ and $\lambda\in \bcc^*$ such that $M = \cok(A^3\xrightarrow{\theta_i((m, n, l), \lambda))} A^3)$ for some $4\leq i \leq 7$, where
                $$\theta_4((m, n, l), \lambda) = \begin{pmatrix} u & w^l & 0 \\ 0 & v & u^m \\ \lambda v^n & 0 & w \end{pmatrix}, \quad \theta_5((m, n, l), \lambda) = \begin{pmatrix} u & w^l & \lambda v^n \\ 0 & v & u^m \\ 0 & 0 & w \end{pmatrix},$$ $\theta_6((m, n, l), \lambda) = \theta_4((m, n, l), \lambda)^{tr}$ and $\theta_7((m, n, l), \lambda) = \theta_5((m, n, l), \lambda)^{tr}$.
        \end{enumerate}
\end{thm}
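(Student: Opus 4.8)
The plan is to deduce the statement directly from the explicit canonical presentations of Proposition~\ref{prop:Rank1MFfromModule}. A rank-one object of $\CMlf(A)$ corresponds, under the $\tau=\mu=1$ specialization of Theorem~\ref{thm:BD}, to a modified band datum $((l,m,n),\lambda,1)$, and it is isomorphic to $A$ precisely in the degenerate case $((0,0,0),1,1)$. Hence, for $M\not\cong A$ we may assume $M\cong M((l,m,n),\lambda,1)$ for a non-degenerate datum, and by Proposition~\ref{prop:Rank1MFfromModule} (together with Eisenbud's Theorem~\ref{thm:Eisenbud}) we have $M\cong\cok\underline{\varphi}$ for the explicit $3\times 3$ matrix $\varphi=\varphi((l',m',n'),\lambda,1)$. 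Since the minimal number of generators of an MCM $A$-module equals the size of its minimal matrix factorization, and a matrix factorization having an entry in $\bcc^{*}$ splits off a trivial rank-one summand after elementary row and column operations over $\bcc[[x,y,z]]$ (an isomorphism in $\mathrm{MF}$), the whole statement reduces to a finite bookkeeping on $\varphi$. First, $\varphi$ can never be reduced all the way down to size one: a cyclic rank-one module over the reduced ring $A$ has defining ideal localizing to zero at every minimal prime, hence contained in the nilradical, which is $0$; so such a module must be $A$, contradicting $M\not\cong A$. Therefore the minimal number of generators is $2$ or $3$, which is the first assertion.

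For the two enumerated cases I would run through the sign pattern of $(l,m,n)$ as in the case analysis preceding the statement, using the cyclic symmetry $x\mapsto y\mapsto z\mapsto x$ of $A$ (which cyclically shifts $(l,m,n)$ and supplies the cyclic relabelings $\{u,v,w\}\to\{x,y,z\}$) and the passage to the dual module (whose matrix factorization is the transpose, accounting for $\theta_{3}=\theta_{1}^{tr}$, $\theta_{6}=\theta_{4}^{tr}$, $\theta_{7}=\theta_{5}^{tr}$) to reduce to a handful of cases. In each case one reads off from the formula for $\varphi$ --- recalling the convention $x^{a}=y^{a}=z^{a}=0$ for $a<0$ --- which of the power entries $z^{n'-1}, y^{m'-1}, x^{l'-1}, z^{-n'}, y^{-m'}, x^{-l'}$ becomes a unit; this happens exactly at the boundary values of $(l,m,n)$ where, as already noted, one of the module generators is redundant. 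When a unit occurs, clear its row and column, split off the $I_{1}$ block, and compute the Schur complement; a short computation shows the resulting $2\times 2$ factorization is identified, after absorbing an overall unit, a sign and the relabeling, with $\theta_{i}((p,q),\lambda^{\pm1})$ for a suitable $i\in\{1,2,3\}$ --- e.g.\ for $l>0$, $m=0$, $n<0$ the Schur complement is $\left(\begin{smallmatrix} zx & 0 \\ -(z^{-n}+\lambda x^{l}) & y\end{smallmatrix}\right)\sim\theta_{1}$. When no unit occurs, every entry of $\varphi$ lies in $\fm$, so $\varphi$ is already the minimal presentation and $M$ is genuinely $3$-generated; the sign classes $l,m,n\ge0$ and $l,m,n\le0$ give $\theta_{4}$ and $\theta_{6}$ directly, the remaining mixed-sign $3$-generated classes give the triangular forms $\theta_{5}$ and $\theta_{7}$, and indecomposability of $M$ (with $M\not\cong A$) guarantees that none of these reduces further.

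The main obstacle is organizational rather than conceptual: one must track, uniformly over all sign classes and boundary values, the correct cyclic relabeling $\{u,v,w\}\to\{x,y,z\}$, when the transpose is needed, the overall sign, and --- most delicately --- how the parameter $\lambda$ is transformed through each reduction (this is the rank-one shadow of the twist $\lambda'=(-1)^{\cdots}\lambda$ in Theorem~\ref{thm:c}). The linear algebra within any single case is routine, which is exactly why the statement can safely be left as an exercise once Proposition~\ref{prop:Rank1MFfromModule} is available.
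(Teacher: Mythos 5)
Your approach is exactly the one the paper indicates (and then leaves as an exercise): start from the canonical $3\times 3$ matrix factorization $\varphi((l',m',n'),\lambda,1)$ of Proposition~\ref{prop:Rank1MFfromModule}, note that a unit entry allows a $2\times2$ reduction, and run the case analysis over the sign patterns of $(l,m,n)$ modulo cyclic relabeling of $\{x,y,z\}$ and transposition. Your argument ruling out a $1\times1$ reduction (a cyclic module $A/I$ locally free of rank one on the punctured spectrum forces $I\subseteq\bigcap\fp_{\min}=\mathrm{nil}(A)=0$ since $A$ is reduced) is clean and correct, and the remaining bookkeeping you describe is exactly the intended exercise.
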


\section{Immersed Lagrangians of Rank $1$ in pair of pants}\label{sec:MFFromRank1Lagrangian}
In this section, we describe immersed Lagrangians of rank $1$ in $\mathcal{P}$, and compute the
corresponding mirror matrix factorization under the localized mirror functor (from the Seidel Lagrangian $\bL$ in $\mathcal{P}$). 
Such immersed Lagrangians will be indexed by what we call the loop data $((l',m',n'), \lambda')$ where $(l',m',n') \in \Z^3, \lambda' \in \C^*$ called a holonomy.
Explicit correspondence with the Cohen-Macaulay modules will be given in the next section.

%
%

%
%
%
%
%
%

%
\begin{figure}[h]
        \centering
        \begin{subfigure}[t]{0.45\textwidth}
                \includegraphics[scale=0.35]{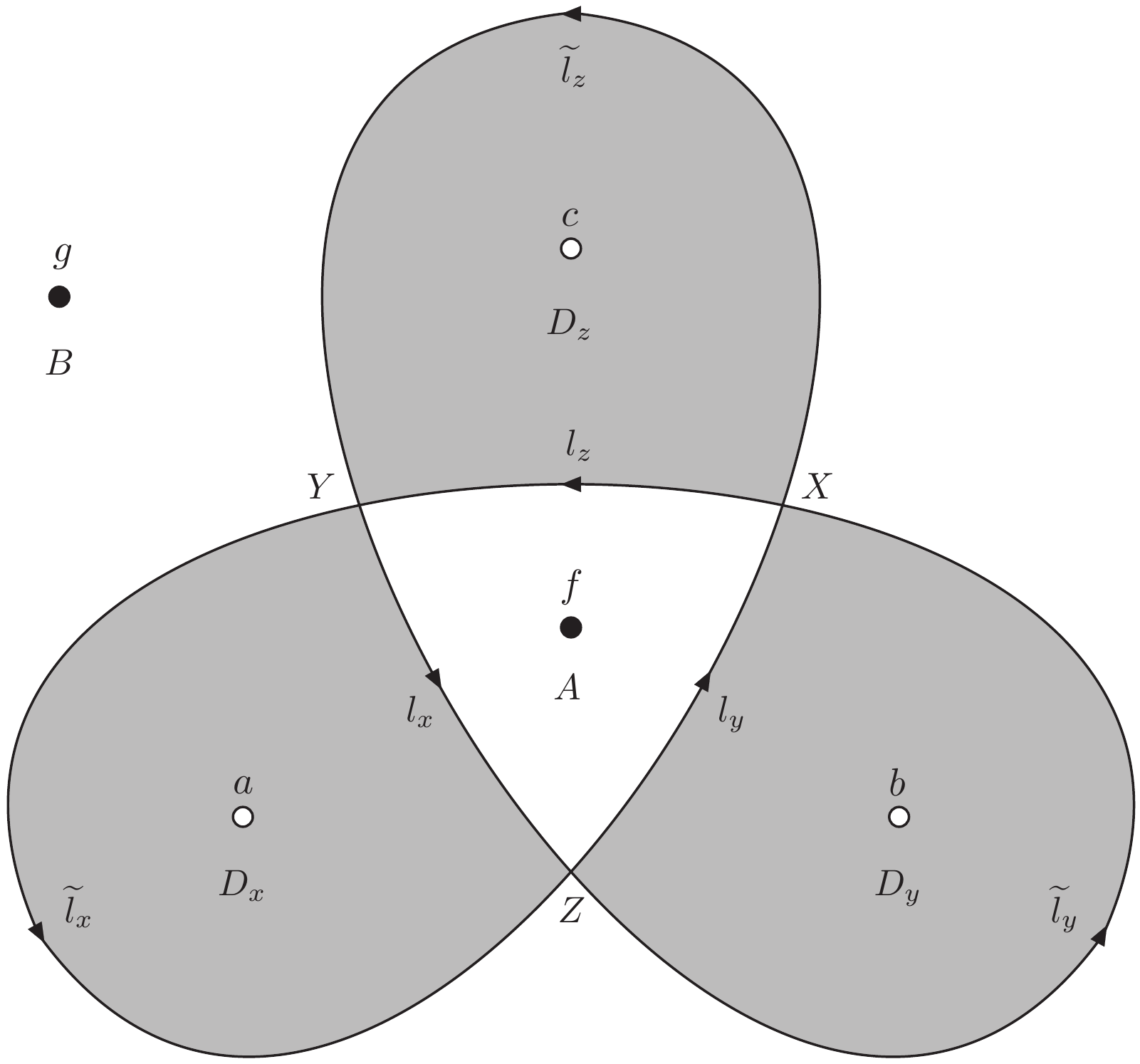}
                \centering
                \caption{Seidel Lagrangian centered at $f$}
                       \end{subfigure}
        \begin{subfigure}[t]{0.45\textwidth}
                \includegraphics[scale=0.35]{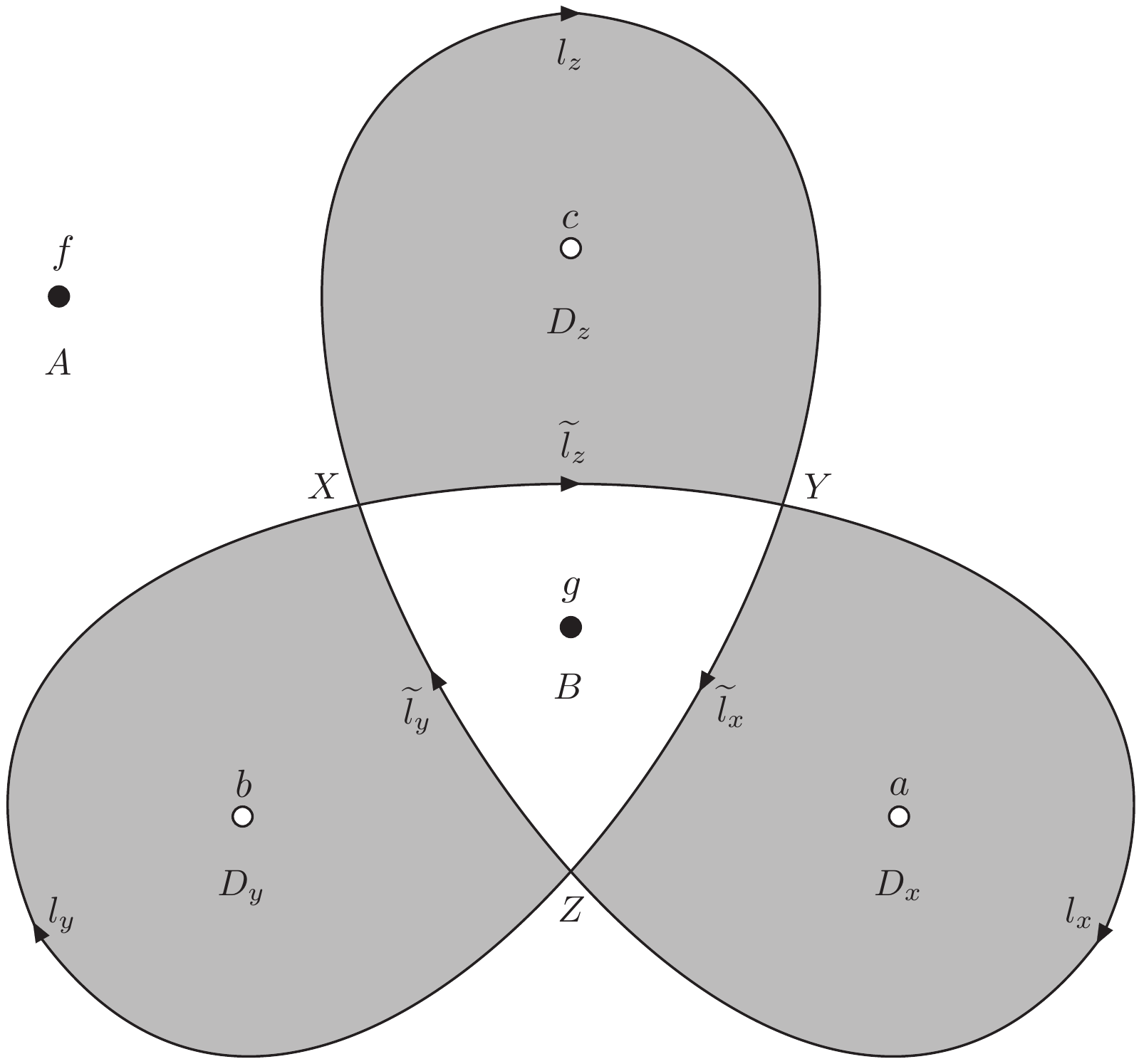}
                \centering
                \caption{Seidel Lagrangian centered at $g$}
                      \end{subfigure}
                      
        \begin{subfigure}[t]{0.45\textwidth}
                \includegraphics[scale=0.35]{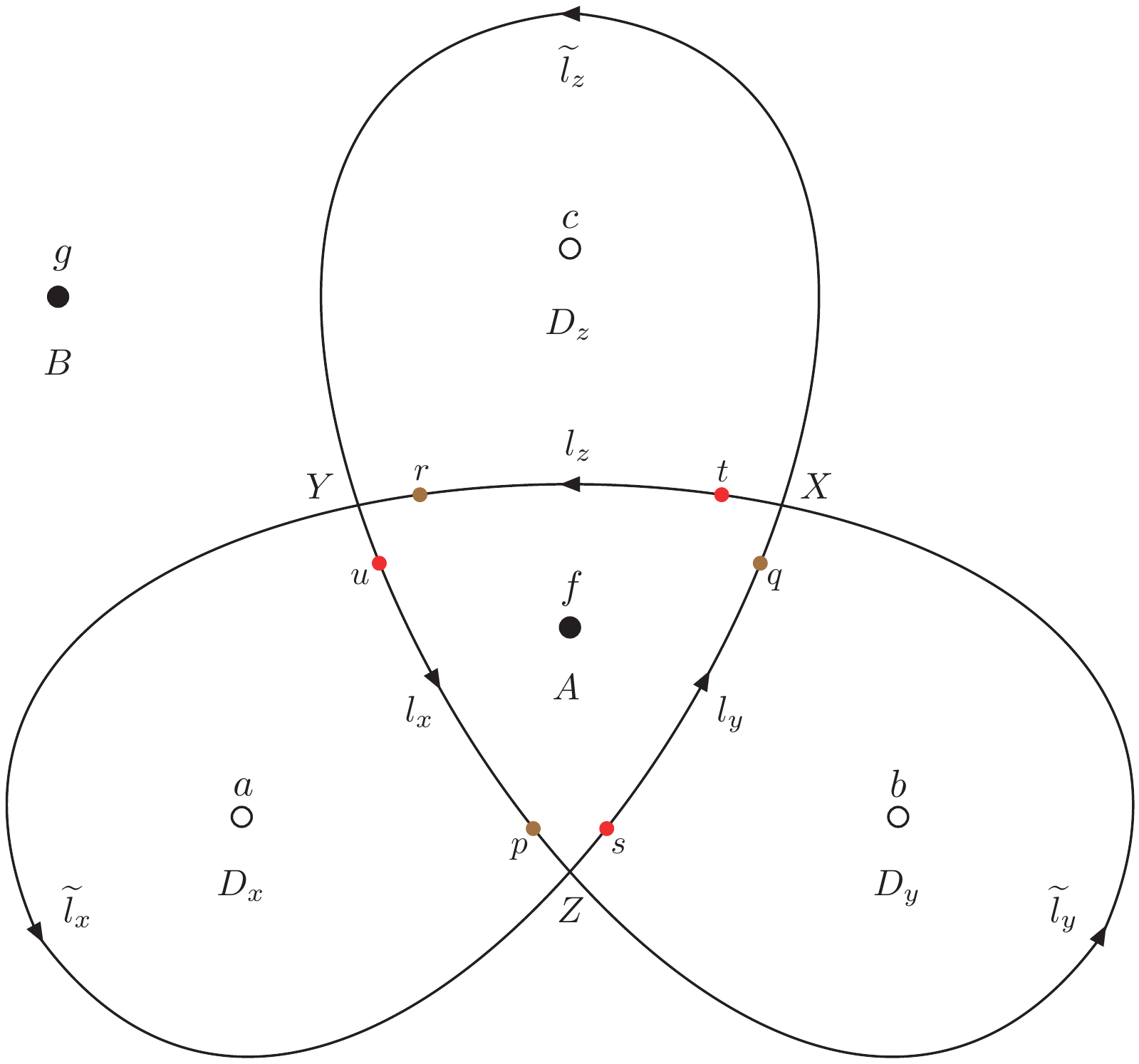}
                \centering
                \caption{Six points on $\bll$.}
                \label{fig:SixPoints}
        \end{subfigure}
        \begin{subfigure}[t]{0.45\textwidth}
                \includegraphics[scale=0.35]{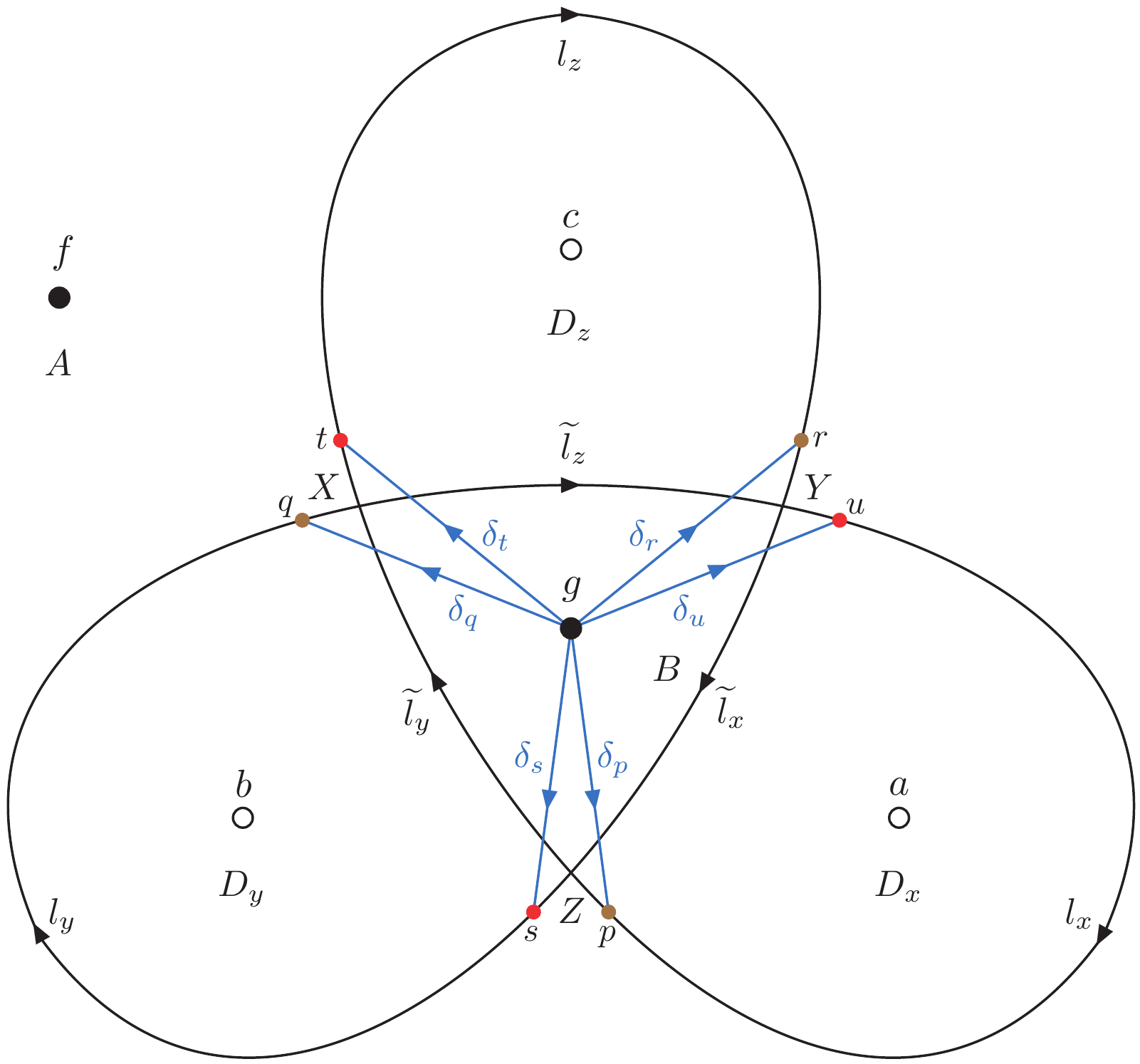}
                \centering
                \caption{Paths $\delta_\square$ with $\square\in\{p, q, r, s, t, u\}$.}
                \label{fig:PathFromBase}
        \end{subfigure}
                       \caption{}
        \label{fig:SeidelLagFigures}
\end{figure}

\subsection{Immersed Lagrangians}
The pair of pants, and the Seidel Lagrangian are given in Figure \ref{fig:F1}, but we will
work with different drawings (given by Figure \ref{fig:SeidelLagFigures}).
Let us first explain them.  The pair of pants $\mathcal{P}$ can be identified with the $\mathbb{P}^1$ with three distinct punctures $a, b, c$. We will consider the fundamental group with base point  $g \in \C$. 
For convenience, we take another identification of $\mathcal{P}$ with the plane $\mathbb{R}^2$ with 3 punctures, by sending $g$ to the infinity and thus having $f$ at the center. See the left side of Figure \ref{fig:SeidelLagFigures}, where the Seidel Lagrangian is also drawn and self-intersection points $X,Y,Z$ are marked.
Alternatively, we can send $f$ to the infinity, and have $g$ at the center to obtain 
the right side of Figure \ref{fig:SeidelLagFigures}.

Note that $\cpp\setminus\bll$ has five connected components, each of which is labeled as $D_x, D_y, D_z$, $A$, $B$ in Figure \ref{fig:SeidelLagFigures}. For instance, $D_x$ is the only component of which $x$ is not contained in the closure. Also, $A$ and $B$ are the component containing $f$ and $g$, respectively.
%


We label the edges of the Seidel Lagrangian as follows.
Namely,  $\bll\setminus \{X, Y, Z\}$ has six components. We denote them by \begin{align*}
        &l_x = \overline{D_x}\cap\overline{A}, \quad l_y = \overline{D_y}\cap\overline{A}, \quad l_z = \overline{D_z}\cap\overline{A},\\ &\tilde{l_x}=\overline{D_x}\cap\overline{B}, \quad \tilde{l_y}=\overline{D_y}\cap\overline{B}, \quad \tilde{l_x}=\overline{D_z}\cap\overline{B}.
\end{align*}
These are illustrated in Figure \ref{fig:SeidelLagFigures}. 
We may view them as paths on $\cpp$, for instance, $l_x$ is a path from $Y$ to $Z$
(along the orientation of $\bL$).
%
Now, we will describe a family of immersed Lagrangians as in Figure \ref{fig:loopex} that will correspond to Burban-Drozd's indecomposable
maximal Cohen-Macaulay modules. 
Here, we only describe the rank one part, and the general cases will be handled in Section \ref{sec:MfFromLag}.

First, put two points on the line $l_x$ along the orientation and call them $u$ and $p$ in order. Also put four points, $s, q$ on $l_y$ and $t, r$ on $l_z$. See Figure \ref{fig:SixPoints}. Then, for each  point $\square\in\{p, q, r, s, t, u\}$, define a path $\delta_\square$ from the base point $g$ to the  as in Figure \ref{fig:PathFromBase} and denote by  $\overline{\delta_\square}$ the reversing of it.
 

\begin{figure}[H]
        \centering
       \begin{subfigure}[t]{0.45\textwidth}
                \includegraphics[scale=0.35]{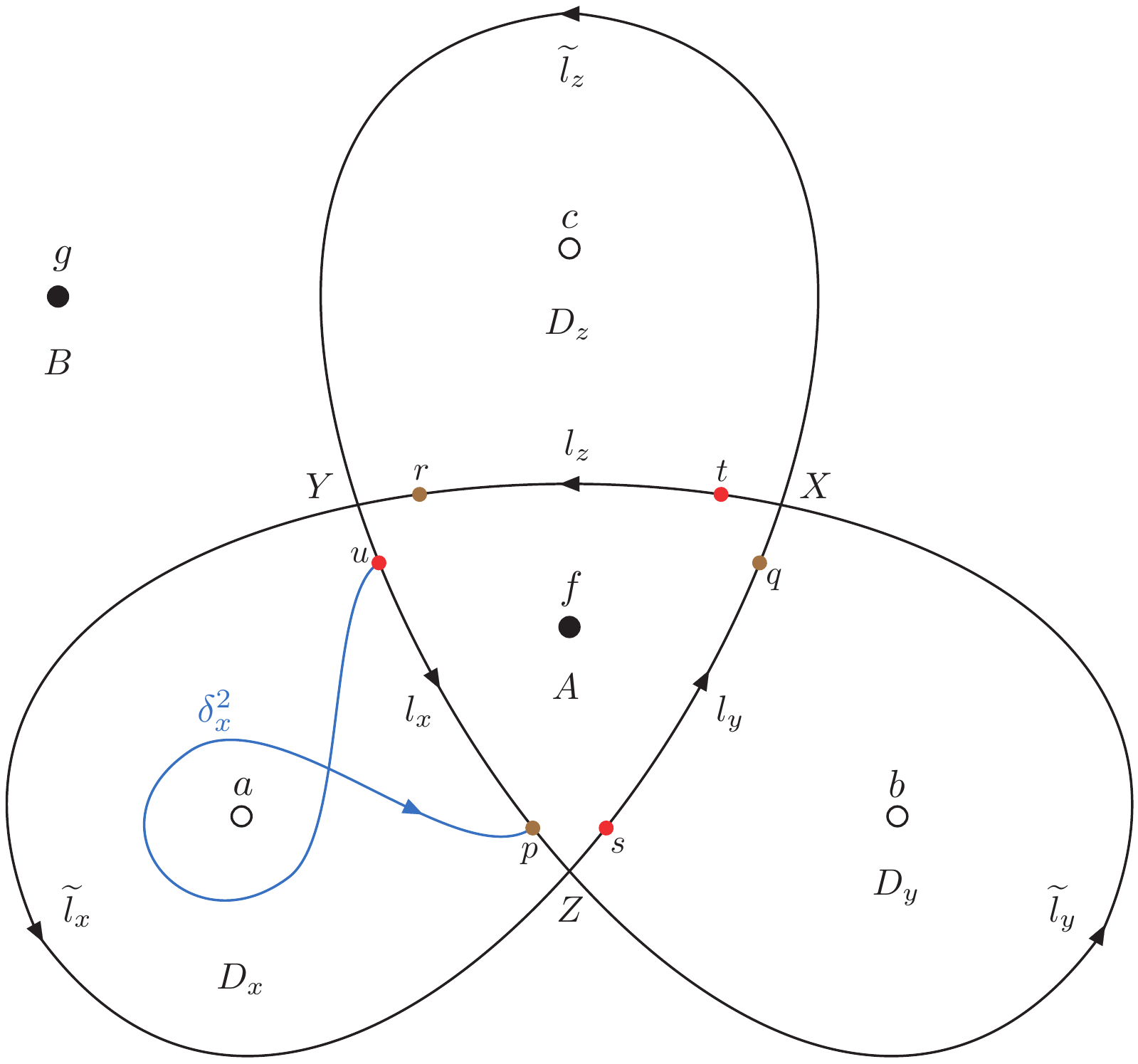}
                \centering
                \caption{The path $\delta^2_x$.}
                \label{fig:DeltaPosTwoA}
        \end{subfigure}
            \begin{subfigure}[t]{0.45\textwidth}
                \includegraphics[scale=0.35]{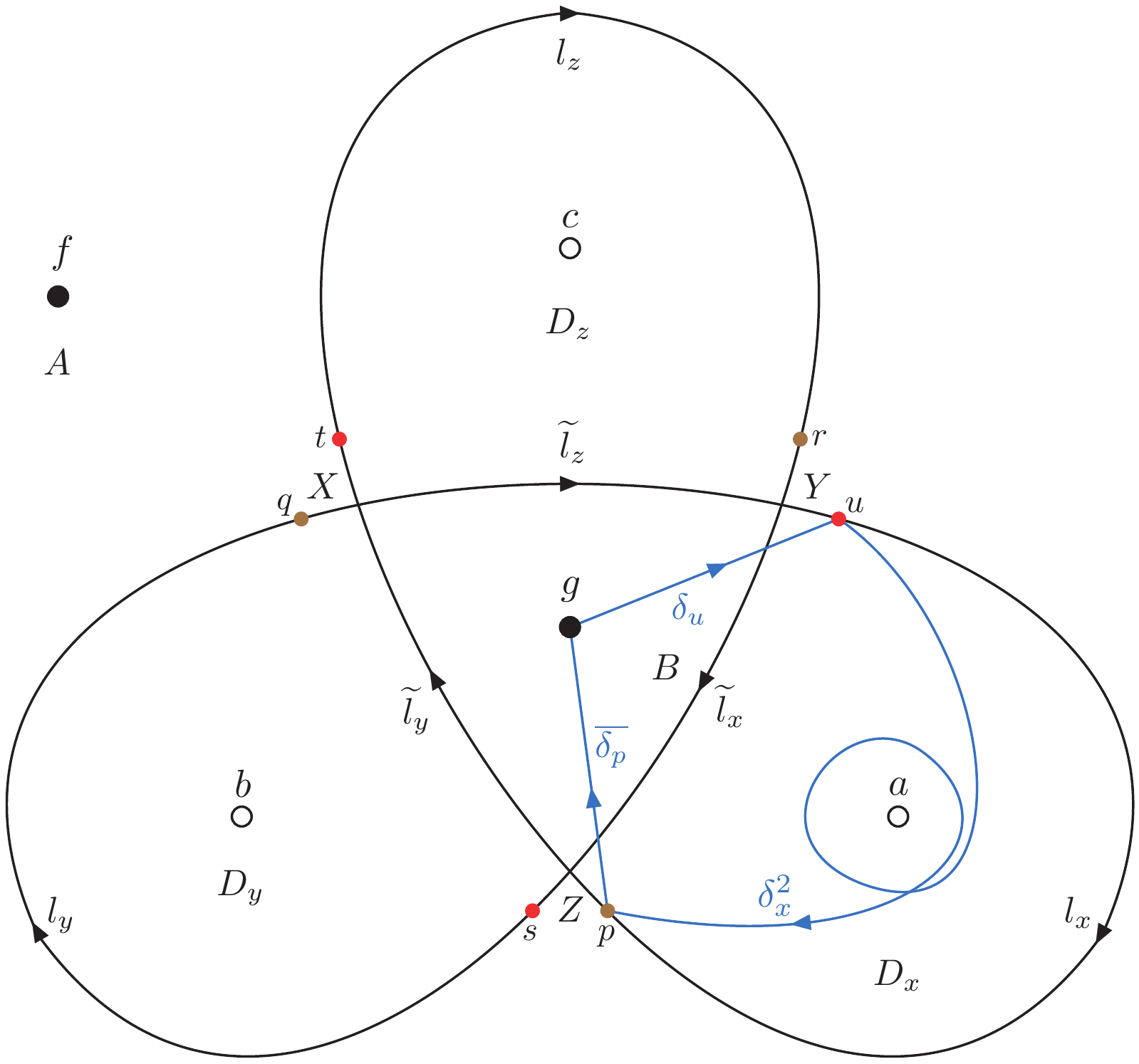}
                \centering
                \caption{The loop $\delta_u\cdot\delta^2_x\cdot\overline{\delta_p}$}
                \label{fig:DeltaPosTwoB}
        \end{subfigure}
     
    \begin{subfigure}[t]{0.45\textwidth}
                \includegraphics[scale=0.35]{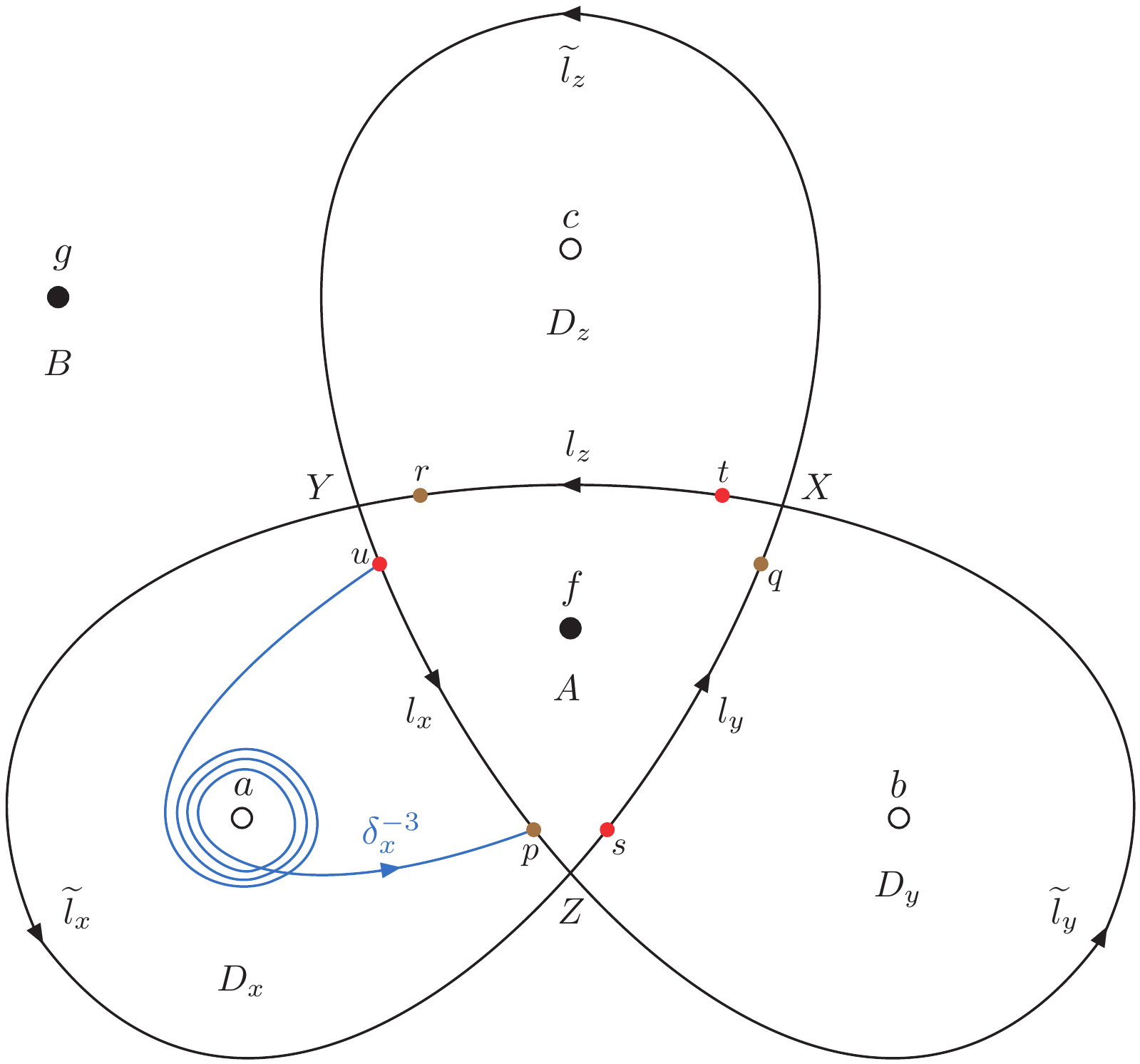}
                \centering
                \caption{The path $\delta^{-3}_x$.}
                \label{fig:DeltaNegThreeA}
        \end{subfigure}
              \begin{subfigure}[t]{0.45\textwidth}
                \includegraphics[scale=0.35]{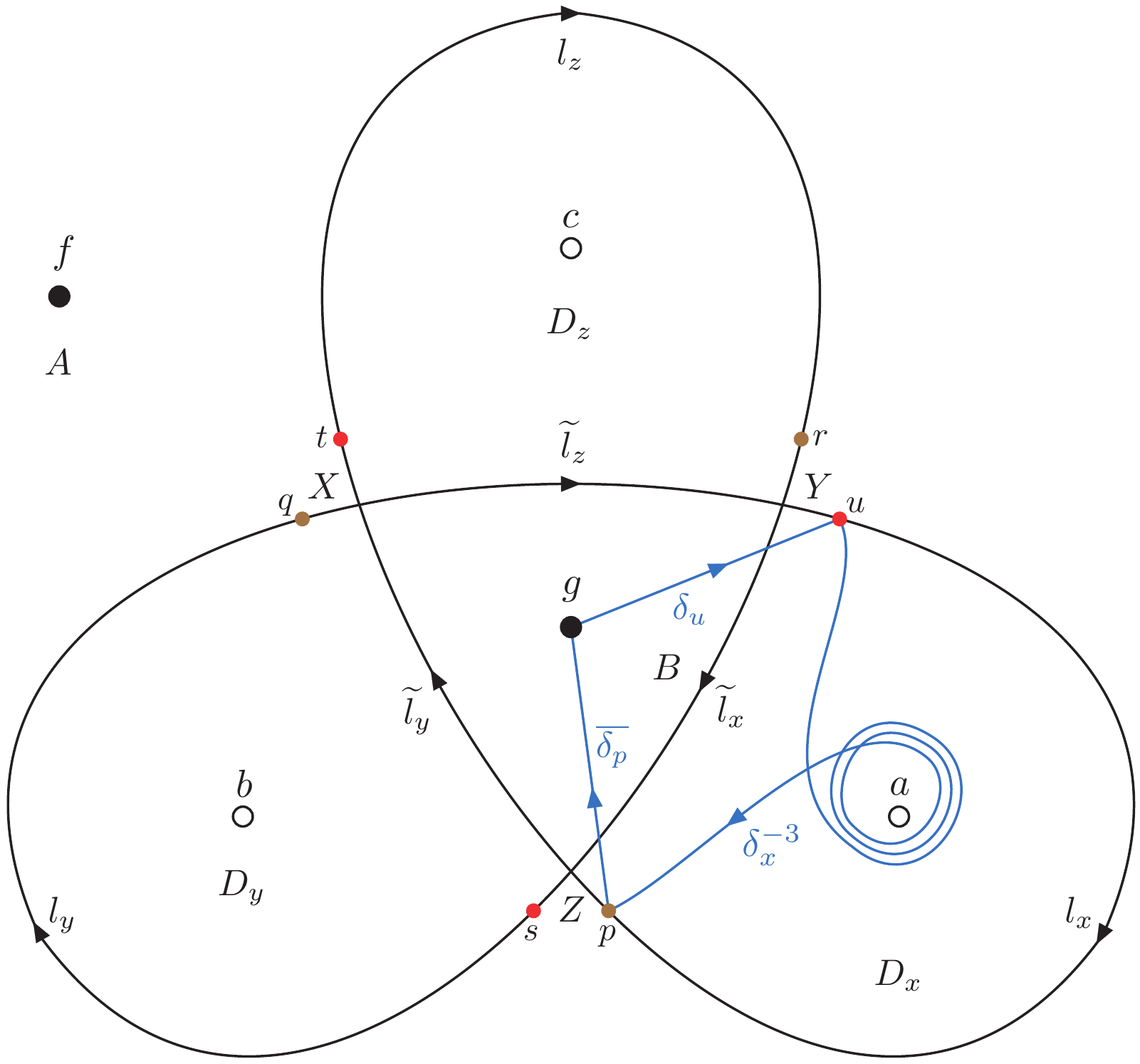}
                \centering
                \caption{The loop $\delta_u\cdot\delta^{-3}_x\cdot\overline{\delta_p}$}
                \label{fig:DeltaNegThreeB}
        \end{subfigure}   
          \caption{}
           \label{fig:SS2}
\end{figure}

\begin{defn}
We define the path $\delta_x^\nu$ for each $\nu \in \mathbb{Z}$ as the path from $u$ to $p$
winding around the puncture in the region $D_x$. See Figure \ref{fig:DeltaPosTwoA} for the path $\delta_x^2$ and
Figure \ref{fig:DeltaNegThreeA} for the path $\delta_x^{-3}$. 

To be more precise, we use the fundamental group $\pi_1(\cpp) = \left<\alpha, \beta, \gamma | \alpha\beta\gamma = 1\right>$.
We require that the homotopy class of the concatenated loop $[\delta_u\cdot\delta_x^{\nu}\cdot \overline{\delta_p}]$  is $\alpha^\nu$
(see  Figure \ref{fig:DeltaPosTwoB} and \ref{fig:DeltaNegThreeB}). Geometrically, $\nu$ may be understood as a signed intersection number between the loop and the shortest path from  the puncture $a$ to the center point $f$.

One can define paths $\delta_y^\nu$ from $s$ to $q$ and $\delta_z^\nu$ from $t$ to $r$ in a similar way. Next, we define a path  $\Delta_{\square_1\square_2}$  to be a line segment from $\square_1$ to $\square_2$ contained in $\overline{A}$, where $\square_1, \square_2\in\{p, q, r, s, t, u\}$.
\end{defn}

\subsection{Immersed Lagrangians $\boldsymbol{L((l', m', n'), \lambda')}$ and their mirror matrix factorizations}
We are ready to define our rank $1$ immersed Lagrangians.
\begin{defn}\label{def:RankOneLagrangian} 
        For three integers $l', m', n'$ and nonzero $\lambda'\in\bcc^*$, define an immersed Lagrangian submanifold $L((l', m', n'), \lambda')$ to be a smoothing of the loop $\delta_x^{l'}\cdot \Delta_{ps} \cdot \delta_y^{m'} \cdot \Delta_{qt} \cdot \delta_z^{n'} \cdot \Delta_{ru}$ whose holonomy $\lambda'$ is concentrated at a point in $\delta_x^{l'}$.
\end{defn}
See Figure \ref{fig:loopex} for the curve $L( (3,-2,2), \lambda' )$ and Figure \ref{fig:loopdata} for the general cases.
We will call the pair $((l', m', n'), \lambda')$ a \emph{loop data of rank $1$}. Note that the Lagrangian $L((l', m', n'), \lambda')$ has a free homotopy class $[\alpha^{l'}\beta^{m'}\gamma^{n'}]$.

\begin{figure}
\includegraphics[scale=0.5]{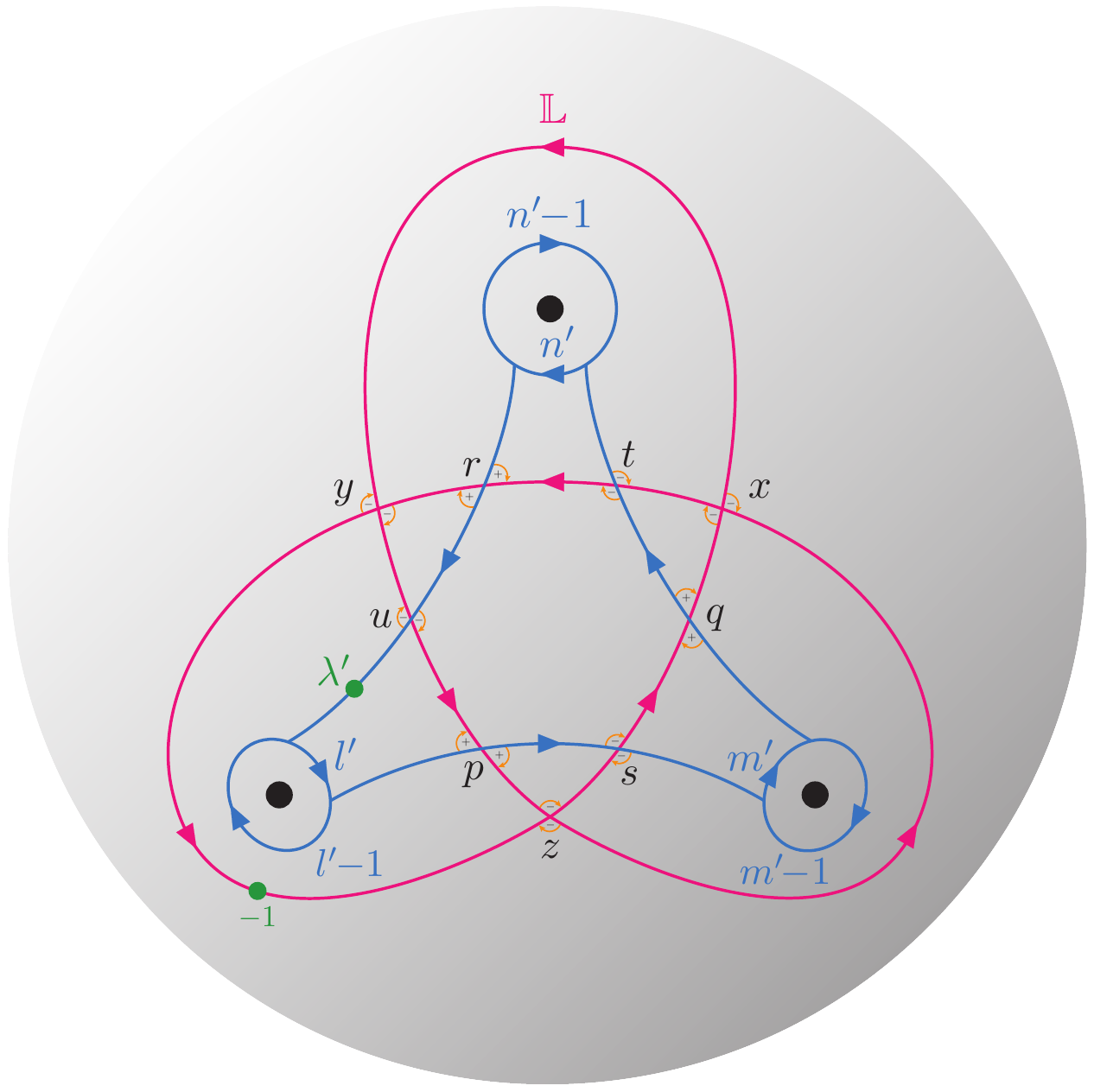}
\centering
\caption{$L((l', m', n'), \lambda')$ in the pair of pants}
\label{fig:loopdata}
\end{figure}

\begin{defn}\label{defn:normal1}
        The triple $(l', m', n')=(w_1', w_2', w_3')$ is said to be {\em normal} if the following are satisfied. Here the index $i$ is regarded as an element of $\bzz_3$.
        \begin{itemize}
                \item If $w_i'=1$, then $w_{i-1}'$, $w_{i+1}'\leq 0$.
                \item If $w_i'=0$ ,then $w_{i-1}'\leq -1, w_{i+1}'\geq 1$ or $w_{i-1}'\geq 1, w_{i+1}'\leq -1$ or $w_{i-1}'$, $w_{i+1}'\geq 1$.
                \item $(l', m', n')\neq (-1, -1, -1)$.
        \end{itemize}
\end{defn}

We need the normality for two purposes. The first is to choose a unique representative in each free homotopy class of maps from $S^1$ to $\mathcal{P}$. To prove this, we introduce some notions and an easy lemma.

\begin{defn}\label{def:LoopWordEquivalenceRankOne}
        Two triples $(l'_1, m'_1, n'_1)$ and $(l'_2, m'_2, n'_2)$ are said to be {\em equivalent} if one can obtained from the other by performing the following operations:
        \begin{itemize}
                \item if one of $l'_1, m'_1, n'_1$ is zero, then add $(1, 1, 1)$ to the triple.
                \item if one of $l'_1, m'_1, n'_1$ is one, then subtract $(1, 1, 1)$ from the triple.
                \item change $(-1, -1, -1)$ to $(2, 2, 2)$ and vice versa.
        \end{itemize}
        A triple $(l', m', n')$ is said to be {\em non-essential} if it is equivalent to a triple of the form $(n, 0, 0), (0, n, 0), or (0, 0, n)$ for some $n\in\bzz$. If a triple is not non-essential, we say it is {\em essential}.
\end{defn}
For more general definition of equivalence, see Definition \ref{def:LoopWord}. In there, we use the following property as a definition.

\begin{lemma}\label{lem:TripleEqui}
        If two triples $(l'_1, m'_1, n'_1)$ and $(l'_2, m'_2, n'_2)$ are equivalent, then the free homotopy classes $[\alpha^{l'_1}\beta^{m_1'}\gamma^{n'_1}]$ and $[\alpha^{l'_2}\beta^{m_2'}\gamma^{n'_2}]$ are the same.
\end{lemma}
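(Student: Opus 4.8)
The statement to prove is Lemma \ref{lem:TripleEqui}: if two triples are equivalent in the sense of Definition \ref{def:LoopWordEquivalenceRankOne}, then the associated free homotopy classes $[\alpha^{l_1'}\beta^{m_1'}\gamma^{n_1'}]$ and $[\alpha^{l_2'}\beta^{m_2'}\gamma^{n_2'}]$ coincide. Since equivalence is generated by the three elementary moves, it suffices to check that each move preserves the free homotopy class, i.e.\ the conjugacy class in $\pi_1(\mathcal P) = \langle \alpha,\beta,\gamma \mid \alpha\beta\gamma = 1\rangle$. So the whole proof reduces to three short computations in this group.

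\textbf{Move 1 (adding $(1,1,1)$ when some entry is zero).} By cyclic symmetry of the presentation (the relator $\alpha\beta\gamma$ is invariant under cyclic permutation of $(\alpha,\beta,\gamma)$), I may assume the zero entry is $n_1'=0$, so the word is $\alpha^{l}\beta^{m}\gamma^{0} = \alpha^l\beta^m$ and the target is $\alpha^{l+1}\beta^{m+1}\gamma$. Using the relation $\gamma = (\alpha\beta)^{-1} = \beta^{-1}\alpha^{-1}$, I compute $\alpha^{l+1}\beta^{m+1}\gamma = \alpha^{l+1}\beta^{m+1}\beta^{-1}\alpha^{-1} = \alpha^{l+1}\beta^{m}\alpha^{-1}$, which is conjugate to $\alpha^{l}\beta^{m}$. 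Hence the two words are conjugate, as required. (If the zero entry is in another slot, the same computation applies after the appropriate cyclic relabeling; one should note that the free homotopy class is also insensitive to cyclic rotation of the word, which is itself a conjugation.)

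\textbf{Move 2 (subtracting $(1,1,1)$ when some entry is one)} is simply the inverse of Move 1: if some $w_i' = 1$, then after subtracting we are in the situation where the corresponding entry becomes $0$, and Move 1 applied in reverse shows the classes agree. \textbf{Move 3 ($(-1,-1,-1) \leftrightarrow (2,2,2)$)}: here I compute directly that $\alpha^2\beta^2\gamma^2$ is conjugate to $\alpha^{-1}\beta^{-1}\gamma^{-1}$. Using $\gamma = \beta^{-1}\alpha^{-1}$ repeatedly, $\alpha^{-1}\beta^{-1}\gamma^{-1} = \alpha^{-1}\beta^{-1}(\beta^{-1}\alpha^{-1})^{-1} = \alpha^{-1}\beta^{-1}\alpha\beta$, a commutator; and on the other side $\alpha^2\beta^2\gamma^2 = \alpha^2\beta^2(\beta^{-1}\alpha^{-1})^2 = \alpha^2\beta^2\beta^{-1}\alpha^{-1}\beta^{-1}\alpha^{-1} = \alpha^2\beta\alpha^{-1}\beta^{-1}\alpha^{-1} = \alpha(\alpha\beta\alpha^{-1}\beta^{-1})\alpha^{-1}$, which is conjugate to $\alpha\beta\alpha^{-1}\beta^{-1} = (\beta^{-1}\alpha^{-1}\beta\alpha)^{-1}$, conjugate to $(\alpha^{-1}\beta^{-1}\alpha\beta)^{-1}$; comparing, both $\alpha^{-1}\beta^{-1}\gamma^{-1}$ and $\alpha^2\beta^2\gamma^2$ reduce to commutators that are conjugate (or inverse-conjugate, which for free homotopy classes of \emph{unoriented} loops is the same, or one checks the orientations match). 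I would present the cleanest chain of conjugations making the two words literally conjugate in $\pi_1(\mathcal P)$.

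\textbf{Main obstacle.} There is no real analytic or topological difficulty here; the content is entirely the bookkeeping of conjugations in a one-relator group, and the mild subtlety is making sure the cyclic-symmetry reduction in Move 1 is stated correctly (the relator is cyclically symmetric, and free homotopy classes are conjugacy classes, so rotating both the word and the generator labels is legitimate) and that Move 3 is verified with the orientation convention consistent with the rest of the paper — in particular that the identification $[S^1,\mathcal P] = \{\text{conjugacy classes}\}$ is the one fixed just before Definition \ref{defn:lpwd}. I would keep the write-up to a few lines per move, since each is a direct substitution using $\gamma = \beta^{-1}\alpha^{-1}$.
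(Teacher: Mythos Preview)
Your approach is the same as the paper's: reduce to checking that each of the three elementary moves in Definition~\ref{def:LoopWordEquivalenceRankOne} preserves the conjugacy class in $\pi_1(\mathcal P)$. Moves~1 and~2 are fine and match the paper (which simply says ``because of the relation $\alpha\beta\gamma=1$'').

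For Move~3 your computation is correct but unnecessarily tangled, and the hedge ``or inverse-conjugate, which for free homotopy classes of \emph{unoriented} loops is the same'' should be deleted: the loops here are oriented, and an element of $\pi_1$ is generally \emph{not} conjugate to its inverse, so that escape hatch is not available. Fortunately you do not need it: your two commutators $\alpha^{-1}\beta^{-1}\alpha\beta$ and $\alpha\beta\alpha^{-1}\beta^{-1}$ are honestly conjugate (conjugate the first by $\beta\alpha$). The paper's one-line alternative is cleaner and avoids this detour entirely: use $\alpha^{-1}=\beta\gamma$, $\beta^{-1}=\gamma\alpha$, $\gamma^{-1}=\alpha\beta$ all at once to get
\[
\alpha^{-1}\beta^{-1}\gamma^{-1}=(\beta\gamma)(\gamma\alpha)(\alpha\beta),
\]
which after a cyclic rotation is $[\alpha^2\beta^2\gamma^2]$.
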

\begin{proof}
        By the definition, it is enough to show that the free homotopy class does not change by each operation in definition \ref{def:LoopWordEquivalenceRankOne}. Indeed, the first two operations do not change the class because of the relation $\alpha\beta\gamma=1$. The third one also does not change the class as
$
\left[\alpha^{-1}\beta^{-1}\gamma^{-1}\right]
= \left[(\beta\gamma)(\gamma\alpha)(\alpha\beta)\right]
= \left[\alpha^2\beta^2\gamma^2\right].
$
\end{proof}

Now we prove the main lemma.

\begin{lemma}\label{lem:normal}
        For the loops $\alpha,\beta,\gamma$ in $\mathcal{P}$, consider the following (sub)set of equivalence classes of essential free loops in $[S^1,\mathcal{P}]$ up to free homotopy:
        $$\big\{ \alpha^{l'}\beta^{m'}\gamma^{n'}\big\} \big/ \sim $$
        Each equivalence class has the unique element which is normal.
\end{lemma}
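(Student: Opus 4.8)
The plan is to prove existence and uniqueness separately, both by an explicit combinatorial algorithm on triples $(l',m',n')$. For \emph{existence}, I would start from an arbitrary essential triple and repeatedly apply the three operations of Definition \ref{def:LoopWordEquivalenceRankOne} (adding $(1,1,1)$ when some entry is $0$, subtracting $(1,1,1)$ when some entry is $1$, swapping $(-1,-1,-1)\leftrightarrow(2,2,2)$) to drive it toward a normal representative. The key is to find a \emph{monovariant} that controls this process: a natural candidate is something like $\max(l',m',n') - \min(l',m',n')$ together with a secondary quantity (e.g. the sum $l'+m'+n'$, or the number of non-positive entries), organized lexicographically, so that one can show the algorithm terminates. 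After termination one must verify that a triple to which no operation \emph{usefully} applies, and which is essential, satisfies all three bullets of Definition \ref{defn:normal1}; this is a finite case-check on the sign pattern of $(l',m',n')$ (how many entries are $\le 0$, equal to $1$, equal to $0$, etc.), using essentiality to exclude the degenerate patterns like $(n,0,0)$ and $(-1,-1,-1)$.

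For \emph{uniqueness}, I would show that two distinct normal triples cannot be equivalent. The cleanest route is: (i) observe that each elementary operation changes $l'+m'+n'$ by $\pm 3$ or (for the $(-1,-1,-1)\leftrightarrow(2,2,2)$ swap) by $9$, so modulo $3$ the sum is an invariant and any equivalence is a composition of "add $(1,1,1)$" steps going one way and "subtract $(1,1,1)$" steps going the other, with finitely many swap steps; (ii) prove that a normal triple is "locally extremal" in its equivalence class — e.g. if $(l',m',n')$ is normal then $(l'-1,m'-1,n'-1)$ is \emph{not} in normal form reachable, and adding $(1,1,1)$ immediately breaks normality because some new entry hits the forbidden configurations in Definition \ref{defn:normal1}. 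Concretely: if $(l',m',n')$ is normal and we add $(1,1,1)$, then any entry that was $0$ becomes $1$, and normality of the original forces (via bullet two) its neighbors to have signs that now violate bullet one for that new $1$; similarly subtracting $(1,1,1)$ from a normal triple produces an entry equal to $0$ whose neighbors violate bullet two, or produces $(-1,-1,-1)$, excluded by bullet three. Handling the swap requires noting $(2,2,2)$ and $(-1,-1,-1)$ are both non-normal, so the swap never connects two normal triples. Chaining these observations shows the normal representative in each class is unique up to the cyclic shift coming from the rotational symmetry of $\mathcal{P}$, which is why the statement is "unique" only among genuinely distinct triples.

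The main obstacle I anticipate is the \emph{existence} direction, specifically proving termination of the normalization algorithm and then the exhaustive verification that the terminal triples are exactly the normal ones. The difficulty is that the three operations interact: subtracting $(1,1,1)$ can create a $0$, triggering an add, and one must rule out infinite oscillation. I expect this to require a carefully chosen potential function and a somewhat tedious but elementary case analysis on the sign vector $(\operatorname{sign} l',\operatorname{sign} m',\operatorname{sign} n')$, distinguishing how many coordinates lie in $\{\le -1\}$, $\{0\}$, $\{1\}$, $\{\ge 2\}$. Since the general (length $3\tau$) version of this is deferred to Appendix \ref{sec:normal} (Proposition \ref{prop:normalnormal}), I would keep the rank-one argument self-contained and explicit, treating the roughly dozen sign patterns directly rather than invoking the general machinery, and cite Lemma \ref{lem:TripleEqui} for the fact that equivalence preserves the free homotopy class (so that "normal representative of an equivalence class" is the same as "normal representative of a free homotopy class").
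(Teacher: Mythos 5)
Your existence argument takes a genuinely different (and heavier) route than the paper: you propose an algorithm with a lexicographic monovariant and a termination proof, whereas the paper's proof is a short, direct case check — if the triple is non-normal it is $(-1,-1,-1)$ or has an entry equal to $0$ or $1$, and each such case is handled in one or two elementary moves using essentiality to discard edge cases like $(0,-1,-1)\sim(1,0,0)$. Your approach would probably work, but it is more machinery than the rank-one lemma needs and you would have to actually exhibit the monovariant and verify termination; the paper sidesteps all of that.

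The serious problem is in uniqueness. Your argument (ii) is purely local: you show that from a normal triple, a single add-$(1,1,1)$ or subtract-$(1,1,1)$ step produces a non-normal triple, and that the swap never connects two normal triples. That rules out two normal triples at \emph{distance one} in the operation graph, but says nothing about longer chains $w'_1 \to u_1 \to u_2 \to \cdots \to w'_2$ passing through non-normal intermediates. Your observation (i) does not close the gap either: knowing $l'+m'+n'$ is invariant mod $3$ tells you nothing about the structure of the chain, and the claim that any equivalence decomposes into "all adds one way, all subtracts the other" is simply asserted, not proved — the operations can be interleaved arbitrarily so long as intermediates have a $0$ or $1$ entry. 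The paper instead uses two global invariants that you are missing: first, the abelianization $\pi_1(\mathcal{P})\to\mathbb{Z}^2$, $\alpha^{l'}\beta^{m'}\gamma^{n'}\mapsto(l'-n',m'-n')$, which pins down that two equivalent triples must be of the form $(l',m',n')$ and $(l'+r,m'+r,n'+r)$; and second, the cyclically reduced word length in the free group $\langle\alpha,\gamma\rangle$ (after eliminating $\beta=(\alpha\gamma)^{-1}$), which is computed explicitly as a function of $\operatorname{sign}(m')$ and then forces $r=0$ by a short case analysis. Without something playing the role of these invariants, local extremality cannot yield uniqueness. As a side point, your closing remark that uniqueness only holds "up to cyclic shift from rotational symmetry of $\mathcal{P}$" is not correct for the rank-one statement: for $\tau=1$ the shift of Definition \ref{def:LoopWord} is trivial, and the lemma asserts strict uniqueness of the normal triple.
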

\begin{proof}
        Let us first prove existence of a normal representative. Let $(l', m', n')$ be an essential non-normal triple. Then it is $(-1, -1, -1)$ or one of entries is $0$ or $1$. The triple $(-1, -1, -1)$ is by definition equivalent to a normal triple, $(2, 2, 2)$. If $l'=0$, the assumption implies  $m', n'<0$. Then, it is equivalent to  a normal triple $(1, m'+1, n'+1)$. Also, if $l'=1$, then it is equivalent to $(0, m'-1, n'-1)$, which should be normal. If not, the last argument implies normality of $(1, m', n')$. The case of $m'=0, 1$ or $n'=0, 1$ is proved by the same say.

        Now we prove uniqueness of a normal representative. Suppose that two distinct normal triples $(l'_1, m'_1, n'_1)$, $(l'_2, m'_2, n'_2)$ which give freely homotopic loops. Consider a group homomorphism
        \begin{align*}
                \left<\alpha, \beta, \gamma | \alpha\beta\gamma = 1\right>
                 & \rightarrow \left<\alpha, \beta | \alpha\beta\alpha^{-1}\beta^{-1}\right> \cong \bzz^2 \\
                \alpha^{l'}\beta^{m'}\gamma^{n'}&\mapsto(l'-n', m'-n').
        \end{align*}
        Since a group homomorphism preserves the conjugate action and a conjugacy classes of an element in an Abelian group is just a singleton, we have $l'_1-n'_1 = l'_2-n'_2$, $m'_1-n'_1 = l'_2-m'_2$. Thus $(l'_2, m'_2, n'_2) = (l'_1+r, m'_1+r, n'_1+r)$ for some $r\in \bzz$. For simplicity, let us write $(l', m', n')$ instead of $(l'_1, m'_1, n'_1)$ and assume $r> 0$.
        
        Note that a conjugacy class $w$ in the free group $\left<\alpha, \gamma\right>$ has a cyclically reduced representative $\alpha^{a_1}\gamma^{c_1}\cdots\alpha^{a_\nu}\gamma^{c_\nu}$ which is unique up to cyclic permutation. Define the length of $w$ as the integer $\nu$ and denote by $\len(w)$. Also define the length of $[\alpha^{l'}\beta^{m'}\gamma^{n'}]$ using the group isomorphism $\left<\alpha, \beta, \gamma | \alpha\beta\gamma\right> \cong \left<\alpha, \gamma\right>$. Then length is computed as follows.
        \begin{itemize}
                \item If $m'>0$, then $l', n'\neq 1$, and $\len(\alpha^{l'}\beta^{m'}\gamma^{n'}) = \len(\alpha^{l'-1}(\gamma^{-1}\alpha^{-1})^{m'-1}\gamma^{n'-1}) = m'$.
                \item If $m'=0$, then $\len(\alpha^{l'}\gamma^{n'})=1$.
                \item If $m'<0$, then $$\len(\alpha^{l'}\beta^{m'}\gamma^{n'}) = \len(\alpha^{l'}(\gamma\alpha)^{-m'}\gamma^{n'}) = \begin{cases} -m'+1 \text{ if } l'n'\neq 0 \\ -m' \text{ if } l'n'=0\end{cases}.$$
        \end{itemize}
        
        Two triples $\alpha^{l'}\beta^{m'}\gamma^{n'}$ and $\alpha^{l'+r}\beta^{m'+r}\gamma^{n'+r}$ should have the same length. Denote by $\ell_1$, $\ell_2$ the length of $\alpha^{l'}\beta^{m'}\gamma^{n'}$ and $\alpha^{l'+r}\beta^{m'+r}\gamma^{n'+r}$, respectively. If $m'>0$, then $\ell_1 = m' \neq m'+r = \ell_2$. Thus $m'$ should be nonpositive. Suppose that $m'=0$. Then the equality $\ell_2=\ell_1=1$ implies that $r=1$. However, $(l', 0, n')$ and $(l'+1, 0, n'+1)$ cannot be normal at the same time. Thus $m'$ cannot be zero. By the cyclic symmetry, we may assume $l', m', n' <0$. Then $\ell_1 = -m'+1\geq 2$. Since $m'+r\leq 0$ implies $\ell_2\leq -m'-r+1<\ell_1$, so we have $m'+r>0$ and $-m'+1=\ell_1=\ell_2=m'+r$. Simply comparing the number of exponent $(-1)$ in each reduced cyclic words of $\alpha^{l'}\beta^{m'}\gamma^{n'}$ and $\alpha^{l'+r}\beta^{m'+r}\gamma^{n'+r}$,
        $$\alpha^{l'}(\gamma\alpha)^{-m'}\gamma^{n'}\sim \alpha^{l'-2m'}(\gamma^{-1}\alpha^{-1})^{-m'}\gamma^{n'-2m'},$$ we have $m'=-1$. Considering cyclic symmetry again, we also have $l'=n'=-1$ also. This contradicts the given triple $(l', m', n')$ is normal. This proves the lemma.

\end{proof}

Secondly, normality will guarantee that we have the canonical form of the corresponding matrix factorization,
i.e., a non-normal triple may have a mirror matrix factorization that is not in the canonical form. (see Example \ref{ex:nonnormal}).

\begin{prop}\label{prop:LagToMFRankOne}
        If $(l', m', n')\neq (2, 2, 2)$, a {\em  normal} immersed Lagrangian $L((l', m', n'), \lambda')$ is mapped to
        the following matrix factorization under the localized mirror functor $\mathcal{F}^\bL$.
        We only give the $\varphi$-matrix factor.
        \begin{equation}\label{eq:33}
                \cff^\bll(L((l', m', n'), \lambda')) = \begin{pmatrix} z & -y^{m'-1} & (-1)^{l'+1}\lambda'^{-1}x^{-l'} \\
                        y^{-m'} & x & -z^{n'-1} \\ 
                        (-1)^{l'}\lambda' x^{l'-1} & z^{-n'} & y \end{pmatrix}.
        \end{equation}
        (Again, we are using a non-standard notation that {\em $x^a$, $y^a$ or $z^a$ is considered as zero if $a<0$} .)
\end{prop}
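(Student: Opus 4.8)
The plan is to compute the matrix factorization $\mathcal{F}^{\bL}(L((l',m',n'),\lambda'))$ directly from the definition of the localized mirror functor (Theorem \ref{thm:lmf}), i.e.\ as the $\Z/2$-graded space $CF(L,\bL)$ equipped with the differential $-\operatorm_1^{0,b}$, reading off the matrix entries by counting immersed holomorphic (polygon) strips bounded by $L=L((l',m',n'),\lambda')$ and $\bL$ with corners decorated by $X,Y,Z$ as in Figure \ref{fig:strip}. First I would fix, once and for all, a convenient position of the curve $L$ relative to $\bL$: using Definition \ref{def:RankOneLagrangian}, $L$ is a smoothing of $\delta_x^{l'}\cdot\Delta_{ps}\cdot\delta_y^{m'}\cdot\Delta_{qt}\cdot\delta_z^{n'}\cdot\Delta_{ru}$, and in a regular representative the intersections $L\cap\bL$ are exactly the points coming from the $\Delta$-segments and the winding arcs $\delta_x^{l'},\delta_y^{m'},\delta_z^{n'}$. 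I would set up notation so that $CF(L,\bL)$ has three odd and three even generators (this is where the $3\times 3$ size comes from), labelled compatibly with the three ``corners'' of the loop near $l_x,l_y,l_z$; the normality hypothesis on $(l',m',n')$ is precisely what guarantees that no extra intersection points are forced and that the count stabilizes to this minimal model (cf.\ the role of normality advertised before Proposition \ref{prop:LagToMFRankOne}).

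Next I would enumerate the rigid strips. The diagonal entries $z,x,y$ should come from the three small bigon/triangle regions adjacent to the segments $\Delta_{ps},\Delta_{qt},\Delta_{ru}$ passing through a single corner ($Z$, $X$, $Y$ respectively), each contributing a monomial of degree one. The off-diagonal entries $-y^{m'-1}$, $-z^{n'-1}$, $y^{-m'}$, $z^{-n'}$, $(-1)^{l'+1}\lambda'^{-1}x^{-l'}$, $(-1)^{l'}\lambda' x^{l'-1}$ should come from strips that sweep across one of the three puncture-regions $D_x,D_y,D_z$ while picking up corners of a single type; the exponent records how many times the relevant winding arc $\delta_x^{l'}$ (etc.) wraps the puncture, and the non-standard convention ``$x^a=0$ if $a<0$'' corresponds to the geometric fact that for the wrong sign of winding there is no such immersed strip (the region to be swept is ``on the wrong side''). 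The two cases $l'-1$ vs.\ $-l'$ (and likewise for $m',n'$) reflect the two ways a strip can traverse $D_x$: with the boundary of $\bL$ on one side one sees $l'-1$ copies of the corner, on the other side one sees $-l'$ copies. The holonomy $\lambda'$, which by Definition \ref{def:RankOneLagrangian} is concentrated on $\delta_x^{l'}$, multiplies exactly the two entries whose strips cross that arc, giving $\lambda'$ and $\lambda'^{-1}$. The signs $(-1)^{l'+1}$ and $(-1)^{l'}$ I would pin down by a careful application of Seidel's sign/orientation conventions (spin structure on $\bL$ with $(-1)$-holonomy, orientation of moduli), checking that the two entries multiply to give a consistent contribution to $\varphi\psi=xyz\cdot I_3$. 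Finally I would verify directly that the resulting matrix $\varphi$ squares correctly, i.e.\ that $\varphi\cdot\mathrm{adj}\,\varphi = (xyz)I_3$, which both confirms we have found all the strips and produces the $\psi$-factor; this last check is essentially the computation already done by hand in the rank-one cases of Section \ref{sec:mod1} (Cases 2--4 and Proposition \ref{prop:Rank1MFfromModule}), so it reduces to matching the case-by-case formulas there with \eqref{eq:33} via the correction number $\varepsilon(l,m,n)$ and $(l',m',n')=(l,m,n)+(\varepsilon,\varepsilon,\varepsilon)$.

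The main obstacle will be the combinatorial bookkeeping of strips as the signs of $l',m',n'$ vary: a strip that exists and is rigid for one sign pattern degenerates or disappears for another, and one must argue that in every normal case exactly the strips producing \eqref{eq:33} survive and contribute multiplicity one, with no ``wide'' (non-rigid, one-parameter-family) polygons and no higher-order contributions from $\operatorname{m}_{k}$ with many $b$-insertions. I expect to handle this by the same reduction used elsewhere in the paper — cyclic symmetry in $x,y,z$ reduces the sign patterns to the representative cases $l',m',n'\ge 1$, the ``one negative'' case, and the ``two negative'' case — and in each representative case to exhibit the finitely many strips explicitly on the picture (Figures \ref{fig:loopdata}, \ref{fig:SixPoints}, \ref{fig:PathFromBase}), the excluded case $(2,2,2)$ being exactly the one where an extra bounding cylinder appears and forces a matrix factorization only homotopic (not equal) to the canonical form. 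The determination of the precise signs $(-1)^{l'},(-1)^{l'+1}$ is the other delicate point, but it is forced by the requirement $\varphi\psi=\psi\varphi=xyz\cdot I_3$ together with the fixed sign conventions, so once the strip count is in hand the signs are not a genuine freedom.
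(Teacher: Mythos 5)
Your overall framework—compute $\mathcal{F}^{\bL}(L)$ by counting decorated strips bounded by $L$ and $\bL$, reading entries off as in Figure \ref{fig:strip}—is the right one, and you correctly identify where each type of entry comes from (small triangles for the diagonal, strips sweeping the puncture regions for the off-diagonal). But the proposal has a genuine gap in the argument for \emph{completeness} of the strip enumeration, which is the actual content of the proposition.

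You propose to handle completeness by a sign-pattern case split, exhibiting the strips "explicitly on the picture," and then to "confirm we have found all the strips" by checking $\varphi\cdot\mathrm{adj}\,\varphi=(xyz)I_3$. That check is only a necessary condition: a proper sublist of the true strips could produce a matrix that still factors $xyz$, so it cannot certify that no strips were missed. Exhibiting strips on a picture finds contributions but never rules out additional ones, especially since $\operatorm_1^{0,b}$ involves polygons with arbitrarily many $b$-corners. The paper's proof (Proposition \ref{prop:DiscCounting}, to which this proposition reduces) closes this gap by a fundamental-group argument that you do not mention: any immersed polygon from $p_1$ to some $s_j,t_j,u_j$ yields a path homotopy between the arc of $L$ and the arc of $\bL$ forming its boundary; the arc of $\bL$ is encoded as a word in the six edges $l_x,\dots,\tilde l_z$ subject to local adjacency constraints, which forces it into a rigid canonical form $l_x(\tilde l_z\tilde l_y l_x)^{a_1}\tilde l_z l_y(\cdots)\cdots$; comparing the resulting element of $\pi_1(\mathcal{P})$ with the one determined by the loop word $w'$ (using uniqueness of cyclically reduced words in the free group $\langle\alpha,\gamma\rangle$ and the normality of $w'$) pins down exactly which target points and which strips occur. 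This is what makes normality do real work: it rules out the extra polygons of Remark \ref{ex:nonnormal}. Without some such argument, your enumeration is not a proof.

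A second, smaller problem: in the last paragraph you propose to cross-check \eqref{eq:33} against the algebraic computation of Section \ref{sec:mod1} (Proposition \ref{prop:Rank1MFfromModule}) via the correction number. That matching is exactly the statement of the mirror symmetry theorem (Theorem \ref{thm:r1}), not an available ingredient when proving the A-side computation independently—using it here would be circular. Likewise, the signs $(-1)^{l'},(-1)^{l'+1}$ cannot be "forced by $\varphi\psi=xyz I_3$"; they must be read off from the Floer orientation and holonomy conventions (the paper tracks them via the number of times a strip boundary runs against the orientation of $\bL$ and through the holonomy point).
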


\begin{remark}\label{rem:DegenerateLoopLength1}
        Since the Lagrangian $L((2, 2, 2), \lambda')$ is nor regular in the sense \ref{def:regular}, we have to perturb it to compute the localized mirror functor with $\bcc$-coefficient as in the figure \ref{fig:Perturbed222}. The corresponding matrix factorization is given as follows.
        $$\begin{pmatrix}
                xz & 0 & 0 & 0 \\ z & -y & 0 & 0 \\ 0 & x & -z & 0 \\ \lambda'x & 0 & y & xy
        \end{pmatrix}, \quad \begin{pmatrix}
        y & 0 & 0 & 0 \\ z & -xz & 0 & 0 \\ x & -x^2 & -xy & 0 \\ -1-\lambda' & x & y & z
\end{pmatrix},$$ which turns out to be, when $\lambda'\neq -1$, homotopy equivalent to the following matrix factorization.
$$\begin{pmatrix}
        z & -y & 0 \\ 0 & x & -z \\ \lambda'x & 0 & y \end{pmatrix}, \quad \begin{pmatrix} \frac{xy}{1+\lambda'} & \frac{y^2}{1+\lambda'} & \frac{yz}{1+\lambda'} \\ \frac{-\lambda'xz}{1+\lambda'} & \frac{yz}{1+\lambda'} & \frac{z^2}{1+\lambda'} \\ \frac{-\lambda'x^2}{1+\lambda'} & \frac{-\lambda'xy}{1+\lambda'} & \frac{xz}{1+\lambda'}
\end{pmatrix},$$ where the $\phi$-component is canonical.
        
\end{remark}

\begin{figure}[h]
        \centering
                \includegraphics[scale=0.35]{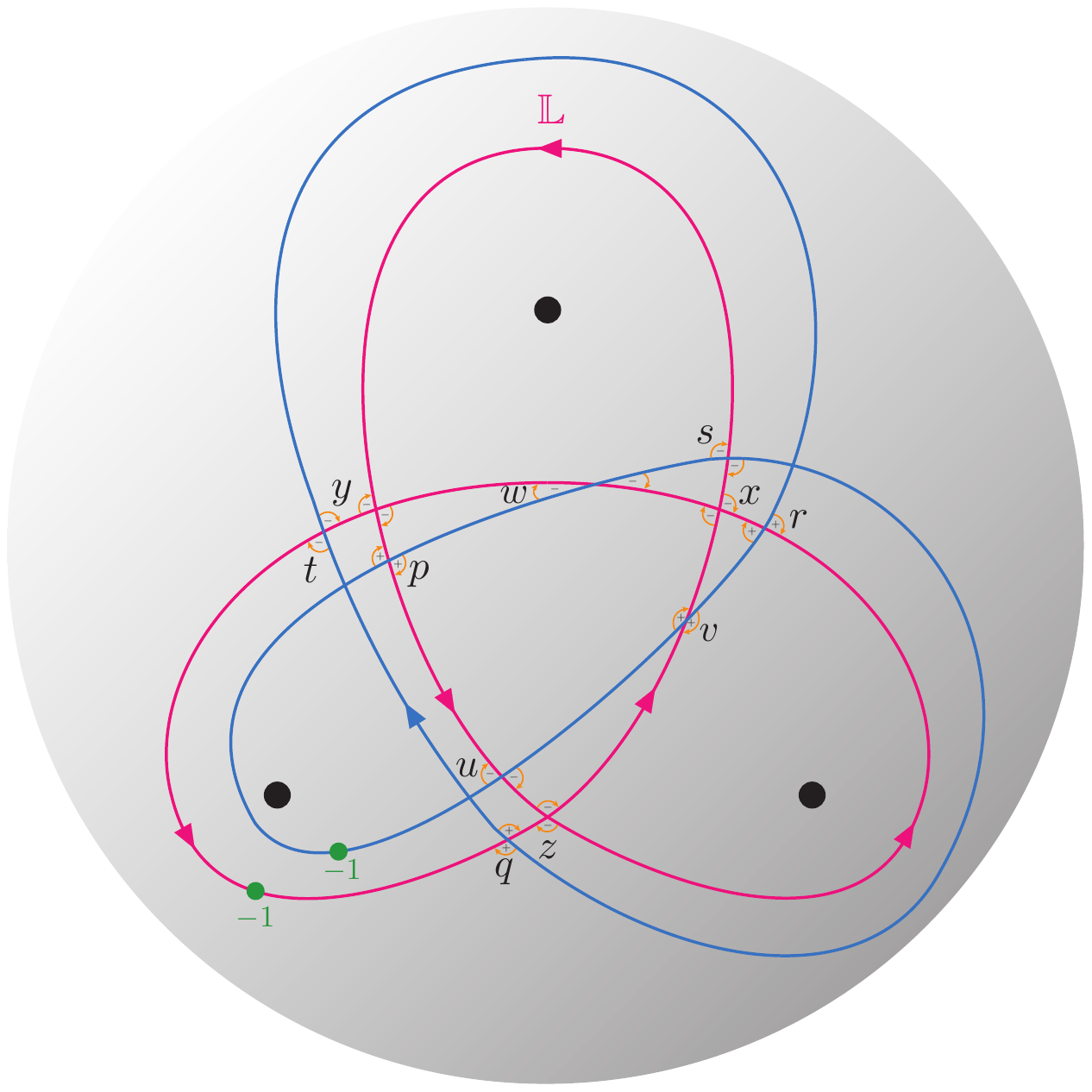}
                \caption{Perturbed Lagrangian $L(2, 2, 2)$.}
                \label{fig:Perturbed222}
\end{figure}
As we will handle the proof of much more general case in  Section \ref{sec:MfFromLag},
let us only illustrate the computation by working out one specific case of $L(2, 3, 2)$.

Consider the Lagrangian $L(2, 3, 2)$ which is drawn in Figure \ref{fig:L322}.
It is enough to find decorated strips (see Figure \ref{fig:strip}) bounded by $L(2,3,2)$ and $\mathbb{L}$ that corresponds to
the following matrix factor of $L(2,3,2)$: $$\begin{blockarray}{cccc}
        p & q & r \\
        \begin{block}{(ccc)c}
                z & -y^2 & 0 & s\\
                0 & x & -z & t\\
                \lambda x & 0 & y & u\\
        \end{block}
\end{blockarray}.$$

Here $\{p,q,r\}$ and $\{s, t, u\}$ are intersection points $L(2,3,2) \cap \mathbb{L}$, grouped according to their intersection signs.
Let us find the non-trivial strips from $p,q,r$ to $s,t,u$, which produce the above matrix factor.

Let us start with easy ones given by three small triangles. 
One of them is drawn in Figure \ref{fig:L322a}, namely the triangle $pZs$ corresponding  to the entry ``z'' of the matrix  for the map $p \to s$. Similar triangles  produces
the rest of diagonal entries $x, y$ of the above matrix.

Now, we may try to find another strip that contains $pZs$-triangle in Figure \ref{fig:L322a}.
If we do not turn at the $Z$-corner, and try to find a bigger (immersed) strip, we will meet the puncture as in Figure \ref{fig:L322ps}. This means that there are no more decorated bigons in that direction.

Thus, we can find all strips, starting from $p,q,r,s,t,u$ in every possible direction while keeping track of turns or not turns at each immersed points.
\begin{figure}[h]
       \centering
       \begin{subfigure}[b]{0.45\textwidth}
               \includegraphics[scale=0.35]{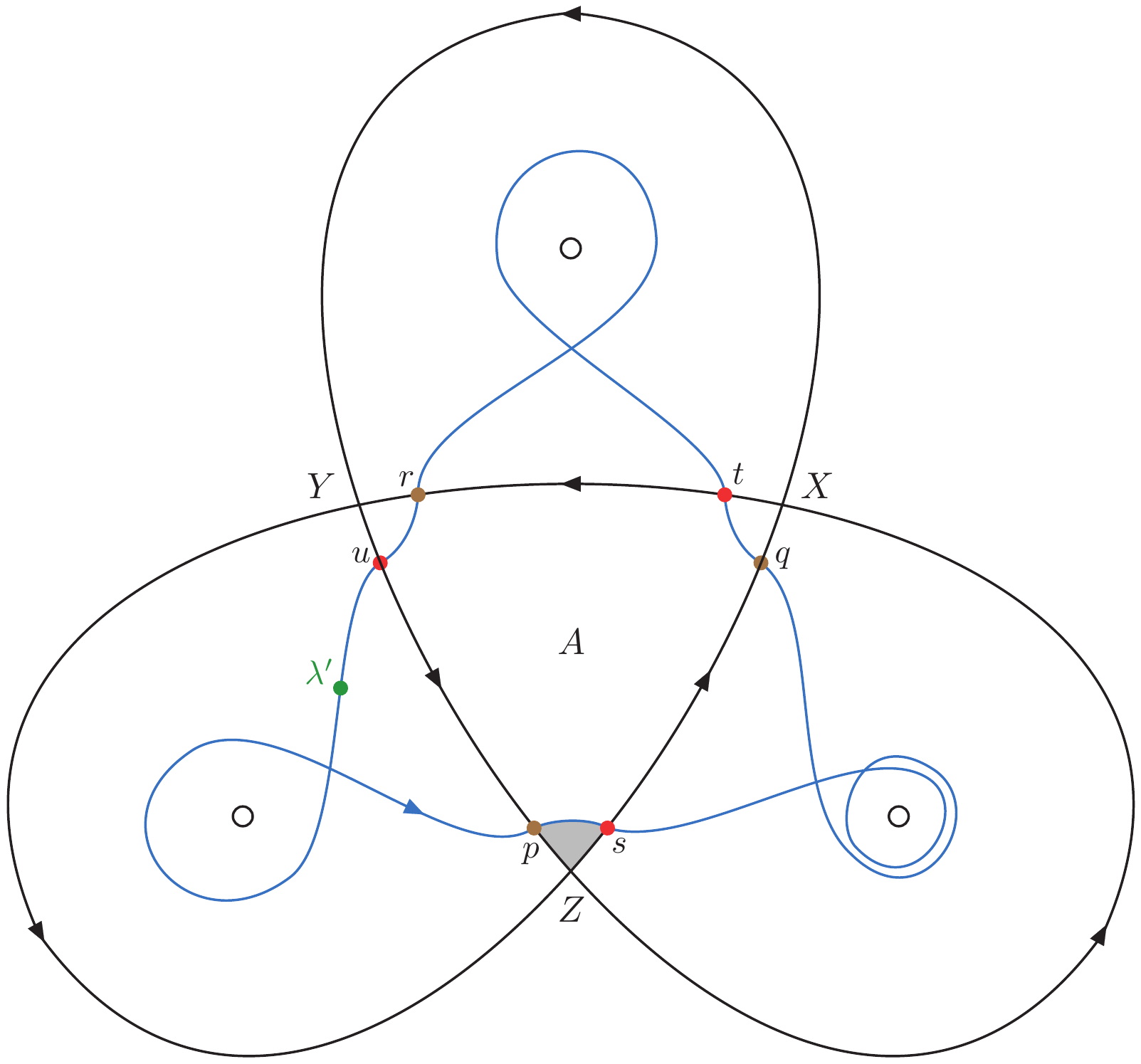}
               \centering
               \caption{}
               \label{fig:L322a}
       \end{subfigure}
       \qquad
       \begin{subfigure}[b]{0.45\textwidth}
               \includegraphics[scale=0.35]{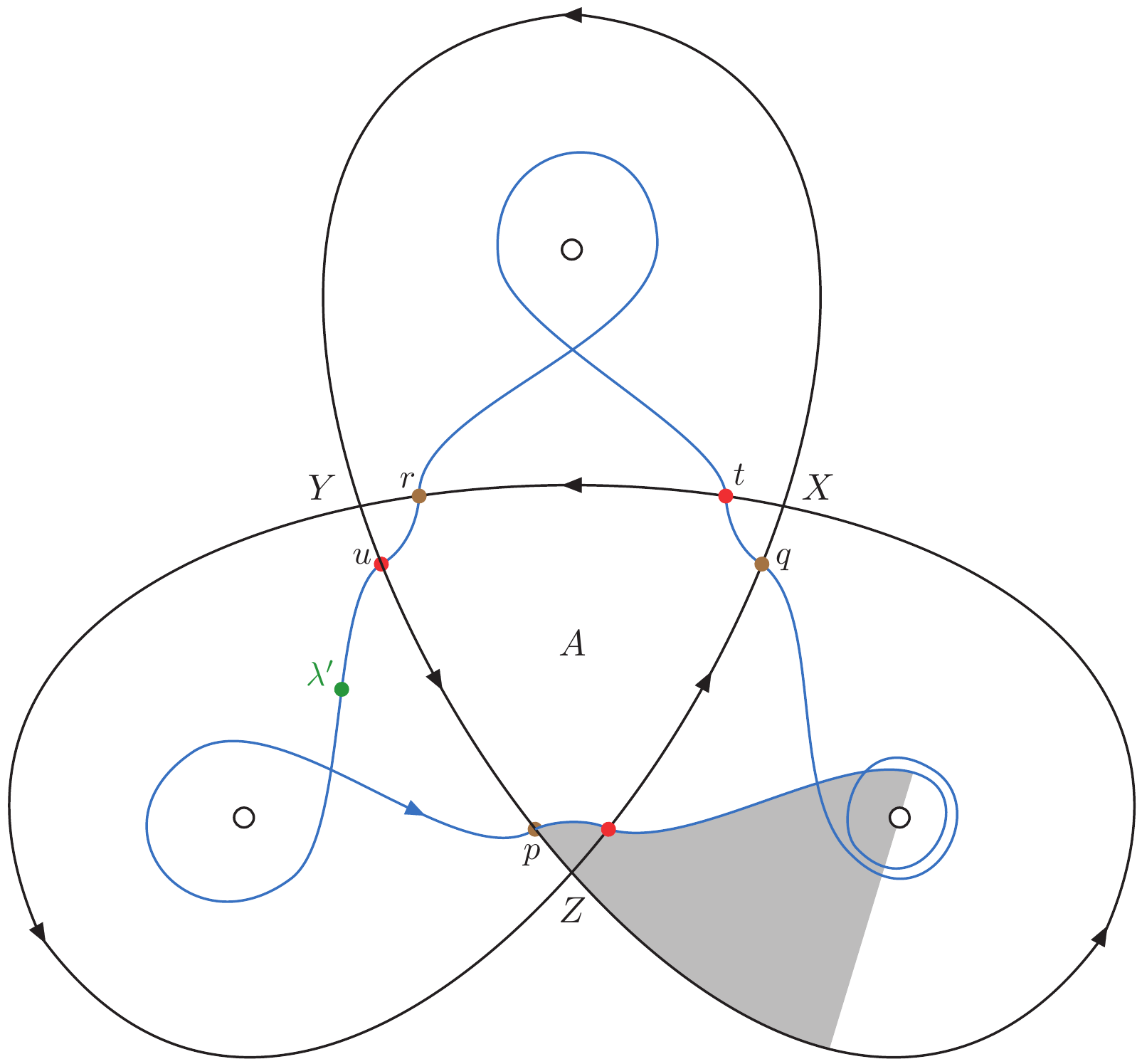}
               \centering
               \caption{}
               \label{fig:L322ps}
       \end{subfigure}
       \caption{}
       \label{fig:L322}
\end{figure}
Let us find other strips in this way. 
Figure \ref{fig:L322psin} describes a part of the decorated strip, which starts from $p$ and moving toward left hand side.
Note that since the infinity is a point $g$ that we can pass through, the strip looks unbounded, but actually it is a bounded strip.
Note that we turn at the $X$-corner, and the rest of the strip is drawn in \ref{fig:L322pu2}, ending at the point $u$.
Since the strip passes the holonomy point, we multiply  the weight $\lambda$ in our counting.
This gives the entry $\lambda x$ for the map $p \to u$. Since this strip follows the Seidel Lagrangian in the opposite direction, the negative sign contributes one more than the number of corners and special holonomy points on the path. This time, this number is $2$ and the strip gives the $\lambda x$ entry.

\begin{figure}[h]
       \centering
       \begin{subfigure}[b]{0.45\textwidth}
       \includegraphics[scale=0.35]{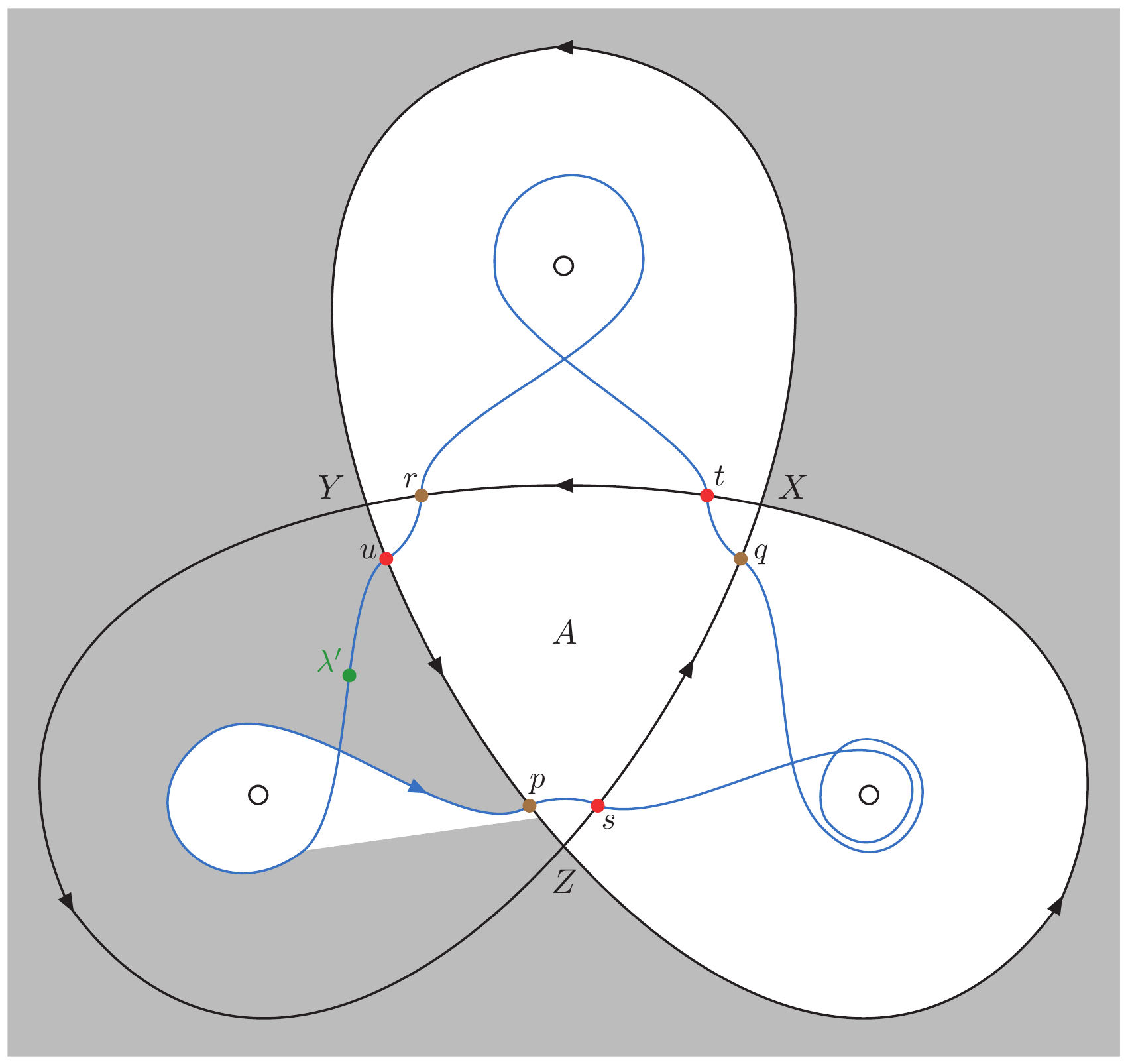}
       \centering
       \caption{}
       \label{fig:L322psin}
       \end{subfigure}
     \begin{subfigure}[b]{0.45\textwidth}
               \includegraphics[scale=0.35]{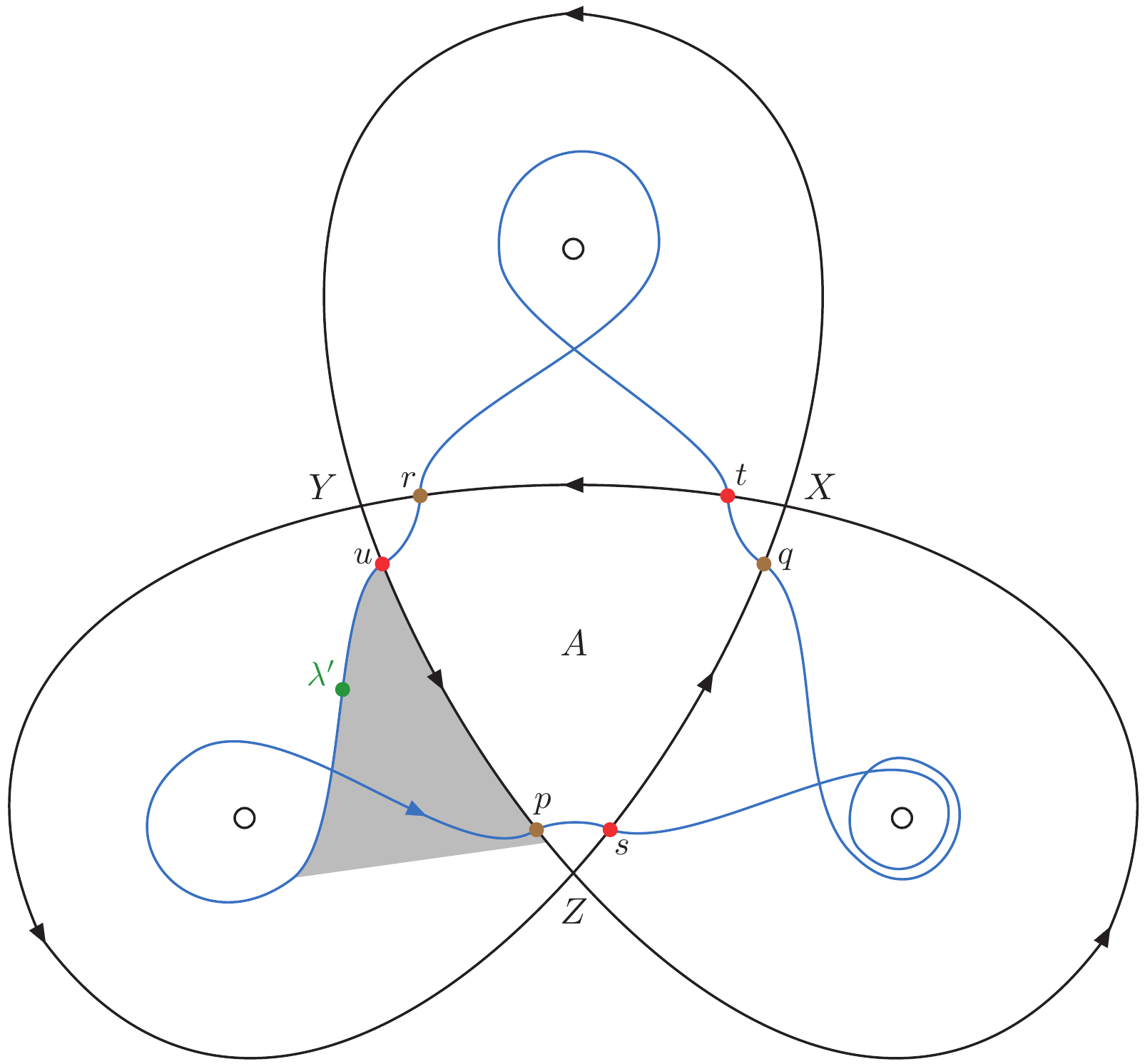}
               \centering
               \caption{}
               \label{fig:L322pu2}
       \end{subfigure}
       \caption{}
       \label{fig:L32222}    
\end{figure}

There is another strip starting from $r$ moving toward right hand side, and covering the whole non-compact region as before, and its boundary on $\bL$ turns at the
$Z$-corner, and the strip ends at the intersection $t$. This strip has one corner and meets the special holonomy point once, the corresponding entry from $r$ to $t$ is $-z$.

The final strip starts from $q$ moving downward, and covering the whole non-compact region twice while turning at the $Y$-corner twice, and ending at the intersection $s$.
This corresponds to $-y^2$ for the map $q \to s$.
One can check that these are all the decorated strips that we can find.

\subsection{ The cases of $2 \times 2$ mirror matrix factorizations}
We consider the cases when a mirror $3 \times 3$ matrix factorization admits a reduction to $2\times 2$.
From the matrix factor in \eqref{eq:33}, we need to consider only the cases when some of $l',m',n'$ are equal to zero or one. Geometrically, if $n'=0, 1$, then the free homotopy class $[\alpha^{l'}\beta^{m'}\gamma^{n'}]$ is reduced to  $[\alpha^{l'}\beta^{m'}], [\alpha^{l'-1}\beta^{m'-1}]$ respectively. Therefore, we may find immersed Lagrangians having these free homotopy classes and intersecting with $\bll$ at only four points. Also, from the symmetry of $x,y,z$, it is enough to consider the cases of $(l',m',1)$ or $(l', m', 0)$. Similar to \ref{def:RankOneLagrangian}, corresponding immersed Lagrangians are given as follows.



\begin{defn}
For two integers $l', m'$ and nonzero $\lambda'\in\bcc^*$, we define an immersed Lagrangian submanifold $L((l', m'), \lambda')$ to be a smoothing of the loop $\delta_x^{l'}\cdot \Delta_{ps} \cdot \delta_y^{m'} \cdot \Delta_{qu}$ whose holonomy $\lambda'$ is concentrated at a point in $\delta_x^{l'}$.
\end{defn}
\begin{remark}
This loop is homotopic to $L((l', m', 1), \lambda')$ (see Figure \ref{fig:SS3}), and $L((l'-1, m'-1, 0), \lambda')$ also because of Lemma \ref{lem:TripleEqui}.  More precisely,  the Lagrangian $L((l', m'), \lambda')$ does not pass through the region $D_z$,
whereas the Lagrangian $L((l', m', 1), \lambda')$ will pass through $D_z$. The latter curve will enter $D_z$ through segment $l_z$, but exits immediately forming an embedded bigon.
This results in a scalar factor  (the expression $z^{n'-1}$ in \eqref{eq:33}) in the corresponding matrix factorization, which can be reduced.
\end{remark}

\begin{figure}[h]
        \centering
        \begin{subfigure}[t]{0.45\textwidth}
                \includegraphics[scale=0.35]{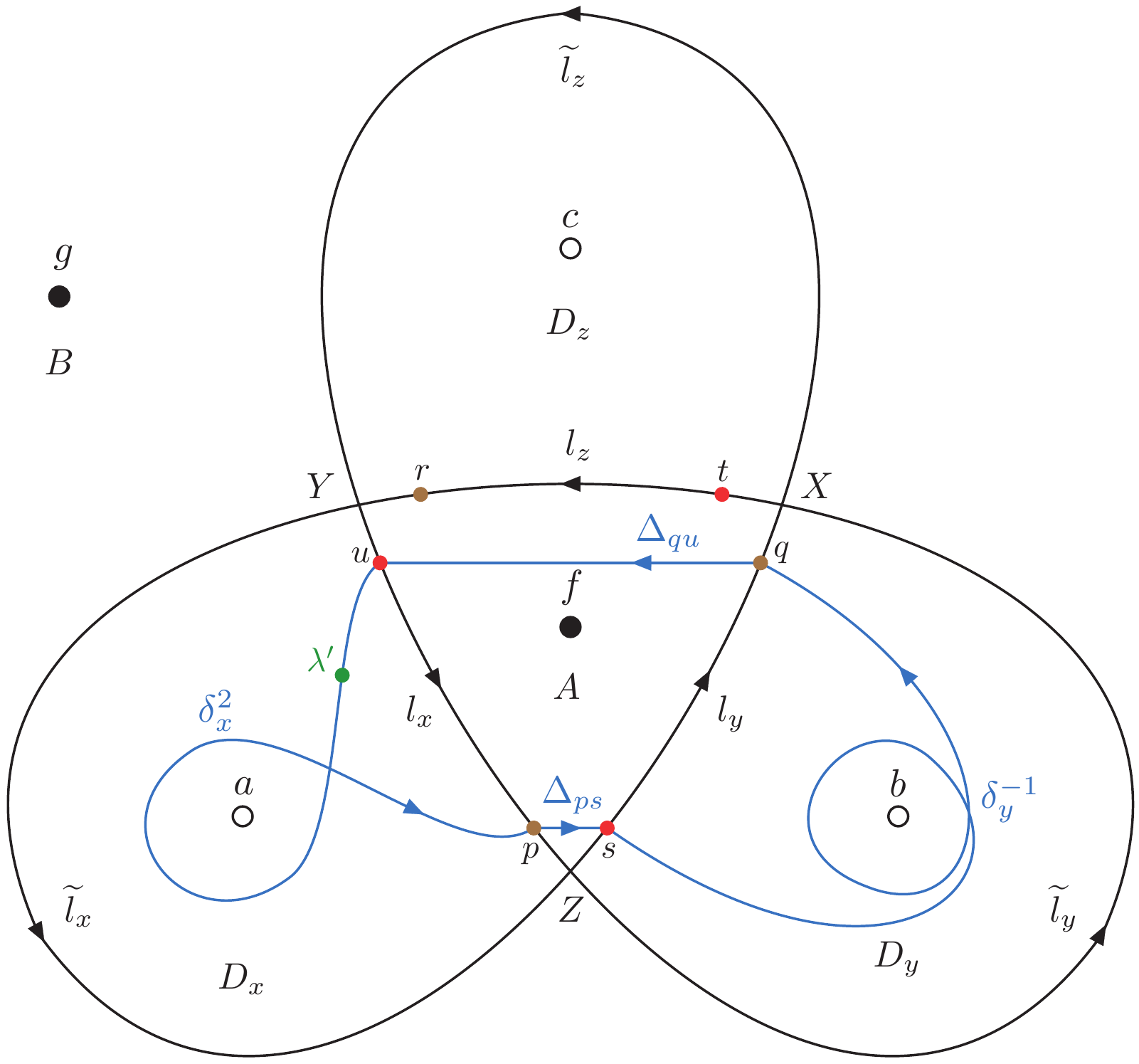}
                \centering
                \caption{The loop $\delta^2_x \cdot \Delta_{ps} \cdot \delta^{-1}_y \cdot \Delta_{qu}$}
                \label{fig:BeforeSmoothingL(2, -1)}
        \end{subfigure}
        \begin{subfigure}[t]{0.45\textwidth}
                \includegraphics[scale=0.35]{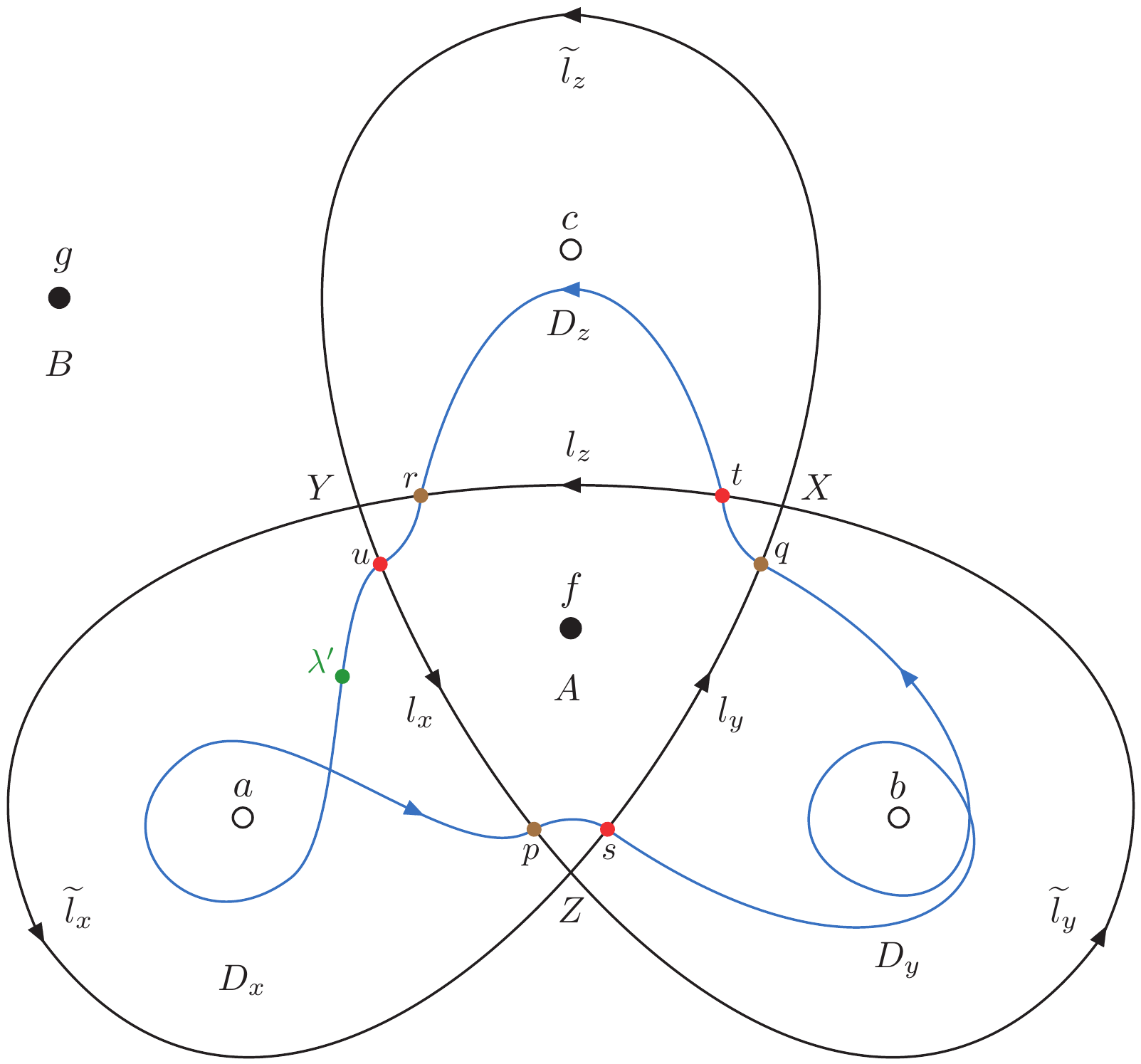}
                \centering
                \caption{The Lagrangian $L_1((2, -1), \lambda')$}
                \label{fig:AfterSmoothingL(2, -1)}
        \end{subfigure}
            \caption{}
           \label{fig:SS3}
\end{figure}

We leave the following proposition as an exercise. Readers can find a more general `\emph{matrix reduction}' process according to removing a bigon in lemma \ref{lem:HomotopyTypeV}.

\begin{prop}\label{prop:RemovingBigon}
Suppose that $(l',m',1)$ is normal. The immersed Lagrangian $L((l', m'), \lambda')$ is mapped to
the following matrix factorization ($\varphi$-matrix factor in fact) under the localized mirror functor $\mathcal{F}^\bL$.
 $$                 \cff^\bll(L((l', m'), \lambda')) = \begin{pmatrix} z+(-1)^{l'+1}\lambda'^{-1}x^{l'-2}y^{m'-2}+(-1)^{l'+1}\lambda' x^{-l'}y^{-m'} & y^{m'-1}+\lambda'^{-1}(-1)^{l'}x^{-l'+1}\\
                        y^{-m'+1}+\lambda'(-1)^{l'}x^{l'-1} & xy \end{pmatrix} $$
                                               (Again, we are using a non-standard notation that {\em $x^a$, $y^a$ or $z^a$ is considered as zero if $a<0$} .)
\end{prop}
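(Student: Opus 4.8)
The plan is to deduce this from Proposition~\ref{prop:LagToMFRankOne} by collapsing an embedded bigon, following the observation in the Remark preceding the statement. First I would make that observation precise: the immersed curve $L((l',m',1),\lambda')$ is a smoothing of $\delta_x^{l'}\cdot\Delta_{ps}\cdot\delta_y^{m'}\cdot\Delta_{qt}\cdot\delta_z^{1}\cdot\Delta_{ru}$, and its only interaction with the region $D_z$ is that it enters through the edge $l_z$ of $\bL$ and leaves immediately, forming one embedded bigon with $\bL$; collapsing this bigon gives exactly $L((l',m'),\lambda')$. Since the hypothesis is that $(l',m',1)$ is normal --- and it is automatically distinct from $(2,2,2)$ since its last entry is $1$, so Remark~\ref{rem:DegenerateLoopLength1} never applies --- Proposition~\ref{prop:LagToMFRankOne} gives $\cff^\bll(L((l',m',1),\lambda'))$ as the matrix \eqref{eq:33} with $n'=1$. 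The point is that the $(2,3)$-entry $-z^{n'-1}$ then specializes to the unit $-1$, precisely reflecting the collapsible bigon.

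Next I would invoke the reduction of a matrix factorization along a unit entry (the general ``matrix reduction'' is Lemma~\ref{lem:HomotopyTypeV}, but in this rank-one instance it can be checked directly): an invertible entry lets one split off a contractible summand of the form $\big(S\xrightarrow{\cdot 1}S,\ S\xrightarrow{\cdot xyz}S\big)$ by elementary row and column operations, which are isomorphisms in $\underline{\operatorname{MF}}(xyz)$, and deleting it leaves $\cok$ unchanged. Concretely, clearing the row and column through the $(2,3)$-entry $-1$ of \eqref{eq:33}, the surviving $2\times2$ block is the Schur complement $A+BC$, where $A$ is the submatrix on the two remaining rows and columns, $B$ the rest of the third column, and $C$ the rest of the second row. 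Computing $A+BC$ and collecting terms --- using the convention that $x^{a},y^{a},z^{a}$ vanish for $a<0$, which is exactly what forces the several sign regimes of $(l',m')$ allowed by normality to collapse into the single displayed formula --- yields the asserted $\varphi$-factor up to an overall sign on a row or column, which I would absorb into the isomorphism; the companion $\psi$-factor is then obtained from the adjugate as in Proposition~\ref{prop:Rank1MFfromModule}, and $\varphi\psi=\psi\varphi=xyz\cdot I_2$ serves as a final consistency check.

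As an alternative that does not route through Proposition~\ref{prop:LagToMFRankOne}, one can compute directly: $L((l',m'),\lambda')$ meets $\bL$ at four points, two of each sign, and one enumerates all decorated strips between $L((l',m'),\lambda')$ and $\bL$ exactly as in the worked example of $L(2,3,2)$ --- the two small triangles give the $z$ and $xy$ entries, and the strips that sweep the noncompact region while winding $|l'|$ times around $a$ and $|m'|$ times around $b$, picking up $\lambda'^{\pm1}$ according to how and how many times they cross the holonomy point on $\delta_x^{l'}$, give the remaining monomials. Normality of $(l',m',1)$ is what guarantees there are no further strips, and hence that the answer has the stated form.

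The step I expect to be the main obstacle is purely bookkeeping: carrying the holonomy weight $\lambda'$ versus $\lambda'^{-1}$ and the signs $(-1)^{l'}$ attached to the ``negative'' strips (those traversing $\bL$ against its orientation, where the sign contributes one more than the number of corners plus holonomy points) correctly through the reduction, and verifying that the convention $x^{a}=0$ for $a<0$ reproduces the same two-by-two matrix in each admissible sign case of $(l',m')$, including the boundary cases in which $l'$ or $m'$ equals $0$. There is no conceptual difficulty beyond this.
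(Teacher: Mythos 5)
Your strategy is exactly the one the paper itself points to (the preceding Remark and the ``exercise'' comment direct the reader to the matrix reduction of Lemma~\ref{lem:HomotopyTypeV} applied to the $n'=1$ case of \eqref{eq:33}), and both of your routes --- Schur complement along the unit pivot $-z^{n'-1}\big|_{n'=1}=-1$, or direct strip counting with four intersection points --- are the right ones. The alternative direct count is essentially the content of Theorem~\ref{thm:MFFromLag} restricted to length $3$ with the $D_z$ detour deleted, so both routes are sound.

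However, the step you wave through (``yields the asserted $\varphi$-factor up to an overall sign on a row or column'') will not go through as stated, and carrying out the computation is worth doing precisely because it reveals a discrepancy that is \emph{not} a sign. Applying Lemma~\ref{lem:HomotopyTypeV} to \eqref{eq:33} with $n'=1$, with pivot the $(t_1,r_1)$-entry $u=-1$, gives
$$
C-Du^{-1}E^T \;=\;
\begin{pmatrix}
z + (-1)^{l'+1}\lambda'^{-1}x^{-l'}y^{-m'} & -y^{m'-1} + (-1)^{l'+1}\lambda'^{-1}x^{-l'+1} \\[1mm]
(-1)^{l'}\lambda'\,x^{l'-1} + y^{-m'+1} & xy
\end{pmatrix},
$$
which carries $\lambda'^{-1}$ on the surviving holonomy term of the $(1,1)$-entry, whereas the proposition prints $\lambda'$. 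This cannot be absorbed by rescaling a row or column: any such rescaling leaves the ratio of the $z$-term to the $xy$-term in the $(1,1)$-slot unchanged. Moreover, the printed formula is not even a matrix factor of $xyz$: for the normal triple $(l',m',n')=(-1,-1,1)$ it reads
$\bigl(\begin{smallmatrix} z+\lambda' xy & -\lambda'^{-1}x^2 \\ y^2 & xy\end{smallmatrix}\bigr)$,
whose determinant is $xyz+(\lambda'+\lambda'^{-1})x^2y^2$, not a unit multiple of $xyz$. So the proposition's displayed matrix almost certainly carries a typo (most plausibly $\lambda'\mapsto\lambda'^{-1}$ in the $(1,1)$-entry, together with a sign $(-1)^{l'}\mapsto(-1)^{l'+1}$ in the $(1,2)$-entry); the Schur complement above, which recovers the Burban--Drozd form $\theta_2((p,q),\lambda)$ after a unit rescaling, is the correct statement. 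Finally, a small precision point: the reduced block is $C-Du^{-1}E^T$, not ``$A+BC$''; here it happens to coincide because $u^{-1}=-1$, but you should carry the pivot explicitly so the sign logic survives when you compare the two conventions \eqref{eq:33} and Theorem~\ref{thm:MFFromLag}, which already differ from each other by signs in the $(2,3)$ and $(3,2)$ entries.
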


\section{Rank one Mirror symmetry Correspondence between modules and Lagrangians}\label{sec:MSRank1}
In this section, we prove homological mirror symmetry between indecomposable maximal Cohen-Macaulay modules over $\mathbb{C}[[x,y,z]]/(xyz)$ 
and free homotopy classes of normal immersed Lagrangian curves in the pair of pants $\mathcal{P}$ given by rank one data.

Recall that for modules, rank one means that they are classified by (modified) band data of rank one $((l,m,n), \lambda)$.
In other word, their word length is 3 and multiplicity $\mu$ is one. 
The canonical forms of their associated matrix factorizations are given in Proposition \ref{prop:Rank1MFfromModule} and
due to the non-trivial  Macaulayfications of modules, canonical form used corrected band data $(l',m',n')$.

Rank one immersed Lagrangians have  their free homotopy classes as $\alpha^{l'}\beta^{m'}\gamma^{n'}$ (up to conjugation).
We have the unique normal Lagrangian loop in each homotopy class whose mirror matrix factorization under the localized mirror functor
is given by the canonical form in Section \ref{sec:MFFromRank1Lagrangian}.

We justify the use of the  same notation $(l',m',n')$ in algebra and geometry. 
First, we show that there is a bijective correspondence between (corrected) band words and normal loop words in the rank one case.
Next, we will verify that the corresponding matrix factorizations match and therefore, the coincidence is the
manifestation of mirror symmetry.

\begin{thm}\label{thm:c1}
The conversion $(l,m,n) \mapsto (l',m',n')$ given 
in Proposition \ref{prop:Rank1MFfromModule} gives a bijection between non-degenerate band words $(l,m,n)$ and
normal loop words $(l',m',n')$.  More precisely, the converted word  is always normal and there exists an inverse
from normal loop words to non-degenerate band words
$$
(l',m',n') \mapsto  (l',m',n') - (\varepsilon'(l',m',n'), \varepsilon'(l',m',n'), \varepsilon'(l',m',n'))
$$
where the correction number is
$$
\varepsilon'(l',m',n') := -1 + \#\left(\left\{l',m',n'\right\} \cap \mathbb{Z}_{\ge1}\right)
$$
\end{thm}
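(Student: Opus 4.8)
The plan is to exhibit the claimed map $g\colon (l',m',n')\mapsto (l',m',n')-(\varepsilon',\varepsilon',\varepsilon')$, with $\varepsilon'=-1+\#(\{l',m',n'\}\cap\mathbb{Z}_{\ge 1})$, as a two-sided inverse of the conversion $f\colon (l,m,n)\mapsto (l',m',n')=(l,m,n)+(\varepsilon,\varepsilon,\varepsilon)$, $\varepsilon=\varepsilon(l,m,n)$, while simultaneously checking that $f$ has image exactly the set of normal loop words and that $g$ sends normal loop words back to band words. Since $f$ is obtained from the case-defined number $\varepsilon$ by a single translation, the content is to match that case split with the sign pattern of the output triple.

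First I would partition the band words $\mathbb{Z}^3$ into the four pieces on which $\varepsilon$ is constant, reading them off from its definition: $S_2=\{l,m,n\ge 0\}$ with $\varepsilon=2$; $S_1=\{\text{two of }l,m,n\text{ are }>0\text{ and the third }<0\}$ with $\varepsilon=1$; $S_0=\{\text{exactly one of }l,m,n\text{ is }>0,\text{ the other two }\le 0\text{ and not both }0\}$ with $\varepsilon=0$; and $S_{-1}=\{l,m,n\le 0\}\setminus\{(0,0,0)\}$ with $\varepsilon=-1$. These genuinely exhaust $\mathbb{Z}^3$ and are pairwise disjoint. On $S_c$ the map $f$ is translation by $(c,c,c)$, so one computes directly that $f(S_2)=\{l',m',n'\ge 2\}$, that $f(S_1)=\{\text{two entries}\ge 2\text{ and one entry}\le 0\}$, that $f(S_0)=S_0$, and that $f(S_{-1})=\{l',m',n'\le -1\}\setminus\{(-1,-1,-1)\}$. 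The organizing observation is that every triple in $f(S_c)$ has exactly $c+1$ entries that are $\ge 1$; hence $\varepsilon'\equiv c$ on $f(S_c)$, so $g|_{f(S_c)}$ is translation by $(-c,-c,-c)$, it carries $f(S_c)$ back into $S_c$, and $g\circ f=\mathrm{id}$. In particular the four sets $f(S_c)$ are pairwise disjoint (their triples have $3,2,1,0$ entries $\ge 1$ respectively).

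It remains to show that $\bigcup_c f(S_c)$ equals the set of normal loop words; together with the previous step this yields $f\circ g=\mathrm{id}$ and completes the proof. For ``$\subseteq$'' I would check Definition \ref{defn:normal1} on each $f(S_c)$: triples in $f(S_2)$ and in $f(S_{-1})$ have no entry equal to $0$ or $1$ and differ from $(-1,-1,-1)$, hence are normal for free; a triple in $f(S_1)$ is only constrained through a possible $0$ entry, whose two neighbors are then both $\ge 2$; in $f(S_0)=S_0$, an entry equal to $1$ has both neighbors $\le 0$, and an entry equal to $0$ has one neighbor $>0$ and, by the ``not both $0$'' clause, the other neighbor $\le -1$ --- exactly the alternatives in the normality condition. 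For ``$\supseteq$'', given a normal loop word I would sort by $k:=\#(\text{entries}\ge 1)$: if $k\in\{2,3\}$, an entry equal to $1$ would be adjacent (in a cyclic triple all entries are pairwise adjacent) to another entry $\ge 1$, contradicting normality, so those $k$ entries are $\ge 2$ and the word lies in $f(S_2)$ when $k=3$ and in $f(S_1)$ when $k=2$; if $k=1$, normality forbids the two nonpositive entries from both vanishing, so the word is in $S_0$; if $k=0$, a $0$ entry would force a neighbor $\ge 1$, impossible, so all entries are $\le -1$, and normality rules out $(-1,-1,-1)$, placing the word in $f(S_{-1})$.

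The only real obstacle is bookkeeping: lining up the four-way case split defining $\varepsilon$ with the invariant ``number of entries $\ge 1$'' on the loop side, and handling the boundary values $0$ and $1$ with care --- this is precisely where the normality conditions of Definition \ref{defn:normal1} and the ``$(m,n)\ne(0,0)$''-type provisos in the definition of $\varepsilon$ (equivalently, the exclusion on the geometric side of the non-essential words $(n,0,0),(0,n,0),(0,0,n)$) enter. Incidentally, $(0,0,0)\in S_2$ and $f(0,0,0)=(2,2,2)$, so the degenerate band word is matched with the degenerate loop word, consistently with the conventions for the band and loop data.
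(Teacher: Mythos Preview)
Your proof is correct and follows essentially the same approach as the paper: both arguments partition by the value of $\varepsilon$ (equivalently, by the sign pattern of the triple), compute the image of each piece, verify normality there, and check that $\varepsilon'$ recovers $\varepsilon$ on each piece so that the proposed inverse undoes the translation. Your organizing invariant ``number of entries $\ge 1$'' makes the bookkeeping slightly cleaner, but the content is the same case analysis as in the paper.
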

Before we give a proof of the theorem, let us prove rank one homological mirror symmetry first.
\begin{thm}\label{thm:r1}
Let  $((l,m,n),\lambda,1)$ be a non-degenerate (modified) band datum
and let $((l',m',n'),\lambda,1')$ be the converted word with the relation
$$ \lambda' =(-1)^{l+1}\lambda$$
Then, the  diagram \eqref{dia:main} commutes.
\end{thm}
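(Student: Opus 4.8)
The plan is to verify the commutativity of \eqref{dia:main} by composing the two ``vertical'' computations already established and checking they agree on the nose. Concretely, I would prove the two items asserted in the introductory Theorem \ref{thm:intromain} separately and then paste them together. First, for the algebra side, I would invoke Proposition \ref{prop:Rank1MFfromModule}: it asserts that the Macaulayfication $M((l,m,n),\lambda,1)$ of the Burban--Drozd module $\tilde M((l,m,n),\lambda,1)$ is isomorphic, under Eisenbud's functor $\operatorname{coker}$, to $\operatorname{coker}\underline{\varphi}$ where $\varphi = \varphi((l',m',n'),\lambda,1)$ is the canonical $3\times 3$ matrix factorization with exponents shifted by the correction number $\varepsilon(l,m,n)$. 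This gives the left-hand arrows of \eqref{dia:main}. Second, for the geometry side, I would invoke Proposition \ref{prop:LagToMFRankOne}: for a normal triple $(l',m',n') \neq (2,2,2)$, the localized mirror functor sends the immersed Lagrangian $L((l',m',n'),\lambda')$ to the matrix factorization $\cff^{\bL}(L((l',m',n'),\lambda'))$ displayed in \eqref{eq:33}, with holonomy-dependent signs $(-1)^{l'}$, $(-1)^{l'+1}$ attached to the $x$-entries. This gives the right-hand arrow of \eqref{dia:main}.

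The core of the argument is then a direct comparison of the two matrices. The matrix in Proposition \ref{prop:Rank1MFfromModule} has off-diagonal entries $-y^{m'-1}$, $-\lambda^{-1}x^{-l'}$, $-y^{-m'}$, $-z^{n'-1}$, $-\lambda x^{l'-1}$, $-z^{-n'}$, while \eqref{eq:33} has $-y^{m'-1}$, $(-1)^{l'+1}\lambda'^{-1}x^{-l'}$, $y^{-m'}$, $-z^{n'-1}$, $(-1)^{l'}\lambda'x^{l'-1}$, $z^{-n'}$. The signs on the $y$- and $z$-entries differ, but this discrepancy is harmless: conjugating $\varphi$ by the diagonal matrix $D=\operatorname{diag}(1,-1,1)$ (i.e.\ passing to $D\varphi D^{-1}$, which induces an isomorphism of cokernels and hence an isomorphism in $\underline{\operatorname{MF}}$) flips exactly the signs of the $(1,2)$, $(2,1)$, $(2,3)$, $(3,2)$ entries, reconciling the two $y$- and $z$-entries. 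It then remains to match the $x$-entries: one needs
$$
-\lambda^{-1} \;=\; (-1)^{l'+1}\lambda'^{-1},
\qquad
-\lambda \;=\; (-1)^{l'}\lambda',
$$
which both reduce to $\lambda' = (-1)^{l'+1}\lambda = (-1)^{l+1}\lambda$, since $l' = l + \varepsilon(l,m,n)$ and one checks $\varepsilon(l,m,n)$ is always even (it equals $2$, $0$, or $-1$... ) --- so in fact I must be slightly careful: $\varepsilon$ can be odd, so $(-1)^{l'} = (-1)^{l+\varepsilon}$, and the sign prescription $\lambda' = (-1)^{l+1}\lambda$ in the statement is exactly what absorbs the parity of $\varepsilon$. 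I would record this parity bookkeeping as a short lemma. Combining: the conjugated canonical-form matrix of $M((l,m,n),\lambda,1)$ equals $\cff^{\bL}(L((l',m',n'),\lambda'))$ in $\underline{\operatorname{MF}}(xyz)$, so the square commutes.

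Two loose ends need attention. The first is the exceptional degenerate triple $(2,2,\dots,2)$: here $L((2,2,2),\lambda')$ is not regular, and by Remark \ref{rem:DegenerateLoopLength1} its mirror is only \emph{homotopy equivalent} (not equal) to the canonical form when $\lambda' \neq -1$; since the corresponding band datum $(0,0,0)$ with $\lambda=1$ is the excluded degenerate case, and $\lambda'=(-1)^{0+1}\cdot 1 = -1$ is precisely excluded, I must restrict to non-degenerate data throughout, which is consistent with the hypothesis of the theorem. The second is the normality of the converted word: Theorem \ref{thm:c1} guarantees $(l',m',n')$ is always normal, which is exactly the hypothesis needed to apply Proposition \ref{prop:LagToMFRankOne}; I would therefore prove Theorem \ref{thm:c1} first (a finite, if tedious, case-check over the sign patterns of $(l,m,n)$, verifying that $(l+\varepsilon, m+\varepsilon, n+\varepsilon)$ satisfies the three bullet conditions of Definition \ref{defn:normal1} and that $\varepsilon'$ inverts $\varepsilon$), and then the main theorem follows. \textbf{The main obstacle} I anticipate is not conceptual but organizational: getting the sign conventions consistent across three sources --- Eisenbud's $\operatorname{coker}$, the adjoint-matrix computation of $\psi$, and the polygon-counting sign rule of the localized mirror functor (the ``$-1$ more than the number of corners'' rule illustrated for $L(2,3,2)$) --- so that the single scalar relation $\lambda' = (-1)^{l+1}\lambda$ comes out correctly rather than, say, $\lambda' = (-1)^{l}\lambda$ or with an extra factor from $\tau$; the $\tau$-dependent sign $(-1)^{w_1+w_4+\cdots+\tau}$ in the general statement collapses to $(-1)^{l+1}$ only when $\tau=1$, and I would double-check this collapse explicitly.
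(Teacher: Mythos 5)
The overall strategy is the same as the paper's: invoke Proposition~\ref{prop:Rank1MFfromModule} on the algebra side, Proposition~\ref{prop:LagToMFRankOne} on the geometry side, and compare the two $3\times3$ canonical matrices. The paper leaves the comparison as a one-line assertion; you try to supply the explicit isomorphism, and that is where the argument goes wrong.

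Your proposed conjugation $D=\operatorname{diag}(1,-1,1)$ does not work in general. Conjugating by $D$ flips the signs of all four of the $(1,2)$, $(2,1)$, $(2,3)$, $(3,2)$ entries, but only $(2,1)$ and $(3,2)$ actually differ between $\varphi((l',m',n'),\lambda,1)$ and $\cff^{\bL}(L((l',m',n'),\lambda'))$ -- the entries $(1,2)$ and $(2,3)$ already agree (both $-y^{m'-1}$, both $-z^{n'-1}$). Flipping those two destroys the match whenever they are nonzero, i.e.\ whenever $m'\ge 1$ or $n'\ge 1$. For example, in the case $l,m,n\ge 0$ nondegenerate (so $l',m',n'\ge 2$), the algebra matrix is
$$
\begin{pmatrix} z & -y^{m'-1} & 0 \\ 0 & x & -z^{n'-1} \\ -\lambda x^{l'-1} & 0 & y\end{pmatrix},
$$
and $D\,\varphi\,D^{-1}$ has $+y^{m'-1}$ at $(1,2)$ and $+z^{n'-1}$ at $(2,3)$, which no longer matches the geometry matrix. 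The correct observation is that for each antipodal pair $\{(1,2),(2,1)\}$, $\{(2,3),(3,2)\}$, $\{(1,3),(3,1)\}$ at most one entry is nonzero (because $\chi^a=0$ for $a<0$), so the diagonal change of basis $\operatorname{diag}(1,\sigma_m,\sigma_m\sigma_n)$ is forced entry-by-entry, where $\sigma_\chi=+1$ if $\chi'\ge 1$ and $-1$ if $\chi'\le 0$. The conjugation is therefore \emph{case-dependent}: it is the identity when $m',n'\ge1$, it is $\operatorname{diag}(1,-1,1)$ only when $m'\le 0$ and $n'\le 0$, and so on.

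Your parity paragraph is also circular. After your conjugation you obtain $\lambda'=(-1)^{l'+1}\lambda$; since $l'=l+\varepsilon$ this equals $(-1)^{l+1}\lambda$ precisely when $\varepsilon$ is even, and you correctly note $\varepsilon$ can be odd ($\varepsilon\in\{-1,1\}$). But you then say ``the sign prescription $\lambda'=(-1)^{l+1}\lambda$ in the statement is exactly what absorbs the parity of $\varepsilon$,'' which assumes the conclusion. What actually absorbs the parity is the conjugation itself: the $x$-entry comparison gives $\lambda'=\sigma_l\sigma_m\sigma_n(-1)^{l'+1}\lambda$, and one checks directly from the case definition of $\varepsilon$ that $\sigma_l\sigma_m\sigma_n=(-1)^{\varepsilon}$, whence $\lambda'=(-1)^{\varepsilon+l'+1}\lambda=(-1)^{2\varepsilon+l+1}\lambda=(-1)^{l+1}\lambda$. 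You should therefore prove the sign identity $\sigma_l\sigma_m\sigma_n=(-1)^{\varepsilon}$ as a small lemma, alongside the normality claim of Theorem~\ref{thm:c1}, rather than fixing a single $D$ and appealing to the desired relation to finish.
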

\begin{proof}
We prove Theorem \ref{thm:r1}.
Given a rank one band data $\left((l,m,n),\lambda,1\right)$, Burban-Drozd defined the corresponding module 
$\tilde{M}\left(\left(l,m,n\right),\lambda,1\right)$. In Section \ref{sec:mod1}, we have computed its Macaulayfication $M\left(\left(l,m,n\right),\lambda,1\right)$
and found the following canonical matrix factor of $xyz$:
$$
\varphi((l',m',n'),\lambda,1) =
\begin{pmatrix}
z & - y^{m'-1} & -\lambda^{-1}x^{-l'} \\
-y^{-m'} & x & - z^{n'-1} \\
- \lambda x^{l'-1} & -z^{-n'} & y
\end{pmatrix}.
$$
This explains arrows between $\underline{\mathrm{CM}}(A)$ and $\mathrm{HMF}(xyz)$.

We showed that the normal immersed Lagrangian $L((l',m',n'),\lambda,1')$
is mapped to the following matrix factor under the localized mirror functor $\LocalF$
$$
\LocalF\left(L((l',m',n'),\lambda',1')\right) =
\begin{pmatrix}
z & - y^{m'-1} & (-1)^{l'+1}\lambda'^{-1}x^{-l'} \\
y^{-m'} & x & - z^{n'-1} \\
(-1)^{l'} \lambda'x^{l'-1} & z^{-n'} & y
\end{pmatrix}.
$$
Comparing the above two matrices, one can show that they are isomorphic in $\operatorname{MF}(xyz)$ if we set $\lambda'=(-1)^{l+1}\lambda$. Note that the exponent of $(-1)$ is not $(l'+1)$ but $(l+1)$. 
\end{proof}

Now, let us prove Theorem \ref{thm:c1}.

\begin{proof}
        Let $(l, m, n)$ be a nondegenerate triple and $(l', m', n')$ be the converted triple. We show that $(l', m', n')$ is normal and $\varepsilon'(l', m', n') = \varepsilon(l, m, n)$ by dividing the cases according to the value of $\varepsilon(l, m, n)$.
        \begin{itemize}
                \item $\varepsilon(l, m, n)=2$ : in this case, $(l', m', n') = (l+2, m+2, n+2)\in\bzz_{\geq 2}^3$, hence the converted triple is normal and $\varepsilon'(l', m', n') = 2$.
                \item $\varepsilon(l, m, n)=1$ : we may assume that $l, m>0$ and $n<0$. Then $(l', m', n') = (l+1, m+1, n+1) \in\bzz_{\geq 2}^2\times\bzz_{\leq 0}$,  the converted triple is normal and $\varepsilon'(l', m', n')=1$.
                \item $\varepsilon(l, m, n)=0$ : we may assume $l>0$, $m<0$, and $n\leq 0$. Then the converted triple $(l', m', n')=(l, m, n)$ is normal and $\varepsilon'(l', m', n') = 0$.
                \item $\varepsilon(l, m, n)=-1$ : assume that $l<0$, and $m, n\leq 0$. Then $(l', m', n') = (l-1, m-1, n-1)\in\bzz_{\leq -2}\times\bzz_{\leq -1}^2$ is normal and $\varepsilon'(l', m', n') = -1$.
        \end{itemize}
        Thus conversion from nondegenerate triples to normal triples is well defined and injective. Now consider the converse. Let $(l', m', n')$ be a normal triple and $(l'', m'', n'')$ be the converted triple. As before, we show $\varepsilon(l'', m'', n'') = \varepsilon'(l', m', n')$ by dividing the cases according to the value of $\varepsilon(l', m', n')$.
        \begin{itemize}
                \item $\varepsilon'(l', m', n')=2$ : in this case, $l', m', n'\geq 1$. Since $(l', m', n')$ is normal, neither of them is $1$. Thus, $(l'', m'', n'')=(l'-2, m'-2, n'-2)\in\bzz_{\geq 0}^3$ and $\varepsilon(l'', m'', n'')=2$.
                \item $\varepsilon'(l', m', n')=1$ : we may assume $l', m'\geq 1$ and $n'\leq 0$. By the same reason as above, $l', m'\neq 1$. Hence $(l'', m'', n'') = (l'-1, m'-1, n'-1)\in\bzz_{\geq 1}^2\times\bzz_{\leq-1}$ and $\varepsilon(l'', m'', n'') = 1$.
                \item $\varepsilon'(l', m', n')=0$ : we may assume $l'\geq 1$ and $m', n'\leq 0$. Note that not both $m'$ and $n'$ are zero by normality assumption. Then $\varepsilon(l'', m'', n'') = \varepsilon(l', m', n') = 0$.
                \item $\varepsilon(l', m', n')=-1$ : let $l', m', n'\leq 0$. Since $(l', m', n')$ is normal, they are all non-zero. Also, $(l', m', n')\neq (-1, -1, -1)$. Then $(l'', m'', n'') = (l'+1, m'+1, n'+1)\in\bzz_{\leq 0}^3\setminus\{(0, 0, 0)\}$ and $\varepsilon(l'', m'', n'')=-1$.
        \end{itemize}
        Thus conversion from normal triples to nondegenerate triples is also injective, which implies the conversion is bijective.
\end{proof}

We will also prove it in more general form for higher rank cases in Propositions \ref{prop:ConvertedToNormalForm} and \ref{prop:ConversionFormulaInverseToEachOther}.

\section{Loop Data and Modified Band Data}
To discuss higher length cases, we study some properties of loop data in this section.
We will also introduce a slightly modified version of the band data of Burban-Drozd \cite{BD17}
to compare it with the loop data.

\subsection{Loop data for immersed Lagrangians}
Recall that the fundamental group of $\mathcal{P}$ can be presented as
$\pi_1\left(\mathcal{P}\right) = \left<\alpha,\beta,\gamma\left|\alpha\beta\gamma=1\right.\right>$
with the based loops $\alpha$, $\beta$ and $\gamma$ in $\mathcal{P}$ shown in Figure \ref{fig:gen}.
Also recall that there is a one-to-one correspondence between the elements of $\left[S^1,\mathcal{P}\right]$ and the conjugacy classes in $\pi_1\left(\mathcal{P}\right)$. 

\begin{defn}\label{def:LoopWord}
A \emph{loop word} of \emph{length} $3\tau$ is
$$
w' = (l_1',m_1',n_1',l_2',m_2',n_2',\dots,l_\tau',m_\tau',n_\tau')\in \mathbb{Z}^{3\tau}
$$
with  $\tau\in\mathbb{Z}_{\ge1}$, $l_i'$, $m_i'$, $n_i'\in\mathbb{Z}$ for $i\in\left\{1,\dots,\tau\right\}$.
Its associated element in $[S^1,\CP]$ is denoted as
$$
L\left[w'\right]:=\alpha^{l_1'}\beta^{m_1'}\gamma^{n_1'}\alpha^{l_2'}\beta^{m_2'}\gamma^{n_2'}\cdots\alpha^{l_\tau'}\beta^{m_\tau'}\gamma^{n_\tau'}
$$
Two loop words $w'$ and $w_*'$ are \emph{equivalent} if  the associated loops are homotopic to each other.
A \emph{loop datum} $(w',\lambda',\mu')$ consists of the following:
\begin{itemize}
\item (loop word) $w' = (l'_1,m'_1,n'_1,l'_2,m'_2,n'_2,\dots,l'_\tau,m'_\tau,n'_\tau)\in \mathbb{Z}^{3\tau}$ for some $\tau \in \mathbb{Z}_{\ge1}$,
\item (holonomy)\ $\lambda' \in \mathbb{C}^*$,
\item (multiplicity) $\mu' \in \mathbb{Z}_{\ge1}$.
\end{itemize}
Two loop data $(w',\lambda',\mu')$ and $(w'_*,\lambda'_*,\mu'_*)$ are said to be \emph{equivalent} if $w'$ and $w'_*$ are equivalent as loop words, $\lambda'=\lambda'_*$ and $\mu'=\mu'_*$.
\end{defn}

For such a word $w'$, define its \emph{$1$-shift} to be
$$
w'^{(1)}=\left(l'_2,m'_2,n'_2,\dots,l'_\tau,m'_\tau,n'_\tau,l'_1,m'_1,n'_1\right)
$$
and \emph{$k$-shift} to be $w'^{(k)}$ where $w'^{(k)}$ is obtained from $w'$ by applying the $1$-shift $k$-times for some $k \in \Z$.

We sometimes denote the $j$-th value of $w'$ as $w'_j$ so that
$$
w' = (w'_1, w'_2, w'_3, w'_4, w'_5, w'_6, \dots, w'_{3\tau-2}, w'_{3\tau-1}, w'_{3\tau})
$$
where $w'_{3i-2} = l'_i$, $w'_{3i-1} = m'_i$ and $w'_{3i} = n'_i$ for $i\in\mathbb{Z}_\tau$. Here we regard the index $i$ of $l'_i$, $m'_i$ and $n'_i$ to be in $\mathbb{Z}_\tau$ (hence $3i\in\mathbb{Z}_{3\tau}$)
and the index $j$ of $w'_j$ to be in $\mathbb{Z}_{3\tau}$. Then any tuple
$
(w'_{k},w'_{k+1},\dots,w'_l)
$
for some distinct $k$, $l\in\mathbb{Z}_{3\tau}$ is called a \emph{subword} in $w$. For example, $(w'_{3\tau-1},w'_{3\tau},w'_1)$ is a subword.
The following lemma is easy to check.

\begin{lemma}\label{lem:equimove}
The following $5$ operations on a loop word $w'$ do not change the equivalence class of $w'$:
\begin{itemize}
\item (shifting) take $k$-shift on $w'$ for some $k\in\mathbb{Z}$,
\item (inserting 0s) insert the subword $(0,0,0)$ in any place of $w'$,
\item (removing 0s) remove a subword $(0,0,0)$ in $w'$ if it exists,
\item (adding $1$s around $0$) add $(1,1,1)$ to the subword $(w_{j-1}',0,w_{j+1}')$ in $w'$ where $w_j'=0$, and
\item (subtracting $1$s around $1$) subtract $(1,1,1)$ from the subword $(w_{j-1}',1,w_{j+1}')$ in $w'$ where $w_j'=1$.
\end{itemize}
\end{lemma}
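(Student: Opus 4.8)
The plan is to verify that each of the five listed operations preserves the free homotopy class of the associated loop $L[w']$, i.e.\ preserves the corresponding conjugacy class in $\pi_1(\mathcal{P}) = \langle \alpha,\beta,\gamma \mid \alpha\beta\gamma = 1\rangle$. Since $[S^1,\mathcal{P}]$ is identified with the set of conjugacy classes, it suffices to check, operation by operation, that the word $L[w']$ read as an element of the group is either unchanged, or changed only by a cyclic permutation (which implements conjugation), or changed by inserting/deleting a subword that equals the identity in $\pi_1(\mathcal{P})$.

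First I would dispose of the three easy operations. \emph{Shifting}: replacing $w'$ by $w'^{(k)}$ replaces the word $\alpha^{l_1'}\beta^{m_1'}\gamma^{n_1'}\cdots$ by a cyclic permutation of itself at the level of the syllable blocks $\alpha^{l_i'}\beta^{m_i'}\gamma^{n_i'}$, hence conjugates $L[w']$ in $\pi_1$, so the conjugacy class is unchanged. \emph{Inserting/removing} $(0,0,0)$: this inserts or deletes the subword $\alpha^0\beta^0\gamma^0 = 1$, which is the identity element, so $L[w']$ is literally unchanged as a group element. These two use nothing but the group structure and the identification with conjugacy classes.

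Next, the two ``$\pm 1$ around $0$ or $1$'' operations: these are exactly the local rewriting moves governed by the single relation $\alpha\beta\gamma = 1$. For \emph{adding $1$s around a $0$}, suppose $w_j' = 0$; one isolates the relevant syllables around position $j$ and inserts a factor of $\gamma\alpha$ or $\beta\gamma$ or $\alpha\beta$ appropriately. Concretely, if the zero is in the $\beta$-slot, say a piece $\cdots \alpha^{l}\beta^0\gamma^{n}\alpha^{l''}\cdots$, one uses $1 = \gamma^{-1}\beta^{-1}\alpha^{-1}$ read backwards — but it is cleaner to argue as in the proof of Lemma~\ref{lem:TripleEqui}: the relation $\alpha\beta\gamma = 1$ lets one absorb $\alpha\beta\gamma$ (or its cyclic rotations $\beta\gamma\alpha$, $\gamma\alpha\beta$) into a block without changing the group element, which is precisely the effect of adding $(1,1,1)$ to a consecutive $(l_{j-1}',0,m_{j+1}')$-type subword straddling position $j$. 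The ``subtracting $1$s around a $1$'' move is the inverse rewriting, removing such an $\alpha\beta\gamma$ factor. In each case I would write out the one-line group identity witnessing that the inserted/removed syllable-string equals $1$ in $\pi_1(\mathcal{P})$, taking care that the cyclic position of the $0$ or $1$ (whether it sits in an $l$, $m$, or $n$ slot, and whether the move straddles the wrap-around at index $3\tau$) is handled by the same relation up to the cyclic symmetry already established by the shifting move.

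The main obstacle — really the only place requiring care rather than routine bookkeeping — is the wrap-around and indexing: the moves are stated on subwords $(w_{j-1}', w_j', w_{j+1}')$ with indices in $\mathbb{Z}_{3\tau}$, so when $j \in \{1, 3\tau\}$ the three affected entries are not contiguous in the linear display of $w'$, and the ``block'' $\alpha^{l_i'}\beta^{m_i'}\gamma^{n_i'}$ they live in may be split across the start/end of the written word. I would handle this by first applying a shift (already shown harmless) to move the relevant position into the interior, reducing every case to the generic interior case, and then I would note that the three syllable-types $\alpha,\beta,\gamma$ are permuted cyclically under the relevant shift by one slot, so that ``$0$ in an $l$-slot'', ``$0$ in an $m$-slot'', ``$0$ in an $n$-slot'' are all instances of a single computation using one of the three cyclic forms of $\alpha\beta\gamma = 1$. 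This completes the verification for all five operations and hence proves the lemma.
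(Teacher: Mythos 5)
Your proposal is correct, and since the paper offers no proof for this lemma (it simply declares it ``easy to check''), your argument supplies exactly the kind of elementary verification the authors had in mind. Shifting is conjugation, inserting or removing $(0,0,0)$ is inserting or removing the identity syllable $\alpha^0\beta^0\gamma^0$, and the ``$\pm(1,1,1)$ around a $0$/$1$'' moves are absorption or extraction of one of the cyclic forms of the relator $\alpha\beta\gamma = 1$ (namely $\alpha\beta\gamma$, $\beta\gamma\alpha$, or $\gamma\alpha\beta$, depending on whether the distinguished slot $j$ is an $m$-, $n$-, or $l$-slot), which is precisely the identity used in the paper's proof of Lemma~\ref{lem:TripleEqui}. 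Two minor points of imprecision that do not affect correctness: your remark that ``the three syllable-types are permuted cyclically under the relevant shift by one slot'' conflates two unrelated facts — the paper's $k$-shift always moves by $3k$ positions and therefore fixes slot types, while the three slot-type cases are handled not by shifting but simply by choosing the appropriate cyclic rotation of the relator. And the wrap-around when $j\in\{1,3\tau\}$ is more cleanly dispensed with by observing directly that the group element is only defined up to conjugation (so the ``written'' word may be cyclically rotated by any number of syllables, not just by blocks of three), though your route via the paper's block-shift also works since one application of it already moves the endpoints strictly into the interior.
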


 \begin{prop}\label{prop:equihomotopy}
Two loop words $w'$ and $w_*'$ are equivalent if and only if $w_*'$ can be obtained from $w'$ by performing the above operations finitely many times.
\end{prop}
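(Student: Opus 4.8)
The plan is to prove the two implications separately. The backward implication is immediate from Lemma \ref{lem:equimove}: each of the five operations was checked there to leave the free homotopy class of the associated loop unchanged, so every word obtained from $w'$ by finitely many such operations is equivalent to $w'$.

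For the forward implication, the idea is that the five operations are already enough to reduce any loop word to a \emph{canonical} representative of its free homotopy class; granting this, if $L[w'] = L[w_*']$ then $w'$ and $w_*'$ reduce to the same canonical word (up to a shift, which is itself one of the operations), hence $w'$ and $w_*'$ are related by finitely many operations. I would split into two cases. If $L[w']$ is non-essential, i.e.\ a power $\alpha^l$, $\beta^m$ or $\gamma^n$ of a single generator (including the trivial class), then by ``removing $0$s'' I can cut the length down, by ``adding/subtracting $1$s around $0$/$1$'' I can clear away isolated entries equal to $0$ or $1$, and by shifting I can rotate blocks; a short case analysis --- of the same flavour as the length computation in the proof of Lemma \ref{lem:normal} --- shows that $w'$ is reduced to one of the standard words $(l,0,0)$, $(0,m,0)$, $(0,0,n)$, which is visibly determined by the homotopy class. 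If $L[w']$ is essential, I would show the five operations bring $w'$ to a loop word satisfying the general normality condition (Definition \ref{defn:normal2}), and then invoke Proposition \ref{prop:normalnormal}, which says an essential class has a unique normal representative up to shifting; so again $w'$ and $w_*'$ reduce to the same word up to shift. The reduction of an arbitrary essential loop word to a normal one is done by induction on a complexity measure --- the length of $L[w']$ regarded as a cyclically reduced word in the free group $\langle \alpha,\gamma\rangle$ (using $\beta = \alpha^{-1}\gamma^{-1}$), together with the block count $\tau$ --- repeatedly deleting $(0,0,0)$-blocks and applying the ``$\pm 1$ around $0$/$1$'' moves at the positions where normality fails, each time checking that the chosen move strictly decreases the measure.

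I expect the main obstacle to be exactly this last reduction step: one has to choose the complexity measure so that some operation is always available and always decreases it, and the bookkeeping around maximal strings of $0$s (the same phenomenon that forces the string analysis in the sign word $\delta(w)$ of Theorem \ref{thm:1}) and around isolated entries equal to $1$ is delicate, since a poorly chosen move can increase the word length or the block count instead of decreasing it. The detailed verification of this algorithm --- applicability of the moves and termination --- is carried out in Appendix \ref{sec:normal}, and the present proof is completed by combining it with Lemma \ref{lem:equimove} and Proposition \ref{prop:normalnormal} as above.
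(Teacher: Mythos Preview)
Your approach is correct but differs from the paper's in a noteworthy way. The paper (Proposition~\ref{prop:a1} in Appendix~\ref{subsec:equiv}) does not pass through the normal form at all. Instead it chooses as canonical representative the word $(\mu_1,\nu_1,0,\mu_2,\nu_2,0,\dots,\mu_\sigma,\nu_\sigma,0)$ recording the cyclically reduced form of $L[w']$ in the free group $\langle\alpha,\beta\rangle$ (using $\gamma=\beta^{-1}\alpha^{-1}$): first one shows the operations can kill every $n_i'$, then one absorbs zero entries among the $l_i',m_i'$ into their neighbours. Uniqueness of this canonical word up to shift is then immediate from uniqueness of cyclically reduced words in free groups, so Proposition~\ref{prop:normalnormal} is not invoked. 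Your route instead reuses the normal form of Definition~\ref{defn:normal2} as the canonical representative and appeals to Proposition~\ref{prop:normalnormal} for uniqueness; this is logically sound (the existence and uniqueness proofs in Appendix~\ref{subsec:exist}--\ref{subsec:unique} do not themselves rely on Proposition~\ref{prop:equihomotopy}), but it front-loads the considerably harder uniqueness argument of Appendix~\ref{subsec:unique}, whereas the paper's choice makes the present proposition entirely elementary.

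Two small points of care in your version. First, the statement of Proposition~\ref{prop:normalnormal} uses ``equivalent'' in the sense of equal free homotopy class, so it does not literally assert that the operations reach the normal form; you need the \emph{proofs} in Appendix~\ref{subsec:exist} (Propositions~\ref{prop:WeakNormal}--\ref{prop:NormalExistence}), which do proceed by the five operations. Second, your proposed single complexity measure (reduced length in $\langle\alpha,\gamma\rangle$ together with $\tau$) does not match the actual termination arguments there, which go through a three-stage pipeline weak-normal $\to$ almost-normal $\to$ normal with a different decreasing quantity at each stage (length, then number of nonpositive entries, then number of positive entries); if you pursue your route you should adopt that staged argument rather than a single measure.
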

\begin{proof}
We give the proof in Appendix \ref{subsec:equiv}. See Proposition \ref{prop:a1}. 
\end{proof}

Note that several equivalent loop words can represent the same element of  $\left[S^1,\mathcal{P}\right]$.
To find a unique representative in each homotopy class, we propose the following `\emph{normal form}'  of a loop word.
It turns out that this also play an important role in the conversion formula between loop and band datum.

\begin{defn}\label{defn:normal2}
A loop word $w'$ is said to be \emph{normal} if it satisfies the following $4$ conditions:
\begin{itemize}
\item any subword of the form $\left(a,1,b\right)$ in $w'$ satisfies $a, b\le0$,
\item any subword of the form $\left(a,0,b\right)$ in $w'$ satisfies $a\le-1$, $b\ge1$ or $a\ge1$, $b\le-1$ or $a, b\ge1$,
\item $w'$ has no subword of the form $(0,-1,-1,\dots,-1,0)$, and
\item $w'$ does not consist only of $-1$, that is, $w'\ne\left(-1,-1,\dots,-1\right)$.
\end{itemize}
\end{defn}

The normal form actually gives exactly one representative among equivalent loop words, with a few exceptions. To rule those out, we say a loop word $w'$ is \emph{essential} if $w'$ is not equivalent to a  loop word of the form $(l',0,0)$, $(0,m',0)$ or $(0,0,n')$ for some $l',m', n'\in\mathbb{Z}$.

\begin{prop}\label{prop:normalnormal}
Any essential loop word is equivalent to a unique normal loop word up to shifting. Conversely, any normal
loop word is essential.

\end{prop}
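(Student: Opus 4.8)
The plan is to split the statement into three parts: (a) every normal loop word is essential; (b) every essential loop word is equivalent to some normal loop word (existence); (c) any two equivalent normal loop words differ by a shift (uniqueness). Part (a) is the easiest: if $w'$ is normal but not essential, then $w'$ is equivalent to a word of the form $(l',0,0)$ (up to cyclic symmetry of the three punctures). Using the equivalence moves of Lemma \ref{lem:equimove}, I would trace through the possible normal forms that such a class can have and show that a normal representative of $[\alpha^{l'}]$ would violate one of the four normality conditions — for instance $(l',0,0)$ itself fails the second condition (the subword $(0,0,l')$, read cyclically, forces a neighbour pattern excluded by normality), and every other word in its equivalence class is obtained by inserting $(0,0,0)$'s or adding/subtracting $(1,1,1)$'s around $0$'s and $1$'s, none of which can repair this. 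This is a finite case-check.

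For existence (b), I would argue by a descent/normalization algorithm. Given an essential non-normal loop word, at least one of the four conditions fails; I would show that in each case one of the five operations of Lemma \ref{lem:equimove} can be applied to produce an equivalent word that is "smaller" in a suitable well-founded measure. A natural measure is something like the pair $(\text{length } 3\tau,\ \sum_j |w'_j|)$ ordered lexicographically, or the word length $\len$ of the associated conjugacy class in $\langle\alpha,\gamma\rangle$ as used in the proof of Lemma \ref{lem:normal}: removing a $(0,0,0)$ strictly decreases the length; "subtracting $1$s around a $1$" replaces a subword $(a,1,b)$ with $a>0$ or $b>0$ by $(a{-}1,0,b{-}1)$, reducing $\sum|w'_j|$ when the $1$ is flanked by positive entries; "adding $1$s around a $0$" handles the configurations of condition two that are not yet normal; and configuration three, a subword $(0,-1,\dots,-1,0)$, is handled by adding $(1,1,1)$ around one of the bounding $0$'s, turning the block into something with a positive entry. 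I would organize this as: first make the word have no $(0,0,0)$ subword; then repeatedly fix violations of conditions one and two by $\pm(1,1,1)$ moves; then eliminate the $(0,-1,\dots,-1,0)$ patterns; finally observe that essentiality rules out the all-$(-1)$ outcome, and that the algorithm terminates because the measure strictly decreases and is bounded below. A mild subtlety is that fixing one violation might temporarily create another, so the measure argument must be set up so that the total measure still drops — this is where I would be careful.

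Uniqueness (c) is the main obstacle. I would follow the strategy of Lemma \ref{lem:normal}: map $\pi_1(\mathcal P)=\langle\alpha,\beta,\gamma\mid\alpha\beta\gamma\rangle$ isomorphically onto the free group $\langle\alpha,\gamma\rangle$ (eliminating $\beta=\gamma^{-1}\alpha^{-1}$), so that a free homotopy class becomes a conjugacy class in a free group, represented by a cyclically reduced word unique up to cyclic rotation. Given a normal loop word $w'$, I would write down the cyclically reduced word in $\alpha,\gamma$ that $L[w']$ determines, reading off the contribution of each block $\alpha^{l'_i}\beta^{m'_i}\gamma^{n'_i}$ and checking — using precisely the normality conditions — that no cancellation occurs across block boundaries, so that the reduced word and the block structure can be recovered from each other. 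Concretely, the normality conditions are exactly what is needed to guarantee that the syllable pattern of the reduced cyclic word determines $\tau$ and each triple $(l'_i,m'_i,n'_i)$ up to an overall cyclic shift of the blocks. Then two normal loop words with the same $L[\cdot]$ have the same cyclically reduced representative, hence the same block data up to shift. I expect the bulk of the work — and the place where several sub-cases proliferate (signs of $m'_i$, whether adjacent entries vanish, the role of condition three) — to be in showing that no boundary cancellation happens and that the reconstruction is unambiguous; the excerpt defers the full details to Appendix \ref{sec:normal} (Proposition \ref{prop:a1} and the surrounding results), and I would do the same, giving the reduction-to-free-group setup and the no-cancellation bookkeeping here and relegating the exhaustive case analysis to the appendix.
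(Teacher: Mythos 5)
Your overall decomposition into \emph{(a) normal $\Rightarrow$ essential}, \emph{(b) existence}, \emph{(c) uniqueness} and your use of the free-group reduction for (c) match the paper's strategy. But there is a genuine gap in (b).

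You propose to normalize in the order: remove $(0,0,0)$; fix conditions one and two with $\pm(1,1,1)$ moves; \emph{then} eliminate subwords $(0,-1,\dots,-1,0)$; with the measure $\bigl(\mathrm{length},\ \sum_j|w'_j|\bigr)$ ordered lexicographically. This fails at the third stage. Once condition two already holds, a subword $(a,0,-1,\dots,-1,0,b)$ has $a\geq 1$ (condition two applied to $(a,0,-1)$ forces $a\geq 1$). Adding $(1,1,1)$ around that first $0$ replaces $(a,0,-1)$ by $(a+1,1,0)$, which preserves length but increases $\sum_j|w'_j|$ by $1$. So the proposed measure is not monotone for the operations you perform, in the order you perform them. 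Your fall-back measure — the length of the cyclically reduced word in $\langle\alpha,\gamma\rangle$ — is worse: it is a conjugacy invariant and hence an \emph{invariant} of the loop-word equivalence class, so it cannot strictly decrease along a normalization sequence at all.

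The paper sidesteps this by a different grouping. Conditions are not attacked one-by-one in the order they appear in Definition~\ref{defn:normal2}; instead they are bundled into a length-decreasing first stage (``weak-normal'', which simultaneously forbids $(0,0)$, $(1,1)$, $(0,-1,\dots,-1,0)$, $(1,2,\dots,2,1)$, each of which is removed by a move of Lemma~\ref{lem:MainLemmaForNormalLoopWord} that shortens the word by one), and then two length-preserving stages (``almost-normal'' reducing the count of nonpositive entries, then ``normal'' reducing the count of positive entries), each shown to preserve the earlier normality level so the cascade does not unwind. If you keep your ordering, you need a measure that tolerates the increase; if you keep your measure, you must handle $(0,-1,\dots,-1,0)$ at the length-reducing stage (as in Lemma~\ref{lem:MainLemmaForNormalLoopWord}.(3)) rather than after. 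Either way, the ``mild subtlety'' you flag is not mild and needs an actual fix.

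For (c) your plan is the paper's plan (Lemma~\ref{lem:WordWord}, Corollary~\ref{cor:WordWordWord}, Proposition~\ref{prop:NormalUniqueness}); one point to be careful about, which the paper handles explicitly, is that the conjugating element realizing the cyclic shift need not be a whole block $\alpha^{l'}\beta^{m'}\gamma^{n'}$ but may be only ``a part of'' one exponent, which is where the no-cancellation bookkeeping genuinely bites. For (a), a cleaner route than a case-check is the one the paper uses: normal representatives are length-minimal in their equivalence class (Remark~\ref{rem:MinimalityOfNormalLoopWord}), so a normal word of length $>3$ cannot be equivalent to $(l',0,0)$, and length $3$ is settled by Lemma~\ref{lem:normal}.
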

\begin{proof}
We prove the existence and the uniqueness in Appendix \ref{subsec:exist} and \ref{subsec:unique},
respectively. See Proposition \ref{prop:NormalExistence} and \ref{prop:NormalUniqueness}. For the last statement, see Corollary \ref{cor:NormalEssential}.
\end{proof}

A normal loop word $w'$ is called \emph{periodic} if $w'^{\left(\tilde{\tau}\right)}=w'$ for some positive divisor $\tilde{\tau}$ of $\tau$ other than itself. The smallest $\tilde{\tau}$ among those satisfying such relation is called the \emph{period} of $w'$. In this case, 
for $N:=\frac{\tau}{\tilde{\tau}}\in\mathbb{Z}_{\ge2}$, 
there exists another normal loop word $\tilde{w}'$ of length $3\tilde{\tau}$ such that $w'$ is $N$ repetitions
of $\tilde{w}'$. We denote it as
$$
w' = \left(\tilde{w}',\tilde{w}',\dots,\tilde{w}'\right) =: \left(\tilde{w}'\right)^{\#N}
$$
and call $w'$ as the \emph{$N$-concatenation} of $\tilde{w}'$.

\subsection{Modified band datum}\label{sec:mbd}
Let us denote the band datum of Burban-Drozd by $(w_{\text{\rm BD}},\lambda,\mu)$ (see \eqref{eq:bw}).
Let us define a modified version of band datum.
\begin{defn}\label{defn:mbd}
A \emph{modified band datum} $(w,\lambda,\mu)$ consists of the following:
\begin{itemize}
\item (band word) $w = \left(l_1,m_1,n_1,l_2,m_2,n_2,\dots,l_\tau,m_\tau,n_\tau\right)\in \mathbb{Z}^{3\tau}$ for some $\tau \in \mathbb{Z}_{\ge1}$,
\item (eigenvalue) $\lambda\in \mathbb{C}^*$,
\item (multiplicity) $\mu \in \mathbb{Z}_{\ge1}$.
\end{itemize}
The \emph{length} of the band word $w$ or the band datum $(w,\lambda,\mu)$ is defined as $3\tau$, and the \emph{rank} of the band datum $(w,\lambda,\mu)$ is defined as $\tau\mu$.
\end{defn}

It is easy to see that two notions of band datum are equivalent.
\begin{lemma}
Modified band word $w$ can be obtained from $w_{\text{\rm BD}}$ by recoding the differences $l_i:=f_i-a_i$,$m_i:=b_i-c_i$, $n_i:=d_i-e_i$ for $i\in\mathbb{Z}_\tau$. Conversely, we can recover $w_{\text{\rm BD}}$ from $w$ by setting $f_i:=\max\left\{0, l_i\right\}+1$, $a_i:=\max\left\{0, -l_i\right\}+1$, $b_i:=\max\left\{0, m_i\right\}+1$, $c_i:=\max\left\{0, -m_i\right\}+1$, $d_i:=\max\left\{0, n_i\right\}+1$ and $e_i:=\max\left\{0, -n_i\right\}+1$ for $i\in\mathbb{Z}_\tau$.
\end{lemma}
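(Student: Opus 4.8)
The plan is to reduce the statement to a single elementary bijection applied coordinatewise. Both maps described in the lemma act block by block in the index $i\in\mathbb{Z}_\tau$ and, within each block, independently on the three pairs $(f_i,a_i)$, $(b_i,c_i)$, $(d_i,e_i)$; so it suffices to prove the following claim. \emph{Claim.} The assignment $(p,q)\mapsto p-q$ is a bijection from the set $\{(p,q)\in\mathbb{Z}_{\ge1}^2 : \min(p,q)=1\}$ onto $\mathbb{Z}$, with inverse $\delta\mapsto\big(\max\{0,\delta\}+1,\ \max\{0,-\delta\}+1\big)$. Once the Claim is in hand, everything else is bookkeeping.

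I would prove the Claim by a sign analysis. Given $(p,q)$ with $\min(p,q)=1$, set $\delta:=p-q$. If $\delta\ge0$ then $p\ge q\ge1$, which forces $q=1$ (otherwise $\min(p,q)\ge2$), hence $p=\delta+1=\max\{0,\delta\}+1$ and $q=1=\max\{0,-\delta\}+1$ because $-\delta\le0$; the case $\delta<0$ is symmetric, giving $p=1$ and $q=1-\delta$. Conversely, for an arbitrary $\delta\in\mathbb{Z}$ put $p:=\max\{0,\delta\}+1$ and $q:=\max\{0,-\delta\}+1$, and write $\delta^+:=\max\{0,\delta\}$, $\delta^-:=\max\{0,-\delta\}$ as in Definition~\ref{defn:modrank1}. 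Then $\delta^+,\delta^-\ge0$ so $p,q\ge1$; moreover $p-q=\delta^+-\delta^-=\delta$; and since at least one of $\delta,-\delta$ is $\le0$ we have $\min(\delta^+,\delta^-)=0$, hence $\min(p,q)=1$. This exhibits the two maps of the Claim as mutual inverses.

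To assemble the lemma, I would apply the Claim to each of the $3\tau$ pairs occurring in a Burban--Drozd band word $w_{\mathrm{BD}}=((a_1,b_1,c_1,d_1,e_1,f_2),\dots,(a_\tau,b_\tau,c_\tau,d_\tau,e_\tau,f_1))$, which by Definition~\ref{defn:bd} are precisely the pairs subject to the condition $\min=1$. The one point needing care is the cyclic shift built into the Burban--Drozd convention: $f_{i+1}$ is recorded inside the $i$-th sextuple, so $l_i=f_i-a_i$ pairs $a_i$ (in the $i$-th sextuple) with the $f_i$ sitting in the $(i-1)$-st sextuple, with all indices read in $\mathbb{Z}_\tau$. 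After unwinding this relabeling, the correspondence $w_{\mathrm{BD}}\leftrightarrow w$ is exactly the Claim applied to each of the $3\tau$ coordinates, so the two assignments are mutually inverse bijections between Burban--Drozd band words of length $3\tau$ and modified band words in $\mathbb{Z}^{3\tau}$; since neither assignment touches $\lambda$ or $\mu$, this upgrades to a bijection of band data. I expect the main (indeed essentially the only) obstacle to be this indexing bookkeeping for the $f_i$'s; the genuine arithmetic content is confined to the two one-line identities $\delta^+-\delta^-=\delta$ and $\min(\delta^+,\delta^-)=0$.
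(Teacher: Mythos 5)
Your proof is correct, and in fact it supplies more than the paper does: the paper states this lemma with no argument at all, remarking only that ``it is easy to see that two notions of band datum are equivalent.'' Your reduction to the one-coordinate bijection $(p,q)\mapsto p-q$ on $\{(p,q)\in\mathbb{Z}_{\ge1}^2:\min(p,q)=1\}$, verified by the sign split and the two identities $\delta^+-\delta^-=\delta$, $\min(\delta^+,\delta^-)=0$, is the natural argument the paper implicitly has in mind, and your remark about the cyclic offset in the Burban--Drozd indexing (the pair $(f_i,a_i)$ straddles consecutive sextuples since $f_{i+1}$, not $f_i$, sits in the $i$-th sextuple) is exactly the small bookkeeping point that justifies the phrase ``for $i\in\mathbb{Z}_\tau$'' in the lemma. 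Nothing to correct.
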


Since the word $w$ (resp. $w_{\text{\rm BD}}$) is in $\mathbb{Z}^{3\tau}$(resp. $\mathbb{Z}^{6\tau}$)
and it is easy to tell whether band datum is a modified version or not. Therefore, we will call the modified band datum $\left(w,\lambda,\mu\right)$, simply as a band datum.

The notions of \emph{shift}, \emph{subword}, \emph{periodicity}, \emph{period} and \emph{concatenation} of a band word can be defined similarly following those of a normal loop word. Then two band words $w$ and $w_*$  are considered \emph{equivalent} if they are the same as cyclic words, that is, $w_* = w^{(k)}$ for some $k\in\mathbb{Z}$. Note that the periodicity and the period of a band word are invariant under this equivalence relation.
Two band data $(w,\lambda,\mu)$ and $(w_*,\lambda_*,\mu_*)$ are said to be \emph{equivalent} if $w$ and $w_*$ are equivalent as band words, $\lambda=\lambda_*$ and $\mu=\mu_*$.

\begin{defn}
Given a band datum $(w,\lambda,\mu)$ as above, define $T(w,\lambda,\mu)$ to be the following quiver representation:

\adjustbox{scale=0.7,center}{
\begin{tikzcd}[arrow style=tikz,>=stealth,row sep=10em,column sep=6em]
&
\mathbb{C}((t))^{\tau \mu}
  \arrow[rr,"\spmat{
               0 & t^{l_2^+ +1}I_\mu & & 0 \\
               \vdots &  & \ddots &  \\
               0 & 0 &  & t^{l_\tau^+ +1}I_\mu \\
               t^{l_1^+ +1}J_\mu(\lambda) & 0 & \displaystyle\cdots & 0
               }"]
  \arrow[dl,swap,"\spmat{
               t^{l_1^- +1}I_\mu & & 0 \\
                & \ddots &  \\
               0 &  & t^{l_\tau^- +1}I_\mu
               }"]
&&
\mathbb{C}((t))^{\tau\mu} 
\\
\mathbb{C}((t))^{\tau\mu}
&&&&
\mathbb{C}((t))^{\tau\mu}
  \arrow[ul,swap,"\spmat{
               t^{n_1^- +1}I_\mu & & 0 \\
                & \ddots &  \\
               0 &  & t^{n_\tau^- +1}I_\mu
               }"]
  \arrow[dl,"\spmat{
               t^{n_1^+ +1}I_\mu & & 0 \\
                & \ddots &  \\
               0 &  & t^{n_\tau^+ +1}I_\mu
               }"]
\\
&
\mathbb{C}((t))^{\tau\mu}
  \arrow[ul,"\spmat{
               t^{m_1^+ +1}I_\mu & & 0 \\
                & \ddots &  \\
               0 &  & t^{m_\tau^+ +1}I_\mu
               }"]
  \arrow[rr,swap,"\spmat{
               t^{m_1^- +1}I_\mu & & 0 \\
                & \ddots &  \\
               0 &  & t^{m_\tau^- +1}I_\mu
               }"]
&&
\mathbb{C}((t))^{\tau\mu}
\end{tikzcd}
}
where $I_\mu$ is the $\mu\times \mu$ identity matrix and $J_\mu(\lambda)$ is the $\mu\times \mu$ Jordan block with the eigenvalue $\lambda$.
\end{defn}

This generalizes the quiver in Example \ref{example:xyzTri}. This $T(w,\lambda,\mu)$ represents an object in the category $\operatorname{Tri}(A)$,
which is equivalent to the category of maximal Cohen-Macaulay modules (Theorem \ref{BurbanDrozdMainTheorem}). 
We describe the module corresponding to $T(w,\lambda,\mu)$ from \cite{BD17}.
\begin{defn}\label{defn:module}
Given a band datum $(w,\lambda,\mu)$ as above, define $\tilde{M}(w,\lambda,\mu)$ to be the $A$-submodule of $A^{\tau\mu}$ generated by all columns of the following $6$ matrices of size $\tau\mu\times \tau\mu$:
\begin{center}
$x^2y^2I_{\tau\mu}$,\quad $y^2z^2I_{\tau\mu}$,\quad $z^2x^2I_{\tau\mu}$,\quad
$
\pi_x\left(w,\lambda,\mu\right):=
\begin{pmatrix}
x^{l_1^- +2}yI_\mu & zx^{l_2^+ +2}I_\mu & \cdots & 0 \\
0 & x^{l_2^- +2}yI_\mu & \ddots & \vdots \\
\vdots & \vdots & \ddots & zx^{l_\tau^+ +2}I_\mu \\
zx^{l_1^+ +2}J_\mu(\lambda) & 0 & \cdots & x^{l_\tau^- +2}yI_\mu
\end{pmatrix}_{\tau\mu \times \tau\mu},
$

$
\pi_y\left(w,\lambda,\mu\right):=
\begin{pmatrix}
\left(xy^{m_1^+ +2} + y^{m_1^- +2}z\right)I_\mu & 0 & \cdots & 0 \\
0 & \left(xy^{m_2^+ +2} + y^{m_2^- +2}z\right)I_\mu & \cdots & 0 \\
\vdots & \vdots & \ddots & \vdots \\
0 & 0 & \cdots & \left(xy^{m_\tau^+ +2} + y^{m_\tau^- +2}z\right)I_\mu
\end{pmatrix}_{\tau\mu \times \tau\mu},
$

$
\pi_z\left(w,\lambda,\mu\right):=
\begin{pmatrix}
\left(yz^{n_1^+ +2}+z^{n_1^- +2}x\right)I_\mu & 0 & \cdots & 0 \\
0 & \left(yz^{n_2^+ +2} + z^{n_2^- +2}x\right)I_\mu & \cdots & 0 \\
\vdots & \vdots & \ddots & \vdots \\
0 & 0 & \cdots & \left(yz^{n_\tau^+ +2} + z^{n_\tau^- +2}x\right)I_\mu
\end{pmatrix}_{\tau\mu \times \tau\mu}.
$
\end{center}
Then we define $M(w,\lambda,\mu)$ to be the Macaulayfication $\tilde{M}(w,\lambda,\mu)^\dagger$ of $\tilde{M}(w,\lambda,\mu)$.
\end{defn}
This generalizes the rank one case given in Definition \ref{defn:modrank1}.
%
%
%
%
%
Now we can state one of the main results in \cite{BD17} which establishes the relationship between band data and modules.

\begin{thm}
\begin{enumerate}
\item For a non-periodic band datum $(w,\lambda,\mu)$, the module $M(w,\lambda,\mu)$ is an indecomposable maximal Cohen-Macaulay module over $A$. Furthermore, it is locally free of rank $\tau\mu$ on the punctured spectrum where the length of $w$ is $3\tau$.
\item Any indecomposable maximal Cohen-Macaulay module over $A$ which is locally free on the punctured spectrum is isomorphic to $M(w,\lambda,\mu)$ for some non-periodic band datum $(w,\lambda,\mu)$.
\item For two band data $(w,\lambda,\mu)$ and $(w_*,\lambda_*,\mu_*)$, the corresponding modules $M(w,\lambda,\mu)$ and $M(w_*,\lambda_*,\mu_*)$ are isomorphic if and only if $(w,\lambda,\mu)$ and $(w_*,\lambda_*,\mu_*)$ are equivalent.
\end{enumerate}
\end{thm}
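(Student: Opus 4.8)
The plan is to deduce the statement from the two structural results of Burban--Drozd already recalled above: the categorical equivalence $\bff\colon\CM(A)\xrightarrow{\ \sim\ }\Tri(A)$ of Theorem~\ref{BurbanDrozdMainTheorem}, which restricts to an equivalence $\CMlf(A)\simeq\Trilf(A)$, and the bijection between isomorphism classes of indecomposable objects of $\Trilf(A)$ and non-periodic band data of Theorem~\ref{thm:BD}. Granting these, all three assertions follow once one verifies the single claim
$$\bff\bigl(M(w,\lambda,\mu)\bigr)\ \cong\ T(w,\lambda,\mu)\qquad\text{in }\Tri(A).$$
Indeed, $T(w,\lambda,\mu)$ is locally free: each structure arrow is a square matrix over the field $\bcc((t))$ with nonzero determinant --- diagonal for five of them, and for $\Theta^x_3$ a block-cyclic matrix with a $J_\mu(\lambda)$-corner, of determinant $\pm\lambda^{\mu}t^{N}$ with $N=\mu\sum_i(l_i^{+}+1)$ --- so $\theta$ is an isomorphism; hence $M(w,\lambda,\mu)\in\CMlf(A)$, it is indecomposable precisely when $T(w,\lambda,\mu)$ is, which by Theorem~\ref{thm:BD} holds exactly when $(w,\lambda,\mu)$ is non-periodic, and its rank on the punctured spectrum equals the rank of the free $Q(\OL{A})$-module in the triple, namely $\tau\mu$. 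Assertion (2) is then the surjectivity and (3) the injectivity of the classification of indecomposables of $\Trilf(A)$, transported along $\bff$ (so $M(w,\lambda,\mu)\cong M(w_*,\lambda_*,\mu_*)$ iff $T(w,\lambda,\mu)\cong T(w_*,\lambda_*,\mu_*)$), together with the elementary fact that two band words produce $\bcc((t))$-isomorphic collections of canonical matrices iff one is a cyclic shift of the other with the same eigenvalue and multiplicity.

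It remains to establish $\bff(M(w,\lambda,\mu))\cong T(w,\lambda,\mu)$. Recall $\bff(M)=\bigl(R\boxtimes_A M,\ Q(\OL{A})\otimes_A M,\ \theta_M\bigr)$ and $M(w,\lambda,\mu)=\tilde M(w,\lambda,\mu)^{\dagger}$. The first step is to observe that Macaulayfication alters a module only at the maximal ideal: the cokernel $M(w,\lambda,\mu)/\tilde M(w,\lambda,\mu)$ has finite length (it is the local cohomology module $H^0_{\{\fm\}}$ occurring in the proof of Proposition~\ref{prop:Macaulayfying}), hence is supported at $\fm$, so $Q(\OL{A})\otimes_A M(w,\lambda,\mu)\cong Q(\OL{A})\otimes_A\tilde M(w,\lambda,\mu)$ and the localizations of $R\otimes_A M(w,\lambda,\mu)$ away from the singular points of $\Spec R$ agree with those of $R\otimes_A\tilde M(w,\lambda,\mu)$. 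Consequently the triple attached to $M(w,\lambda,\mu)$ is computed from the six generating matrices of $\tilde M(w,\lambda,\mu)$ of Definition~\ref{defn:module}. Tensoring with $Q(\OL{A})=\bcc((x))\times\bcc((y))\times\bcc((z))$ makes the generators $x^2y^2I$, $y^2z^2I$, $z^2x^2I$ redundant and presents the $Q(\OL{A})$-part as a free module of rank $\tau\mu$ over each factor; tensoring with the regular ring $R$ and Macaulayfying over $R$ yields a free $R$-module; and tracking the columns of $\pi_x(w,\lambda,\mu)$, $\pi_y(w,\lambda,\mu)$, $\pi_z(w,\lambda,\mu)$ inside $Q(\OL{R})\otimes_R(R\boxtimes_A M)$ reproduces exactly the matrices $t^{l_i^{\pm}+1}I_\mu$, $t^{m_i^{\pm}+1}I_\mu$, $t^{n_i^{\pm}+1}I_\mu$ and the Jordan corner $t^{l_1^{+}+1}J_\mu(\lambda)$ appearing in $T(w,\lambda,\mu)$, up to a change of $\bcc((t))$-bases that is absorbed in the identification of $\Tri$-objects. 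This computation is carried out explicitly for $\tau\mu=1$ in Section~\ref{sec:mod1} and in general, via the generator diagram, in Section~\ref{sec:higherrank}.

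The main obstacle is precisely this identification, and inside it the Macaulayfication step: $\tilde M(w,\lambda,\mu)$ is in general \emph{not} maximal Cohen--Macaulay, so one genuinely has to compute $\tilde M(w,\lambda,\mu)^{\dagger}$ --- equivalently, to locate \emph{all} Macaulayfying elements in the sense of Proposition~\ref{prop:Macaulayfying} --- and then check that applying $Q(\OL{R})\otimes_R-$ to the Macaulayfied $R$-module recovers the canonical matrices with the correct exponents and the correct cyclic/Jordan structure. Everything else is formal bookkeeping with the equivalences of Theorems~\ref{BurbanDrozdMainTheorem} and~\ref{thm:BD}; the actual content lies in the explicit Macaulayfication, which is exactly the reason the generator-diagram technology is developed in this paper.
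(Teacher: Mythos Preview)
The paper does not give a proof of this theorem at all: it is stated as ``one of the main results in \cite{BD17}'' and simply cited. Your outline --- reduce to $\bff(M(w,\lambda,\mu))\cong T(w,\lambda,\mu)$ and then invoke Theorems~\ref{BurbanDrozdMainTheorem} and~\ref{thm:BD} --- is indeed the correct strategy, and it is essentially how the result is obtained in \cite{BD17}.

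There is, however, a misconception in your last two paragraphs that is worth correcting. You say that the identification $\bff(M(w,\lambda,\mu))\cong T(w,\lambda,\mu)$ requires the explicit Macaulayfication carried out in Sections~\ref{sec:mod1} and~\ref{sec:higherrank}. It does not. As you yourself note, Macaulayfication changes a module only by a finite-length piece supported at $\fm$, so the triple attached to $M(w,\lambda,\mu)$ equals the triple attached to $\tilde M(w,\lambda,\mu)$; and the generators in Definition~\ref{defn:module} are rigged precisely so that, after tensoring with $Q(\overline A)$ and with $Q(\overline R)$, the columns of $\pi_x,\pi_y,\pi_z$ become the six matrices of $T(w,\lambda,\mu)$. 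No Macaulayfying elements need to be found for this step --- only the \emph{existence} of the Macaulayfication matters, not its explicit form. Sections~\ref{sec:mod1} and~\ref{sec:higherrank} do something different and genuinely harder: they compute $M(w,\lambda,\mu)$ explicitly as a submodule of $A^{\tau}$ in order to read off a free $S$-resolution and hence the corresponding \emph{matrix factorization}, which is what the mirror-symmetry comparison (Theorem~\ref{thm:MFFromModule}) requires. That computation is not needed for the classification statement you are proving here.
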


\section{Conversion Formula between Band Data and Loop Data}
We have defined the conversion formula from rank one band data $(l,m,n)$ to normal loop data $(l',m',n')$ in Proposition \ref{prop:Rank1MFfromModule},
and showed that it defines a bijection in Theorem \ref{thm:c1}.
In this section, we discuss the conversion formula for higher rank cases that appeared in Theorem \ref{defn:lpwd}.
\begin{defn}[Conversion from band data to loop data]\label{def:ConversionFromBandtoLoop}
Pick a band datum $(w,\lambda,\mu)$, with
$$
w = \left(w_1,w_2,w_3,w_4,w_5,w_6,\dots,w_{3\tau-2},w_{3\tau-1},w_{3\tau}\right)\in\mathbb{Z}^{3\tau}.
$$
We define the \emph{sign word} $\delta = \delta(w)\in\left\{0,1\right\}^{3\tau}$, the \emph{correction word} $\varepsilon = \varepsilon(w)\in\left\{-1,0,1,2\right\}^{3\tau}$ of $w$ and the loop word $w'\in\mathbb{Z}^{3\tau}$ converted from $w$ below, where we regard the index $j$ of $w_j$, $\delta_j$, $\varepsilon_j$ and $w_j'$ to be in $\mathbb{Z}_{3\tau}$. First, each entry of the sign word $\delta=\delta(w)$ is defined as 
$$
\delta_j :=
\left\{
\begin{array}{cl}

\setlength\arraycolsep{0pt}
\begin{array}{l}
0 \\ \\ \\
\end{array} 

&

\setlength\arraycolsep{0pt}
\begin{array}{l}
\text{if} \\ \\ \\
\end{array} 

\quad \left\{
                         \begin{array}{l}
                         w_j<0, \text{ or} \\
                         w_j=0 \text{ and at least one of the first non-zero entries adjacent to the} \\
                         \hspace{11mm}\text{string of }$0$\text{s containing $w_j$ (exists and) is negative,}
                         \end{array}
                         \right. \\[6mm]
1 & \text{otherwise}.
\end{array}
\right.
$$
Next, each entry of the correction word $\varepsilon=\varepsilon(w)$ is defined as
$$
\varepsilon_j := -1 + \delta_{j-1} + \delta_j + \delta_{j+1}.
$$
Then each entry of the converted loop word $w'$ is defined as
$$
w_j' = w_j + \varepsilon_j
$$
and the \emph{conversion} from the band datum to the loop datum is given by
$$
(w,\lambda,\mu)\mapsto (w' = w + \varepsilon(w), \lambda' = (-1)^{l_1 + \cdots + l_\tau + \tau} \lambda , \muprime = \mu)
$$
where $l_i = w_{3i-2}$ for $i\in\mathbb{Z}_\tau$.
\end{defn}

\begin{lemma}\label{lem:SignWord}
For a band word $w\in\mathbb{Z}^{3\tau}$ and its sign word $\delta=\delta(w)\in\left\{0,1\right\}^{3\tau}$, assume $w_k=\cdots=w_l=0$ for some $k$, $l\in\mathbb{Z}_{3\tau}$. Then
\begin{enumerate}
\item $\delta_{k-1}=\delta_{l+1}=1$ implies $\delta_k=\cdots=\delta_l=1$, and
\item either $\delta_{k-1}=0$ or $\delta_{l+1}=0$ implies $\delta_k=\cdots=\delta_l=0$.
\end{enumerate}
\end{lemma}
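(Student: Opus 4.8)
The plan is to prove both statements directly from the definition of the sign word $\delta = \delta(w)$. The key observation is that the definition of $\delta_j$ for an index $j$ with $w_j = 0$ depends only on the maximal string of zeros of $w$ containing position $j$ and on the two nonzero entries (if they exist) that immediately bracket that string. So first I would fix the maximal string of consecutive zeros $w_{k'} = \cdots = w_{l'} = 0$ of $w$ that contains the given block $w_k = \cdots = w_l = 0$; here $k' \le k$ and $l \ge l'$, and (provided $w$ is not identically zero, which is the only case where the bracketing entries fail to exist — and in that degenerate case all $\delta_j = 0$ trivially once we note every defining clause for a zero entry is vacuous or false, so $\delta_j = 1$ for all $j$; one should check which convention the definition forces, but it does not affect either implication since the hypotheses on $\delta_{k-1}, \delta_{l+1}$ cannot both be met consistently) the entries $w_{k'-1}$ and $w_{l'+1}$ are the two nonzero entries adjacent to this string.

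For statement (1): assume $\delta_{k-1} = \delta_{l+1} = 1$. I need to show $\delta_k = \cdots = \delta_l = 1$. Each of the indices $j \in \{k-1, k, \ldots, l, l+1\}$ either lies in the zero-string $[k', l']$ or is one of its bracketing positions $k'-1, l'+1$. If $j = k'-1$ is among them (this happens precisely when $k-1 = k'-1$), then $w_j \ne 0$ and $\delta_j = 1$ by hypothesis, which by the definition ($\delta_j = 0$ iff $w_j < 0$, for nonzero entries) forces $w_{k'-1} > 0$; similarly if $l+1 = l'+1$ then $w_{l'+1} > 0$. For the interior indices $j$ with $w_j = 0$ in the string $[k', l']$: by the definition $\delta_j = 0$ would require at least one of the two bracketing nonzero entries of $[k',l']$ to be negative. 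So I just need to rule that out. The bracketing entries are $w_{k'-1}$ and $w_{l'+1}$. If $k' = k$, then $w_{k'-1} = w_{k-1}$ and the hypothesis $\delta_{k-1} = 1$ gives $w_{k-1} > 0$ as above; if $k' < k$ then $w_{k'-1}$ is some earlier nonzero entry — but then $\delta_{k-1}$ is itself an interior zero of the same string, and $\delta_{k-1} = 1$ by hypothesis already tells us neither bracketing entry is negative. Either way both $w_{k'-1}$ and $w_{l'+1}$ are positive (whenever they exist), hence every interior zero gets $\delta_j = 1$, and the bracketing positions, if included, also get $\delta_j = 1$. This gives $\delta_k = \cdots = \delta_l = 1$.

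For statement (2): assume $\delta_{k-1} = 0$ or $\delta_{l+1} = 0$; say $\delta_{k-1} = 0$ (the other case is symmetric). I claim one of the two nonzero entries bracketing the full string $[k', l']$ is negative. If $k' = k$: then $w_{k-1} = w_{k'-1}$ is nonzero with $\delta_{k-1} = 0$, so by definition $w_{k'-1} < 0$, done. If $k' < k$: then $w_{k-1} = 0$ is an interior zero of $[k',l']$, and $\delta_{k-1} = 0$ means by definition that one of the two bracketing nonzero entries of $[k',l']$ is negative — and those bracketing entries are exactly $w_{k'-1}$ and $w_{l'+1}$. In both cases at least one of $w_{k'-1}, w_{l'+1}$ is negative, so by the defining clause every zero entry in $[k',l']$ — in particular $w_k, \ldots, w_l$ — receives $\delta_j = 0$. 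Hence $\delta_k = \cdots = \delta_l = 0$.

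The argument is essentially bookkeeping, so there is no serious obstacle; the one point requiring care is the boundary casework distinguishing whether the given block $[k,l]$ sits strictly inside the maximal zero-string or touches one of its ends, and making sure the "first nonzero entries adjacent to the string" in the definition are correctly identified as $w_{k'-1}$ and $w_{l'+1}$ regardless of how far $[k,l]$ is from the ends. I would also flag explicitly the degenerate case $w = (0,\ldots,0)$ so that the statement is not read as asserting anything false there.
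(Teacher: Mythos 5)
Your proof is correct and fills in the bookkeeping that the paper dismisses with a one-line ``It is obvious from the definition of the sign word.'' The key move — locating the maximal zero-string $[k',l']$ containing the given block, noting that the definition of $\delta_j$ for any interior zero of that string depends only on the signs of the two bracketing nonzero entries $w_{k'-1}$ and $w_{l'+1}$, and then case-splitting on whether $k-1$ and $l+1$ are themselves interior to the string or sit at the bracketing positions — is exactly the right decomposition.

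One small confusion worth cleaning up: in your aside about $w = (0,\ldots,0)$, you first write ``all $\delta_j = 0$ trivially'' and then in the same sentence conclude ``so $\delta_j = 1$ for all $j$''; only the latter is correct (the defining clause for $\delta_j = 0$ with $w_j = 0$ fails because no adjacent nonzero entry exists, so $\delta_j = 1$). And your claim that the hypotheses ``cannot both be met consistently'' in that case is off: for part (1) the hypothesis $\delta_{k-1} = \delta_{l+1} = 1$ \emph{is} met, but so is the conclusion, so there is nothing to worry about; for part (2) the hypothesis fails and the implication is vacuous. Neither point affects the validity of the main argument — the cyclic non-degenerate case is handled correctly and the degenerate case is indeed harmless — but the sentence as written is internally inconsistent and should be tidied before inclusion. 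Also, ``here $k' \le k$ and $l \ge l'$'' should read $l' \ge l$.
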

\begin{proof}
It is obvious from the definition of the sign word. 
\end{proof}

\begin{prop}\label{prop:ConvertedToNormalForm}
The loop word $w'$ converted from a band word $w$ is always normal.
\end{prop}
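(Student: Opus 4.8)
The claim is a purely combinatorial statement: the loop word $w' = w + \varepsilon(w)$ satisfies the four conditions in Definition \ref{defn:normal2}. I would verify each of the four conditions by a local case analysis, working with a fixed index $j$ and the adjacent entries $w_{j-1}, w_j, w_{j+1}$ together with their signs $\delta_{j-1}, \delta_j, \delta_{j+1}$. The central observation to establish first is a dictionary lemma relating the value $w'_j = w_j + \varepsilon_j = w_j - 1 + \delta_{j-1} + \delta_j + \delta_{j+1}$ to the signs: if $\delta_j = 1$ then $w_j \ge 0$, and more precisely $w_j \ge 1$ unless $w_j = 0$ with all adjacent nonzero entries (on the relevant side) positive; if $\delta_j = 0$ then $w_j \le 0$. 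Combining this with the possible values $\delta_{j-1} + \delta_j + \delta_{j+1} \in \{0,1,2,3\}$ gives tight control on $w'_j$: e.g.\ when $\delta_j = 0$ one gets $w'_j \le -1 + \delta_{j-1} + \delta_{j+1} \le 1$, and when $\delta_j = 1$ one gets $w'_j \ge \delta_{j-1} + \delta_{j+1} - 1 \cdot [\![w_j = 0]\!] \ge -1$ but usually $\ge 0$.

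\textbf{Checking the conditions.} For the first condition (a subword $(a,1,b)$ forces $a,b \le 0$): suppose $w'_j = 1$. I would show $\delta_j = 1$ is impossible together with $\delta_{j\pm1} = 1$ — if $\delta_{j-1}=\delta_j=\delta_{j+1}=1$ then $w'_j = w_j + 2 \ge 2$ unless $w_j$ is a zero flanked appropriately, and Lemma \ref{lem:SignWord} plus the definition of $\delta$ rules out $w'_j = 1$ here. So at least one of $\delta_{j-1}, \delta_{j+1}$ is $0$; one then argues that $w'_{j-1} \le 1$ and in fact $w'_{j-1} \le 0$ using $\delta_{j-1} = 0 \Rightarrow w_{j-1} \le 0$ and $w'_{j-1} = w_{j-1} - 1 + \delta_{j-2} + \delta_{j-1} + \delta_j \le w_{j-1} + \delta_{j-2} \le w_{j-1} \le 0$ — wait, $\delta_{j-2}$ could be $1$, so one needs the sharper fact that when $w_{j-1} = 0$ and $\delta_{j-1} = 0$, the adjacent nonzero entries are negative, forcing the relevant contribution down. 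This is exactly where Lemma \ref{lem:SignWord} does the work: strings of zeros have constant $\delta$, so $\delta_{j-2} = \delta_{j-1} = 0$ whenever $w_{j-1} = 0$. For the second condition (a subword $(a,0,b)$: need $a \le -1, b \ge 1$ or $a \ge 1, b \le -1$ or $a,b \ge 1$), I would start from $w'_j = 0$, deduce the possible sign patterns $(\delta_{j-1},\delta_j,\delta_{j+1})$ and corresponding $w_j$ values, and in each case read off the forced signs of $w'_{j-1}$ and $w'_{j+1}$, ruling out the patterns $(\le 0, 0)$ and $(0, \le 0)$ and $(\le -1, \le -1)$... actually one must show the forbidden cases $a \le 0$ with $b \le 0$ (both nonpositive) and $a = 0$ or $b = 0$ cannot occur; a clean way is to show $\{w'_{j-1}, w'_{j+1}\}$ always has the stated sign configuration by tracking how $\delta$ changes across $j$.

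\textbf{The last two conditions and the main obstacle.} For the third condition (no subword $(0,-1,\ldots,-1,0)$), suppose such a subword of $w'$ existed, say $w'_j = 0$, $w'_{j+1} = \cdots = w'_{j+k} = -1$, $w'_{j+k+1} = 0$. The interior entries $w'_i = -1$ force $\delta_i = 0$ and $w_i \in \{-1, -2\}$ depending on neighbors; the boundary entries $w'_j = 0 = w'_{j+k+1}$ constrain the $\delta$-values at the ends, and I expect to derive a contradiction with the sign-word rules — specifically, a maximal run of $\delta = 0$ interior entries propped up at both ends by a $0$-valued $w'$ is incompatible with how $\delta$ is assigned to strings of zeros in $w$ (either the whole block is a string of zeros in $w$, handled by Lemma \ref{lem:SignWord}, or there is a genuine negative entry somewhere, which then forces $\delta = 0$ to spread further). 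The fourth condition ($w' \ne (-1,\ldots,-1)$) follows because $w'_j = -1$ for all $j$ forces $\delta_j = 0$ for all $j$, hence every $w_j < 0$ or is a zero in an all-negative context, hence $w_j \le -1$ and $w'_j = w_j - 1 \le -2$, contradiction. The main obstacle is the bookkeeping in the third condition: one has to handle both the ``pure string of zeros in $w$'' sub-case and the ``mixed'' sub-case where negative entries of $w$ appear inside the block, and carefully invoke the two parts of Lemma \ref{lem:SignWord} to see that $\delta$ cannot transition from $1$ to $0$ and back within the span dictated by the two bounding zeros of $w'$. I would organize this as: (i) prove the dictionary lemma; (ii) note $\delta$ is constant on maximal zero-strings of $w$ (Lemma \ref{lem:SignWord}); (iii) dispatch conditions 1, 2, 4 by direct local case analysis; (iv) dispatch condition 3 by the run-length argument above.
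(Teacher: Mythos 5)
Your proposal takes the same route as the paper: verify the four normality conditions one by one by local case analysis on $(\delta_{j-1},\delta_j,\delta_{j+1})$, with Lemma~\ref{lem:SignWord} carrying the load whenever $w$ has a $0$ entry. The overall plan is sound and would compile into a correct proof, but two intermediate claims in your sketch are wrong and need to be replaced before the local analysis closes.

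First, in your treatment of condition~1 you assert ``strings of zeros have constant $\delta$, so $\delta_{j-2}=\delta_{j-1}=0$ whenever $w_{j-1}=0$.'' As a blanket statement this is false: for $w=(3,0,-2)$ one has $w_2=0$ but $\delta_1=1\ne 0=\delta_2$, since $j-2$ need not lie in the same maximal zero-string of $w$ as $j-1$. The conclusion you want is nevertheless correct in context, but for a different reason: you are in the sub-case $\delta_{j-1}=0$, $\delta_j=1$ with $w_{j-1}=0$, and then Lemma~\ref{lem:SignWord}(1) (applied to the zero-string containing $j-1$, read in contrapositive form) shows $\delta_{j-2}=1$ is impossible. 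Equivalently, the paper argues that if $\delta_{j-2}=1$ then $w_{j-1}\le -1$ by Lemma~\ref{lem:SignWord}(1), which still yields $w_{j-1}'\le 0$; either repair works, but the step must be justified by the contrapositive of the lemma, not by constancy of $\delta$ across $j-2$ and $j-1$.

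Second, in condition~3 you write that the interior entries $w'_i=-1$ ``force $\delta_i=0$ and $w_i\in\{-1,-2\}$,'' which is inconsistent with the very next sentence where you invoke an all-zero string. The correct computation is: $w'_i=-1$ forces $\delta_i=0$ (if $\delta_i=1$ then $w_i\ge 0$ and $\varepsilon_i\ge 0$, so $w'_i\ge 0$), and then $\varepsilon_i=-1+\delta_{i-1}+\delta_{i+1}$; once you know \emph{all} interior signs vanish (and $\delta_k=\delta_l=0$ at the ends), $\varepsilon_i=-1$ and hence $w_i=w'_i-\varepsilon_i=0$, not $-1$ or $-2$. This is precisely what feeds into Lemma~\ref{lem:SignWord}(1): you get $w_k=\cdots=w_l=0$ flanked by $\delta_{k-1}=\delta_{l+1}=1$, which forces $\delta_k=\cdots=\delta_l=1$ and contradicts $\delta_k=0$. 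With those two corrections your outline matches the paper's proof step for step.
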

\begin{proof}
Let $w$ be any band word, $\varepsilon = \varepsilon(w)$ the correction word of $w$, and $w'=w+\varepsilon(w)$ the converted loop word. We will show that $w'$ satisfies all of $4$ conditions to be normal in order.

\begin{description}[font=\normalfont\itshape\textbullet\space,labelindent=3mm,leftmargin=5mm]
\setlength\itemsep{1em}

\item[Condition 1] Assume that $w_j'=1$ for some $j\in\mathbb{Z}_{3\tau}$. As $\varepsilon_j$ takes its value in one of $-1$, $0$, $1$ and $2$, the possible combination of $w_j$ and $\varepsilon_j$ are $\left(w_j,\varepsilon_j\right) = (2,-1)$, $(1,0)$, $(0,1)$ and $(-1,2)$. But the first one is impossible as $w_j=2$ means $\delta_j=1$ so that $\varepsilon_j\ge0$. The last one is also ruled out as $w_j=-1$ yields $\delta_j=0$ so that $\varepsilon_j\le1$. In the third case,
in order for $\varepsilon_j=1$ to be hold, only one of $\delta_{j-1}$, $\delta_j$ and $\delta_{j+1}$ is $0$. But this cannot hold under $w_j=0$ according to Lemma \ref{lem:SignWord}.

Thus only the second combination remains. In this case, in order to hold $w_j=1$ and $\varepsilon_j=0$, we must have $\delta_j=1$ and $\delta_{j-1}=\delta_{j+1}=0$. Then we have $w_{j-1}\le0$. If $\delta_{j-2}=0$, we have $\varepsilon_{j-1}=0$ and hence $w_{j-1}'\le0$. Otherwise, if $\delta_{j-2}=1$, we have $\varepsilon_{j-1}=1$ and $w_{j-1}\le-1$ by Lemma \ref{lem:SignWord}.(1) which gives $w_{j-1}'\le0$. Therefore, $w_{j-1}'\le0$ holds in any case and similarly we conclude that $w_{j+1}'\le0$ also holds. This establishes the first normality condition of $w'$.

\item[Condition 2] Assume that $w_j'=0$ for some $j\in\mathbb{Z}_{3\tau}$. The possible combination of $w_j$ and $\varepsilon_j$ are
$$\left(w_j,\varepsilon_j\right)=(1,-1), (0,0), (-1,-1) \mbox{ and } (-2,-2).$$
 As before, one can easily exclude the first and the last cases.

In the second case, because $\varepsilon_j=0$, only one of $\delta_{j-1}$, $\delta_j$ and $\delta_{j+1}$ is $1$. As $w_j=0$, it follows from Lemma \ref{lem:SignWord}.(2) that one of $\delta_{j-1}$ and $\delta_{j+1}$ is $1$.  Hence, we can assume without loss of generality that $\delta_{j-1}=1$ and $\delta_j=\delta_{j+1}=0$. Then we must have $w_{j-1}\ge1$ by Lemma \ref{lem:SignWord}.(2) and $\varepsilon_{j-1}\ge0$ and hence $w_{j-1}'\ge1$. Also, we have $w_{j+1}\le0$ and $\varepsilon_{j+1}\le0$. If $w_{j+1}\le-1$, we get $w_{j+1}'\le-1$. Otherwise, if $w_{j+1}=0$, Lemma \ref{lem:SignWord}.(1) gives $\delta_{j+2}=0$ so that $\varepsilon_{j+1}=-1$ and $w_{j+1}'\le-1$ follow. Consequently, we have $w_{j-1}'\ge1$ and $w_{j+1}'\le-1$. If $\delta_{j+1}=1$, by symmetry, we get $w_{j-1}'\le-1$ and $w_{j+1}'\ge1$, establishing the second normality condition of $w'$.

In the third case, $\varepsilon_j=1$ and $\delta_j=0$ yield $\delta_{j-1}=\delta_{j+1}=1$. Then Lemma \ref{lem:SignWord}.(2) gives $w_{j-1}$, $w_{j+1}\ge1$. Since $\varepsilon_{j-1}$, $\varepsilon_{j+1}\ge0$, we have $w_{j-1}'$, $w_{j+1}'\ge1$, establishing again the second normality condition of $w'$.

\item[Condition 3] Assume there are integers $k$, $l\in\mathbb{Z}_{3\tau}$ such that $l\ne k+1$ and $w_k'=0$, $w_{k+1}'=\cdots=w_{l-1}'=-1$, $w_l'=0$. By discussion in the previous case, we have $w_k=w_l=0$ and $\delta_{k-1}=\delta_{l+1}=1$, $\delta_k=\delta_l=0$. It can be easily checked that $w_{k+1}'=\cdots=w_{l-1}'=-1$ implies $\delta_{k+1}=\cdots=\delta_{l-1}=0$, yielding $\varepsilon_{k+1}=\cdots=\varepsilon_{l-1}=-1$ and that $w_{k+1}=\cdots=w_{l-1}=0$. Putting these together would contradict Lemma \ref{lem:SignWord}(1). Therefore, we conclude that there is no subword of the form $(0,-1,-1,\dots,-1,0)$ in $w'$, establishing the third normality condition of $w'$.

\item[Condition 4] Assume $w_j'=-1$ for all $j\in\mathbb{Z}_{3\tau}$. It can be easily checked that $\delta_j=0$ for all $j$, yielding $\varepsilon_j=-1$ and $w_j=0$. But then by definition we have $\delta_j=1$ for any such $j$, which is a contradiction. Therefore, $w'$ does not consist only of $-1$, establishing the last normality condition of $w'$.
\end{description}
\end{proof}

Next we define the inverse of the above conversion formula.

\begin{defn}[Conversion from normal loop data to band data]\label{def:ConversionFromLooptoBand}
Pick a normal loop datum $(w',\lambda',\muprime)$, with
$$
w' = (w_1',w_2',w_3',w_4',w_5',w_6',\dots,w_{3\tau-2}',w_{3\tau-1}',w_{3\tau}')\in\mathbb{Z}^{3\tau}.
$$
We define the \emph{sign word} $\delta' = \delta'(w')\in\left\{0,1\right\}^{3\tau}$, the \emph{correction word} $\varepsilon' = \varepsilon'(w')\in\left\{-1,0,1,2\right\}^{3\tau}$ of $w'$ and the band word $w\in\mathbb{Z}^{3\tau}$ converted from $w'$ below, where we regard the index $j$ of $w_j'$, $s_j'$, $\varepsilon_j'$ and $w_j$ to be in $\mathbb{Z}_{3\tau}$. First, each entry of the sign word $\delta'=\delta'(w')$ is defined as 
$$
\delta_j' :=
\left\{
\begin{array}{ll}
0 & \text{if} \quad w_j'\le0 \\[1mm]
1 & \text{if} \quad w_j'>0.
\end{array}
\right.
$$
Next, each entry of the correction word $\varepsilon'=\varepsilon'(w')$ is defined as
$$
\varepsilon_j' := -1 + \delta_{j-1}' + \delta_j' + \delta_{j+1}'.
$$
Then each entry of the converted loop word $w$ is defined as
$$
w_j = w_j' - \varepsilon_j'
$$
and the \emph{conversion} from the normal loop datum to the band datum is given by
$$
(w',\lambda',\muprime) \mapsto (w = w' - \varepsilon'(w'), \lambda= (-1)^{l_1 + \cdots + l_\tau + \tau}\lambda', \mu = \muprime )
$$
where $l_i = w_{3i-2}$ for $i\in\mathbb{Z}_\tau$.
\end{defn}

\begin{example}\label{ex:ConversionExample}
Consider a band datum $(w,\lambda,\mu)$ whose band word $w$ is given as below.

\newcolumntype{x}[1]{>{\centering\hspace{0pt}}p{#1}} 

\newcommand{\corcmidrule}[1][0.6pt]{\\[\dimexpr-\normalbaselineskip-\belowrulesep-\aboverulesep-#1\relax]} 

\begin{center}
\setlength{\tabcolsep}{0.9pt}
\begin{tabular}{cccx{4.5mm}cx{4.5mm}cx{4.5mm}cx{4.5mm}cx{4.5mm}cx{4.5mm}cx{4.5mm}cx{4.5mm}cx{4.5mm}cx{4.5mm}cx{4.5mm}cx{4.5mm}cx{4.5mm}cx{4.5mm}cx{4.5mm}c}
$w$ & $=$ & $($ & $6$ & $,$ & $0$ & $,$ & $2$ & $,$ & $-1$ & $,$ & $0$ & $,$ & $-3$ & $,$ & $0$ & $,$ & $0$ & $,$ & $5$ & $,$ & $0$ & $,$ & $-2$ & $,$ & $1$ & $,$ & $-1$ & $,$ & $3$ & $,$ & $4$ & $)$ \\
\arrayrulecolor{blue}\cmidrule[0.6pt]{4-8}
\corcmidrule\arrayrulecolor{red}\cmidrule[0.6pt]{10-18}
\corcmidrule\arrayrulecolor{blue}\cmidrule[0.6pt]{20-20}
\corcmidrule\arrayrulecolor{red}\cmidrule[0.6pt]{22-24}
\corcmidrule\arrayrulecolor{blue}\cmidrule[0.6pt]{26-26}
\corcmidrule\arrayrulecolor{red}\cmidrule[0.6pt]{28-28}
\corcmidrule\arrayrulecolor{blue}\cmidrule[0.6pt]{30-32} \\[-3mm]

$\delta(w)=\delta'\left(w'\right)$ & $=$ & $($ & $1$ & $,$ & $1$ & $,$ & $1$ & $,$ & $0$ & $,$ & $0$ & $,$ & $0$ & $,$ & $0$ & $,$ & $0$ & $,$ & $1$ & $,$ & $0$ & $,$ & $0$ & $,$ & $1$ & $,$ & $0$ & $,$ & $1$ & $,$ & $1$ & $)$ \\[3.8mm]

$\varepsilon(w)=\varepsilon'\left(w'\right)$ & $=$ & $($ & $2$ & $,$ & $2$ & $,$ & $1$ & $,$ & $0$ & $,$ & $-1$ & $,$ & $-1$ & $,$ & $-1$ & $,$ & $0$ & $,$ & $0$ & $,$ & $0$ & $,$ & $0$ & $,$ & $0$ & $,$ & $1$ & $,$ & $1$ & $,$ & $2$ & $)$ \\[3.8mm]

$w'$ & $=$ & $($ & $8$ & $,$ & $2$ & $,$ & $3$ & $,$ & $-1$ & $,$ & $-1$ & $,$ & $-4$ & $,$ & $-1$ & $,$ & $0$ & $,$ & $5$ & $,$ & $0$ & $,$ & $-2$ & $,$ & $1$ & $,$ & $0$ & $,$ & $4$ & $,$ & $6$ & $)$ \\
\arrayrulecolor{blue}\cmidrule[0.6pt]{4-8}
\corcmidrule\arrayrulecolor{red}\cmidrule[0.6pt]{10-18}
\corcmidrule\arrayrulecolor{blue}\cmidrule[0.6pt]{20-20}
\corcmidrule\arrayrulecolor{red}\cmidrule[0.6pt]{22-24}
\corcmidrule\arrayrulecolor{blue}\cmidrule[0.6pt]{26-26}
\corcmidrule\arrayrulecolor{red}\cmidrule[0.6pt]{28-28}
\corcmidrule\arrayrulecolor{blue}\cmidrule[0.6pt]{30-32}

\end{tabular}
\end{center}

We computed the sign word $\delta(w)$, the correction word $\varepsilon(w)$ of $w$ and the loop word $w' = w + \varepsilon(w)$ converted from $w$. Note that $w'$ is presented in the normal form. Then we computed the sign word $\delta'\left(w'\right)$, the correction word $\varepsilon'\left(w'\right)$ of $w'$.

We underlined the spots of $w$ in blue or red, respectively, according to whether the value of $\delta$ on them is $1$ or $0$. Likewise, the spots of $w'$ are underlined according to the value of $\delta'$. Observe that both $w$ and $w'$ have the same underline pattern, implying $\delta'\left(w'\right) = \delta(w)$ and hence $\varepsilon'\left(w'\right) = \varepsilon(w)$. Therefore, the band word $w'-\varepsilon'\left(w'\right)$ converted from $w'$ is the same as the original band word $w$. 

The parameter $\lambda$ and  the holonomy parameter $\lambda'$ in this case are related by
$\lambda' = (-1)^{6-1+0+0-1+5}\lambda = -\lambda$.

\end{example}

\begin{prop}\label{prop:ConversionFormulaInverseToEachOther}
The conversion from the band datum to the loop datum and the conversion from the normal loop datum to the band datum are the inverses of each other.
\end{prop}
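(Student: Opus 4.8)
\emph{Proof strategy.} The statement reduces to a single combinatorial fact about sign words. For a \emph{fixed} sign word $\sigma\in\{0,1\}^{3\tau}$, introduce the two affine maps $\Phi_\sigma,\Psi_\sigma\colon\mathbb{Z}^{3\tau}\to\mathbb{Z}^{3\tau}$, with indices in $\mathbb{Z}_{3\tau}$, defined by
\[
\Phi_\sigma(v)_j=v_j-1+\sigma_{j-1}+\sigma_j+\sigma_{j+1},\qquad \Psi_\sigma(v)_j=v_j+1-\sigma_{j-1}-\sigma_j-\sigma_{j+1}.
\]
For fixed $\sigma$ these are mutually inverse bijections. By Definitions \ref{def:ConversionFromBandtoLoop} and \ref{def:ConversionFromLooptoBand}, the band-to-loop conversion sends a band word $w$ to $w'=\Phi_{\delta(w)}(w)$, while the loop-to-band conversion sends a normal loop word $w'$ to $w=\Psi_{\delta'(w')}(w')$; both keep $\mu$ fixed and rescale the holonomy by $(-1)^{l_1+\cdots+l_\tau+\tau}$, the exponent being read off the \emph{band} word in each direction. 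Hence, once the word-level conversions are shown to be mutually inverse, so are the holonomy-level ones (the same exponent is recovered from the recovered band word) and the multiplicities trivially agree. So it suffices to establish the two sign-word identities: \textbf{(A)} $\delta'\bigl(\Phi_{\delta(w)}(w)\bigr)=\delta(w)$ for every band word $w$, and \textbf{(B)} $\delta\bigl(\Psi_{\delta'(w')}(w')\bigr)=\delta'(w')$ for every normal loop word $w'$. Indeed \textbf{(A)} gives $w\mapsto w'=\Phi_{\delta(w)}(w)\mapsto\Psi_{\delta(w)}\bigl(\Phi_{\delta(w)}(w)\bigr)=w$, and \textbf{(B)} gives the reverse composite.

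For \textbf{(A)} I would check entry by entry that $w'_j:=\Phi_{\delta(w)}(w)_j=w_j+\varepsilon_j$ satisfies $w'_j>0\iff\delta(w)_j=1$. If $\delta_j=1$ and $w_j\ge1$ then $\varepsilon_j\ge0$, so $w'_j\ge1$; if $\delta_j=1$ and $w_j=0$, the nonzero entries bracketing the maximal $0$-string through $j$ are non-negative, so Lemma \ref{lem:SignWord}(1) forces $\delta_{j-1}=\delta_j=\delta_{j+1}=1$ and $w'_j=2$; if $\delta_j=0$ and $w_j\le-1$ then $\varepsilon_j\le1$, so $w'_j\le0$; and if $\delta_j=0$ and $w_j=0$, some bracketing nonzero entry is negative, so Lemma \ref{lem:SignWord}(2) forces $\delta_j=0$ with at most one of $\delta_{j\pm1}$ equal to $1$, whence $w'_j=\varepsilon_j\le0$. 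This is the easy half, and parallels the proof of Proposition \ref{prop:ConvertedToNormalForm}.

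For \textbf{(B)} set $s:=\delta'(w')$ (so $s_j=1\iff w'_j>0$) and $w:=\Psi_s(w')$, i.e.\ $w_j=w'_j+1-s_{j-1}-s_j-s_{j+1}$. I would first record two local consequences of normality (Definition \ref{defn:normal2}): (i) if $s_j=1$ then $w_j=w'_j-s_{j-1}-s_{j+1}\ge0$, and $w_j=0$ forces $w'_j=2$ and $s_{j-1}=s_{j+1}=1$ (the alternatives $w'_j=1$ and $w'_j\ge3$ being ruled out by the first normality condition and by $w_j\ge w'_j-2$, respectively); (ii) if $s_j=0$ then $w_j\le1$, and $w_j=1$ is impossible, since it would force $w'_j=0$ with $w'_{j-1},w'_{j+1}\le0$, violating the second normality condition. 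From (i) and (ii), $\operatorname{sign}w_j=\operatorname{sign}w'_j$ away from the $0$-blocks of $w$, so $\delta(w)_j=s_j$ there. On a maximal $0$-block $B$ of $w$ which is not the whole cycle, (i) propagates any value $s_i=1$ ($i\in B$) to all of $B$ and forces $w'\ge1$ at the two entries bracketing $B$; using the first normality condition one upgrades these to $w'\ge2$, so both bracketing $w$-entries are positive, matching $\delta(w)|_B=1$. If instead $s\equiv0$ on $B$, then every $w'_i$ with $i\in B$ lies in $\{-1,0\}$, a value $0$ of $w'$ can occur on $B$ only at an endpoint (pushing a positive $w'$-entry just outside $B$), and the third normality condition --- no subword $(0,-1,\dots,-1,0)$ in $w'$ --- excludes the only configuration that would leave both brackets of $w$ positive; hence some bracketing $w$-entry is negative, matching $\delta(w)|_B=0$. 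Finally $w\equiv0$, equivalently $w'=(2,\dots,2)$ (which is normal; the fourth normality condition bars $w'\equiv(-1,\dots,-1)$), gives $\delta(w)=(1,\dots,1)=s$ directly. Assembling these cases yields $\delta(w)=\delta'(w')$.

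The hard part is \textbf{(B)}, and within it the analysis on the maximal $0$-blocks of the reconstructed band word: one must verify that the sign pattern $s$ inherited from $w'$ is constant on each such block and that its common value is precisely the one dictated by the ``bracketing nonzero entries'' rule defining the band-side sign word $\delta$. This is exactly where the less transparent normality conditions --- the ban on subwords $(0,-1,\dots,-1,0)$ and the exclusion of the all-$(-1)$ word --- are indispensable, and it is the only step requiring a genuine (if elementary) case analysis; everything else is bookkeeping, already rehearsed for rank one in Theorem \ref{thm:c1}.
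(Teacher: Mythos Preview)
Your proposal is correct and follows essentially the same route as the paper: both reduce the statement to the single combinatorial claim that the two sign words agree, $\delta(w)=\delta'(w')$, and then verify this using the normality conditions. The paper organizes part \textbf{(B)} as a six-case analysis on the value of $w'_j$ (the cases $w'_j\le-2,\,-1,\,0,\,1,\,2,\,\ge3$), whereas you organize it around the maximal $0$-blocks of the reconstructed band word; the content is the same, and the places where each normality condition enters match up exactly (your $0$-block argument is the paper's Cases 2 and 5 repackaged).
 Your framing via the mutually inverse affine maps $\Phi_\sigma,\Psi_\sigma$ for fixed $\sigma$ is a clean way to isolate the reduction, but it does not change the substance of the proof.
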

\begin{proof}
Let $\left(w,\lambda,\mu\right)$ be a band datum and
$$
\left(w' = w+\varepsilon(w),\lambda' = (-1)^{l_1+\cdots+l_\tau+\tau}\lambda,\muprime\right)
$$
the converted loop datum, where $l_i=w_{3i-2}$ for $i\in\mathbb{Z}_\tau$. Then let
$$
\left(w'' = w'-\varepsilon'\left(w'\right),\lambda'' = (-1)^{l_1''+\cdots+l_\tau''+\tau}\lambda',\mupprime =\mu' \right)
$$
be the band datum converted from $(w',\lambda',\muprime)$, where $l_i'' = w_{3i-2}''$ for $i\in\mathbb{Z}_\tau$. In order to show $(w'',\lambda'',\mupprime) = (w,\lambda,\mu)$, we notice that it is enough to show $w''=w$, which is also equivalent to $\varepsilon(w) = \varepsilon'(w')$. By the construction of $w$ and $w'$, therefore, we only need to show that $\delta(w)=\delta'(w')$. Denoting $\delta=\delta(w)$ and $\delta'=\delta'(w')$, we can prove $\delta_j=\delta_j'$ for each $j\in\mathbb{Z}_{3\tau}$ as follows.

\begin{description}[font=\normalfont\itshape\textbullet\space,labelindent=3mm,leftmargin=5mm]
\setlength\itemsep{1em}

\item[Case 1] $w_j\le-1$

In this case, we have $\delta_j=0$ and hence $\varepsilon_j\le1$, implying $w_j'\le0$ so that $\delta_j'=0$ follows.

\item[Case 2] $w_j=0$

If $\delta_j=0$, by Lemma \ref{lem:SignWord}.(1), at least one of $\delta_{j-1}$ and $\delta_{j+1}$ must be $0$, implying $\varepsilon_j\le0$ so that $w_j'\le0$ and hence $\delta_j'=0$. Otherwise, if $\delta_j=1$, by Lemma \ref{lem:SignWord}.(2), both $\delta_{j-1}$ and $\delta_{j+1}$ must be $1$, implying $\varepsilon_j=2$ so that $w_j'=2$ and hence $\delta_j'=1$.

\item[Case 3] $w_j\ge1$

We have $\delta_j=1$ and hence $\varepsilon_j\ge0$, implying $w_j'\ge1$ so that $\delta_j'=1$ follows.
\end{description}

Therefore, we proved that if we convert a given band datum to a loop datum and then convert it back to a band datum, it returns to itself.

Conversely, let $(w',\lambda',\muprime)$ be a normal loop datum and
$$
\left(w'' = w'-\varepsilon'\left(w\right),\lambda'' = (-1)^{l_1''+\cdots+l_\tau''+\tau}\lambda',\mupprime\right)
$$
the converted band datum, where $l_i''=w_{3i-2}''$ for $i\in\mathbb{Z}_\tau$. Then let
$$
\left(w''' = w''-\varepsilon''\left(w''\right),\lambda''' = (-1)^{l_1''+\cdots+l_\tau''+\tau}\lambda'',\muppprime = \mupprime\right)
$$
be the loop datum converted from $(w'',\lambda'',\mupprime)$. In order to show $(w''',\lambda''',\muppprime) = (w',\lambda',\muprime)$, we only need to show $w'''=w'$, which again follows from $\delta'(w')=\delta''(w'')$. As above, denoting $\delta'=\delta'(w')$ and $\delta''=\delta''(w'')$, we can prove $\delta_j'=\delta_j''$ for each $j\in\mathbb{Z}_{3\tau}$ by dividing the case.

\begin{description}[font=\normalfont\itshape\textbullet\space,labelindent=3mm,leftmargin=5mm]
\setlength\itemsep{1em}
  
\item[Case 1] $w_j'\le-2$

We have $\delta_j'=0$. As $\varepsilon_j'\ge-1$, we also have $w_j''\le-1$ and hence $\delta_j''=0$.

\item[Case 2] $w_j'=-1$

We have $\delta_j'=0$. Since $w'$ is normal, $w'$ does not consist only of $-1$.

If the first non-$(-1)$ element to the left of $w_j'$ is less than or equal to $-2$, namely, if there is an integer $k\in\mathbb{Z}_{3\tau}\setminus\left\{j\right\}$ such that $w_k'\le-2$, $w_{k+1}'=\cdots=w_j'=-1$, we have $w_k''\le-1$, $w_{k+1}'', \dots, w_j''\le0$, implying $\delta_j''=0$ by Lemma \ref{lem:SignWord}.(2).

If the first non-$(-1)$ element to the left of $w_j'$ is greater than or equal to $1$, namely, if there is $k\in\mathbb{Z}_{3\tau}\setminus\left\{j\right\}$ such that $w_k'\ge1$, $w_{k+1}'=\cdots=w_j'=-1$, we have $\varepsilon_{k+1}'\ge0$ and hence $w_{k+1}''\le-1$, $w_{k+2}'', \dots, w_j''\le0$, implying $\delta_j''=0$ by Lemma \ref{lem:SignWord}.(2).

The above discussion also applies to the elements to the right of $w_j'$. It remains a case where both the first non-$(-1)$ element to the left and the right of $w_j'$ are $0$, namely, there are $k$, $l\in\mathbb{Z}_{3\tau}\setminus\left\{j\right\}$ such that $k<j<l$ and $w_k'=0$, $w_{k+1}'=\cdots=w_{l-1}'=-1$, $w_l'=0$. But this
violates the third condition for $w'$ to be normal. Thus we conclude that $\delta_j''=0$ whenever $w_j'=-1$ and $w'$ is normal.

\item[Case 3] $w_j'=0$

We have $\delta_j'=0$. Because $w'$ is normal, one of $w_{j-1}'\le-1$, $w_{j+1}'\ge1$ or $w_{j-1}'\ge1$, $w_{j+1}'\le-1$ or $w_{j-1}'$, $w_{j+1}'\ge1$ must hold. In the first case, we have $\varepsilon_j'=0$, $w_j''=0$ and $\delta_{j-1}''=0$ bt arguments in the case 1 and 2. Hence, Lemma \ref{lem:SignWord}.(2) gives $\delta_j''=0$. The second case can also be handled in the same way. In the third case, we have $\varepsilon_j'=1$, $w_j''=-1$ and hence $\delta_j''=0$.

\item[Case 4] $w_j'=1$

We have $\delta_j'=1$. As $w'$ is normal, both $w_{j-1}'$ and $w_{j+1}'$ are less than or equal to $0$. Therefore, we have $\varepsilon_j'=0$, $w_j''=1$ and hence $\delta_j''=1$.

\item[Case 5] $w_j'=2$

We have $\delta_j'=1$.  If $w'$ consists only of $2$, then $w''$ consists only of $0$, whence $\delta_j''=1$. Now assume that this is not the case.

If the first non-$2$ element to the left of $w_j'$ is less than or equal to $1$, namely, if there is $k\in\mathbb{Z}_{3\tau}\setminus\left\{j\right\}$ such that $w_k'\le1$, $w_{k+1}'=\cdots=w_j'=2$, we automatically get $w_k'\le0$ by the first condition that $w'$ is normal. Thus we have $\varepsilon_{k+1}'=1$ and hence $w_{k+1}''=1$, $w_{k+2}'', \dots, w_j''\ge0$. And if the first non-$2$ element to the left of $w_j'$ is greater than or equal to $3$, namely, if there is $k\in\mathbb{Z}_{3\tau}\setminus\left\{j\right\}$ such that $w_k'\ge3, w_{k+1}'=\cdots=w_j'=2$, we have $w_k''\ge1$, $w_{k+1}'', \dots, w_j''\ge0$. Therefore, in any cases, either $w_j''\ge1$ or there is $k\in\mathbb{Z}_{3\tau}\setminus\left\{j\right\}$ such that $w_k''\ge1$, $w_{k+1}'', \dots, w_j''\ge0$. This result also applies to the elements to the right of $w_j'$ and therefore we conclude that either $w_j''\ge1$, or there are $k$, $l\in\mathbb{Z}_{3\tau}\setminus\left\{j\right\}$\ with $k<j<l$ such that $w_k''\ge1$, $w_{k+1}'', \dots, w_{l-1}''\ge0$, $w_l''\ge1$. In either case, we have $\delta_j''=1$.

\item[Case 6] $w_j'\ge3$

We have $\delta_j'=1$. As $\varepsilon_j'\le2$, we also have $w_j''\ge1$ and hence $\delta_j''=1$.

\end{description}

We proved that if we convert a given normal loop datum to a band datum and then convert it back to a loop datum, it returns to itself.
\end{proof}

\begin{remark}
We could have chosen different forms in the conversion formula from band words to loop words. To be more specific, we can take any sign word $\delta^*:=\delta^*(w)\in\left\{0,1\right\}^{3\tau}$ in Definition \ref{def:ConversionFromBandtoLoop}
satisfying the following:
\begin{itemize}
\item
$\delta_j^* = 1$ \quad if \ \ $w_j >0$,
\item
$\delta_j^* = 0$ \quad if \ \ $w_j <0$,
\item
if $w_j =0$ and $\delta_j^* < \delta_{j+1}^*$, then the first non-zero entry to the left of $w_{j+1}$ (exists and) is negative and
\\
\hspace*{26mm} the first non-zero entry to the right of $w_{j}$ (exists and) is positive,
\item
if $w_j =0$ and $\delta_j^* > \delta_{j+1}^*$, then the first non-zero entry to the left of $w_{j+1}$ (exists and) is positive and
\\
\hspace*{26mm} the first non-zero entry to the right of $w_{j}$ (exists and) is negative.
\end{itemize}
Then we can prove that loop words obtained from the same band word should be equivalent to each other,
no matter which sign word $\delta^*(w)$ is used. In this case, all converted loop words satisfy the following `\emph{quasi-normal}'
conditions:
\begin{itemize}
\item any subword of the form $\left(a,0,b\right)$ in $w'$ satisfies $a\le-1$, $b\ge1$ or $a\ge1$, $b\le-1$ or $a, b\ge1$,
\item any subword of the form $\left(a,1,b\right)$ in $w'$ satisfies $a\ge2$, $b\le0$ or $a\le0$, $b\ge2$ or $a, b\le0$,
\item $w'$ has no subword of the form $(0,-1,-1,\dots,-1,0)$, and
\item $w'$ has no subword of the form $(1,2,2,\dots,2,1)$.
\end{itemize}
The conversion formula from `quasi-normal' loop words to band words remains the same as in Definition \ref{def:ConversionFromLooptoBand}. If one convert a given band word to a loop word using any sign word and then convert it back to a band word, it returns to itself. Conversely, if one convert a given `quasi-normal' loop word to a band
word and then convert it back to a loop word using some sign word, it is equivalent to the original one.

\end{remark}

Finally, we can state our main theorem.

\begin{thm}\label{thm:MainThoerem}
The following diagram commutes.

\begin{equation*}
\begin{tikzcd}[ampersand replacement=\&, sep=20pt] 
  \&
  \varphi\left(w',\lambda,1\right)\in
  \hspace{-15mm}
  \&
  \underline{\operatorname{MF}}(xyz)
    \arrow[dl, swap, "\mathrel{\rotatebox{48}{$\simeq$}}", shift right,
           "\begin{matrix}
              \vspace{-9mm}
              \text{(Eisenbud, 1980)}
              \\
              \operatorname{coker}
            \end{matrix}\hspace{4mm}"]
  \& \&
  \\[15mm]
  M\left(w,\lambda,1\right) \in
  \hspace{-12mm}
  \&
  \underline{\operatorname{CM}}(A)
    \arrow[ur, swap, shift right,
           "\text{resolve}"]
  \&
  \&
  WF\left(\mathcal{P}\right)
    \arrow[ul, swap, "\mathrel{\rotatebox{-48}{$\simeq$}}",
           "\quad
            \begin{matrix}
              \text{localized mirror functor}
                       \end{matrix}"]
  \&
  \hspace{-11mm}
  \ni L\left(w',\lambda',1\right)
  \\[5mm]
  \left(w,\lambda,1\right)
  \hspace{-12mm}
  \&
  \text{\color{black} band data}
    \arrow[rr, leftrightarrow, color=black,
           "\text{conversion formula}"]
    \arrow[u, swap, color=black, swap,
           "\begin{matrix}
              \text{classification Thm} \\
              \text{(}\color{black}\text{Burban-Drozd)}
            \end{matrix}"]
  \&
  \&
  \text{\color{black} loop data}
    \arrow[u, swap, color=black,
           "\begin{matrix}
              \text{classify objects} \\
            \end{matrix}"]
  \&
  \hspace{-12mm}
  \left(w',\lambda',1\right)
\end{tikzcd}
\end{equation*}

Namely, given a non-degenerate band datum $\left(w,\lambda,1\right)$ and the corresponding loop datum $\left(w',\lambda',1\right)$,

(1) $M\left(w,\lambda,1\right)$ corresponds to $\varphi\left(w',\lambda,1\right)$ under Eisenbud's theorem, i.e.,
$$
M\left(w,\lambda,1\right) \cong \operatorname{coker}\underline{\varphi}\left(w',\lambda,1\right) \quad\text{in}\
\ \underline{\operatorname{CM}}(A),
$$

(2) $L\left(w',\lambda',1\right)$ corresponds to $\varphi\left(w',\lambda,1\right)$ under the localized
mirror functor, i.e.,
$$
\LocalF\left(L\left(w',\lambda',1\right)\right)\ \cong \varphi\left(w',\lambda,1\right)\quad
\text{in}\ \ \underline{\operatorname{MF}}(xyz).
$$

\end{thm}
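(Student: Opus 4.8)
The theorem splits into two independent statements (1) and (2), linked by the common matrix factorization $\varphi(w',\lambda,1)$ appearing at the apex of the diagram. Statement (1) is algebraic: it asserts that the Macaulayfication $M(w,\lambda,1)$ of Burban--Drozd's module $\tilde M(w,\lambda,1)$ has a $2$-periodic free resolution whose first syzygy matrix is exactly $\varphi(w',\lambda,1)$. Statement (2) is symplectic-geometric: it asserts that the localized mirror functor $\LocalF$ sends the immersed Lagrangian $L(w',\lambda',1)$ to the same matrix factorization. The plan is to prove each of these separately — (1) in the generality of Theorem \ref{thm:MFFromModule} via the generator diagram, and (2) via Theorem \ref{thm:MFFromLag} by counting decorated strips — and then observe that the conversion formula of Theorem \ref{thm:c} (equivalently Definition \ref{def:ConversionFromBandtoLoop}) together with the holonomy identification $\lambda' = (-1)^{l_1+\cdots+l_\tau+\tau}\lambda$ makes the two outputs literally coincide (up to the sign bookkeeping already isolated in the rank-one case, Theorem \ref{thm:r1}).

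\textbf{Part (1): from band data to matrix factorization.} First I would set up the generator diagram for $\tilde M(w,\lambda,\mu)$ with $\mu=1$, reading off the $3\tau$ generators $G_1,\dots,G_{3\tau}$ of the submodule of $A^{\tau}$ from the matrices $\pi_x,\pi_y,\pi_z$ in Definition \ref{defn:module}. The key step is to identify \emph{all} Macaulayfying elements: by Proposition \ref{prop:Macaulayfying}, adding them one at a time until none remain produces $M(w,\lambda,1)$. The generator diagram makes this combinatorial — a Macaulayfying element $F$ is a lattice configuration not in the module such that $xF,yF,zF$ all land in it, and one shows these are parametrized precisely by the positions where the sign word $\delta(w)$ records a sign change, i.e., the "correction" happening at each of the $3\tau$ slots contributes $\varepsilon_j = -1 + \delta_{j-1}+\delta_j+\delta_{j+1}$ extra vanishing order to the corresponding entry. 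After forming $M(w,\lambda,1) = \langle \tilde M, F_1,\dots\rangle^\dagger$, one writes down the $3\tau$ relations among the (new) generators — each relation is a "triangle" $w_j' G_j - (\text{monomial})\,G_{j+1} = 0$ read off from three consecutive lattice moves, with the wraparound relation carrying the Jordan/eigenvalue factor $\lambda^{\pm1}$. Assembling these relations gives $\underline\varphi(w',\lambda,1)$ as displayed in \eqref{eq:can}; then one verifies $\varphi\psi = \psi\varphi = xyz\cdot I$ using the adjugate (Corollary \ref{cor:Psi}) and that $\operatorname{coker}\underline\varphi \cong M(w,\lambda,1)$ as $A$-modules, which is essentially forced once one checks the presentation has the right rank and the obvious map is an isomorphism. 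This is where the bulk of Section \ref{sec:higherrank} lives.

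\textbf{Part (2): from Lagrangian to matrix factorization.} For $(w',\lambda',1)$ normal, $L(w',\lambda',1)$ is the smoothing of the concatenation $\delta_x^{l_1'}\Delta\,\delta_y^{m_1'}\Delta\,\cdots\,\delta_z^{n_\tau'}\Delta$ and intersects $\bL$ in $6\tau$ points, grouped by sign into two sets of $3\tau$; the localized mirror functor builds $\LocalF(L) = (CF(L,\bL), -m_1^{0,b})$, whose differential matrix I compute by enumerating rigid decorated strips bounded by $L$ and $\bL$. The argument is local: near each "turn" of $L$ around a puncture in region $D_x$ (resp.\ $D_y$, $D_z$) there is a one-parameter-terminated family of strips that wrap the puncture, contributing a single entry $\pm y^{m_i'-1}$, $\pm x^{l_i'-1}$, etc., exactly as in the worked example $L(2,3,2)$; the small triangles at the $X,Y,Z$ corners give the diagonal entries $x,y,z$; and the strip passing through the holonomy point picks up $\lambda'$. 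Normality of $w'$ is precisely what guarantees no strip degenerates or produces an off-canonical entry (this is the content of Theorem \ref{thm:MFFromLag}, with the $(2,2,\dots,2)$ case handled by perturbation as in Remark \ref{rem:DegenerateLoopLength1}). The sign $(-1)^{l'}$ on the wraparound entries comes from counting corners plus holonomy points on the portion of the strip boundary traversing $\bL$ against its orientation. One must also invoke Appendix \ref{sec:T=1} to justify evaluating $T=1$: since $L$ is chosen regular, the resulting matrix factorization has $\C$-coefficients and is well-defined up to homotopy.

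\textbf{Matching and the main obstacle.} With both sides computed, $\LocalF(L(w',\lambda',1))$ has entries involving $(-1)^{l'}\lambda'$ where $l' = l_1' + \cdots$, while $\operatorname{coker}\underline\varphi(w',\lambda,1)$ has $\lambda$; the identity $\lambda' = (-1)^{l_1+\cdots+l_\tau+\tau}\lambda$ and the relation $l_i' = l_i + \varepsilon_{3i-2}$ are rigged so that $(-1)^{l_1'+\cdots+l_\tau'}\lambda' = \lambda$ after a bookkeeping of the $\varepsilon$'s and $\delta$'s — this is the higher-rank analogue of the observation "the exponent of $(-1)$ is $l+1$ not $l'+1$" in the proof of Theorem \ref{thm:r1}. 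A final isomorphism in $\underline{\operatorname{MF}}(xyz)$ (conjugating by a diagonal sign matrix) absorbs the remaining signs on individual entries. I expect the \textbf{main obstacle to be Part (1)}: correctly enumerating all Macaulayfying elements for an arbitrary band word, where strings of zeros and the cyclic wraparound interact — the generator diagram controls this, but proving that the list of relations one reads off is \emph{complete} (i.e., that $\operatorname{coker}\underline\varphi$ is not a proper quotient of $M(w,\lambda,1)$) requires a careful rank/length count, and this is the technical heart of Section \ref{sec:higherrank}. Part (2) is conceptually routine once the strip-counting lemma is set up, and the matching is pure combinatorics on the words.
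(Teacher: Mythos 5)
Your proposal takes essentially the same route as the paper: the theorem is proved by establishing parts (1) and (2) separately — (1) via the generator-diagram-driven Macaulayfication and the free-resolution argument of Theorem \ref{thm:MFFromModule} in Section \ref{sec:higherrank}, and (2) via the decorated-strip enumeration of Theorem \ref{thm:MFFromLag} in Section \ref{sec:MfFromLag} (with the $T=1$/regularity issues handled in Appendix \ref{sec:T=1}) — and then matching outputs through the conversion formula. Two small imprecisions in your sketch, neither fatal but worth flagging: the Macaulayfying elements $F_{\imath}$ live only at the $\delta$-transitions from $1$ to $0$ (transitions from $0$ to $1$ instead force one to add an $H_{\jmath}$, a generator of $\tilde M$ that is not generated by the $G_j$'s, which is a separate modification step in the paper); and the final sign bookkeeping is not literally the scalar identity $(-1)^{l_1'+\cdots+l_\tau'}\lambda'=\lambda$ but rather an isomorphism in $\underline{\operatorname{MF}}(xyz)$ realized by conjugating with an appropriate diagonal sign matrix, with $\lambda'=(-1)^{l_1+\cdots+l_\tau+\tau}\lambda$ (note: the unprimed exponents) absorbing the residual holonomy sign.
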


\begin{proof}
We will prove (1) and (2) in Section \ref{sec:higherrank} and \ref{sec:MfFromLag}, respectively.
See Theorem \ref{thm:MFFromModule} and \ref{thm:MFFromLag}.
\end{proof}

\begin{remark}
The above theorem is between the band data and the loop data.
On the other hand, there are indecomposable maximal Cohen-Macaulay modules that are not locally free on the punctured spectrum, and they correspond to the string data (Burban-Drozd).  On the mirror side,  these correspond to non-compact Lagrangians which start and end at punctures (i.e. Lagrangian immersions of $\mathbb{R}$). Most of the proof in this paper would carry over to these cases without much difficulty, and hence we do not discuss them.
\end{remark}

\begin{remark}
In the sequel, we will generalize the above theorem in the following sense.

(1) The above theorem is for the multiplicity one ($\mu=1$). We will prove the
corresponding statements for the higher multiplicity, namely
for $(w,\lambda, \mu)$ with  $\mu \geq 2$ as well. The mirror Lagrangian will be given by
twisted complexes (of length $\mu$)  of a single Lagrangian  $L(w',\lambda,1)$.

(2)  The above theorem holds even for periodic words.
(Recall that the definition of the band and loop data excludes periodic ones). They correspond to some
direct sums of indecomposable objects corresponding to non-periodic words in each category.

(3)  The theorem holds for degenerate band/loop data.  For degenerate cases, the correspondences become somewhat subtle, and we will explain how to handle them in the sequel.
\end{remark}

\section{Matrix Factorizations Arising from Modules: Higher Rank Cases}\label{sec:higherrank}
In this section, we work out the higher rank analogue of Section \ref{sec:mod1} to prove Theorem \ref{thm:1}.

Let us briefly recall our setting.
Let $S:=\mathbb{C}[[x,y,z]]$ be the formal power series ring of three variables and $A:=\mathbb{C}[[x,y,z]]/(xyz)$ its quotient ring. Given a band datum $\left(w,\lambda,1\right)$ of length $3\tau$ and multiplicity $1$, let $\delta:=\delta\left(w\right)$ and $\varepsilon:=\varepsilon\left(w\right)$ be the sign word and the correction word of $w$, respectively and $(w',\lambda',1)$ the loop datum obtained by the conversion formula, which are described in Definition \ref{def:ConversionFromBandtoLoop}. Then the module $M\left(w,\lambda,1\right)$ corresponding to $\left(w,\lambda,1\right)$ and the Lagrangian $L(w',\lambda',1)$ corresponding to $(w',\lambda',1)$ give matrix factors of $xyz$ over $S$. To compare them, we proposed the \emph{canonical form} $\varphi(w',\lambda,1)$ of matrices arising from $(w,\lambda,1)$ as

\adjustbox{scale=1,center}{$
\varphi(w',\lambda,1) :=
\begin{psmallmatrix}
z & -y^{m_1'-1} & 0 & 0 & \displaystyle\cdots & 0 & -\lambda^{-1}x^{-l_1'} \\[2mm]
-y^{-m_1'} & x & -z^{n_1'-1} & 0 & \displaystyle\cdots & 0 & 0 \\[2mm]
0 & -z^{-n_1'} & y & -x^{l_2'-1} & \displaystyle\cdots & 0 & 0 \\[0mm]
0 & 0 & -x^{-l_2'} & z & \ddots & \vdots & \vdots \\[0mm]
\vdots & \vdots & \vdots & \ddots & \ddots & -y^{m_\tau'-1} & 0 \\[2mm]
0 & 0 & 0 & \displaystyle\cdots & -y^{-m_\tau'} & x & -z^{n_\tau'-1} \\[2mm]
-\lambda x^{l_1'-1} & 0 & 0 & \displaystyle\cdots & 0 & -z^{-n_\tau'} & y
\end{psmallmatrix}_{3\tau\times3\tau}.
$}

As always, we denote $l_i' := w_{3i-2}'$, $m_i' := w_{3i-1}'$, $n_i' := w_{3i}'$ and \uline{regard $x^a$, $y^a$ or $z^a$ to be zero if $a<0$}. The entries of the matrix $\varphi(w',\lambda,1)$ are considered to be in $S$ and then it yields an obvious $S$-module homomorphism $\varphi(w',\lambda,1):S^{3\tau}\rightarrow S^{3\tau}$ between free $S$-modules.

Then we denote by $\mathunderbar{\varphi}(w',\lambda,1)$ the matrix $\varphi(w',\lambda,1)$ modulo $xyz$. That is, $\mathunderbar{\varphi}(w',\lambda,1)$ is the same form as $\varphi(w',\lambda,1)$ but entries are considered to be elements of $A$. This yields an $A$-module homomorphism $\mathunderbar{\varphi}(w',\lambda,1):A^{3\tau}\rightarrow A^{3\tau}$ between free $A$-modules.

Note that each $A$-module $M$ also has a natural $S$-module structure, and we will denote by $M_S$ the
corresponding $S$-module. Namely, the underlying set and abelian group structure of $M_S$ is the same as $M$ whereas the scalar multiplication is defined by $f\cdot u := \left[f\right]\cdot u$, where $f\in S$, $\left[f\right]\in A = S/(xyz).$ and $u$ in the left hand side is an element of $M_S$ and $u$ in the right hand side is the same element of $M$. Readers should note that $xyz \cdot u =0$ for any $u \in M_S$ even if it is an $S$-module operation. Note also that a subset of $M$ or $M_S$ generates the whole set with the $A$-module structure if and only if it does with the $S$-module structure.

Our goal in this section is to show that the matrix factor of $xyz$ over $S$ arising from the module $M(w,\lambda,1)$ fits into the above canonical form $\varphi(w',\lambda,1)$.

\begin{thm}\label{thm:MFFromModule}
For a nondegenerate band datum $(w,\lambda,1)$ of length $3\tau$ and multiplicity $1$, we can construct the following free resolution of $M_S\left(w,\lambda,1\right)$ as an $S$-module:
\begin{equation}\label{eqn:ExactSequence1}
\begin{tikzcd}[column sep = 20pt] 
  0 \arrow[r] & S^{3\tau} \arrow[rr, "\varphi{(w',\lambda,1)}"] & & S^{3\tau} \arrow[r, "\pi"] & M_S\left(w,\lambda,1\right) \arrow[r] & 0.
\end{tikzcd}
\end{equation}
The map $\pi$ will be constructed during the proof. In particular,
$$
M\left(w,\lambda,1\right) \cong \operatorname{coker}\mathunderbar{\varphi}(w',\lambda,1)
$$
holds as $A$-modules.
\end{thm}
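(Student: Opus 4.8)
The plan is to construct the surjection $\pi\colon S^{3\tau}\to M_S(w,\lambda,1)$ explicitly, verify that its kernel is exactly the image of $\varphi(w',\lambda,1)$, and deduce the cokernel statement over $A$ by reducing mod $xyz$. The first and longest step is to pin down $M(w,\lambda,1)$ itself: starting from the six matrices of Definition \ref{defn:module} defining $\tilde M(w,\lambda,1)\subseteq A^\tau$, one performs Macaulayfication via the generator-diagram method of Section \ref{sec:mod1} (and its higher-rank elaboration). Concretely, for each band word $w$ I would read off, using Lemma \ref{lem:SignWord} and the definition of the sign word $\delta(w)$, a family of Macaulayfying elements: each consecutive run of indices where $\delta$ is constant produces one new generator of the Macaulayfication, and the positions at which $\delta$ switches value (captured by the correction word $\varepsilon(w)$, hence by the shift $w'=w+\varepsilon(w)$) dictate the exponents appearing in that generator. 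By Proposition \ref{prop:Macaulayfying}(2) one adds these elements to $\tilde M$ one at a time, and by Proposition \ref{prop:Macaulayfying}(1) one checks at the end that no further Macaulayfying elements exist, so that the resulting submodule $M(w,\lambda,1)$ is genuinely maximal Cohen--Macaulay. The careful bookkeeping here — organizing the case analysis on the sign pattern of $w$ so that it collapses uniformly into the single formula involving $w'$ — is exactly the content that the rank-one discussion (Cases 1--4 and Proposition \ref{prop:Rank1MFfromModule}) was a warm-up for.

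Once the generators of $M(w,\lambda,1)$ are in hand, I would index them $1,\dots,3\tau$ (cyclically, matching the $l_i',m_i',n_i'$ pattern) and define $\pi$ by sending the $j$-th standard basis vector of $S^{3\tau}$ to the $j$-th generator. Surjectivity is immediate. For the kernel, one writes down the $3\tau$ relations among the generators that the generator diagram makes visible: each generator, when multiplied by the appropriate one of $x,y,z$, lands on a scalar multiple (a monomial, or zero, recorded by the convention $x^a=0$ for $a<0$) of an adjacent generator. Assembling these relations as the columns of a matrix recovers precisely $\varphi(w',\lambda,1)$ — the three nonzero entries in each row/column being $z,-y^{m_i'-1}$ and $-\lambda^{-1}x^{-l_i'}$ type terms, with the $\lambda^{\pm1}$ factors coming from the Jordan-block entry $J_\mu(\lambda)$ (here $\mu=1$) in Definition \ref{defn:module}. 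One then checks $\varphi(w',\lambda,1)\cdot\psi = \psi\cdot\varphi(w',\lambda,1) = xyz\cdot I_{3\tau}$ for the adjoint-type companion $\psi$ (Corollary \ref{cor:Psi}), which shows $\operatorname{im}\varphi\subseteq\ker\pi$ and that $\varphi$ is injective over $S$ (since $xyz$ is a nonzerodivisor and $\varphi$ has a right inverse after inverting it). For the reverse inclusion $\ker\pi\subseteq\operatorname{im}\varphi$, the cleanest route is a rank/depth count: $M_S(w,\lambda,1)$ has projective dimension $\le 1$ over $S$ by the Auslander--Buchsbaum formula (it has depth $2=\dim S-1$, being a maximal CM module over $A=S/(xyz)$ viewed over $S$), so the kernel of any surjection from a free module is free of rank $3\tau$; since it contains the rank-$3\tau$ free submodule $\operatorname{im}\varphi$ and the quotient $\ker\pi/\operatorname{im}\varphi$ is torsion but sits inside a free module, it must vanish. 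This gives the exact sequence \eqref{eqn:ExactSequence1}.

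Finally, reducing \eqref{eqn:ExactSequence1} modulo $xyz$ and using that $\operatorname{coker}\underline\varphi(w',\lambda,1)$ is the cokernel of the induced map $A^{3\tau}\to A^{3\tau}$, together with $\varphi\cdot\psi = xyz\cdot I$ which guarantees the resulting $2$-periodic $A$-complex is acyclic (Eisenbud's theorem, Theorem \ref{thm:Eisenbud} and the preceding discussion), yields $M(w,\lambda,1)\cong\operatorname{coker}\underline\varphi(w',\lambda,1)$ as $A$-modules. The main obstacle is the first step: carrying out the Macaulayfication uniformly across all sign patterns of $w$ and proving that the added generators exhaust the Macaulayfication (i.e. verifying the ``no more Macaulayfying elements'' condition in general, not just case-by-case), which is where the generator-diagram formalism has to do real work and where the role of the normality of $w'$ — equivalently the structure of $\delta(w)$ via Lemma \ref{lem:SignWord} — becomes essential. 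The degenerate case $w=(0,\dots,0)$, $\lambda=1$ is excluded precisely because there the Macaulayfication collapses the rank and the formula for $\varphi$ would need the separate treatment indicated after Case 1 in Section \ref{sec:mod1}.
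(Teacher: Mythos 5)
Your plan resolves $M$ directly, whereas the paper resolves a smaller module and works \emph{forward}: it first resolves $\left(M_0\right)_S=\langle G_1,\dots,G_{3\tau}\rangle_S$ explicitly (Step~1), then runs a ``Matrix Expansion'' lemma (Lemma~\ref{lem:MatrixExpansionLemma}) twice — once to add the $H_{\jmath_\nu}$'s, once to add the Macaulayfying elements $F_{\imath_\nu}$ — and finally cuts the enlarged presentation back down to a $3\tau\times 3\tau$ matrix using row/column operations and the ``Matrix Reduction'' Lemma~\ref{lem:MatrixReductionLemma}. The punchline is that one recognizes the surviving matrix as $\varphi(w',\lambda,1)$, checks $\det\varphi=(xyz)^\tau u$ with $u$ a unit, and only \emph{then} concludes that the module $M_2$ one has been resolving is already MCM (so $M_2=M$) via Eisenbud. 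That ordering is the key design decision of the paper's proof, and your plan misses its main advantage.

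Two genuine gaps in your proposal.

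First, you put ``carry out the Macaulayfication uniformly and prove that the added generators exhaust the Macaulayfication'' at the start and call it the main obstacle. That obstacle is precisely what the paper's proof is engineered to \emph{avoid}: the paper never verifies directly that there are no further Macaulayfying elements. It adds the visible ones ($F_{\imath_\nu}$), resolves, reduces to $\varphi$, and then uses the fact that $\operatorname{coker}\underline{\varphi}$ is MCM (Eisenbud) to conclude after the fact that $M_2=M_2^\dagger=M$. If you insist on first pinning down $M$ exhaustively and then resolving it, you are taking on a nontrivial extra proof obligation that your plan does not discharge. The remark that the role of normality of $w'$ becomes essential there is not enough; the generator diagram gives you candidate Macaulayfying elements, but Proposition~\ref{prop:Macaulayfying}(1) requires a genuine argument that the process has terminated, which you neither sketch nor reduce to a manageable statement.

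Second, your closing step for $\ker\pi\subseteq\operatorname{im}\varphi$ does not work as stated. You argue $\ker\pi$ is free of rank $3\tau$ (Auslander--Buchsbaum, fine), contains the free rank-$3\tau$ module $\operatorname{im}\varphi$, and ``the quotient is torsion but sits inside a free module, so it must vanish.'' The quotient $\ker\pi/\operatorname{im}\varphi$ does not sit inside a free $S$-module, and a full-rank free submodule of a free module can certainly have nonzero torsion quotient ($(x)\subseteq S$). What does work: the quotient $\ker\pi/\operatorname{im}\varphi$ embeds in $S^{3\tau}/\operatorname{im}\varphi=\operatorname{coker}\underline{\varphi}$, which is an MCM $A$-module (once $\varphi\psi=xyz\cdot I$ is established, so Eisenbud applies) and hence torsion-free over $A$ (depth $\ge 1$). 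Since both $\operatorname{coker}\underline{\varphi}$ and $M$ have $A$-rank $\tau$, the kernel of the induced surjection $\operatorname{coker}\underline{\varphi}\twoheadrightarrow M_S$ is $A$-torsion, and an $A$-torsion submodule of an $A$-torsion-free module vanishes. Even then you would need the rank computation for $\operatorname{coker}\underline{\varphi}$, which in the paper comes out of Proposition~\ref{prop:DetAndAdjOfPhiS}. As written, the proposal's final deduction is a gap, not a shortcut.
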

This together with the conversion formula proves Theorem \ref{thm:1}.

\subsection{Notations and Properties}

For  rings $S:=\mathbb{C}[[x,y,z]]$ and $A = \mathbb{C}[[x,y,z]]/(xyz)$,
let
$$
A^\tau = \mathbb{C}[[x_1,y_1,z_1]]/(x_1y_1z_1) \times \cdots \times \mathbb{C}[[x_\tau,y_\tau,z_\tau]]/(x_\tau y_\tau z_\tau)
$$
be the free $A$-module of rank $\tau$. We introduce an alternative notation for elements of $S$, $A$ and
$A^\tau$. This is to represent the variables such as $x$, $y$ and $z$ at once.

\begin{notation}

\begin{enumerate}
\item
We denote by
$$
\chi_{3i-2}:=x,\quad \chi_{3i-1}:=y\quad\text{and}\quad \chi_{3i}:=z
$$
for $i \in\mathbb{Z}_\tau$ the elements in the ring $S$ or $A$, depending on the context. Then
we \uline{consider $\chi_j^a$ to be zero if $a<0$}, for any $j\in\mathbb{Z}_{3\tau}$.

\item
We abbreviate some monomials in $A^\tau$ as
$$
^l X_{3i-2}^m:=x_i^l y_i^m
,\quad
^m X_{3i-1}^n:= y_i^m z_i^n
\quad\text{and}\quad
^n X_{3i}^l:= z_i^n x_i^l
$$
for $i\in\mathbb{Z}_\tau$ and $l$, $m$, $n\in \mathbb{Z}_{\ge1}$.

\item
To deal with the eigenvalue $\lambda$, we introduce the symbols
$$
\Lambda_{j}^+ := \begin{cases}\lambda & \quad\text{if }j=1\text{ and }\delta_{1} = 1, \\ 1 & \quad\text{otherwise}\end{cases}
\quad\text{and}\quad
\Lambda_{j}^- := \begin{cases}\lambda^{-1} & \quad\text{if }j=1\text{ and }\delta_{1}=0, \\ 1 & \quad\text{otherwise}\end{cases}
$$
for $j \in \mathbb{Z}_{3\tau}$, depending on the band datum $\left(w,\lambda,1\right)$.

\end{enumerate}

\end{notation}

For later use, we summarize some useful properties of new notations in the following proposition.

\begin{prop}\label{prop:ChiRelations}

\begin{enumerate}
\item
As elements of $A^\tau$,
$$
^a X_j^b = \chi_j^a \chi_{j+1}^b \mathbf{e}_i
$$
for $i\in\mathbb{Z}_\tau$, $j\in\mathbb{Z}_{3\tau}$ with $3i-2\le j \le 3i$ and $a$, $b\in\mathbb{Z}_{\ge1}$, where $\mathbf{e}_i$ is the column vector in $A^\tau$ whose $i$-th entry is $1$ and the rest are $0$.
Here the multiplication of $\chi_j^a\chi_{j+1}^b\in (S\text{ or }A)$ to $\mathbf{e}_i\in \left(\left(A^\tau\right)_S\text{
or }A^\tau\right)$ can be either an $S$-module or an $A$-module operation.

\item
We have the arithmetic rules
\begin{equation}\label{eqn:ScalarMultiplicationArithmetic}
\chi_{j+3i}\left(^a X_{j}^b\right) = {^{a+1} X_{j}^b},\quad
\chi_{j+3i+1} \left(^a X_{j}^b\right) = {^a X_{j}^{b+1}},\quad\text{and}\quad
\chi_{j+3i+2} \left(^a X_{j}^b\right) = 0
\end{equation}
for any $i\in\mathbb{Z}_\tau$, $j\in\mathbb{Z}_{3\tau}$ and $a$, $b\in\mathbb{Z}_{\ge1}$, regardless whether these are $S$-module or $A$-module operations.

\end{enumerate}
\end{prop}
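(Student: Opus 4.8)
The plan is to deduce both identities directly from the definitions of $\chi_k$ and of the symbols ${}^aX_j^b$; the entire content is cyclic index bookkeeping, so I would isolate two elementary facts at the outset. First, as an element of $S$ (equivalently of $A$) the symbol $\chi_k$ depends on $k$ only modulo $3$, so $\chi_{j+3i}=\chi_j$, $\chi_{j+3i+1}=\chi_{j+1}$ and $\chi_{j+3i+2}=\chi_{j+2}$ for every $i\in\Z_\tau$. Second, multiplying the basis vector $\mathbf{e}_i$ of $A^\tau$ by a monomial $x^ay^bz^c\in S$ (or $A$) yields $x_i^ay_i^bz_i^c$ in the $i$-th slot; and since the $S$-module structure on $A^\tau$ is by definition the $A$-module structure precomposed with $S\twoheadrightarrow A$, and the monomials that arise below are never divisible by $xyz$, it makes no difference whether the scalar multiplications are read over $S$ or over $A$.

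For part (1), I would fix $i\in\Z_\tau$ and $j$ with $3i-2\le j\le 3i$, so that $i$ is the block index of $j$. Then $(\chi_j,\chi_{j+1})$ is $(x,y)$, $(y,z)$ or $(z,x)$ according as $j=3i-2$, $3i-1$ or $3i$, hence by the second fact $\chi_j^a\chi_{j+1}^b\mathbf{e}_i$ is $x_i^ay_i^b\mathbf{e}_i$, $y_i^az_i^b\mathbf{e}_i$ or $z_i^ax_i^b\mathbf{e}_i$; matching against the definitions of ${}^aX_{3i-2}^b$, ${}^aX_{3i-1}^b$ and ${}^aX_{3i}^b$ finishes it. The only subtle case is $j=3i$, where the index $j+1=3i+1=3(i+1)-2$ formally belongs to the next block; but $\chi_{3i+1}=x$ still acts on $\mathbf{e}_i$ as multiplication by $x_i$, so one correctly recovers ${}^nX_{3i}^l=z_i^nx_i^l$ in the $i$-th slot rather than anything in the $(i+1)$-st.

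For part (2), I would invoke part (1) to write ${}^aX_j^b=\chi_j^a\chi_{j+1}^b\mathbf{e}_i$ and then apply the first fact. Since $\chi_{j+3i}=\chi_j$ and $\chi_{j+3i+1}=\chi_{j+1}$, the first two rules in \eqref{eqn:ScalarMultiplicationArithmetic} read $\chi_j\cdot\chi_j^a\chi_{j+1}^b\mathbf{e}_i=\chi_j^{a+1}\chi_{j+1}^b\mathbf{e}_i={}^{a+1}X_j^b$ and $\chi_{j+1}\cdot\chi_j^a\chi_{j+1}^b\mathbf{e}_i={}^aX_j^{b+1}$. For the third rule, $\chi_{j+3i+2}=\chi_{j+2}$ and $\{\chi_j,\chi_{j+1},\chi_{j+2}\}=\{x,y,z\}$ as elements of $S$, so $\chi_j\chi_{j+1}\chi_{j+2}$ reduces to $xyz=0$ in $A$; as $a,b\ge1$ I can factor $\chi_{j+2}\cdot\chi_j^a\chi_{j+1}^b\mathbf{e}_i=\chi_j^{a-1}\chi_{j+1}^{b-1}(\chi_j\chi_{j+1}\chi_{j+2})\mathbf{e}_i=0$, and as noted the $S$-versus-$A$ ambiguity is immaterial since the $S$-action factors through $A$ where $xyz=0$.

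I do not expect a genuine obstacle: this is a bookkeeping lemma that feeds the free-resolution construction in Theorem \ref{thm:MFFromModule}. The one thing that needs attention is not conflating the two periodicities — the subscript of $\chi_\bullet$ has period $3$, the block index $i$ has period $\tau$ — and, inside part (1), verifying that the overflow index $j+1=3i+1$ keeps $\mathbf{e}_i$ in the $i$-th component. Stating the two facts above up front makes the rest a direct substitution.
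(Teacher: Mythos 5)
Your proof is correct, and since the paper states Proposition~\ref{prop:ChiRelations} without an explicit proof (it is treated as an immediate consequence of the definitions), your write-out is exactly the routine verification the authors left to the reader. The two facts you isolate up front — that $\chi_k$ has period $3$ in $k$, and that multiplying $\mathbf{e}_i$ by a monomial places that monomial in the $i$-th slot — are precisely the right organizing observations, and you correctly handle the only delicate point, namely that in the case $j=3i$ the index $j+1=3i+1$ formally enters block $i+1$ but $\chi_{3i+1}=x$ still acts on $\mathbf{e}_i$ as $x_i$. One small caveat on phrasing: your parenthetical claim that the $S$-versus-$A$ ambiguity is immaterial ``since the monomials that arise are never divisible by $xyz$'' is unnecessary and slightly misleading — the $S$-action on $A^\tau$ factors through $A$ by definition, so the two readings agree always, not just when $xyz$ does not divide; and indeed your own treatment of the third rule in part (2) crucially uses a monomial that \emph{is} divisible by $xyz$ to get $0$. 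You handle that case correctly, so the proof stands, but the justification in the setup paragraph should simply say the $S$-action factors through $A$ and leave it there.
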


Using new notations, we can rewrite the relevant modules and matrices.

\begin{prop}
\begin{enumerate}
\item
$\tilde{M}\left(w,\lambda,1\right)$ is the $A$-submodule of $A^{\tau}$ generated by $6\tau$ elements
$$
H_{j}:= {^2 X_{j}^2} \quad \text{and} \quad G_{j}:= \Lambda_{j}^+ \left(^1 X_{j-1}^{w_{j}^+ +2}\right) + \Lambda_{j}^- \left(^{w_{j}^- +2} X_{j}^1\right) \qquad \left(j\in\mathbb{Z}_{3\tau}\right).
$$
\item
The canonical matrix $\varphi(w',\lambda,1)$ can be written as

$$
\varphi(w',\lambda,1) =
\begin{psmallmatrix}
\chi_{3\tau} & -\Lambda_{2}^+ \chi_{2}^{w_{2}'-1} & 0 & 0 & \displaystyle\cdots & 0 & -\Lambda_{1}^- \chi_{1}^{-w_{1}'} \\[2mm]
-\Lambda_{2}^- \chi_{2}^{-w_{2}'} & \chi_{1} & -\Lambda_{3}^+ \chi_{3}^{w_{3}'-1} & 0 & \displaystyle\cdots & 0 & 0 \\[2mm]
0 & -\Lambda_{3}^- \chi_{3}^{-w_{3}'} & \chi_{2} & -\Lambda_{4}^+ \chi_{4}^{w_{4}'-1} & \displaystyle\cdots & 0 & 0 \\[0mm]
0 & 0 & -\Lambda_{4}^- \chi_{4}^{-w_{4}'} & \chi_{3} & \ddots & \vdots & \vdots \\[1mm]
\vdots & \vdots & \vdots & \ddots & \ddots & -\Lambda_{3\tau-1}^+ \chi_{3\tau-1}^{w_{3\tau-1}'-1} & 0 \\[2mm]
0 & 0 & 0 & \displaystyle\cdots & -\Lambda_{3\tau-1}^- \chi_{3\tau-1}^{-w_{3\tau-1}'} & \chi_{3\tau-2} & -\Lambda_{3\tau}^+ \chi_{3\tau}^{w_{3\tau}'-1} \\[2mm]
-\Lambda_{1}^+ \chi_{1}^{w_{1}'-1} & 0 & 0 & \displaystyle\cdots & 0 & -\Lambda_{3\tau}^- \chi_{3\tau}^{-w_{3\tau}'} & \chi_{3\tau-1}
\end{psmallmatrix}_{3\tau\times3\tau}
$$

\noindent where entries are elements of $S$. The matrix $\mathunderbar{\varphi}(w',\lambda,1)$ can be written in the same form but entries are considered to be in $A$.

\end{enumerate}
\end{prop}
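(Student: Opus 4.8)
The statement is a reformulation, in the abbreviated notation of Proposition \ref{prop:ChiRelations}, of data already fixed in Definition \ref{defn:module} and Theorem \ref{thm:1}, so the plan is simply to unwind both sides and match them term by term; below I flag the two spots where a small genuine argument is needed.

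\emph{Part (1).} Specialise Definition \ref{defn:module} to $\mu=1$. By Proposition \ref{prop:ChiRelations}(1) the $i$-th columns of $x^2y^2I_\tau$, $y^2z^2I_\tau$, $z^2x^2I_\tau$ are $x_i^2y_i^2\mathbf{e}_i={}^2X_{3i-2}^2=H_{3i-2}$ and similarly $H_{3i-1}$, $H_{3i}$; running over $i\in\mathbb{Z}_\tau$ these are exactly $\{H_j\}_{j\in\mathbb{Z}_{3\tau}}$. The diagonal matrices $\pi_y$, $\pi_z$ have $i$-th columns $(x_iy_i^{m_i^++2}+y_i^{m_i^-+2}z_i)\mathbf{e}_i$ and $(y_iz_i^{n_i^++2}+z_i^{n_i^-+2}x_i)\mathbf{e}_i$, which by Proposition \ref{prop:ChiRelations}(1) are $G_{3i-1}$ and $G_{3i}$ (here $3i-1,3i\not\equiv1\pmod{3\tau}$, so the relevant $\Lambda^\pm$ are $1$). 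For $\pi_x$, the $j$-th column with $2\le j\le\tau$ has entries $z_{j-1}x_{j-1}^{l_j^++2}$ in row $j-1$ and $x_j^{l_j^-+2}y_j$ in row $j$, i.e.\ it is $G_{3j-2}$ (again $3j-2\not\equiv1$); and column $1$ has entries $\lambda z_\tau x_\tau^{l_1^++2}$ in row $\tau$ and $x_1^{l_1^-+2}y_1$ in row $1$, i.e.\ it equals $\lambda\,{}^1X_0^{w_1^++2}+{}^{w_1^-+2}X_1^1$. This is $G_1$ on the nose when $\delta_1=1$ (so $\Lambda_1^+=\lambda$, $\Lambda_1^-=1$); when $\delta_1=0$ one has $G_1=\lambda^{-1}\bigl(\lambda\,{}^1X_0^{w_1^++2}+{}^{w_1^-+2}X_1^1\bigr)$, a unit multiple of that column, so the generated cyclic $A$-submodule is unchanged. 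Collecting, $\langle H_j,G_j\mid j\in\mathbb{Z}_{3\tau}\rangle_A=\tilde{M}(w,\lambda,1)$.

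\emph{Part (2).} Substitute $\chi_{3i-2}=x$, $\chi_{3i-1}=y$, $\chi_{3i}=z$, $w'_{3i-2}=l_i'$, $w'_{3i-1}=m_i'$, $w'_{3i}=n_i'$ and note $\Lambda_j^\pm=1$ for $j\not\equiv1\pmod{3\tau}$. Then the main diagonal $\chi_{3\tau},\chi_1,\chi_2,\dots$ reads $z,x,y,z,\dots,x,y$, and the off-corner super- and sub-diagonal entries $-\Lambda_j^+\chi_j^{w_j'-1}$, $-\Lambda_j^-\chi_j^{-w_j'}$ become the $-y^{m_i'-1},-z^{n_i'-1},-x^{l_{i+1}'-1},\dots$ and $-y^{-m_i'},-z^{-n_i'},-x^{-l_{i+1}'},\dots$ of $\varphi(w',\lambda,1)$ in Theorem \ref{thm:1}. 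It remains to compare the two corners $-\Lambda_1^-x^{-l_1'}$, $-\Lambda_1^+x^{l_1'-1}$ with $-\lambda^{-1}x^{-l_1'}$, $-\lambda x^{l_1'-1}$. Here the small input is: $\delta_1=1$ forces $w_1'\ge1$, and $\delta_1=0$ forces $w_1'\le0$. Both follow from $w_1'=w_1-1+\delta_{3\tau}+\delta_1+\delta_2$ together with Lemma \ref{lem:SignWord}: when $w_1=0$ and $\delta_1=1$ the string of $0$s through $w_1$ is finite (nondegeneracy) with positive nonzero neighbours, forcing $\delta_{3\tau}=\delta_2=1$; when $w_1=0$ and $\delta_1=0$ some adjacent nonzero entry is negative, forcing one of $\delta_{3\tau},\delta_2$ to be $0$; the cases $w_1\ne0$ are immediate. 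Consequently, when $\delta_1=1$ the entry $x^{-l_1'}$ has negative exponent and is $0$, so the value of $\Lambda_1^-$ there is immaterial while $-\Lambda_1^+x^{l_1'-1}=-\lambda x^{l_1'-1}$; symmetrically when $\delta_1=0$ the entry $x^{l_1'-1}$ is $0$ and $-\Lambda_1^-x^{-l_1'}=-\lambda^{-1}x^{-l_1'}$. In all cases the two matrices coincide.

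\emph{Main obstacle.} There is no substantive obstacle; the proposition is bookkeeping. The only non-mechanical points are the two just flagged: that for $\delta_1=0$ the generator $G_1$ differs from a literal column of $\pi_x$ only by the unit $\lambda^{-1}$, and the short sign computation (via Lemma \ref{lem:SignWord}) showing that whichever of $\Lambda_1^\pm$ equals $1$ rather than $\lambda^{\pm1}$ multiplies a corner entry that vanishes for exponent reasons. I expect the latter to be the fussiest step, but it is entirely elementary.
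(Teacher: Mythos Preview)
Your proposal is correct and takes the same approach as the paper, which merely lists the columns of the six matrices in the $X$-notation for part (1) and says ``The second statement can be checked easily'' for part (2). You are considerably more explicit than the paper, especially in verifying that the corner entries $-\Lambda_1^\pm \chi_1^{\pm(w_1'-1)\text{ or }\mp w_1'}$ agree with $-\lambda^{\pm1}x^{\cdots}$; this is exactly the right point to flag. One small remark: your appeal to ``nondegeneracy'' to ensure the string of zeros through $w_1$ is finite is unnecessary---if \emph{all} $w_j=0$ then by definition all $\delta_j=1$ and $w_1'=2\ge1$ anyway, and the contrapositive of Lemma~\ref{lem:SignWord}(1) (applied with $k=l=1$) already gives what you need in the remaining cases without that hypothesis.
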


\begin{proof}
The second statement can be checked easily. For the first one, 
recall that $\tilde{M}\left(w,\lambda,1\right)$ is the $A$-submodule of $A^\tau$ generated by all columns of the $6$ matrices
$$
x^2 y^2 I_\tau,\quad y^2 z^2 I_\tau,\quad z^2 x^2 I_\tau,\quad
\pi_x\left(w,\lambda,1\right)
,\quad
\pi_y\left(w,\lambda,1\right)
\quad\text{and}\quad
\pi_z\left(w,\lambda,1\right)
$$
in $A^{\tau\times\tau}$, where the last three are given by

\adjustbox{scale=1,center}{$
\begin{psmallmatrix}
x^{l_1^- +2}y & zx^{l_2^+ +2} & \displaystyle\cdots & 0 \\[0mm]
0 & x^{l_2^- +2}y & \ddots & \vdots \\[1mm]
\vdots & \vdots & \ddots & zx^{l_\tau^+ +2} \\[2mm]
\lambda zx^{l_1^+ +2} & 0 & \displaystyle\cdots & x^{l_\tau^- +2}y
\end{psmallmatrix}_{\tau\times\tau}
,\quad
\begin{psmallmatrix}
xy^{m_1^+ +2} + y^{m_1^- +2}z & \displaystyle\cdots & 0 \\[1mm]
\vdots & \ddots & \vdots \\[2mm]
0 & \displaystyle\cdots & xy^{m_\tau^+ +2} + y^{m_\tau^- +2}z
\end{psmallmatrix}_{\tau\times\tau}
,\quad\begin{psmallmatrix}
yz^{n_1^+ +2}+z^{n_1^- +2}x & \displaystyle\cdots & 0 \\[1mm]
\vdots & \ddots & \vdots \\[2mm]
0 & \displaystyle\cdots & yz^{n_\tau^+ +2} + z^{n_\tau^- +2}x
\end{psmallmatrix}_{\tau\times\tau}
$}
in order. Now the $i$-th column of each matrix as an element of $A^\tau$ is
$$
{^2 X_{3i-2}^2}
,\quad
{^2 X_{3i-1}^2}
,\quad
{^2 X_{3i}^2},
$$
$$
\Lambda_{3i-2}^+ \left(\Lambda_{3i-2}^-\right)^{-1} \left(^1 X_{3i-3}^{w_{3i-2}^+ +2}\right) + {^{w_{3i-2}^- +2} X_{3i-2}^1}
,\quad
{^1 X_{3i-2}^{w_{3i-1}^+ +2}} + {^{w_{3i-1}^- +2} X_{3i-1}^1}
\quad\text{and}\quad
{^1 X_{3i-1}^{w_{3i}^+ +2}} + {^{w_{3i}^- +2} X_{3i}^1},
$$
respectively, in order.
\end{proof}

Recall in linear algebra that the adjoint matrix $\operatorname{adj}B$ of a square matrix $B$ is the transpose of the cofactor matrix of $B$ whose $(a,b)$-entry is $(-1)^{a+b}$ times the $(a,b)$-minor of $B$. A useful property of the adjoint matrix is that it satisfies
\begin{equation}\label{eqn:AdjointEquation}
B\cdot\operatorname{adj}B = \operatorname{adj}B \cdot B = \left(\det B\right)I
\end{equation}
where $I$ is the identity matrix of the same size as $B$. This is valid whenever the matrix $B$ has entries in a commutative ring.
\begin{prop}\label{prop:DetAndAdjOfPhiS}
Let $\left(w,\lambda,1\right)$ be a band datum of length $3\tau$ and multiplicity $1$.

\begin{enumerate}
\item The determinant of $\varphi:=\varphi\left(w',\lambda,1\right)$ is
$$
\det\varphi\left(w',\lambda,1\right) = x^\tau y^\tau z^\tau u
$$
where
$$
u := u\left(w',\lambda,1\right)
:=
1
- \displaystyle\prod_{j=1}^{3\tau}\Lambda_{j}^+\chi_{j}^{w_{j}'-2}
- \displaystyle\prod_{j=1}^{3\tau} \Lambda_{j}^- \chi_{j}^{-w_{j}'-1}
$$
is a unit in $S$ if and only if $\left(w,\lambda,1\right)$ is nondegenerate. In particular, $u = 1$ unless $w_j\ge0$ for all $j$ or $w_j\le0$ for all $j$.

\item The $(a,b)$-entry of the adjoint matrix of $\varphi\left(w',\lambda,1\right)$ is

$$
\left(\operatorname{adj}\varphi\left(w',\lambda,1\right)\right)_{ab}=
\begin{cases}
\displaystyle\prod_{j=a}^{a-2} \chi_j & \text{if}\quad a=b, \\[7mm]
\left(\displaystyle\prod_{j=b}^{a-2} \chi_j\right)
\left(\displaystyle\prod_{j=a+1}^{b} \Lambda_{j}^+ \chi_j^{w_{j}'-1}\right)
+
\left(\displaystyle\prod_{j=a}^{b-2} \chi_j\right)
\left(\displaystyle\prod_{j=b+1}^{a} \Lambda_{j}^- \chi_j^{-w_{j}'}\right) & \text{if} \quad a \ne b
\end{cases}
$$
where we regard the product $\displaystyle\prod_{j=a}^b h_j$ as $1$ if $b=a-1$ and $h_a \cdots h_{3\tau}h_1 \cdots h_b$ if $b\le a-2$.
\end{enumerate}
\end{prop}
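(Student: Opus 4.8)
The plan is to read off both quantities directly from the Leibniz (respectively Laplace) expansion, using the fact that $\varphi:=\varphi(w',\lambda,1)$ is a \emph{cyclic tridiagonal} matrix together with the elementary vanishing $\chi_j^{w_j'-1}\cdot\chi_j^{-w_j'}=0$ (one of the two factors always carries a negative exponent, hence is $0$ by our convention). First I would record the combinatorial lemma: if $B$ is an $N\times N$ matrix supported on the diagonal, the two cyclic off-diagonals, and the two corners $B_{1N},B_{N1}$, then a permutation $\sigma$ contributes a nonzero term to $\det B$ only if $\sigma(j)\in\{j-1,j,j+1\}$ for every $j$ (indices in $\mathbb Z_N$); such $\sigma$ is either a product of disjoint adjacent transpositions plus fixed points, or one of the two full rotations $j\mapsto j\pm1$. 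The same analysis, applied after deleting one row and one column, gives the minors needed for $\operatorname{adj}\varphi$, because on the remaining index set the matrix is a genuine \emph{linear} (non-cyclic) tridiagonal matrix (non-adjacent corners of it are already $0$ in $\varphi$).

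For the determinant: the product of the two off-diagonal entries in adjacent slots is $(\varphi)_{j,j+1}(\varphi)_{j+1,j}=\Lambda_{j+1}^+\Lambda_{j+1}^-\,\chi_{j+1}^{w_{j+1}'-1}\chi_{j+1}^{-w_{j+1}'}=0$, so every permutation containing an adjacent transposition drops out, leaving the diagonal product $\prod_{j}\chi_{j-1}=\prod_{j=1}^{3\tau}\chi_j=x^\tau y^\tau z^\tau$ and the two rotation terms. Evaluating the rotations (their signs each combine to an overall $-1$) yields
$$\det\varphi = x^\tau y^\tau z^\tau-\prod_{j=1}^{3\tau}\Lambda_j^+\chi_j^{w_j'-1}-\prod_{j=1}^{3\tau}\Lambda_j^-\chi_j^{-w_j'}.$$
To match $x^\tau y^\tau z^\tau u$ I would show $\prod_j\Lambda_j^+\chi_j^{w_j'-1}=x^\tau y^\tau z^\tau\prod_j\Lambda_j^+\chi_j^{w_j'-2}$: if all $w_j'\ge2$ this is just $\prod_j\chi_j^{w_j'-1}=(\prod_j\chi_j)\prod_j\chi_j^{w_j'-2}$, and if not all $w_j'\ge2$ then both sides vanish — here normality of $w'$ (Proposition~\ref{prop:ConvertedToNormalForm}) is used, since a normal word cannot have all $w_j'\ge1$ with some $w_j'=1$, so "not all $\ge2$" forces some $w_j'\le0$ and $\prod_j\chi_j^{w_j'-1}=0$. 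The analogous identity for the $\Lambda^-$ product uses the second normality condition (no subword $(a,0,b)$ with $a,b\le0$). Finally, $-1\le\varepsilon_j\le2$ turns "all $w_j'\ge2$" and "all $w_j'\le-1$" into "all $w_j\ge0$" and "all $w_j\le0$", giving the stated description of when $u=1$; in the remaining cases $u\in\{1-\prod\Lambda_j^+\chi_j^{w_j'-2},\,1-\prod\Lambda_j^-\chi_j^{-w_j'-1}\}$ has constant term $1$, $1-\lambda$, or $1-\lambda^{-1}$, so $u$ is a unit in the power series ring $S$ exactly when $(w,\lambda,1)$ is nondegenerate.

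For the adjoint I would use $(\operatorname{adj}\varphi)_{ab}=\sum_{\sigma:\sigma(b)=a}\operatorname{sgn}(\sigma)\prod_{k\ne b}(\varphi)_{k,\sigma(k)}$ and apply the structural lemma. When $a=b$ the surviving permutation is forced to be the identity on $\mathbb Z_{3\tau}\setminus\{a\}$ (transposition terms vanish as above), so $(\operatorname{adj}\varphi)_{aa}=\prod_{k=a+1}^{a-1}\chi_{k-1}=\prod_{j=a}^{a-2}\chi_j$. When $a\ne b$, the cycle of $\sigma$ through $b$ forces an injective $\pm1$-walk from $a$ to $b$ in $\mathbb Z_{3\tau}$, which must be one of the two monotone arcs, with $\sigma$ the identity off that arc; the "increasing" arc (cycle length $\equiv b-a+1$, so its sign cancels the $(-1)$ from the off-diagonal factors) contributes $\big(\prod_{j=b}^{a-2}\chi_j\big)\big(\prod_{j=a+1}^{b}\Lambda_j^+\chi_j^{w_j'-1}\big)$, and the "decreasing" arc contributes $\big(\prod_{j=a}^{b-2}\chi_j\big)\big(\prod_{j=b+1}^{a}\Lambda_j^-\chi_j^{-w_j'}\big)$, which is exactly the claimed sum; the empty-product and wrap-around conventions match the degenerate cases $a=b\pm1$. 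As a consistency check one could instead verify the given formula by the direct multiplication $\varphi\cdot\operatorname{adj}\varphi=(\det\varphi)I$, but the Laplace route is cleaner.

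The genuinely delicate points are bookkeeping rather than ideas: keeping signs straight across the cyclic product convention (wrap-around, empty products equal to $1$) and the cycle-length parities, and — more substantively — the cases where the naive exponent arithmetic used to factor out $x^\tau y^\tau z^\tau$ breaks down, namely when some $w_j'\in\{0,1\}$. It is exactly there that normality of the converted word $w'$ has to be invoked, so the argument genuinely depends on Proposition~\ref{prop:ConvertedToNormalForm} and is not a purely formal computation with the matrix $\varphi$.
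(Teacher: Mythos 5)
Your proof is correct and takes essentially the same route the paper gestures at — expand $\det\varphi$ (and the minors) by permutations, observe that $\varphi$ is cyclic tridiagonal so only identity/rotation/arc permutations can contribute, and kill every term containing an adjacent transposition via $\chi_j^{w_j'-1}\chi_j^{-w_j'}=0$. The paper's own proof is a one-liner ("straightforward calculation using $\chi_j^{w_j'-1}\chi_j^{-w_j'}=0$"), so in effect you are supplying the details it omits. Your sign bookkeeping for the rotations (cycle sign $(-1)^{\mathrm{length}-1}$ cancelling the $(-1)$'s from the off-diagonal factors) and the Laplace treatment of the minors, including the wrap-around and empty-product conventions for the degenerate cases $b=a\pm 1$, are all sound.

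The substantive thing you add that the paper does not make explicit is the need for normality of $w'$ in the factorization step $\det\varphi=x^\tau y^\tau z^\tau u$. The raw Leibniz expansion yields $\det\varphi=x^\tau y^\tau z^\tau-\prod_j\Lambda_j^+\chi_j^{w_j'-1}-\prod_j\Lambda_j^-\chi_j^{-w_j'}$, and the identity $\prod_j\chi_j^{w_j'-1}=(\prod_j\chi_j)\prod_j\chi_j^{w_j'-2}$ genuinely fails for non-normal $w'$ — your example scheme of a word with all $w_j'\ge1$ and some $w_j'=1$ makes the left side a nonzero monomial not divisible by $xyz$ while the right side vanishes. Ruling this out via normality conditions 1 and 2 (through Proposition~\ref{prop:ConvertedToNormalForm}) is correct and is a real gap-filler, since the proposition is false for arbitrary loop words $w'$. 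One minor imprecision you should tighten if writing this up: the constant term $1-\lambda^{-1}$ in your enumeration never actually occurs, since $w'=(-1,\dots,-1)$ is excluded by normality condition 4; and the equivalence "all $w_j'\le -1$ iff all $w_j\le 0$" should be "all $w_j\le 0$ and not all zero" (the all-zero band word has $\delta\equiv 1$, hence $w'\equiv 2$). Neither affects the stated conclusion, which only uses the contrapositive direction.
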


\begin{proof}
It is a straightforward calculation using the fact $\chi_j^{w_{j}'-1}\chi_j^{-w_{j}'}=0$ for all $j$.
\end{proof}

In particular, Proposition \ref{prop:DetAndAdjOfPhiS}.(1) together with  the equation \ref{eqn:AdjointEquation} and the fact that $S$ is an integral domain demonstrates that the map $\varphi:S^{3\tau}\rightarrow S^{3\tau}$ is injective. This establishes the exactness of the sequence \ref{eqn:ExactSequence1} at the leftmost arrow.

\begin{cor}\label{cor:Psi}
For a nondegenerate band datum $\left(w,\lambda,1\right)$ of length $3\tau$ and multiplicity $1$, define a matrix $\tilde{\psi} := \tilde{\psi}\left(w',\lambda,1\right)\in S^{3\tau\times 3\tau}$ by
$$
\left(\tilde{\psi}\left(w',\lambda,1\right)\right)_{ab}
:=
\begin{cases}
\chi_{a}\chi_{a+1}
& \text{if} \quad a=b, \\[3mm]
\displaystyle\prod_{j=a+1}^{b} \Lambda_{j}^+ \chi_j^{w_{j}'-2+\mathbb{1}_{j\in\left\{a+1,b\right\}}}
+
\displaystyle\prod_{j=b+1}^{a} \Lambda_{j}^- \chi_j^{-w_{j}'-1+\mathbb{1}_{j\in\left\{b+1,a\right\}}}
& \text{if} \quad a \ne b
\end{cases}
$$
where we regard the product symbol as in Proposition \ref{prop:DetAndAdjOfPhiS}.(2) and $\mathbb{1}_{j\in\left\{a,b\right\}}$ is $1$ if $j\in\left\{a,b\right\}$ and $0$ otherwise. Then the matrix $\psi := \psi\left(w',\lambda,1\right) \in S^{3\tau \times 3\tau}$ defined by
$$
\psi\left(w',\lambda,1\right)
:=
\left(u\left(w',\lambda,1\right)\right)^{-1}\tilde{\psi}\left(w',\lambda,1\right)
$$
satisfies
$$
\varphi \psi = \psi \varphi = xyz I_{3\tau}.
$$
That is, both $\varphi$ and $\psi$ are matrix factors of $xyz$ over $S$.
\end{cor}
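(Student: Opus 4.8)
The plan is to deduce both matrix--factor identities at once from the classical adjoint relation \eqref{eqn:AdjointEquation}, after identifying $\tilde{\psi}=\tilde{\psi}(w',\lambda,1)$ with the adjoint of $\varphi=\varphi(w',\lambda,1)$ up to a single monomial factor. Concretely, the first and main step is to prove
\[
\operatorname{adj}\varphi(w',\lambda,1)\;=\;(xyz)^{\tau-1}\,\tilde{\psi}(w',\lambda,1)
\qquad\text{in }S^{3\tau\times3\tau}.
\]
Granting this, the corollary is immediate: by \eqref{eqn:AdjointEquation} and Proposition \ref{prop:DetAndAdjOfPhiS}.(1),
\[
(xyz)^{\tau-1}\bigl(\varphi\,\tilde{\psi}\bigr)=(xyz)^{\tau-1}\bigl(\tilde{\psi}\,\varphi\bigr)=\varphi\cdot\operatorname{adj}\varphi=(\det\varphi)\,I_{3\tau}=(xyz)^{\tau}\,u\,I_{3\tau},
\]
and since $S=\mathbb{C}[[x,y,z]]$ is a domain the nonzerodivisor $(xyz)^{\tau-1}$ cancels entrywise, giving $\varphi\,\tilde{\psi}=\tilde{\psi}\,\varphi=xyz\,u\,I_{3\tau}$. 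Because $(w,\lambda,1)$ is nondegenerate, $u=u(w',\lambda,1)$ is a unit of $S$ by Proposition \ref{prop:DetAndAdjOfPhiS}.(1), so $\psi=u^{-1}\tilde{\psi}$ lies in $S^{3\tau\times3\tau}$ and $\varphi\,\psi=\psi\,\varphi=xyz\,I_{3\tau}$, which is the assertion.

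To prove the displayed identity I would compare, entry by entry, the formula for $(\operatorname{adj}\varphi)_{ab}$ from Proposition \ref{prop:DetAndAdjOfPhiS}.(2) with the definition of $\tilde{\psi}_{ab}$, using two elementary facts about the cyclic family $\chi_1,\dots,\chi_{3\tau}$: $\prod_{j=1}^{3\tau}\chi_j=(xyz)^{\tau}$, and any three cyclically consecutive $\chi_{k-1}\chi_{k}\chi_{k+1}$ equal $xyz$. On the diagonal this is exactly $\prod_{j=a}^{a-2}\chi_j=(xyz)^{\tau}/\chi_{a-1}=(xyz)^{\tau-1}\chi_a\chi_{a+1}=(xyz)^{\tau-1}\tilde{\psi}_{aa}$. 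Off the diagonal, the two summands of $(\operatorname{adj}\varphi)_{ab}$ carry exactly the $\Lambda^{\pm}$--coefficients of the two summands of $\tilde{\psi}_{ab}$ over the same cyclic index ranges, so one only has to check that multiplying $\prod_{j=a+1}^{b}\chi_j^{\,w_j'-1}$ by the extra monomial $\prod_{j=b}^{a-2}\chi_j$ present in $\operatorname{adj}\varphi$ yields $(xyz)^{\tau-1}$ times $\prod_{j=a+1}^{b}\chi_j^{\,w_j'-2+\mathbb{1}_{j\in\{a+1,b\}}}$, and symmetrically for the second summand. After cancelling the common factor this reduces to the bookkeeping statement $\prod_{j=b}^{a-2}\chi_j\cdot\prod_{j=a+2}^{b-1}\chi_j=(xyz)^{\tau-1}$, i.e.\ that these two cyclic ranges are disjoint and together omit precisely the three indices $a-1,a,a+1$.

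The delicate points, which I expect to be the main nuisance rather than a genuine obstruction, are the boundary cases of this verification. First, when a cyclic range collapses to a single index (that is, $a+1=b$ or $b+1=a$), the two endpoint contributions to the correction $\mathbb{1}_{j\in\{a+1,b\}}$ must be taken together and the bookkeeping identity of the previous paragraph has to be redone, still using $\chi_{a-1}\chi_a\chi_{a+1}=xyz$ for the single remaining triple of omitted indices. Second, some exponents $w_j'-1$ or $-w_j'$ may be negative, where the convention ``$\chi^a=0$ for $a<0$'' makes a whole summand vanish; one then checks that the corresponding summand of $(xyz)^{\tau-1}\tilde{\psi}_{ab}$ vanishes too, which is where the normality of $w'$ (Proposition \ref{prop:ConvertedToNormalForm}) enters, precisely through its first condition linking a value $1$ or $0$ to its neighbours. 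An essentially equivalent route that sidesteps the adjoint is to expand $\varphi\,\tilde{\psi}$ directly: since $\varphi$ is cyclically tridiagonal, each entry of $\varphi\,\tilde{\psi}$ is a sum of three terms, and one verifies that the off--diagonal entries cancel and the diagonal ones sum to $xyz\,u$, using $\chi_j^{\,w_j'-1}\chi_j^{\,-w_j'}=0$ exactly as in the proof of Proposition \ref{prop:DetAndAdjOfPhiS}; this alternative carries the same combinatorial cost.
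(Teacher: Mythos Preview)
Your approach is exactly the paper's: it proves the corollary by asserting $\operatorname{adj}\varphi = x^{\tau-1}y^{\tau-1}z^{\tau-1}\tilde{\psi}$, then substitutes this together with $\det\varphi = x^\tau y^\tau z^\tau u$ into the adjoint relation and cancels $x^{\tau-1}y^{\tau-1}z^{\tau-1}$ using that $S$ is a domain. Your added detail on the entrywise check (the bookkeeping identity $\prod_{j=b}^{a-2}\chi_j\cdot\prod_{j=a+2}^{b-1}\chi_j=(xyz)^{\tau-1}$, the collapsed-range endpoint convention, and the role of normality in the ``$\chi^a=0$ for $a<0$'' cases) simply fleshes out what the paper compresses into ``one can check that''.
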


\begin{proof}
One can check that
$$
\operatorname{adj}\varphi = x^{\tau-1} y^{\tau-1} z^{\tau-1} \tilde{\psi}.
$$
Substituting this and $\det\varphi = x^\tau y^\tau z^\tau u$ into the equation \ref{eqn:AdjointEquation} gives
$$
x^{\tau-1} y^{\tau-1} z^{\tau-1} \varphi \tilde{\psi} = x^{\tau-1} y^{\tau-1} z^{\tau-1} \tilde{\psi} \varphi = x^\tau y^\tau z^\tau u I_{3\tau}.
$$
We may cancel out the common terms $x^{\tau-1} y^{\tau-1} z^{\tau-1}$ in both sides since $S$ is an integral domain, which yields the desired result.
\end{proof}

\subsection{Generators, Relations and Macaulayfying Elements}\label{subsubsec:GeneratorsAndMacaulayfyingElements}

Recall that $\tilde{M} := \tilde{M}\left(w,\lambda,\mu\right)$ is generated by elements $G_1, \dots, G_{3\tau}$ together with $H_1, \dots, H_{3\tau}$ in $A^\tau$. Here we investigate the relations among them by plotting them on the lattice diagram for $A^\tau$. Figure \ref{fig:HigherRankGeneratorDiagram} illustrates  a part of generator diagram for $\tilde{M}$ representing $G_j$, $H_j$ and $G_{j+1}$ in each case depending on the value of $\delta_j$ and $\delta_{j+1}$.

\begin{figure}[h]
     \centering
     \begin{subfigure}[t]{0.47\textwidth}
         \includegraphics[scale=0.5]{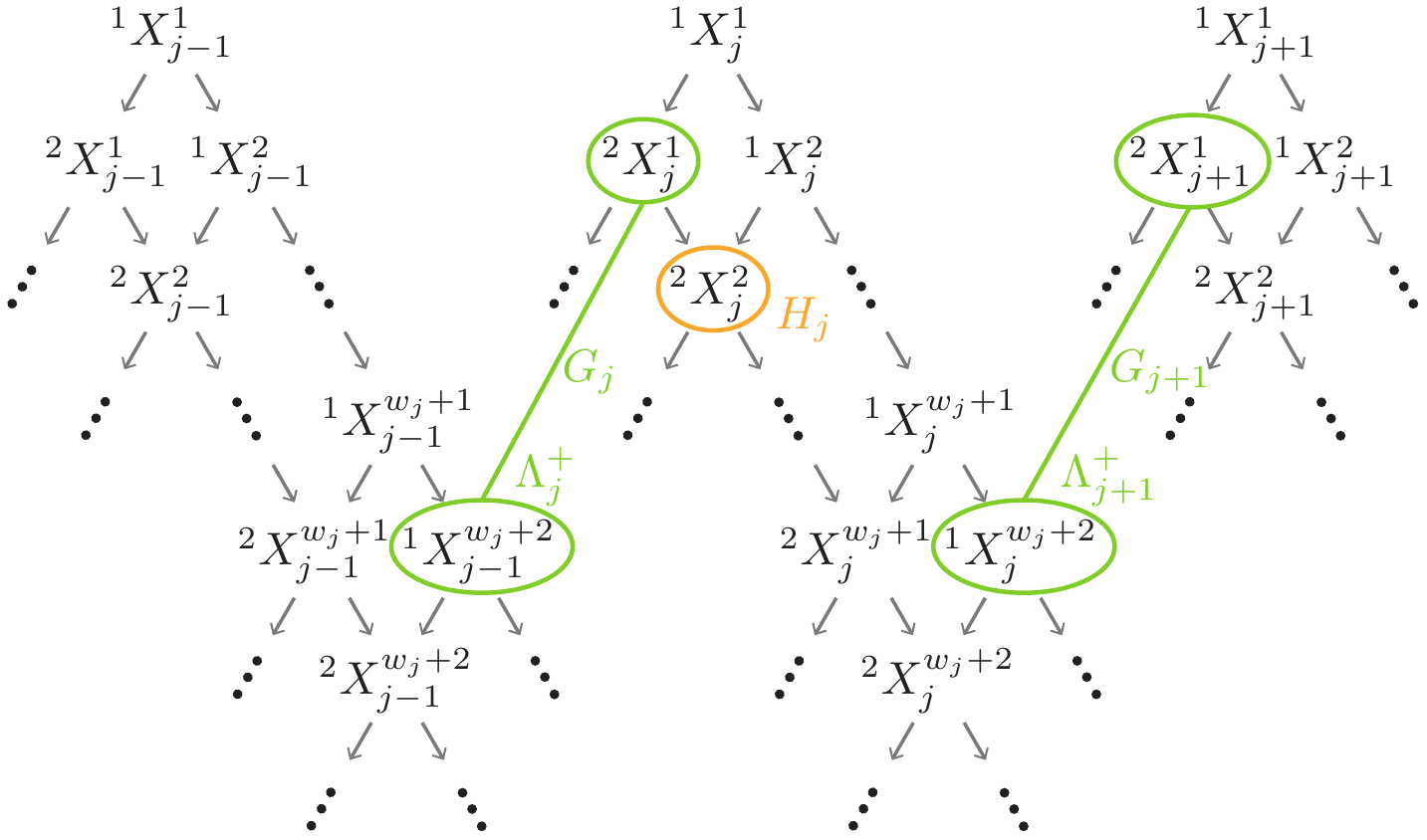}
         \caption{$\delta_j=\delta_{j+1}=1$}
         \label{fig:HigerRankGeneratorDiagramCase1}
         \centering
     \end{subfigure}
     \begin{subfigure}[t]{0.47\textwidth}
         \includegraphics[scale=0.5]{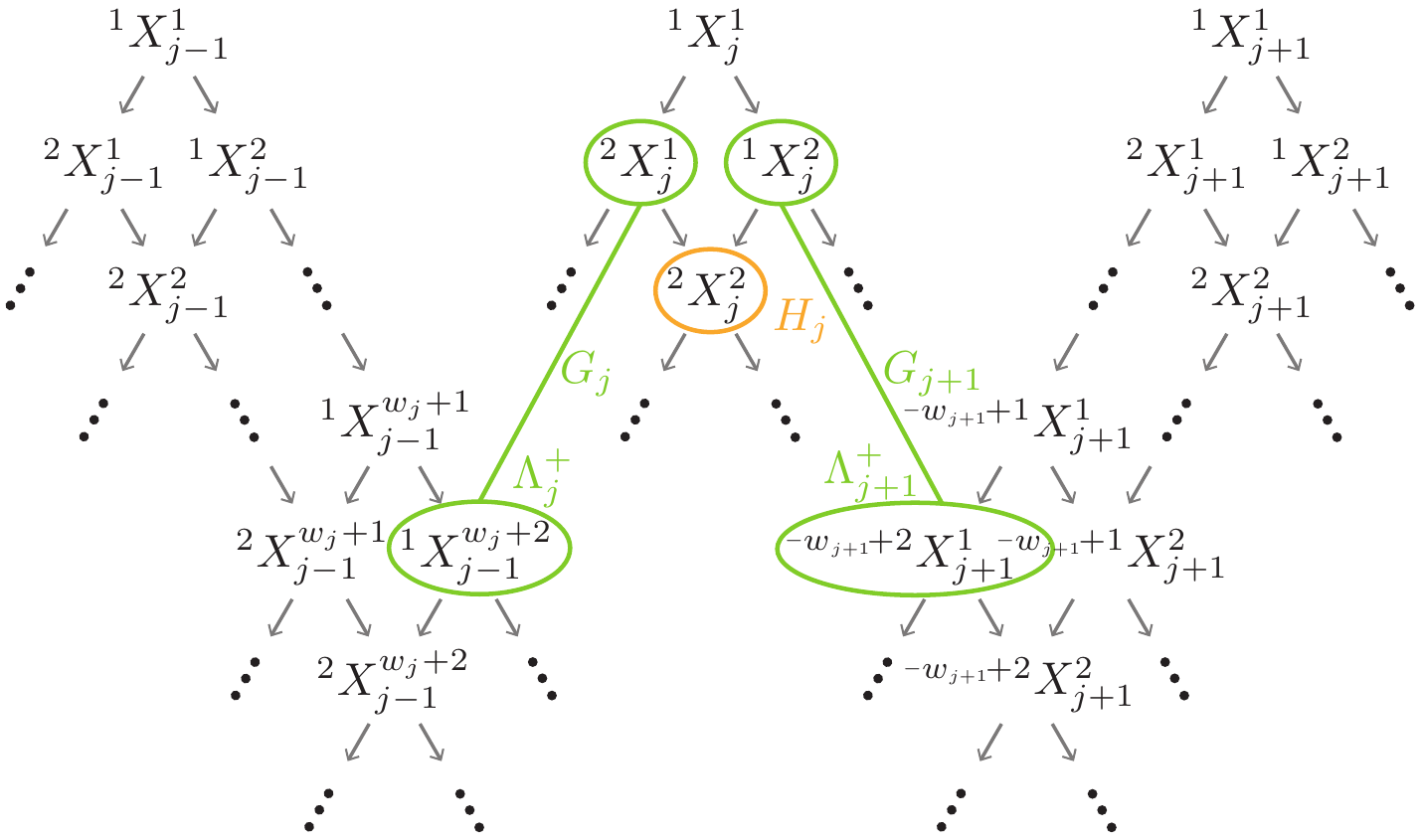}
         \caption{$\delta_j=1$ and $\delta_{j+1}=0$}
         \label{fig:HigherRankGeneratorDiagramCase2}
         \centering
     \end{subfigure}
     \\[5mm]
     \begin{subfigure}[t]{0.47\textwidth}
         \includegraphics[scale=0.5]{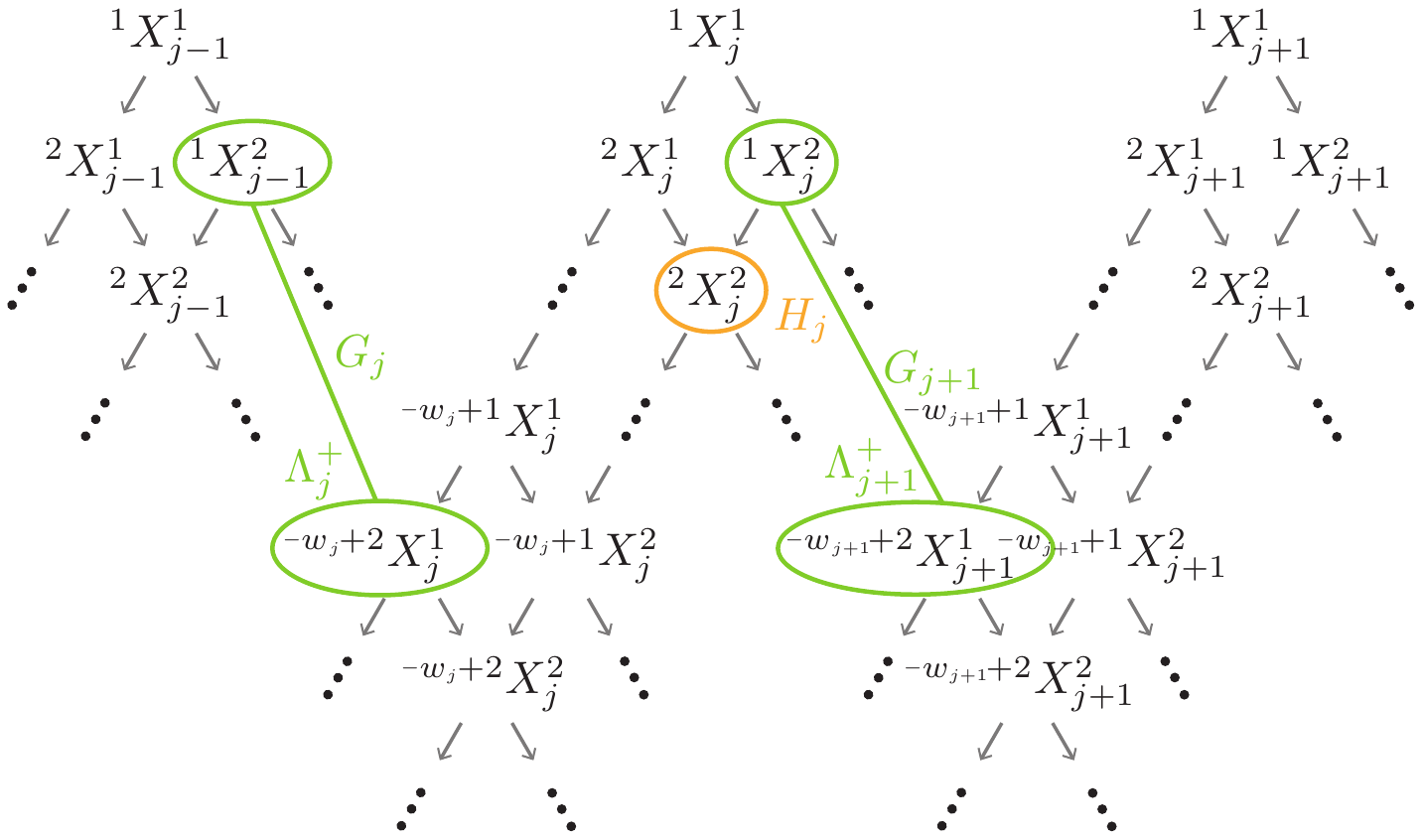}
         \caption{$\delta_j=\delta_{j+1}=0$}
         \label{fig:HigerRankGeneratorDiagramCase3}
         \centering
     \end{subfigure}
     \begin{subfigure}[t]{0.47\textwidth}
         \includegraphics[scale=0.5]{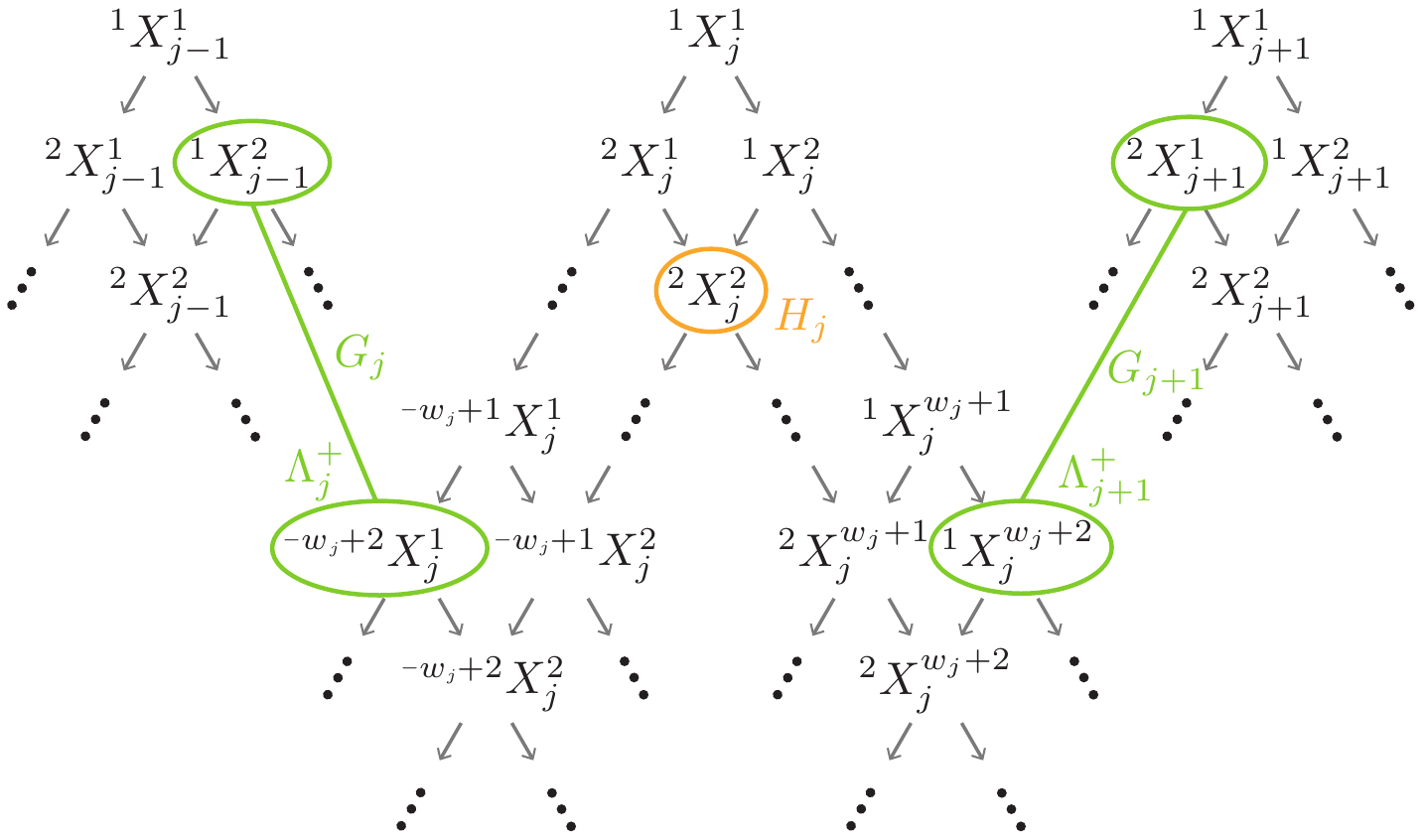}
         \caption{$\delta_j=0$ and $\delta_{j+1}=1$}
         \label{fig:HigherRankGeneratorDiagramCase4}
         \centering
     \end{subfigure}
        \caption{A part of generator diagram for $\tilde{M}$ containing $G_j$, $H_j$ and $G_{j+1}$ in each case}
        \label{fig:HigherRankGeneratorDiagram}
\end{figure}

In each case, we can observe the relations among $G_j$'s and determine whether $H_j$ is spanned by them or not, as follows:
\begin{itemize}
\item
$\delta_j = \delta_{j+1} = 1$:
\hspace{6.5mm}
$
\Lambda_{j+1}^+ \chi_{j+1}^{w_{j+1}+1} G_j
=
\chi_j G_{j+1}
$
,\hspace{14mm}
$
H_j
=
\chi_{j+1} G_j
$

\item
$\delta_j = 1$, $\delta_{j+1} = 0$:
\hspace{15mm}
$
\chi_{j+1} G_j
=
\chi_j G_{j+1}
$
,\hspace{14mm}
$
H_j
=
\chi_{j+1} G_j
=
\chi_j G_{j+1}
$
\item
$\delta_j = \delta_{j+1} = 0$:
\hspace{18mm}
$
\chi_{j+1} G_j
=
\Lambda_j^- \chi_j^{-w_j+1} G_{j+1}
$
,\quad
$
H_j = \chi_j G_{j+1}
$

\item
$\delta_j = 0$, $\delta_{j+1} = 1$:
\quad
$
\Lambda_{j+1}^+ \chi_{j+1}^{w_{j+1}+1} G_j
=
\Lambda_j^- \chi_j^{-w_j+1} G_{j+1}
$
,\quad
$H_j$ may not be spanned by $G_1, \dots, G_{3\tau}$
\end{itemize}
The relations among $G_j$'s can be summarized as
\begin{equation}\label{eqn:RelationsOnG}
\Lambda_{j+1}^+ \chi_{j+1}^{w_{j+1}^++1} G_j
=
\Lambda_j^- \chi_j^{w_j^-+1} G_{j+1}
\end{equation}
for $j\in\mathbb{Z}_{3\tau}$ in any cases, noting that $\Lambda_j^- = 1$, $w_j\ge0$ when $\delta_j = 1$ and $\Lambda_j^+ = 1$, $w_j\le0$ when $\delta_j = 0$.

On the other hand, $H_j$ is not spanned by $G_1, \dots, G_{3\tau}$ only if $\delta_j=0$ and $\delta_{j+1}=1$ hold simultaneously. The converse is not true in general, as $w_j = 0$ may still hold in this case. As a consequence, $\tilde{M}$ is generated by $G_1, \dots, G_{3\tau}$ together with those $H_j$'s. In such a case, we find in Figure \ref{fig:HigherRankGeneratorDiagramCase4} the relations
\begin{equation}\label{eqn:RelationsOnH}
-\chi_{j-2} G_{j}
+
\Lambda_{j}^- \chi_{j}^{-w_{j}} H_{j}
= 0,\quad
\chi_{j-1} H_{j}
= 0\quad\text{and}\quad
\Lambda_{j+1}^+ \chi_{j+1}^{w_{j+1}} H_{j}
-
\chi_{j} G_{j+1}
=0
\end{equation}
among $H_j$ and $G_j$'s. Note that these relations are `\emph{enough}' in the sense that any other
relations containing $H_j$ are $S$-linear combinations of these. Namely, if $sH_j$ can be represented as an $S$-linear combination of $G_j$'s for some $s\in S$, then $s$ should be an $S$-linear combination of $\chi_j^{-w_j}$, $\chi_{j-1}$ and $\chi_{j+1}^{w_{j+1}}$ and the whole equation should
be an $S$-linear combination of the above $3$ relations.

Next, we find Macaulayfying elements of $\tilde{M}$ in $A^\tau$. For any $\imath\in\mathbb{Z}_{3\tau}$ with $\delta_{\imath}=1$ and $\delta_{\imath+1}=0$, there are $\jmath\in\mathbb{Z}_{3\tau}\setminus\left\{\imath\right\}$ and $\kappa \in \left\{0,\dots,\jmath-\imath-1\right\}$ such that
\begin{equation}\label{eqn:InequalitiesOfWord}
w_\imath \ge1
,\quad
w_{\imath+1} = \cdots = w_{\imath+\kappa} = 0
,\quad
w_{\imath+\kappa+1}\le-1
,\quad
w_{\imath+\kappa+2}, \dots, w_{\jmath} \le 0
\quad\text{and}\quad
w_{\jmath+1} \ge 1.
\end{equation}
One should be careful about the indices. If $\kappa = 0$, then the second condition in \ref{eqn:InequalitiesOfWord} becomes an empty condition. If $\kappa = \jmath-\imath-1$, then the fourth one is empty.

In this case, we have the identities
\begin{equation}\label{eqn:IdentitiesOfSignWord}
\delta_{\imath} = 1
,\quad
\delta_{\imath+1} = \cdots = \delta_{\imath+\kappa+1} = \cdots = \delta_{\jmath} = 0
,\quad
\delta_{\jmath+1} = 1
\end{equation}
by definition of the sign word $\delta$. Figure \ref{fig:MacaulayfyingElementGeneratorDiagram} shows the relevant part of the generator diagram for $\tilde{M}$.
\begin{figure}[H]
\includegraphics[scale=0.55]{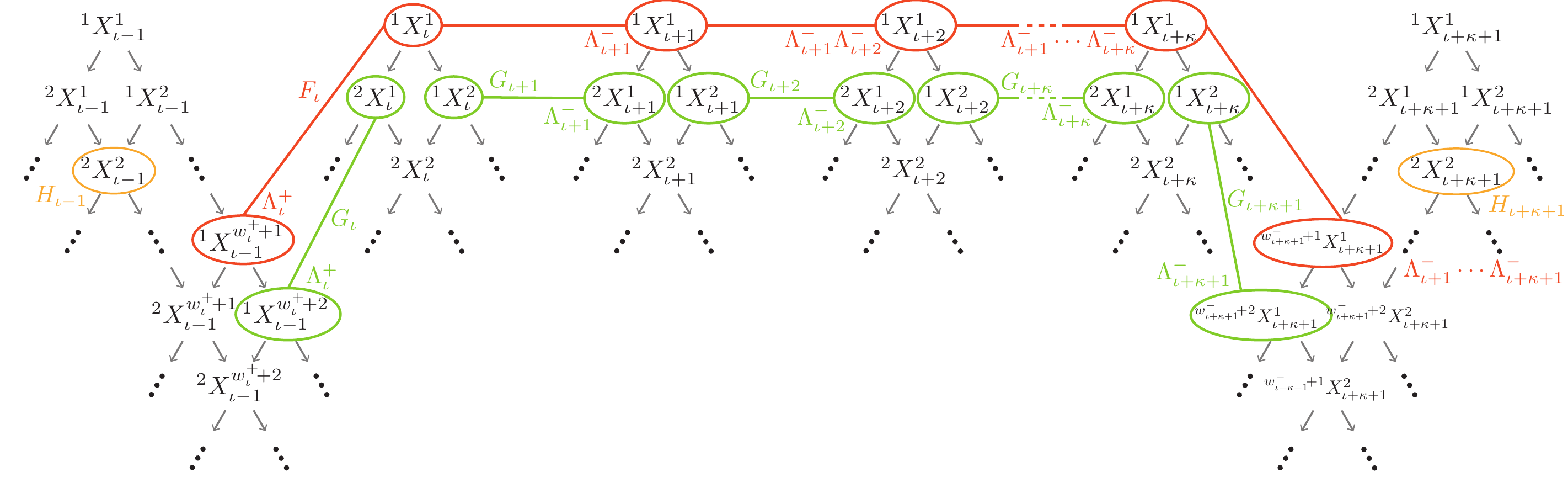}
\centering
\caption{A Macaulayfying element of $\tilde{M}$ in $A^{3\tau}$}
\label{fig:MacaulayfyingElementGeneratorDiagram}
\end{figure}

One may notice that the element $F_{\imath}\in A^{3\tau}\setminus\tilde{M}$ marked with orange color is a Macaulayfying element of $\tilde{M}$ in $A^{3\tau}$, which can be written as
\begin{align}\label{eqn:MacaulayfyingElement}
\begin{split}
F_{\imath}
&:=
\Lambda_\imath^+ \left(^1 X_{\imath-1}^{w_\imath+1}\right)
+
^1X_\imath^1
+
\Lambda_{\imath+1}^- \left(^1 X_{\imath+1}^1\right)
+
\Lambda_{\imath+1}^- \Lambda_{\imath+2}^- \left(^1 X_{\imath+2}^1\right)
\\
&\hspace{48mm}
+\cdots+
\left(\Lambda_{\imath+1}^- \cdots \Lambda_{\imath+\kappa}^-\right) \left(^1 X_{\imath+\kappa}^1\right) +
\left(\Lambda_{\imath+1}^- \cdots \Lambda_{\imath+\kappa+1}^-\right) \left(^{-w_{\imath+\kappa+1} +1} X_{\imath+\kappa+1}^1\right)
\\
&=
\Lambda_\imath^+ \left(^1 X_{\imath-1}^{w_\imath+1}\right)
+
\sum_{a=0}^{\kappa} \left(\prod_{b=1}^a \Lambda_{\imath+b}^- \right)\left(^1X_{\imath+a}^1 \right)
+
\left(\prod_{b=1}^{\kappa+1} \Lambda_{\imath+b}^- \right) \left(^{-w_{\imath+\kappa+1}+1}X_{\imath+\kappa+1}^1\right).
\end{split}
\end{align}
Indeed, we can express $\chi_{\imath+1}F_{\imath}$, $\chi_{\imath+2}F_{\imath}$ and $\chi_{\imath+3}F_{\imath}$ as $S$-linear combinations of
$$
G_{\imath-1}''
,\ \
G_{\imath}
,\ \
G_{\imath+1}
,\ \
G_{\imath+2}'
,\ \
\dots
,\ \
G_{\imath+\kappa+1}'
$$
where
\begin{equation}\label{eqn:DefinitionOfG'}
G_{\imath-1}''
:=
\begin{cases}
H_{\imath-1} & \text{if}\ \ \delta_{\imath-1} = 0
\\
G_{\imath-1} & \text{if}\ \ \delta_{\imath-1} = 1
\end{cases}
,\quad
G_{\imath}'
:=
G_{\imath}
,\quad\dots,\quad
G_{\jmath}'
:=
G_{\jmath}
\quad\text{and}\quad
G_{\jmath+1}'
:=
H_{\jmath},
\end{equation}
confirming that $xF_{\imath}$, $yF_{\imath}$ and $zF_{\imath}$ are elements of $\tilde{M}$. Note that $G_{\imath}' = G_{\imath}$ and $G_{\imath+1}' = G_{\imath+1}$ always hold. Moreover, each of $G_{\imath-1}''$ and $G_{\imath+a}'$ is one of $G_j$'s or one of $H_j$'s that is not generated by $G_j$'s, by the discussion in the above paragraph and identities $\delta_{\imath}=1$, $\delta_{\jmath}=0$
and $\delta_{\jmath+1}=1$. For later use, we write down the explicit formulas as follows:
\begin{equation}\label{eqn:RelationsOnF}
\left\{
\setlength\arraycolsep{1pt}
\begin{array}{rcccccccccccccccccc}
\chi_{\imath} F_\imath
&=& &\phantom{+}& 
G_{\imath}
& & & & &+&
\zeta_{\imath,-1,2}^{-1} G_{\imath+3}'
&+&
\cdots
&+&
\zeta_{\imath,-1,\kappa+1}^{-1} G_{\imath+\kappa+1}'
\\[4mm]
\chi_{\imath+1} F_\imath
&=& & & &\phantom{+}&
G_{\imath+1}
& & &+&
\zeta_{\imath,-1,2}^0 G_{\imath+3}'
&+&
\cdots
&+&
\zeta_{\imath,-1,\kappa+1}^0 G_{\imath+\kappa+1}'
\\[3mm]
\chi_{\imath+2} F_\imath
&=&
\Lambda_{\imath}^+ \chi_{\imath}^{w_{\imath}'-1} G_{\imath-1}''
& & & & & & +
\Lambda_{\imath+1}^- \chi_{\imath+1}^{-w_{\imath+1}'} G_{\imath+2}'
&+&
\zeta_{\imath,-1,2}^1 G_{\imath+3}'
&+&
\cdots
&+&
\zeta_{\imath,-1,\kappa+1}^1 G_{\imath+\kappa+1}'
\end{array}
\right.
\end{equation}
where
\begin{equation}\label{eqn:DefinitionOfZeta}
\zeta_{\imath,a,b}^c
:=
\begin{cases}
\left(\displaystyle\prod_{d=a+1}^{b}\Lambda_{\imath+d}^-\right) \chi_{\imath+b}^{-w_{\imath+b}'-1}
&
\text{if}\ \ c\equiv b\ \ \left(\operatorname{mod}\ 3\right)
\\[5mm]
\ 0
&
\text{otherwise}
\end{cases}
\end{equation}
for $b\in \left\{2, \dots, \kappa+1\right\}$, $a\in \left\{-1, \dots, b-1\right\}$ and $c\in\mathbb{Z}$. These can be observed in Figure \ref{fig:MacaulayfyingElementGeneratorDiagram} or computed straightforwardly from equation \ref{eqn:MacaulayfyingElement} applying the arithmetic rules \ref{eqn:ScalarMultiplicationArithmetic}, the (in)equalities \ref{eqn:InequalitiesOfWord}, the identities \ref{eqn:IdentitiesOfSignWord} and the conversion formula
$
w_j' = w_j + \delta_{j-1} + \delta_j + \delta_{j+1} - 1
$
in Definition \ref{def:ConversionFromBandtoLoop}.

We remark here on some basic properties of the symbols $\zeta_{\imath,a,b}^c$. First note that the monomial
$\chi_{\imath+b}^{-w_{\imath+b}'-1}$ is $1$ for $b \in \left\{2, \dots, \kappa\right\}$ and is nonzero for $b=\kappa+1$.
This follows from the observations $w_{\imath+2}' = \cdots = w_{\imath+\kappa}' = -1$ and $w_{\imath+\kappa+1}' \le -1$.
$\zeta_{\imath,a,b}^c$ is just an element of $\mathbb{C}$, namely $\displaystyle \prod_{d=a+1}^b \Lambda_{\imath+d}^-$, unless $b=\kappa+1$.
Based on this fact, we find that the symbols satisfy  algebraic relations
\begin{equation}\label{eqn:RelationsOnZeta}
\zeta_{\imath,a,b}^{b+d} \zeta_{\imath,b,c}^{c}
=
\zeta_{\imath,a,c}^{c+d}
\end{equation}
for any $c\in\left\{2,\dots,\kappa-2\right\}$, $b\in\left\{2,\dots,c-1\right\}$, $a\in\left\{-1,\dots,b-1\right\}$ and $d\in\mathbb{Z}$,
i.e. whenever all terms are defined.

Other than the form of $F_{\imath}$, there seem to be no further Macaulayfying elements. This will be revealed to be true in the proof of Theorem \ref{thm:MFFromModule}.

\subsection{Proof of the Theorem}

Let $M_{{0}}:=M_{{0}}\left(w,\lambda,1\right)$ be the $A$-submodule of $A^\tau$ generated by the $3\tau$ elements $G_{1}, \dots, G_{3\tau}$. Then $\tilde{M}$ is generated by elements of $M_{{0}}$ together with $H_1, \dots, H_{3\tau}$ in $A^\tau$. And then $M$ is generated by elements of $\tilde{M}$ together with the Macaulayfying elements of $\tilde{M}$ in $A^\tau$.

The overall strategy of the proof is as follows. We will complete it in four steps. In step 1, we will first find a (part of) free resolution of $\left(M_{{0}}\right)_S$ as an $S$-module. In step 3, we modify it to get a (part of) free resolution of $\tilde{M}_S$. In step 4, we fix it again to finally establish the desired free resolution \ref{eqn:ExactSequence1} of $M_S$. But for some technical reasons, we treat the special cases where $w_j\ge0$ for all $j$ or $w_j\le0$ for all $j$ separately in step 2.

In step 3 and step 4, we need some technical lemmas, which allow us to modify the free resolution of a module when we add or replace its generators.
Lemma \ref{lem:MatrixExpansionLemma} describes the `\emph{matrix expansion}' process according to adding
a generator and lemma \ref{lem:MatrixReductionLemma} gives the `\emph{matrix reduction}' process according
to replacing a redundant generator. There is also a geometric version of matrix reduction in lemma \ref{lem:HomotopyTypeV}
and remark \ref{rmk:MatrixReduction}.

\begin{lemma}\label{lem:MatrixExpansionLemma}
Let $\pi_{{0}} \in A^{a \times b}$ and $\varphi_{{0}} \in S^{b \times c}$ be matrices such that the sequence
$$
\begin{tikzcd}[column sep = 20pt] 
S^{c} \arrow[r, "\varphi_{{0}}"] & S^{b} \arrow[r, "\pi_{{0}}"] & \left(A_S\right)^{a} \arrow[r] &
0
\end{tikzcd}
$$
of $S$-modules is exact and $B \in A^{a\times 1}$  and $C^T \in S^{1\times d}$, $D \in S^{b\times d}$ be matrices such that the enlarged matrices
$$
\pi_{{1}}
:=
\left(
\begin{array}{c|c}
& \\
\qquad \pi_{{0}} \qquad \ & B \hspace*{-1mm} \\
& \\
\end{array}
\right)
\in A^{a \times \left(b+1\right)}
\quad\text{and}\quad
\varphi_{{1}}
:=
\left(
\begin{array}{c|c}
\\
\qquad \mbox{$\varphi_{{0}}$} \qquad \ &
\hspace{3mm} D \hspace*{1mm}
\\
\\ \hline
\\[-4mm]
\quad 0 \quad \ & \hspace{3mm} C^T \hspace*{1mm}
\end{array}
\right)
\in S^{\left(b+1\right)\times\left(c+d\right)}
$$
satisfy $\pi_{{1}}\varphi_{{1}} = 0\in A^{a\times\left(c+d\right)}$. Assume further that $\operatorname{im}C^T \subseteq S$ contains the conductor
$$
\operatorname{ann}_S \left(\operatorname{im}\pi_{{1}}\left/\operatorname{im}\pi_{{0}}\right.\right)
:=
\left\{s \in S \left| s \left(\operatorname{im}\pi_{{1}}\right) \subseteq \operatorname{im}\pi_{{0}}\right.\right\}
=
\left\{s \in S \left| s B \subseteq \operatorname{im}\pi_{{0}}\right.\right\}
$$
of $\operatorname{im} \pi_{{0}}$ in $\operatorname{im} \pi_{{1}}$. Then we have the following modified exact sequence of $S$-modules:
$$
\begin{tikzcd}[column sep = 20pt] 
S^{c+d} \arrow[r, "\varphi_{{1}}"] & S^{b+1} \arrow[r, "\pi_{{1}}"] & \left(A_S\right)^{a} \arrow[r]
& 0.
\end{tikzcd}
$$
\end{lemma}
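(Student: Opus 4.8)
\textit{Proof proposal.} The plan is a direct diagram chase in the category of $S$-modules, where $A^{a}$ carries its $S$-action and the (a priori $A$-linear) maps $\pi_{0},\pi_{1}$ are viewed as $S$-linear maps. Writing elements of $S^{b+1}=S^{b}\oplus S$ as pairs $(v,t)$ with $v\in S^{b}$, $t\in S$, and elements of $S^{c+d}=S^{c}\oplus S^{d}$ as pairs $(s,r)$, the definition of the enlarged matrices gives $\pi_{1}(v,t)=\pi_{0}v+tB$ and $\varphi_{1}(s,r)=(\varphi_{0}s+Dr,\,C^{T}r)$. Two things must be checked: surjectivity of $\pi_{1}$ (exactness at $(A_{S})^{a}$) and $\ker\pi_{1}=\operatorname{im}\varphi_{1}$ (exactness at $S^{b+1}$).

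Surjectivity of $\pi_{1}$ is immediate, since $\operatorname{im}\pi_{1}\supseteq\operatorname{im}\pi_{0}=A^{a}$, the last equality holding because the original sequence is exact at $(A_{S})^{a}$ and $\pi_{1}$ agrees with $\pi_{0}$ on the first $b$ coordinates. The inclusion $\operatorname{im}\varphi_{1}\subseteq\ker\pi_{1}$ is precisely the standing hypothesis $\pi_{1}\varphi_{1}=0$.

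The substantive step is the reverse inclusion $\ker\pi_{1}\subseteq\operatorname{im}\varphi_{1}$. Suppose $\pi_{1}(v,t)=\pi_{0}v+tB=0$ in $A^{a}$. Then $tB=-\pi_{0}v\in\operatorname{im}\pi_{0}$, so $t$ lies in the conductor $\operatorname{ann}_{S}(\operatorname{im}\pi_{1}/\operatorname{im}\pi_{0})$, which by hypothesis is contained in $\operatorname{im}C^{T}$; hence $t=C^{T}r$ for some $r\in S^{d}$. Since $\varphi_{1}(0,r)=(Dr,t)$, subtracting this element of $\operatorname{im}\varphi_{1}$ reduces us to an element of the form $(v-Dr,0)$. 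Applying $\pi_{0}$ and combining $\pi_{0}v+tB=0$ with $\pi_{0}(Dr)+tB=0$ — the latter obtained by applying $\pi_{1}$ to $\varphi_{1}(0,r)=(Dr,t)$ and using $\pi_{1}\varphi_{1}=0$ — gives $\pi_{0}(v-Dr)=0$. By exactness of the original sequence at $S^{b}$, $v-Dr=\varphi_{0}s$ for some $s\in S^{c}$, so that $(v,t)=\varphi_{1}(0,r)+(\varphi_{0}s,0)=\varphi_{1}(s,r)\in\operatorname{im}\varphi_{1}$, as desired.

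The only step requiring a hypothesis beyond formal bookkeeping is the passage from $tB\in\operatorname{im}\pi_{0}$ to $t\in\operatorname{im}C^{T}$; this is exactly the role of the conductor-containment assumption $\operatorname{ann}_{S}(\operatorname{im}\pi_{1}/\operatorname{im}\pi_{0})\subseteq\operatorname{im}C^{T}$, and I expect it to be the sole non-routine point. Everything else is a standard chase together with the elementary verification that the two identities $\pi_{0}v+tB=0$ and $\pi_{0}(Dr)+tB=0$ hold simultaneously, so they can be subtracted to kill the $tB$ term.
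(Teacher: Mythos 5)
Your proof is correct and follows essentially the same route as the paper: both reduce to showing $\ker\pi_1\subseteq\operatorname{im}\varphi_1$, use the conductor hypothesis to write the last coordinate as $C^T r$, use $\pi_1\varphi_1=0$ to get $\pi_0(Dr)+tB=0$, and then invoke exactness at $S^b$ to finish. The only cosmetic difference is that the paper extracts the matrix identity $\pi_0 D + BC^T = 0$ from $\pi_1\varphi_1=0$ and substitutes, while you apply $\pi_1\varphi_1=0$ directly to the vector $(0,r)$; these are the same manipulation.
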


\begin{proof}
Note that $\operatorname{im}\pi_{{1}} = \left(A_S\right)^a$ immediately follows from $\operatorname{im}\pi_{{0}}
= \left(A_S\right)^a$. The inclusion $\operatorname{im}\varphi_{{1}} \subseteq \ker\pi_{{1}}$ is also
immediate from $\pi_{{1}} \varphi_{{1}} = 0$. For the opposite inclusion, assume that $v:= \begin{pmatrix} v_1 \\ v_2 \end{pmatrix} \in S^{b+1}$ is an element of $\ker\pi_{{1}}$ for some $v_1\in S^b$ and $v_2 \in S$. Then $\pi_{{1}} v=0\in A^a$ yields $\pi_{{0}}v_1+Bv_2 = 0 \in A^a$. Thus, we have $v_2B \in \operatorname{im}\pi_{{0}}$ and the assumption implies $v_2 \in \operatorname{im}C^T$. Put $v_2=C^Tu_2$ for some $u_2 \in S^d$ and substitute it to the previous equation, which gives $\pi_{{0}}v_1 + BC^Tu_2 = 0 \in A^a$. Note that the assumption $\pi_{{1}}\varphi_{{1}} = 0$ implies the equation $\pi_{{0}}D + B C^T = 0 \in A^{a \times d}$. Combining these, we get $\pi_{{0}}v_1 - \pi_{{0}}Du_2 = 0 \in A^a$, or equivalently, $v_1-Du_2 \in \ker\pi_{{0}} = \operatorname{im}\varphi_{{0}}$. Thus we have $v_1 = \varphi_{{0}}u_1 + Du_2$ for some $u_1 \in S^c$. Now put $u:= \begin{pmatrix} u_1 \\ u_2 \end{pmatrix} \in S^{c+d}$ then one can check $v = \varphi_{{1}} u \in \operatorname{im}\varphi_{{1}}$, implying $\ker\pi_{{1}} \subseteq \operatorname{im}\varphi_{{1}}$.
\end{proof}

\begin{lemma}\label{lem:MatrixReductionLemma}
Let $\pi_{{1}}\in A^{a\times b}$, $B\in A^{a\times 1}$, $C\in S^{b\times c}$, $D\in S^{b\times1}$ and $E^T\in S^{1\times c}$
be matrices and $u\in S$ be a unit such that for matrices
$$
\pi_{{0}}
:=
\left(
\begin{array}{c|c}
& \\
\qquad \pi_{{1}} \qquad \ & B \hspace*{-1mm} \\
& \\
\end{array}
\right)
\in A^{a \times \left(b+1\right)}
\quad\text{and}\quad
\varphi_{{0}}
:=
\left(
\begin{array}{c|c}
\\
\qquad \mbox{$C$} \qquad \ &
D \hspace*{-1mm}
\\
\\ \hline
\\[-4mm]
\quad E^{T} \quad \ & u \hspace*{-1mm}
\end{array}
\right)
\in S^{\left(b+1\right)\times\left(c+1\right)}
$$
the sequence
$$
\begin{tikzcd}[column sep = 20pt] 
S^{c+1} \arrow[r, "\varphi_{{0}}"] & S^{b+1} \arrow[r, "\pi_{{0}}"] & \left(A_S\right)^{a} \arrow[r]
& 0
\end{tikzcd}
$$
of $S$-modules is exact. Setting
$
\varphi_{{1}} := C-Du^{-1}E^T \in S^{b\times c}
$, we have the following modified exact sequence of $S$-modules:
$$
\begin{tikzcd}[column sep = 20pt] 
S^{c} \arrow[r, "\varphi_{{1}}"] & S^{b} \arrow[r, "\pi_{{1}}"] & \left(A_S\right)^{a} \arrow[r] &
0.
\end{tikzcd}
$$
\end{lemma}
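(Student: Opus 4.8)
The plan is to read Lemma \ref{lem:MatrixReductionLemma} as a Gaussian-elimination (Schur-complement) statement: the last column $\bigl(\begin{smallmatrix}D\\u\end{smallmatrix}\bigr)$ of $\varphi_{{0}}$ has the \emph{unit} $u$ in its bottom entry, so it may be used to eliminate the extra generator (the column $B$ appended to $\pi_{{1}}$), and $\varphi_{{1}}=C-Du^{-1}E^{T}$ is exactly the block that remains after clearing that column. Accordingly I would verify the three standard assertions in turn: $\pi_{{1}}$ is surjective, $\operatorname{im}\varphi_{{1}}\subseteq\ker\pi_{{1}}$, and $\ker\pi_{{1}}\subseteq\operatorname{im}\varphi_{{1}}$.

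First I would extract the two block identities contained in $\pi_{{0}}\varphi_{{0}}=0$ (which follows from exactness of the given sequence at $S^{b+1}$): looking at the first $c$ columns gives $\pi_{{1}}C+BE^{T}=0$ in $A^{a\times c}$, and looking at the last column gives $\pi_{{1}}D+Bu=0$ in $A^{a\times1}$. Since $u$ is a unit in $S$, its reduction $\bar u$ is a unit in $A$, so the second identity yields $B=-\bar u^{-1}\pi_{{1}}D=\pi_{{1}}(-\bar u^{-1}D)\in\operatorname{im}\pi_{{1}}$; hence $\operatorname{im}\pi_{{1}}=\operatorname{im}\pi_{{0}}=(A_S)^{a}$ and $\pi_{{1}}$ is surjective. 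Substituting this expression for $B$ into the first identity gives $\pi_{{1}}C=\pi_{{1}}D\,u^{-1}E^{T}$ in $A^{a\times c}$, which says precisely $\pi_{{1}}\varphi_{{1}}=\pi_{{1}}C-\pi_{{1}}Du^{-1}E^{T}=0$, so $\operatorname{im}\varphi_{{1}}\subseteq\ker\pi_{{1}}$.

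The substantive step is $\ker\pi_{{1}}\subseteq\operatorname{im}\varphi_{{1}}$. Given $v\in S^{b}$ with $\pi_{{1}}v=0$ in $A^{a}$, I would form $\bigl(\begin{smallmatrix}v\\0\end{smallmatrix}\bigr)\in S^{b+1}$; then $\pi_{{0}}\bigl(\begin{smallmatrix}v\\0\end{smallmatrix}\bigr)=\pi_{{1}}v=0$, so exactness at $S^{b+1}$ produces $\bigl(\begin{smallmatrix}w_{1}\\w_{2}\end{smallmatrix}\bigr)\in S^{c+1}$ with $\varphi_{{0}}\bigl(\begin{smallmatrix}w_{1}\\w_{2}\end{smallmatrix}\bigr)=\bigl(\begin{smallmatrix}v\\0\end{smallmatrix}\bigr)$, i.e. $Cw_{1}+Dw_{2}=v$ and $E^{T}w_{1}+uw_{2}=0$ as equations over $S$. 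The bottom equation, using $u^{-1}\in S$, forces $w_{2}=-u^{-1}E^{T}w_{1}$, and then the top equation gives $v=(C-Du^{-1}E^{T})w_{1}=\varphi_{{1}}w_{1}$, so $v\in\operatorname{im}\varphi_{{1}}$; combined with the previous two steps this completes the proof.

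I do not anticipate a genuine obstacle; the only point requiring care is the bookkeeping between the two rings, since the $\pi$-matrices have entries in $A=S/(xyz)$ while the $\varphi$-matrices have entries in $S$. One must use that $u$ is a unit in $S$ (so that $u^{-1}$ is available over $S$, keeping the last paragraph's manipulation inside $S$) \emph{and} that its reduction is a unit in $A$ (for surjectivity), and one must be careful to carry out the final computation over $S$ before reducing modulo $xyz$. The conductor hypothesis $\operatorname{ann}_S(\cdots)$ that appears in the companion Lemma \ref{lem:MatrixExpansionLemma} is not needed here.
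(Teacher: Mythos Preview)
Your proof is correct. It proceeds by direct verification of exactness---surjectivity of $\pi_{1}$, the inclusion $\operatorname{im}\varphi_{1}\subseteq\ker\pi_{1}$, and the reverse inclusion via lifting along $\varphi_{0}$---and the bookkeeping between $S$ and $A$ is handled accurately.

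The paper takes a different but closely related route: instead of verifying the three exactness conditions separately, it writes down two invertible block matrices $\begin{psmallmatrix}I_c & 0\\ u^{-1}E^{T} & 1\end{psmallmatrix}$ and $\begin{psmallmatrix}I_b & -Du^{-1}\\ 0 & u^{-1}\end{psmallmatrix}$, checks that conjugating $(\varphi_{0},\pi_{0})$ by them produces the block-diagonal pair $\bigl(\begin{psmallmatrix}C-Du^{-1}E^{T}&0\\0&1\end{psmallmatrix},\ (\pi_{1}\mid 0)\bigr)$, and then reads off the reduced exact sequence from this split form. Your argument is essentially the coordinate computation underlying this base change (the same substitution $w_{2}=-u^{-1}E^{T}w_{1}$ appears implicitly in the first vertical isomorphism). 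The diagram approach packages all three checks into the single statement ``isomorphisms preserve exactness,'' whereas your approach is more elementary and perhaps clearer about exactly where the unit hypothesis on $u$ is invoked at each stage.
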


\begin{proof}
Consider the following diagram of $S$-modules:
$$
\newcommand{\scriptverteq}{\mathrel{\rotatebox{90}{$\scriptstyle=$}}}
\tikzset{
    labl/.style={anchor=south, rotate=90, inner sep=.5mm}
}
\begin{tikzcd}[arrow style=tikz,>=stealth,row sep=4em,column sep=7em,
    execute at end picture={
    \path (\tikzcdmatrixname-1-1) -- (\tikzcdmatrixname-2-1)
    coordinate[pos=0.5] (aux1)
    (\tikzcdmatrixname-1-2) -- (\tikzcdmatrixname-2-2)
    coordinate[pos=0.5] (aux2)
    (aux1) -- (aux2) node[midway,sloped]{$\hspace{-2mm} \begin{matrix} \scriptstyle\varphi_{{1}} \\[-1mm] \scriptverteq \end{matrix}$};
    }] 
S^{c}\oplus S
  \arrow[r, "\varphi_{{0}} = \spmat{ C & D \\[1mm] E^T & u }"]
  \arrow[d, "\cong" labl, swap, "\spmat{ I_c & 0 \\[1mm] u^{-1}E^T & 1 }\hspace{3mm}"]
&
S^{b}\oplus S
  \arrow[r, "\pi_{{0}} = \spmat{\pi_{{1}} & B}"]
  \arrow[d, "\cong" labl, "\spmat{ I_b & -Du^{-1} \\[1mm] 0 & u^{-1} }"]
&
\left(A_S\right)^{a}
  \arrow[r]
  \arrow[d, "\cong" labl, "I_a"]
&
0
  \arrow[d, equal]
\\
S^{c}\oplus S
  \arrow[r, "\spmat{ C-Du^{-1}E^T & 0 \\[1mm] 0 & 1 }"]
&
S^{b}\oplus S
  \arrow[r, "\spmat{\pi_{{1}} & 0}"]
&
\left(A_S\right)^{a}
  \arrow[r]
&
0
\end{tikzcd}
$$
One can immediately check by matrix calculations that both squares in the diagram commute. Also, note
that the vertical maps are all isomorphisms, hence the exactness of the top row yields the exactness
of the bottom row, which also implies the exactness of the desired sequence.
\end{proof}

\begin{proof}[Proof of Theorem \ref{thm:MFFromModule}]
We now prove the theorem, step by step as illustrated above.
\\
\\
\noindent\textbf{Step 1}: Find a (part of) free resolution of $\left(M_{{0}}\right)_S$.

Note that $M_{{0}} = \left<G_1,\dots,G_{3\tau}\right>_A \in A^\tau$ yields an $S$-module $\left(M_{{0}}\right)_S = \left<G_1,\dots,G_{3\tau}\right>_S \in \left(A_S\right)^\tau$, where $A_S$ is the same as $A$\ but considered as an $S$-module. That is, $M_{{0}}$ and $\left(M_{{0}}\right)_S$ are the same as underlying sets and the subset $\left\{G_1,\dots,G_{3\tau}\right\}$ $A$-generates $M_{{0}}$ and at the same time $S$-generates $\left(M_{{0}}\right)_S$.

Consider the $\tau\times 3\tau$ matrix $\pi_{{0}}:= \pi_{{0}}\left(w,\lambda,1\right)\in A^{\tau\times 3\tau}$ whose $j$-th column is $G_j\in A^\tau$ for $j\in\mathbb{Z}_{3\tau}$.
That is,
$$
\pi_{{0}}
:=
\left(
\arraycolsep=2pt\def\arraystretch{1}
\begin{array}{c|c|c|c|c|c}
\color{colorG} G_1 & \color{colorG} G_2 & \color{colorG} G_3 & \color{colorG} \cdots & \color{colorG} G_{3\tau-1} & \color{colorG} G_{3\tau}
\end{array}
\right)_{\tau\times 3\tau}.
$$
Then regard the matrix as an $S$-module homomorphism $\pi_{{0}}:S^{3\tau}\rightarrow \left(A_S\right)^\tau$, which is well-defined using the natural isomorphism $\operatorname{Hom}_S\left(S,A_S\right)\cong A_S$ where $A_S$ is the same as $A$ but considered as an $S$-module. As a result, the $S$-module $\left(M_{{0}}\right)_S$ is the image of $\pi_{{0}}:S^{3\tau}\rightarrow\left(A_S\right)^\tau$. Restricting the codomain, we also denote the map by $\pi_{{0}}:S^{3\tau}\rightarrow \left(M_{{0}}\right)_S$, which becomes automatically surjective.

We need to figure out the kernel of $\pi_{{0}}$ to further resolve $\left(M_{{0}}\right)_S$. It is equivalent to find relations among generators $G_{j}$ of $\left(M_{{0}}\right)_S$, or the columns of $\pi_{{0}}$. Based on the observations
\begin{equation}\label{eqn:RelationsOnG2}
\Lambda_{j}^+ \chi_{j}^{w_{j}^+ +1} G_{j-1} = \Lambda_{j-1}^- \chi_{j-1}^{w_{j-1}^- +1} G_{j}
\quad\text{and}\quad
\chi_{j-2}\chi_{j-1} G_{j}
=
0
\quad
\left(j \in \mathbb{Z}_{3\tau}\right)
\end{equation}
from equation \ref{eqn:RelationsOnG} in Subsection \ref{subsubsec:GeneratorsAndMacaulayfyingElements}, we define two matrices below. First, the matrix $\varphi_{{0}}:=\varphi_{{0}}\left(w,\lambda,1\right)\in S^{3\tau\times 3\tau}$ is defined by
\begin{equation}\label{eqn:DefinitionOfTtildePhiS}
\varphi_{{0}}:=
\setlength\arraycolsep{0pt}
\begin{matrix}
&
\color{colorR} \SMALL\text{$R_1$ \hspace{10mm} $R_2$ \hspace{9.5mm} $R_3$} \hspace{5.5mm} \cdots \hspace{8mm} \text{$R_{3\tau-1}$ \hspace{11.5mm} $R_{3\tau}$ \hspace{9mm}}
\\
\begin{smallmatrix}
\color{colorG} G^1
\\[3.7mm]
\color{colorG} G^2
\\[3.7mm]
\color{colorG} G^3
\\[1.5mm]
\color{colorG} \vdots
\\[5mm]
\color{colorG} G^{3\tau-1}
\\[4.5mm]
\color{colorG} G^{3\tau}
\\[1mm]
\end{smallmatrix}
&
\begin{psmallmatrix}
\Lambda_{3\tau}^- \chi_{3\tau}^{w_{3\tau}^- +1} & -\Lambda_{2}^+ \chi_{2}^{w_{2}^+ +1} & 0 & \displaystyle\cdots & 0 & 0 \\[1mm]
0 & \Lambda_{1}^- \chi_{1}^{w_{1}^- +1} & -\Lambda_{3}^+ \chi_{3}^{w_{3}^+ +1} & \displaystyle\cdots & 0 & 0 \\[0mm]
0 & 0 & \Lambda_{2}^- \chi_{2}^{w_{2}^- +1} & \ddots & \vdots & \vdots \\[1mm]
\vdots & \vdots & \vdots & \ddots & -\Lambda_{3\tau-1}^+\chi_{3\tau-1}^{w_{3\tau-1}^+ +1} & 0 \\[2mm]
0 & 0 & 0 & \displaystyle\cdots & \Lambda_{3\tau-2}^- \chi_{3\tau-2}^{w_{3\tau-2}^- +1} & -\Lambda_{3\tau}^+ \chi_{3\tau}^{w_{3\tau}^+ +1} \\[2mm]
-\Lambda_{1}^+ \chi_{1}^{w_{1}^+ +1} & 0 & 0 & \displaystyle\cdots & 0 & \Lambda_{3\tau-1}^- \chi_{3\tau-1}^{w_{3\tau-1}^- +1} \\[2mm]
\end{psmallmatrix}_{3\tau\times3\tau}
.
\end{matrix}
\end{equation}
We denote the $j$-th row and the $k$-th column of $\varphi_{{0}}$ as $G^j$ and $R_k$, respectively, for each $j$, $k\in\mathbb{Z}_{3\tau}$. This is
considering that column $R^k$ represents the $k$-th relation of $G_j$'s given in the left
part of \ref{eqn:RelationsOnG2}, when the entry in row $G^j$ is multiplied to $G_j$ for each $j$.
This proves $\pi_{{0}} \varphi_{{0}} = 0$.
 
Next, we define the matrix $\varphi_{{0\#}}\in S^{3\tau\times3\tau}$ by
$$
\begin{matrix}
\color{colorR} \SMALL\text{\hspace{4mm} $R_{1\#}$ \hspace{12mm} $R_{2\#}$ \hspace{9mm} $R_{3\#}$} \hspace{6mm} \cdots \hspace{10mm} \text{$R_{\left(3\tau-1\right)\#}$ \hspace{17mm} $R_{3\tau\#}$}
\\
\varphi_{{0\#}}
:=
\left(
\arraycolsep=2pt\def\arraystretch{1}
\begin{array}{c|c|c|c|c|c}
\chi_{3\tau-1}\chi_{3\tau} \mathbf{e}_{G^1} & \chi_{3\tau}\chi_1 \mathbf{e}_{G^2} & \chi_1\chi_2 \mathbf{e}_{G^3} & \cdots & \chi_{3\tau-3}\chi_{3\tau-2}
\mathbf{e}_{G^{3\tau-1}} & \chi_{3\tau-2}\chi_{3\tau-1} \mathbf{e}_{G^{3\tau}}
\end{array}
\right)_{3\tau \times 3\tau}
\end{matrix}
$$
where $\mathbf{e}_{G^j}$ is the column vector in $S^{3\tau}$ whose unique nonzero entry is $1$ and lies in the same
position corresponding to row $G^j$, namely the $j$-th row. We denote the $j$-th row and the $k$-th column
of $\varphi_{{0\#}}$
by $G^{j\#}$ and $R_{k\#}$, respectively, for each $j$, $k\in\mathbb{Z}_{3\tau}$. Then column $R_{k\#}$ corresponds to the $k$-th relation of $G_j$'s in the right part of \ref{eqn:RelationsOnG2}, when the entry in row $G^{j\#}$ is multiplied to $G_j$
for each $j$. This yields $\pi_{{0}} \varphi_{{0\#}} = 0$.

Now we claim that the following sequence of $S$-modules is exact:
\begin{equation}\label{eqn:ExactSequence2}
\begin{tikzcd}[column sep = 20pt] 
S^{6\tau} \arrow[rrr, "\left(\varphi_{{0}} \left|\varphi_{{0\#}}\right.\right)"] & & & S^{3\tau} \arrow[r, "\pi_{{0}}"] & \left(M_{{0}}\right)_S \arrow[r] & 0.
\end{tikzcd}
\end{equation}
Since we already know $\pi_{{0}}\left(\varphi_{{0}} \left|\varphi_{{0\#}}\right.\right)=0$, we only need to show that $\ker \pi_{{0}} \subseteq \operatorname{im}\left(\varphi_{{0}} \left|\varphi_{{0\#}}\right.\right)$. Let $a = \left(a_{1},\dots,a_{3\tau}\right)\in S^{3\tau}$ be an element of $\operatorname{ker}\pi_{{0}}$. Then, as the $j$-th column of $\pi_{{0}}$ is $G_{j}$ as an element of $A^\tau$, the equation $\pi_{{0}} a = 0$ is equivalent to the relation
$$
a_{1}G_{1} + \cdots + a_{3\tau} G_{3\tau} =0,
$$
which can be also written as
\begin{equation}\label{eqn:CokerEqn}
\sum_{j=1}^{3\tau}\left( \Lambda_{j+1}^+ \chi_{j+1}^{w_{j+1}^+ +1} a_{j+1}
+
\Lambda_{j}^- \chi_{j}^{w_{j}^- +1} a_{j} \right)
\left(^1 X_{j}^1 \right)
=0.
\end{equation}
This also implies that each summand vanishes.

Note that each $a_j \in S$ can be uniquely expressed as
\begin{equation}
a_j = a_{j,0} + a_{j,1}\chi_{j-1} + a_{j,2}\chi_{j} + a_{j,3}\chi_{j+1} + a_{j,4}\chi_{j-1}\chi_{j} +
a_{j,5}\chi_{j}\chi_{j+1} + a_{j,6}\chi_{j+1}\chi_{j-1} + a_{j,7}xyz
\end{equation}\label{eqn:ajExpression}
for some $a_{j,0}\in\mathbb{C}$, $a_{j,1}\in\mathbb{C}[[\chi_{j-1}]]$, $a_{j,2}\in\mathbb{C}[[\chi_{j}]]$,
 $a_{j,3}\in\mathbb{C}[[\chi_{j+1}]]$, $a_{j,4}\in\mathbb{C}[[\chi_{j-1},\chi_{j}]]$, $a_{j,5}\in\mathbb{C}[[\chi_{j},\chi_{j+1}]]$, $a_{j,6}\in\mathbb{C}[[\chi_{j+1},\chi_{j-1}]]$ and $a_{j,7}\in S$. Substituting this into the $j$-th summand in \ref{eqn:CokerEqn}, simplifying using proposition \ref{prop:ChiRelations}, and comparing coefficients
of each monomials yield the following:
$$
a_{j,0} = a_{j,2} = a_{j+1,0} = a_{j+1,2} = 0
\quad\text{and}
$$
$$
\Lambda_{j+1}^+\chi_{j+1}^{w_{j+1}^++1}\left(a_{j+1,1}\chi_j
+ a_{j+1,4}\chi_j\chi_{j+1}\right) + \Lambda_{j}^-\chi_{j}^{w_{j}^-+1}\left(a_{j,3}\chi_{j+1} + a_{j,5}\chi_{j}\chi_{j+1}\right)
= 0.
$$
From the latter one, we can write
$$
a_{j,3}\chi_{j+1} + a_{j,5}\chi_{j}\chi_{j+1} = -\Lambda_{j+1}^+\chi_{j+1}^{w_{j+1}^++1}b_{j+1}
\quad\text{and}\quad
a_{j+1,1}\chi_{j} + a_{j+1,4}\chi_{j}\chi_{j+1} = \Lambda_{j}^-\chi_{j}^{w_{j}^-+1}b_{j+1}
$$
for some $b_{j+1}\in S$.

Substituting into equation \ref{eqn:ajExpression} what we have earned so far, we get
$$
a_j = \Lambda_{j-1}^-\chi_{j-1}^{w_{j-1}^-+1}b_{j} - \Lambda_{j+1}^+\chi_{j+1}^{w_{j+1}^++1}b_{j+1} +
\left(a_{j,6} + a_{j,7}\chi_{j}\right)\chi_{j+1}\chi_{j-1}
$$
for any $j\in\left\{1,\dots,3\tau\right\}$, or equivalently,
$$
a = \sum_{j=1}^{3\tau}\left(b_jR_j + \left(a_{j,6}+a_{j,7}\chi_j\right)R_{j\#}\right).
$$
Thus $a$ is an $S$-linear combination of the columns $R_k$ and $R_{k\#}$, that is, $a \in \operatorname{im}\left(\varphi_{{0}} \left|\varphi_{{0\#}}\right.\right)$.
Consequently, the sequence \ref{eqn:ExactSequence2} is exact.
\\
\\
\noindent\textbf{Step 2}: Establish a free resolution of $M_S$ in special cases.

Here we first consider the case where $w_j\ge0$ for all $j$. Then we have $w_j^+ = w_j$, $w_j^- = 0$, $w_j' = w_j + 2$ and $\Lambda_j^- = 1$ for all $j$. Relations $H_j = \chi_{j+1} G_j$ show that $\tilde{M}$ is generated by only $3\tau$ elements $G_j$ $\left(j\in\mathbb{Z}_{3\tau}\right)$ in $A^\tau$ and hence $M_{{0}} = \tilde{M}$. We denote $\pi_{{0}}$ by $\tilde{\pi}$. One can also notice $\varphi_{{0}} = \varphi$.

Meanwhile, one can observe in Corollary \ref{cor:Psi} that $\psi_{ab}$ is a multiple of $\chi_{a+1} \chi_{b}$ for any $a$, $b\in\mathbb{Z}_{3\tau}$. Therefore, the $b$-th column of $\psi$ is a multiple of $\chi_{b}$ and can be written as $\chi_{b} v_b$ for some $v_b \in S^{3\tau}$. Using the equation
$\varphi \psi = xyz I_{3\tau}$, we now have
$$
\chi_{b} \varphi v_b = xyz \mathbf{e}_{G^b}.
$$
Since $S$ is an integral domain, we may cancel out common terms $\chi_{b}$ in both sides. The result
is
$$
\varphi v_b
=
\chi_{b-2} \chi_{b-1} \mathbf{e}_{G^b},
$$
where the right side is the $b$-th column of $\varphi_{{0\#}}$. This proves $\operatorname{im}\varphi_{{0+}} \subseteq\operatorname{im}\varphi$ and the exact sequence \ref{eqn:ExactSequence2} yields a new exact sequence
$$
\begin{tikzcd}[column sep = 20pt] 
  0 \arrow[r] & S^{3\tau} \arrow[r, "\varphi"] & S^{3\tau} \arrow[r, "\tilde{\pi}"] & \tilde{M}_S \arrow[r] & 0.
\end{tikzcd}
$$
From this we deduce $\tilde{M} \cong \operatorname{coker}\mathunderbar{\varphi}$\ as $A$-modules, which guarantees that $\tilde{M}$ is already a maximal Cohen-Macaulay $A$-module and hence $\tilde{M} = M$. Setting $\pi := \tilde{\pi}$, we get the desired free resolution \ref{eqn:ExactSequence1} in the theorem.

We can handle the case where $w_j\le0$ for all $j$ in a similar way but using some coordinate translation in the matrix. We leave it to the readers. For the other cases, we move on to the next step.
\\
\\
\noindent\textbf{Step 3}: Get a (part of) free resolution of $\tilde{M}_S$ in the other cases.

In the rest cases, some of $w_j$ are positive and some are negative. Therefore, some of $\delta_j$ are $1$ and some are $0$. Let's say the value of $\delta_j$ changes $2\xi$ times for some $\xi\in\mathbb{Z}_{\ge1}$ with
$2\xi\le3\tau$. Then there are $2\xi$ different integers $\imath_1$, $\jmath_1$,\dots ,$\imath_\xi$, $\jmath_\xi$ in cyclic order in $\mathbb{Z}_{3\tau}$ such that
$$
\delta_{\jmath_\xi+1} = \cdots = \delta_{\imath_1} = 1,\quad
\delta_{\imath_1+1} = \cdots = \delta_{\jmath_1} = 0,\quad
\delta_{\jmath_1+1} = \cdots = \delta_{\imath_2} = 1,\quad
\dots,\quad
\delta_{\imath_\xi+1} = \cdots = \delta_{\jmath_\xi} = 0.
$$

By the discussion in the first paragraph in Subsection \ref{subsubsec:GeneratorsAndMacaulayfyingElements}, $\tilde{M}$ is generated by elements of $M_{{0}}=\left<G_1,\dots,G_{3\tau}\right>_A$ together with $H_{\jmath_1}, \dots, H_{\jmath_\xi}$. We denote it by $M_{{1}}:=\tilde{M}$. To resolve it, we enlarge the matrix $\pi_{{0}} \in A^{\tau\times3\tau}$ to a new matrix $\pi_{{1}} := \pi_{{1}} \left(w,\lambda,1\right) \in A^{\tau\times\left(3\tau+\xi\right)}$ by
inserting the new column $H_{\jmath_\nu} \in A^\tau$ between the columns $G_{\jmath_\nu}$ and $G_{\jmath_\nu+1}$
of $\pi_{{0}}$
for each $\nu\in\left\{1, \dots, \xi\right\}$. As a result, we get
$$
\pi_{{1}}
:=
\left(
\arraycolsep=2pt\def\arraystretch{1}
\begin{array}{c|c|c|c|c|c|c|c|c|c|c|c|c|c|c}
\color{colorG} G_1 & \color{colorG} \cdots & \color{colorG} G_{\jmath_1} & \color{colorH} H_{\jmath_1} & \color{colorG} G_{\jmath_1+1} & \color{colorG} \cdots & \color{colorG} G_{\jmath_2} & \color{colorH} H_{\jmath_2} & \color{colorG} G_{\jmath_2+1} & \color{colorG} \cdots & \color{colorG} G_{\jmath_\xi} & \color{colorH} H_{\jmath_\xi} & \color{colorG} G_{\jmath_\xi+1} & \color{colorG} \cdots & \color{colorG} G_{3\tau}
\end{array}
\right)_{\tau\times \left(3\tau+\xi\right)}.
$$
Then it can be viewed as an $S$-module map $\pi_{{1}}:S^{3\tau+\xi}\rightarrow \left(A_S\right)^\tau$ whose image is $\left(M_{{1}}\right)_S = \tilde{M}_S$. Restricting the codomain, we get the surjective map $\pi_{{1}}:S^{3\tau+\xi}\rightarrow \left(M_{{1}}\right)_S$.

Next, denote by
$$
\varphi_{{0}}
\left[
G^{j_1}: G^{j_2}
;
R_{k_1}: R_{k_2}
\right]
$$
the submatrix of $\varphi_{{0}}$ taking rows from $G^{j_1}$ to $G^{j_2}$ and columns from $R_{k_1}$
to $R_{k_2}$. Now, for each $\nu\in\left\{1,\dots,\xi\right\}$, look at submatrices
$$
\left(\varphi_{{0}}\right)_\nu^-
:=
\varphi_{{0}}
\left[
G^{\imath_\nu+2}: G^{\jmath_\nu+1};R_{\imath_\nu+2}: R_{\jmath_\nu+1}
\right]
\quad\text{and}\quad
\left(\varphi_{{0}}\right)_{\nu+1}^+
:=
\varphi_{{0}} \left[G^{\jmath_\nu}: G^{\imath_{\nu+1}-1};R_{\jmath_\nu+1}: R_{\imath_{\nu+1}}\right]
$$
of $\varphi_{{0}}$, which are highlighted in Figure \ref{fig:Step3BeforeModifying}
respectively as a red box and a blue box. Check that the entries are the same but only expressions have changed from the original definition \ref{eqn:DefinitionOfTtildePhiS} of $\varphi_{{0}}$, using the inequalities $w_{\imath_\nu+1}, \dots, w_{\jmath_\nu}\le0$ and $w_{\jmath_\nu+1}, \dots, w_{\imath_{\nu+1}}\ge0$.

\begin{figure}[H]
\adjustbox{scale=0.7,center}{$
\begin{tikzpicture}
\setcounter{MaxMatrixCols}{100}
\matrix(MatrixPhi)[matrix of math nodes, nodes in empty cells] 
{
& \hspace{5mm} & \scalebox{1}{$\color{colorR} R_{\imath_\nu}$} & \scalebox{1}{$\color{colorR} R_{\imath_\nu + 1}$} & \scalebox{1}{$\color{colorR} R_{\imath_\nu + 2}$} &  &  & \scalebox{1}{$\color{colorR} R_{\jmath_\nu}$} & \scalebox{1}{$\color{gray} R_{\jmath_\nu + 1}$} & \scalebox{1}{$\color{colorR} R_{\jmath_\nu + 2}$} &  &  & \scalebox{1}{$\color{colorR} \hspace{-6mm} R_{\imath_{\nu+1}}$}
\\[7mm]
\color{colorG} G^{\imath_\nu - 1} & & -\Lambda_{\imath_\nu}^+ \chi_{\imath_\nu}^{w_{\imath_\nu}+1} & & & & & & & & & &
\\[5mm]
\color{colorG} G^{\imath_\nu} & & \chi_{\imath_\nu-1} & -\chi_{\imath_\nu+1} & & & & & & & & &
\\[5mm]
\color{colorG} G^{\imath_\nu + 1} & & & \chi_{\imath_\nu} & -\chi_{\imath_\nu+2} & & & & & & & &
\\[5mm]
\color{colorG} G^{\imath_\nu + 2} & & & & \Lambda_{\imath_\nu+1}^- \chi_{\imath_\nu+1}^{-w_{\imath_\nu+1} +1} & \hspace{-6mm} - \chi_{\imath_{\nu}+3} & \cdots & 0 & \color{gray} 0 & & & &
\\[5mm]
& & & & 0 & \hspace{-6mm} \Lambda_{\imath_{\nu}+2}^- \chi_{\imath_{\nu}+2}^{-w_{\imath_{\nu}+2} +1} & \ddots & \vdots & \color{gray} \vdots & & & &
\\[5mm]
& & & & \vdots & \hspace{-6mm} \vdots & \ddots & -\chi_{\jmath_\nu} & \color{gray} 0 & & & &
\\[5mm]
\color{colorG} G^{\jmath_\nu} & & & & 0 & \hspace{-6mm} 0 & \cdots & \Lambda_{\jmath_\nu-1}^- \chi_{\jmath_\nu-1}^{-w_{\jmath_\nu-1}+1} & \color{gray} - \Lambda_{\jmath_\nu+1}^+ \chi_{\jmath_\nu+1}^{w_{\jmath_\nu+1} +1} & 0 & \cdots & 0 & \hspace{-6mm} 0
\\[5mm]
\color{colorG} G^{\jmath_\nu + 1} & & & & 0 & \hspace{-6mm} 0 & \cdots & 0 & \color{gray} \quad\Lambda_{\jmath_\nu}^- \chi_{\jmath_\nu}^{-w_{\jmath_\nu} +1}\quad & - \Lambda_{\jmath_\nu+2}^+ \chi_{\jmath_\nu+2}^{w_{\jmath_\nu+2} +1} & \cdots & 0 & \hspace{-6mm} 0
\\[5mm]
& & & & & & & & \color{gray} 0 & \chi_{\jmath_\nu+1} & \ddots & \vdots & \hspace{-6mm} \vdots
\\[5mm]
& & & & & & & & \color{gray} \vdots & \vdots & \ddots & -\Lambda_{\imath_{\nu+1}-1}^+ \chi_{\imath_{\nu+1}-1}^{w_{\imath_{\nu+1}-1} +1} & \hspace{-6mm} 0
\\[5mm]
\color{colorG} G^{\imath_{\nu+1} - 1} & & & & & & & & \color{gray} 0 & 0 & \cdots & \chi_{\imath_{\nu+1}-2} & \hspace{-6mm} -\Lambda_{\imath_{\nu+1}}^+ \chi_{\imath_{\nu+1}}^{w_{\imath_{\nu+1}} +1}
\\
};

\draw[color=red] (MatrixPhi-5-5.north west) rectangle (MatrixPhi-9-9.south east);
\draw[color=blue] (MatrixPhi-8-9.north west) rectangle (MatrixPhi-12-13.south east);

\draw[blue] ($0.5*(MatrixPhi-2-3.north east)+0.5*(MatrixPhi-2-3.north east)$) -- ($0.5*(MatrixPhi-2-3.south east)+0.5*(MatrixPhi-2-3.south east)$);
\draw[blue] ($0.5*(MatrixPhi-2-3.south west)+0.5*(MatrixPhi-2-3.south west)$) -- ($0.5*(MatrixPhi-2-3.south east)+0.5*(MatrixPhi-2-3.south east)$);

\node[fit=(MatrixPhi-2-4)(MatrixPhi-2-4)]{$\color{blue} \left(\varphi_{{0}}\right)_{\nu}^+$};

\node[fit=(MatrixPhi-10-5)(MatrixPhi-10-7)]{$\color{red} \left(\varphi_{{0}}\right)_\nu^-$};
\node[fit=(MatrixPhi-7-11)(MatrixPhi-7-13)]{$\color{blue} \left(\varphi_{{0}}\right)_{\nu+1}^+$};

\node[fit=(MatrixPhi-1-6)(MatrixPhi-1-7)]{\scalebox{1}{$\color{colorR} \hspace{-6mm} \cdots$}};
\node[fit=(MatrixPhi-1-10)(MatrixPhi-1-13)]{\scalebox{1}{$\color{colorR} \cdots$}};
\node[fit=(MatrixPhi-6-1)(MatrixPhi-7-1)]{$\color{colorG} \vdots$};
\node[fit=(MatrixPhi-10-1)(MatrixPhi-11-1)]{$\color{colorG} \vdots$};

\tikzset{
    cheating dash/.code args={on #1 off #2}{
        \csname tikz@addoption\endcsname{%
            \pgfgetpath\currentpath%
            \pgfprocessround{\currentpath}{\currentpath}%
            \csname pgf@decorate@parsesoftpath\endcsname{\currentpath}{\currentpath}%
            \pgfmathparse{\csname pgf@decorate@totalpathlength\endcsname-#1}\let\rest=\pgfmathresult%
            \pgfmathparse{#1+#2}\let\onoff=\pgfmathresult%
            \pgfmathparse{max(floor(\rest/\onoff), 1)}\let\nfullonoff=\pgfmathresult%
            \pgfmathparse{max((\rest-\onoff*\nfullonoff)/\nfullonoff+#2, #2)}\let\offexpand=\pgfmathresult%
            \pgfsetdash{{#1}{\offexpand}}{0pt}}%
    }
}
\path(MatrixPhi-8-1.south) -- (MatrixPhi-9-1.north) coordinate[midway] (Y);
\draw[cheating dash=on 3pt off 3pt,gray] (Y -| MatrixPhi.west) -- (Y -| MatrixPhi.east);

\end{tikzpicture}
$}
\caption{Submatrix $\varphi_{{0}}
\left[
G^{\imath_\nu-1}: G^{\imath_{\nu+1}-1}
;
R_{\imath_\nu}: R_{\imath_{\nu+1}}
\right]$
of $\varphi_{{0}}$}
\label{fig:Step3BeforeModifying}
\end{figure}

Note that the diagonal entries of $\left(\varphi_{{0}}\right)_\nu^-$ and $\left(\varphi_{{0}}\right)_{\nu+1}^+$ can be read off respectively as
\begin{align}\label{eqn:DiagonalsOfPhi0}
\begin{split}
\operatorname{diag} \left(\varphi_{{0}}\right)_\nu^-
&=
\left(
\Lambda_{\imath_\nu+1}^- \chi_{\imath_\nu+1}^{-w_{\imath_\nu+1} - \delta_{\imath_\nu+1} +1},
\dots,
\Lambda_{\jmath_\nu}^- \chi_{\jmath_\nu}^{-w_{\jmath_\nu} - \delta_{\jmath_\nu} +1}
\right)
\in
S^{\jmath_\nu - \imath_\nu}
\quad\text{and}
\\[2mm]
\operatorname{diag} \left(\varphi_{{0}}\right)_{\nu+1}^+
&=
\left(
- \Lambda_{\jmath_\nu+1}^+ \chi_{\jmath_\nu+1}^{w_{\jmath_\nu+1} + \delta_{\jmath_\nu+1}},
\dots,
-\Lambda_{\imath_{\nu+1}}^+ \chi_{\imath_{\nu+1}}^{w_{\imath_{\nu+1}} + \delta_{\imath_{\nu+1}} +1}
\right)
\in
S^{\imath_{\nu+1} - \jmath_\nu}
\end{split}
\end{align}
by the identities $\delta_{\imath_\nu+1}=\cdots=\delta_{\jmath_\nu}=0$ and $\delta_{\jmath_\nu+1}=\cdots=\delta_{\imath_{\nu+1}}=1$, where $\operatorname{diag}B$ denotes the tuple consisting of the main diagonal entries of $B$ for any matrix $B$.

Now we modify the matrix $\varphi_{{0}} \in S^{3\tau \times 3\tau}$ to a new matrix $\varphi_{{1}} \in S^{\left(3\tau+\xi\right) \times \left(3\tau+2\xi\right)}$ by performing the following procedure for each $\nu\in\left\{1,\dots,\xi\right\}$:
\begin{itemize}
\item
Multiply the columns $\color{colorR} R_{\imath_\nu+1}$, $\color{colorR} \dots$, $\color{colorR} R_{\jmath_\nu}$ by $-1$,
\item
Insert a new row $\color{colorH}H^{\jmath_\nu}$ consisting of zeros between the rows $\color{colorG} G^{\jmath_\nu}$ and $\color{colorG} G^{\jmath_\nu+1}$, and then
\item
Replace column $\color{gray} R_{\jmath_\nu+1}$ with the new three columns
$\color{colorT} T_{\jmath_\nu}$,
$\color{colorT} T_{\jmath_\nu+1}$
and
$\color{colorT} T_{\jmath_\nu+2}$
\end{itemize}
as described in Figure \ref{fig:Step3AfterModifying}. We keep the same names for the original rows and
columns, although their size and entries may have changed. But submatrices $\left(\varphi_{{0}}\right)_{\nu}^-$ and $\left(\varphi_{{0}}\right)_{\nu+1}^+$ of $\varphi_{{0}}$ are readjusted to submatrices
$$
\left(\varphi_{{1}}\right)_\nu^-
:=
\scalebox{1}{$
\varphi_{{1}}
\left[
G^{\imath_\nu+2}: H^{\jmath_\nu};R_{\imath_\nu+2}: T_{\jmath_\nu}
\right]
$}
\quad\text{and}\quad
\left(\varphi_{{1}}\right)_{\nu+1}^+
:=
\scalebox{1}{$
\varphi_{{1}} \left[H^{\jmath_\nu}: G^{\imath_{\nu+1}-1};T_{\jmath_\nu+2}: R_{\imath_{\nu+1}}\right]
$}
$$
of $\varphi_{{1}}$, respectively. Check that the replaced column $R_{\jmath_\nu+1}$ is an $S$-linear combination of the new columns, namely
$$
-\Lambda_{\jmath_\nu+1}^+ \chi_{\jmath_\nu+1}^{w_{\jmath_\nu+1}} T_{\jmath_\nu}
+
\Lambda_{\jmath_\nu}^- \chi_{\jmath_\nu}^{-w_{\jmath_\nu}} T_{\jmath_\nu+2},
$$
meaning that the presence of the replaced column does not affect the image of $\varphi_{{1}}$. We also shifted indices in the upper diagonal entries of $\left(\varphi_{{1}}\right)_\nu^-$ by $3$,  which does not actually change the entries, to make them consecutive with those in the lower diagonal of $\left(\varphi_{{1}}\right)_{\nu+1}^+$.

\begin{figure}[h]
\adjustbox{scale=0.67,center}{$
\begin{tikzpicture}[column sep = -7.5pt]
\setcounter{MaxMatrixCols}{100}
\matrix (MatrixPhi)[matrix of math nodes, nodes in empty cells]
{
& \hspace{7mm} & \scalebox{1}{$\color{colorR} R_{\imath_\nu}$} & \scalebox{1}{$\color{colorR} R_{\imath_\nu + 1}$} & \color{colorR} R_{\imath_\nu+2} & \color{colorR} \hspace{-6mm} R_{\imath_\nu+3} & \color{colorR} \cdots & \color{colorR}
R_{\jmath_\nu} & \color{colorT} T_{\jmath_\nu} & \color{colorT} T_{\jmath_\nu+1} & \color{colorT} T_{\jmath_\nu+2} & \color{colorR} R_{\jmath_\nu+2} & \color{colorR} \cdots & \color{colorR} R_{\imath_{\nu+1}-1} & \color{colorR}
\hspace{-6mm} R_{\imath_{\nu+1}}
\\[5mm]
\color{colorG} G^{\imath_\nu - 1} & & -\Lambda_{\imath_\nu}^+ \chi_{\imath_\nu}^{w_{\imath_\nu}+1} & & & & & & & & & & & &
\\[5mm]
\color{colorG} G^{\imath_\nu} & & \chi_{\imath_\nu-1} & \chi_{\imath_\nu+1} & & & & & & & & & & &
\\[5mm]
\color{colorG} G^{\imath_\nu + 1} & & & -\chi_{\imath_\nu} & \chi_{\imath_\nu+2} & & & & & & & & &
&
\\[5mm]
\color{colorG} G^{\imath_\nu + 2} & & & & -\Lambda_{\imath_\nu+1}^- \chi_{\imath_\nu+1}^{-w_{\imath_\nu+1} +1} & \hspace{-6mm} \chi_{\imath_{\nu}} & \cdots & 0 & \color{colorT} 0 & & & & & &
\\[5mm]
 & & & & 0 & \hspace{-6mm} -\Lambda_{\imath_{\nu}+2}^- \chi_{\imath_{\nu}+2}^{-w_{\imath_{\nu}+2} +1} & \ddots & \vdots & \color{colorT} \vdots & & & & & &
\\[5mm]
 & & & & \vdots & \hspace{-6mm} \vdots & \ddots & \chi_{\jmath_\nu-3} & \color{colorT} 0 & & & & & &
 \\[5mm]
\color{colorG} G^{\jmath_\nu} & & & & 0 & \hspace{-6mm} 0 & \cdots & -\Lambda_{\jmath_\nu-1}^- \chi_{\jmath_\nu-1}^{-w_{\jmath_\nu-1} +1} & \color{colorT} \chi_{\jmath_\nu-2} & & & & & &
\\[5mm]
\color{colorH} H^{\jmath_\nu} & & & & \color{colorH} 0 & \color{colorH} \hspace{-6mm} 0 & \color{colorH} \cdots & \color{colorH} 0 & \color{colorT} -\Lambda_{\jmath_\nu}^- \chi_{\jmath_\nu}^{-w_{\jmath_\nu} } & \color{colorT} \qquad\chi_{\jmath_\nu-1}\qquad & \color{colorT} -\Lambda_{\jmath_\nu+1}^+ \chi_{\jmath_\nu+1}^{w_{\jmath_\nu+1}} & \color{colorH} 0 & \color{colorH} \cdots & \color{colorH} 0 & \color{colorH} \hspace{-6mm} 0
\\[5mm]
\color{colorG} G^{\jmath_\nu + 1} & & & & & & & & & & \color{colorT} \chi_{\jmath_\nu} & - \Lambda_{\jmath_\nu+2}^+ \chi_{\jmath_\nu+2}^{w_{\jmath_\nu+2} +1} & \cdots & 0 & \hspace{-6mm} 0
\\[5mm]
 & & & & & & & & & & \color{colorT} 0 & \chi_{\jmath_\nu+1} & \ddots & \vdots & \hspace{-6mm} \vdots
 \\[5mm]
 & & & & & & & & & & \color{colorT} \vdots & \vdots & \ddots & -\Lambda_{\imath_{\nu+1}-1}^+ \chi_{\imath_{\nu+1}-1}^{w_{\imath_{\nu+1}-1} +1} & \hspace{-6mm} 0
\\[5mm]
\color{colorG} G^{\imath_{\nu+1} - 1} & & & & & & & & & & \color{colorT} 0 & 0 & \cdots & \chi_{\imath_{\nu+1}-2} & \hspace{-6mm} -\Lambda_{\imath_{\nu+1}}^+ \chi_{\imath_{\nu+1}}^{w_{\imath_{\nu+1}} +1}
\\
};

\draw[color=red] (MatrixPhi-5-5.north west) rectangle (MatrixPhi-9-9.south east);
\draw[color=blue] (MatrixPhi-9-11.north west) rectangle (MatrixPhi-13-15.south east);

\draw[blue] ($0.5*(MatrixPhi-2-3.north east)+0.5*(MatrixPhi-2-3.north east)$) -- ($0.5*(MatrixPhi-2-3.south east)+0.5*(MatrixPhi-2-3.south east)$);
\draw[blue] ($0.5*(MatrixPhi-2-3.south west)+0.5*(MatrixPhi-2-3.south west)$) -- ($0.5*(MatrixPhi-2-3.south east)+0.5*(MatrixPhi-2-3.south east)$);

\node[fit=(MatrixPhi-2-4)(MatrixPhi-2-4)]{$\color{blue} \left(\varphi_{{1}}\right)_{\nu}^+$};

\node[fit=(MatrixPhi-10-6)(MatrixPhi-10-7)]{$\color{red} \left(\varphi_{{1}}\right)_{\nu}^-$};
\node[fit=(MatrixPhi-8-13)(MatrixPhi-8-14)]{$\color{blue} \left(\varphi_{{1}}\right)_{\nu+1}^+$};

\node[fit=(MatrixPhi-6-1)(MatrixPhi-7-1)]{$\color{colorG} \vdots$};
\node[fit=(MatrixPhi-11-1)(MatrixPhi-12-1)]{$\color{colorG} \vdots$};

\end{tikzpicture}
$}
\caption{Submatrix $\varphi_{{1}} \left[G^{\imath_\nu-1}: G^{\imath_{\nu+1}-1}; R_{\imath_\nu}: R_{\imath_{\nu+1}}\right]$ of $\varphi_{{1}}$}
\label{fig:Step3AfterModifying}
\end{figure}

The construction of $\varphi_{{1}}$ is based on the relations
$$
-\chi_{\jmath_\nu-2} G_{\jmath_\nu}
+
\Lambda_{\jmath_\nu}^- \chi_{\jmath_\nu}^{-w_{\jmath_\nu}} H_{\jmath_\nu}
= 0,\quad
\chi_{\jmath_\nu-1} H_{\jmath_\nu}
= 0\quad\text{and}\quad
\Lambda_{\jmath_\nu+1}^+ \chi_{\jmath_\nu+1}^{w_{\jmath_\nu+1}} H_{\jmath_\nu}
-
\chi_{\jmath_\nu} G_{\jmath_\nu+1}
=0
$$
on $G_{\jmath_\nu}$, $H_{\jmath_\nu}$ and $G_{\jmath_\nu+1}$ for $\nu\in\left\{1,\dots,\xi\right\}$, already observed in equation \ref{eqn:RelationsOnH}. This enables the matrix equation $\pi_{{1}} \varphi_{{1}} = 0$ to hold.

Comparing the exponents of the diagonal entries of $\left(\varphi_{{1}}\right)_\nu^-$ and $\left(\varphi_{{1}}\right)_{\nu+1}^+$
with those of $\left(\varphi_{{0}}\right)_\nu^-$ and $\left(\varphi_{{0}}\right)_{\nu+1}^+$, only the last one of $\left(\varphi_{{1}}\right)_\nu^-$ and the first
one of $\left(\varphi_{{1}}\right)_{\nu+1}^+$ have been changed. Now we can rewrite \ref{eqn:DiagonalsOfPhi0}
for those as
\begin{align*}
\operatorname{diag} \left(\varphi_{{1}}\right)_\nu^-
&=
\left(
-\Lambda_{\imath_\nu+1}^- \chi_{\imath_\nu+1}^{-w_{\imath_\nu+1} - \delta_{\imath_\nu+1} - \delta_{\imath_\nu+2} +1},
\dots,
- \Lambda_{\jmath_\nu}^- \chi_{\jmath_\nu}^{-w_{\jmath_\nu} - \delta_{\jmath_\nu} - \delta_{\jmath_\nu+1} +1}
\right)
\in
S^{\jmath_\nu - \imath_\nu}
\quad\text{and}
\\[2mm]
\operatorname{diag} \left(\varphi_{{1}}\right)_{\nu+1}^+
&=
\left(
- \Lambda_{\jmath_\nu+1}^+ \chi_{\jmath_\nu+1}^{w_{\jmath_\nu+1} + \delta_{\jmath_\nu} + \delta_{\jmath_\nu+1} - 1},
\dots,
-\Lambda_{\imath_{\nu+1}}^+ \chi_{\imath_{\nu+1}}^{w_{\imath_{\nu+1}} + \delta_{\imath_{\nu+1}-1} + \delta_{\imath_{\nu+1}} -1}
\right)
\in
S^{\imath_{\nu+1} - \jmath_\nu}
\end{align*}
using $\delta_{\imath_\nu+1}=\cdots=\delta_{\jmath_\nu}=0$ and $\delta_{\jmath_\nu+1}=\cdots=\delta_{\imath_{\nu+1}}=1$ again. Taking one step further, by the conversion formula
$$
w_j' = w_j + \delta_{j-1} + \delta_j + \delta_{j+1} -1
\quad
\left(j\in\mathbb{Z}_{3\tau}\right).
$$
in Definition \ref{def:ConversionFromBandtoLoop}, these can also be written as
\allowdisplaybreaks
\begin{align}\label{eqn:DiagonalsOfPhi1}
\begin{split}
\operatorname{diag} \left(\varphi_{{1}}\right)_\nu^-
&=
\scalebox{0.83}{$
\left(
-\Lambda_{\imath_\nu+1}^- \chi_{\imath_\nu+1}^{-w_{\imath_\nu+1} - \delta_{\imath_\nu} - \delta_{\imath_\nu+1} - \delta_{\imath_\nu+2} +2},
-\Lambda_{\imath_\nu+2}^- \chi_{\imath_\nu+2}^{-w_{\imath_\nu+2} - \delta_{\imath_\nu+1} - \delta_{\imath_\nu+2} - \delta_{\imath_\nu+3} +1},
\dots,
- \Lambda_{\jmath_\nu}^- \chi_{\jmath_\nu}^{-w_{\jmath_\nu} - \delta_{\jmath_\nu-1} - \delta_{\jmath_\nu} - \delta_{\jmath_\nu+1} +1}
\right)
$}
\\[2mm]
&=
\left(
-\Lambda_{\imath_\nu+1}^- \chi_{\imath_\nu+1}^{-w_{\imath_\nu+1}' +1},
-\Lambda_{\imath_\nu+2}^- \chi_{\imath_\nu+2}^{-w_{\imath_\nu+2}'},
\dots,
- \Lambda_{\jmath_\nu}^- \chi_{\jmath_\nu}^{-w_{\jmath_\nu}'}
\right)
\in
S^{\jmath_\nu - \imath_\nu}
\quad\text{and}
\\[2mm]
\operatorname{diag} \left(\varphi_{{1}}\right)_{\nu+1}^+
&=
\scalebox{0.83}{$
\left(
- \Lambda_{\jmath_{\nu}+1}^+ \chi_{\jmath_{\nu}+1}^{w_{\jmath_{\nu}+1} + \delta_{\jmath_{\nu}} + \delta_{\jmath_{\nu}+1} + \delta_{\jmath_{\nu}+2}- 2},
\dots,
-\Lambda_{\imath_{\nu+1}-1}^+ \chi_{\imath_{\nu+1}-1}^{w_{\imath_{\nu+1}-1} + \delta_{\imath_{\nu+1}-2} + \delta_{\imath_{\nu+1}-1} + \delta_{\imath_{\nu+1}} -2},
-\Lambda_{\imath_{\nu+1}}^+ \chi_{\imath_{\nu+1}}^{w_{\imath_{\nu+1}} + \delta_{\imath_{\nu+1}-1} + \delta_{\imath_{\nu+1}} + \delta_{\imath_{\nu+1}+1} -1}
\right)
$}
\\[2mm]
&=
\left(
- \Lambda_{\jmath_{\nu}+1}^+ \chi_{\jmath_{\nu}+1}^{w_{\jmath_{\nu}+1}' -1},
\dots,
-\Lambda_{\imath_{\nu+1}-1}^+ \chi_{\imath_{\nu+1}-1}^{w_{\imath_{\nu+1}-1}' -1},
-\Lambda_{\imath_{\nu+1}}^+ \chi_{\imath_{\nu+1}}^{w_{\imath_{\nu+1}}'}
\right)
\in
S^{\imath_{\nu} - \jmath_{\nu-1}}.
\end{split}
\end{align}

We also enlarge the matrix $\varphi_{{0\#}} \in S^{3\tau \times 3\tau}$ to a new matrix $\varphi_{{1\#}} \in S^{\left(3\tau+\xi\right)\times 3\tau}$ by
inserting a new row $H^{\jmath_\nu\#}$ consisting of zeros between the rows $G^{\left(\jmath_\nu+1\right)\#}$ and $G^{\left(\jmath_\nu+2\right)\#}$ of $\varphi_{{0\#}}$
for each $\nu\in\left\{1,\dots,\xi\right\}$. We use the same name $R_{k\#}$ for the $k$-th column. Then it is easy to see that $\pi_{{1}}\varphi_{{1\#}} = 0$. Moreover, we have the inclusion $\operatorname{im}\varphi_{{1\#}}\subseteq \operatorname{im}\varphi_{{1}}$.
Indeed, the equations
\newcommand{\tagarray}{
\mbox{}\refstepcounter{equation}%
$(\theequation)$%
}
\setlength\tabcolsep{1pt}
\begin{longtable}{ccccl}
$R_{\left(\imath_\nu+1\right)\#}$
&$=$&
$\chi_{\imath_\nu-1} \chi_{\imath_\nu} \mathbf{e}_{G^{\imath_\nu+1}}$
&$=$&
$\chi_{\imath_\nu} R_{\imath_\nu+2}
+
\Lambda_{\imath_\nu+1}^- \chi_{\imath_\nu+1}^{-w_{\imath_\nu+1}}
R_{\left(\imath_\nu+2\right)\#}$
\\
& $\vdots$ & & $\vdots$ &
\\
$R_{\left(\jmath_\nu-1\right)\#}$
&$=$&
$\chi_{\jmath_\nu-3} \chi_{\jmath_\nu-2} \mathbf{e}_{G^{\jmath_\nu-1}}$
&$=$&
$\chi_{\jmath_\nu-2} R_{\jmath_\nu}
+
\Lambda_{\jmath_\nu-1}^- \chi_{\jmath_\nu-1}^{-w_{\jmath_\nu-1}}
R_{\jmath_\nu\#}$
\\[3mm]
$R_{\jmath_\nu\#}$
&$=$&
$\chi_{\jmath_\nu-2} \chi_{\jmath_\nu-1} \mathbf{e}_{G^{\jmath_\nu}}$
&$=$&
$\chi_{\jmath_\nu-1} T_{\jmath_\nu}
+
\Lambda_{\jmath_\nu}^- \chi_{\jmath_\nu}^{-w_{\jmath_\nu}} T_{\jmath_\nu+1}$ \hspace{-123.7mm}\tagarray\label{eqn:GeneratingRSharp}
\\[3mm]
$R_{\left(\jmath_\nu+1\right)\#}$
&$=$&
$\chi_{\jmath_\nu-1} \chi_{\jmath_\nu} \mathbf{e}_{G^{\jmath_\nu+1}}$
&$=$&
$\Lambda_{\jmath_\nu+1}^+ \chi_{\jmath_\nu+1}^{w_{\jmath_\nu+1}} T_{\jmath_\nu+1}
+
\chi_{\jmath_\nu-1} T_{\jmath_\nu+2}$
\\[3mm]
$R_{\left(\jmath_\nu+2\right)\#}$
&$=$&
$\chi_{\jmath_\nu} \chi_{\jmath_\nu+1} \mathbf{e}_{G^{\jmath_\nu+2}}$
&$=$&
$\Lambda_{\jmath_\nu+2}^+ \chi_{\jmath_\nu+2}^{w_{\jmath_\nu+2}} R_{\left(\jmath_\nu+1\right)\#}
+
\chi_{\jmath_\nu} R_{\jmath_\nu+2}$
\\
& $\vdots$ & & $\vdots$ &
\\
$R_{\imath_{\nu+1}\#}$
&$=$&
$\chi_{\imath_{\nu+1}-2} \chi_{\imath_{\nu+1}-1} \mathbf{e}_{G^{\imath_{\nu+1}}}$
&$=$&
$\Lambda_{\imath_{\nu+1}}^+ \chi_{\imath_{\nu+1}}^{w_{\imath_{\nu+1}}} R_{\left(\imath_{\nu+1}-1\right)\#}
+
\chi_{\imath_{\nu+1}-2} R_{\imath_{\nu+1}}$
\end{longtable}
\noindent show that the columns $R_{\left(\imath_\nu+1\right)\#}$ through $R_{\imath_{\nu+1}\#}$ of $\varphi_{{1\#}}$
are spanned by the columns of $\varphi_{{1}}$. This holds for all $\nu\in\left\{1,\dots,\xi\right\}$. Thus, we have $\operatorname{im}\varphi_{{1\#}}\subseteq \operatorname{im}\varphi_{{1}}$.

Now we have the matrix equation $\pi_{{1}}\left(\varphi_{{1}}\left|\varphi_{{1\#}}\right.\right)=0$. We also checked in the below of equation \ref{eqn:RelationsOnH} in Subsection \ref{subsubsec:GeneratorsAndMacaulayfyingElements} that any element $s\in S$ satisfying $s H_{\jmath_\nu}\in \left(M_{{0}}\right)_S$ is $S$-generated by three elements
$$
- \Lambda_{\jmath_\nu}^- \chi_{\jmath_\nu}^{-w_{\jmath_\nu}},\quad
\chi_{\jmath_\nu-1}
\quad\text{and}\quad
- \Lambda_{\jmath_\nu+1}^+ \chi_{\jmath_\nu+1}^{w_{\jmath_\nu+1}}
$$
in $S$, which are the nonzero entries in the newly added row of $\varphi_{{1}}$.
Thus, we can apply Lemma \ref{lem:MatrixExpansionLemma} for each $\nu \in \left\{1,\dots,\xi\right\}$ to get a part of free resolution of $\left(M_{{1}}\right)_S = \tilde{M}_S$ as follows:
$$
\begin{tikzcd}[column sep = 20pt] 
S^{6\tau+2\xi} \arrow[rrr, "\left(\varphi_{{1}} \left|\varphi_{{1\#}}\right.\right)"] & & & S^{3\tau+\xi} \arrow[r, "\pi_{{1}}"] & \left(M_{{1}}\right)_S \arrow[r] & 0.
\end{tikzcd}
$$
Using $\operatorname{im}\varphi_{{1\#}}\subseteq \operatorname{im}\varphi_{{1}}$, we can drop $\varphi_{{1\#}}$ and have the following simplified one:
$$
\begin{tikzcd}[column sep = 20pt] 
S^{3\tau+2\xi} \arrow[r, "\varphi_{{1}}"] & S^{3\tau+\xi} \arrow[r, "\pi_{{1}}"] & \left(M_{{1}}\right)_S \arrow[r] & 0.
\end{tikzcd}
$$
\\
\noindent\textbf{Step 4}: Continue to establish a free resolution of $M_S$.

Recall that we have $2\xi$ different numbers $\imath_1$, $\jmath_1$,\dots ,$\imath_\xi$, $\jmath_\xi \in \mathbb{Z}_{3\tau}$ in cyclic order such that
$$
\delta_{\jmath_\xi+1} = \cdots = \delta_{\imath_1} = 1,\quad
\delta_{\imath_1+1} = \cdots = \delta_{\jmath_1} = 0,\quad
\delta_{\jmath_1+1} = \cdots = \delta_{\imath_2} = 1,\quad
\dots,\quad
\delta_{\imath_\xi+1} = \cdots = \delta_{\jmath_\xi} = 0.
$$
For each $\nu\in\left\{1,\dots,\xi\right\}$, we know that $w_{\imath_\nu}\ge1$ and all $w_{\imath_\nu+1}, \dots, w_{\jmath_\nu}$ must be non-positive
while at least one must be negative, by the definition of the sign word $\delta_j$. There is $\kappa_\nu \in \left\{0,\dots,\jmath_\nu-\imath_\nu-1\right\}$ for each $\nu$ such that
$$
w_{\imath_\nu} \ge1,
\quad
w_{\imath_\nu+1}=\cdots=w_{\imath_\nu+\kappa_\nu}=0
\quad\text{and}\quad
w_{\imath_\nu+\kappa_\nu+1}\le-1.
$$
Then we have a Macaulayfying element
$$
F_{\imath_\nu}
:=
\Lambda_{\imath_\nu}^+ \left(^1 X_{\imath_\nu-1}^{w_{\imath_\nu}+1}\right)
+
\sum_{a=0}^{\kappa_\nu} \left(\prod_{b=1}^a \Lambda_{\imath_\nu+b}^- \right)\left(^1X_{\imath_\nu+a}^1 \right)
+
\left(\prod_{b=1}^{\kappa_\nu+1} \Lambda_{\imath_\nu+b}^- \right) \left(^{-w_{\imath_\nu+\kappa_\nu+1}+1}X_{\imath_\nu+\kappa_\nu+1}^1\right)
$$
of $M_{{1}} = \tilde{M}$ in $A^\tau$ for each $\nu$, by the observation in Subsection \ref{subsubsec:GeneratorsAndMacaulayfyingElements}.

We know by Proposition \ref{prop:Macaulayfying}.(2) that $M = \left(M_{{1}}\right)^\dagger \cong \left<M_{{1}},F_{\imath_1},\dots,F_{\imath_\xi}\right>_A^\dagger$. Let $M_{{2}}:=\left<M_{{1}},F_{\imath_1},\dots,F_{\imath_\xi}\right>_A \subseteq A^\tau$, for a while. Later we will find out that $M_{{2}}$ is actually maximal Cohen-Macaulay and hence $M_{{2}}=M$. Therefore, the goal is to construct a free resolution of $\left(M_{{2}}\right)_S$.

We further enlarge the matrix $\pi_{{1}} \in A^{\tau\times\left(3\tau+\xi\right)}$ constructed in Step 3 to a new matrix $\pi_{{2}} \in A^{\tau\times(3\tau+2\xi)}$ by
inserting column $F_{\imath_\nu} \in A^\tau$ between the columns $G_{\imath_\nu-1}''$ and $G_{\imath_\nu}$ of $\pi_{{1}}$
for each $\nu\in\left\{1,\dots,\xi\right\}$. Here $G_{\imath_\nu-1}''$ is $H_{\imath_\nu-1}$ if $\delta_{\imath_\nu-1}=0$,
or equivalently $\jmath_{\nu-1}+1=\imath_\nu$, and is $G_{\imath_\nu-1}$ otherwise, as in equation \ref{eqn:DefinitionOfG'}. As a result, we get
$$
\pi_{{2}}
:=
\left(
\arraycolsep=2pt\def\arraystretch{1}
\begin{array}{c|c|c|c|c|c|c|c|c|c|c|c|c|c|c|c|c|c|c}
\color{colorG} G_1 & \color{colorG} \cdots & \color{colorG} G_{\imath_1-1} & \color{colorF} F_{\imath_1} & \color{colorG} G_{\imath_1} & \color{colorG} \cdots & \color{colorG} G_{\jmath_1} & \color{colorH} H_{\jmath_1} & \color{colorG} G_{\jmath_1+1} & \color{colorG} \cdots & \color{colorG} G_{\imath_\xi-1} & \color{colorF} F_{\imath_\xi} & \color{colorG} G_{\imath_\xi} & \color{colorG} \cdots & \color{colorG} G_{\jmath_\xi} & \color{colorH} H_{\jmath_\xi} & \color{colorG} G_{\jmath_\xi+1} & \color{colorG} \cdots & \color{colorG} G_{3\tau}
\end{array}
\right)_{\tau\times \left(3\tau+2\xi\right)}.
$$
Then it can be viewed as an $S$-module map $\pi_{{2}}:S^{3\tau+2\xi}\rightarrow \left(A_S\right)^\tau$ whose image is $\left(M_{{2}}\right)_S$. Restricting the codomain, we get the surjective map $\pi_{{2}}:S^{3\tau+2\xi}\rightarrow \left(M_{{2}}\right)_S$.

\begin{figure}[H]
\adjustbox{scale=0.67,center}{$
\begin{tikzpicture}[column sep = -7.5pt]
\setcounter{MaxMatrixCols}{100}
\matrix (MatrixPhi)[matrix of math nodes, nodes in empty cells]
{
& \hspace{7mm} & \color{colorT} T_{\jmath_{\nu-1}+2} & \hspace{-5mm} \color{colorR} R_{\jmath_{\nu-1}+2} & \color{colorR} \cdots & \color{colorR} R_{\imath_\nu-1} & \color{colorR} R_{\imath_\nu} & \quad\color{colorR} R_{\imath_\nu+1} & \color{colorR} R_{\imath_\nu+2} & \hspace{-5mm}\color{colorR} R_{\imath_\nu+3} & \color{colorR} \cdots & \color{colorR} R_{\jmath_\nu} & \color{colorT} T_{\jmath_\nu} & \color{colorT} T_{\jmath_\nu+1} & \color{colorT} T_{\jmath_\nu+2}
\\[5mm]
\color{colorH} H^{\jmath_{\nu-1}} & & -\Lambda_{\jmath_{\nu-1}+1}^+ \chi_{\jmath_{\nu-1}+1}^{w_{\jmath_{\nu-1}+1}'
-1} & \hspace{-5mm} 0 & \cdots & 0 & 0 & & & & & & & &
\\[5mm]
\color{colorG} G^{\jmath_{\nu-1} + 1} & & \chi_{\jmath_{\nu-1}} & \hspace{-5mm} - \Lambda_{\jmath_{\nu-1}+2}^+ \chi_{\jmath_{\nu-1}+2}^{w_{\jmath_{\nu-1}+2}' -1} & \cdots & 0 & 0 & & & & & & & &
\\[5mm]
 & & 0 & \hspace{-5mm} \chi_{\jmath_{\nu-1}+1} & \ddots & \vdots & \vdots & & & & & & & &
\\[5mm]
 & & \vdots & \hspace{-5mm} \vdots & \ddots & -\Lambda_{\imath_{\nu}-1}^+ \chi_{\imath_{\nu}-1}^{w_{\imath_{\nu}-1}' -1} & 0 & & & & & & & &
\\[5mm]
\color{colorG} G^{\imath_{\nu} - 1} & & 0 & \hspace{-5mm} 0 & \cdots & \chi_{\imath_{\nu}-2} & -\Lambda_{\imath_{\nu}}^+ \chi_{\imath_{\nu}}^{w_{\imath_{\nu}}'} & & & & & & & &
\\[5mm]
\color{colorG} G^{\imath_{\nu}} & & & & & & \chi_{{\imath_\nu}-1} & \quad\chi_{\imath_\nu+1} & & & & & & &
\\[5mm]
\color{colorG} G^{\imath_{\nu} + 1} & & & & & & & \quad-\chi_{\imath_\nu} & \chi_{\imath_\nu+2} & & & & & &
\\[5mm]
\color{colorG} G^{\imath_{\nu} + 2} & & & & & & & & -\Lambda_{\imath_\nu+1}^- \chi_{\imath_\nu+1}^{-w_{\imath_\nu+1}'+1} & \hspace{-5mm} \chi_{\imath_{\nu}} & \cdots & 0 & 0 & &
\\[5mm]
 & & & & & & & & 0 & \hspace{-5mm} -\Lambda_{\imath_{\nu}+2}^- \chi_{\imath_{\nu}+2}^{-w_{\imath_{\nu}+2}'} & \ddots & \vdots & \vdots & &
\\[5mm]
 & & & & & & & & \vdots & \hspace{-5mm} \vdots & \ddots & \chi_{\jmath_\nu-3} & 0 & &
\\[5mm]
\color{colorG} G^{\jmath_\nu} & & & & & & & & 0 & \hspace{-5mm} 0 & \cdots & -\Lambda_{\jmath_\nu-1}^- \chi_{\jmath_\nu-1}^{-w_{\jmath_\nu-1}'} & \chi_{\jmath_\nu-2} & &
\\[5mm]
\color{colorH} H^{\jmath_\nu} & & & & & & & & 0 & \hspace{-5mm} 0 & \cdots & 0 & -\Lambda_{\jmath_\nu}^- \chi_{\jmath_\nu}^{-w_{\jmath_\nu}'} & \qquad\chi_{\jmath_\nu-1} \qquad &  -\Lambda_{\jmath_\nu+1}^+ \chi_{\jmath_\nu+1}^{w_{\jmath_\nu+1}'}
\\
};

\draw[color=blue] (MatrixPhi-2-3.north west) rectangle (MatrixPhi-6-7.south east);
\draw[color=red] (MatrixPhi-9-9.north west) rectangle (MatrixPhi-13-13.south east);

\draw[blue] ($0.5*(MatrixPhi-13-15.north west)+0.5*(MatrixPhi-13-15.north west)$) -- ($0.5*(MatrixPhi-13-15.south west)+0.5*(MatrixPhi-13-15.south west)$);
\draw[blue] ($0.5*(MatrixPhi-13-15.north west)+0.5*(MatrixPhi-13-15.north west)$) -- ($0.5*(MatrixPhi-13-15.north east)+0.5*(MatrixPhi-13-15.north east)$);

\node[fit=(MatrixPhi-4-8)(MatrixPhi-4-8)]{$\color{blue} \left(\varphi_{{1}}\right)_{\nu}^+$};
\node[fit=(MatrixPhi-11-8)(MatrixPhi-11-8)]{\hspace{-9mm}$\color{red} \left(\varphi_{{1}}\right)_{\nu}^-$};

\node[fit=($0.4*(MatrixPhi-12-14.north west)+0.6*(MatrixPhi-13-15.north west)$)]{$\hspace{10mm} \color{blue} \left(\varphi_{{1}}\right)_{\nu+1}^+$};

\tikzset{
    cheating dash/.code args={on #1 off #2}{
        \csname tikz@addoption\endcsname{%
            \pgfgetpath\currentpath%
            \pgfprocessround{\currentpath}{\currentpath}%
            \csname pgf@decorate@parsesoftpath\endcsname{\currentpath}{\currentpath}%
            \pgfmathparse{\csname pgf@decorate@totalpathlength\endcsname-#1}\let\rest=\pgfmathresult%
            \pgfmathparse{#1+#2}\let\onoff=\pgfmathresult%
            \pgfmathparse{max(floor(\rest/\onoff), 1)}\let\nfullonoff=\pgfmathresult%
            \pgfmathparse{max((\rest-\onoff*\nfullonoff)/\nfullonoff+#2, #2)}\let\offexpand=\pgfmathresult%
            \pgfsetdash{{#1}{\offexpand}}{0pt}}%
    }
}
\path(MatrixPhi-6-1.south) -- (MatrixPhi-7-1.north) coordinate[midway] (Y);
\draw[cheating dash=on 3pt off 3pt,gray] (Y -| MatrixPhi.west) -- (Y -| MatrixPhi.east);
\path(MatrixPhi-5-6.east) -- (MatrixPhi-6-7.west) coordinate[midway] (X);
\draw[cheating dash=on 3pt off 3pt,gray] (X |- MatrixPhi.north) -- (X |- MatrixPhi.south);

\node[fit=(MatrixPhi-4-1)(MatrixPhi-5-1)]{$\color{colorG} \vdots$};
\node[fit=(MatrixPhi-10-1)(MatrixPhi-11-1)]{$\color{colorG} \vdots$};

\end{tikzpicture}
$}
\caption{Submatrix $\varphi_{{1}} \left[H^{\jmath_{\nu-1}}:H^{\jmath_\nu};T_{\jmath_{\nu-1}+2}:T_{\jmath_\nu+2}\right]$ of $\varphi_{{1}}$}
\label{fig:Step4BeforeModifying}
\end{figure}

Next, for each $\nu\in\left\{1,\dots,\xi\right\}$, look at the submatrix
$
\varphi_{{1}} \left[H^{\jmath_{\nu-1}}:H^{\jmath_\nu};T_{\jmath_{\nu-1}+2}:T_{\jmath_\nu}\right]
$
of $\varphi_{{1}}$ containing $\left(\varphi_{{1}}\right)_\nu^+$ and $\left(\varphi_{{1}}\right)_\nu^-$ as shown in Figure \ref{fig:Step4BeforeModifying}, whose diagonal entries have been calculated in \ref{eqn:DiagonalsOfPhi1}.

We modify the matrix $\varphi_{{1}} \in S^{\left(3\tau+\xi\right)\times\left(3\tau+2\xi\right)}$ to a new matrix $\varphi_{{2}} := \varphi_{{2}} \left(w,\lambda,1\right)\ \in S^{\left(3\tau+2\xi\right)\times\left(3\tau+5\xi\right)}$ in the following way. For each $\nu\in\left\{1,\dots,\xi\right\}$,
\begin{itemize}
\item
Insert a new row $\color{colorF} F^{\imath_\nu}$ consisting of zeros above the row $\color{colorG} G^{\imath_\nu}$, and then
\item
Insert the new three columns $\color{colorQ} Q_{\imath_\nu-1}$, $\color{colorQ} Q_{\imath_\nu}$ and $\color{colorQ}
Q_{\imath_\nu+1}$ to the left of the column $\color{colorR} R_{\imath_\nu}$ (or $\color{colorT} T_{\jmath_{\nu-1}+2}$
in the case $\jmath_{\nu-1}+1 = \imath_\nu$)
\end{itemize}
as described in Figure \ref{fig:Step4AfterAdding}, where the entries of the new columns that do not appear
in the figure are all zeros. In the figure, the rows indicate
$$
\left(G^{\imath_\nu-1}\right)''
:=
\begin{cases}
H^{\jmath_{\nu-1}} & \text{if}\ \ \jmath_{\nu-1}+1 = \imath_\nu
\\
G^{\imath_\nu-1} & \text{otherwise}
\end{cases} 
,\quad
\left(G^{\imath_\nu}\right)'
:=
G^{\imath_\nu}
,\quad\dots,\quad
\left(G^{\jmath_\nu}\right)'
:=
G^{\jmath_\nu}
\quad\text{and}\quad
\left(G^{\jmath_\nu+1}\right)'
:=
H^{\jmath_\nu}
$$
depending on the case, imitating the definition of $G_{\imath_\nu-1}''$ and $G_{\imath_\nu+a}'$ given in \ref{eqn:DefinitionOfG'}, and the columns indicate
$$
R_{\imath_\nu}''
:=
\begin{cases}
T_{\jmath_{\nu-1}+2} & \text{if}\ \ \jmath_{\nu-1}+1 = \imath_\nu
\\
R_{\imath_\nu} & \text{otherwise}
\end{cases}
,\quad
R_{\imath_\nu+1}'
:=
R_{\imath_\nu+1}
,\quad\dots,\quad
R_{\jmath_\nu}'
:=
R_{\jmath_\nu}
\quad\text{and}\quad
R_{\jmath_\nu+1}'
:=
T_{\jmath_\nu},
$$
similarly. Note that we always have $\left(G^{\imath_\nu}\right)' = G^{\imath_\nu}$, $\left(G^{\imath_\nu+1}\right)'
= G^{\imath_\nu+1}$ and $R_{\imath_\nu+1}' = R_{\imath_\nu+1}$. The symbols $\zeta_{\imath_\nu,a,b}^c$ are as defined in \ref{eqn:DefinitionOfZeta}.

\begin{figure}[h]
\adjustbox{scale=0.65,center}{$
\begin{tikzpicture}[ = -7.5pt]
\setcounter{MaxMatrixCols}{100}
\matrix(MatrixPhi)[matrix of math nodes, nodes in empty cells]
{
 & \hspace{3mm} & \color{colorQ} Q_{\imath_\nu-1} & \color{colorQ} Q_{\imath_\nu} & \color{colorQ} Q_{\imath_\nu+1} & \color{colorR} R_{\imath_\nu}'' & \quad \color{colorR} R_{\imath_\nu+1} & \color{colorR} R_{\imath_\nu+2}' & & & & & \color{colorR} R_{\imath_\nu+\kappa_\nu+1}' & \hspace{-5mm} \color{colorR} R_{\imath_\nu+\kappa_\nu+2}'
\\[5mm]
\color{colorG} \left(G^{\imath_{\nu} - 1}\right)'' & & \color{colorQ} 0 & \color{colorQ} 0 & \hspace{-10mm} \color{colorQ} -\Lambda_{\imath_\nu}^+ \chi_{\imath_\nu}^{w_{\imath_\nu}' -1} \hspace{-10mm} & -\Lambda_{\imath_{\nu}}^+ \chi_{\imath_{\nu}}^{w_{\imath_{\nu}}'} & & & & & & & &
\\[5mm]
\color{colorF} F^{\imath_\nu} & & \color{colorQ} \chi_{\imath_\nu} & \color{colorQ} \chi_{\imath_\nu+1} & \color{colorQ} \chi_{\imath_\nu+2} & \color{colorF} 0 & \quad \color{colorF} 0 & \color{colorF} 0 & \hspace{-3mm} \color{colorF} 0 & \color{colorF} 0 & \color{colorF} 0 & \color{colorF} \cdots & \color{colorF} 0 & \hspace{-5mm} \color{colorF} 0
\\[5mm]
\color{colorG} G^{\imath_{\nu}} & & \color{colorQ} -1 & \color{colorQ} 0 & \color{colorQ} 0 & \chi_{{\imath_\nu}-1} & \quad \chi_{\imath_\nu+1} & & & & & & &
\\[5mm]
\color{colorG} G^{\imath_{\nu}+1} & & \color{colorQ} 0 & \color{colorQ} -1 & \color{colorQ} 0 & & \quad -\chi_{\imath_\nu} & \chi_{\imath_\nu+2} & & & & \phantom{\color{red} -\zeta_{\imath_\nu,\kappa_\nu-2,\kappa_\nu+1}^{\kappa_\nu-2}}
& &
\\[5mm]
\color{colorG} \left(G^{\imath_{\nu} + 2}\right)' & & \color{colorQ} 0 & \color{colorQ} 0 & \color{colorQ} -\Lambda_{\imath_\nu+1}^- \chi_{\imath_\nu+1}^{-w_{\imath_\nu+1}'} &  & & -\Lambda_{\imath_\nu+1}^- \chi_{\imath_\nu+1}^{-w_{\imath_\nu+1}'+1} & \hspace{-3mm} \chi_{\imath_{\nu}} & 0 & 0 & \cdots & 0 & \hspace{-5mm} 0
\\[5mm]
\color{colorG} \left(G^{\imath_{\nu} +3}\right)' & & \color{colorQ} -\zeta_{\imath_\nu,-1,2}^{-1} & \color{colorQ} -\zeta_{\imath_\nu,-1,2}^{0} & \color{colorQ} -\zeta_{\imath_\nu,-1,2}^{1} & & & 0 & \hspace{-5mm} -\Lambda_{\imath_{\nu}+2}^- \chi_{\imath_{\nu}+2}^{-w_{\imath_\nu+2}'} & \chi_{\imath_\nu+1} & 0 & \cdots & 0 & \hspace{-5mm} 0
\\[5mm]
\color{colorG} \left(G^{\imath_{\nu} + 4}\right)' & & \color{colorQ} -\zeta_{\imath_\nu,-1,3}^{-1} & \color{colorQ} -\zeta_{\imath_\nu,-1,3}^{0} & \color{colorQ} -\zeta_{\imath_\nu,-1,3}^{1} & & & 0 &
\hspace{-3mm} 0 & \hspace{-2mm} -\Lambda_{\imath_{\nu}+3}^- \chi_{\imath_{\nu}+3}^{-w_{\imath_\nu+3}'} & \chi_{\imath_\nu+2} & \cdots & 0 & \hspace{-5mm} 0
\\[5mm]
& & \color{colorQ} -\zeta_{\imath_\nu,-1,4}^{-1} & \color{colorQ} -\zeta_{\imath_\nu,-1,4}^{0} & \color{colorQ} -\zeta_{\imath_\nu,-1,4}^{1} & & & 0 &
\hspace{-3mm} 0 & 0 & \hspace{-2mm} -\Lambda_{\imath_{\nu}+4}^- \chi_{\imath_{\nu}+4}^{-w_{\imath_\nu+4}'} & \ddots & \vdots & \hspace{-5mm} \vdots
\\[5mm]
& & \color{colorQ} \vdots & \color{colorQ} \vdots & \color{colorQ} \vdots & & & \vdots & \hspace{-3mm} \ddots & \ddots & \ddots & \ddots & \chi_{\imath_\nu+\kappa_\nu-2} & \hspace{-5mm} 0
\\[5mm]
\color{colorG} \left(G^{\imath_\nu+\kappa_\nu+1}\right)' & & \color{colorQ} -\zeta_{\imath_\nu,-1,\kappa_\nu}^{-1} & \color{colorQ} -\zeta_{\imath_\nu,-1,\kappa_\nu}^{0} & \color{colorQ} -\zeta_{\imath_\nu,-1,\kappa_\nu}^{1} & & & 0 & \hspace{-3mm} \cdots & 0 & 0 & 0 & -\Lambda_{\imath_\nu+\kappa_\nu}^- \chi_{\imath_\nu+\kappa_\nu}^{-w_{\imath_\nu+\kappa_\nu}'} & \hspace{-5mm} \chi_{\imath_\nu+\kappa_\nu-1}
\\[5mm]
\color{colorG} \left(G^{\imath_\nu+\kappa_\nu+2}\right)' & & \color{colorQ} -\zeta_{\imath_\nu,-1,\kappa_\nu+1}^{-1}
\hspace{1.5mm} & \color{colorQ} -\zeta_{\imath_\nu,-1,\kappa_\nu+1}^{0}\hspace{-1mm} & \color{colorQ} -\zeta_{\imath_\nu,-1,\kappa_\nu+1}^{1} & & & 0 & \hspace{-3mm} \cdots & 0 & 0 & 0 & 0 & \hspace{-5mm} -\Lambda_{\imath_\nu+\kappa_\nu+1}^- \chi_{\imath_\nu+\kappa_\nu+1}^{-w_{\imath_\nu+\kappa_\nu+1}'}
\\
};



\draw[blue] ($0.5*(MatrixPhi-2-6.north east)+0.5*(MatrixPhi-2-6.north east)$) -- ($0.5*(MatrixPhi-2-6.south east)+0.5*(MatrixPhi-2-6.south east)$);
\draw[blue] ($0.5*(MatrixPhi-2-6.south west)+0.5*(MatrixPhi-2-6.south west)$) -- ($0.5*(MatrixPhi-2-6.south east)+0.5*(MatrixPhi-2-6.south east)$);
\draw[red] ($0.5*(MatrixPhi-6-8.north west)+0.5*(MatrixPhi-6-8.north west)$) -- ($-4.55*(MatrixPhi-6-8.north west)+5.55*(MatrixPhi-6-8.north east)$);
\draw[red] ($0.5*(MatrixPhi-6-8.north west)+0.5*(MatrixPhi-6-8.north west)$) -- ($-9.5*(MatrixPhi-6-8.north west)+10.5*(MatrixPhi-6-8.south west)$);

\node[fit=(MatrixPhi-2-7)(MatrixPhi-2-7)]{$\color{blue} \left(\varphi_{{1}}\right)_{\nu}^+$};
\node[fit=(MatrixPhi-11-6)(MatrixPhi-12-7)]{\hspace{2mm}$\color{red} \left(\varphi_{{1}}\right)_{\nu}^-$};

\node[fit=(MatrixPhi-1-10)(MatrixPhi-1-11)]{\hspace{0mm}$\color{colorR} \cdots$};
\node[fit=(MatrixPhi-9-1)(MatrixPhi-10-1)]{$\color{colorG} \vdots$};

\end{tikzpicture}
$}
\caption{Submatrix $\varphi_{{2}} \left[\left(G^{\imath_\nu-1}\right)'':\left(G^{\imath_\nu+\kappa_\nu+2}\right)';Q_{\imath_\nu-1}:R_{\imath_\nu+\kappa_\nu+2}'\right]$
of $\varphi_{{2}}$}
\label{fig:Step4AfterAdding}
\end{figure}

The construction of $\varphi_{{2}}$ is based on the formulas \ref{eqn:RelationsOnF}, which makes
the matrix equation $\pi_{{2}}\varphi_{{2}} = 0$\ hold. It is obvious that any element $s\in S$
satisfying $s F_{\imath_\nu} \in \left(M_{{1}}\right)_S = \tilde{M}_S$ is in the ideal of $S$\ generated by three
elements $x$, $y$ and $z$ in $S$, namely the maximal ideal of $S$. Since these elements coincide with the nonzero entries $\chi_{\imath_\nu}$, $\chi_{\imath_\nu+1}$ and $\chi_{\imath_\nu+2}$ in the newly added row of $\varphi_{{2}}$, we can apply Lemma \ref{lem:MatrixExpansionLemma} for each
$\nu\in\left\{1,\dots,\xi\right\}$ to get a part of free resolution of $\left(M_{{2}}\right)_S$:
\begin{equation}\label{eqn:SESforM2}
\begin{tikzcd}[ = 20pt] 
S^{3\tau+5\xi} \arrow[r, "\varphi_{{2}}"] & S^{3\tau+2\xi} \arrow[r, "\pi_{{2}}"] & \left(M_{{2}}\right)_S \arrow[r] & 0.
\end{tikzcd}
\end{equation}

Then some appropriate coordinate changes on the codomain, i.e. elementary row operations, enable us to remove the terms $-\zeta_{\imath_\nu,-1,b}^c$ in $\varphi_{{2}}$. The detailed recipe for each $\nu\in\left\{1, \dots,
\xi\right\}$ is given as follows:
\begin{itemize}
\item
Set
$\left(G^{\jmath_\nu+1}\right)^* := \left(G^{\jmath_\nu+1}\right)'$,
$\left(G^{\jmath_\nu}\right)^* := \left(G^{\jmath_\nu}\right)'$, \dots ,
$\left(G^{\imath_\nu+\kappa_\nu+3}\right)^* := \left(G^{\imath_\nu+\kappa_\nu+3}\right)'$,
\item
Add $-\zeta_{\imath_\nu,b-1,b+2}^{b-1}$ times the row $\left(G^{\imath_\nu+b}\right)'$ to the row $\left(G^{\imath_\nu+b+3}\right)'$
and rename the changed row $\left(G^{\imath_\nu+b+3}\right)^*$ for each $b\in\left\{\kappa_\nu-1, \kappa_\nu-2, \dots, 1, 0\right\}$ in order, and then
\item
Set
$\left(G^{\imath_\nu+2}\right)^* := \left(G^{\imath_\nu+2}\right)'$,
$\left(G^{\imath_\nu+1}\right)^* := G^{\imath_\nu+1}$
 and 
$\left(G^{\imath_\nu}\right)^* := G^{\imath_\nu}$.
\end{itemize}
We perform those for all $\nu$ and denote the resulting matrix by $\varphi_{{3}} := \varphi_{{3}} \left(w,\lambda,1\right)\ \in S^{\left(3\tau+2\xi\right)\times\left(3\tau+5\xi\right)}$. We keep the original names for columns, although they may also have changed. For each $\nu$, the procedure eliminates the all terms $-\zeta_{\imath_\nu,-1,b+2}^c$ $\left(\text{in the row }\left(G^{\imath_\nu+b+3}\right)'\right)$
in $\varphi_{{2}}$ in the order from bottom to top, using the identities
$
\zeta_{\imath_\nu,-1,b-1}^c \zeta_{\imath_\nu,b-1,b+2}^{b-1}
=
\zeta_{\imath_\nu,-1,b+2}^c
$
observed in equation \ref{eqn:RelationsOnZeta}. At the same time, it yields some `\emph{unwanted terms}'. They
are described in Figure \ref{fig:Step4AfterRowOperations} in pink color, where we calculated as, for example,
$$
\left(-\zeta_{\imath_\nu,b-1,b+2}^{b-1}\right) \left(-\Lambda_{\imath_\nu+b-1}^- \chi_{\imath_\nu+b-1}^{-w_{\imath_\nu+b-1}'}\right)
=
\zeta_{\imath_\nu,b-1,b+2}^{b-1} \zeta_{\imath_\nu,b-2,b-1}^{b-1} \chi_{\imath_\nu+b-1}
=
\zeta_{\imath_\nu,b-2,b+2}^{b-1} \chi_{\imath_\nu+b+2},
$$
for $b\in\left\{3, \dots, \kappa_\nu-1\right\}$ using the definition of $\zeta_{\imath_\nu,b-2,b-1}^{b-1}$ in \ref{eqn:DefinitionOfZeta} and the identity \ref{eqn:RelationsOnZeta} again.

\begin{figure}[h]
\adjustbox{scale=0.67,center}{$
\begin{tikzpicture}[ = -10pt]
\setcounter{MaxMatrixCols}{100}
\matrix(MatrixPhi)[matrix of math nodes, nodes in empty cells]
{
 & \hspace{5mm} & \color{gray} Q_{\imath_\nu-1} \hspace{7mm} & \color{gray} Q_{\imath_\nu} & \color{colorQ} Q_{\imath_\nu+1} & \color{gray} R_{\imath_\nu}'' & \quad \color{gray} R_{\imath_\nu+1} & \color{gray} R_{\imath_\nu+2}' & & & & & \color{colorR} R_{\imath_\nu+\kappa_\nu+1}' & \hspace{-5mm} \color{colorR} R_{\imath_\nu+\kappa_\nu+2}'
\\[5mm]
\color{colorG} \left(G^{\imath_{\nu} - 1}\right)'' & & \color{gray} 0 \hspace{7mm} & \color{gray} 0 & \hspace{-10mm} \color{black} -\Lambda_{\imath_\nu}^+ \chi_{\imath_\nu}^{w_{\imath_\nu}' -1} \hspace{-10mm} & \color{gray} -\Lambda_{\imath_{\nu}}^+ \chi_{\imath_{\nu}}^{w_{\imath_{\nu}}'} & & & & & & & &
\\[5mm]
\color{colorF} F^{\imath_\nu} & & \color{gray} \chi_{\imath_\nu} \hspace{7mm} & \color{gray} \chi_{\imath_\nu+1} & \color{black} \chi_{\imath_\nu+2} & & & & & & & & &
\\[5mm]
\color{gray} \left(G^{\imath_{\nu}}\right)^* & & \color{gray} -1 \hspace{7mm} & \color{gray} 0 & \color{black} \color{gray} 0 & \color{gray} \chi_{{\imath_\nu}-1} & \quad \color{gray} \chi_{\imath_\nu+1} & & & & & & &
\\[5mm]
\color{gray} \left(G^{\imath_{\nu} + 1}\right)^* & & \color{gray} 0 \hspace{7mm} & \color{gray} -1 & \color{black} \color{gray} 0 & & \quad \color{gray} -\chi_{\imath_\nu} & \color{gray} \chi_{\imath_\nu+2} & & & & & &
\\[5mm]
\color{colorG} \left(G^{\imath_{\nu} + 2}\right)^* & & \color{gray} 0 \hspace{7mm} & \color{gray} 0 & \color{black} -\Lambda_{\imath_\nu+1}^- \chi_{\imath_\nu+1}^{-w_{\imath_\nu+1}'} & \hspace{3mm}
\color{gray} 0^{(*)} & & \color{gray} -\Lambda_{\imath_\nu+1}^- \chi_{\imath_\nu+1}^{-w_{\imath_\nu+1}'+1} & \hspace{-3mm} \chi_{\imath_{\nu}} & 0 & 0 & \cdots & 0 & \hspace{-5mm} 0
\\[5mm]
\color{colorG} \left(G^{\imath_{\nu} +3}\right)^* & & \color{gray} 0 \hspace{7mm} & \color{gray} 0 & \color{black} 0 & \color{rosepink}
\begin{array}{c} -\zeta_{\imath_\nu,-1,2}^{-1} \\ \chi_{\imath_\nu+2} \end{array} & \hspace{1mm} \color{rosepink}
\begin{array}{c} -\zeta_{\imath_\nu,-1,2}^{-1} \\ \chi_{\imath_\nu+1} \end{array} \hspace{1mm} & \color{gray} 0 & \hspace{-5mm} -\Lambda_{\imath_{\nu}+2}^- \chi_{\imath_{\nu}+2}^{-w_{\imath_\nu+2}'} & \chi_{\imath_\nu+1} & 0 & \cdots & 0 & \hspace{-5mm} 0
\\[5mm]
\color{colorG} \left(G^{\imath_{\nu} + 4}\right)^* & & \color{gray} 0 \hspace{7mm} & \color{gray} 0 & \color{black} 0 & & \color{rosepink}
\begin{array}{c} \zeta_{\imath_\nu,-1,3}^0 \\ \chi_{\imath_\nu+3} \end{array} & \color{rosepink}
\begin{array}{c} -\zeta_{\imath_\nu,0,3}^0 \\ \chi_{\imath_\nu+2} \end{array} &
\hspace{-3mm} 0 & \hspace{-2mm} -\Lambda_{\imath_{\nu}+3}^- \chi_{\imath_{\nu}+3}^{-w_{\imath_\nu+3}'} & \chi_{\imath_\nu+2} & \cdots & 0 & \hspace{-5mm} 0
\\[5mm]
& & \color{gray} 0 \hspace{7mm} & \color{gray} 0 & \color{black} 0 & & & \color{rosepink}
\begin{array}{c} \zeta_{\imath_\nu,0,4}^1 \\ \chi_{\imath_\nu+4} \end{array} &
\hspace{-3mm} \color{rosepink} \begin{array}{c} -\zeta_{\imath_\nu,1,4}^1 \\ \chi_{\imath_\nu+3} \end{array} & 0 & \hspace{-2mm} -\Lambda_{\imath_{\nu}+4}^- \chi_{\imath_{\nu}+4}^{-w_{\imath_\nu+4}'} & \ddots & \vdots & \hspace{-5mm} \vdots
\\[5mm]
& & \color{gray} \vdots \hspace{7mm} & \color{gray} \vdots & \color{black} \vdots & & & \color{gray} \vdots & \hspace{-3mm} \color{rosepink} \ddots & \color{rosepink} \ddots & \ddots & \ddots & \chi_{\imath_\nu+\kappa_\nu-2} & \hspace{-5mm} 0
\\[5mm]
\color{colorG} \left(G^{\imath_\nu+\kappa_\nu+1}\right)^* & & \color{gray} 0 \hspace{7mm} & \color{gray} 0 & \color{black} 0 & & & \color{gray} 0 & \hspace{-3mm} \cdots & \color{rosepink} \begin{array}{c}
\zeta_{\imath_\nu,\kappa_\nu-4,\kappa_\nu}^{\kappa_\nu-3} \\ \chi_{\imath_\nu+\kappa_\nu} \end{array} & \color{rosepink} \begin{array}{c} -\zeta_{\imath_\nu,\kappa_\nu-3,\kappa_\nu}^{\kappa_\nu-3} \\ \chi_{\imath_\nu+\kappa_\nu-1}
\end{array} & 0 & -\Lambda_{\imath_\nu+\kappa_\nu}^- \chi_{\imath_\nu+\kappa_\nu}^{-w_{\imath_\nu+\kappa_\nu}'} & \hspace{-5mm} \chi_{\imath_\nu+\kappa_\nu-1}
\\[5mm]
\color{colorG} \left(G^{\imath_\nu+\kappa_\nu+2}\right)^* & & \color{gray} 0 \hspace{7mm} & \color{gray} 0 & \color{black} 0 & & & \color{gray} 0 & \hspace{-3mm} \cdots & 0 & \color{rosepink} \begin{array}{c}
\zeta_{\imath_\nu,\kappa_\nu-3,\kappa_\nu+1}^{\kappa_\nu-2} \\ \chi_{\imath_\nu+\kappa_\nu+1} \end{array} & \color{rosepink} \begin{array}{c} -\zeta_{\imath_\nu,\kappa_\nu-2,\kappa_\nu+1}^{\kappa_\nu-2} \\ \chi_{\imath_\nu+\kappa_\nu}
\end{array} & 0 & \hspace{-5mm} -\Lambda_{\imath_\nu+\kappa_\nu+1}^- \chi_{\imath_\nu+\kappa_\nu+1}^{-w_{\imath_\nu+\kappa_\nu+1}'}
\\
};



\draw[blue] ($0.5*(MatrixPhi-2-6.north east)+0.5*(MatrixPhi-2-6.north east)$) -- ($0.5*(MatrixPhi-2-6.south east)+0.5*(MatrixPhi-2-6.south east)$);
\draw[blue] ($0.5*(MatrixPhi-2-6.south west)+0.5*(MatrixPhi-2-6.south west)$) -- ($0.5*(MatrixPhi-2-6.south east)+0.5*(MatrixPhi-2-6.south east)$);
\draw[red] ($0.5*(MatrixPhi-6-8.north west)+0.5*(MatrixPhi-6-8.north west)$) -- ($-4.55*(MatrixPhi-6-8.north west)+5.55*(MatrixPhi-6-8.north east)$);
\draw[red] ($0.5*(MatrixPhi-6-8.north west)+0.5*(MatrixPhi-6-8.north west)$) -- ($-11*(MatrixPhi-6-8.north west)+12*(MatrixPhi-6-8.south west)$);

\node[fit=(MatrixPhi-2-7)(MatrixPhi-2-7)]{$\color{blue} \left(\varphi_{{1}}\right)_{\nu}^+$};
\node[fit=(MatrixPhi-11-6)(MatrixPhi-12-7)]{\hspace{7mm}$\color{red} \left(\varphi_{{1}}\right)_{\nu}^-$};

\node[fit=(MatrixPhi-1-10)(MatrixPhi-1-11)]{\hspace{0mm}$\color{colorR} \cdots$};
\node[fit=(MatrixPhi-9-1)(MatrixPhi-10-1)]{$\color{colorG} \vdots$};

\end{tikzpicture}
$}
\caption{Submatrix $\varphi_{{3}} \left[\left(G^{\imath_\nu-1}\right)'':\left(G^{\imath_\nu+\kappa_\nu+2}\right)^*;Q_{\imath_\nu-1}:R_{\imath_\nu+\kappa_\nu+2}'\right]$
of $\varphi_{{3}}$}
\label{fig:Step4AfterRowOperations}
\end{figure}

We should carry out the coordinate changes on the domain, i.e. elementary column operations on $\pi_{{2}} \in A^{\tau\times(3\tau+2\xi)}$ corresponding to the row operations taken to $\varphi_{{2}}$, to maintain the exactness of the sequence that has been obtained. We proceed
for each $\nu\in\left\{1, \dots, \xi\right\}$ as follows:
\begin{itemize}
\item
Set $G_{\jmath_\nu+1}^* := G_{\jmath_\nu+1}'$,
$G_{\jmath_\nu}^* := G_{\jmath_\nu}'$
, \dots,
$G_{\imath_\nu+\kappa_\nu}^* := G_{\imath_\nu+\kappa_\nu}'$, and then
\item
Add $\zeta_{\imath_\nu,b-1,b+2}^{b-1}$ times the column $G_{\imath_\nu+b+3}^*$ to the column $G_{\imath_\nu+b}'$
and rename the changed column $G_{\imath_\nu+b}^*$ for each $b\in\left\{\kappa_\nu-1, \kappa_\nu-2, \dots, 1, 0\right\}$ in order.
\end{itemize}
Denoting the transformed matrix by $\pi_{{3}}:= \pi_{{3}}\left(w,\lambda,1\right) \in A^{\tau\times\left(3\tau+2\xi\right)}$,
we can express it as
$$
\pi_{{3}}
:=
\left(
\arraycolsep=2pt\def\arraystretch{1}
\begin{array}{c|c|c|c|c|c|c|c|c|c|c|c|c|c|c|c|c|c|c}
\color{colorG} G_1 & \color{colorG} \cdots & \color{colorG} G_{\imath_1-1} & \color{colorF} F_{\imath_1} & \color{colorG} G_{\imath_1}^* & \color{colorG} \cdots & \color{colorG} G_{\jmath_1}^* & \color{colorH} H_{\jmath_1} & \color{colorG} G_{\jmath_1+1} & \color{colorG} \cdots & \color{colorG} G_{\imath_\xi-1} & \color{colorF} F_{\imath_\xi} & \color{colorG} G_{\imath_\xi}^* & \color{colorG} \cdots & \color{colorG} G_{\jmath_\xi}^* & \color{colorH} H_{\jmath_\xi} & \color{colorG} G_{\jmath_\xi+1} & \color{colorG} \cdots & \color{colorG} G_{3\tau}
\end{array}
\right)_{\tau\times \left(3\tau+2\xi\right)}
$$
where the explicit form of columns are given by
$$
G_{\imath_\nu+b}^*
=
\begin{cases}
G_{\imath_\nu+b} + \displaystyle\sum_{c=b+3}^{\kappa_\nu+2} \zeta_{\imath_\nu,b-1,c-1}^{b-1} G_{\imath_\nu+c}'
\qquad
\text{if } 0 \le b \le \kappa_\nu-1,
\\[5mm]
G_{\imath_\nu+b}'
=
\begin{cases}
G_{\imath_\nu+b}
&
\hspace{24mm} \text{if } \kappa_\nu \le b \le \jmath_\nu-\imath_\nu,
\\[2mm]
H_{\jmath_\nu}
&
\hspace{24mm} \text{if } b = \jmath_\nu-\imath_\nu+1.
\end{cases}
\end{cases}
$$

Then we may check
$
\operatorname{im}\varphi_{{3}}
=
\operatorname{ker}\pi_{{3}}
$
from the fact that
$
\operatorname{im}\varphi_{{2}}
=
\operatorname{ker}\pi_{{2}}
$
and the above constructions of $\varphi_{{3}}$ and $\pi_{{3}}$.
Moreover, the construction of $\pi_{{3}}$ yields $\operatorname{im}\pi_{{3}} = \operatorname{im}\pi_{{2}}$.
Thus, setting $M_{{3}} := M_{{2}}$, we get the following modified exact sequence of $S$-modules from
\ref{eqn:SESforM2}:
\begin{equation}\label{eqn:SESforM3}
\begin{tikzcd}[ = 20pt] 
S^{3\tau+5\xi} \arrow[r, "\varphi_{{3}}"] & S^{3\tau+2\xi} \arrow[r, "\pi_{{3}}"] & \left(M_{{3}}\right)_S \arrow[r] & 0.
\end{tikzcd}
\end{equation}

To get rid of the `\emph{unwanted terms}' in the matrix $\varphi_{{3}}$, we execute coordinate changes
on its domain, i.e. elementary column operations on it. First, to remove the bottom-right entry, consider
the column
$$
\zeta_{\imath_\nu,\kappa_\nu-2,\kappa_\nu+1}^{\kappa_\nu-2} \chi_{\imath_\nu+\kappa_\nu} \mathbf{e}_{\left(G^{\imath_\nu+\kappa_\nu+2}\right)'}
=
\left(\displaystyle\prod_{d=\kappa_\nu-1}^{\kappa_\nu+1}\Lambda_{\imath+d}^-\right)
\chi_{\imath_\nu+\kappa_\nu+1}^{-w_{\imath_\nu+\kappa_\nu+1}'-1} \chi_{\imath_\nu+\kappa_\nu} \mathbf{e}_{\left(G^{\imath_\nu+\kappa_\nu+2}\right)'}.
$$
When $\imath_\nu+\kappa_\nu+1=\jmath_\nu$, it is just an $S$-multiple of the column $T_{\jmath_\nu+1} = \chi_{\jmath_\nu-1}
\mathbf{e}_{H^{\jmath_\nu}}$ in $\varphi_{{3}}$. Otherwise, when $\imath_\nu+\kappa_\nu+2\le\jmath_\nu$,
it is an $S$-multiple of the column $\chi_{\imath_\nu+\kappa_\nu} \chi_{\imath_\nu+\kappa_\nu+1} \mathbf{e}_{G^{\imath_\nu+\kappa_\nu+2}}$,
which is an $S$-linear combination of columns $R_{\imath_\nu+\kappa_\nu+3}'$, \dots, $R_{\jmath_\nu+1}'$
and $T_{\jmath_\nu+1}$ in $\varphi_{{3}}$, by the same argument used in equations \ref{eqn:GeneratingRSharp}. In any cases, therefore,
the mentioned column is an $S$-linear combination of columns in $\varphi_{{3}}$. 

Using this fact, we provide the explicit manual to remove all `\emph{unwanted terms}' in $\varphi_{{3}}$
as follows: For each $\nu\in\left\{1, \dots, \xi\right\}$,
\begin{itemize}
\item
Add the column $\zeta_{\imath_\nu,\kappa_\nu-2,\kappa_\nu+1}^{\kappa_\nu-2} \chi_{\imath_\nu+\kappa_\nu} \mathbf{e}_{\left(G^{\imath_\nu+\kappa_\nu+2}\right)'}$ to the column $R_{\imath_\nu+\kappa_\nu}'$ using elementary
column operations,
\item
Add $\zeta_{\imath_\nu,b-2,b+1}^{b-2}$ times the column $R_{\imath_\nu+b+3}'$ to the column $R_{\imath_\nu+b}'$
for each $b\in\left\{\kappa_\nu-1, \kappa_\nu-2, \dots, 2, 1\right\}$, and then
\item
Add $\Lambda_{\imath_\nu+1}^- \chi_{\imath_\nu+1}^{-w_{\imath_\nu+1}'}$ times the column $R_{\imath_\nu+3}'$
to the column $R_{\imath_\nu}''$.
\end{itemize}
We denote the transformed matrix by $\varphi_{{4}} := \varphi_{{4}} \left(w,\lambda,1\right)\ \in S^{\left(3\tau+2\xi\right)\times\left(3\tau+5\xi\right)}$ and keep the original names for its rows and columns.

One can check indeed that the process eliminates the entries of $\varphi_{{3}}$ which are colored
in pink in Figure \ref{fig:Step4AfterRowOperations}. The bottom-right entry disappears in the first stage.
The entries other than this and the top-left entry are removed in the second stage in order from right
to left, using identities such as
$$
\zeta_{\imath_\nu,b-2,b+1}^{b-2} \left(-\Lambda_{\imath_\nu+b+2}^- \chi_{\imath_\nu+b+2}^{-w_{\imath_\nu+b+2}'}\right)
=
\zeta_{\imath_\nu,b-2,b+1}^{b-2} \zeta_{\imath_\nu,b+1,b+2}^{b+2} \chi_{\imath_\nu+b+2}
=
\zeta_{\imath_\nu,b-2,b+2}^{b-1} \chi_{\imath_\nu+b+2}
$$
for $b\in\left\{1,\dots,\kappa_\nu-1\right\}$, which follow from the definition of $\zeta_{\imath_\nu,b+1,b+2}^{b+2}$
in \ref{eqn:DefinitionOfZeta} and the identity \ref{eqn:RelationsOnZeta}. Then in the final stage the top-left one goes
away, but a new term $\Lambda_{\imath_\nu+1}^- \chi_{\imath_\nu+1}^{-w_{\imath_\nu+1}'}
\chi_{\imath_\nu}$ is created instead in the place marked with $(*)$.

As coordinate changes on the domain of $\varphi_{{3}}$ do not affect the exactness of sequence \ref{eqn:SESforM3},
just setting $\pi_{{4}}:=\pi_{{3}} \in A^{\tau\times\left(3\tau+2\xi\right)}$ and $M_{{4}}:=M_{{3}}$, we get the following exact sequence:
$$
\begin{tikzcd}[column sep = 20pt] 
S^{3\tau+5\xi} \arrow[r, "\varphi_{{4}}"] & S^{3\tau+2\xi} \arrow[r, "\pi_{{4}}"] & \left(M_{{4}}\right)_S \arrow[r] & 0.
\end{tikzcd}
$$

Here we may abandon three columns $R_{\imath_\nu}''$, $R_{\imath_\nu+1}$ and $R_{\imath_\nu+2}'$ in  $\varphi_{{4}}$ because they can be expressed as $S$-linear combinations of other columns, not contributing
to $\operatorname{im}\varphi_{{4}}$. To be specific,
we have
\begin{align*}
R_{\imath_\nu}''
&=
\chi_{\imath_\nu} Q_{\imath_\nu+1} - \chi_{\imath_\nu-1} Q_{\imath_\nu-1},
\\[2mm]
R_{\imath_\nu+1}
&=
-\chi_{\imath_\nu+1} Q_{\imath_\nu-1} + \chi_{\imath_\nu} Q_{\imath_\nu},
\\[0mm]
R_{\imath_\nu+2}'
&=
-\chi_{\imath_\nu+2} Q_{\imath_\nu} + \chi_{\imath_\nu+1} Q_{\imath_\nu+1} + \Lambda_{\imath_\nu}^+ \chi_{\imath_\nu}^{w_{\imath_\nu}'-1}
\chi_{\imath_\nu+1} \mathbf{e}_{\left(G^{\imath_\nu-1}\right)''}.
\end{align*}
Note that the column $\Lambda_{\imath_\nu}^+ \chi_{\imath_\nu}^{w_{\imath_\nu}'-1} \chi_{\imath_\nu+1} \mathbf{e}_{\left(G^{\imath_\nu-1}\right)''}$
is just an $S$-multiple of the column $T_{\jmath_{\nu-1}+1} = \chi_{\jmath_{\nu-1}-1} \mathbf{e}_{H^{\jmath_{\nu-1}}}$
in $\varphi_{{4}}$ if $\jmath_{\nu-1}+1 = \imath_\nu$, and an $S$-multiple of the column $\chi_{\imath_\nu-3} \chi_{\imath_\nu-2}
\mathbf{e}_{G^{\imath_\nu-1}}$ otherwise, which is an $S$-linear combination of columns $T_{\jmath_{\nu-1}+1}$,
$T_{\jmath_{\nu-1}+2}$, $R_{\jmath_{\nu-1}+2}$, \dots, $R_{\imath_\nu-1}$ in $\varphi_{{4}}$ by the same argument used in equations \ref{eqn:GeneratingRSharp}.

Then we further delete rows $\left(G^{\imath_\nu}\right)^*$, $\left(G^{\imath_\nu+1}\right)^*$ and columns
$Q_{\imath_\nu-1}$, $Q_{\imath_\nu}$ in $\varphi_{{4}}$. Denote the surviving matrix as $\varphi_{{5}} := \varphi_{{5}} \left(w,\lambda,1\right) \in S^{3\tau \times 3\tau}$ after removing two rows
and five columns in $\varphi_{{4}} \in S^{\left(3\tau+2\xi\right)\times\left(3\tau+5\xi\right)}$ for each $\nu\in\left\{1,\dots,\xi\right\}$. Then, finally, we find that $\varphi_{{5}}$ is in fact exactly
equal to $\varphi$, comparing the submatrix $\varphi_{{5}} \left[H^{\jmath_{\nu-1}}:H^{\jmath_\nu};T_{\jmath_{\nu-1}+2}:T_{\jmath_\nu+2}\right]$ of $\varphi_{{5}}$
described in Figure \ref{fig:Step4AfterDeleting} with the corresponding part of $\varphi$. We name
the rows and columns of $\varphi$ the same as the corresponding rows and columns of $\varphi_{{5}}$,
and define submatrices $\varphi_\nu^+$ and $\varphi_\nu^-$ of $\varphi$
as boxed in the figure.

\begin{figure}[h]
\adjustbox{scale=0.7,center}{$
\begin{tikzpicture}
\setcounter{MaxMatrixCols}{100}
\matrix(MatrixPhi)[matrix of math nodes, nodes in empty cells]
{
& \hspace{5mm} & \color{colorT} T_{\jmath_{\nu-1}+2} & \hspace{-9mm} \color{colorR} R_{\jmath_{\nu-1}+2} & \color{colorR} \cdots & \color{colorR} R_{\imath_\nu-1} \hspace{-7mm} & \color{colorQ} Q_{\imath_\nu+1} & \hspace{-5mm} \color{colorR} R_{\imath_\nu+3} & \color{colorR} \cdots & \color{colorR} R_{\jmath_\nu}
\hspace{-2mm} & \hspace{-3mm} \color{colorT} T_{\jmath_\nu} & \color{colorT} T_{\jmath_\nu+1} & \color{colorT}
T_{\jmath_\nu+2}
\\[5mm]
\color{colorH} H^{\jmath_{\nu-1}} & & -\Lambda_{\jmath_{\nu-1}+1}^+ \chi_{\jmath_{\nu-1}+1}^{w_{\jmath_{\nu-1}+1}' -1} & \hspace{-9mm} 0 & \cdots & 0 \hspace{-7mm} & 0 & & & & & &
\\[5mm]
\color{colorG} G^{\jmath_{\nu-1} + 1} & & \chi_{\jmath_{\nu-1}} & \hspace{-9mm} - \Lambda_{\jmath_{\nu-1}+2}^+ \chi_{\jmath_{\nu-1}+2}^{w_{\jmath_{\nu-1}+2}' -1} & \cdots & 0 \hspace{-7mm} & 0 & & & & & &
\\[5mm]
& & 0 & \hspace{-9mm} \chi_{\jmath_{\nu-1}+1} & \ddots & \vdots  \hspace{-7mm}& \vdots & & & & & &
\\[5mm]
& & \vdots & \hspace{-9mm} \vdots & \ddots & -\Lambda_{\imath_{\nu}-1}^+ \chi_{\imath_{\nu}-1}^{w_{\imath_{\nu}-1}' -1} \hspace{-7mm} & 0 & & & & & &
\\[5mm]
\color{colorG} G^{\imath_{\nu} - 1} & & 0 & \hspace{-9mm} 0 & \cdots & \chi_{\imath_{\nu}-2} \hspace{-7mm} & -\Lambda_{\imath_\nu}^+ \chi_{\imath_\nu}^{w_{\imath_\nu}' -1} & & & & & &
\\[5mm]
\color{colorF} F^{\imath_\nu} & & & & & & \chi_{\imath_\nu-1} & & & & & &
\\[5mm]
\color{colorG} \left(G^{\imath_{\nu} + 2}\right)^* & & & & & & -\Lambda_{\imath_\nu+1}^- \chi_{\imath_\nu+1}^{-w_{\imath_\nu+1}'} & \hspace{-5mm} \chi_{\imath_{\nu}} & \cdots & 0 \hspace{-2mm} & 0 & &
\\[5mm]
& & & & & & 0 & \hspace{-5mm} -\Lambda_{\imath_{\nu}+2}^- \chi_{\imath_{\nu}+2}^{-w_{\imath_{\nu}+2}'} & \ddots & \vdots  \hspace{-2mm}& \vdots & &
\\[5mm]
& & & & & & \vdots & \hspace{-5mm} \vdots & \ddots & \chi_{\jmath_\nu-3 \hspace{-2mm}} & 0 & &
\\[5mm]
\color{colorG} \left(G^{\jmath_\nu}\right)^* & & & & & & 0 & \hspace{-5mm} 0 & \cdots & -\Lambda_{\jmath_\nu-1}^- \chi_{\jmath_\nu-1}^{-w_{\jmath_\nu-1}'} \hspace{-2mm} & \chi_{\jmath_\nu-2} & &
\\[5mm]
\color{colorH} H^{\jmath_\nu} & & & & & & 0 & \hspace{-5mm} 0 & \cdots & 0 \hspace{-2mm} & -\Lambda_{\jmath_\nu}^- \chi_{\jmath_\nu}^{-w_{\jmath_\nu}'} & \hspace{5mm}\chi_{\jmath_\nu-1}\hspace{3mm}  &  -\Lambda_{\jmath_\nu+1}^+ \chi_{\jmath_\nu+1}^{w_{\jmath_\nu+1}'-1}
\\
};

\draw[color=blue] (MatrixPhi-2-3.north west) rectangle (MatrixPhi-6-7.south east);
\draw[color=red] (MatrixPhi-8-7.north west) rectangle (MatrixPhi-12-11.south east);

\draw[blue] ($0.5*(MatrixPhi-12-13.north west)+0.5*(MatrixPhi-12-13.north west)$) -- ($0.5*(MatrixPhi-12-13.south west)+0.5*(MatrixPhi-12-13.south west)$);
\draw[blue] ($0.5*(MatrixPhi-12-13.north west)+0.5*(MatrixPhi-12-13.north west)$) -- ($0.5*(MatrixPhi-12-13.north east)+0.5*(MatrixPhi-12-13.north east)$);

\node[fit=($0.4*(MatrixPhi-11-12.north west)+0.6*(MatrixPhi-12-13.north west)$)]{$\hspace{10mm} \color{blue} \varphi_{\nu+1}^+$};


\node[fit=(MatrixPhi-4-7)(MatrixPhi-4-9)]{$\color{blue} \varphi_{\nu}^+$};
\node[fit=(MatrixPhi-10-6)(MatrixPhi-10-6)]{\hspace{0mm}$\color{red} \varphi_{\nu}^-$};

\node[fit=(MatrixPhi-4-1)(MatrixPhi-5-1)]{$\color{colorG} \vdots$};
\node[fit=(MatrixPhi-9-1)(MatrixPhi-10-1)]{$\color{colorG} \vdots$};

\end{tikzpicture}
$}
\caption{Submatrix $\varphi\left[H^{\jmath_{\nu-1}}:H^{\jmath_\nu};T_{\jmath_{\nu-1}+2}:T_{\jmath_\nu+2}\right]$ of $\varphi$}
\label{fig:Step4AfterDeleting}
\end{figure}

To get an exact sequence containing $\varphi=\varphi_{{5}}$, we also have to erase columns $G_{\imath_\nu}^*$ and $G_{\imath_\nu+1}^*$ in $\pi_{{4}}\in A^{\tau\times\left(3\tau+2\xi\right)}$, corresponding to the removed rows in $\varphi_{{4}}$. As a result, we get the reduced matrix $\pi_{{5}}:=\pi_{{5}}\left(w,\lambda,1\right)
\in A^{\tau\times 3\tau}$ given by
$$
\pi_{{5}}
:=
\left(
\arraycolsep=2pt\def\arraystretch{1}
\begin{array}{c|c|c|c|c|c|c|c|c|c|c|c|c|c|c|c|c|c|c}
\color{colorG} G_1 & \color{colorG} \cdots & \color{colorG} G_{\imath_1-1} & \color{colorF} F_{\imath_1} & \color{colorG} G_{\imath_1+2}^* & \color{colorG} \cdots & \color{colorG} G_{\jmath_1}^* & \color{colorH} H_{\jmath_1} & \color{colorG} G_{\jmath_1+1} & \color{colorG} \cdots & \color{colorG} G_{\imath_\xi-1} & \color{colorF} F_{\imath_\xi} & \color{colorG} G_{\imath_\xi+2}^* & \color{colorG} \cdots & \color{colorG} G_{\jmath_\xi}^* & \color{colorH} H_{\jmath_\xi} & \color{colorG} G_{\jmath_\xi+1} & \color{colorG} \cdots & \color{colorG} G_{3\tau}
\end{array}
\right)_{\tau\times 3\tau}
$$
Consequently, setting $M_{{5}}:=M_{{4}}$, we get the following (part of) free resolution of $\left(M_{{5}}\right)_S$ by Lemma \ref{lem:MatrixReductionLemma}:
$$
\begin{tikzcd}[column sep = 20pt] 
S^{3\tau} \arrow[rr, "\varphi = \varphi_{{5}}"] & & S^{3\tau} \arrow[r, "\pi_{{5}}"] & \left(M_{{5}}\right)_S \arrow[r] & 0.
\end{tikzcd}
$$

To finish the proof, we recall in Proposition \ref{prop:DetAndAdjOfPhiS}.(1) that $\det\varphi$ is
a unit in $S$ when $\left(w,\lambda,1\right)$ is nondegenerate. This implies that the map $\varphi
: S^{3\tau}\rightarrow S^{3\tau}$ is injective. Indeed, if $\varphi u = 0$ for some $u \in S^{3\tau}$, we multiply
$\operatorname{adj}\varphi$ to both sides, then  we have $\operatorname{adj}\varphi\cdot \varphi
u = \left(\det\varphi\right)u = 0$ by equation \ref{eqn:AdjointEquation}, yielding $u=0$ when $\det\varphi$
is a unit. Therefore, by Theorem \ref{thm:Eisenbud}, we know that $M_{{5}}$ is isomorphic to $\operatorname{coker}\mathunderbar{\varphi}$ as an $A$-module and it is actually a maximal Cohen-Macaulay $A$-module. But we also have
$M_{{5}} = M_{{4}} = M_{{3}} = M_{{2}}$ and $M = \left(M_{{2}}\right)^\dagger$ by construction. Taking these together, we conclude that $M = M_{{5}}$ and thus establish the free resolution of $M_S$ with $\pi:=\pi\left(w,\lambda,1\right)
:= \pi_{{5}} \in
A^{\tau\times 3\tau}$:
$$
\begin{tikzcd}[column sep = 20pt] 
  0 \arrow[r] & S^{3\tau} \arrow[r, "\varphi"] & S^{3\tau} \arrow[r, "\pi"] & M_S \arrow[r] & 0.
\end{tikzcd}
$$
\end{proof}


\section{Matrix Factorizations Arising from Lagrangians}\label{sec:MfFromLag}

In this section, we prove a general version of Proposition \ref{prop:LagToMFRankOne}.

Let $\mathcal{P}$ be the pair of pants and $\mathbb{L}$ the Seidel Lagrangian in $\mathcal{P}$. For an immersed loop $L$ in $\mathcal{P}$, we need the concept of `\emph{regularity}'
to define its mirror image $\LocalF\left(L\right) = \left(\LocalPhi\left(L\right),\LocalPsi\left(L\right)\right)\in
\operatorname{MF}\left(xyz\right)$ under the localized mirror functor $\LocalF$.

\begin{defn}\label{def:regular}
        Consider $L$ and $\mathbb{L}$ as immersed loops $L$, $\mathbb{L} : S^1 \rightarrow \mathcal{P}$.
        \begin{enumerate}
                \item $L$ is said to bound an immersed `\emph{fish-tale}' if there is an immersion $i:D^2\rightarrow\mathcal{P}$ which satisfies $i\left(e^{2\pi it}\right)=L\left( \imath (t) \right)$ for some immersion $\imath:[0,1]\rightarrow S^1$.
                \item $L$ and $\mathbb{L}$ are said to bound an immersed `\emph{cylinder}' if there is a continuous map $j:S^1\times[0,1]\rightarrow\mathcal{P}$ which is an immersion on $S^1\times(0,1)$ and satisfies $j\left(\left(e^{2\pi it},0\right)\right) = L(\imath(t))$ and $j\left(\left(e^{2\pi it},1\right)\right) = \mathbb{L}(\jmath(t))$ for some immersions $\imath$, $\jmath : S^1\rightarrow S^1$.
                \item $L$ is called \emph{unobstructed} if it satisfies the following conditions:
                \begin{itemize}
                        \item $L$ does not have a triple intersection,
                        \item $L$ and $\mathbb{L}$ meet transversally,
                        \item $L$ does not pass through a self-intersection of $\mathbb{L}$, and
                        \item $L$ does not bound any immersed disks or fish-tales.
                \end{itemize}
                \item $L$ is called \emph{regular} if it is unobstructed and satisfies the following additional condition:
                \begin{itemize}
                        \item $L$ and $\mathbb{L}$ do not bound any immersed cylinders.
                \end{itemize}
        \end{enumerate}
\end{defn}

The following lemma and theorem justify that we only care about regular loops.

\begin{lemma}
        Every free homotopy class of loops in $\mathcal{P}$ which is not null-homotopic has its regular representatives.
\end{lemma}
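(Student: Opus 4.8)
The statement is that every non-null-homotopic free homotopy class in $\mathcal{P}$ has a regular representative, where ``regular'' was defined in Definition \ref{def:regular} as: unobstructed (no triple intersections, transverse to $\bL$, avoiding self-intersections of $\bL$, and bounding no immersed disks or fish-tails) together with the extra condition that $L$ and $\bL$ bound no immersed cylinders. My plan is to start from a geometrically nice representative and then perturb it in two stages: first to achieve unobstructedness, then to destroy any cylinders.

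\textbf{Step 1: a good starting representative.} Since $\mathcal{P}$ carries a complete hyperbolic metric with three cusps, every non-null-homotopic, non-peripheral free homotopy class is represented by a unique closed geodesic, and the peripheral classes $\alpha^l,\beta^m,\gamma^n$ are represented by curves running once around a cusp (pushed slightly off the cusp). In either case pick such a representative $L_0$; a closed geodesic on a hyperbolic surface is automatically an immersed curve with only transverse double self-intersections and no immersed disk or fish-tail bigon bounded by it (a disk or fish-tail would force, via the Gauss--Bonnet / developing map argument, a contractible loop, contradicting that $L_0$ is a geodesic in its essential class). So $L_0$ already has no self-bigons; the only failures of unobstructedness are the possible coincidences with $\bL$: non-transverse intersection points, triple points, or $L_0$ passing through $X,Y,Z$.

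\textbf{Step 2: achieve unobstructedness by a small perturbation.} These bad coincidences with $\bL$ are non-generic, so a $C^\infty$-small perturbation $L_1$ of $L_0$, supported near the bad points, removes them: we can make $L_1$ transverse to $\bL$, disjoint from the self-intersections $X,Y,Z$, and free of triple intersections. A small enough perturbation does not create any new self-bigon of $L_1$ (self-bigons are an open/closed phenomenon controlled away from $L_0$, and $L_0$ had none), so $L_1$ is unobstructed and still lies in the given free homotopy class. This is the routine part.

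\textbf{Step 3: destroy immersed cylinders.} The remaining issue is that $L_1$ and $\bL$ might cobound an immersed cylinder in $\mathcal{P}$. An immersed cylinder forces (a multiple of) $L_1$ and (a multiple of) $\bL$ to be freely homotopic, so this can only happen when the free homotopy class of $L$ lies in the (finitely many) classes commensurable with $[\bL]$; in particular for a \emph{generic} class there is nothing to do. When it does happen, the fix is exactly the one indicated in the introduction (see Lemma \ref{lem:Regular}): introduce a small new pair of intersection points between $L$ and $\bL$ by a finger move (a Reidemeister-II type move pushing a small arc of $L_1$ across a strand of $\bL$), producing $L_2$ in the same free homotopy class. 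The new intersections obstruct the existence of the cylinder, since any immersed cylinder $j:S^1\times[0,1]\to\mathcal{P}$ would have to have its boundary circle on the $L$-side disjoint from $\bL$ up to the cylinder's interior, which is incompatible with the extra transverse crossings; and the finger move, being supported in a small disk disjoint from all self-intersections of $L_1$ and of $\bL$, does not create any self-bigons or fish-tails and keeps $L_2$ transverse to $\bL$ away from $X,Y,Z$. Hence $L_2$ is regular and represents the given class.

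\textbf{Main obstacle.} The genuinely delicate point is Step 3: arguing cleanly that after the finger move no immersed cylinder survives, and more importantly that the finger move itself introduces no new \emph{self}-disk or self-fish-tail of $L$ (which would break unobstructedness). The cylinder-obstruction argument needs the observation that an immersed cylinder between $L$ and $\bL$ must realize a homotopy between (powers of) the two loops, which pins down the class, together with a count of intersection numbers modulo $2$ that the finger move changes. I expect this to be handled by the same bookkeeping as in Lemma \ref{lem:Regular} and Appendix \ref{sec:T=1}, so I would simply invoke that lemma here; the rest of the proof (Steps 1 and 2) is standard transversality plus the hyperbolic-geometry fact that essential geodesics bound no bigons.
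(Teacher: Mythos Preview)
Your overall approach is genuinely different from the paper's. The paper does not use hyperbolic geodesics or generic perturbation at all; instead it splits into two cases. For the peripheral classes $\alpha^n,\beta^n,\gamma^n$ it takes an embedded loop around the relevant puncture, which is trivially regular since it can be chosen disjoint from $\bL$. For all other classes it invokes Proposition \ref{prop:normalnormal} to get a normal loop word $w'$ and then uses the explicit combinatorial Lagrangian $L(w',\lambda')$ built from the arcs $\delta_x^{\nu},\delta_y^{\nu},\delta_z^{\nu}$ and $\Delta$-segments; regularity of this specific curve is the content of Lemma \ref{lem:Regular}, whose proof uses (i) that normal words are length-minimal, so no fish-tails, and (ii) that a cylinder with $\bL$ forces $L$ to be a power of $\bL$, which only happens for $w'=(2,2,\dots,2)$, handled separately by a strongly admissible perturbation. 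Your geodesic-plus-perturbation route is more conceptual and avoids the combinatorics of normal words; the paper's route is more constructive and feeds directly into the later disk-counting.

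Your Steps 1 and 2 are fine. Step 3 has a real gap. The sentence ``any immersed cylinder $j:S^1\times[0,1]\to\mathcal{P}$ would have to have its boundary circle on the $L$-side disjoint from $\bL$ up to the cylinder's interior, which is incompatible with the extra transverse crossings'' is not correct: the definition of bounding an immersed cylinder (Definition \ref{def:regular}) places no disjointness condition between the $L$-boundary and $\bL$, and indeed $L$ and $\bL$ already intersected before your finger move. A Reidemeister-II finger move merely creates a small bigon between $L$ and $\bL$; a pre-existing immersed cylinder can typically absorb or cancel this bigon and persist. What actually kills cylinders in the paper is not an arbitrary finger move but a perturbation that makes $L$ \emph{strongly admissible} (Definition \ref{def:StronglyAdmissible}): one pushes an arc of $L$ along a path toward a puncture so that each punctured region acquires boundary pieces of $L$ with both signs (see Lemma \ref{lem:HomotopicToAdmissible} and Figure \ref{fig:RegToAdm}). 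That is a specific, located perturbation, and the mechanism is that any would-be cylinder must now cover a punctured region, which is impossible. Your deferral to Lemma \ref{lem:Regular} does not rescue this, since that lemma is proved via the explicit $L(w',\lambda')$ and strong admissibility, not via a generic finger move. To repair Step 3 you should either (a) specify that the finger move pushes $L$ toward a puncture and argue via strong admissibility, or (b) simply observe, as the paper does, that cylinders only occur in the commensurability class of $\bL$ and handle that single class by the perturbation of Remark \ref{rem:DegenerateLoopLength1}.
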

\begin{proof}
For the classes $\alpha^n, \beta^n$, and $\gamma^n$ in $\pi_1(\cpp)$, one can easily construct regular loops around each punctures. For other classes, by Proposition \ref{prop:normalnormal}, it is sufficient to consider normal representatives. We will soon construct an explicit regular loop $L\left(w',\lambda'\right)$ for each normal loop word $w'$. See Definition \ref{defn:RepresentingLoops} and Lemma \ref{lem:Regular}.
\end{proof}

\begin{thm}\label{thm:T=1Homotopy}
        Any regular immersed loop $L$ with holonomy has its mirror matrix factorization $\LocalF (L) = \left(\LocalPhi\left(L\right),\LocalPsi\left(L\right)\right)\in\operatorname{MF}(xyz)$
        over $\mathbb{C}$.
        The homotopy class of $\LocalF\left(L\right)$ depends only on the free homotopy class of $L$ and the total holonomy on $L$.
\end{thm}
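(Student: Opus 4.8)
\textbf{Proof plan for Theorem \ref{thm:T=1Homotopy}.}
The statement has two parts: first, that a regular immersed loop $L$ (with holonomy) admits a mirror matrix factorization over $\mathbb{C}$ — i.e.\ one can set $T=1$ in $\LocalF(L)$; and second, that the homotopy class of $\LocalF(L)$ is an invariant of the free homotopy class of $L$ together with the total holonomy. The plan is to reduce both parts to counts of immersed strips (decorated bigons/polygons) bounded by $L$ and $\mathbb{L}$, exactly as in the description surrounding Figure \ref{fig:strip}, and to control the energy (the exponent of $T$) of each such strip.

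For the first part, I would argue that regularity makes the $\AI$-operations $\operatorm_1^{0,b}$ defining $\LocalF(L)$ \emph{finite sums} — so that the matrix entries are genuine polynomials in $x,y,z$ and evaluation $T=1$ is literally well-defined, with no convergence issue. The key step is: given any two intersection points $p,q \in L\cap\mathbb{L}$, the number of rigid decorated strips from $p$ to $q$ is finite. I would prove this by the usual combinatorial/topological dichotomy already invoked in the paper: a decorated strip that fails to be rigid comes in a one-parameter family, so every contributing strip is rigid; and a rigid decorated strip, being an immersed polygon in the surface $\mathcal{P}$ with boundary on $L\cup\mathbb{L}$, is determined by its corner data and a bounded combinatorial type once we know $L$ and $\mathbb{L}$ have no triple points and $L$ does not bound disks or fish-tails. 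Here the ``no immersed cylinder'' condition in the definition of regularity is precisely what rules out the one remaining source of an infinite family of strips with energies going to infinity (strips that wrap an annulus between $L$ and $\mathbb{L}$ arbitrarily many times). This is essentially the content of the earlier remark about evaluation $T=1$ failing when multiples of $L$ and $\mathbb{L}$ bound a cylinder, and Lemma \ref{lem:Regular}. So after establishing finiteness, setting $T=1$ gives a well-defined pair of matrices with entries in $\mathbb{C}[x,y,z]$ satisfying $\varphi\psi=\psi\varphi=xyz\cdot I$ (the factorization identity passes to $T=1$ because it holds identically in $T$), which is the desired matrix factorization over $\mathbb{C}$.

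For the second (invariance) part, the strategy is to show that the elementary moves generating free homotopy of loops — a Lagrangian isotopy decomposed into (i) Reidemeister-type moves (II and III) creating/removing or sliding intersection points with $\mathbb{L}$, and (ii) isotopies supported away from $\mathbb{L}$ — each change $\LocalF(L)$ only by a homotopy equivalence of matrix factorizations over $\mathbb{C}$. For a move of type (i), a new canceling pair of intersection points appears together with a small embedded bigon; this contributes a $2\times 2$ block of the form $\begin{psmallmatrix} 1 & * \\ 0 & * \end{psmallmatrix}$ to $\varphi$ (a unit entry), and Lemma \ref{lem:MatrixReductionLemma} (matrix reduction) collapses it, exactly matching the geometric picture in Lemma \ref{lem:HomotopyTypeV} and Remark \ref{rmk:MatrixReduction}; crucially, after $T=1$ the would-be scalar $T^{\text{area}}$ becomes $1$, so the reduction is over $\mathbb{C}$. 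A Reidemeister III move permutes strips without changing their count mod the ideal generated by the $\AI$-relations, hence changes $\varphi$ by conjugation — a homotopy equivalence. For a move of type (ii), the strip counts literally do not change (the strips are determined by intersection and corner data, which are unaffected), so $\LocalF(L)$ is unchanged on the nose; the subtlety is that energies of strips \emph{do} change under such an isotopy, but since we have already passed to $T=1$ this is invisible, and one invokes (or re-proves) the homotopy-invariance of the $\AI$-functor up to the chain-level homotopies produced by $\operatorm_2^{0,b}$ with the continuation element. Finally, changing the holonomy data of $L$ while keeping the total holonomy $\lambda'$ fixed amounts to moving the holonomy point along $L$; since every decorated strip touching $L$ along an arc picks up the product of holonomies along that arc, and the strips are closed along $L$ in a way that always sees either $\lambda'$ or $1$ (this is exactly the bookkeeping in Proposition \ref{prop:LagToMFRankOne}), only $\lambda'$ enters the matrix, giving the claimed dependence.

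\textbf{Main obstacle.} The hard part is the careful accounting in the second part: showing that Reidemeister moves and holonomy-point slides induce \emph{honest homotopy equivalences} of matrix factorizations rather than merely isomorphisms of the underlying $\Z/2$-graded modules, and doing so uniformly over all (normal) loop words rather than case-by-case. Concretely, one must exhibit the homotopy operators from the higher components $\LocalF_k$ of the localized mirror functor applied to the continuation morphisms implementing the isotopy, and verify that after $T=1$ these are still well-defined (finite, by the same regularity-based finiteness argument). Handling the interaction between an isotopy and the ``no cylinder'' condition — an isotopy can momentarily create a near-cylinder — is where the argument is most delicate, and is presumably why this is deferred to Appendix \ref{sec:T=1}; the plan is to choose the isotopy generic enough that regularity holds at all but finitely many times and to analyze the wall-crossing at those times via the matrix expansion/reduction lemmas. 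I would organize the full proof around these two lemmas (\ref{lem:MatrixExpansionLemma}, \ref{lem:MatrixReductionLemma}) on the algebra side and the Reidemeister decomposition on the geometry side, with the finiteness lemma as the common technical backbone.
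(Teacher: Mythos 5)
Your two-part organization (finiteness of $\LocalF(L)$ after $T=1$, then homotopy invariance) matches the paper's, which proves exactly Propositions \ref{prop:T=1} and \ref{prop:Homotopy}. For the invariance part your plan is essentially the paper's: decompose an isotopy into elementary moves (the paper uses five types, Figure \ref{fig:HomotopyMove}, rather than just Reidemeister II/III, but the content is the same), produce isomorphisms $f,g$ in the Fukaya category for each move, push them through the $\AI$-functor, and read off either row/column operations (Lemma \ref{lem:HomotopyTypeIII}) or matrix reductions (Lemma \ref{lem:HomotopyTypeV}, Remark \ref{rmk:MatrixReduction}, tying back to Lemma \ref{lem:MatrixReductionLemma}). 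You also correctly identify that regularity must be preserved along the isotopy, which the paper handles via Lemma \ref{lem:PreservationOfRegularity} and by choosing the intermediate loops carefully in Lemma \ref{lem:5TypesAreEnough}.

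The real gap is in the finiteness part. You assert that a rigid decorated strip with boundary on $L\cup\mathbb{L}$ is ``determined by its corner data and a bounded combinatorial type,'' so that regularity (no cylinders) directly gives finitely many strips contributing to each monomial. That is not a proof, and in fact regularity alone does not directly give this: the paper records explicitly (Remark \ref{rmk:RegularityAdmissibility}) that the class of regular loops is strictly larger than the class of loops for which the finiteness argument works directly. A priori, even with fixed corner data $(p,b_1,\dots,b_r,q)$, there can be infinitely many homotopy classes of immersed strips in a surface with nontrivial topology, and ruling this out requires real input. What the paper actually does (Section \ref{sec:Admissibility}) is import the notion of \emph{admissibility} from Azam--Blanchet: define the chain groups $C_1,C_2$, the Euler measure, and the set $H(L)$ of Euler-measure-zero $2$-chains with boundary in $\langle[L],[\mathbb{L}]\rangle$; call $L$ admissible if every nonzero element of $H(L)$ has mixed signs. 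Finiteness of $\mathcal{M}(p,b_1,\dots,b_r,q)$ for admissible $L$ (Lemma \ref{lem:AdmissibleFinite}) then follows from Gromov compactness on the fibers of $\pi:\mathcal{M}\to C_2$ together with the combinatorial Lemma \ref{lem:UpperRight} about infinite subsets of $\mathbb{Z}_{\ge 0}^n$. Since regularity does not imply admissibility, the paper still has to connect the two: it shows any regular loop can be deformed by finitely many type-V$^{+}$ poking moves (which preserve regularity) to a \emph{strongly admissible} loop (Lemma \ref{lem:HomotopicToAdmissible}), proves finiteness there, and then propagates finiteness back through each poking move using the explicit computation in Lemma \ref{lem:HomotopyTypeV} that $u^{-1}\in S$. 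That last step is itself where regularity is used crucially (to show the constant term of $u$ is $1$, hence that $u$ is a unit with finite inverse). None of this chain appears in your plan, and the direct claim you substitute for it would not survive scrutiny.

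A secondary but related point: in Lemma \ref{lem:HomotopyTypeV} the argument does not merely ``collapse a unit block.'' One needs to know in advance that the $(k{+}1,k{+}1)$-entry $u$ is a unit \emph{with finite inverse} over $\mathbb{C}$. The paper derives this by combining the $\AI$-relation $\LocalF_1(g)\circ\LocalF_1(f)=\mathrm{id}+[\text{homotopy}]$ with a direct polygon count for $\LocalPsi_2(g,f)$, and uses the ``no immersed cylinder'' condition precisely to ensure the constant term of $w=\LocalPsi_2(g,f)_{k+1,k+1}$ is exactly $1$, not $1$ plus contributions from wrapping annuli. This is the technical heart that makes both the finiteness propagation and the homotopy equivalence work over $\mathbb{C}$; your proposal gestures at it (``after $T=1$ the would-be scalar $T^{\text{area}}$ becomes $1$'') but doesn't isolate that this is exactly where regularity enters and why it cannot be waved away.
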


\begin{proof}
We give the proof in Appendix \ref{sec:T=1}. See Propositions \ref{prop:T=1} and \ref{prop:Homotopy} for the first and second statement, respectively. See  remark \ref{rmk:Regularity} for the reason why
we should consider only regular loops.
\end{proof}

Thus $\LocalF$ takes any element in $\left[S^1,\mathcal{P}\right]$ with a holonomy to a homotopy class in $\operatorname{MF}\left(xyz\right)$.
But note that $\mathcal{P}$ has $3$ boundary components each of which can be represented by an embedded loop, and
their mirror images are the zero object in $\operatorname{MF}\left(xyz\right)$ because they don't meet
$\mathbb{L}$. Similarly, any loops that just wind one of the boundary components have zero mirror image
and therefore their free homotopy classes also have null-homotopic mirror images.
This suggests that we exclude those kinds of loops.

\begin{defn}\label{defn:RepresentingLoops}
        Consider $L$ as an immersed loop $L:S^1\rightarrow \mathcal{P}$.
        \begin{enumerate} 
                \item For $k \in \mathbb{Z}$, a ($k$-)\emph{multiple} of $L$ is defined by the immersed loop $L^k:S^1\rightarrow \mathcal{P}$, $e^{2\pi it}\mapsto L\left(e^{2k\pi i t} \right)$.
                \item $L$ is called \emph{essential} if it is not freely homotopic to a multiple of a
                boundary component of $\mathcal{P}$.
                \item If $L$ is essential, we say the same for the free homotopy class $[L]$ of $L$. This is well-defined because whether a loop is essential or not is an invariant of its free homotopy class, obviously.
        \end{enumerate}
\end{defn}

\begin{example}\label{example:MultipleOfSeidelLagerangian}
        A non-regular loop is homotopic to a multiple of the Seidel Lagrangian $\bll$. Let $L$ be a non-regular loop with a cyclically reduced homotopy class $\alpha^{a_1}\beta^{b_1}\cdots\alpha^{a_\nu}\beta^{\nu}$. Since an immersed cylinder gives a homotopy between a multiple of $L$ and a multiple of the Seidel Lagrangian $\bll$, permuting cyclically if it is necessary, we have $$(\alpha^{a_1}\beta^{b_1}\cdots\alpha^{a_\nu}\beta^{b_\nu})^n =  (\alpha\beta\alpha^{-1}\beta^{-1})^m$$ for some integers $n, m$.  One could conclude that the homotopy class of $L$ is $(\alpha\beta\alpha^{-1}\beta^{-1})^k$ for some $k$  by comparing exponents, which is followed by $L$ is freely homotopic to a multiple of $\bll$.    
\end{example} 

From now on we will only discuss essential loops. To compute their mirror images, it is enough to consider only one regular representative in each of their free homotopy class. Recall in Proposition \ref{prop:normalnormal} that the free homotopy classes of essential loops are indeed represented by essential loop words, so they can be uniquely represented by the normal loop words up to shifting.


Let us define special types of Lagrangians analogously as in Section \ref{sec:MFFromRank1Lagrangian}. Let $\tau$ be a positive integer and put $2\tau$ points on each segment $l_x, l_y$ ,and $l_z$. We call $u_1, u_2, \cdots, u_{\tau}, p_{\tau}, \cdots, p_2, p_1$ the points on $l_x$ in order. Similarly, call $s_1, s_2, \cdots, s_\tau, q_\tau, \cdots, q_2, q_1$ the points on $l_y$, and $t_1, t_2, \cdots, t_\tau, r_\tau, \cdots, r_2, r_1$ on $l_z$. Then define a path $\delta_{\square}$ from the base point $g$ to the point $\square\in\{p_i, q_i, r_i, s_i, t_i, u_i\}$ as in Section \ref{sec:MFFromRank1Lagrangian}. Also, define a path $\delta_{x, i}^\nu$, for each $i=1, \cdots, \tau$ and $\nu\in\bzz$, from $u_{i-1}$ to $p_i$, whose interior is contained in $D_x$ and the homotopy class of the concatenated loop $[\delta_{u_{i-1}}\cdot \delta^{\nu}_{x, i}\cdot \overline{\delta_{p_i}}]\in\pi_1(\cpp, g)$ represents $\alpha^\nu$. Define $\delta^\nu_{y, i}$ from $s_i$ to $q_i$ and $\delta^\nu_{z, i}$ from $t_i$ to $r_i$ in the same way. Also let $\Lambda_{\square_1\square_2}$ be a path in $\overline{A}$ from $\square_1$ to $\square_2$ whose interior contained in $A$, where $\square_1, \square_2 \in \{p_i, q_i, r_i, s_i, t_i, u_i : i=1, \cdots, \tau\}$. Now define the Lagrangian $L((l'_1, m'_1, n'_1, \cdots, l'_\tau, m'_\tau, n'_\tau), \lambda')$.

\begin{defn}
        Let $\tau$ be a positive integer and $w' = (l'_1, m'_1, n'_1, \cdots, l'_\tau, m'_\tau, n'_\tau)$ be a loop word. Then we define the closed immersed Lagrangian curve $L(w', \lambda') = L((l'_1, m'_1, n'_1, \cdots, l'_\tau, m'_\tau, n'_\tau), \lambda')$ to be a smoothing of the loop $$\delta^{l'_1}_{x, 1}\cdot \Delta_{p_1, s_1} \cdot \delta^{m'_1}_{y, 1} \cdot \Delta_{q_1, t_1} \cdot \delta^{n'_1}_{z, 1} \cdot \Delta_{r_1, u_1} \cdot \cdots \cdot \delta^{l'_\tau}_{x, \tau}\cdot \Delta_{p_\tau, s_\tau} \cdot \delta^{m'_\tau}_{y, \tau} \cdot \Delta_{q_\tau, t_\tau} \cdot \delta^{n'_\tau}_{z, 1} \cdot \Delta_{r_\tau, u_\tau}$$ whose holonomy $\lambda'$ is concentrated at a point in $\delta^{m'_1}_{x, 1}$.
\end{defn}

\begin{remark}
        \begin{enumerate}\label{rem:ImmersedLagrangians}
                \item Since each paths are not defined very concretely, the Lagrangian may seem ambiguous also. However, the mirror image is only depends on the intersection patter with the Seidel Lagrangian as long as it is regular. To guarantee regularity, there should be no triple intersections or non-transversal intersections of paths $\delta, \Delta$. Also, each path $\delta, \Delta$ should have no self-intersection. These can be generically achieved. \label{rem:UnobstructedLagrangian}
                \item\label{rem:DegenerateLagrangian} As in Remark \ref{rem:DegenerateLoopLength1}, the loop word $w' = (2, 2, \cdots, 2)$ gives a non-regular Lagrangian. So we have to perturb it to make it regular illustrated as in Figure \ref{fig:Perturbed222} for $\tau=1$ and Figure \ref{fig:StronglyAdmissibleL(2)} for $\tau\geq2$. Since the perturbed Lagrangian is strongly admissible, it is regular(c.f. \ref{ex:DegenerateLagrnagianStronglyAdmissible}) and so one can compute its mirror matrix factorization. As in Remark \ref{rem:DegenerateLoopLength1}, it also turns out that the resulting matrix is homotopy equivalent to the canonical form. 
        \end{enumerate}
\end{remark}

\begin{lemma}\label{lem:Regular}
For a normal loop word $w'$ that is not of type $(2,2,\cdots,2)$ and any holonomy $\lambda'$, the corresponding loop $L\left(w',\lambda'\right)$ is a regular representative in the free homotopy class $L\left[w'\right]\in\left[S^1,\mathcal{P}\right]$ associated to $w'$.
For the case of $w'=(2,2,\cdots,2)$, we can perturb  the loop $L\left(w',\lambda'\right)$ to be the regular one.
\end{lemma}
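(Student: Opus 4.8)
The plan is to verify the four defining properties of regularity from Definition \ref{def:regular} for the loop $L\left(w',\lambda'\right)$ constructed in the preceding definition, treating the generic case $w'\ne(2,2,\dots,2)$ first and then addressing the exceptional word separately. Recall that $L\left(w',\lambda'\right)$ is built as a smoothing of a concatenation of the arcs $\delta^{\nu}_{x,i}$, $\delta^{\nu}_{y,i}$, $\delta^{\nu}_{z,i}$ (winding around the punctures inside $D_x$, $D_y$, $D_z$) and the connecting segments $\Delta_{\square_1\square_2}$ contained in $\overline{A}$. By Remark \ref{rem:ImmersedLagrangians}(\ref{rem:UnobstructedLagrangian}), we may choose these arcs generically so that there are no triple intersections, no non-transverse intersections, no self-intersections of the individual arcs, and $L$ does not pass through any self-intersection of $\bL$; the only intersections with $\bL$ occur along the segments $l_x,l_y,l_z$ at the prescribed points $p_i,q_i,r_i,s_i,t_i,u_i$, and these are transverse. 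This handles the first three bullets of the "unobstructed" condition automatically by a generic perturbation argument.

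The substantive content lies in the last bullet of unobstructedness — that $L$ bounds no immersed disk or fish-tale — and the extra bullet of regularity — that $L$ and $\bL$ bound no immersed cylinder. First I would observe that since $L\left(w',\lambda'\right)$ has free homotopy class $L\left[w'\right] = \alpha^{l_1'}\beta^{m_1'}\gamma^{n_1'}\cdots\alpha^{l_\tau'}\beta^{m_\tau'}\gamma^{n_\tau'}$, and since $w'$ is normal, Proposition \ref{prop:normalnormal} guarantees this class is essential (not a multiple of a boundary loop); in particular it is not null-homotopic, so $L$ bounds no immersed disk. For fish-tales, I would argue that an immersed fish-tale would force the existence of a sub-loop of $L$ bounding a disk after removing a "tail," which by the normal form would reduce the word length or create a subword forbidden by normality — concretely, a fish-tale corresponds to a spot where $L$ backtracks inside one of the regions $D_x$, $D_y$, $D_z$ or in $A$, and normality (no subwords $(a,1,b)$ with $a$ or $b>0$, no $(a,0,b)$ with both $a,b$ of the wrong sign, no $(0,-1,\dots,-1,0)$) is exactly designed to prevent such cancellations in the geometric picture. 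The cleanest route is to invoke the uniqueness part of Proposition \ref{prop:normalnormal}: if $L$ bounded a fish-tale, one could isotope across it to get a shorter representative in the same homotopy class, still of the same combinatorial type, contradicting that the normal representative is unique and minimal.

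For the cylinder condition, the key input is Example \ref{example:MultipleOfSeidelLagerangian}: if $L$ and $\bL$ bounded an immersed cylinder, then some multiple $L^n$ would be freely homotopic to some multiple $\bL^m$, and comparing reduced cyclic words in $\pi_1(\mathcal{P})\cong\langle\alpha,\gamma\rangle$ forces $L\left[w'\right]$ to be a power of the commutator $[\alpha,\beta]$, i.e.\ of the class $\alpha^{-1}\beta^{-1}\gamma^{-1} = [\alpha^2\beta^2\gamma^2]$. Thus $w'$ would be equivalent to $(2,2,\dots,2)$ or $(-1,-1,\dots,-1)$ up to shift; but a normal word equal to neither $(-1,\dots,-1)$ (excluded by the fourth normality condition) nor $(2,\dots,2)$ (excluded by hypothesis) cannot be equivalent to such a word, again by uniqueness of the normal representative in Proposition \ref{prop:normalnormal}. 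I expect this cylinder step to be the main obstacle, since it requires carefully ruling out the degenerate homotopy class and matching the combinatorial exclusion in the definition of normality to the geometric "powers of the Seidel Lagrangian" phenomenon; one must also be a little careful that the Seidel Lagrangian $\bL$ itself has homotopy class $[\alpha^2\beta^2\gamma^2]$, which is why $(2,2,\dots,2)$ is genuinely exceptional. Finally, for $w' = (2,2,\dots,2)$, I would follow Remark \ref{rem:ImmersedLagrangians}(\ref{rem:DegenerateLagrangian}): perturb $L\left(w',\lambda'\right)$ as in Figures \ref{fig:Perturbed222} and \ref{fig:StronglyAdmissibleL(2)} so that the perturbed curve is strongly admissible, hence regular by the (forward-referenced) criterion, completing the proof.
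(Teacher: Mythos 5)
Your proposal is correct and takes essentially the same approach as the paper's proof: generic perturbation for the first three unobstructedness conditions via Remark \ref{rem:ImmersedLagrangians}(\ref{rem:UnobstructedLagrangian}), essentiality of the class (hence non-null-homotopy) for discs, minimality/uniqueness of normal words (Remark \ref{rem:MinimalityOfNormalLoopWord} and Proposition \ref{prop:normalnormal}) for fish-tales, Example \ref{example:MultipleOfSeidelLagerangian} to rule out cylinders when $w'\ne(2,\dots,2)$, and the perturbation of Remark \ref{rem:ImmersedLagrangians}(\ref{rem:DegenerateLagrangian}) for the exceptional word.
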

\begin{proof}
By construction, the free homotopy class of $L\left(w',\lambda'\right)$ is $L\left[w'\right]$.
The first three conditions of unobstructedness of  $L\left(w',\lambda'\right)$ is in Remark \ref{rem:ImmersedLagrangians}.(\ref{rem:UnobstructedLagrangian}).

For the fourth condition, we have to prove that $L$ doesn't bound immersed discs or fish-tales. Since $L$ is not null-homotopic, we only have to prove for the fish-tales. Let $i : D^2 \rightarrow \cpp$ be a fish-tale with a boundary condition $\imath : [0, 1]\rightarrow S^1$. By Remark \ref{rem:ImmersedLagrangians}.(\ref{rem:UnobstructedLagrangian}) again, the fish-tale should not be totally contained a path $\delta$ or $\Delta$. This implies that the loop word $w'$ has a subword which can be reduced which contradicts to minimality of normal words, see Remark \ref{rem:MinimalityOfNormalLoopWord}.


Finally, by Example \ref{example:MultipleOfSeidelLagerangian}, for $w' \neq (2, 2, \cdots, 2)$, $L\left(w',\lambda'\right)$ and Seidel Lagrangian cannot bound a cylinder. The last statement will be explained in Remark \ref{rem:ImmersedLagrangians}.(\ref{rem:DegenerateLagrangian}).
\end{proof}

Let us compute $\cff^\bll(L)$ for these Lagrangians $L$. The matrix component of $\cff^\bll(L((l'_1, \cdots, n'_\tau), \lambda'))$ is $3\tau \times 3\tau$-matrix since $L\cap \bll$ consists of $6\tau$ points. Denote by $(\square_i, \triangle_j)$ the $(\square_i, \triangle_j)$-entry of $\cff^\bll(L((l'_1, \cdots, n'_\tau), \lambda'))$, for $\square\in\{p, q, r\}, \triangle\in \{s, t, u\}$. Then we can compute the following.

\begin{prop}\label{prop:DiscCounting}
        Let $w' = (l'_1, m'_1, n'_1, \cdots, l'_{\tau}, m'_{\tau}, n'_{\tau})$ be a normal loop word. Then we have, up to sign and holonomy, 
        \begin{align*}
                (p_1, s_j) &= z &\text{ if } \quad j=1\\
                (p_1, t_j) &= y^{-m'_1} & \text{ if } \quad j=1\\
                (p_1, u_j) &= x^{l'_1-1} & \text{ if } \quad j=\tau,
        \end{align*}and $0$ otherwise. We are using the notation that $x^a, y^a$ and $z^a$ is considered as zero if $a<0$.
\end{prop}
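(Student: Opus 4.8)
\textbf{Proof proposal for Proposition \ref{prop:DiscCounting}.}

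The plan is to carry out a careful case analysis of the decorated strips bounded by $L=L(w',\lambda')$ and $\mathbb{L}$ that begin at one of the odd intersection points $p_1,q_1,\dots$ and end at one of the even intersection points $s_j,t_j,u_j$; by Theorem \ref{thm:lmf} and the recipe in Figure \ref{fig:strip}, the $(\square_i,\triangle_j)$-entry of $\LocalF^{\mathbb{L}}(L)$ is the signed, holonomy-weighted count of such strips. Since the statement only asks for the entries in the row indexed by $p_1$, I would first localize the analysis to the three edges of $\mathbb{L}$ incident to the point $p_1$, namely the edge $l_x$ on which $p_1$ lies and the two neighbouring edges $l_y$ and $l_z$ meeting it at the corners $Z$ and $Y$ respectively. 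The key geometric input is that $L$ near $p_1$ runs along the arc $\delta_{x,1}^{l_1'}$ (entering $D_x$) and then the segments $\Delta_{r_\tau u_1}$ and $\Delta_{p_1 s_1}$ on either side, so there are exactly three ``small'' strips emanating from $p_1$: the triangle $p_1 Z s_1$ giving the diagonal entry $z$; the strip that turns at $Y$ and travels back to $t_1$ along the region $D_y$, winding $-m_1'$ times, giving $y^{-m_1'}$ (nonzero only when $m_1'\le 0$, consistent with the convention $y^a=0$ for $a<0$); and the strip that turns at $X$, sweeps through the noncompact region of $D_x$, and ends at $u_\tau$ after winding, contributing $x^{l_1'-1}$ (nonzero only when $l_1'\ge 1$).

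The main steps, in order: (1) Using the normality of $w'$ together with Proposition \ref{prop:normalnormal} (unique normal representative) and Lemma \ref{lem:Regular} (regularity of $L(w',\lambda')$), argue that $L$ bounds no immersed discs, fish-tales, or cylinders with $\mathbb{L}$, so that every contributing strip is rigid and the count is finite and well-defined over $\mathbb{C}$ after the evaluation $T=1$ (Theorem \ref{thm:T=1Homotopy}). (2) Enumerate, starting from $p_1$, all immersed strips by following $\mathbb{L}$ in both directions and recording whether the boundary turns or goes straight at each self-intersection $X,Y,Z$ of $\mathbb{L}$; show that after leaving the initial small triangle region, any strip that does not immediately close up runs into a puncture (as in Figure \ref{fig:L322ps}), which blocks the strip — this is the mechanism that forces $(p_1,s_j)=0$ for $j\neq 1$, $(p_1,t_j)=0$ for $j\neq 1$, and $(p_1,u_j)=0$ for $j\neq\tau$, and also caps the exponents appearing in the nonzero entries. (3) For each of the three surviving strips, count the winding: the $z$-strip has no winding; the $t_1$-strip winds around the puncture in $D_y$ exactly $-m_1'$ times (which requires tracking the homotopy class condition $[\delta_{s_1}\cdot\delta_{y,1}^{m_1'}\cdot\overline{\delta_{q_1}}]=\beta^{m_1'}$ in $\pi_1(\mathcal{P})$); the $u_\tau$-strip picks up $l_1'-1$ (one less than $l_1'$ because the first pass corresponds to the edge traversal itself, cf.\ the discussion of $L(2,3,2)$ and Figures \ref{fig:L322psin}–\ref{fig:L322pu2}). (4) Read off the signs and the holonomy factor $\lambda'$ from the number of corners plus the number of times the strip traverses $\mathbb{L}$ in the reverse direction and passes the holonomy marked point, which justifies the phrase ``up to sign and holonomy'' in the statement (the precise signs and powers of $\lambda'$ are recorded separately when assembling $\varphi(w',\lambda,1)$ in Theorem \ref{thm:MFFromLag}).

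I expect the main obstacle to be step (2): proving rigorously that \emph{no} further strips exist. The difficulty is that a strip emanating from $p_1$ could in principle wrap many times around several punctures and through several of the blocks indexed by $1,\dots,\tau$ before closing up at some $s_j$, $t_j$, or $u_j$, and one must show every such attempt is obstructed by a puncture. The cleanest way to handle this is to argue by the combinatorics of the loop word: each time the strip boundary on the $\mathbb{L}$-side passes through a region $D_x$, $D_y$, or $D_z$ without being blocked, it forces a cancellation or reduction in the subword of $w'$ that $L$ realizes in that region, contradicting the minimality/irreducibility built into the normality conditions (Definition \ref{defn:normal2}), as invoked via Remark \ref{rem:MinimalityOfNormalLoopWord}. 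Making this ``strip $\Rightarrow$ reducible subword'' implication precise — essentially an Euler-characteristic or developing-map argument for the immersed strip lifted to the universal cover of $\mathcal{P}$ — is where the real work lies; the remaining bookkeeping of exponents and signs is routine given the worked example of $L(2,3,2)$.
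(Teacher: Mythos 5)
Your proposal follows the same strategy as the paper's proof: localize to the odd generator $p_1$, identify the three nonzero contributions (the small $p_1Zs_1$ triangle, the $D_y$-winding strip to $t_1$, and the $D_x$-winding strip to $u_\tau$), and rule out all other strips by comparing the homotopy classes of the two boundary arcs of any candidate strip — which is precisely the developing-map argument you anticipate. The paper's rendition of the "real work" you flag is to encode the $\mathbb{L}$-side boundary as a constrained sequence of the six edges $l_x,l_y,l_z,\tilde l_x,\tilde l_y,\tilde l_z$, observe that its homotopy class necessarily has the form $\alpha^{-a_1-1}\beta^{-b_1-1}\gamma^{-c_1-1}\cdots$ with all exponents $\leq -1$, and match this against the prefixes of (the inverse of) the loop word; normality then forces the endpoint to be $s_1$, $t_1$, or $u_\tau$ and pins down the exponents exactly as you describe.
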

\begin{remark}
The entry $(p_1, s_1)$ comes from the small triangle, similar to the $\tau=1$ case that is  illustrated  in Figure \ref{fig:L322} (a).
The entry $(p_1,t_1)$ is zero if $m_1' >0$ and this was illustrated in Figure \ref{fig:L322} (b) where the strip could not extend beyond the puncture.
If $m_1'<0$, the immersed curve will wind around the puncture in the opposite direction, and we can find an immersed polygon now.
 The entry $(p_1,j_j)$ for $j=\tau$ comes from the counting of the polygon similar to the one  described in Figure \ref{fig:L32222}.        
\end{remark}

\begin{proof}
        Denote by $L$ the Lagrangian $L(w', \lambda')$. Let $D^2$ be the unit disc in the complex plane and let $\eta^+ : [0, 1]\rightarrow D^2$ be a path defined as $\eta^+(t) = e^{\sqrt{-1}\pi (1-t)}$ and $\eta^-$ be a path defined as $\eta^-(t) = e^{\sqrt{-1}\pi (1+t)}$. Also let $z_1, \cdots, z_r$ be marking points on $\eta^-$ in counterclockwise order.
        
        Now let $\phi : D^2\rightarrow \cpp$ be a polygon such that $\phi(-1) = p_1$, $\phi(1) = s_i, t_i, $or $u_i$, $\phi\circ\eta^-\subseteq \bll$, $\phi\circ\eta^+\subseteq L$, and $\phi(z_i)$ is one of $X, Y, Z$. Then we have two paths $\phi\circ\eta^+$ and $\phi\circ\eta^-$ from $p_1$ to $\phi(1)$ which are path homotopic to each other. Hence two loops $\delta_{p_1}\cdot (\phi\circ\eta^+) \cdot \overline{\delta_{\phi(1)}}$ and $\delta_{p_1}\cdot (\phi\circ\eta^-) \cdot \overline{\delta_{\phi(1)}}$ should have the same homotopy type.
        
        Note that there are two cases : the orientation of $\bll$ and $\phi\circ\eta^-$ are the same or opposite. First consider the case that two orientations are opposite. In this case, the homotopy type of $\delta_{p_1}\cdot \phi\circ\eta^- \cdot \overline{\delta_{\phi(1)}}$ is a former part of
        $\alpha^{-l'_1}\gamma^{-n'_\tau}\beta^{-m'_\tau}\alpha^{-l'_\tau}\cdots\gamma^{-n'_1}\beta^{-m'_1}\alpha^{-n'_1}\cdots$, that is, $e, \alpha^{-l'_1}, \alpha^{-l'_1}\gamma^{-n'_\tau}, \cdots$.
        
        Now find the possible homotopy type of $\delta_{p_1}\cdot \phi\circ\eta^- \cdot \overline{\delta_{\phi(1)}}$. The path $\phi\circ\eta^-$ can be identified with a sequence $(\ell_i)$ of $l_x, l_y, l_z, \tilde{l_x}, \tilde{l_y}, \tilde{l_z}$ satisfying the following rule : 
        \begin{itemize}
                \item $\ell_1=l_x$.
                \item For each $i$, $(\ell_i, \ell_{i+1})$ is one of
                $$(\tilde{l_x}, l_z), (\tilde{l_x}, \tilde{l_z}), (\tilde{l_y}, l_x), (\tilde{l_y}, \tilde{l_x}), (\tilde{l_z}, l_y), (\tilde{l_z}, \tilde{l_y}).$$
                \item There is no $i$ such that $(\ell_i, \ell_{i+1}, \ell_{i+2})$ is one of $$(\tilde{l_x}, \tilde{l_z}, \tilde{l_y}), (\tilde{l_y}, \tilde{l_x}, \tilde{l_z}), (\tilde{l_z}, \tilde{l_y}, \tilde{l_x}).$$
        \end{itemize}
        Then the sequence is of the following form : 
        $$l_x(\tilde{l_z}\tilde{l_y}l_x)^{a_1}\tilde{l_z}l_y(\tilde{l_x}\tilde{l_z}l_y)^{b_1}\tilde{l_x}l_z(\tilde{l_y}\tilde{l_x}l_z)^{c_1}\tilde{l_y}l_x(\tilde{l_z}\tilde{l_y}l_x)^{a_2}\tilde{l_z}l_y(\tilde{l_x}\tilde{l_z}l_y)^{b_2}\tilde{l_x}l_z(\tilde{l_y}\tilde{l_x}l_z)^{c_2}\cdots,$$ where the sequence ends at $l_x, l_y$ or $l_z$ and $a_i, b_i, c_i$ are nonnegative integers. The homotopy class of the loop $\delta_{p_1}\cdot \phi\circ\eta^- \cdot \overline{\delta_{\phi(1)}}$ is then given by $$\alpha^{-a_1-1}\beta^{-b_1-1}\gamma^{-c_1-1}\alpha^{-a_2-1}\beta^{-b_2-1}\gamma^{-c_2-1}\cdots,$$ where the word breaks at $\alpha^{-a_i-1}, \beta^{-b_i-1}, \gamma^{-c_i-1}$ if the sequence ends at $l_x, l_y, l_z$, respectively. Note that the word is already reduced since $\gamma^{-c_i-1}=(\alpha\beta)^{c_i+1}$. Thus, by comparing two classes and using the normal condition,
        $$\alpha^{-l'_1}\gamma^{-n'_\tau}\beta^{-m'_\tau}\alpha^{-l'_\tau}\cdots\gamma^{-n'_1}\beta^{-m'_1}\alpha^{-n'_1}\cdots, \alpha^{-a_1-1}\beta^{-b_1-1}\gamma^{-c_1-1}\alpha^{-a_2-1}\beta^{-b_2-1}\gamma^{-c_2-1}\cdots,$$ 
        one could conclude that they are the same only when $\phi(1) = u_\tau$, $l'_\tau>0$ and the path $\phi\circ\eta^- $ corresponds to $l_x(\tilde{l_z}\tilde{l_y}l_x)^{l'_1-1}$.
         Hence, since the variable $x$ contribute once for each $\tilde{l_z}\tilde{l_y}$, $(p_1, u_\tau) = x^{l'_1-1}$. We give an example for the disc in Figure \ref{fig:L32222}, which corresponds the path  $l_x \tilde{l_z} \tilde{l_y} l_x$. The domain is illustrated in Figure \ref{fig:bdysq}.
         
\begin{figure}[h]
\includegraphics[scale=0.6]{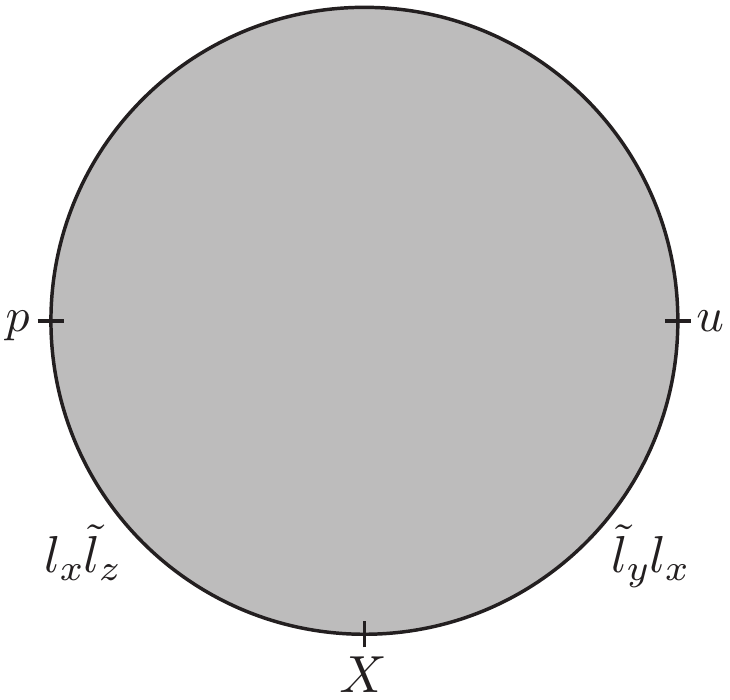}
\centering
\caption{The sequence from the disc in Figure \ref{fig:L32222}}
\label{fig:bdysq}
\end{figure}
        
        When the orientations of $\bll$ and the path $\delta_{p_1}\cdot \phi\circ\eta^- \cdot \overline{\delta_{\phi(1)}}$ are matched, one could see that there are at most two polygons in the same way. One is a triangle with vertices $p_1, Z, s_1$, which gives $(p_1, s_1) = z$. The other exists only when $m'_1\leq 0$ and it is a $(-m'_1+2)$-gon. In this case, the path $\delta_{p_1}\cdot \phi\circ\eta^- \cdot \overline{\delta_{\phi(1)}}$ is given by $l_x\tilde{l_y}l_z(l_x\tilde{l_y}l_z)^{-m'_1}$. The variable $y$ contributes once for every $l_zl_x$, thus we have $(p_1, t_1) = y^{-m'_1}$.    
\end{proof}

\begin{remark}\label{ex:nonnormal}
        The normal assumption is necessary since if the word is not normal, there can be some extra polygons. For example, consider the case where $(m'_1, n'_1, l'_1) = (-2, 0, -3)$. In this case, $(p_1, s_1)$ is not $z$ but $z+x^2y$.The full matrix is computed as follows, up to holonomy and sign,
        $$\begin{pmatrix}
                z+x^2y & 0 & x^3 \\ y^2 & x & 0 \\ 0 & 1 & y
        \end{pmatrix}.$$
        Also, if we consider the holonomy and the sign, then the matrix becomes
        $$\begin{pmatrix} z-\lambda x^2y & 0 & \lambda x^3 \\ y^2 & x & 0 \\ 0 & 1 & y \end{pmatrix}.$$
        
        The normal form of the given loop word is $(-1, 1, -2)$ and its corresponding matrix is $$\begin{pmatrix} z & 0 & \lambda x \\ y & x & 1 \\ 0 & 0 & x\end{pmatrix}.$$ Two matrix factorizations are homotopy equivalent.
        \begin{figure}[h]
                \centering
                \begin{subfigure}[b]{0.45\textwidth}
                        \includegraphics[scale=0.35]{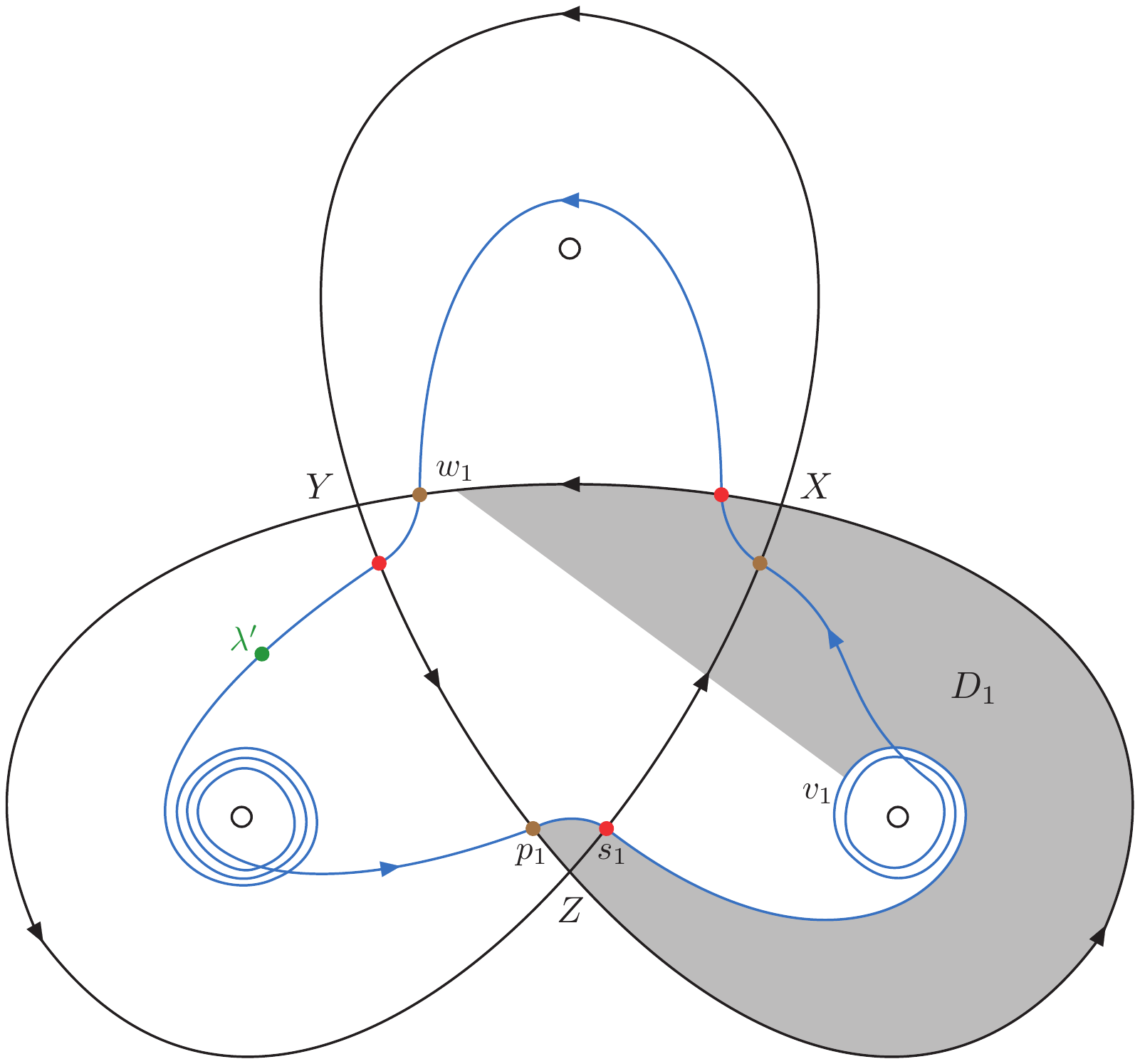}
                        \centering
                        \caption{The first piece of an extra polygon from $p_1$ to $s_1$}
                        \label{fig:extrapolygonps1}
                \end{subfigure}
                \qquad
                \begin{subfigure}[b]{0.45\textwidth}
                        \includegraphics[scale=0.35]{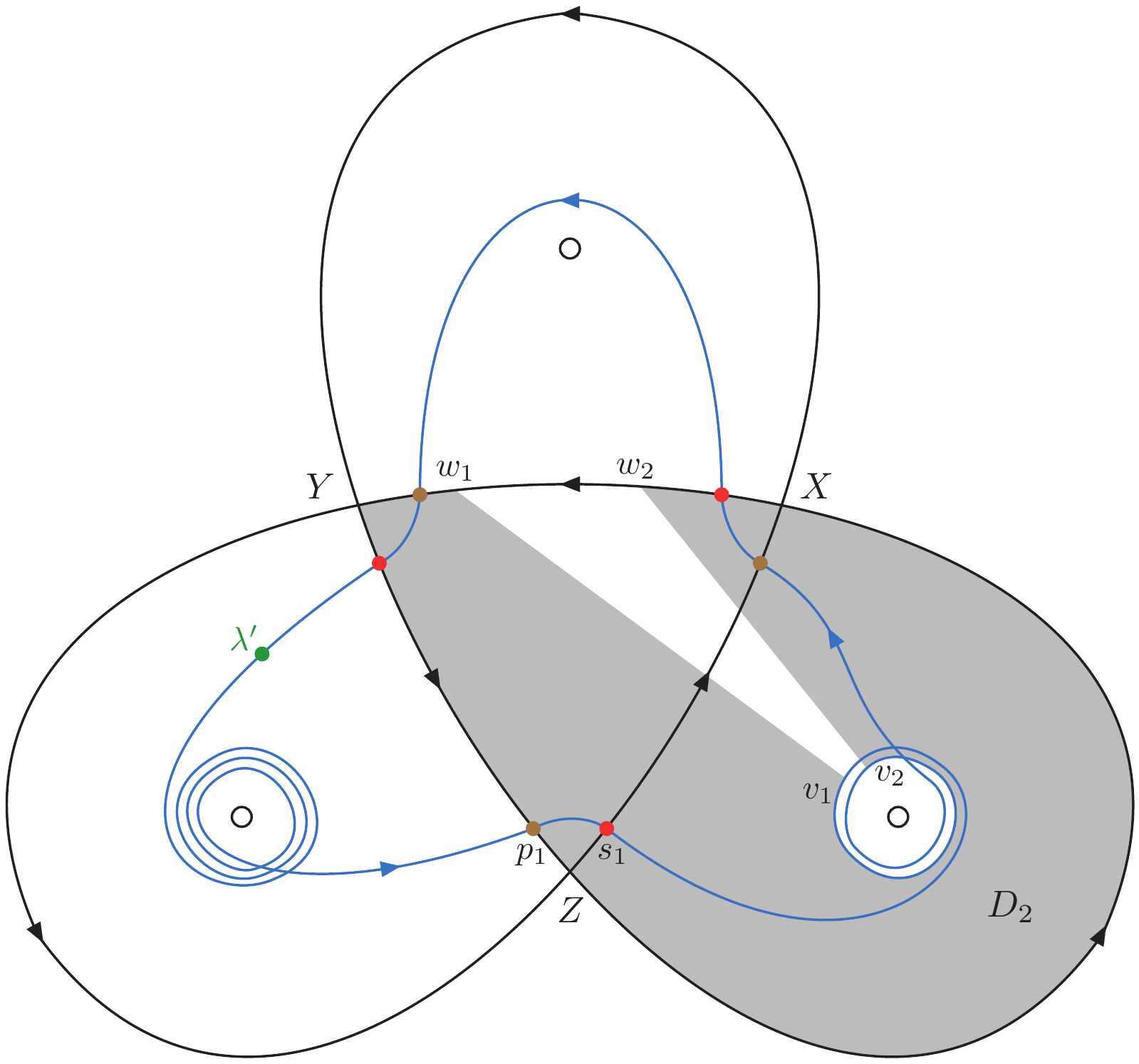}
                        \centering
                        \caption{The second piece of an extra polygon from $p_1$ to $s_1$}
                        \label{fig:extrapolygonps2}
                \end{subfigure}
                \centering
                \begin{subfigure}[b]{0.45\textwidth}
                        \includegraphics[scale=0.35]{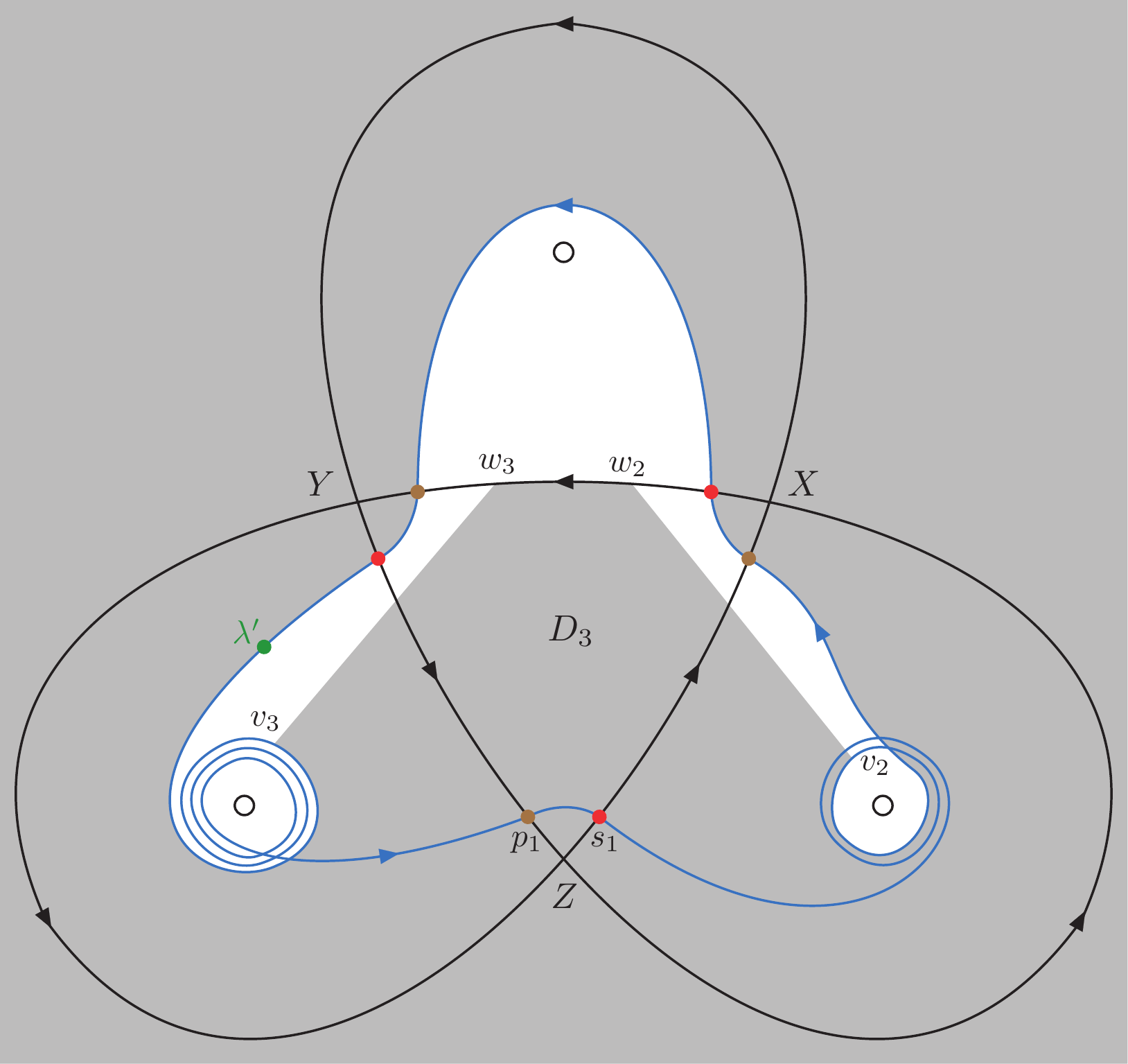}
                        \centering
                        \caption{The third piece of an extra polygon from $p_1$ to $s_1$}
                        \label{fig:extrapolygonps3}
                \end{subfigure}
                \qquad
                \begin{subfigure}[b]{0.45\textwidth}
                        \includegraphics[scale=0.35]{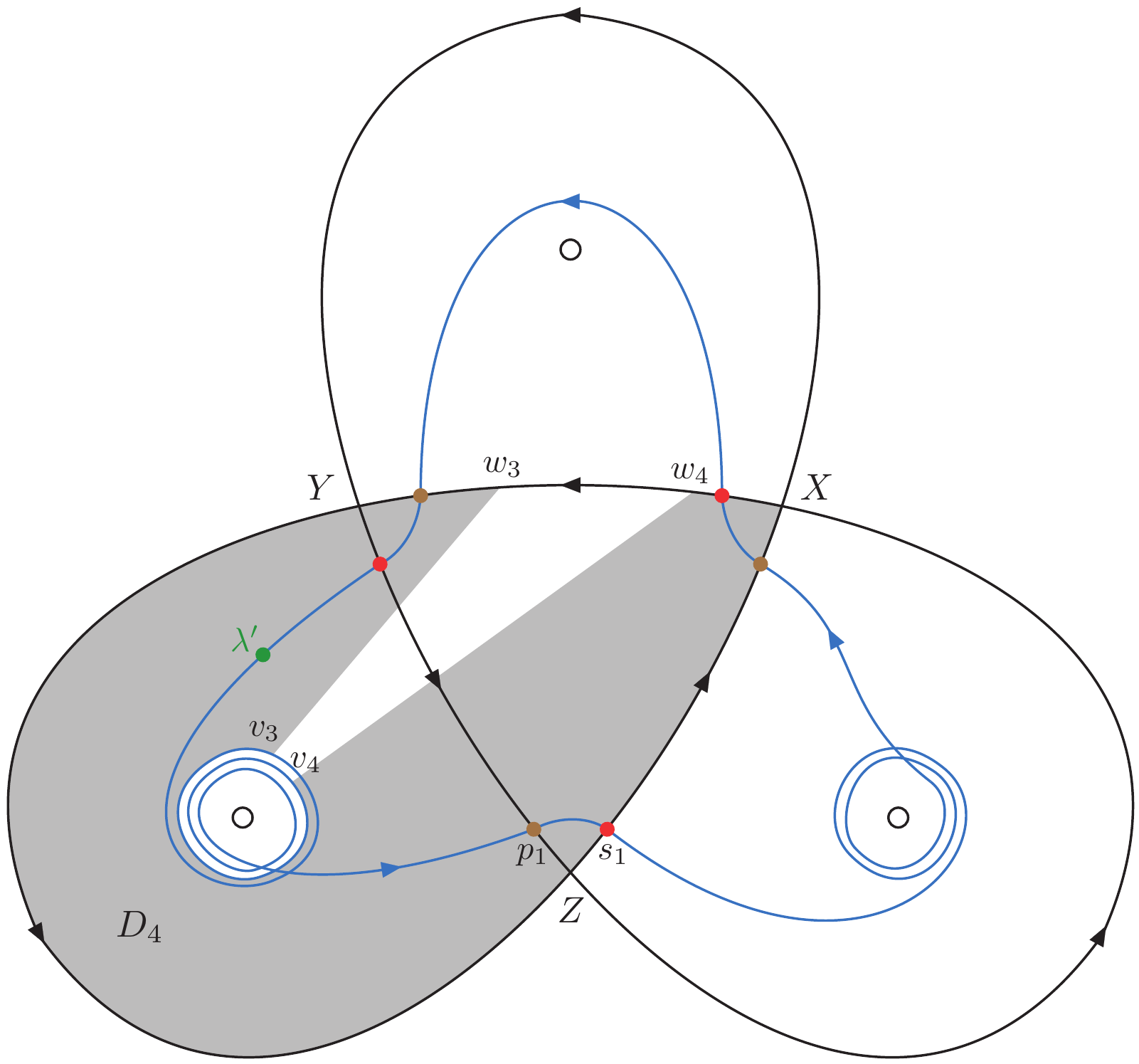}
                        \centering
                        \caption{The fourth piece of an extra polygon from $p_1$ to $s_1$}
                        \label{fig:extrapolygonps4}
                \end{subfigure}
                \centering
                \begin{subfigure}[b]{0.45\textwidth}
                        \includegraphics[scale=0.35]{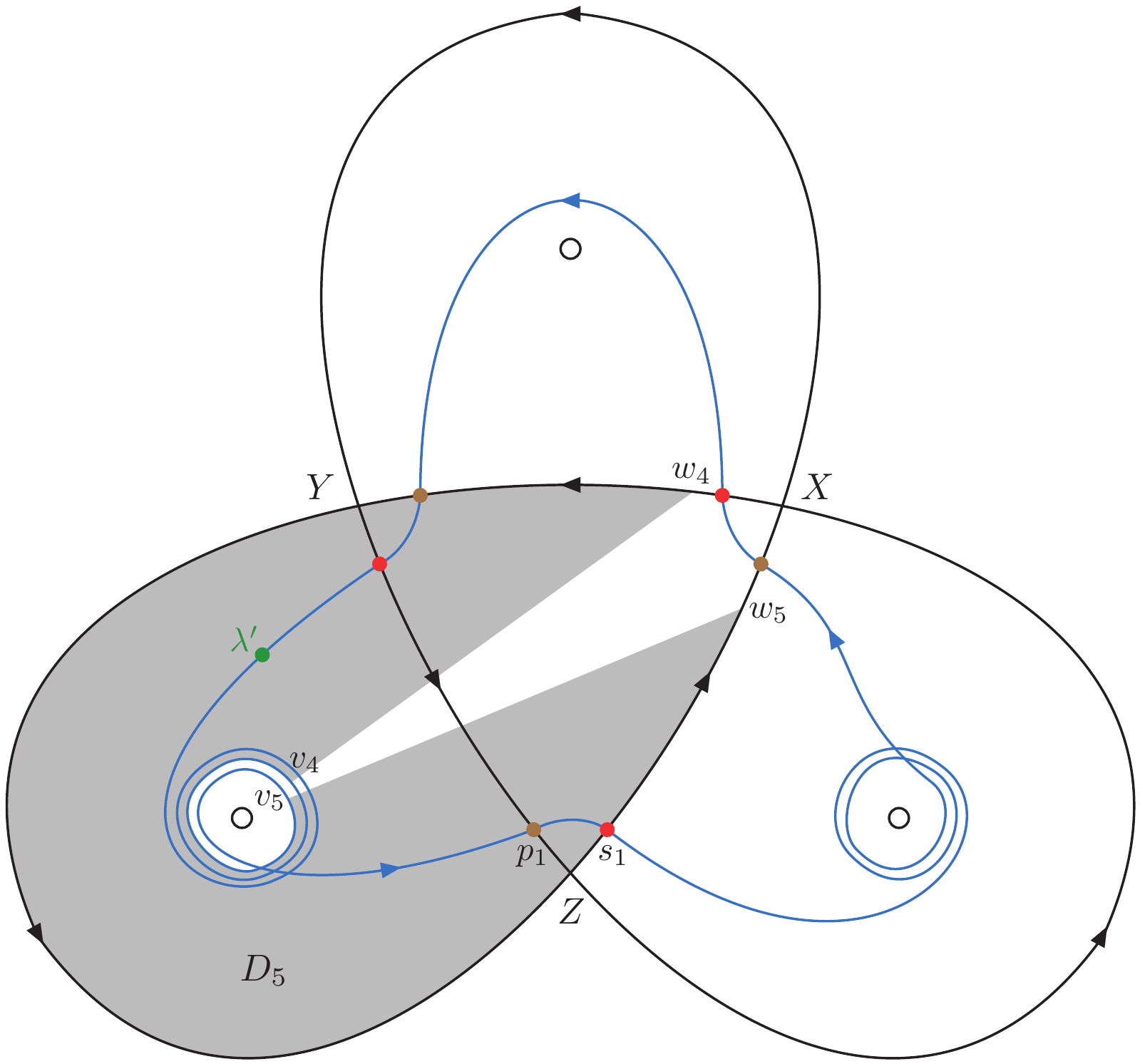}
                        \centering
                        \caption{The fifth piece of an extra polygon from $p_1$ to $s_1$}
                        \label{fig:extrapolygonps5}
                \end{subfigure}
                \qquad
                \begin{subfigure}[b]{0.45\textwidth}
                        \includegraphics[scale=0.35]{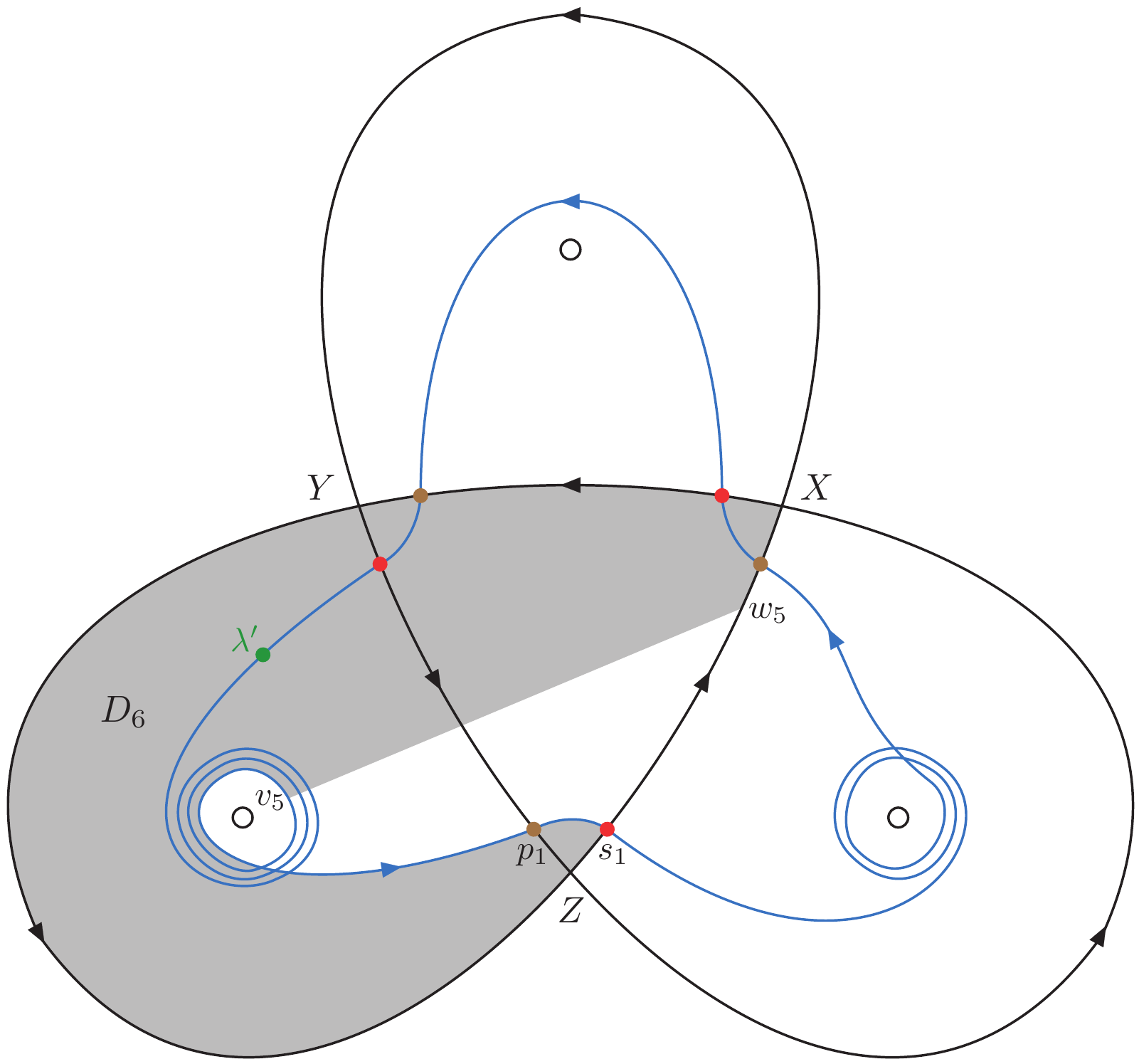}
                        \centering
                        \caption{The sixth piece of an extra polygon from $p_1$ to $s_1$}
                        \label{fig:extrapolygonps6}
                \end{subfigure}
                \centering
                \caption{Extra polygon from $p_1$ to $s_1$}
                \label{fig:nonnormalps}
        \end{figure}
        The domain of the polygon for the $x^2y$ term is divided into six pieces as in Figure \ref{fig:extrapolygondomain}.
        \begin{figure}[h]
                \centering
                \includegraphics[scale=0.4]{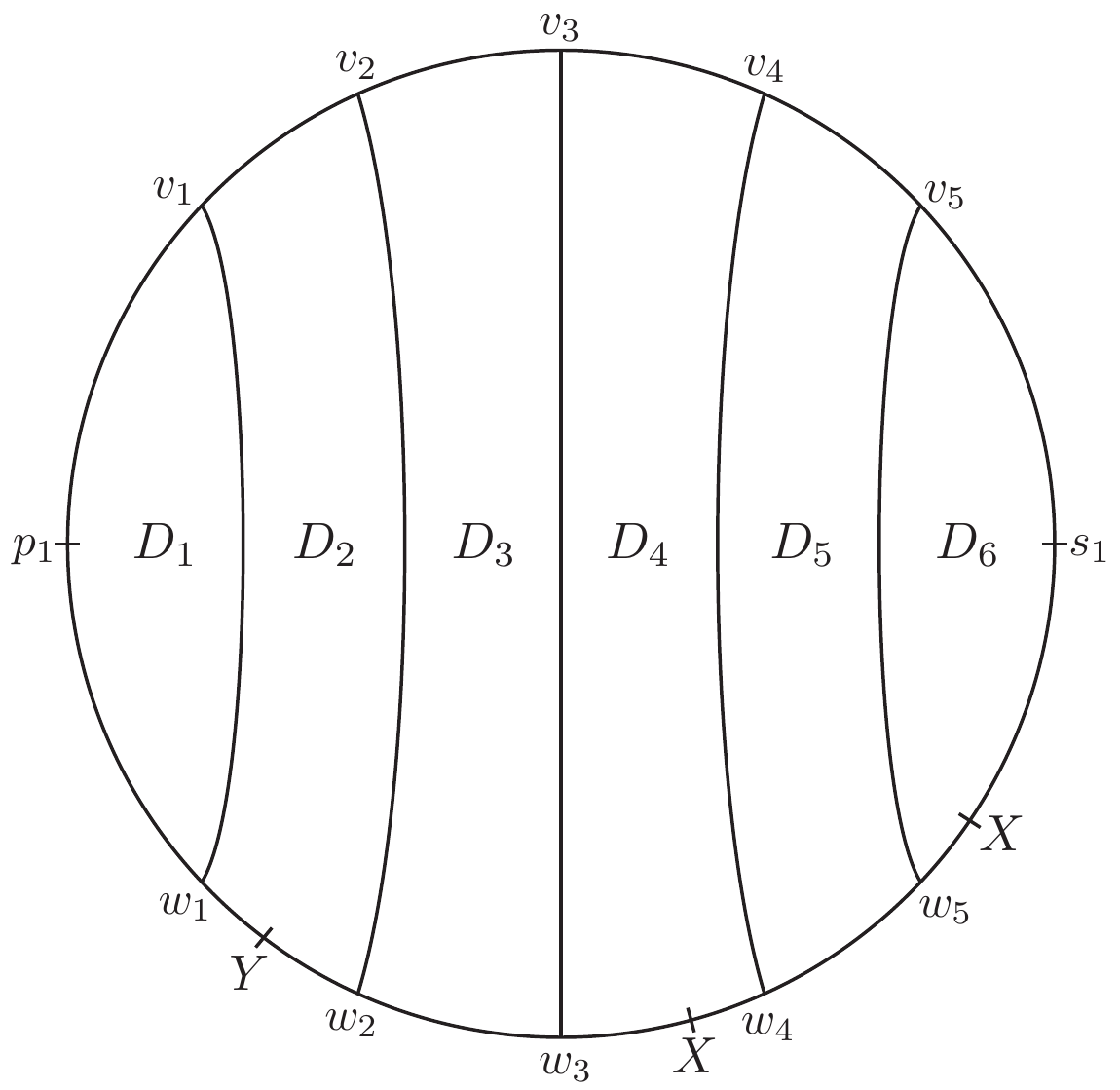}
                \caption{The domain of the extra polygon fron $p_1$ to $s_1$}
                \label{fig:extrapolygondomain}
        \end{figure}
        
\end{remark}

By the symmetry, we have the following theorem.
\begin{thm}\label{thm:MFFromLag}
        Let $w'=(l'_1, m'_1, n'_1, \cdots, l'_\tau, m'_\tau, n'_\tau)$ be a normal loop word. Then we have, up to holonomy, 
         \begin{alignat*}{3}
                &(p_i, s_i)= z, \quad
                &&(p_i, t_i) = y^{-m'_i}, \quad
                &&(p_i, u_{i-1}) = -(-x)^{l'_{i-1}-1}\\
                &(q_i, s_i) = -y^{m'_i-1}, 
                &&(q_i, t_i) = x, 
                &&(q_i, u_i) = -z^{-n'_i}\\
                &(r_i, s_{i+1}) = -(-x)^{-l'_i},
                &&(r_i, t_i) = z^{n'_i-1}, 
                &&(r_i, u_i) = y
        \end{alignat*}
        and $0$ otherwise. We are using the notation that $x^a, y^a$ and $z^a$ is considered as zero if $a<0$. The holonomy contribution occurs only at $$(p_1, u_\tau) = -\lambda'(-x)^{l'_1-1}, \quad (r_\tau, s_1) = -\lambda'^{-1}(-x)^{-l'_1}.$$
\end{thm}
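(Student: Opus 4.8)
The claim of Theorem~\ref{thm:MFFromLag} is that, for a normal loop word $w'$, the $3\tau\times3\tau$ matrix component of $\cff^\bll\bigl(L(w',\lambda')\bigr)$ has exactly the entries listed (with all other entries zero). The strategy is a direct extension of the rank-one computation in Proposition~\ref{prop:DiscCounting}, using the symmetry of the construction under cyclic rotation of the three punctures and the index $i$. First I would fix the ordering conventions: the $6\tau$ points of $L\cap\bll$ split into the ``positive'' set $\{s_i,t_i,u_i\}$ and the ``negative'' set $\{p_i,q_i,r_i\}$ by intersection sign, so the matrix $\LocalPhi(L)$ is naturally indexed by (negative point, positive point) pairs. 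By Theorem~\ref{thm:T=1Homotopy} the curve $L(w',\lambda')$ is regular (Lemma~\ref{lem:Regular}), so $\cff^\bll(L)$ is a genuine matrix factorization over $\bcc$ and each entry is a finite signed count (weighted by holonomy) of immersed polygons bounded by $L$ and $\bll$ with one corner at the source point, one at the target point, and all other corners at $X,Y,Z$.

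The computational heart is essentially already done: Proposition~\ref{prop:DiscCounting} computes, for a normal word, the three entries in the ``row'' of $p_1$, namely $(p_1,s_1)=z$, $(p_1,t_1)=y^{-m_1'}$, $(p_1,u_\tau)=x^{l_1'-1}$, and shows all other entries in that row vanish, by analyzing the two possible orientation relations between $\bll$ and the boundary arc $\phi\circ\eta^-$ of a polygon and matching homotopy classes of the concatenated loops $\delta_{p_1}\cdot(\phi\circ\eta^{\pm})\cdot\overline{\delta_{\phi(1)}}$ against the reduced word $L[w']$. The key point I would stress is that the normality of $w'$ is exactly what guarantees the word obtained from the polygon boundary is cyclically reduced in the same form as $L[w']$, so that comparing exponents pins down the polygon uniquely (this is where Remark~\ref{ex:nonnormal} shows things break for non-normal words). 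Then I would invoke the cyclic symmetry: the Lagrangian $L(w',\lambda')$ and the Seidel Lagrangian $\bll$ are both equivariant under the order-$3$ rotation permuting the three punctures and cyclically permuting $(x,y,z)$, $(p,q,r)$, $(s,t,u)$ and shifting the block index, which transports the computation at $p_1$ to the computations at $q_i$ (the $y$-shifted version) and $r_i$ (the $z$-shifted version), and the shift $w'\mapsto w'^{(1)}$ transports the computation at block $1$ to block $i$. This yields all nine families of entries and their vanishing outside the listed positions.

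The remaining, and I expect most delicate, step is bookkeeping the \textbf{signs and holonomy}, i.e.\ justifying the prefactors $-(-x)^{l_{i-1}'-1}$, $-(-x)^{-l_i'}$, $-y^{m_i'-1}$, $-z^{-n_i'}$ and the two holonomy-twisted entries $(p_1,u_\tau)=-\lambda'(-x)^{l_1'-1}$, $(r_\tau,s_1)=-\lambda'^{-1}(-x)^{-l_1'}$. Each polygon boundary that traverses $\bll$ against its orientation contributes a sign governed by the number of corners plus the number of times it crosses the marked spin/holonomy point of $\bll$; the ``$\tilde l_z\tilde l_y$'' building block of the $x$-winding polygons in the proof of Proposition~\ref{prop:DiscCounting} is traversed with reversed orientation, which is precisely the origin of the $(-1)$ per power of $x$, i.e.\ the $(-x)$ rather than $x$. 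I would carry this out by tracking, for a single $x$-winding polygon of each type, the parity of (corners $+$ reversed-$\bll$-edges) and the placement of the holonomy point $\lambda'$ on the segment $\delta_{x,1}^{l_1'}$, then argue by the same symmetry that the $y$- and $z$-entries carry the analogous signs but \emph{without} any holonomy factor (since the holonomy is concentrated only on the first $x$-arc), leaving $\lambda'^{\pm1}$ to appear only in the two ``wrap-around'' entries connecting block $\tau$ to block $1$. The main obstacle is thus not the enumeration of polygons — which the normality condition makes rigid — but the careful Koszul/orientation sign accounting; once that is fixed for the $p_1$-row and propagated by symmetry, the theorem follows. Finally I would note that one may cross-check the whole list against Corollary~\ref{cor:Psi} and Proposition~\ref{prop:DetAndAdjOfPhiS}: the resulting $\LocalPhi(L)$ must square (with its partner $\LocalPsi$) to $xyz\cdot I_{3\tau}$, and a quick inspection shows the stated matrix is exactly $\varphi(w',\lambda,1)$ after the sign change $\lambda'=(-1)^{l_1+\cdots+l_\tau+\tau}\lambda$ absorbed into rescaling basis vectors, giving an independent consistency check.
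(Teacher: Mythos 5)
Your proposal follows exactly the paper's own strategy: Proposition~\ref{prop:DiscCounting} computes the $(p_1,\cdot)$-row up to sign and holonomy using the rigidity of reduced words for normal loop words, and the remaining entries are obtained by the combination of the $\mathbb{Z}/3$ rotational symmetry of $(\mathcal{P},\bll)$ permuting $(x,y,z)$ together with the cyclic block shift of $w'$, which is precisely what the paper means by ``by the symmetry.'' Your additional care with the orientation/holonomy sign bookkeeping and the consistency check against $\varphi(w',\lambda,1)$ via Corollary~\ref{cor:Psi} is sound and, if anything, fills in detail the paper leaves implicit.
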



Hence the matrix factor $\cff^{\bll}\left(L\left(w', \lambda'\right)\right)$ is given by
$$
M_L = \begin{blockarray}{cccccccccc}
        p_1 & q_1 & r_1 & p_2 & q_2 & r_2 & \cdots & q_\tau & r_\tau \\
        \begin{block}{(ccccccccc)c}
                z & -y^{m'_1-1} & 0 & 0 & 0 & 0 & \cdots & 0 & -\lambda'^{-1}(-x)^{-l'_1} & s_1 \\
                y^{-m'_1} & x & -z^{n'_1-1} & 0 & 0 & 0 & \cdots & 0 & 0 & t_1 \\
                0 & z^{-n'_1} & y & -(-x)^{l'_2-1} & 0 & 0 & \cdots & 0 & 0 & u_1 \\
                0 & 0 & -(-x)^{-l'_2} & z & -y^{m'_2-1} & 0 & \cdots & 0 & 0 & s_2 \\
                0 & 0 & 0 & y^{-m'_2} & x & -z^{n'_2-1} & \cdots & 0 & 0 & t_2 \\
                0 & 0 & 0 & 0 & z^{-n'_2} & y & \cdots & 0 & 0 & u_2 \\
                \vdots & \vdots & \vdots & \vdots & \vdots & \vdots & \ddots & \vdots & \vdots & \vdots \\
                0 & 0 & 0 & 0 & 0 & 0 & \cdots & x & -z^{m'_\tau-1} & t_\tau \\
                -\lambda'(-x)^{l'_1-1} & 0 & 0 & 0 & 0 & 0 & \cdots & z^{-n'_\tau} & y & u_\tau \\
        \end{block}.
\end{blockarray}
$$


\section{$T_{3,2,\infty}$-singularity}\label{sec:32}
In this section, we discuss the mirror symmetry for $x^3+y^2-xyz$ and prove Theorem \ref{thm:J}.

The Milnor fiber $M_{F_{3,2}}$ is a torus with a puncture (drawn in Figure \ref{fig:A2K} as
a hexagon whose opposite sides are identified) and
$G_{F_{3,2}}$ action is given by $\Z_3$-rotation at the center of the hexagon as well as simultaneous $\Z_2$-rotation at the
centers of three rhombuses. The quotient space is $\mathbb{P}^1_{3,2,\infty}$.

\begin{figure}[h]
\begin{subfigure}[t]{0.43\textwidth}
\includegraphics[scale=0.55]{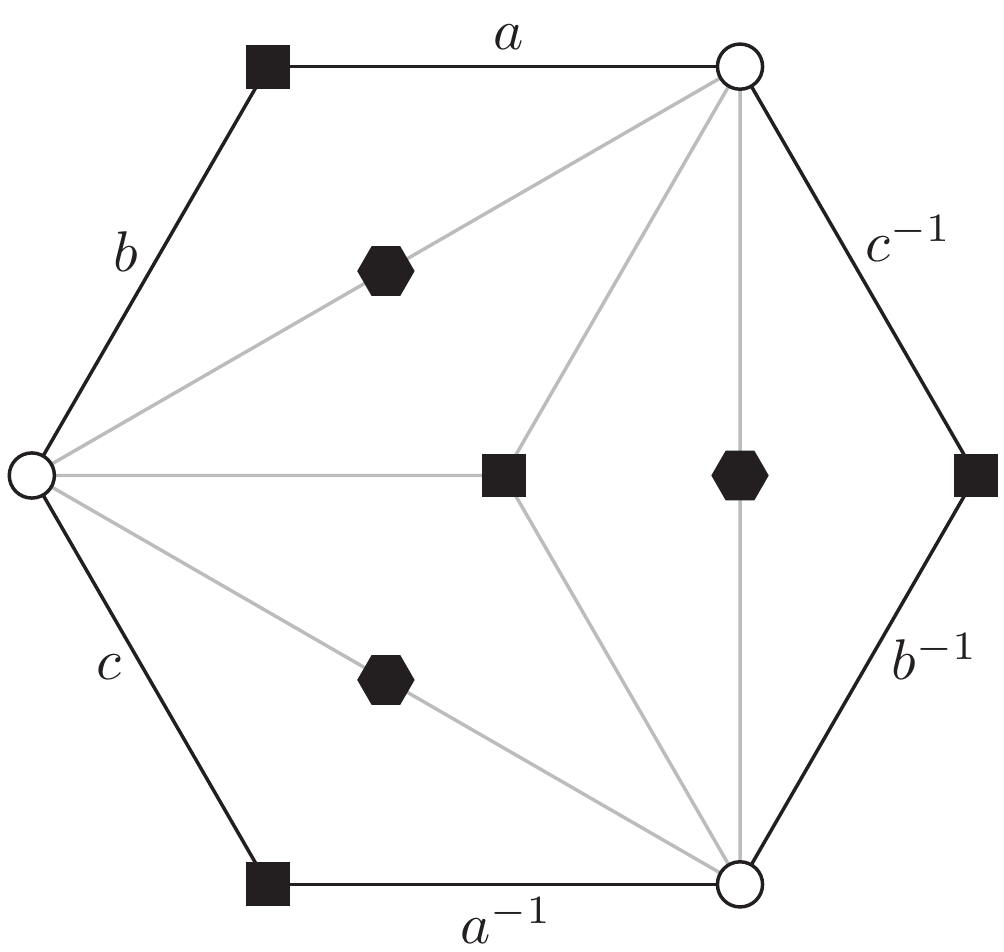}
\centering
\caption{Milnor fiber of $F_{3,2}$}
\end{subfigure}
\begin{subfigure}[t]{0.43\textwidth}
\includegraphics[scale=0.55]{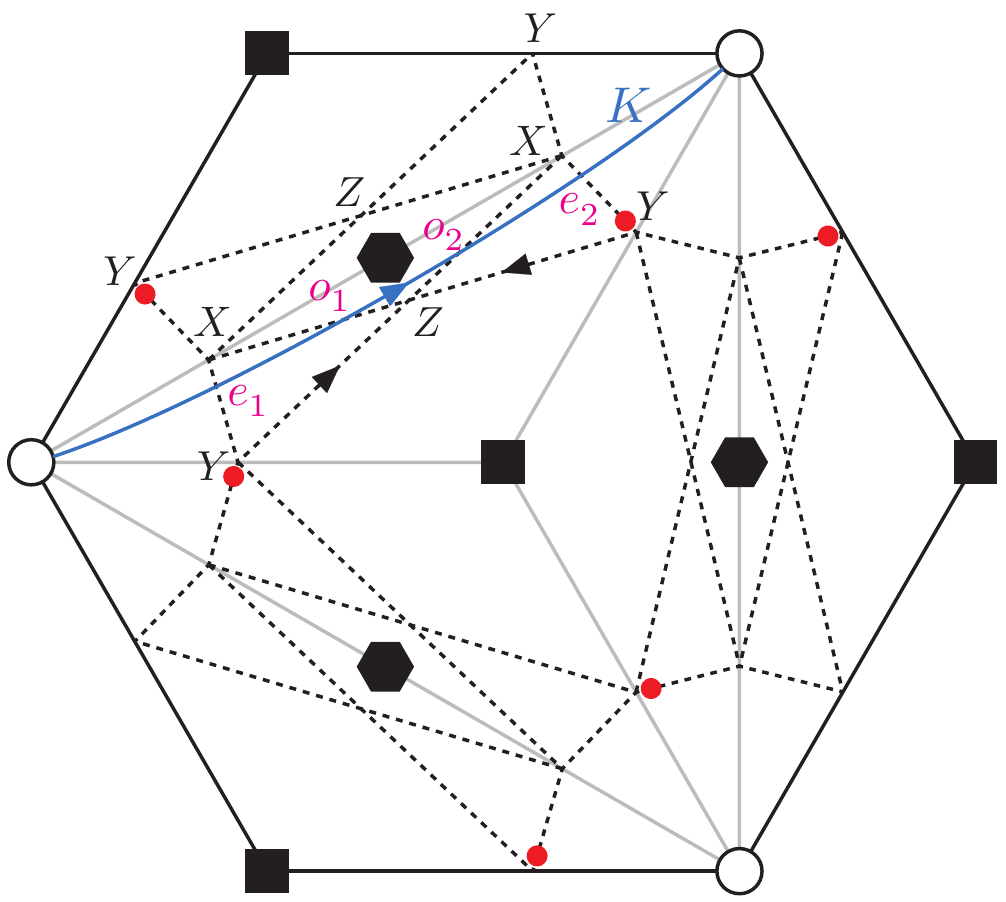}
\centering
\caption{Lift of $\bL$ (dotted lines) and the noncompact Lagrangian $K$}
\end{subfigure}
\caption{}
\label{fig:A2K}
\end{figure}
 
Let us first discuss the rank one module which is not locally free  in Proposition \ref{BD23},
namely the one corresponding to the conductor ideal $I = \langle x, y \rangle$.
 The conductor ideal $I$ gives the following  matrix factorization
\begin{equation*}
\begin{pmatrix}
x^{2} - yz& y\\
y& -x\\
\end{pmatrix}
\begin{pmatrix}
x& y\\
y& -x^{2} + yz\\
\end{pmatrix}
\end{equation*}
In \cite{CCJ}, it was observed that the non-compact Lagrangian $K$ in Figure \ref{fig:A2K} maps to the above matrix factorization under the
localized mirror functor: this can be easily done by counting decorated polygons.

\begin{figure}[h]
\includegraphics[scale=0.5]{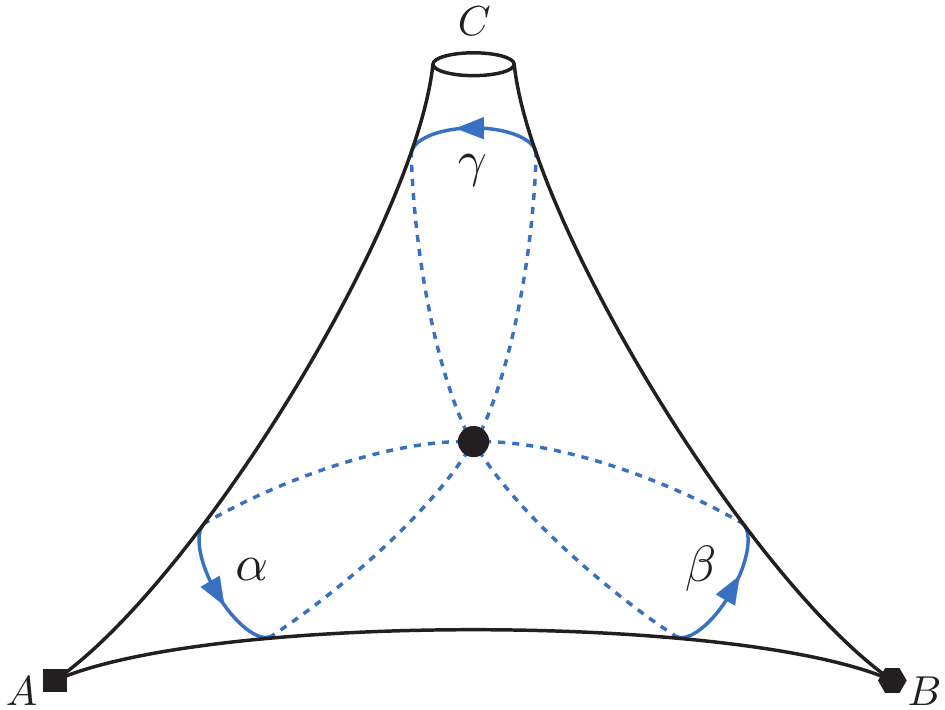}
\centering
\caption{3 generators of $\pi_1^{\mathrm{orb}}\left( M_{F_{3,2}}/G_{F_{3,2}} \right)$}
\label{fig:loops}
\end{figure}

The orbifold fundamental group of $M_{F_{3,2}}/G_{F_{3,2}}$ is given by
\begin{equation*}
 \pi_1^{\mathrm{orb}}\left( M_{F_{3,2}}/G_{F_{3,2}} \right)=\left\langle \alpha, \beta,\gamma \mid
\alpha^{3} =\beta^{2} =\alpha\beta\gamma = 1\right\rangle
\end{equation*}
where $\alpha,\beta,\gamma$ are loops in $\mathbb{P}^1_{3,2,\infty}$; see Figure \ref{fig:loops}.

Given $m \in \Z_{>0}$ and $\lambda \in \C^{*}$, we choose a compact (immersed) Lagrangian $L_{m,\lambda}$ with holonomy $\lambda$
which has the homotopy class $[ \alpha \gamma^{-m-1} ]$. See Figure \ref{fig:f32} for the case of $m=3$, or 
its lift in  the universal covering space \ref{fig:L3im} (for later computations). In Figure \ref{fig:L3im} (a), we decomposed $L_{m,\lambda}$ into
the union of (overlapping) arcs and wrote the multiplicity of each arc (negative sign indicates the reversal of orientations).

\begin{figure}[h]
\begin{subfigure}[t]{0.43\textwidth}
\includegraphics[scale=0.55]{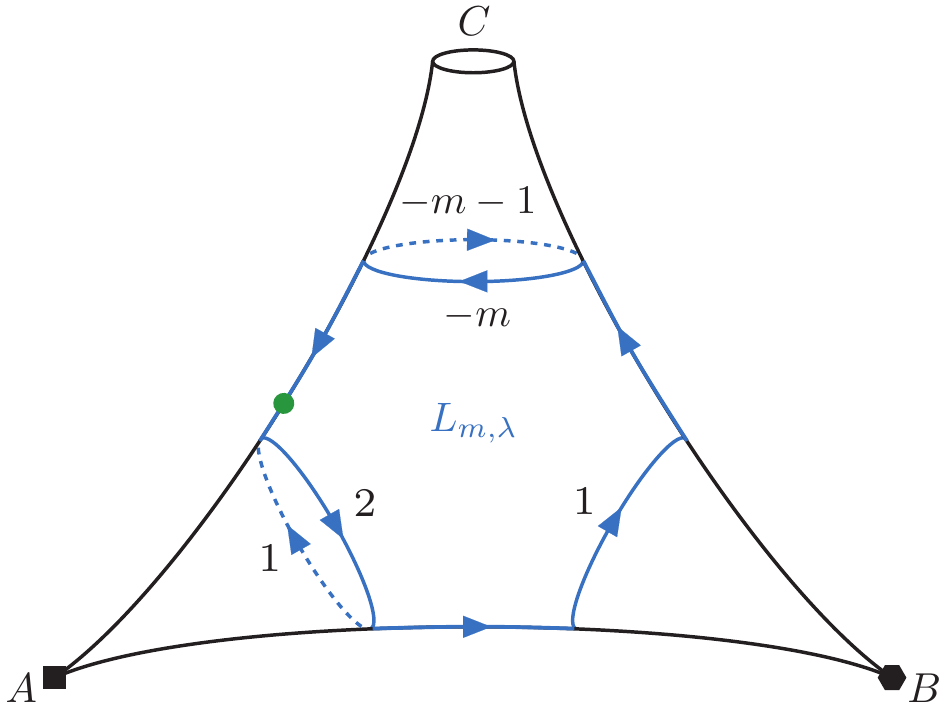}
\centering
\caption{$L_{m,\lambda}$ in $M_{F_{3,2}}/G_{F_{3,2}}$}
\label{fig:Lm}
\end{subfigure}
\begin{subfigure}[t]{0.43\textwidth}
\includegraphics[scale=0.7]{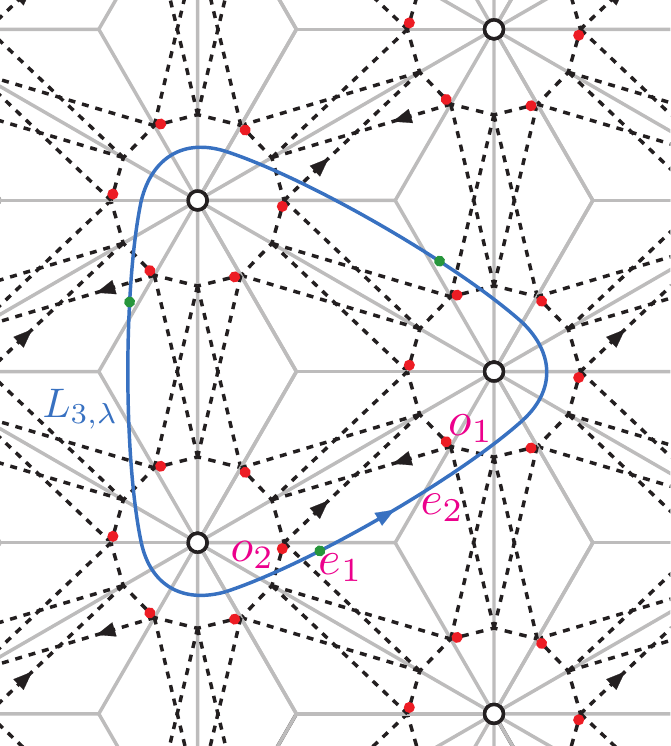}
\centering
\caption{Lift of $L_{3,\lambda}$ in the covering space}
\end{subfigure}
\centering
\caption{ }
\label{fig:L3im}
\end{figure}

\begin{lemma}\label{lem:32InfiniteGeodesic}
The Lagrangian loop $L_{m,\lambda}$ and its orientation reversal $\overline{L}_{m,\lambda}$ with $m \in \Z_{>0}$
have  homotopy classes $[\alpha\gamma^{-m-1}]$ and $[\alpha^{-1}\gamma^{m+1}]$.
These cover all homotopy classes of type $\alpha^{l^{\prime}}\beta^{m^{\prime}}\gamma^{n^{\prime}} \in  \pi_1^{\mathrm{orb}}\left( M_{F_{3,2}}/G_{F_{3,2}} \right)$
(up to conjugation) except the parabolic elements $\gamma^{n^{\prime}}$, and elliptic elements $\alpha, \alpha^2, \beta$.
Therefore  closed geodesics in $\mathbb{P}^1_{3,2,\infty}$
in the homotopy class of the form $\alpha^{l^{\prime}}\beta^{m^{\prime}}\gamma^{n^{\prime}}$ are Lagrangian isotopic to one of $L_{m,\lambda}$ or $\overline{L}_{m,\lambda}$ for some $m \in \Z_{>0}$.
\end{lemma}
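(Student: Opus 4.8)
The plan is to reduce the statement to a computation inside the orbifold fundamental group $\pi_1^{\mathrm{orb}}\left( M_{F_{3,2}}/G_{F_{3,2}} \right) = \left\langle \alpha,\beta,\gamma \mid \alpha^3 = \beta^2 = \alpha\beta\gamma = 1 \right\rangle$, together with the classical fact that for a complete hyperbolic orbifold each hyperbolic conjugacy class contains a unique closed geodesic. First I would record the effect of orientation reversal and of free homotopy: reversing the orientation of a loop replaces its homotopy class $w$ by the conjugacy class of $w^{-1}$, so it suffices to prove that, up to inversion and conjugation, the classes of the form $\alpha^{l'}\beta^{m'}\gamma^{n'}$ that are not parabolic ($\gamma^{n'}$) or elliptic ($\alpha,\alpha^2,\beta$) are exactly the conjugacy classes of $[\alpha\gamma^{-m-1}]$ for $m \in \mathbb{Z}_{>0}$. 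The homotopy class computation for $L_{m,\lambda}$ itself (that it represents $[\alpha\gamma^{-m-1}]$) is read off from Figure \ref{fig:L3im} by following the decomposition into arcs, winding once around the $\mathbb{Z}/3$-point and $m+1$ times around the puncture; this is routine and I would present it by inspection.

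The core group-theoretic step is the normal-form analysis. Using $\beta = \beta^{-1}$, $\alpha^2 = \alpha^{-1}$ and $\gamma = \beta^{-1}\alpha^{-1}$, any word $\alpha^{l'}\beta^{m'}\gamma^{n'}$ can be rewritten, after reducing the exponents $l'$ modulo $3$ and $m'$ modulo $2$, as a word in the free product $\mathbb{Z}/3 * \mathbb{Z}/2 = \langle\alpha\rangle * \langle\beta\rangle$. I would then invoke the standard normal form theorem for free products: every element is conjugate to a cyclically reduced word, and two cyclically reduced words are conjugate iff one is a cyclic permutation of the other (and inversion sends a cyclically reduced word to the reversed inverse word). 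A word $\alpha^{l'}\beta^{m'}\gamma^{n'}$ with $n' = -(m+1)$ becomes $\alpha^{l'}\beta^{m'}(\beta^{-1}\alpha^{-1})^{m+1}$; I would compute that when $l' \not\equiv 0$ and the word is genuinely hyperbolic, the cyclically reduced form is $\alpha^{\pm 1}(\beta\alpha^{\mp 1})^{m+1}$ or the like, i.e. it alternates between the two factors with all $\alpha$-syllables equal and all $\beta$-syllables equal, and such a word is, up to cyclic permutation and inversion, determined by the single integer $m \geq 1$. Conversely I would check that if both $l' \equiv 0$ or the syllable pattern collapses, the element is parabolic (a power of $\gamma$) or elliptic ($\alpha$, $\alpha^2$, $\beta$, or trivial), which are precisely the excluded cases — parabolics are realized by loops around the puncture, elliptics by loops around the orbifold points, and none by closed geodesics.

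Having matched conjugacy classes, the final sentence follows: a closed geodesic in $\mathbb{P}^1_{3,2,\infty}$ whose free homotopy class has the form $\alpha^{l'}\beta^{m'}\gamma^{n'}$ is necessarily hyperbolic (geodesics cannot represent parabolic or elliptic classes in a complete hyperbolic orbifold), hence by the above its class is conjugate to $[\alpha\gamma^{-m-1}]$ or its inverse for a unique $m \geq 1$; and $L_{m,\lambda}$ (resp. $\overline{L}_{m,\lambda}$) realizes that class. Two freely homotopic loops on a surface are Lagrangian isotopic (any two immersed curves in the same free homotopy class on a surface are regularly homotopic, and one can upgrade a regular homotopy to a Lagrangian isotopy in dimension $2$), so the geodesic is Lagrangian isotopic to $L_{m,\lambda}$ or $\overline{L}_{m,\lambda}$.

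The main obstacle I anticipate is the bookkeeping in the free-product normal form: one must carefully track what happens to the $\alpha$- and $\beta$-exponents under the substitution $\gamma = \beta^{-1}\alpha^{-1}$ and the reductions mod $3$ and mod $2$, and verify in each degenerate subcase that the resulting element is exactly one of the listed parabolic or elliptic exceptions and nothing else slips through. A secondary, softer point is justifying that free homotopy of immersed curves on the orbifold surface implies Lagrangian isotopy in the sense used for the Fukaya category here; I would either cite the surface case of this standard fact or note that it was already used implicitly in the body of the paper for the pair of pants.
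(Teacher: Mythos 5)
Your overall route is genuinely different from the paper's. The paper works directly with the triples $(l',m',n')$, reduces $l'$ mod $3$ and $m'$ mod $2$, and runs a short case analysis on the six residue pairs $(l',m')\in\{0,1,2\}\times\{0,1\}$ using the relation $\alpha\beta\gamma=1$ (mirroring the normal-form machinery developed earlier for the pair of pants). You instead pass to the free product $\mathbb{Z}/3 * \mathbb{Z}/2$ and invoke the cyclically-reduced normal form theorem for conjugacy classes in free products. Both are sound strategies; the paper's is more self-contained and closer to the bookkeeping done elsewhere in the paper, while yours calls on a standard external theorem and is arguably cleaner conceptually once the normal form is correctly identified.

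However, the normal form you wrote down is wrong, and this is precisely where the argument must be careful. You claim the cyclically reduced form of $\alpha\gamma^{-m-1}$ is ``$\alpha^{\pm 1}(\beta\alpha^{\mp 1})^{m+1}$ or the like, with all $\alpha$-syllables equal.'' But $\alpha(\beta\alpha^{-1})^{m+1}$ is not even cyclically reduced (the first and last syllables merge to the identity), and after reduction it collapses to an elliptic element: for $m=1$ one finds $\alpha(\beta\alpha^{-1})^{2}$ is conjugate to $\alpha^{-1}$. The actual computation, using $\gamma^{-1}=\alpha\beta$, gives
\[
\alpha\gamma^{-m-1} \;=\; \alpha(\alpha\beta)^{m+1} \;=\; \alpha^{2}\beta(\alpha\beta)^{m},
\]
whose cyclically reduced syllable pattern is $\alpha^{2},\beta,\alpha,\beta,\ldots,\alpha,\beta$ (a single $\alpha^{2}$-syllable followed by $m$ copies of $\alpha$ alternating with $\beta$, for $2m+2$ syllables in total). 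In particular the $\alpha$-syllables are \emph{not} all equal, so the ``determined by the single integer $m$'' step must be rephrased: it is the number of syllables (or equivalently the number of $\beta$-syllables), together with the fact that exactly one $\alpha$-syllable is $\alpha^{2}$ and the rest are $\alpha$, that pins down the conjugacy class up to inversion. You flag the bookkeeping as the main obstacle, and indeed that is where the error lives; the rest of your plan (reduce mod $3$ and mod $2$, show the degenerate subcases collapse to parabolic or elliptic classes, invoke uniqueness of closed geodesics in hyperbolic conjugacy classes, and note that free homotopy of immersed curves on a surface can be upgraded to Lagrangian isotopy) is fine once the normal-form computation is corrected.
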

\begin{proof}
Denote a class $\alpha^{l^{\prime}}\beta^{m^{\prime}}\gamma^{n^{\prime}}$ by $(l^{\prime},m^{\prime},n^{\prime})$. Then $(0,0,n^{\prime})$ type and $(1,1,n^{\prime})$ type are non-essential. $(2,0,n^{\prime})$ and $(2,1,n^{\prime})$ types can be transformed into $(0,1,n^{\prime})$ and $(1,0,n^{\prime})$ types. Hence we only consider those two types. First, since $(0,1,n^{\prime}) = (-1,0,n^{\prime}-1)$, it can be written as $[\alpha^{-1}\gamma^{m+1}]$ for $n^{\prime} \geq 3$. From $(0,1,n^{\prime}) = (1,0,n^{\prime}+1)$, it holds also for $n^{\prime} \leq -3$. For $-2 \leq n^{\prime} \leq 2$, one can check directly that they are all non-essential. Similarly, $(1,0,n^{\prime})$ is already of the form which we want for $n^{\prime} \leq -2$ and for $n^{\prime} \geq 4$, $(1,0,n^{\prime}) = (0,1,n^{\prime}-1) = (-1,0,n^{\prime}-2)$ gives a desired result. In the remaining cases, it can be also shown that all they are non-essential.
\end{proof}


Let us first outline the proof of Theorem \ref{thm:J} in the introduction.
 
\begin{thm}\label{thm:32InfiniteCorrespondence}
Let $L_{m,\lambda}$ be an immersed curve in $\mathbb{P}^1_{3,2,\infty}$ that wind the puncture $m$ times and wind the $\Z/3$-orbifold point once as in Figure \ref{fig:f32}(b), equipped complex line bundle with holonomy $\lambda$.
Then, under the localized mirror functor $\cF^{\bL}$, we have the mirror symmetry correspondences
$$\begin{cases}
L_{m,\lambda} & \mapsto J_{m,-\lambda}, \\
\overline{L}_{m,\lambda} &\mapsto I_{m,-\frac{1}{\lambda}}
\end{cases}$$
where $\overline{L}_{m,\lambda}$ is the orientation reversal of $L_{m,\lambda}$.
\end{thm}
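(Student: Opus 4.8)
The plan is to compute $\cF^{\bL}(L_{m,\lambda})$ and $\cF^{\bL}(\overline{L}_{m,\lambda})$ directly by counting decorated immersed polygons bounded by these Lagrangians and the Seidel Lagrangian $\bL$ in $\mathbb{P}^1_{3,2,\infty}$, obtain explicit matrix factorizations, and then identify the cokernels (or the factorizations themselves) with the modules $J_{m,\lambda}$ and $I_{m,\lambda}$ from Burban-Drozd's classification in Proposition \ref{BD23}. The computation is entirely analogous to the pair-of-pants case treated in Section \ref{sec:MfFromLag} and to the rank one computations in Section \ref{sec:MFFromRank1Lagrangian}, but now the disc potential is $W_{\bL} = x^3 + y^2 - xyz$ rather than $xyz$, and one of the ``punctures'' is replaced by a $\Z/3$ orbifold point and another by a $\Z/2$ orbifold point; concretely this means the monomials $x$ and $y$ now carry the relations coming from the orbifold structure, so polygons winding around those special points contribute bounded (rather than arbitrarily large) powers.

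\textbf{Step 1: set up the Lagrangian and its intersections with $\bL$.} First I would fix the regular representative $L_{m,\lambda}$ as in Figure \ref{fig:f32}(b), lift it to the universal cover (Figure \ref{fig:L3im}) where the curve decomposes into overlapping arcs with recorded multiplicities as in the figure, and enumerate the intersection points $L_{m,\lambda}\cap\bL$, sorting them by sign to get the two index sets for the rows and columns of the $\Z/2$-graded Floer complex $CF(L_{m,\lambda},\bL)$. Since $L_{m,\lambda}$ has homotopy class $[\alpha\gamma^{-m-1}]$ by Lemma \ref{lem:32InfiniteGeodesic}, and $J_{m,\lambda}$ is generated by two elements, I expect a $2\times 2$ matrix factorization (after reducing away any embedded-bigon scalar factors, exactly as in Proposition \ref{prop:RemovingBigon}).

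\textbf{Step 2: count the decorated polygons.} Next I would enumerate the rigid decorated strips from even to odd generators, reading off the $x,y,z$ labels of the $X,Y,Z$-corners on $\bL$ together with the holonomy weight $\lambda$ and the sign determined by how many corners/holonomy points the boundary on $\bL$ traverses against its orientation; this is the same bookkeeping as in the proof of Proposition \ref{prop:DiscCounting} and Theorem \ref{thm:MFFromLag}. Winding $m$ times around the puncture produces the degree-$m$ monomials $x^{m+1}$ and $x^{m-1}$, winding once around the $\Z/3$-point produces the factor $(xz-y)^m$ after using the orbifold relation, and the holonomy insertion produces the sign/scalar $-\lambda$; assembling these gives a matrix factor of $x^3+y^2-xyz$ whose cokernel is the $A$-module $\langle x^{m+1}, y^m + \lambda x^{m-1}(xz-y)\rangle$ up to the sign conventions, i.e. $J_{m,-\lambda}$. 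For $\overline{L}_{m,\lambda}$, reversing orientation swaps the roles of the two factors $(\varphi,\psi)\mapsto(\psi,\varphi)$ and inverts the total holonomy (so $\lambda\mapsto 1/\lambda$), and the same count, reorganized, yields $I_{m,-1/\lambda}$; here I would use Burban-Drozd's relation between $I_{m,\lambda}$ and $J_{m,\lambda}$ via the normalization $R=\C[[u,v]]$ with $u=y/x$, $v=(xz-y)/x$ to match conventions.

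\textbf{Step 3: identify with the Burban-Drozd modules.} Finally I would verify that the reduced $2\times2$ factorization obtained in Step 2 is, up to homotopy in $\underline{\operatorname{MF}}(x^3+y^2-xyz)$, the one whose cokernel is $J_{m,-\lambda}$ (resp. $I_{m,-1/\lambda}$), by exhibiting an explicit isomorphism of presentations — either by a direct change of basis over $\C[[x,y,z]]$ or by matching generators and relations of the cokernel $A$-module against the explicit ideals in Proposition \ref{BD23}. The main obstacle I anticipate is the same one encountered in Remark \ref{ex:nonnormal} and handled there by passing to the normal form: for certain small values of $m$ (notably $m=1$, where the parameter is restricted to $\lambda\neq 1$), or if the chosen representative curve is not in its ``normal'' position, extra immersed polygons can appear and the resulting matrix factorization is only homotopy equivalent, not equal, to the canonical form; so part of the work is to either choose $L_{m,\lambda}$ carefully (a regular, ``normal'' representative, cf. Lemma \ref{lem:Regular}) so that only the expected polygons contribute, or to explicitly carry out the matrix reduction removing the spurious terms. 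A secondary technical point is the evaluation $T=1$ for these non-exact compact Lagrangians, which is legitimate by Theorem \ref{thm:T=1Homotopy} once regularity is established, and the verification that $L_{m,\lambda}$ and $\bL$ do not bound an immersed cylinder — here, unlike the pair of pants, one must check against multiples of the orbifold Seidel Lagrangian, using the orbifold relations $\alpha^3=\beta^2=1$.
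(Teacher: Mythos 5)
Your treatment of $L_{m,\lambda}$ matches the paper's in spirit: the paper does compute $\cF^{\bL}(L_{m,\lambda})$ directly by counting decorated polygons in the lifted picture (Proposition \ref{mirror32}) and then identifies the resulting $2\times 2$ matrix factorization with the presentation of $J_{m,-\lambda}$ (Lemma \ref{lem:J}). Two points you underestimate, though. First, the polygon count does not simply produce the monomials $x^{m+1}$ and $y^{m}+\lambda x^{m-1}(xz-y)$ from the ideal presentation in Proposition \ref{BD23}: the immersed curve wraps around the puncture, which creates \emph{infinitely many} decorated polygons whose contributions sum to rational expressions like $\lambda^{-1}x^2/(1-r)$ with $r$ a polynomial in $x,z$; the resulting entries $a_{ij}$ in Proposition \ref{mirror32} involve binomial coefficients and geometric series. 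Second, matching this factorization with $J_{m,-\lambda}$ is not a routine change of generators: Lemma \ref{lem:J} has to produce an alternative generating pair for the ideal (e.g.\ $\langle \lambda^{-1}x^2, c_2\rangle$ with a specific long $c_2$) and verify by hand that the cokernel relation reproduces a multiple of $W$.

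The real gap is your handling of $\overline{L}_{m,\lambda}$. You write that ``the same count, reorganized, yields $I_{m,-1/\lambda}$'' and that orientation reversal ``swaps the roles of the two factors and inverts the total holonomy.'' This is exactly the computation the paper declares infeasible and deliberately circumvents. The paper instead invokes a structural result (Lemma~10.2 of \cite{CCJ}) stating that orientation reversal of a Lagrangian is sent by $\cF^{\bL}$ to the Auslander--Reiten translation (= shift) of its mirror, and then the burden shifts to pure algebra: Proposition \ref{prop:AR} verifies directly, by exhibiting matched periodic resolutions built from explicit alternative ideal presentations, that $J_{m,\lambda}[1]\simeq I_{m,1/\lambda}$. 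Your version asserts the conclusion of this proposition as if it followed from the geometry of the count; it does not. In particular, the appearance of $\lambda\mapsto 1/\lambda$ in the module index is not a transparent consequence of ``holonomy inverting under orientation reversal'' but emerges only after a nontrivial calculation with the ideals $I_{m,\cdot}$ and $J_{m,\cdot}$ (their normalization-theoretic descriptions are quite different, and Burban--Drozd's presentations do not make the AR relation obvious). So if you were to pursue your route, you would either have to recount all polygons for $\overline{L}_{m,\lambda}$ from scratch — which the authors say is prohibitively complicated — or you would have to import the orientation-reversal/shift correspondence and the AR computation anyway, at which point your proof becomes the paper's proof.
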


\begin{proof}
We first find the matrix factorizations that are mirror to the Lagrangians $L_{m,\lambda}$
by explicitly computing the  image of the $\AI$-functor $\cF^{\bL}(L_{m,\lambda})$ in Proposition \ref{mirror32}.
Next, we show that these correspond to the presentations of  maximal Cohen-Macaulay modules $J_{m,-\lambda}$ in Lemma \ref{lem:J}.

This proves the theorem for $L_{m,\lambda}$ and  $J_{m,-\lambda}$.
To prove the analogous result for  $\overline{L}_{m,\lambda}$, we proceed somewhat differently
due to computational difficulty.
First, we recall from Lemma 10.2 \cite{CCJ} that orientation reversal of a Lagrangian $L_{m,\lambda}$
is mapped to Auslander-Reiten translation of the mirror $\cF^{\bL}(L_{m,\lambda})$. 
Hence, it is  enough to show that $J_{m,-\lambda}$ and $I_{m,-\frac{1}{\lambda}}$ are
Auslander-Reiten translation of each other. 
We will find different presentations of $J_{m,-\lambda}$ and $I_{m,-\frac{1}{\lambda}}$ to verify this claim.
This will be given in Proposition \ref{prop:AR}.
This will finish the proof of the theorem. 
 \end{proof}

%

%

Note that since the potential function for $W_\bL$ in \cite{CCJ} is 
$$ W = x^{3} + y^{2} +xyz,$$
we need to substitute $-z$ for $z$ to compare the results with Burban-Drozd.
 
\begin{prop}\label{mirror32}
For general $L_{m,\lambda}$ with $m \geq 2$,  its mirror matrix factorization $\cF^{\bL}(L_{m,\lambda}) = (P_1,P_2)$
where 
$$P_{1} =
\begin{pmatrix}
a_{11}& a_{12}\\
a_{21}& a_{22}\\
\end{pmatrix}, \;\; P_{2} =
\begin{pmatrix}
a^{\prime}_{11}& a^{\prime}_{12}\\
a^{\prime}_{21}& a^{\prime}_{22}\\
\end{pmatrix}.$$

%
\begingroup
\allowdisplaybreaks
\begin{align*}
&\bullet \; a_{11} = a^{\prime}_{22} = \lambda x + \sum_{i = 0}^{\lfloor \frac{m-1}{2}\rfloor} (-1)^{m-1-i} {m-1-i \choose i} x^{i+1} z^{m-1-2i} \\
&\hspace{3.5cm}+ \frac{\displaystyle \sum_{i = 0}^{\lfloor \frac{m}{2}\rfloor} (-1)^{m-i} {m-i \choose i} x^{i} z^{m+1-2i} \left(1+ \lambda^{-1} \sum_{i = 0}^{\lfloor \frac{m-2}{2}\rfloor} (-1)^{m-3-i} {m-2-i \choose i} x^{i+1} z^{m-3-2i} \right)}{1-r}
\\
&\bullet \; a_{21} = - a^{\prime}_{21} = y + \frac{\displaystyle \lambda^{-1} \sum_{i = 0}^{\lfloor \frac{m}{2}\rfloor} (-1)^{m-1-i} {m-i \choose i} x^{i+1} z^{m-2i}}{1-r}
\\
&\bullet \; a_{12} = - a^{\prime}_{12} = - y + \frac{\displaystyle -xz + \lambda^{-1} \sum_{i = 0}^{\lfloor \frac{m-2}{2}\rfloor} (-1)^{m-2-i} {m-2-i \choose i} x^{i+2} z^{m-2-2i}}{1-r}
\\
&\bullet \; a_{22} = a^{\prime}_{11} = \frac{\lambda^{-1} x^{2}}{1-r}
\end{align*}
\endgroup
where $\displaystyle r = \lambda^{-1} \sum_{i = 0}^{\lfloor \frac{m-1}{2}\rfloor} (-1)^{m-i} {m-1-i \choose i} x^{i} z^{m-1-2i}$.
\end{prop}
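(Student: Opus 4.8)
The plan is to compute the image $\cF^{\bL}(L_{m,\lambda})$ directly from the definition of the localized mirror functor (Theorem \ref{thm:lmf}), which for a surface reduces to counting decorated immersed polygons bounded by $L_{m,\lambda}$ and the Seidel Lagrangian $\bL$ in $\mathbb{P}^1_{3,2,\infty}$, with each polygon contributing a monomial in $x,y,z$ recording its $X,Y,Z$-corners, signs from the sign conventions of \cite{Se}, and a factor $\lambda^{\pm1}$ whenever its boundary passes the holonomy point on $L_{m,\lambda}$. First I would fix the intersection pattern: since $L_{m,\lambda}$ winds once around the $\Z/3$-orbifold point and $m$ times around the puncture (Figure \ref{fig:f32}(b), \ref{fig:L3im}), it meets $\bL$ in four points, so $\cF^{\bL}(L_{m,\lambda})$ is a rank-two matrix factorization $(P_1,P_2)$; label the two even and two odd generators and identify the four entries $a_{ij}$ as counts of strips between them.

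Next I would enumerate the strips. Working in the universal cover as in Figure \ref{fig:L3im}(b), each decorated strip from one generator to another is determined by a sequence of the edges $l_x,l_y,l_z,\tilde l_x,\tilde l_y,\tilde l_z$ of $\bL$ together with the winding it picks up along $L_{m,\lambda}$, exactly as in the pair-of-pants computation of Proposition \ref{prop:DiscCounting}; the difference here is the orbifold relation $\alpha^3=1$, which makes the strip families ``wrap around'' and produces infinitely many strips whose energies form a geometric series. Summing such a geometric series is what yields the denominator $1-r$ with $r = \lambda^{-1}\sum_{i}(-1)^{m-i}\binom{m-1-i}{i}x^iz^{m-1-2i}$: the polynomial $r$ is the contribution of ``going once around'' the orbifold point, and the binomial coefficients arise as the number of ways of distributing the $X$- (equivalently $z$-) corners along a path that winds $m$ times, i.e. Chebyshev-type combinatorics coming from the recursion $\alpha^3=1$ applied repeatedly. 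I would make this precise by setting up the recursion for the generating function of strips of each fixed winding, solving it, and matching coefficients; the polynomial numerators for $a_{11},a_{12},a_{21},a_{22}$ then come out as truncated versions of the same series.

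Finally I would verify the matrix-factorization identity $P_1P_2 = P_2P_1 = (x^3+y^2+xyz)I_2$ as an independent check that the polygon count has been done correctly — this is a purely algebraic computation with the explicit formulas, using the Chebyshev-type identities satisfied by the sums $\sum_i(-1)^{m-i}\binom{m-i}{i}x^iz^{m-2i}$ (these satisfy a three-term recurrence in $m$ that makes the off-diagonal products telescope), and it also confirms that the potential obtained is $x^3+y^2+xyz$ rather than the Burban–Drozd normalization $x^3+y^2-xyz$, which is why the substitution $z \mapsto -z$ is needed downstream. The main obstacle I expect is the bookkeeping in the strip enumeration: correctly identifying which strips exist (ruling out spurious polygons with non-convex corners, and handling the orbifold corners of $\bL$ carefully), getting the signs right from the spin-structure/orientation conventions, and tracking exactly when the holonomy factor $\lambda^{\pm1}$ appears — the combinatorial heart is showing that the count of fixed-winding strips is governed by the claimed binomial coefficients, which I would isolate as a lemma and prove by induction on $m$.
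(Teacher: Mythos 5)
Your proposal follows essentially the same route as the paper's proof: identify that $L_{m,\lambda}\cap\bL$ consists of four points so $\cF^{\bL}(L_{m,\lambda})$ is a $2\times 2$ factorization, enumerate decorated strips between the four generators, and observe that wrapping around the $\Z/3$-orbifold point produces infinite families of strips whose contributions sum as geometric series, giving the common denominator $1-r$. The paper's proof is actually somewhat less systematic than what you sketch: it does the full strip enumeration only for $m=3$ (Figure \ref{fig:L3disc}), explicitly resums the resulting geometric series into the rational expressions, records the symmetry relations $a_{11}=a_{22}'$, $a_{21}=-a_{21}'$, $a_{12}=-a_{12}'$, $a_{22}=a_{11}'$ coming from the symmetry of the picture, and then asserts that ``We can generalize the disc counting to the case of general $m$.''

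Two ways your plan differs in emphasis, both defensible. First, you propose to formalize the ``generalize to all $m$'' step by setting up a generating-function recursion for the number of fixed-winding strips and proving the binomial-coefficient formulas by induction on $m$; the paper leaves this to the reader. If you carried this out, your version would be strictly more complete. Second, you propose to verify $P_1P_2=P_2P_1=(x^3+y^2+xyz)I_2$ as an independent sanity check via the three-term Chebyshev-type recurrence. The paper does not do this in the proof of the proposition itself (it checks a closely related identity — that $x^2 a_{11}+c_2 a_{21}$ is a unit multiple of $W$ — only later in the proof of Lemma \ref{lem:J}, and only for purposes of identifying the cokernel). The check is not logically needed since the $A_\infty$-functor produces a matrix factorization automatically, but it would catch sign and holonomy bookkeeping errors, which you correctly flag as the main pitfall. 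One small correction to your framing: the paper does not literally compare to Proposition \ref{prop:DiscCounting} (the pair-of-pants count); the orbifold computation is done from scratch, and the role played there by the fundamental-group comparison is here replaced by direct inspection of the pieces in Figure \ref{fig:L3disc} and their concatenations.
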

\begin{proof}
Let us illustrate the computation for $L_{3,\lambda}$. The general case can be handled exactly in the same way.

There are two odd degree morphisms $o_{1}$, $o_{2}$ and two even degree morphisms $e_{1}$, $e_{2}$ in $CF(L_{3,\lambda},\mathbb{L})$. We have to compute all coefficients in the following equations;
\begin{align*}
m^{0,\bb}_{1}(o_{1}) &= a_{11} \cdot e_{1} + a_{21} \cdot e_{2} &m^{0,\bb}_{1}(e_{1}) &= a^{\prime}_{11} \cdot o_{1} + a^{\prime}_{21} \cdot o_{2} \\
m^{0,\bb}_{1}(o_{2}) &= a_{12} \cdot e_{1} + a_{22} \cdot e_{2} &m^{0,\bb}_{1}(e_{2}) &= a^{\prime}_{12} \cdot o_{1} + a^{\prime}_{22} \cdot o_{2} 
\end{align*}
which give a matrix factorization $(P_1, P_2)$ of $x^3+y^2 -xyz$.
\begin{figure}
\begin{subfigure}{0.46\textwidth}
\includegraphics[scale=0.65]{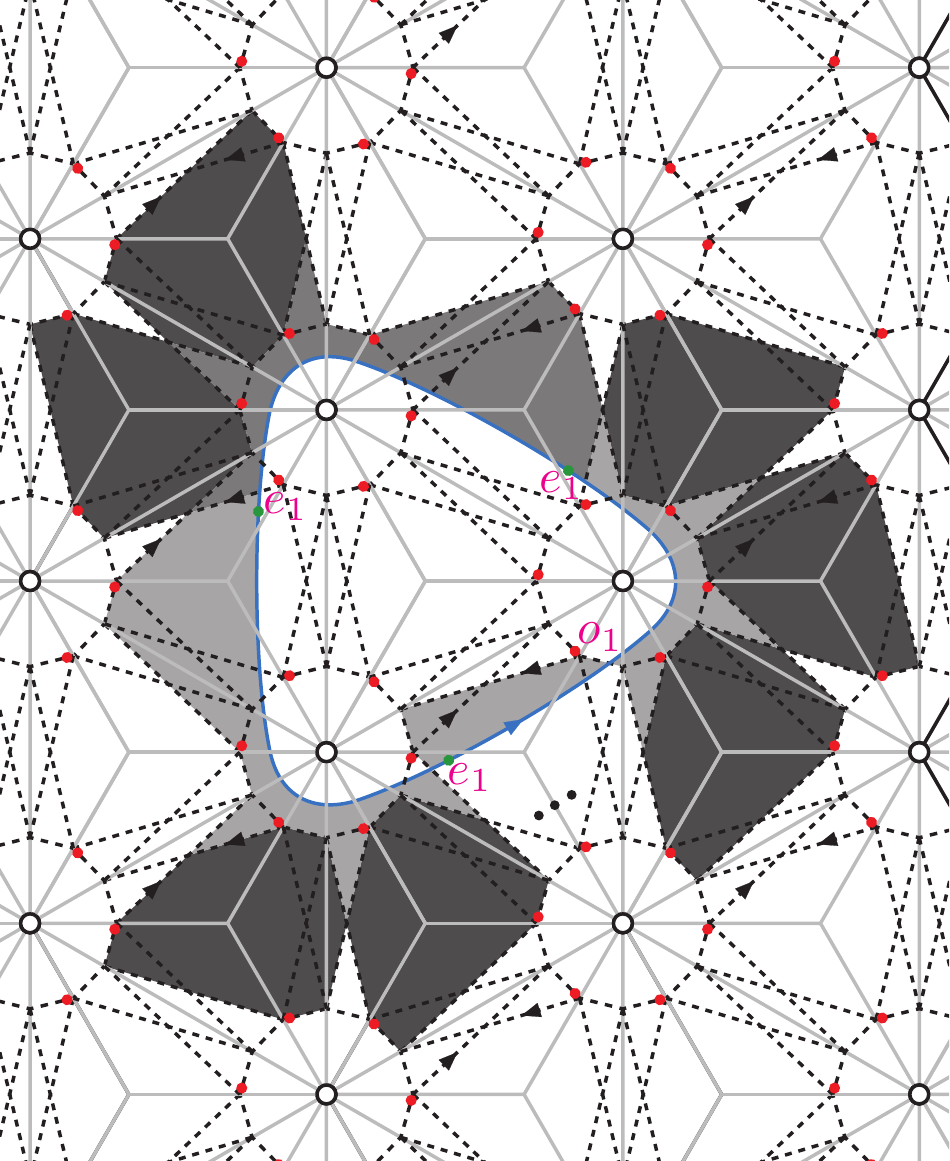}
\centering
\caption{Discs for $a_{11}$}
\end{subfigure}
\begin{subfigure}{0.46\textwidth}
\includegraphics[scale=0.65]{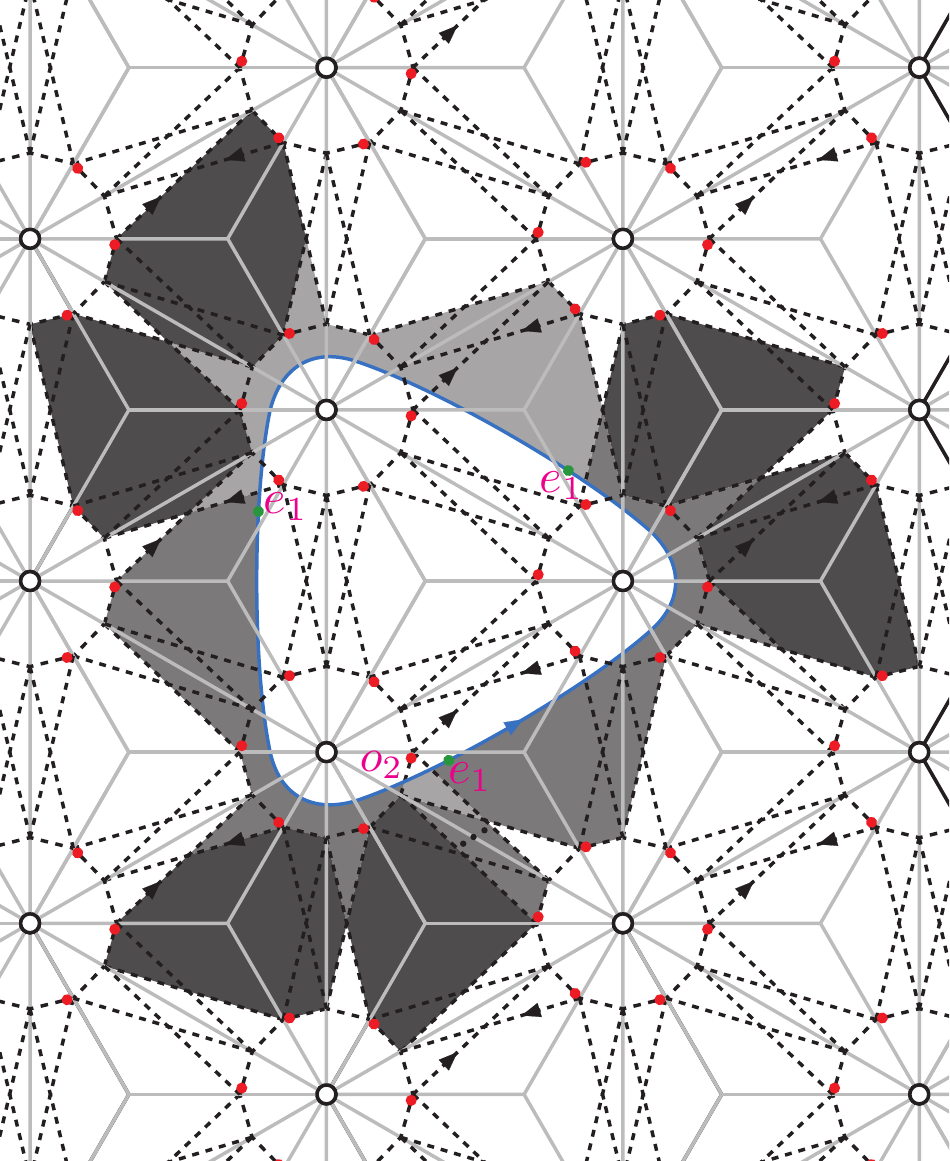}
\centering
\caption{Discs for $a_{12}$}
\end{subfigure}
\centering
\begin{subfigure}{0.46\textwidth}
\includegraphics[scale=0.65]{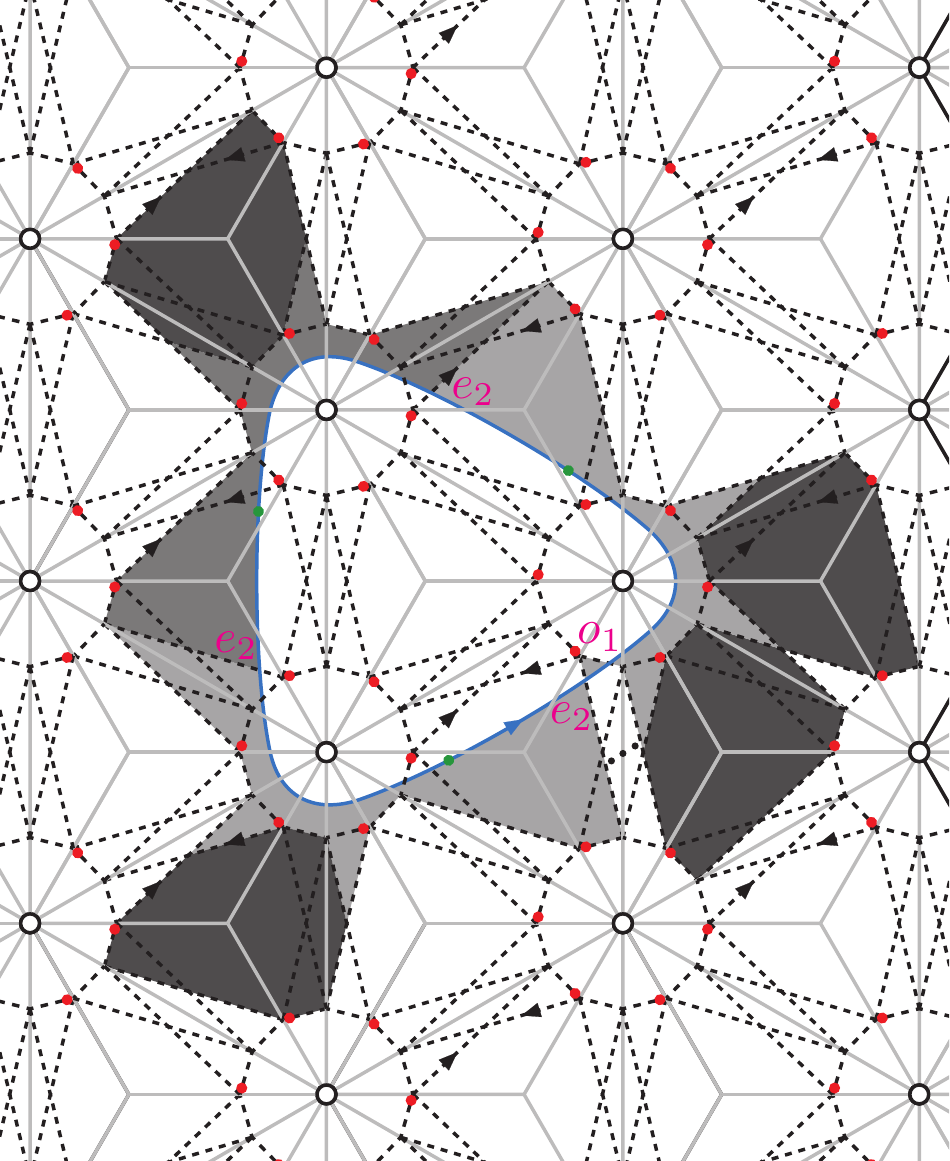}
\centering
\caption{Discs for $a_{21}$}
\end{subfigure}
\begin{subfigure}{0.46\textwidth}
\includegraphics[scale=0.65]{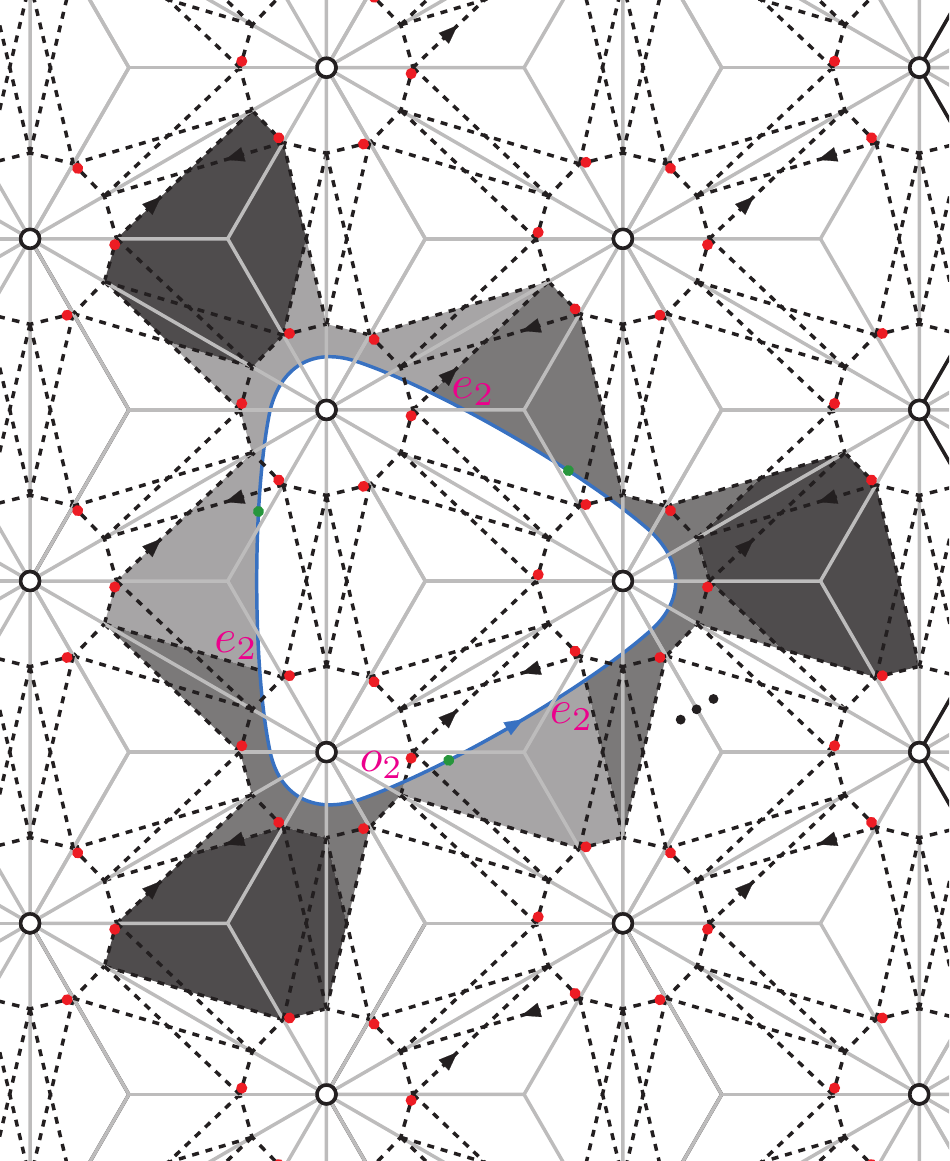}
\centering
\caption{Discs for $a_{22}$}
\end{subfigure}
\centering
\caption{ }
\label{fig:L3disc}
\end{figure}
In Figure \ref{fig:L3disc}, many pieces of polygons are given in each case. Any possible combination of these pieces gives a disc for $m^{0,\bb}_{1}$. We count (infinitely many) possible discs and get the followings.
\begingroup
\allowdisplaybreaks
\begin{align*}
a_{11} &= \lambda x - z^{4} + 3xz^{2} - x^{2} + \lambda^{-1} z^{6} - 4 \lambda^{-1} xz^{4} + 4 \lambda^{-1} x^{2}z^{2} - \lambda^{-2} z^{8} + 6 \lambda^{-2} xz^{6} - 8 \lambda^{-2} x^{2}z^{4} + 4 \lambda^{-2} x^{3}z^{2} + \cdots \\
&= \lambda x + xz^{2} - x^{2} + (2xz^{2} - z^{4} + 2 \lambda^{-1} z^{2}x^{2} - \lambda^{-1}xz^{4}) + (2xz^{2} - z^{4} + 2 \lambda^{-1} z^{2}x^{2} - \lambda^{-1}xz^{4})(-\lambda^{-1}z^{2} + \lambda^{-1} x) + \cdots \\
&= \lambda x + xz^{2} - x^{2} + \dfrac{(2xz^{2} - z^{4})(1 + \lambda^{-1}x)}{1-(-\lambda^{-1}z^{2} + \lambda^{-1} x)}\\[1ex]
a_{21} &= y + \lambda^{-1} xz^{3} - 2\lambda^{-1} x^{2}z - \lambda^{-2}xz^{5} + 3 \lambda^{-2}x^{2}z^{3} - 2 \lambda^{-2}x^{3}z + \cdots \\
& = y + (\lambda^{-1} xz^{3} - 2\lambda^{-1} x^{2}z) + (\lambda^{-1} xz^{3} - 2\lambda^{-1} x^{2}z)(-\lambda^{-1}z^{2} + \lambda^{-1} x) + \cdots \\
& = y + \dfrac{\lambda^{-1} xz^{3} - 2\lambda^{-1} x^{2}z}{1-(-\lambda^{-1}z^{2} + \lambda^{-1} x)} \\[1ex]
a_{12} &= - y -xz + \lambda^{-1} xz^{3} - 2 \lambda^{-1} x^{2}z - \lambda^{-2} xz^{5} + 3 \lambda^{-2} x^{2}z^{3} - 2 \lambda^{-2} x^{3}z + \cdots \\
&= -y + (-xz - \lambda^{-1}x^{2}z) + (-xz - \lambda^{-1}x^{2}z)(-\lambda^{-1}z^{2} + \lambda^{-1} x) + \cdots \\
&= -y + \dfrac{-xz - \lambda^{-1}x^{2}z}{1-(-\lambda^{-1}z^{2} + \lambda^{-1} x)}\\[1ex]
a_{22} &= \lambda^{-1}x^{2} - \lambda^{-2}x^{2}z^{2} + \lambda^{-2}x^{3} + \lambda^{-3}x^{2}z^{4} - 2\lambda^{-3}x^{3}z^{2} +\lambda^{-3}x^{4} + \cdots\\
&= \lambda^{-1}x^{2} + \lambda^{-1}x^{2}(-\lambda^{-1}z^{2} + \lambda^{-1} x) + \cdots \\
&= \frac{\lambda^{-1} x^{2}}{1-(-\lambda^{-1}z^{2} + \lambda^{-1} x)}
\end{align*}
\endgroup
By the symmetry of domain, there are relations between entries;
$$a_{11} = a^{\prime}_{22}, a_{21} = -a^{\prime}_{21}, a_{12} = -a^{\prime}_{12}, a_{22} = a^{\prime}_{11}.$$
 We can generalize the disc counting to the case of general $m$, which gives the results.
\end{proof}

\begin{lemma} \label{lem:J}
For any $m \geq 1$, $P_{1}$ and $P_{2}$ fit into the periodic resolution of $J_{m, -\lambda}$;
$$\begin{tikzcd}
\cdots \arrow[r, "P_{1}"] & A^{2} \arrow[r, "P_{2}"] & A_{2} \arrow[r, "P_{1}"] & A^{2} \arrow[r] & J_{m, -\lambda}\\
\end{tikzcd}$$

\begin{proof}
Since two generators of $J_{m,-\lambda} = \langle x^{m+1}, y^{m} + \lambda x^{m-1} (xz + y) \rangle$ form a regular sequence of $\C [[x,y,z]]$, we will find only one nontrivial relation in $\C [[x,y,z]] / \langle W \rangle$ between generators to compute the presentation of $J_{m,-\lambda}$. But that can be found exactly in $\cF^{\bL}(L_{m,\lambda})$. First, we give explicit forms of $\cF^{\bL}(L_{1,\lambda})$ since $m=1$ case is ruled out from the formula.
$$\cF^{\bL}(L_{1,\lambda}) = \begin{pmatrix}
\lambda x + x + \dfrac{-z^{2}}{1+\lambda^{-1}}& -y + \dfrac{-xz}{1+\lambda^{-1}}\\[3ex]
y + \dfrac{\lambda^{-1} xz}{1+\lambda^{-1}}& \dfrac{\lambda^{-1}x^{2}}{1+\lambda^{-1}}\\
\end{pmatrix}
\begin{pmatrix}
\dfrac{\lambda^{-1}x^{2}}{1+\lambda^{-1}}& y - \dfrac{-xz}{1+\lambda^{-1}}\\[3ex]
- y - \dfrac{\lambda^{-1} xz}{1+\lambda^{-1}}& \lambda x + x + \dfrac{-z^{2}}{1+\lambda^{-1}}\\
\end{pmatrix}$$
Then one can check directly that
\begin{align*}
&\begin{pmatrix}
x^2& y + \lambda (xz + y)\\
\end{pmatrix}
\begin{pmatrix}
\lambda x + x + \dfrac{-z^{2}}{1+\lambda^{-1}}& -y + \dfrac{-xz}{1+\lambda^{-1}}\\[3ex]
y + \dfrac{\lambda^{-1} xz}{1+\lambda^{-1}}& \dfrac{\lambda^{-1}x^{2}}{1+\lambda^{-1}}\\
\end{pmatrix} \\
&=\begin{pmatrix}
(1+\lambda)x^{3} + \dfrac{-x^{2}z^{2}}{1+\lambda^{-1}} + (1+\lambda) y^{2} + xyz + \lambda xyz + \dfrac{x^{2}z^{2}}{1+\lambda^{-1}}& -x^{2}y + \dfrac{-x^{3}z}{1+\lambda^{-1}} + \dfrac{\lambda^{-1}(1+\lambda)x^{2}y + x^{3}z}{1+\lambda^{-1}}
\end{pmatrix}\\
&= \begin{pmatrix}
(1+\lambda)W & 0\\
\end{pmatrix}
\end{align*}
When $m = 2$, we need the following isomorphism;
$$J_{2,-\lambda} \simeq \langle x^{2}, y(\lambda-z) + \lambda xz \rangle \simeq \langle x^{2}, y(\lambda-z) + \lambda xz - x^{2} \rangle$$
First column of
$$\begin{pmatrix}
x^{2}& y(\lambda-z) + \lambda xz - x^{2}
\end{pmatrix}
\begin{pmatrix}
\lambda x - xz + \dfrac{z^{3} -\lambda^{-1} xz^{2} -xz + \lambda^{-1}x^{2}}{1 - \lambda^{-1}z}& -y + \dfrac{-xz+\lambda^{-1} x^{2}}{1-\lambda^{-1}z}\\[3ex]
y + \dfrac{-\lambda^{-1} xz^{2} + \lambda^{-1}x^{2}}{1-\lambda^{-1}z}& \dfrac{\lambda^{-1}x^{2}}{1-\lambda^{-1}z}
\end{pmatrix}  $$
becomes a multiple of $W$;
\begin{align*}
&x^{2}(\lambda x -xz) + \dfrac{x^{2}z^{3} - \lambda^{-1}x^{3}z^{2} - x^{3}z + \lambda^{-1}x^{4}}{1-\lambda^{-1} z} + y^{2}(\lambda - z) + \lambda xy^{2} - x^{2}y \\
&\hspace{5.7cm}+ \dfrac{-xyz^{2} + x^{2}y +\lambda^{-1}xyz^{3} -\lambda^{-1}x^{2}yz - x^{2}z^{3} + x^{3}z + \lambda^{-1}x^{3}z^{2} - \lambda^{-1}x^{4}}{1 - \lambda^{-1}z}\\
&=x^{2}(\lambda x -xz) + y^{2}(\lambda - z) + \lambda xyz - x^{2}y -xyz^{2} + x^{2}y \\
&=(\lambda-z)W
\end{align*}
For general $m \geq 3$, $J_{m,-\lambda}$ is isomorphic to
$$\left\langle \lambda^{-1} x^{2}, y\left(1- \lambda^{-1} \sum_{i = 0}^{\lfloor \frac{m-1}{2}\rfloor} (-1)^{m-i} {m-1-i \choose i} x^{i} z^{m-1-2i}\right) + xz - \lambda^{-1} \sum_{i = 0}^{\lfloor \frac{m-2}{2}\rfloor} (-1)^{m-2-i} {m-2-i \choose i} x^{i+2} z^{m-2-2i} \right\rangle$$
Denote the second generator by $c_{2}$. We will show that $x^{2} a_{11} + c_{2} a_{21}$ is a multiple of $W$.
\begin{align*}
&\lambda^{-1}x^{2} a_{11} + c_{2} a_{21}\\
&=x^{3}(1-r) + \dfrac{\displaystyle \lambda^{-1} \sum_{i = 0}^{\lfloor \frac{m}{2}\rfloor} (-1)^{m-i} {m-i \choose i} x^{i+2} z^{m+1-2i}}{1-r} \\
&\hspace{4cm}+ \dfrac{\lambda^{-2} \displaystyle \sum_{i = 0}^{\lfloor \frac{m}{2}\rfloor} (-1)^{m-i} {m-i \choose i} x^{i+2} z^{m+1-2i} \cdot \sum_{i = 0}^{\lfloor \frac{m-2}{2}\rfloor} (-1)^{m-3-i} {m-2-i \choose i} x^{i+1} z^{m-3-2i}}{1-r} \\
&+ y^{2}(1-r) + \lambda^{-1} \sum_{i = 0}^{\lfloor \frac{m}{2}\rfloor} (-1)^{m-1-i} {m-i \choose i} x^{i+1}yz^{m-2i} + xyz + \dfrac{\displaystyle \lambda^{-1} \sum_{i = 0}^{\lfloor \frac{m}{2}\rfloor} (-1)^{m-1-i} {m-i \choose i} x^{i+2} z^{m+1-2i}}{1-r}\\
&- \lambda^{-1} \sum_{i = 0}^{\lfloor \frac{m-2}{2}\rfloor} (-1)^{m-2-i} {m-2-i \choose i} x^{i+2}yz^{m-2-2i} \\
&\hspace{4cm}- \dfrac{\lambda^{-2} \displaystyle \sum_{i = 0}^{\lfloor \frac{m}{2}\rfloor} (-1)^{m-1-i} {m-i \choose i} x^{i+1} z^{m-2i} \cdot \sum_{i = 0}^{\lfloor \frac{m-2}{2}\rfloor} (-1)^{m-2-i} {m-2-i \choose i} x^{i+2} z^{m-2-2i}}{1-r} \\
&= (1-r)x^{3} + (1-r)y^{2} \\
&\hspace{2.1cm} + xyz \left( 1 + \lambda^{-1} \sum_{i = 0}^{\lfloor \frac{m}{2}\rfloor} (-1)^{m-1-i} {m-i \choose i} x^{i}z^{m-1-2i} - \lambda^{-1} \sum_{i = 1}^{\lfloor \frac{m-2}{2}\rfloor + 1} (-1)^{m-1-i} {m-1-i \choose i-1} x^{i}z^{m-1-2i} \right) \\
&= (1-r)x^{3} + (1-r)y^{2} + xyz \left( 1 - \lambda^{-1} \sum_{i = 0}^{\lfloor \frac{m-1}{2}\rfloor} (-1)^{m-i} {m-1-i \choose i} x^{i}z^{m-1-2i} \right) \\
&= (1-r) W
\end{align*}
Thus, we can conclude that $\cF^{\bL}(L_{m,\lambda})$ is the stabilization of $J_{m,-\lambda}$.
\end{proof}
\end{lemma}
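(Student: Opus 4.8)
\textbf{Plan for the proof of Lemma \ref{lem:J}.}

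The statement to be proved is that the two matrices $P_1,P_2$ produced by the disc count in Proposition \ref{mirror32} assemble into the $2$-periodic free resolution of the Cohen--Macaulay module $J_{m,-\lambda}$ over $A=\C[[x,y,z]]/(x^3+y^2-xyz)$. The overall strategy is a direct computational verification: since a matrix factorization $(P_1,P_2)$ of $W$ gives rise to the resolution $\cdots\to A^2\xrightarrow{P_2}A^2\xrightarrow{P_1}A^2\to\cok\underline{P_1}\to 0$ by Eisenbud's theorem (Theorem \ref{thm:Eisenbud}), it suffices to identify $\cok\underline{P_1}$ with the ideal $J_{m,-\lambda}\subseteq A$. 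To do this I would exhibit an explicit surjection $A^2\twoheadrightarrow J_{m,-\lambda}$ whose kernel is precisely the column space of $\underline{P_1}$. Concretely, choosing a convenient presentation of the ideal $J_{m,-\lambda}=\langle g_1,g_2\rangle$ (isomorphic copies are used freely, as in the lemma: $\langle x^{m+1},y^m+\lambda x^{m-1}(xz+y)\rangle$, or after clearing units $\langle \lambda^{-1}x^2,\,c_2\rangle$ with $c_2$ the rescaled second generator), one sends $(e_1,e_2)\mapsto(g_1,g_2)$. The first factorization identity $P_1P_2=P_2P_1=W\cdot I_2$ already holds by construction of the localized mirror functor, so the only genuine content is that the row vector $(g_1,g_2)$ left-annihilates $P_1$ modulo $W$, i.e.\ $g_1\cdot(\text{column }j\text{ of }P_1)+g_2\cdot(\text{same})$ is a multiple of $W$ for each $j$, and moreover that $(g_1,g_2)$ generates the full kernel.

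The execution proceeds by splitting into the small cases $m=1$ and $m=2$, where the formulas in Proposition \ref{mirror32} degenerate, and the generic case $m\ge 3$. For $m=1$ I would write out $\cF^\bL(L_{1,\lambda})$ explicitly (the $1+\lambda^{-1}$ denominator replacing $1-r$) and check by hand that $\begin{pmatrix}x^2 & y+\lambda(xz+y)\end{pmatrix}P_1=\begin{pmatrix}(1+\lambda)W & 0\end{pmatrix}$; the second coordinate vanishing is the nontrivial relation. For $m=2$ I would use the isomorphism $J_{2,-\lambda}\simeq\langle x^2,\,y(\lambda-z)+\lambda xz-x^2\rangle$ and verify that the first entry of $(g_1,g_2)P_1$ equals $(\lambda-z)W$ while the second is $0$ automatically. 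For $m\ge 3$ I would use the rescaled presentation with generators $\lambda^{-1}x^2$ and $c_2$ (the explicit binomial sum written in the lemma) and compute $\lambda^{-1}x^2\,a_{11}+c_2\,a_{21}$, showing it collapses to $(1-r)W$ after two telescoping simplifications of the binomial convolutions — the key combinatorial identity being the Chebyshev/Pascal recursion $\binom{m-1-i}{i}=\binom{m-2-i}{i}+\binom{m-2-i}{i-1}$ that makes the coefficient of $xyz$ in the expansion equal $1-\lambda^{-1}\sum(-1)^{m-i}\binom{m-1-i}{i}x^iz^{m-1-2i}$, exactly the factor $1-r$. One then argues that the remaining relation (with the second column of $P_1$) holds for the same reason, or more cleanly, invokes that $(g_1,g_2)$ is a regular sequence in the regular ring $\C[[x,y,z]]$, so there is a unique nontrivial syzygy in $A$ up to scalar and the one we found must be it; hence $\cok\underline{P_1}\cong J_{m,-\lambda}$ and $(P_1,P_2)$ is its $2$-periodic resolution (``the stabilization'').

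The main obstacle I expect is the bookkeeping of the infinite sums in the disc count for $m\ge 3$: verifying that $\lambda^{-1}x^2 a_{11}+c_2 a_{21}$ equals $(1-r)W$ requires multiplying two geometric-series-type expressions with $1-r$ in the denominator and then recognizing that the numerator telescopes via the binomial recursion above, with careful attention to the floor functions in the summation ranges and to the boundary terms ($i=0$ and $i=\lfloor\cdot\rfloor$) where the recursion degenerates. The cleanest route around grinding through everything is to reduce to showing that a single coordinate of $(g_1,g_2)P_1$ is a unit multiple of $W$ and that the other coordinate vanishes identically, then invoke the regular-sequence/uniqueness-of-syzygy argument to conclude the module identification without separately checking the second relation; this also sidesteps re-deriving $P_2$ from $\operatorname{adj}P_1$. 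A secondary subtlety worth flagging is the sign/orientation convention: because the potential in \cite{CCJ} is $x^3+y^2+xyz$, one must substitute $-z\mapsto z$ to match Burban--Drozd's $x^3+y^2-xyz$, which is exactly the source of the $-\lambda$ (rather than $\lambda$) appearing in $J_{m,-\lambda}$; keeping this substitution consistent throughout is essential for the identities to close up.
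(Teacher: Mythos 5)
Your proposal is correct and follows essentially the same route as the paper: the same reduction to exhibiting a single nontrivial syzygy (justified by the regular-sequence observation), the same case split $m=1$, $m=2$, $m\ge 3$ with the same rescaled presentations $\langle x^2,\,y(\lambda-z)+\lambda xz - x^2\rangle$ and $\langle \lambda^{-1}x^2,\,c_2\rangle$, and the same telescoping of the binomial sums via the Pascal recursion to produce the factor $1-r$. The only cosmetic difference is that you make the uniqueness-of-syzygy reasoning explicit where the paper leaves it as a one-line remark, and you correctly flag the $z\mapsto -z$ substitution from \cite{CCJ} as the origin of the sign $-\lambda$.
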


\begin{prop} \label{prop:AR}
For any integer $m \geq 1$, $J_{m,\lambda} = (P_{1}, P_{2})$ and $I_{m,\frac{1}{\lambda}} = (P_{2}, P_{1})$ satisfy
$$J_{m,\lambda}[1] \simeq I_{m,\frac{1}{\lambda}}$$
as matrix factorizations. Equivalently, $I_{m,\frac{1}{\lambda}}$ is the Auslander-Reiten translation of $J_{m,\lambda}$ in $CM(A)$.
\end{prop}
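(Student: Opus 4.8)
\textbf{Proof proposal for Proposition \ref{prop:AR}.} The statement to prove is that for each $m\ge1$, the matrix factorizations $J_{m,\lambda}=(P_1,P_2)$ and $I_{m,\frac1\lambda}=(P_2,P_1)$ of $W=x^3+y^2-xyz$ satisfy $J_{m,\lambda}[1]\simeq I_{m,\frac1\lambda}$, equivalently that $I_{m,\frac1\lambda}$ is the Auslander--Reiten translation of $J_{m,\lambda}$ in $\underline{\operatorname{CM}}(A)$. The first observation is that the two assertions are literally the same: for a hypersurface of dimension $2$ (so that $A$ is Gorenstein of dimension $2$), the AR translation on $\underline{\operatorname{CM}}(A)$ is the shift functor $[1]$ (up to the canonical module, which is trivial here since $A$ is Gorenstein), and for matrix factorizations the shift $[1]$ is precisely swapping the two factors: $(\varphi,\psi)[1]=(\psi,\varphi)$. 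Hence it suffices to exhibit an isomorphism in $\underline{\operatorname{MF}}(W)$ between $(P_2,P_1)$ — which by Lemma \ref{lem:J} is a presentation of $J_{m,\lambda}$ with the roles of $P_1$ and $P_2$ interchanged, i.e. a matrix factorization whose cokernel is $\operatorname{coker}\underline{P_2}$ — and the matrix factorization $(P_2',P_1')$ obtained from the standard presentation of $I_{m,\frac1\lambda}$.

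The plan is to run the same module-theoretic computation as in Lemma \ref{lem:J} but for the ideal $I_{m,\mu}=\langle x^{m+1},\,yx^{m-1}+\mu(xz-y)^m\rangle$ (after the substitution $z\mapsto -z$ matching the Burban--Drozd convention, as flagged right before Proposition \ref{mirror32}). First I would record, exactly as in Lemma \ref{lem:J}, that the two listed generators of $I_{m,\mu}$ form a regular sequence in $\mathbb{C}[[x,y,z]]$, so that the periodic free resolution of $I_{m,\mu}$ over $A$ is determined by a single nontrivial syzygy modulo $W$; this produces a $2\times2$ matrix factorization $(Q_1,Q_2)$ with $\operatorname{coker}\underline{Q_1}\cong I_{m,\mu}$. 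Then I would carry out the power-series bookkeeping — the same geometric-series resummation with binomial coefficients $\binom{m-1-i}{i}$ that appears in Proposition \ref{mirror32} and Lemma \ref{lem:J} — to identify, after clearing the unit denominator $1-r$ and possibly an overall unit and a change of basis $\operatorname{GL}_2(A)$, the pair $(Q_1,Q_2)$ for $I_{m,\frac1\lambda}$ with the pair $(P_2,P_1)$ coming from $J_{m,\lambda}$. Concretely, the entries $a_{ij},a_{ij}'$ of $P_1,P_2$ in Proposition \ref{mirror32} already satisfy $a_{11}=a_{22}'$, $a_{21}=-a_{21}'$, $a_{12}=-a_{12}'$, $a_{22}=a_{11}'$, so $P_2$ is obtained from $P_1$ by a permutation of rows/columns together with sign changes; I expect $(Q_1,Q_2)$ for $I_{m,\frac1\lambda}$ to match $(P_2,P_1)$ on the nose after conjugating by $\operatorname{diag}(1,-1)$ or a similar elementary matrix and substituting $\lambda\mapsto\lambda^{-1}$. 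Tracking the $\lambda\leftrightarrow\lambda^{-1}$ and the sign conventions through the resummation is the delicate part; small cases $m=1,2$ (which again must be treated by hand, since the closed formula for general $m$ excludes them) should be checked directly first to pin down the signs.

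Alternatively — and this is the route I would actually emphasize to keep the argument short — one can avoid re-deriving the matrix factorization for $I_{m,\frac1\lambda}$ from scratch. By Lemma \ref{lem:J} we already know $\cF^{\bL}(L_{m,\lambda})=(P_1,P_2)=J_{m,-\lambda}$ in $\underline{\operatorname{MF}}(W)$, and by Theorem \ref{thm:32InfiniteCorrespondence}'s outline (invoking Lemma 10.2 of \cite{CCJ}) the orientation reversal $\overline{L}_{m,\lambda}$ maps to the AR translate of $\cF^{\bL}(L_{m,\lambda})$, i.e. to $J_{m,-\lambda}[1]=(P_2,P_1)$. On the other hand $\overline{L}_{m,\lambda}$ has holonomy $\lambda^{-1}$ around the reversed loop and homotopy class $[\alpha^{-1}\gamma^{m+1}]$ by Lemma \ref{lem:32InfiniteGeodesic}, so a direct (finite!) disc count for $\overline{L}_{m,\lambda}$ — mirror to the even simpler geometry of a curve winding the $\mathbb{Z}/3$-point once in the opposite sense — yields a $2\times2$ matrix factorization that one checks to be a presentation of $I_{m,-\frac1\lambda}$ by the same regular-sequence argument. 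Matching these two descriptions of $\overline{L}_{m,\lambda}$ gives $I_{m,-\frac1\lambda}\simeq J_{m,-\lambda}[1]$ in $\underline{\operatorname{MF}}(W)$; replacing $\lambda$ by $-\lambda$ and using $I_{m,\frac1\lambda}=(P_2,P_1)$, $J_{m,\lambda}=(P_1,P_2)$ then gives exactly $J_{m,\lambda}[1]\simeq I_{m,\frac1\lambda}$, which is the claim. The main obstacle in either route is purely computational: verifying that the explicit $2\times2$ matrix obtained from the $I$-side syzygy equals, up to $\operatorname{GL}_2(A)$-equivalence and the $\lambda\mapsto\lambda^{-1}$ substitution, the factor-swapped matrix $(P_2,P_1)$ from the $J$-side — i.e. confirming that the AR translation of a rank-one CM module of type $J$ is again rank-one of type $I$ with the reciprocal parameter, rather than landing on a $3\times3$ (non-reduced) presentation.
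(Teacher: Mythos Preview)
Your first route is correct in spirit and is the paper's approach: compute a $2\times2$ matrix factorization for $I_{m,1/\lambda}$ from a syzygy of its two generators and match it with the swap of the one for $J_{m,\lambda}$. However, the paper does not attempt to work with the power-series matrices $(P_1,P_2)$ of Proposition~\ref{mirror32}. Instead it passes to \emph{different}, simpler generators for both ideals: for $m\ge3$ one has $J_{m,\lambda}\simeq\langle x^2,\,y\{z^{m-1}-(m-2)xz^{m-3}-\lambda\}+\lambda xz\rangle$ and $I_{m,1/\lambda}\simeq\langle y\{1-\lambda z^{m-1}+\lambda(m-2)xz^{m-3}\}+\lambda xz^m,\,x^2\rangle$, from which one reads off polynomial (not power-series) $2\times2$ factorizations that are visibly each other's swap, with $m=1,2$ handled separately. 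Your expectation that the mirror-functor matrices match ``on the nose after conjugating by $\operatorname{diag}(1,-1)$'' is too optimistic; finding these alternative presentations is the step that makes the identification tractable.

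Your second route, while not circular, reverses the paper's logical architecture and relies on a step the paper deliberately avoids. In the proof of Theorem~\ref{thm:32InfiniteCorrespondence} the authors say explicitly that they do \emph{not} compute $\cF^{\bL}(\overline{L}_{m,\lambda})$ by disc counting ``due to computational difficulty''; Proposition~\ref{prop:AR} is introduced precisely as the purely algebraic input that lets one deduce $\cF^{\bL}(\overline{L}_{m,\lambda})\simeq I_{m,-1/\lambda}$ from the already-established $\cF^{\bL}(L_{m,\lambda})\simeq J_{m,-\lambda}$ together with the orientation-reversal/AR-translation lemma from \cite{CCJ}. Your assertion that the reversed curve gives ``even simpler geometry'' is unsubstantiated and contrary to what the paper reports. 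If you could actually carry out that disc count and match the result to $I_{m,-1/\lambda}$, you would have an alternative, more geometric proof --- but you would then be doing exactly the hard computation that Proposition~\ref{prop:AR} was designed to circumvent, and proving an algebraic statement about $\operatorname{CM}(A)$ through mirror-symmetry machinery rather than intrinsically.
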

\begin{proof}
When $m =1$, we have $J_{1,\lambda} \simeq I_{1,\lambda} = \langle x^{2}, y(1-\lambda) + \lambda xz \rangle$ and a matrix form is given by
\begin{equation*}
\begin{pmatrix}
(1-\lambda)x + \dfrac{\lambda}{1-\lambda}z^{2}& -y - \dfrac{\lambda}{1-\lambda} xz\\[1em]
y - \dfrac{1}{1-\lambda} xz& \dfrac{1}{1-\lambda} x^{2}\\
\end{pmatrix}
\begin{pmatrix}
\dfrac{1}{1-\lambda}x^{2}& y + \dfrac{\lambda}{1-\lambda} xz\\[1em]
-y + \dfrac{1}{1-\lambda} xz& (1-\lambda)x + \dfrac{\lambda}{1-\lambda}z^{2}\\
\end{pmatrix}
\end{equation*}
As a periodic resolution, we have
$$\begin{tikzcd}[ampersand replacement=\&]
\cdots \arrow[r] \& A^{2} \arrow[r, "{\begin{scriptsize}\begin{pmatrix}
\dfrac{1}{1-\lambda}x^{2}& y + \dfrac{\lambda}{1-\lambda} xz\\[1em]
-y + \dfrac{1}{1-\lambda} xz& (1-\lambda)x + \dfrac{\lambda}{1-\lambda}z^{2}\\
\end{pmatrix}\end{scriptsize}}"] \&[10em] A_{2} \arrow[r, "{\begin{scriptsize}\begin{pmatrix}
(1-\lambda)x + \dfrac{\lambda}{1-\lambda}z^{2} & -y - \dfrac{\lambda}{1-\lambda} xz\\[1em]
y - \dfrac{1}{1-\lambda} xz & \dfrac{1}{1-\lambda} x^{2}
\end{pmatrix}\end{scriptsize}}"] \&[10em] A^{2} \arrow[r] \& J_{m,\lambda}\\
\end{tikzcd}$$
One can easily check that
$$\begin{tikzcd}[ampersand replacement=\&]
\cdots \arrow[r] \& A^{2} \arrow[r, "{\begin{scriptsize}\begin{pmatrix}
(1-\lambda)x + \dfrac{\lambda}{1-\lambda}z^{2} & -y - \dfrac{\lambda}{1-\lambda} xz\\[1em]
y - \dfrac{1}{1-\lambda} xz & \dfrac{1}{1-\lambda} x^{2}
\end{pmatrix}\end{scriptsize}}"] \&[10em] A_{2} \arrow[r, "{\begin{scriptsize}\begin{pmatrix}
\dfrac{1}{1-\lambda}x^{2}& y + \dfrac{\lambda}{1-\lambda} xz\\[1em]
-y + \dfrac{1}{1-\lambda} xz& (1-\lambda)x + \dfrac{\lambda}{1-\lambda}z^{2}\\
\end{pmatrix}\end{scriptsize}}"] \&[10em] A^{2} \arrow[r] \& I_{m,\frac{1}{\lambda}}\\
\end{tikzcd}$$
is a periodic resolution of $I_{m,\frac{1}{\lambda}}$. Hence, we can conclude that $I_{1,\frac{1}{\lambda}}$ is the shift of $J_{1,\lambda}$ in the category of matrix factorizations.

If $m = 2$, $J_{2,\lambda} = \langle x^{3}, y^{2} + \lambda x(xz - y) \rangle \simeq \langle x^{2}, y(z - \lambda) + \lambda xz \rangle$ corresponds to
\begin{equation*}
\begin{pmatrix}
(z-\lambda)^{2}x + \lambda z^{3}& -y(z-\lambda) - \lambda xz\\[1em]
y(z - \lambda) - xz^{2}& x^{2}\\
\end{pmatrix}
\begin{pmatrix}
\dfrac{1}{(z-\lambda)^{2}}x^{2}& \dfrac{1}{z - \lambda} y + \dfrac{\lambda}{(z-\lambda)^{2}} xz\\[1em]
-\dfrac{1}{z- \lambda} y + \dfrac{1}{(z-\lambda)^{2}} xz^{2}& x + \dfrac{\lambda}{(z-\lambda)^{2}}z^{3}\\
\end{pmatrix}
\end{equation*}
Similarly, we can prove that $I_{2,\frac{1}{\lambda}} \simeq \langle y(z - \lambda) - xz^{2}, x^{2} \rangle$ is the shift of $J_{2,\lambda}$

In general, for $J_{m,\lambda}$ with $m \geq 3$, it is isomorphic to the ideal
$$\langle x^{2}, y\{z^{m-1} - (m-2)xz^{m-3} - \lambda\} + \lambda xz \rangle$$
and matrix factorization is given by
\begin{equation*}
\begin{small}
\begin{pmatrix}
x\{z^{m-1} - (m-2)xz^{m-3} - \lambda\}^{2} + \lambda z^{2}\{z^{m-1} - (m-2)xz^{m-3}\} & -y\{z^{m-1} - (m-2)xz^{m-3} - \lambda\} - \lambda xz\\[1em]
y\{z^{m-1} - (m-2)xz^{m-3} - \lambda\} -xz \{z^{m-1} - (m-2)xz^{m-3}\} & x^{2}\\
\end{pmatrix}
\end{small}
\end{equation*}
\begin{equation*}
\begin{footnotesize}
\begin{pmatrix}
\dfrac{x^{2}}{\{z^{m-1} - (m-2)xz^{m-3} - \lambda\}^{2}} & \dfrac{y\{z^{m-1} - (m-2)xz^{m-3} - \lambda\} + \lambda xz}{\{z^{m-1} - (m-2)xz^{m-3} - \lambda\}^{2}}\\[2em]
\dfrac{-y\{z^{m-1} - (m-2)xz^{m-3} - \lambda\} +xz \{z^{m-1} - (m-2)xz^{m-3}\}}{\{z^{m-1} - (m-2)xz^{m-3} - \lambda\}^{2}}& \dfrac{x\{z^{m-1} - (m-2)xz^{m-3} - \lambda\}^{2} + \lambda z^{2}\{z^{m-1} - (m-2)xz^{m-3}\}}{\{z^{m-1} - (m-2)xz^{m-3} - \lambda\}^{2}}\\
\end{pmatrix}
\end{footnotesize}
\end{equation*}
Let $P_{1}$, $P_{2}$ be above matrices, respectively. Then, we can write down them as a periodic resolution
$$\begin{tikzcd}
\cdots \arrow[r, "P_{1}"] & A^{2} \arrow[r, "P_{2}"] & A_{2} \arrow[r, "P_{1}"] & A^{2} \arrow[r] & J_{m,\lambda}\\
\end{tikzcd}$$
By some direct calculations, we have an isomorphism
$$I_{m,\lambda} \simeq \langle y\{ 1 - \lambda z^{m-1} + \lambda (m-2) xz^{m-3} \} + \lambda xz^{m}, x^{2} \rangle$$
for $m \geq 3$ and another resolution
$$\begin{tikzcd}
\cdots \arrow[r, "P_{2}"] & A^{2} \arrow[r, "P_{1}"] & A_{2} \arrow[r, "P_{2}"] & A^{2} \arrow[r] & I_{m,\frac{1}{\lambda}}\\
\end{tikzcd}$$
with same matrices $P_{1}$ and $P_{2}$ of $J_{m,\lambda}$. This proves the proposition.  
\end{proof}


\appendix

\section{Normality} \label{sec:normal}

\subsection{Equivalency and Operations} \label{subsec:equiv}
In this subsection, we prove Proposition \ref{prop:equihomotopy}. If two loop words $w', w'_*$ are equivalent, we denote by $w'\cong w'_*$.
\begin{prop}\label{prop:a1}
        Two loop words $w'$ and $w_*'$ are equivalent if and only if $w_*'$ can be obtained from $w'$ by performing the operations in Lemma \ref{lem:equimove} finitely many times.
\end{prop}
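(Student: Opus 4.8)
The backward direction is immediate: each of the five operations in Lemma~\ref{lem:equimove} preserves the free homotopy class $L[w']$, so a finite composition of them does too. The substance is the forward direction: if $L[w'] = L[w_*']$ in $[S^1,\mathcal{P}]$, then $w'$ and $w_*'$ are connected by a finite sequence of the listed moves. The strategy is to use the moves ``removing $0$s'' and ``subtracting $1$s around $1$'' (together with their inverses) as \emph{reduction moves} that shrink the length of a loop word, and argue that every loop word can be brought, by such moves, to a canonical reduced representative of its homotopy class; then two words with the same homotopy class reduce to the same canonical form (up to shifting, which is also one of the listed moves), and concatenating one reduction sequence with the reverse of the other gives the desired chain of moves.

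First I would set up the algebraic bookkeeping. Using the isomorphism $\pi_1(\mathcal{P})\cong\langle\alpha,\gamma\rangle$ (the free group on two generators, eliminating $\beta=\gamma^{-1}\alpha^{-1}$), the class $L[w']$ becomes a conjugacy class in a free group, which has a cyclically reduced representative unique up to cyclic permutation. I would show that the moves of Lemma~\ref{lem:equimove} suffice to realize: (i) deletion of any subword $\alpha^0$, $\beta^0$, or $\gamma^0$ occurring in the middle of a syllable pattern (this is ``inserting/removing $0$s'' combined with ``adding/subtracting $1$s'' to reposition); (ii) the rewriting $\beta^{-1}=\gamma\alpha$ and its consequences, which is exactly what ``subtracting $1$s around $1$'' encodes after a shift, since $(1,1,1)$ corresponds to $\alpha\beta\gamma=1$; and (iii) cyclic rotation, which is ``shifting''. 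The key lemma is that any two loop words representing the same cyclically reduced word in $\langle\alpha,\gamma\rangle$ are related by these moves — this is proved by induction on total word length, peeling off syllables and checking the finitely many local configurations (a syllable $\alpha^{l'}$ or $\gamma^{n'}$ with $l',n'\in\{0,1\}$ adjacent to its neighbors) that can be simplified. One must be careful that the normal form constructed in Proposition~\ref{prop:normalnormal} is reachable; indeed I would invoke the existence half of that proposition, whose proof (in Appendix~\ref{subsec:exist}) is itself carried out by applying precisely these moves, so the canonical target is the normal form.

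The main obstacle is the case analysis around exponents $0$ and $1$ and the relation $\alpha\beta\gamma=1$: unlike in a free group, here a single relator connects all three generators cyclically, so a naive ``reduce syllable by syllable'' argument can get stuck when two adjacent syllables are both small (e.g.\ a $0$ flanked by $\pm1$'s, or the global word $(-1,-1,\dots,-1)$ versus $(2,2,\dots,2)$). I would handle this by first reducing to the subgroup picture $\langle\alpha,\gamma\rangle$ to get a clean notion of cyclic reduction, then showing every move needed on the $\langle\alpha,\gamma\rangle$ side lifts to a finite sequence of the five $\mathcal{P}$-moves, treating the sporadic $(-1,\dots,-1)\leftrightarrow(2,\dots,2)$ and length-collapse phenomena separately — these are exactly the cases the normality conditions in Definition~\ref{defn:normal2} are designed to pin down, so the verification that the normal form is a genuine fixed point of the reduction and is reached from every equivalent word completes the argument. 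Since this is routine but lengthy, and since the existence and uniqueness of normal forms are already deferred to Appendix~\ref{subsec:exist} and~\ref{subsec:unique}, I would structure the proof as: (1) backward direction (trivial); (2) forward direction by reducing each of $w'$, $w_*'$ to its normal form via the moves, using Proposition~\ref{prop:NormalExistence}; (3) conclude that the two normal forms agree up to shifting by Proposition~\ref{prop:NormalUniqueness}, and splice the two move-sequences together.
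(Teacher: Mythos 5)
Your forward direction takes a genuinely different route from the paper's. The paper does not touch the normal form at all in this proof: it reduces both $w'$ and $w_*'$ by the five moves directly to the distinguished representative $w'' = (\mu_1,\nu_1,0,\ldots,\mu_\sigma,\nu_\sigma,0)$, where $\alpha^{\mu_1}\beta^{\nu_1}\cdots\alpha^{\mu_\sigma}\beta^{\nu_\sigma}$ is the cyclically reduced word in $\langle\alpha,\beta\rangle$ (eliminating $\gamma$, not $\beta$). This is carried out in two concrete phases — first kill all $\gamma$-syllables ($n'_i\to 0$) via the ``$(0,0,0)$-insert / $(1,1,1)$-add-subtract'' moves, then absorb any remaining zeros among the $\alpha$- and $\beta$-syllables — and then uniqueness of cyclically reduced words in a free group finishes the argument. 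Your plan instead reduces to the normal form of Definition~\ref{defn:normal2} and invokes Propositions~\ref{prop:NormalExistence} and~\ref{prop:NormalUniqueness}. That would also work, but it is heavier machinery: the paper's canonical form is much easier to reach than the normal form (no case analysis on signs), and the paper deliberately proves Proposition~\ref{prop:a1} first precisely so that it can then use it to justify the manipulations (Lemma~\ref{lem:MainLemmaForNormalLoopWord}) appearing in the existence proof for the normal form.

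There are two genuine gaps in your plan as stated. First, normal forms exist only for essential loop words (Corollary~\ref{cor:NormalEssential}), but Proposition~\ref{prop:a1} is claimed for all loop words, including non-essential ones such as $(l,0,0)$ or $(0,0,0)$. You would need a separate, direct move-argument in the non-essential case; the paper's $\langle\alpha,\beta\rangle$-canonical-form argument handles these uniformly (the reduced word just degenerates to $\alpha^{\mu_1}$, $\beta^{\nu_1}$, or the trivial word). Second, there is a logical ordering problem: the appendix proves Proposition~\ref{prop:a1} \emph{before} the normal-form machinery, and the proof of the key Lemma~\ref{lem:MainLemmaForNormalLoopWord} (on which Proposition~\ref{prop:NormalExistence} rests) is explicitly stated to be ``by the method in the proof of the previous proposition,'' i.e.\ the proof of~\ref{prop:a1} itself. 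So invoking Proposition~\ref{prop:NormalExistence} here is circular in the paper's organization. It is not a fatal logical circularity — the identities in Lemma~\ref{lem:MainLemmaForNormalLoopWord} can be verified directly as sequences of the five moves — but if you take this route you must make that verification explicit rather than citing the proposition, and you must clearly carve out the non-essential case before appealing to~\ref{prop:NormalUniqueness}.
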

\begin{proof}
        By Lemma \ref{lem:equimove}, we only have to prove the `only if' part. Let $$w' = (l'_1, m'_1, n'_1, l'_2, m'_2, n'_2, \cdots, l'_\tau, m'_\tau, n'_\tau)$$ be a loop word and let $L\left[w'\right] = \alpha^{\mu_1}\beta^{\nu_1}\alpha^{\mu_2}\beta^{\nu_2}\cdots\alpha^{\mu_\sigma}\beta^{\nu_\sigma}$ be the associated reduced word in $\left<\alpha, \beta\right>$. This word is also associated with a loop word $w'' = (\mu_1, \nu_1, 0, \mu_2, \nu_2, 0, \cdots, \mu_\sigma, \nu_\sigma, 0)$. We will show that $w''$ can be obtained by $w'$ by performing the operations. If this is done, the proposition follows  that two equivalent loop words have the same associated reduced word.
        
        Note that if we have shown that the given loop word $w'$ can be deformed by the operations to a loop word of the form $w''' = (\mu'_1, \nu'_1, 0, \mu'_2, \nu'_2, 0, \cdots, \mu'_{\sigma'}, \nu'_{\sigma'})$ where $\mu'_i, \nu'_i$ are all nonzero, then we have $$L\left[w'\right] = \alpha^{\mu'_1}\beta^{\nu'_1}\alpha^{\mu'_2}\beta^{\nu'_2}\cdots\alpha^{\mu'_\sigma}\beta^{\nu'_\sigma},$$ which implies that $w'''$ and $w''$ are the same up to cyclic permutation.

        Suppose that $n'_1<0$. Then we have, 
        \begin{align*}
                (l'_1, m'_1, n'_1, l'_2, \cdots, n'_\tau) &\cong (l'_1, m'_1, \bm{0}, \bm{0}, \bm{0}, n'_1, l'_2, \cdots, n'_\tau)\\
                &=(l'_1, m'_1, \bm{0}, \bm{1}, \bm{1}, n'_1+1, l'_2, \cdots, n'_\tau)\\
                &=(1, 1, n'_1+1, l'_2, \cdots, n'_\tau, \bm{l'_1}, \bm{m'_1}, \bm{0}).
                \end{align*}
        Similarly, if $n'_1>0$, we have,
        \begin{align*}
                (l'_1, m'_1, n'_1, l'_2, \cdots, n'_\tau) &\cong (l'_1, m'_1, \bm{1}, \bm{1}, \bm{1}, n'_1, l'_2, \cdots, n'_\tau)\\
                &\cong (l'_1, \bm{m'_1-1}, \bm{0}, \bm{0}, 1, n'_1, l'_2, \cdots, n'_\tau)\\
                &\cong (l'_1, m'_1-1, 0, \bm{-1}, \bm{0}, \bm{n'_1-1}, l'_2, \cdots, n'_\tau)\\
                &\cong (-1, 0, n'_1-1, l'_2, \cdots, n'_\tau, \bm{l'_1}, \bm{m'_1-1}, \bm{0})\\
        \end{align*}
                Repeating this, we deform the original loop word into a loop word with $n'_1=0$. Also by shifting, we could further deform the word so that $n'_i$ are all zero.
                
                Thus we may assume the given loop word $w'$ satisfies $n'_i=0$ for all $i$. By deleting subwords of the form $(0, 0, 0)$, we may also assume that $w'$ has no subword of the form $(0, 0, 0)$. Suppose that $m'_i=0$ for some $i$. Then $l'_i$ and $l'_{i+1}$ are not zero. If $l'_i>0$, we have
                \begin{align*}
                        (\cdots, l'_i, 0, 0, l'_{i+1}, \cdots) &\cong (\cdots, l'_i, \bm{1}, \bm{1}, \bm{1}, 0, 0, l'_{i+1}, \cdots)\\
                        &\cong (\cdots, \bm{l'_i-1}, \bm{0}, \bm{0}, 1, \bm{1}, \bm{1}, \bm{l'_{i+1}+1}, \cdots)
                        \\
                        &= (\cdots, l'_i-1, 0, 0, \bm{1}, \bm{1}, \bm{1}, l'_{i+1}+1, \cdots)\\
                        &\cong (\cdots, l'_i-1, 0, 0, l'_{i+1}+1, \cdots).
                \end{align*}
        Similarly if $l'_i<0$, we have
        \begin{align*}
                (\cdots, l'_i, 0, 0, l'_{i+1}, \cdots) &\cong (\cdots, \bm{l'_i+1}, \bm{1}, \bm{1}, l'_{i+1}, \cdots)\\
                &\cong (\cdots, l'_i+1, \bm{0}, \bm{0}, \bm{l'_{i+1}-1}, \cdots).
        \end{align*}
        Repeating this, we deform the word into a shorter one as $$(\cdots, 0, l'_i, 0, 0, l'_{i+1}, m'_{i+1} \cdots)\cong(\cdots,0, 0, 0, 0, l'_i+l'_{i+1}, m'_{i+1}, \cdots)\cong(\cdots, 0, l'_i+l'_{i+1}, m'_{i+1}, \cdots).$$
        This can be done for each zero $m'_i$ and for zero $l'_i$ also. This proves that the loop word $w'$ can be deformed into a word of the form $(\mu'_1, \nu'_1, 0, \mu'_2, \nu'_2, 0, \cdots, \mu'_{\sigma'}, \nu'_{\sigma'}, 0)$ as we wanted to.
\end{proof}

\subsection{Existence} \label{subsec:exist}

In this subsection, we prove the existence part of Proposition \ref{prop:normalnormal}. Here, we use the notation $(w'_1, \cdots, w'_n)$ for a loop word for convenience. Also note that throughout the section, we only care about essential loop words. We will introduce three different notions of normality, weak-normal, almost-normal, and normal. Their implication is as follows.
\begin{center}
        Weak normal $\Rightarrow$ Almost-normal $\Rightarrow$ Normal
\end{center}

Let us introduce an easy lemma which will be repeatedly used in the proof of the main proposition. One can prove the lemma by using the method in the proof the previous proposition.
\begin{lemma}\label{lem:MainLemmaForNormalLoopWord}
        The following are true.
        \begin{enumerate}
                \item $(w'_1, \cdots, w'_{i-1}, 0, 0, w'_{i+2}, \cdots, w'_n) \cong (w'_1, \cdots, w'_{i-1}+w'_{i+2}, \cdots, w'_n)$.
                \item $(w'_1, \cdots, w'_{i-1}, 1, 1, w'_{i+2}, \cdots, w'_n) \cong (w'_1, \cdots, w'_{i-1}+w'_{i+2}-1, \cdots, w'_n)$.
                \item $(w'_1, \cdots, w'_i, 0, -1, -1, \cdots, -1, 0, w'_j, \cdots, w'_n) \cong (w'_1, \cdots, w'_i+1, 2, 2, \cdots, 2, w'_j+1, \cdots, w'_n)$.
                \item $(0, -1, -1, \cdots, -1)\cong(3, 2, 2, \cdots, 2)$.
                \item $(w'_1, \cdots, w'_i, 1, 2, 2, \cdots, 2, 1, w'_j, \cdots, w'_n) \cong (w'_1, \cdots, w'_i-1, -1, -1, \cdots, -1, w'_j-1, \cdots, w'_n)$.
                \item $(1, 2, 2, \cdots, 2)\cong(-2, -1, -1, \cdots, -1)$.
        \end{enumerate}
        Note that, in all the cases, the length is shortened by 1.
\end{lemma}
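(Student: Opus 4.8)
\textbf{Plan of proof for Lemma \ref{lem:MainLemmaForNormalLoopWord}.}

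The strategy is to verify each of the six identities directly by applying the elementary operations of Lemma \ref{lem:equimove} (shifting, inserting/removing $(0,0,0)$, adding $(1,1,1)$ around a $0$, subtracting $(1,1,1)$ around a $1$) a bounded number of times, in exactly the same spirit as the deformations already carried out in the proof of Proposition \ref{prop:a1}. Since Proposition \ref{prop:equihomotopy} (proved as Proposition \ref{prop:a1}) tells us that two loop words are equivalent precisely when one is obtained from the other by these operations, it suffices to exhibit an explicit chain of such moves for each item; no homotopy-theoretic input beyond that proposition is needed.

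For items (1) and (2) the argument is immediate: in (1), insert $(0,0,0)$ or rather note that the subword $(\ldots,w'_{i-1},0,0,w'_{i+2},\ldots)$ can be reshuffled—shift so that the two consecutive zeros together with an inserted third zero form a $(0,0,0)$ block that is then removed, leaving $w'_{i-1}$ adjacent to $w'_{i+2}$; one must be slightly careful about whether $w'_{i-1}$ and $w'_{i+2}$ lie in the same ``slot type'' ($\alpha$, $\beta$, or $\gamma$) so that after removal they genuinely add, which they do because removing a length-$3$ block preserves the cyclic slot pattern. For (2), first apply ``adding $(1,1,1)$ around a $0$'' in reverse is not available, so instead subtract $(1,1,1)$ around one of the two $1$'s—this converts $(\ldots,w'_{i-1},1,1,w'_{i+2},\ldots)$ into $(\ldots,w'_{i-1}-1,0,0,w'_{i+2}-1+1,\ldots)$ after one subtraction, and then (1) finishes the job, producing $w'_{i-1}+w'_{i+2}-1$. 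Items (4) and (6) are the ``pure'' special cases of (3) and (5) respectively, where the outer entries $w'_i,w'_j$ are absent (the word wraps around), so once (3) and (5) are established, (4) and (6) follow by the same computation read cyclically, or can be checked independently as short warm-ups using $[\alpha^{-1}\beta^{-1}\gamma^{-1}]=[\alpha^2\beta^2\gamma^2]$ type relations as in Lemma \ref{lem:TripleEqui}.

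The genuine content is in items (3) and (5), the ``string of $-1$'s $\leftrightarrow$ string of $2$'s'' swaps, and these I expect to be the main obstacle because one must run an induction on the length of the string and track carefully how the endpoints $w'_i,w'_j$ get incremented. For (3), the base case (a single interior $-1$, i.e.\ the subword $(w'_i,0,-1,0,w'_j)$) is handled by the moves already displayed in the proof of Proposition \ref{prop:a1}: add $(1,1,1)$ around the first $0$, regroup, add $(1,1,1)$ around the second $0$; this yields $(w'_i+1,2,2,w'_j+1)$ up to the shifting needed to align slots. For the inductive step, peel off one $-1$ from the end of the string: write $(w'_i,0,-1,\ldots,-1,0,w'_j)$, use the base-case move on the rightmost $(0,-1,0)$ portion to convert it to $(\ldots,-1,1,2,2,w'_j+1)$—wait, more precisely, one converts the trailing $(0,0,w'_j)$-neighborhood after first creating a $0$ to the left of it by the identity $-1 = $ (something obtained from a $(1,1,1)$ subtraction)—and then recurse on the now-shorter string. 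The bookkeeping of which slot ($l'$, $m'$, or $n'$) each entry occupies, and hence which increments land where, is the delicate part; I would organize it by always shifting so that the string under consideration starts in a fixed slot, reducing to a single canonical configuration at each inductive stage. Item (5) is entirely parallel: it is the inverse of (3) after the global relabeling $(-1,\ldots,-1)\leftrightarrow(2,\ldots,2)$ combined with decrementing rather than incrementing the endpoints, and can either be proved by running the moves of (3) backwards or by an independent but structurally identical induction. In all cases the assertion that the length drops by exactly $1$ is automatic: each of the six chains of moves eliminates precisely one triple-block (one removal of $(0,0,0)$, or equivalently one net application of the length-changing operations), and no other length-changing move is used.
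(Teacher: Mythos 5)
Your strategy — exhibit an explicit chain of elementary moves for each item and invoke Proposition~\ref{prop:equihomotopy} — matches what the paper intends (its one-line proof simply defers to the method of Proposition~\ref{prop:a1}). The genuine gap is in your argument for item~(1), on which (2)--(6) all depend. You propose to ``insert a third zero'' so that the two consecutive $0$'s become a $(0,0,0)$ block to be removed, ``leaving $w'_{i-1}$ adjacent to $w'_{i+2}$.'' This fails twice over: the only insertion permitted by Lemma~\ref{lem:equimove} is a full triple $(0,0,0)$, never a single $0$, so an insertion followed by a removal is a no-op; and even if the two interior $0$'s could somehow be deleted, $w'_{i-1}$ and $w'_{i+2}$ would merely become \emph{adjacent} (occupying distinct $\mathbb{Z}_3$-slots), not \emph{merged} into the single entry $w'_{i-1}+w'_{i+2}$ that item~(1) asserts. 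The correct move-based argument is the iterative value transfer already displayed in the proof of Proposition~\ref{prop:a1}: the pair ``add $(1,1,1)$ around the second $0$'' then ``subtract $(1,1,1)$ around the first $1$'' carries $(\ldots,w'_{i-1},0,0,w'_{i+2},\ldots)$ to $(\ldots,w'_{i-1}-1,0,0,w'_{i+2}+1,\ldots)$; iterate (in the appropriate direction depending on the sign of $w'_{i-1}$) until the left endpoint reaches $0$, and then, and only then, remove the exposed $(0,0,0)$. That removal is the unique length-changing step. In particular the number of moves is not ``bounded'' as your plan suggests; it grows linearly with $|w'_{i-1}|$, which is still finite and hence acceptable.

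Two smaller errors in the outline for (3)--(6). The base case for (3) is miscomputed: the subword $(w'_i,0,-1,0,w'_j)$ is the case $k=1$ and should reduce to $(w'_i+1,w'_j+1)$ with $k-1=0$ intermediate $2$'s, not to $(w'_i+1,2,2,w'_j+1)$; chasing the moves gives $(w'_i+1,1,0,0,w'_j)$ and item~(1) finishes. And (4)/(6) are not ``pure special cases'' of (3)/(5) with absent endpoints — the cyclic word $(0,-1,\ldots,-1)$ contains a \emph{single} $0$, while (3) requires two $0$'s flanking the string of $-1$'s; you must first apply ``add $(1,1,1)$ around the lone $0$'' to manufacture a second $0$, giving $(1,0,-1,\ldots,-1,0)$, before any (3)-type recursion can be started. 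With the base cases repaired in this way, the recursion you sketch for (3) and (5) does carry through.
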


\begin{example}
        For example, 
        \begin{align*}
                (2, 2, 2, 1, 2, 2)&\cong(2, 2, \bm{1}, \bm{0}, \bm{1}, 2)\\
                &\cong(2, \bm{1}, \bm{0}, \bm{-1}, 1, 2)\\
                &\cong(\bm{1}, \bm{0}, \bm{-1}, -1, 1, 2)\\
                &\cong(\bm{0}, \bm{-1}, -1, -1, 1, \bm{1})\\
                &\cong(\bm{-1}, -1, -1, -1, \bm{0}, \bm{0})\\
                &\cong(\bm{-2}, -1, -1).
        \end{align*}
\end{example}

\begin{defn}
        A loop word $w'$ is said to be {\em  weak-normal} if there is no subword of the form $(0, 0)$, $(1, 1)$, $(0, -1, \cdots, -1, 0)$ and $(1, 2, \cdots, 2, 1)$. In particular, the words $(2, 2, \cdots, 2, 1, 2, \cdots, 2, 2)$ and $(0, -1, \cdots, -1)$ are not weak-normal.
\end{defn}

\begin{prop}\label{prop:WeakNormal}
        Any essential loop word is equivalent to some weak-normal loop word.
\end{prop}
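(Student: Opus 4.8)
The claim to prove is Proposition \ref{prop:WeakNormal}: every essential loop word is equivalent to a weak-normal one. The plan is to induct on the length $n$ of the loop word, using the six reduction moves of Lemma \ref{lem:MainLemmaForNormalLoopWord} (each of which shortens the word by one) together with the elementary equivalence operations of Lemma \ref{lem:equimove} (shifting, inserting/removing $(0,0,0)$, and adding/subtracting $(1,1,1)$ around a $0$ or a $1$). The base cases of small length can be checked by hand: a word of length $1$ or $2$ is either already weak-normal or is non-essential (e.g.\ $(0,0)$, $(1,1)$ reduce to length-$1$ words), so there is nothing to prove for essential words of such small length.

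For the inductive step, suppose $w'$ has length $n$ and is essential. If $w'$ is already weak-normal we are done, so assume it contains one of the forbidden patterns. First I would handle the cyclic patterns $(0,0)$ and $(1,1)$: if a cyclic subword $(w'_i, w'_{i+1})$ equals $(0,0)$ or $(1,1)$, then parts (1) or (2) of Lemma \ref{lem:MainLemmaForNormalLoopWord} produce an equivalent word of length $n-1$, which is still essential (since equivalence preserves the free homotopy class, and essentiality is an invariant of that class by the remark before Definition \ref{def:LoopWordEquivalenceRankOne} / the discussion following Definition \ref{defn:RepresentingLoops}); then apply the inductive hypothesis. Next, if $w'$ contains a cyclic subword of the form $(0,-1,-1,\dots,-1,0)$, I would split into two cases according to whether the block of $-1$'s is a proper subword flanked by two genuine $0$'s at distinct positions — in which case part (3) shortens the word — or whether the word is entirely of the form $(0,-1,\dots,-1)$ cyclically, in which case part (4) shortens it (and the resulting $(3,2,\dots,2)$ is then fed back into the induction). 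The symmetric patterns $(1,2,\dots,2,1)$ and the all-$2$-with-a-$1$ word $(2,\dots,2,1,2,\dots,2)$ (equivalently, after shifting, $(1,2,\dots,2)$ cyclically) are dealt with identically via parts (5) and (6). In every case we have produced an equivalent essential word of strictly smaller length, so the induction closes.

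The one genuinely delicate point is bookkeeping about \emph{cyclic} subwords and making sure the reductions are exhaustive: the forbidden patterns must be read cyclically (indices in $\mathbb{Z}_n$), and a word can fail weak-normality for several overlapping reasons at once, so I need to argue that whenever $w'$ is not weak-normal, \emph{at least one} of the six moves of Lemma \ref{lem:MainLemmaForNormalLoopWord} applies — possibly after a shift to reposition the offending block. This requires checking that the patterns $(0,0)$, $(1,1)$, $(0,-1^k,0)$, $(1,2^k,1)$ together with their two ``wrap-around degenerate'' forms $(0,-1^k)$ and $(1,2^k)$ genuinely cover every way a word can contain one of the four forbidden blocks; the cases $(0,-1^k)$ and $(1,2^k)$ are precisely the subtlety, since there the block of $\pm1$/$2$'s wraps around and there is only one boundary $0$ (resp.\ $1$) rather than two, which is why Lemma \ref{lem:MainLemmaForNormalLoopWord}(4) and (6) are stated separately. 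Once this combinatorial exhaustiveness is verified, the termination of the induction is automatic because each move strictly decreases the length. I expect essentially no analytic content here — it is purely a finite combinatorial case analysis — but it must be carried out carefully, and I would organize it by first reducing to a word with no $(0,0)$ and no $(1,1)$, then treating the remaining two (plus two degenerate) patterns.
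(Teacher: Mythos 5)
Your proposal is correct and takes essentially the same approach as the paper: repeatedly apply the length-decreasing reductions of Lemma \ref{lem:MainLemmaForNormalLoopWord} until no forbidden pattern remains, which must terminate since each move strictly shortens the word. The paper's version states this in a single short paragraph without the inductive framing; your extra care about cyclic wrap-around cases (the patterns $(0,-1,\dots,-1)$ and $(1,2,\dots,2)$ occupying the whole word, handled by parts (4) and (6) of the lemma) and about essentiality being an equivalence invariant makes your write-up slightly more explicit but does not change the underlying argument.
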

\begin{proof}
        Given a loop word $w' = (w'_1, \cdots, w'_n)$, find a subword of the form $(0, 0), (1, 1), (0, -1, \cdots, -1, 0)$ and $(1, 2, \cdots, 2, 1)$. If there is no such a subword, then the word is already weak-normal. If there is, remove or replace it by using Lemma \ref{lem:MainLemmaForNormalLoopWord}. Since this process decreases length of the word, we can apply this only a finite number of times. In particular, the resulting word is weak-normal.
\end{proof}

Note that for a weak-normal sequence $w' = (w'_1, \cdots, w'_n)$, $w'_i=0$ implies $w'_{i-1}\neq 0$ and  $w'_{i+1}\neq 0$ by definition.

\begin{defn}
        A loop word $w' = (w'_1, \cdots, w'_n)$ is said to be {\em  almost-normal} if it satisfies the following conditions : 
        \begin{itemize}
                \item the loop word $w'$ is weak-normal,
                \item any subword of the form $(a, 0, b)$ in $w'$ satisfies $a\leq -1, b\geq 1$, or $a \geq 1, b \leq -1$ or $a, b\geq 1$.
        \end{itemize}
\end{defn}

In order to show that any loop word is equivalent to some almost-normal loop word, let us introduce an operation which preserves weak-normality.
\begin{lemma}
        Suppose that $(w'_1, \cdots, w'_n)$ is a weak-normal loop word such that $w'_i=0$. Then the induced loop word $(w'_1, \cdots, w'_{i-1}+1, 1, w'_{i+1}+1, \cdots, w'_n)$ is still weak-normal.
\end{lemma}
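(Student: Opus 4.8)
Write $\widehat{w}=(w'_1,\dots,w'_{i-1}+1,1,w'_{i+1}+1,\dots,w'_n)$ for the new word; indices are read in $\mathbb{Z}_n$, and $\widehat w$ differs from $w'$ only at the three positions $i-1,i,i+1$, where its entries are $a+1$, $1$, $b+1$ with $a:=w'_{i-1}$ and $b:=w'_{i+1}$. (We may assume $n\ge 3$; for $n\le 2$ the statement is vacuous or immediate.) Since $w'$ is weak-normal it has no subword $(0,0)$, and $w'_i=0$, so $a\ne 0$ and $b\ne 0$; hence $\widehat w_{i-1}=a+1\ne 1$, $\widehat w_{i+1}=b+1\ne 1$, while $\widehat w_i=1$. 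The plan is a proof by contradiction: assume $\widehat w$ contains one of the four forbidden subwords $S\in\{(0,0),\,(1,1),\,(0,-1,\dots,-1,0),\,(1,2,\dots,2,1)\}$. Because every entry of $\widehat w$ outside $\{i-1,i,i+1\}$ agrees with $w'$, such an $S$ must, as a cyclic arc, meet $\{i-1,i,i+1\}$; I will then extract from it a forbidden subword of $w'$, contradicting weak-normality of $w'$. The one organizing observation that trivializes every case is: an entry equal to $1$ (which is exactly what position $i$ now is) cannot occur in a $\{-1,0\}$-valued pattern, and can occur in a $\{1,2\}$-valued pattern only as an endpoint; an entry $\ne 1$ at position $i\pm1$ can occur in a $\{1,2\}$-pattern only as an interior $2$, forcing $a=1$ or $b=1$.

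\textbf{The two ``negative'' patterns.} If $S=(0,0)$ or $S=(0,-1,\dots,-1,0)$, then all entries of $S$ lie in $\{-1,0\}$, so $S$ omits position $i$. Being a contiguous arc that omits $i$ and meets $\{i-1\}$ or $\{i+1\}$, the arc $S$ has that position as an \emph{endpoint}; an endpoint value of such a pattern is $0$, forcing $a+1=0$ (resp.\ $b+1=0$), i.e.\ $a=-1$ (resp.\ $b=-1$). Now reconstitute the picture inside $w'$: the entries of $S$ strictly between its endpoints are unchanged and equal $-1$, $w'_{i-1}=a=-1$ (resp.\ $w'_{i+1}=b=-1$), and $w'_i=0$; stringing these together with the unchanged portion of $S$ and the $0$ at position $i$ yields a subword of $w'$ of the shape $(0,-1,\dots,-1,0)$ — in the extreme ``wrap-around'' case this degenerates to $w'=(0,-1,\dots,-1)$, which the definition of weak-normal already excludes. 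Either way, $w'$ is not weak-normal, a contradiction.

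\textbf{The two ``positive'' patterns.} If $S=(1,1)$ or $S=(1,2,\dots,2,1)$, all entries lie in $\{1,2\}$. If $S$ omits position $i$: a modified position $i-1$ or $i+1$ lying in $S$ must, being an endpoint of the arc, have value $1$, forcing $a=0$ or $b=0$, impossible; and if $S$ meets neither $i-1$ nor $i+1$ it is already a forbidden subword of $w'$, impossible. If $S$ contains position $i$, then (value $1$) $i$ is an endpoint of $S$; the length-$2$ case is $(1,1)$ and is handled by the previous sentence, so for length $\ge 3$ the neighbour $i\pm1$ inside $S$ is an interior $2$, forcing $a=1$ or $b=1$; the remaining entries of $S$ are then unchanged, and together with $w'_{i\pm1}=1$ they form a strictly shorter copy of $(1,2,\dots,2,1)$ (or $(1,1)$) inside $w'$, again contradicting weak-normality of $w'$. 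Having ruled out all four patterns, $\widehat w$ is weak-normal.

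\textbf{Expected obstacle.} There is no deep content; the only thing to be careful about is the cyclic bookkeeping. Forbidden subwords are arcs of a cyclic word, so they can wrap around and in principle cover every position except $i$, and for small $n$ the positions $i-2,i-1,i,i+1,i+2$ may collide. These degenerate configurations are precisely the ones where $w'$ itself becomes $(0,-1,\dots,-1)$ or $(2,\dots,2,1,2,\dots,2)$, i.e.\ the cyclic words the definition explicitly calls non--weak-normal, so they produce the contradiction just as the generic cases do; I would flag this once and otherwise run the case analysis uniformly using the ``$1$-at-position-$i$'' observation above.
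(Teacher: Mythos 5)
Your proof is correct, and it is in fact more careful than the paper's. The paper opens its proof with the assertion that ``$w'_{i-1}, w'_{i+1}$ are negative by weak-normality,'' and the rest of the argument leans on this. But weak-normality plus $w'_i=0$ only yields $w'_{i\pm 1}\ne 0$ (as the paper itself notes just before the lemma); a word like $(3,0,5)$ is weak-normal with $w'_{i\pm1}>0$. The extra hypothesis does hold in the single place the lemma is invoked (a subword $(a,0,b)$ with $a,b\le -1$ in the proof that every essential weak-normal word is equivalent to an almost-normal one), so the paper's proof is valid in context, but it does not establish the lemma as literally stated. You avoid this: using only $a,b\ne 0$, your ``$1$-at-position-$i$, $\ne 1$ at $i\pm1$'' observation forces the positive patterns $(1,1)$, $(1,2,\dots,2,1)$ to have $i$ as an endpoint and to force $a=1$ or $b=1$, which you then unwind to a strictly shorter forbidden pattern in $w'$; under the paper's extra hypothesis this step is unnecessary, since those patterns simply cannot touch $i-1,i,i+1$. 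The only place your write-up could be tightened is the treatment of the two additional exclusions in the definition of weak-normal, namely the cyclic words $(2,\dots,2,1,2,\dots,2)$ and $(0,-1,\dots,-1)$, which need not be witnessed by a subword of length $\le n$: the second is impossible for $\widehat w$ since $\widehat w_i=1$, and the first forces $a=b=1$, after which $(w'_{i+1},\dots,w'_{i-1})=(1,2,\dots,2,1)$ is forbidden in $w'$. You gesture at this under ``degenerate configurations,'' but stating it explicitly would close the remaining cosmetic gap.
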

\begin{proof}
        First note that $w'_{i-1}, w'_{i+1}$ are negative by  weak-normality. Suppose that the induced loop word has a subword of the form $(1, 1)$ or $(1, 2, \cdots, 2, 1)$. Since $w'_{i-1}+1, w'_{i+1}+1\leq 0$, the subword should be contained in $(w'_{i+2}, \cdots, w'_{i-2})$. However, this contradicts that the original word is weak-normal.
        
        Now suppose that the induced loop word has a subword of the form $(0, -1, \cdots, -1, 0)$. It cannot be contained in$(w'_{i+2}, \cdots, w'_{i-2})$. Hence either $w'_{i-1}+1=0$ or $w'_{i+1}=0$ or both. In either case, original word will contain a subword of the form $(0, -1, \cdots, -1, 0)$, which contradicts weak-normality of the original word.
        
        Finally, Suppose that the induced loop word has a subword of the form $(0, 0)$. This only happens when $(w'_{i-2}, w'_{i-1}+1)=(0, 0)$ or $(w'_{i+1}+1, w'_{i+2}) = (0, 0)$. This implies that $(w'_{i-2}, w'_{i-1}, w'_i) = (0, -1, 0)$ or $(w'_i, w'_{i+1}, w'_{i+2}) = (0, -1, 0)$. Both cases contradict weak-normality of the original loop word $w'$.
\end{proof}

\begin{prop}
        Any essential weak-normal loop word is equivalent to some almost-normal loop word.
\end{prop}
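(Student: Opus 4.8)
The plan is to induct on the length of the loop word. Given an essential weak-normal loop word $w' = (w'_1,\dots,w'_n)$, if it is already almost-normal there is nothing to prove, so suppose it is not. Since $w'$ is weak-normal, the only way it can fail almost-normality is that it contains a subword of the form $(a,0,b)$ with $(a,b)$ not of one of the three permitted types, i.e.\ with $a,b \le -1$ (note $a,b\neq 0$ by weak-normality, and $a\ge 1,b\ge 1$ is allowed, and exactly one of $a,b$ being $\ge 1$ with the other $\le -1$ is allowed in a prescribed orientation; after accounting for which side is which, the genuinely bad case is $a,b\le -1$, together with possibly the orientation-reversed version of the mixed case, which is handled by reading the word backwards). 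So we may assume $w'_{i-1}, w'_{i+1} \le -1$ and $w'_i = 0$.

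First I would apply the preceding lemma: replacing the subword $(w'_{i-1},0,w'_{i+1})$ by $(w'_{i-1}+1,\,1,\,w'_{i+1}+1)$ produces a loop word $w''$ that is equivalent to $w'$ (this is the ``adding $1$s around $0$'' operation of Lemma~\ref{lem:equimove}, run in reverse as ``subtracting $1$s around $1$''; in any case it preserves the equivalence class) and, by the lemma just proved, is still weak-normal. The new word $w''$ has the same length $n$ as $w'$, but its entry at position $i$ is now $1$ rather than $0$, and the two neighbours $w'_{i-1}+1 \le 0$ and $w'_{i+1}+1 \le 0$. If $w'_{i-1}+1 < 0$ and $w'_{i+1}+1 < 0$, this $1$ is now a ``good'' entry (its neighbours are $\le -1$), and we have strictly decreased the number of positions $i$ at which $w'$ fails to be almost-normal while keeping the length fixed; iterating, this terminates. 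The remaining subtlety is the boundary case where $w'_{i-1}+1 = 0$ or $w'_{i+1}+1 = 0$: then $w''$ has a new $0$ adjacent to the $1$, i.e.\ a subword $(\dots,0,1,\dots)$. But $(0,1)$ forces, by weak-normality of $w''$, that the entry on the other side of that $0$ is $\le -1$, and we can then re-apply the ``subtracting $1$s around $1$'' move at that $1$, or re-apply the ``adding $1$s around $0$'' move at the new $0$; a short bookkeeping argument (tracking, say, the lexicographically ordered multiset of $|w'_j|$, or the number of sign changes, as a monovariant) shows this cascade cannot continue forever and ends at an almost-normal word. Since at each stage the length is unchanged, it is cleaner to phrase the termination via the monovariant ``$(\text{length},\ \#\{j : w'_j = 0\} + \#\{\text{bad }0\text{-subwords}\})$'' decreasing in lexicographic order, rather than via length alone.

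The main obstacle I expect is precisely the boundary case $w'_{i-1}+1=0$ or $w'_{i+1}+1=0$, where the operation that fixes one bad spot creates a new $0$ and hence potentially a new bad spot; getting a clean, provably terminating monovariant through this cascade is the heart of the argument. Everything else — checking that the single move preserves weak-normality (already done in the lemma above), checking it preserves the equivalence class (immediate from Lemma~\ref{lem:equimove}), and checking that essentiality is preserved (it is an invariant of the equivalence class) — is routine. Once termination is established, the resulting word is weak-normal with every $(a,0,b)$-subword of an allowed type, hence almost-normal, completing the induction.
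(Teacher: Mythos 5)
Your core approach matches the paper's: the only way a weak-normal word can fail almost-normality is a subword $(a,0,b)$ with $a,b\le -1$, and you fix it by replacing it with $(a+1,1,b+1)$, using the preceding lemma to see that this preserves weak-normality (and equivalence). Where you diverge from the paper is in the termination argument, and here you over-engineer: the paper observes that the move strictly decreases the \emph{number of nonpositive entries} (three nonpositive entries $a,0,b$ become two, $a{+}1\le 0,\ 1,\ b{+}1\le 0$), giving a clean monovariant with no boundary-case analysis at all. Your worry about a ``cascade'' when $w'_{i-1}+1=0$ is unfounded, for a reason you half-noticed but did not quite pin down: the newly created $0$ sits adjacent to the new $1$, so any $(a,0,b)$-subword containing it has $b=1\ge 1$ (or $a=1\ge 1$), hence is automatically of an allowed type; weak-normality of $w''$ (the lemma's output) rules out a second $0$ next to it. Also note a small error in your boundary-case reasoning: you claim $(\ldots,0,1,\ldots)$ forces the entry on the far side of the $0$ to be $\le -1$, but weak-normality only forbids it from being $0$ — it could well be $\ge 1$, and that is still fine since then $(a,0,1)$ has $a,b\ge 1$. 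Finally, the move you describe is the \emph{forward} ``adding $1$s around $0$'' operation of Lemma~\ref{lem:equimove}, not its reverse. None of this affects correctness of the overall strategy, but the paper's monovariant is the one to use; your proposed lexicographic count is not obviously monotone (in the boundary case $\#\{j:w'_j=0\}$ stays constant rather than dropping), so it would need a separate verification that the number of nonpositive entries already supplies for free.
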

\begin{proof}
        Given a weak-normal loop word $w'=(w'_1, \cdots, w'_n)$, find a subword of the form $(a, 0, b)$ such that $a, b\leq -1$. If there is no such a subword, then the word is already almost-normal. If there is, replace the subword with $(a+1, 1, b+1)$. Let $w''$ be the resulting loop word. $w''$ is still weak-normal by the previous lemma. Also, the number of nonpositive entries of $w''$ is less than that of $w'$. Thus we can apply this procedure only a finite number of times and get an equivalent almost-normal loop word.
\end{proof}

Let us introduce an operation which preserves the almost-normality.
\begin{lemma}
        Suppose that $w' = (w'_1, \cdots, w'_n)$ is an almost-normal loop word such that $w'_i=1$. Then the induced loop word $w'' = (w'_1, \cdots, w'_{i-1}-1, 0, w'_{i+1}-1, \cdots, w'_n)$ is still almost-normal.
\end{lemma}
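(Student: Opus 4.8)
The plan is to argue exactly as in the preceding lemma, by \emph{localization}: $w''$ is obtained from $w'$ by altering only the three consecutive entries at positions $i-1,i,i+1$, replacing $(w'_{i-1},1,w'_{i+1})$ by $(w'_{i-1}-1,\,0,\,w'_{i+1}-1)$. Hence any forbidden configuration occurring in $w''$ must involve one of these three positions, and in each case I would trace it back to a forbidden configuration in $w'$, contradicting almost-normality — in particular weak-normality — of $w'$. Before starting I would record the constraints on the neighbours: since $w'_i=1$ and $w'$ is weak-normal, neither $w'_{i-1}$ nor $w'_{i+1}$ equals $1$ (otherwise $w'$ contains $(1,1)$), $w'$ contains no block $(1,2,\dots,2)$ immediately to the left of $w'_i$ nor immediately to its right (otherwise $w'$ contains $(1,2,\dots,2,1)$), and if $w'_{i-1}=0$ (resp.\ $w'_{i+1}=0$) the $(a,0,b)$-clause applied to $w'$ forces the next entry outward, $w'_{i-2}$ (resp.\ $w'_{i+2}$), to be nonzero of the appropriate sign.

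I would then check the two defining clauses of almost-normality for $w''$ in turn. For \textbf{weak-normality of $w''$}: the only new zero sits at position $i$, and its new neighbours $w'_{i-1}-1$ and $w'_{i+1}-1$ cannot be $0$ (that would make $w'_{i\mp 1}=1$, hence $(1,1)$ in $w'$), so no new $(0,0)$ appears; no new $(1,1)$ or $(1,2,\dots,2,1)$ can appear since position $i$ carries the value $0$ and so any such block lies entirely on one side of it — if it used $w'_{i-1}-1=1$ as its rightmost entry then, the entries strictly to its left being unchanged, $w'$ would already contain $(1,2,\dots,2,1)$ ending at $w'_i$, and $w'_{i-1}-1$ cannot be an interior $2$ of such a block because its right neighbour in $w''$ is $0$ (symmetrically on the right); and no new $(0,-1,\dots,-1,0)$ can appear because such a block would have the position-$i$ zero as one endpoint, forcing $w'_{i-1}=0$ (resp.\ $w'_{i+1}=0$) and then a run of unchanged $-1$'s back to some zero of $w'$, giving $w'$ a block $(0,-1,\dots,-1,0)$.

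For the \textbf{$(a,0,b)$-clause of $w''$}: the only such subword not already present in $w'$ is the one centred at position $i$, namely $(w'_{i-1}-1,\,0,\,w'_{i+1}-1)$, so it suffices to verify this single triple. Here one must use that the operation is applied precisely when the normality condition fails at $i$, i.e.\ $w'_{i-1}>0$ or $w'_{i+1}>0$; together with $w'_{i\pm 1}\neq 1$ this gives $w'_{i-1}\ge 2$ or $w'_{i+1}\ge 2$. If $w'_{i+1}\ge 2$ then $w'_{i+1}-1\ge 1$, and since $w'_{i-1}$ is either $\le 0$ or $\ge 2$ the triple falls under the alternative ``$a\le -1,\,b\ge 1$'' (when $w'_{i-1}\le 0$) or under ``$a,b\ge 1$'' (when $w'_{i-1}\ge 2$); the case $w'_{i-1}\ge 2$ is symmetric.

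I expect this last clause to be the crux. It is the only part that is not automatic, and it genuinely fails — for instance $(-1,1,-1)\mapsto(-2,0,-2)$ — whenever position $i$ already meets the normality condition, so the proof must make explicit use of the positivity of a neighbour; everything else is routine case bookkeeping confined to a window of radius two around $i$.
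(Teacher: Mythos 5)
Your argument takes the same localize-and-check route as the paper's proof, and your observation about the missing hypothesis is on target: the lemma as stated is false (your example $(-1,1,-1)\mapsto(-2,0,-2)$ is a clean counterexample, as the image violates the $(a,0,b)$-clause), and the paper's proof opens by asserting that ``almost-normality implies either $w'_{i-1}\ge 2$ or $w'_{i+1}\ge 2$,'' which is simply not a consequence of almost-normality. What makes that inequality available is the way the lemma is invoked in Proposition \ref{prop:NormalExistence}, where one only replaces $(a,1,b)$ by $(a-1,0,b-1)$ when $a\ge 2$ or $b\ge 2$; this should be recorded as an explicit hypothesis of the lemma. Your proof does exactly that and is cleaner on this point than the paper's.

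One small gap to close. When checking the $(a,0,b)$-clause for $w''$ you say the only triple not already present in $w'$ is the one centred at the new zero at position $i$. That overlooks pre-existing zeros of $w'$ at positions $i\pm 2$: their triples in $w''$ have a changed endpoint $w''_{i\mp 1}=w'_{i\mp 1}-1$ and must also be checked, and the paper does this explicitly. The check is short. Say the centre is at $i+2$, so $w'_{i+2}=0$; weak-normality of $w'$ forces $w'_{i+1}\notin\{0,1\}$. If $w'_{i+1}\ge 2$ then $w''_{i+1}\ge 1$ and the new triple is automatically allowed regardless of $w'_{i+3}$; if $w'_{i+1}\le -1$, then almost-normality of $w'$ applied to $(w'_{i+1},0,w'_{i+3})$ forces $w'_{i+3}\ge 1$, and $w''_{i+1}\le -2\le -1$ keeps the new triple allowed. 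The centre $i-2$ is symmetric. With this case added, your argument is complete.
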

\begin{proof}
        First note that almost-normality implies  either $w'_{i-1}\geq 2$ or $w'_{i+1}\geq 2$. We will show that $w''$ weak-normal first. Assume that it contains $(1, 1)$. Then $(w'_{i-2}, w'_{i-1}-1) = (1, 1)$ or $(w'_{i+1}-1, w'_{i+2}) = (1, 1)$. However, this implies $(w'_{i-2}, w'_{i-1}, w'_i) = (1, 2, 1)$ or $(w'_i, w'_{i+1}, w'_{i+2}) = (1, 2, 1)$, which contradicts that $w'$ is almost-normal.
        
        Now assume $(0, 0)$ is in $w''$. Since neither $w'_{i-1}-1=0$ nor $w'_{i+1}-1=0$, this implies that $(0, 0)$ is contained in $w'$, which is impossible.
        
        Assume that $(0, -1, \cdots, -1, 0)$ is contained in $w''$. Then the first or the last $0$ should be the $i$-th entry of $w''$. This implies $w'$ contains $(1, 0, -1, \cdots, -1, 0)$ or $(0, -1, -1, \cdots, -1, 0, 1)$, which contradicts weak-normality of $w'$.
        
        Assume that $(1, 2, \cdots, 2, 1)$ is in $w''$. Then the first or the last $1$ should be $w'_{i-1}-1$ or $w'_{i+1}-1$. Then $(1, 2, \cdots, 2, 1)$ should be in the original loop word $w'$. This proves the induced loop word $w''$ is weak-normal.
        
        Now show that $w''$ is almost normal. If it has a subword of the form $(a, 0, b)$ with $a, b\leq -1$, then the $0$ should be $w'_{i-2}$ or $w'_{i+2}$. This implies that $w'$ has a subword of the form $(a+1, 0, b)$ or $(a, 0, b+1)$. Since $w'$ is almost-normal, $a+1\leq 0$ and $b+1\neq 0$. However, since $a, b\leq -1$, we have $a+1, b+1 \leq -1$. This contradicts almost-normality of $w'$. Therefore the induced loop word $w''$ is almost-normal.
\end{proof}

\begin{prop}\label{prop:NormalExistence}
        Any essential loop word is equivalent to some normal loop word.
\end{prop}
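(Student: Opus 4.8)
\textbf{Proof plan for Proposition \ref{prop:NormalExistence}.}

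The plan is to complete the chain of implications \emph{weak-normal} $\Rightarrow$ \emph{almost-normal} $\Rightarrow$ \emph{normal} that the excerpt has set up, and then concatenate with the two already-proven reductions. Concretely: by Proposition \ref{prop:WeakNormal} any essential loop word $w'$ is equivalent to a weak-normal word, and by the immediately preceding proposition any essential weak-normal word is equivalent to an almost-normal word. So it remains only to show that any essential almost-normal loop word is equivalent to a normal one. The recipe mirrors the earlier two steps: scan the word for the single remaining obstruction to normality and clear it by a length- or count-reducing operation.

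Here is how I would organize the final step. Recall that a normal loop word must satisfy the four bullets of Definition \ref{defn:normal2}; an almost-normal word already has the middle two (the $(0,0)$, $(1,1)$, $(0,-1,\dots,-1,0)$, $(1,2,\dots,2,1)$ prohibitions from weak-normality and the $(a,0,b)$ condition from almost-normality), plus it cannot be $(-1,-1,\dots,-1)$ once it is essential — indeed $(-1,-1,\dots,-1)\cong(3,2,\dots,2)$ by Lemma \ref{lem:MainLemmaForNormalLoopWord}(4), and iterating this fact shows such a word reduces, so it is either non-essential or already deformable. The only condition that can fail is the first bullet: a subword $(a,1,b)$ with $a\ge 1$ or $b\ge 1$. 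So I would do the following. Suppose $w'$ is almost-normal and contains a subword $(a,1,b)$ with, say, $a\ge 1$. Apply the ``subtracting $1$s around $1$'' operation at that position, replacing $(a,1,b)$ by $(a-1,0,b-1)$; this is exactly the operation in the lemma preceding this proposition, which I may assume preserves almost-normality. The key numerical observation is that this operation strictly decreases the number of entries equal to $1$ that violate the normality condition — or, more robustly, strictly decreases $\sum_j \max\{w'_j,1\}$ (equivalently, the total of the positive entries), since a $1$ is turned into a $0$ and a neighbor decreases. Because this quantity is a nonnegative integer, the procedure terminates; the terminal word has no bad $(a,1,b)$ subword, hence — being still almost-normal — is normal. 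One must also double-check the edge case where applying the operation creates a new $(a,1,b)$ obstruction elsewhere (it can, if a neighbor drops to $1$), but the monotone quantity above absorbs this: each step still decreases it, so finiteness is unaffected.

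Finally I would assemble the three steps: given an essential loop word $w'$, Proposition \ref{prop:WeakNormal} gives an equivalent weak-normal word, the next proposition upgrades it to almost-normal, and the argument above upgrades it to normal; equivalence is transitive by Proposition \ref{prop:equihomotopy} (or directly, since all operations are among the five of Lemma \ref{lem:equimove}). The main obstacle I anticipate is not any single reduction but the bookkeeping in the last step: one must pick the right monovariant that simultaneously controls the $(a,1,b)$ obstructions, is not disturbed by the incidental creation of new $1$'s during the reduction, and is visibly preserved-or-decreased by the ``subtract around $1$'' move; the total of positive entries (or equivalently $\sum_j \max\{w'_j,1\} - (\text{length})$, which counts positive excess) is the natural candidate, and verifying it strictly drops in every case — including when the neighboring entry was itself positive versus nonpositive — is the one place that needs genuine care rather than routine manipulation.
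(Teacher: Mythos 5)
Your proposal is correct and follows essentially the same route as the paper: reduce to the almost-normal case via the preceding propositions, then repeatedly apply the ``subtract $1$s around $1$'' move at any offending subword $(a,1,b)$, invoking the lemma that this preserves almost-normality, and terminate by a monovariant on the positive entries. The paper uses the \emph{number} of positive entries (which drops by exactly one, since the central $1$ becomes $0$ and $a\ge 2$, $b\neq 1$ by weak-normality), whereas you use the \emph{sum} of positive entries — both work, and your extra care about newly created $1$'s is well placed but handled automatically by either choice.
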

\begin{proof}
        Given an almost-normal loop word $w'=(w'_1, \cdots, w'_n)$, find a subword of the form $(a, 1, b)$ such that $a \geq 2$ or $b\geq 2$. If there is no such a subword, then the word is already normal. If there is, replace the subword $(a, 1, b)$ with $(a-1, 0, b-1)$. Let $w''$ be the resulting loop word. By the lemma, $w''$ is still almost-normal. Also, the number of positive entries of $w''$ is less than that of $w'$. Thus we can apply this procedure only a finite number of times and we get an equivalent normal loop word.
\end{proof}

\subsection{Uniqueness}\label{subsec:unique}

In this subsection, we prove the uniqueness part of Proposition \ref{prop:normalnormal}. Here, we use the notation $(l'_1, \cdots, n'_\tau)$ for a loop word.

Recall that for a loop word $w'=(l'_1, \cdots, n'_\tau)$, we defined $L\left[w'\right] = \alpha^{l'_1}\beta^{m'_1}\gamma^{n'_1}\cdots\alpha^{l'_\tau}\beta^{m'_\tau}\gamma^{n'_\tau}$ as an element of $\left[S^1, \cpp\right]$. Now denote by $L'\left[w'\right]$ an element $\alpha^{l'_1}\beta^{m'_1}\gamma^{n'_1}\cdots\alpha^{l'_\tau}\beta^{m'_\tau}\gamma^{n'_\tau}$ in the group $\left<\alpha, \beta, \gamma | \alpha\beta\gamma = 1\right>$. We show the uniqueness without regarding shifting first.
%

\begin{lemma}\label{lem:WordWord}
        For two normal sequences $w' = (l'_1, \cdots, n'_\tau)$ and $(l''_1, \cdots, n''_{\sigma})$, if $L'(l'_1, \cdots, n'_\tau)=L'(l''_1, \cdots, n''_{\sigma})$, then $l_1=l''_1$.
\end{lemma}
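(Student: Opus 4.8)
The claim is that a normal loop word is uniquely determined (without any shifting) by the element $L'[w'] \in \langle \alpha,\beta,\gamma \mid \alpha\beta\gamma = 1\rangle$ it represents, at least as far as its first entry $l_1'$ is concerned; by the cyclic symmetry of the normality conditions, this will immediately give uniqueness of the whole word. The natural strategy is to work in the free group $\langle \alpha,\gamma\rangle \cong \langle \alpha,\beta,\gamma \mid \alpha\beta\gamma\rangle$ via the substitution $\beta = \gamma^{-1}\alpha^{-1}$, and to extract $l_1'$ from a \emph{cyclically reduced} normal form of the image word. So the first step is to compute, for a normal loop word $w' = (l_1',m_1',n_1',\dots,l_\tau',m_\tau',n_\tau')$, an explicit cyclically reduced expression of $L'[w']$ in $\langle\alpha,\gamma\rangle$, keeping careful track of where the "$\alpha^{l_1'}$-block" sits. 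The point of the normality hypotheses is precisely to guarantee that no cancellation crosses between consecutive syllables $\alpha^{l_i'}\beta^{m_i'}\gamma^{n_i'}$ — for instance the condition on subwords $(a,1,b)$ and $(a,0,b)$ rules out the reductions of Lemma \ref{lem:MainLemmaForNormalLoopWord}, which are exactly the moves that shorten the reduced length.

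The key computation is a syllable-by-syllable analysis of $\alpha^{l}\beta^{m}\gamma^{n}$ under $\beta = \gamma^{-1}\alpha^{-1}$, split into the cases $m>0$, $m=0$, $m<0$, mirroring the length computation in the proof of Lemma \ref{lem:normal} (the rank-one case). For $m > 0$ one gets $\alpha^{l}(\gamma^{-1}\alpha^{-1})^{m}\gamma^{n} = \alpha^{l-1}(\gamma^{-1}\alpha^{-1})^{m-1}\gamma^{n-1}$ after one cancellation at each end, which is reduced precisely when $l \neq 1$ and $n \neq 1$ — and normality forces exactly that when $m_i' > 0$. For $m < 0$ one gets $\alpha^{l}(\gamma\alpha)^{-m}\gamma^{n}$, reduced unless $l$ or $n$ vanishes, and again the normality conditions on the adjacent entries of a $0$ or the requirement forbidding $(0,-1,\dots,-1,0)$ control the boundary behaviour. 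The $m = 0$ case $\alpha^{l}\gamma^{n}$ is handled using the constraints that a $0$ has neighbours of prescribed signs. Concatenating these pieces and checking that the junctions $\gamma^{n_i'}\alpha^{l_{i+1}'}$ and the wrap-around junction $\gamma^{n_\tau'}\alpha^{l_1'}$ do not produce further cancellation (here the cyclic nature of the normality conditions is used), one obtains a genuine cyclically reduced word, from which the exponent sum on the first $\alpha$-syllable — hence $l_1'$ — can be read off directly. Since a conjugacy class in a free group has a cyclically reduced representative unique up to cyclic rotation, and since the structure of the reduced word produced this way pins down which syllable is "the first one", two normal words with the same $L'$ must agree in $l_1'$.

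The main obstacle is the bookkeeping at the syllable junctions: one must verify that after the local reductions inside each block $\alpha^{l_i'}\beta^{m_i'}\gamma^{n_i'}$, the surviving word still has no adjacent inverse pairs at the $2\tau$ internal junctions and the one cyclic junction, and this requires a careful case split on the signs of $(n_i', l_{i+1}')$ (or of $(n_{i-1}', l_i')$) together with whether $m_i'$ or $m_{i+1}'$ is zero — exactly the configurations that the four normality conditions were designed to exclude. A clean way to organise this is to define, for each $i$, a "reduced block" $B_i$ (the cyclically reduced form of the $i$-th syllable as above, possibly with its endpoints already merged with neighbours when $m_i'=0$) and prove a lemma stating that normality implies no reduction occurs between $B_i$ and $B_{i+1}$; the present statement then follows. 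I would also remark that the companion statement needed to finish Proposition \ref{prop:normalnormal} — that the \emph{rest} of the word is determined once $l_1'$ is, and that shifting accounts for the remaining ambiguity — is obtained by iterating Lemma \ref{lem:WordWord} after a cyclic shift, so I would only state and prove the single-entry version here.
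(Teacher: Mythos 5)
Your overall strategy — pass to $\langle\alpha,\gamma\rangle$ via $\beta=\alpha^{-1}\gamma^{-1}$, reduce each syllable block $\alpha^{l_i'}\beta^{m_i'}\gamma^{n_i'}$ separately, and use the four normality conditions to control cancellation at the junctions — is the same as the paper's, and the per-block reductions you record ($\alpha^{l-1}(\gamma^{-1}\alpha^{-1})^{m-1}\gamma^{n-1}$ for $m>0$, etc.) agree with the paper's. However, there are two genuine gaps.

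First, you emphasize the \emph{cyclically} reduced form and even discuss the wrap-around junction $\gamma^{n_\tau'}\alpha^{l_1'}$ and conjugacy classes. This is a conceptual slip: the lemma concerns equality of actual group elements $L'[w']=L'[w'']$ in $\langle\alpha,\beta,\gamma\mid\alpha\beta\gamma\rangle$, not equality of conjugacy classes. The paper works with the plain reduced word of $L'[w]$ in the free group and compares the first syllable pair $\red(\cdot)$; nothing wraps around. Passing to the cyclically reduced form would actually discard the information you need, since a cyclically reduced word is only determined up to rotation. (The conjugacy-class considerations do enter later, in the proof of Proposition \ref{prop:NormalUniqueness}, which is a separate statement.)

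Second, and more seriously, the final claim that from the reduced word ``the exponent sum on the first $\alpha$-syllable --- hence $l_1'$ --- can be read off directly'' does not hold. Precisely because of the block reductions you wrote down, the leading $\alpha$-exponent of the reduced word is $l_1'-1$ when $m_1'\ge 1$ and $l_1'$ when $m_1'\le 0$. So from $L'[w']=L'[w'']$ one only gets $l_1'-\epsilon'=l_1''-\epsilon''$ with $\epsilon',\epsilon''\in\{0,1\}$, and the nontrivial work is ruling out $\epsilon'\neq\epsilon''$, i.e.\ $l_1'-1=l_1''$ with $m_1'\ge 1$ and $m_1''\le 0$. The paper's proof spends its entire second half on exactly this case: it first disposes of $m_1'=1$ via a sign conflict in the second syllable, then handles $m_1'\ge 2$ by showing that the forced shape of the reduced form drives $w''$ to contain a forbidden subword $(0,-1,\dots,-1,0)$, contradicting the third normality condition. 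That argument, not the junction bookkeeping, is the crux of the lemma, and it is missing from your plan. A proof that only establishes ``no cancellation/merging at junctions'' and then tries to read $l_1'$ off the leading syllable will terminate one step too early.
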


\begin{proof}
        We prove the lemma by comparing reduced forms of $L'\left[w'\right]$ and $L'\left[w''\right]$ in the free group $\left<\alpha, \gamma\right>$, where $\beta$ is identified with $\alpha^{-1}\gamma^{-1}$. Let $w=(l_1, m_1, n_1, \cdots, l_r, m_r, n_r)$ be a normal loop word. Then, in $\left<\alpha, \gamma\right>$, we have $$L'\left[w\right] = \alpha^{l_1}(\alpha^{-1}\gamma^{-1})^{m_1}\gamma^{n_1}\cdots\alpha^{l_r}(\alpha^{-1}\gamma^{-1})^{m_r}\gamma^{n_r}.$$ Let us compute the reduced form of the simplest word $\alpha^l\beta^m\gamma^n$ and see how the general word $L'\left[w\right]$ is reduced.
        
        The reduced form of a word $\alpha^l\beta^m\gamma^n$ is computed as follows : 
        \begin{itemize}
                \item If $m\geq 2$, then $\alpha^l\beta^m\gamma^n = \alpha^{l-1}(\gamma^{-1}\alpha^{-1})^{m-1}\gamma^{n-1}$. Note that in this case $l, n \neq 1$.
                \item If $m=1$, then $\alpha^l\beta\gamma^n = \alpha^{l-1}\gamma^{n-1}$. Note that in this case $l, n\leq 0$.
                \item If $m=0$, $\alpha^l\gamma^n$ is the reduced form and in this case, $l\leq -1, n\geq 1$ or $l\geq 1, n\leq -1$ or $l, n \geq 1$.
                \item If $m\leq -1$, $\alpha^l\beta^m\gamma^n = \alpha^l(\gamma\alpha)^{-m}\gamma^n$. Note that $l$ or $n$ can be zero, but they can't be both zero when $m=-1$.
        \end{itemize}
        
        Note that if $l_{i+1}=0$ and $m_{i+1}\leq -1$, then $n_i\geq 1$. Let define, for a reduced word $W=\alpha^{\mu_1}\gamma^{\nu_1}\cdots\alpha^{\mu_\rho}\gamma^{\nu_\rho}$, where $\mu_1$ can be zero, $\red(W)$ to be $\alpha^{\mu_1}\gamma^{\nu_1}$. Then we have the following : 
        \begin{itemize}
                \item If $m_1\geq 2$, $\red(L'\left[w\right]) = \alpha^{l_1-1}\gamma^{-1}$.
                \item If $m_1=1, l_2=0$, then $m_2\geq 0$ and, $\red(L'\left[w\right]) = \alpha^{l_1-1}\gamma^{n_1-1}$.
                \item If $m_1=1, l_2\neq 0$, $\red(L'\left[w\right]) = \alpha^{l_1-1}\gamma^{n_1-1}$.
                \item If $m_1=0, l_2=0$, then $m_2\leq -1$, $\red(L'\left[w\right]) = \alpha^{l_1}\gamma^{n_1}$.
                \item If $m_1=0, l_2\neq 0$, $\red(L'\left[w\right]) = \alpha^{l_1}\gamma^{n_1}$.
                \item If $m_1\leq -1$, $\red(L'\left[w\right]) = \alpha^{l_1}\gamma$.
        \end{itemize}
        
        Suppose that $l'_1\neq l''_1$. From the above list, we may assume $l'_1-1=l''_1$, $m'_1\geq 1$ and $m''_1\leq 0$. If $m'_1=1$, then both $l'_1-1, n'_1-1$ are negative. However, if $m''_1=0$, then $l''_1, n''_1$ cannot be both negative. Also, $m''_1\leq -1$ cannot be the case. Thus it should be $m'_1\geq 2$. In this case, we have $n''_1=-1$. If otherwise, by the list of reduced forms, it should be $l''_2=0, m''_2=-1$ and $n''_2=0$. Then $w''$ has a subword of the form $(0, -1, 0)$, which contradicts normal condition. Hence we have $n''_1=-1$. In a similar way, one can show that $w''$ has a subword of the form $(0, -1, \cdots, -1, 0)$. Thus we get $l'_1=l''_1$.
\end{proof}

\begin{cor}\label{cor:WordWordWord}
        Let $w'$ and $w''$ be normal loops. If $L'\left[w'\right]=L'\left[w''\right]$, then $w'=w''$.
\end{cor}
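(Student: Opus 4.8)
\textbf{Proof proposal for Corollary \ref{cor:WordWordWord}.}

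The plan is to upgrade Lemma \ref{lem:WordWord} from a statement about the single leading exponent $l_1'$ to a statement about the entire word, by running the reduction argument cyclically. Since $L'[w']$ and $L'[w'']$ are equal elements of $\langle \alpha,\beta,\gamma \mid \alpha\beta\gamma=1\rangle \cong \langle\alpha,\gamma\rangle$, Lemma \ref{lem:WordWord} already gives $l_1' = l_1''$. First I would argue that the two words must then have the same \emph{length}. This follows from the length computation built into the proof of Lemma \ref{lem:normal} (and its higher-rank analogue): for a normal loop word $w'$, the reduced cyclic word of $L'[w']$ in $\langle\alpha,\gamma\rangle$ has a length determined combinatorially by $w'$ (roughly, the number of $\alpha$-syllables coming from each $\beta^{m_i'}$ block, adjusted by the normality bookkeeping), and this combinatorial length is strictly monotone enough that two normal words of different length cannot produce the same reduced cyclic word. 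So set $3\tau := |w'| = |w''|$.

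Next, the key step: I would peel off syllables one at a time. Having matched $l_1' = l_1''$, I would show that the ``remainder'' after removing the $\alpha^{l_1'}$ prefix is again computed by a normal word — namely the word $w'$ with its first entry deleted (or, more precisely, after the deletion one may need to re-examine only the local normality conditions around the new boundary, which hold because normality of $w'$ was a condition on \emph{all} subwords, including those wrapping around). Then apply Lemma \ref{lem:WordWord} again — but now in the form adapted to matching $m_1'$ rather than $l_1'$, which is exactly the same argument after the cyclic relabeling $\alpha\mapsto\beta\mapsto\gamma\mapsto\alpha$ that is used throughout Section \ref{sec:MFFromRank1Lagrangian} and Lemma \ref{lem:normal}. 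This gives $m_1' = m_1''$; one more turn of the relabeling gives $n_1' = n_1''$. Iterating $3\tau$ times (which terminates because the length is fixed) yields $w_j' = w_j''$ for all $j$, i.e. $w' = w''$.

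The main obstacle I expect is the bookkeeping in the ``peel off one syllable'' step: I need to be sure that when I strip $\alpha^{l_1'}$ from $L'[w']$, what is left is genuinely of the form $L'[\tilde w']$ for a \emph{normal} word $\tilde w'$, rather than merely an element that happens to reduce correctly. The subtlety is that deleting the first entry of a normal cyclic word can create a subword straddling the old deletion point that no longer satisfies the normality conditions (for instance producing $(0,0)$, $(1,1)$, a run $(0,-1,\dots,-1,0)$, or $(1,2,\dots,2,1)$ at the seam). I would handle this by instead matching the reduced syllable $\alpha^{\mu_1}\gamma^{\nu_1} = \operatorname{red}(L'[w'])$ directly — as in the $\operatorname{red}(\cdot)$ analysis inside the proof of Lemma \ref{lem:WordWord} — and showing that the map from a normal cyclic word to its full reduced cyclic form $\alpha^{\mu_1}\gamma^{\nu_1}\cdots\alpha^{\mu_\rho}\gamma^{\nu_\rho}$ is injective by reading off, syllable by syllable, exactly which of the six cases in that proof applies and thereby recovering each $(l_i',m_i',n_i')$. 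In effect the whole corollary reduces to: ``the normal-form-to-reduced-cyclic-word map is injective,'' which is a finite case analysis on consecutive syllables of the reduced word, and the delicate part is only verifying that the normality hypotheses rule out every ambiguous case (the ambiguities being precisely the four forbidden subword patterns, plus the excluded words $(-1,\dots,-1)$ and the non-essential $(l',0,0)$-type words that Proposition \ref{prop:normalnormal} already sets aside).
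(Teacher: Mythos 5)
Your core idea — iterate Lemma~\ref{lem:WordWord} one syllable at a time, using the cyclic relabeling $\alpha\mapsto\beta\mapsto\gamma\mapsto\alpha$ — is exactly what the paper does, but you've introduced a technical snag and then worked around it in a more roundabout way than necessary. The snag: ``deleting'' the first entry of $w'$ produces a tuple of length $3\tau-1$, which is not a loop word at all (lengths must be multiples of~$3$ for $L'[\,\cdot\,]$ to make sense), so the phrase ``the word $w'$ with its first entry deleted'' doesn't yield a normal loop word to which Lemma~\ref{lem:WordWord} can be reapplied, no matter how one patches up the seam conditions. The paper instead performs a cyclic \emph{shift}: it moves $l'_1$ to the end, forming $\tilde{w'}=(m'_1,n'_1,l'_2,\dots,l'_\tau,m'_\tau,n'_\tau,l'_1)$, which still has length $3\tau$ and is still normal (normality is a condition on cyclic subwords, hence shift-invariant). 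Combined with the $\mathbb{Z}_3$-automorphism $\sigma$ of $\langle\alpha,\beta,\gamma\mid\alpha\beta\gamma=1\rangle$ permuting the generators, one finds $L'[\tilde{w'}]=\sigma^{-1}\bigl(\alpha^{-l'_1}L'[w']\,\alpha^{l'_1}\bigr)$; since $l'_1=l''_1$ has already been matched, $L'[\tilde{w'}]=L'[\tilde{w''}]$ and Lemma~\ref{lem:WordWord} applies verbatim to give $m'_1=m''_1$. Iterating $3\tau$ times recovers the whole word. Your fallback — proving injectivity of the normal-word-to-reduced-word map by reading off which of the six $\operatorname{red}(\cdot)$ cases applies at each syllable — would also work, but it duplicates the case analysis of Lemma~\ref{lem:WordWord} at every step rather than invoking the lemma as a black box; the shift trick is the cleaner route. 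One thing you do that the paper does not: explicitly establishing that $w'$ and $w''$ have the same length. The paper leaves this implicit, but your instinct is sound; if $\tau\ne\sigma$, the iteration forces both words to be repetitions of a common period $u$ with $L'[u]$ a nontrivial torsion element of a free group, which is impossible — a small but genuine point worth spelling out.
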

\begin{proof}
        Let $w' = (l'_1, m'_1, n'_1, \cdots, l'_\tau, m'_\tau, n'_\tau)$ and $w'' = (l''_1, m''_1, n''_1, \cdots, l''_\sigma, m''_\sigma, n''_\sigma)$. By the lemma, we have $l'_1=l''_1$. Now consider normal loop words $\tilde{w'} = (m'_1, n'_1, \cdots, l'_\tau, m'_\tau, n'_\tau, l'_1)$, $\tilde{w''} = (m''_1, n''_1, \cdots, l''_\sigma, m''_\sigma, n''_\sigma, l''_1)$. Since $l'_1 = l''_1$, we have $L'\left[\tilde{w'}\right] = L'\left[\tilde{w''}\right]$. By the lemma, we also have $m'_1=m''_1$. Similarly, we have $n'_1 = n''_1$ and  $(l'_i, m'_i, n'_i) = (l''_i, m''_i, n''_i)$, for each $i$, which proves the corollary.
\end{proof}

To prove the main result which involves shifting, let us define some notations and a notion of cyclic equivalence.

\setcounter{case}{0}

\begin{defn}
        Let $w$ be a reduced word in the free group $\left<\alpha, \gamma\right>$. We denote by $h(w)$ and $t(w)$ the first and the last alphabet respectively. A {\em  decomposition} of $w$ is two reduced subwords $w^{(1)}, w^{(2)}$ such that $w=w^{(1)}w^{(2)}$ and satisfy the following.
        \begin{itemize}
                \item If $t(w^{(1)})=h(w^{(2)})$, then their exponents are both positive or both negative.
        \end{itemize}
\end{defn}
\begin{example}
        Consider a word $w=\alpha^2\gamma^3\alpha^4\gamma^5$. Then $h(w)=\alpha$, $t(w)=\gamma$. Also, the following are true.
        \begin{itemize}
                \item $(w_1^{(1)} = \alpha^2\gamma, w_1^{(2)}=\gamma^2\alpha^4\gamma^5)$ is a decomposition of $w$. 
                \item $(w_2^{(1)}=\alpha^2\gamma^3, \alpha^4\gamma^5)$ is a decomposition of $w$.
                \item $(w_3^{(1)} = \alpha^3\gamma^5, w_3^{(2)} =\gamma^{-2}\alpha^4\gamma^5)$ is not a decomposition of $w$.
        \end{itemize}
\end{example}

\begin{defn}
        Let $w_1, w_2$ be reduced words in the free group $\left<\alpha, \gamma\right>$. Then $w_1$ is said to be {\em  cyclically equivalent} to $w_2$ if there is a decomposition $(w_1^{(1)}, w_1^{(2)})$ of $w_1$ such that $w_2=w_1^{(2)}w_1^{(1)}$.
\end{defn}
\begin{example}
        Let $w_1=\alpha^2\gamma^3\alpha^4\gamma^{-4}\alpha^{-2}$. Then $w_1$ is cyclically equivalent to $w_2=\alpha^4\gamma^{-1}$ with a decomposition $w_1^{(1)}=\alpha^2\gamma^3\alpha^4\gamma^{-1}$,$ w_1^{(2)}=\gamma^{-3}\alpha^{-2}$. Note that $w_2$ is not cyclically equivalent to $w_1$.\footnote{For a reduced word $w$ of length $k$ and its decomposition $(w^{(1)}, w^{(2)})$, $\text{length}(w^{(1)})+\text{length}(w^{(2)})\leq k+1$. Thus a reduced word $w'$ which is cyclically equivalent to $w$ has a length at most $k+1$.} Thus cyclical equivalency is not a equivalent relation.
\end{example}

\begin{lemma}\label{lem:CyclicAndConjugate}
        Let $w$ be a reduced word and $w'$ be the shortest reduced word among reduced words which are cyclically equivalent to $w$. Then $w'$ is also shortest among conjugacy classes of $w$.
\end{lemma}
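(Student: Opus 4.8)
\textbf{Proof plan for Lemma \ref{lem:CyclicAndConjugate}.}

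The statement to prove is that if $w'$ has minimal length among all reduced words cyclically equivalent to $w$, then $w'$ also has minimal length among all reduced words representing elements conjugate to $w$ in $\langle\alpha,\gamma\rangle$. The plan is to compare cyclic equivalence with the more familiar notion of cyclic reduction in a free group, and to exploit the fact that in a free group every conjugacy class contains a cyclically reduced word that is unique up to cyclic permutation, whose length is the minimum among all words in the class.

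First I would recall/establish the two standard facts about the free group $F=\langle\alpha,\gamma\rangle$: (i) every element $g$ has a cyclically reduced representative $c$ (no cancellation between $h(c)$ and $t(c)$), obtained by conjugating away the symmetric prefix/suffix, and $\ell(c)\le\ell(g')$ for any $g'$ conjugate to $g$; (ii) two cyclically reduced words are conjugate iff one is a cyclic permutation of the other. Next, I would show that the notion of \emph{decomposition} $(w^{(1)},w^{(2)})$ in Definition \ref{lem:CyclicAndConjugate}'s preceding paragraph is designed exactly so that the reduced form of $w^{(2)}w^{(1)}$ is obtained from $w$ by conjugation by $w^{(1)}$ \emph{without unexpected cancellation beyond what happens at the single junction} — that is, cyclic equivalence moves a prefix to the suffix while respecting the ``same sign at the junction'' condition, which prevents the new word from being longer than necessary but still allowing the word to shrink when a cyclically-unreduced pair at the two ends gets cancelled. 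Concretely I would argue: (a) if $w$ is not cyclically reduced, then $h(w)$ and $t(w)$ are inverse letters, and splitting off the first syllable (or part of it) gives a decomposition $(w^{(1)},w^{(2)})$ with $w^{(2)}w^{(1)}$ strictly shorter; hence a length-minimal $w'$ in its cyclic-equivalence reach must be cyclically reduced. (b) Conversely, if $w$ is already cyclically reduced, then every cyclic equivalence produces a cyclic permutation of $w$ (the ``same sign'' condition forces any split at a syllable boundary or within a syllable to recombine into an honest cyclic rotation), so all words cyclically equivalent to a cyclically reduced word are its cyclic permutations, all of the same length.

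Combining (a) and (b): starting from $w$, a shortest cyclically equivalent word $w'$ is cyclically reduced (by (a) applied iteratively — each step strictly decreases length until cyclic reducedness is reached, and this terminates), and by fact (i) a cyclically reduced word has the minimum length in its conjugacy class. Therefore $\ell(w')$ equals the minimal length over the conjugacy class of $w$, which is the claim. The one subtlety to handle carefully — and the step I expect to be the main obstacle — is verifying part (b): I must check that the decomposition condition (equal signs of exponents when $t(w^{(1)})=h(w^{(2)})$) genuinely rules out the ``fake shortening'' where one artificially splits a syllable $\alpha^k=\alpha^i\alpha^{k-i}$ with $i$ and $k-i$ of opposite sign, and also that splitting a positive syllable as $\alpha^i\cdot\alpha^{k-i}$ with both parts positive and rotating yields a word of the \emph{same} length (the two parts recombine at the new junction), so no rotation of a cyclically reduced word is shorter. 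This is a short but careful case analysis on where the split point $w=w^{(1)}w^{(2)}$ falls (at a syllable boundary versus interior to a syllable) and on the signs involved; once it is done, the rest of the argument is immediate from the standard free-group facts.
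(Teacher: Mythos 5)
Your plan has a genuine gap, and it hinges on what ``length'' means. In this lemma the paper measures the length of a reduced word $w$ in $\langle\alpha,\gamma\rangle$ by its number of \emph{syllables} (maximal blocks $\alpha^a$ or $\gamma^c$), not by its number of letters. This is visible in the footnote just before the lemma ($\text{length}(w^{(1)})+\text{length}(w^{(2)})\le k+1$, which would simply equal $k$ for letter count when $w=w^{(1)}w^{(2)}$ is reduced), and in the first paragraph of the paper's proof, which asserts that the rotation $\gamma^{n_1}\cdots\gamma^{n_r}\alpha^{m_1+m_{r+1}}$ of $w=\alpha^{m_1}\gamma^{n_1}\cdots\gamma^{n_r}\alpha^{m_{r+1}}$ is \emph{strictly} shorter even when $m_1$ and $m_{r+1}$ share a sign, so that no letter-level cancellation occurs.

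The two standard free-group facts you invoke are theorems about letter-length and fail for syllable-length. Your ``fact (i)'' (that a cyclically reduced word has minimal length in its conjugacy class) is false in this setting: $w=\alpha^2\gamma^3\alpha$ is cyclically reduced in the usual sense and has $3$ syllables, yet it is conjugate --- indeed cyclically equivalent, via the decomposition $(\alpha^2,\ \gamma^3\alpha)$ --- to $\gamma^3\alpha^3$, which has only $2$ syllables. The same example refutes your step (b): a cyclic equivalence of a cyclically reduced word need \emph{not} return a word of equal length, so the ``same-sign'' condition does not force an honest cyclic rotation in the sense you need. The correct stopping condition is not ``cyclically reduced'' but the strictly stronger $h(w)\ne t(w)$ (first and last syllables involve different generators), which is exactly what the paper's first paragraph establishes as the characterization of ``cyclically shortest.'' And once one reaches a word with $h(w)\ne t(w)$, the claim that it has minimal syllable count over the full conjugacy class is \emph{not} a citation to a textbook lemma; the paper proves it by a direct estimate of $\text{length}(gwg^{-1})$ under a careful case analysis of how cancellation between $g$, $w$ and $g^{-1}$ propagates (the case where $g$ cancels a proper prefix of $w$ versus the case where $g^{(2)}=w^{-1}$). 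Your outline replaces exactly this estimate with an appeal to a fact that holds only for letter-length, so the core of the argument is missing.
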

\begin{proof}
        First we prove that a reduced word $w$ is cyclically shortest if and only if $h(w)\neq t(w)$. Suppose that $h(w) = t(w)$. We may assume $h(w)=\alpha$ without loss of generality. Then, for $w=\alpha^{m_1}\gamma^{n_1}\cdots\gamma^{n_r}\alpha^{m_{r+1}}$, $(\alpha^{m_1}, \gamma^{n_1}\cdots\gamma^{n_r}\alpha^{m_{r+1}})$ is a decomposition and $w$ is cyclically equivalent to $\gamma^{n_1}\cdots\gamma^{n_r}\alpha^{m_1+m_{r+1}}$, which has smaller length than $w$.
        
        Conversely, suppose that $h(w)\neq t(w)$. Then for any decomposition $(w^{(1)}, w^{(2)})$, $h(w^{(1)})=h(w)\neq t(w)=t(w^{(2)})$. Hence the word $w^{(2)}w^{(1)}$ is reduced. Thus $\len(w^{(1)}w^{(2)})$ is $\len(w)$ or $\len(w)+1$. Hence $w$ is cyclically shortest.
        
        Now assume $w$ is cyclically shortest. Then consider a conjugate word $gwg^{-1}$. Since $t(g)=h(g^{-1})$, $t(g)\neq h(w)$ or $h(g^{-1})\neq t(w)$. We may assume $h(g^{-1})\neq t(w)$. Also assume that $gw$ is not reduced. Then there are two possibilities.
        \begin{itemize}
                \item There are decompositions $(g^{(1)}, g^{(2)})$ of $g$ and $(w^{(1)}, w^{(2)})$ of $w$ such that $g^{(2)}=(w^{(1)})^{-1}$ and $(g^{(1)}, w^{(2)})$ is a decomposition of $g^{(1)}w^{(2)}$.
                \item There is a decomposition $(g^{(1)}, g^{(2)})$ of $g$ such that $g^{(2)}=w^{-1}$.
        \end{itemize}
        In the second case, we have \begin{align*}
                gwg^{-1}&=g^{(1)}g^{(2)}w(g^{(2)})^{-1}(g^{(1)})^{-1}\\
                & = g^{(1)}(g^{(2)})^{-1}(g^{(1)})^{-1}\\
                &=g^{(1)}w(g^{(1)})^{-1}.
        \end{align*}
        Since $\len(g^{(1)})< \len(g)$, the second case is eventually reduced to the first case. In the first case, we have
        \begin{align*}
                \text{length}(gwg^{-1}) &= \text{length}(g^{(1)}w^{(2)}(g^{(2)})^{-1}(g^{(1)})^{-1})\\
                & = \text{length}(g^{(1)}w^{(2)}w^{(1)}(g^{(1)})^{-1}).
        \end{align*}
        If $g^{(1)}=e$, then $\text{length}(gwg^{-1})=\text{length}(w^{(2)}w^{(1)})=\text{length}(w)$. If not, we have 
        \begin{align*}
                \text{length}(gwg^{-1}) & = \text{length}(g^{(1)}w^{(2)}w^{(1)}(g^{(1)})^{-1}) \\
                &\geq 2\text{length}(g^{(1)})+\text{length}(w)-2        \\
                &\geq \text{length}(w).
        \end{align*}
        Thus, $w$ is the shortest word in its conjugacy class.
\end{proof}

\begin{cor}
        Let $w_1, w_2$ be reduced words in the free group $\left<\alpha, \gamma\right>$. Suppose that $w_1$ and $w_2$ are conjugate, that is, there is some $g\in\left<\alpha, \gamma\right>$ such that $gw_1g^{-1}=w_2$. Then there is the third reduced word $w_3$ such that both $w_1$ and $w_2$ are cyclically equivalent to $w_3$.
\end{cor}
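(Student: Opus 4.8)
The statement to prove is the final corollary: if $w_1$ and $w_2$ are conjugate reduced words in the free group $\langle \alpha, \gamma\rangle$, then there is a third reduced word $w_3$ such that both $w_1$ and $w_2$ are cyclically equivalent to $w_3$.

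\textbf{Plan.} The proof proceeds in two stages, leveraging Lemma \ref{lem:CyclicAndConjugate} and its proof. First I would reduce each of $w_1$ and $w_2$ to a cyclically shortest representative via the notion of cyclic equivalence: starting from $w_i$, repeatedly apply the cyclic-shortening move described in the proof of Lemma \ref{lem:CyclicAndConjugate} (namely, if $h(w)=t(w)$, peel off the initial power of that generator and append it to the trailing power, which strictly decreases length). This terminates in a reduced word $v_i$ with $h(v_i)\ne t(v_i)$, hence cyclically shortest, and by construction $w_i$ is cyclically equivalent to $v_i$. Note that a single cyclic-equivalence move has the required form (there is a decomposition $(w^{(1)},w^{(2)})$ with $w^{(2)}w^{(1)}$ the new word), and I should first check that the relation ``$w_i$ is cyclically equivalent to $v_i$'' is preserved under composing such moves — this is a short bookkeeping check that a composite of cyclic rotations, when the intermediate words are cyclically shortest or not, still yields a single decomposition of $w_i$ realizing $v_i$; more safely, one argues that $w_i$ and $v_i$ differ by conjugation by a subword of $w_i$ of the form built from the peeled-off prefixes, and that such a conjugate is exactly a cyclic rotation.

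\textbf{Second stage.} By Lemma \ref{lem:CyclicAndConjugate}, $v_1$ and $v_2$ are each shortest in their conjugacy classes; since $w_1$ and $w_2$ are conjugate and cyclic equivalence is a special case of conjugation, $v_1$ and $v_2$ are conjugate and of the same (minimal) length. It is a standard fact about free groups that two cyclically reduced words of the same minimal length that are conjugate differ by a cyclic permutation of their syllables; I would either invoke this directly or reprove it by the argument already present in the proof of Lemma \ref{lem:CyclicAndConjugate} (analyzing how $g v_1 g^{-1}$ can fail to be reduced and peeling $g$ down until a genuine cyclic rotation remains). Thus $v_2$ is a cyclic permutation of $v_1$, i.e. there is a decomposition $v_1 = v_1^{(1)} v_1^{(2)}$ with $v_2 = v_1^{(2)} v_1^{(1)}$ — so $v_1$ is cyclically equivalent to $v_2$. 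Now take $w_3 := v_1$. Then $w_1$ is cyclically equivalent to $v_1 = w_3$ by stage one, and $w_2$ is cyclically equivalent to $v_2$, which is cyclically equivalent to $v_1 = w_3$; composing the last two (again a short check that two cyclic rotations compose to a cyclic rotation, valid because both $v_1$ and $v_2$ are cyclically shortest so no cancellation occurs) gives $w_2$ cyclically equivalent to $w_3$, completing the proof.

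\textbf{Main obstacle.} The genuinely delicate point is the transitivity/composition behavior of the ``cyclically equivalent'' relation, which (as the excerpt explicitly warns via a footnote and example) is \emph{not} symmetric or transitive in general because a decomposition is allowed to increase length by one. The key observation that rescues the argument is that when the target word is cyclically shortest (has $h\ne t$), composing cyclic-equivalence moves does not accumulate length, so one can genuinely chain them; I would isolate this as a small lemma: if $w$ is cyclically equivalent to $u$ and $u$ is cyclically equivalent to $z$ with $\len(u)=\len(z)$ and $h(u)\ne t(u)$, then $w$ is cyclically equivalent to $z$. Everything else is routine syllable bookkeeping in the free group, and the heavy lifting is already done inside the proof of Lemma \ref{lem:CyclicAndConjugate}.
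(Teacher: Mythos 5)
Your proof is correct and follows essentially the same route as the paper's terse two-sentence argument: reduce $w_1$ and $w_2$ to cyclically shortest representatives via the peeling process and invoke the sub-argument inside the proof of Lemma \ref{lem:CyclicAndConjugate} showing that conjugate cyclically shortest words are cyclic rotations of one another. You are notably more careful than the paper about the fact that ``cyclically equivalent'' is not transitive (a point the paper's own footnote warns about but its corollary proof silently ignores): the composition lemma you isolate at the end---valid because the middle word $u$ is cyclically shortest, so the two cut points can be merged into a single valid decomposition of $w$ without cancellation---is exactly the implicit step that the paper's one-liner depends on, and making it explicit is a genuine improvement.
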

\begin{proof}
                Note that in the proof of Lemma \ref{lem:CyclicAndConjugate}, we also proved that in a conjugacy class the shortest words are cyclically equivalent. Since $w_1$ and $w_2$ are in the same conjugacy class, both are cyclically equivalent to the shortest word in the conjugate class.
\end{proof}

Now consider a loop word $w'=(l'_1, m'_1, n'_1, \cdots, l'_\tau, m'_\tau, n'_\tau)$. Then $L'\left[w'\right] = \alpha^{l'_1}\beta^{m'_1}\gamma^{n'_1}\cdots\alpha^{l'_\tau}\beta^{m'_\tau}\gamma^{n'_\tau}$ may not be cyclically minimal. There are two cases. 
\begin{itemize}
        \item $h(L'\left[w'\right])=t(L'\left[w'\right])=\alpha$. By the first list in the proof of Lemma \ref{lem:WordWord}, $t(L'\left[w'\right])=\alpha$ implies $n'_\tau=0$ and $m'_\tau\leq-1$. In this case, the last two alphabets of $L'\left[w'\right]$ is $\gamma\alpha$ and $l'_1\geq 1$. Therefore $\alpha L'\left[w'\right] \alpha^{-1}$ is cyclically minimal.
        \item $h(L'\left[w'\right])=t(L'\left[w'\right])=\gamma$. As in the first case, $h(L'\left[w'\right])=\gamma$ implies $l'_1=0$ and $m'_1\leq -1$. In this case, the first two alphabets of $L'\left[w'\right]$ is $\gamma\alpha$ and $n'_\tau\geq 1$. Thus $\gamma^{-1} L'\left[w'\right] \gamma$ is cyclically minimal.
\end{itemize}

Now assume that for two loop words $w' = (l'_1, m'_1, n'_1, \cdots, l'_\tau, m'_\tau, n'_\tau)$ and $w''= (l''_1, m''_1, n''_1, \cdots, l''_\sigma, m''_\sigma, n''_\sigma)$, $L\left[w'\right]=L\left[w''\right]$. Then, by the above argument, we know $L'\left[w'\right]$ and $L'\left[w''\right]$ are cyclically equivalent. Now we prove the main theorem.

\begin{prop}\label{prop:NormalUniqueness}
        Let $w'$ and $w''$ be loop words. If $L\left[w'\right]=L\left[w''\right]$, then $w''=(w')^{(k)}$ for some $k\in\bzz$.
\end{prop}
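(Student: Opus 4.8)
The statement to prove is Proposition \ref{prop:NormalUniqueness}: if two normal loop words $w'$ and $w''$ satisfy $L[w'] = L[w'']$ in $[S^1,\CP]$, then $w''$ is a cyclic shift of $w'$. The plan is to reduce the free-homotopy (conjugacy) statement to the based statement already proved in Corollary \ref{cor:WordWordWord}, using the machinery of cyclic equivalence and cyclic minimality developed in the preceding lemmas.

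First I would record the preliminary observation, made in the paragraph just before this proposition, that for a \emph{normal} loop word $v$, the based word $L'[v] \in \langle\alpha,\gamma\rangle$ is almost cyclically minimal: its only possible defect is $h(L'[v]) = t(L'[v])$, and in each of the two cases ($=\alpha$ forcing $n'_\tau = 0$, $m'_\tau \le -1$; $=\gamma$ forcing $l'_1 = 0$, $m'_1 \le -1$) a single conjugation by $\alpha^{\pm1}$ or $\gamma^{\pm1}$ produces a cyclically minimal representative of the conjugacy class. So from $L[w'] = L[w'']$ we get that $L'[w']$ and $L'[w'']$ are conjugate in $\langle\alpha,\gamma\rangle$, hence (by the Corollary after Lemma \ref{lem:CyclicAndConjugate}) both cyclically equivalent to a common shortest reduced word $w_3$.

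Next, the key step: I would show that a cyclic equivalence between $L'[w']$ and $L'[w'']$ can be realized at the level of loop words as a shift. The point is that a decomposition $(W^{(1)}, W^{(2)})$ of the reduced word $W = L'[w']$ with $W^{(2)}W^{(1)} = L'[w'']$ must (because of the ``same sign'' condition in the definition of decomposition, together with the normality constraints recalled in the reduced-form list in the proof of Lemma \ref{lem:WordWord}) occur at a cut point that corresponds to a boundary between consecutive $\alpha^{l'_i}\beta^{m'_i}\gamma^{n'_i}$ blocks — i.e. the cut separates $w'$ into $w' = (u_1, u_2)$ where $u_1, u_2$ are themselves (sub)words consisting of whole triples, so that $w'' $ equivalent to $(u_2,u_1)$, which is a shift $w'^{(k)}$. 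Concretely, I would argue that the normal representative is the unique one in its equivalence class (Proposition \ref{prop:normalnormal}, existence+the based uniqueness Corollary \ref{cor:WordWordWord}) up to shift, so it suffices to track where cyclic minimization ``re-cuts'' the word and observe that after re-normalizing we land on a shift of $w'$. Combining: $w''$ and $w'^{(k)}$ are both normal and have the same based class after the shift, so by Corollary \ref{cor:WordWordWord}, $w'' = w'^{(k)}$.

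The main obstacle is the middle step — controlling \emph{where} the cyclic cut can fall. A priori the reduced word $L'[w']$ in the letters $\alpha,\gamma$ has far more potential cut points than there are triple-boundaries in $w'$, because each block $\alpha^{l}\beta^m\gamma^n$ expands (when $m\ge 2$) into a long alternating string $\alpha^{l-1}(\gamma^{-1}\alpha^{-1})^{m-1}\gamma^{n-1}$. I expect the careful bookkeeping to consist of: (i) using the normality conditions to pin down the first/last letters of $\red$ of each cyclic rotation (as in the six-case list in the proof of Lemma \ref{lem:WordWord}); (ii) showing a cyclically-minimal rotation of $L'[w']$ is itself of the form $L'[\tilde w]$ for a normal $\tilde w$ obtained from $w'$ by a shift possibly followed by the ``adding/subtracting $1$s'' moves of Lemma \ref{lem:equimove} that cancel out once both sides are put in normal form; and (iii) invoking the already-proved based uniqueness to finish. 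I would lean on the reduced-form analysis already written for Lemma \ref{lem:WordWord} rather than redo it, and present the argument as: conjugacy $\Rightarrow$ common cyclic-minimal form $\Rightarrow$ both sides are shifts of a single normal word $\Rightarrow$ equal up to shift.
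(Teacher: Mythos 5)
Your top-level strategy — pass from the free-homotopy equality $L[w']=L[w'']$ to conjugacy of $L'[w']$ and $L'[w'']$ in $\langle\alpha,\gamma\rangle$, use the cyclic-minimality machinery to control the conjugation, then land back on the based statement (Corollary \ref{cor:WordWordWord}) — is exactly the paper's strategy, and your preliminary observation about when $L'[v]$ fails to be cyclically minimal for normal $v$ is also the one the paper records just before the proposition. So you and the paper agree on the skeleton.

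The gap is in what you call the ``middle step.'' You assert that the cyclic cut realizing the equivalence between $L'[w']$ and $L'[w'']$ ``must $\ldots$ occur at a cut point that corresponds to a boundary between consecutive $\alpha^{l'_i}\beta^{m'_i}\gamma^{n'_i}$ blocks.'' This is not what happens, and it is not what the paper proves. The cut can fall \emph{inside} a block: after a shift of $w'$ by a whole triple, the paper can only guarantee that the conjugating element is a prefix of a single block $\alpha^{l'_1}\beta^{m'_1}\gamma^{n'_1}$, and then, after a further fractional shift (their $(w')^{(1/3)}$ or $(w')^{(2/3)}$, i.e.\ cyclic rotation of $w'$ by one or two individual entries rather than a full triple), that the conjugating element is a prefix of $\alpha^{l'_1}$. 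At that point normality enters: because $t(L'[w])=\alpha$ forces the last $\alpha$-exponent in the reduced form to be exactly $1$ (a fact read off the six-case list in the proof of Lemma \ref{lem:WordWord}), the only surviving possibilities are $L'[w'']=L'[w']$ or $L'[w'']=\alpha^{-1}L'[w']\alpha$, and the latter is then ruled out by chasing the normality constraints to produce a forbidden subword $(0,-1,\dots,-1,0)$. Your plan skips this confinement-and-contradiction argument, replacing it with the unjustified claim that the cut is already at a triple boundary, and then gestures at a cleanup via the operations of Lemma \ref{lem:equimove}. That cleanup would itself need to preserve normality and not change the based class — neither of which is automatic — so the plan as written does not close.

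A second, smaller issue: the phrase ``I would argue that the normal representative is the unique one in its equivalence class (Proposition \ref{prop:normalnormal} $\ldots$) up to shift'' risks circularity, since Proposition \ref{prop:normalnormal} is exactly the package whose uniqueness half you are in the middle of proving. You may cite only the already-established ingredients: the existence part (Proposition \ref{prop:NormalExistence}), the based uniqueness (Corollary \ref{cor:WordWordWord}), and the cyclic-minimality lemmas; uniqueness-up-to-shift itself must not appear as a black box inside its own proof.
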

\begin{proof}
        Since $L'\left[w'\right]$ and $L'\left[w''\right]$ are cyclically equivalent, $$L'\left[w''\right]=(\text{a part of } \alpha^{l'_k}\beta^{m'_k}\gamma^{n'_k})^{-1}L'((w')^{(k-1)})(\text{a part of } \alpha^{l'_k}\beta^{m'_k}\gamma^{n'_k}),$$
        for some $k\in\bzz$. We may assume $k=1$. By replacing $w'$ and $w''$ by $w'^{(\frac{1}{3})}$ and $w''^{(\frac{1}{3})}$ or $w'^{(\frac{2}{3})}$ and $w''^{(\frac{2}{3})}$, we may also assume that $$L'\left[w''\right]=(\text{a part of } \alpha^{l'_1})^{-1}L'((w'))(\text{a part of } \alpha^{l'_1}).$$ Here, we define the word $w^{(\frac{j}{3})}$ for a loop word $w=(w_1, \cdots, w_\nu)$ to be $(w_{1+j}, \cdots, w_{\nu+j})$, where the indices are regarded as elements in $\bzz_\nu$.
        
        Note that for a loop word $w$, by the first list in the proof of Lemma \ref{lem:WordWord}, if $t(L'\left[w\right])=\alpha$ then the exponent of it is $1$. Thus $L'\left[w''\right] = L'\left[w'\right]$ or $L'\left[w''\right] = \alpha^{-1}L'\left[w'\right]\alpha$. Suppose that the latter holds, so that $t(L'\left[w''\right])=\alpha$. Then right end is $\gamma\alpha$ and $L'\left[w'\right]$ ends in $\gamma$. This is the case when $m'_\tau\geq 2$ and $n'_\tau = 2$ or $m'_\tau\leq 0$ and $n'_\tau=1$. If $n'_\tau=1$, then $l'_1\leq 0$. However, since $n''_\sigma=0$ and $m''_\sigma\leq -1$, $l''_1\geq 1$. Therefore $L'\left[w''\right]=\alpha^{-1}L'\left[w'\right]\alpha$ cannot hold. Hence $m'_\tau\geq 2$ and $n'_\tau = 2$.
        
        Then we get a contradiction as in the end of the proof of Lemma \ref{lem:WordWord}. For example, since $L'\left[w'\right]$ ends in $\alpha^{-1}\gamma$, it should be $m''_\sigma==l''_\sigma=-1$. In a similar way, by using normal condition and comparing two words one can deduce that $L'\left[w''\right]$ has a subword of the form $(0, -1, \cdots, -1, 0)$. This proves $L'\left[w''\right]=\alpha^{-1}L'\left[w'\right]\alpha$ cannot hold and $L'\left[w'\right]=L'\left[w''\right]$. Therefore the proposition follows from Corollary \ref{cor:WordWordWord}.
\end{proof}

\begin{remark}\label{rem:MinimalityOfNormalLoopWord}
        Since the reducing process does not increase length of loop words, together with the uniqueness, the normal representative has minimal length in its equivalent class.
\end{remark}

\begin{cor}\label{cor:NormalEssential}
        If a loop word has a normal representative in its equivalent class, then it is essential.
\end{cor}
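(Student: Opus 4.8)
\textbf{Proof proposal for Corollary \ref{cor:NormalEssential}.}

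The statement to prove is the contrapositive of "essential $\Leftarrow$ normal": if a loop word $w'$ has a normal representative in its equivalence class, then $w'$ is essential. By definition a loop word is essential precisely when it is \emph{not} equivalent to a loop word of the form $(l',0,0)$, $(0,m',0)$ or $(0,0,n')$. So the plan is to argue by contradiction: suppose $w'$ is equivalent to a normal loop word $w'_{\mathrm{norm}}$ and also equivalent to one of the three special forms $(l',0,0)$, $(0,m',0)$, $(0,0,n')$. Then $w'_{\mathrm{norm}}$ is itself equivalent to, say, $(l',0,0)$, and the goal is to derive a contradiction with the normality conditions of Definition \ref{defn:normal2}.

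First I would reduce to handling the representative $(l',0,0)$ only, since the other two cases follow by the obvious cyclic-shift symmetry (shifting a loop word is an equivalence-preserving operation by Lemma \ref{lem:equimove}, and also it permutes $\alpha,\beta,\gamma$; concretely $(0,m',0)$ and $(0,0,n')$ are shifts of words of the same shape). Next I would split on the value of $l'$. If $l' \in \{0,1\}$ then $(l',0,0)$ is equivalent via "removing $0$s" or "subtracting $1$s around $1$" (Lemma \ref{lem:equimove}) to the empty/shorter word — in fact $(0,0,0)$ represents the trivial class and $(1,1,1)$ after inserting $0$s, etc. — so one checks directly these are non-essential and that no normal word can be equivalent to the trivial loop; this uses Proposition \ref{prop:normalnormal} (uniqueness of the normal representative) to say the normal word equivalent to a null-homotopic or puncture-winding class, if it existed, would have to coincide with a word that is visibly not normal. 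For $|l'|\ge 2$ the class $L[(l',0,0)] = \alpha^{l'}$ is a nontrivial power of $\alpha$, a parabolic element winding a single puncture. The key point is then: a \emph{normal} loop word can never represent such a class. I would establish this by invoking the length computation already set up inside the proof of Lemma \ref{lem:normal} (and its higher-rank analogue): the conjugacy class $\alpha^{l'}$ in $\langle\alpha,\gamma\rangle$ has cyclically reduced length $1$ (it is just $\alpha^{l'}$), whereas any normal loop word $w'_{\mathrm{norm}}$ with $\tau \ge 1$ whose blocks are not all of the trivial type must, by the normality conditions, produce a cyclically reduced word of length $\ge 2$ — the conditions on subwords $(a,0,b)$ and $(a,1,b)$ and the prohibition of $(0,-1,\dots,-1,0)$ and of the all-$(-1)$ word are exactly what prevent the cyclically reduced form from collapsing to a single syllable. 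So the lengths disagree, a contradiction.

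The main obstacle is the bookkeeping in that last step: precisely quantifying the cyclically reduced length of $L'[w'_{\mathrm{norm}}]$ in the free group $\langle\alpha,\gamma\rangle$ (with $\beta = \alpha^{-1}\gamma^{-1}$) in terms of the normal word, and showing it is always $\ge 2$ unless the word is non-essential by inspection. Fortunately most of this machinery is already present — the reduction rules $\red(\cdot)$, the head/tail analysis, and the case list for $\alpha^l\beta^m\gamma^n$ appear in the proofs of Lemma \ref{lem:WordWord} and Lemma \ref{lem:CyclicAndConjugate}. So in practice I would phrase the corollary as a short consequence: were a normal word equivalent to a word of the form $(l',0,0)$ etc., uniqueness (Proposition \ref{prop:normalnormal}) forces them to be related by shifting; but a shift of $(l',0,0)$ is visibly not normal when $|l'|\ge 2$ (it contains a subword $(0,0,l')$ or $(l',0,0)$ violating the second normality condition, which requires the two entries flanking a $0$ to have prescribed signs), and when $|l'|\le 1$ the class is non-essential for a trivial reason and again no normal representative exists. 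This yields the contradiction and proves the corollary.
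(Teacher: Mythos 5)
Your overall plan (reduce to $(l',0,0)$ by symmetry, split on $|l'|$, compare the element $\alpha^{l'}$ to what a normal word can produce) is a sensible route, but the key step where you actually close the argument is not correct as stated, and there is a second step that misapplies a lemma from the paper.

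\textbf{The cyclically reduced length claim is false.} You write that a normal loop word must ``produce a cyclically reduced word of length $\geq 2$,'' with ``length'' meaning the $\nu$ of Lemma \ref{lem:WordWord}. Take the normal triple $(2,0,-1)$: it is normal (the lone $0$ is flanked by $2\geq 1$ and $-1\leq -1$, the other conditions are vacuous), and $L'[(2,0,-1)]=\alpha^2\gamma^{-1}$, which is cyclically reduced of length $\nu=1$ --- the same $\nu$ as $\alpha^{l'}$. So the length invariant you use cannot separate normal words from $\alpha^{l'}$; your concluding ``the lengths disagree, a contradiction'' fails here. What actually distinguishes $\alpha^{l'}$ is that its cyclically reduced form has \emph{no} $\gamma$-syllable at all, whereas (one can check) the reduced form of $L'[w']$ for a normal $w'$ always contains both $\alpha$ and $\gamma$ letters; and for $\beta^m$ (the third non-essential family) one needs a further argument on the exponent pattern. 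None of this is what your proof says.

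\textbf{The uniqueness step does not apply.} In your last paragraph you argue: if the normal word were equivalent to $(l',0,0)$, then ``uniqueness (Proposition \ref{prop:normalnormal}) forces them to be related by shifting,'' and a shift of $(l',0,0)$ is visibly not normal, contradiction. But Proposition \ref{prop:NormalUniqueness} (the uniqueness input to Proposition \ref{prop:normalnormal}) is a statement about two \emph{normal} words in the same free homotopy class: its proof is built entirely on the normality conditions via Lemma \ref{lem:WordWord}. The word $(l',0,0)$ is not normal --- as you yourself observe --- so the proposition gives no information about it, and you cannot conclude that the normal word is a shift of $(l',0,0)$. You are essentially trying to prove the corollary by appealing to a result whose hypotheses exclude exactly the case you need. (Your handling of $|l'|\leq 1$ has the same problem: ``no normal representative exists'' is asserted, not derived.)

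For comparison, the paper's proof is shorter and structured differently: it splits on the \emph{length} of the normal representative, using the length-minimality observed in Remark \ref{rem:MinimalityOfNormalLoopWord} to discard length $>3$, and deferring the length-$3$ case to the analysis already carried out in Lemma \ref{lem:normal}. Your proof takes a more hands-on group-theoretic route, which could work, but it would need the correct invariant (presence of both generators / exponent patterns in the cyclically reduced word) in place of the length comparison, and it must avoid invoking uniqueness for a non-normal comparison word.
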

\begin{proof}
        If the normal representative has length larger than $3$, then it cannot be equivalent to loop words of the form $(l, 0, 0), (0, m, 0)$, or $(0, 0, n)$. If the normal representative has length $3$, say $(l', m', n')$. Then that this is essential have been proved already in the Lemma \ref{lem:normal}.
\end{proof}

%
%


\section{$T=1$ Substitution and Homotopy Invariance} \label{sec:T=1}

In this section, we prove Theorem \ref{thm:T=1Homotopy} by dividing it into two Propositions \ref{prop:T=1}
and \ref{prop:Homotopy}.

The first proposition is on the `\emph{finiteness}' of the mirror matrix factorizations. Let us first
remark that for exact Lagrangians in exact symplectic manifolds, their Fukaya category can be
defined over $\C$. Punctured Riemann surfaces are exact symplectic manifolds, but the immersed loops
$L$ that we consider are not exact, hence a priori, we need to work on the Novikov field
$$\Lambda = \left\{ \left.\sum_{i=0}^\infty a_i T^{\lambda_i} \right| a_i \in \C, \lim_{i \to \infty} \lambda_i = \infty \right\}$$ 
to define
their localized mirror functor images $\LocalF\left(L\right) = \left(\LocalPhi\left(L\right),\LocalPsi\left(L\right)\right)$.
This means, for an arbitrary unobstructed loop $L$, each entry of $\LocalPhi\left(L\right)$ or $\LocalPsi\left(L\right)$
is an element of $\Lambda[[x,y,z]]$ and each coefficient of a monomial (consisting of $x$, $y$ and $z$)
in it is an element of $\Lambda$. 
In this case, the evaluation $T=1$ may not make sense for general infinite sums in $\Lambda$.

\begin{defn}
We say that $\LocalF\left(L\right) = \left(\LocalPhi\left(L\right), \LocalPsi\left(L\right)\right)$
is \emph{finite} if each coefficient (in $\Lambda$) of a monomial in any entry of $\LocalPhi\left(L\right)$ and $\LocalPsi\left(L\right)$
is a finite sum.
In this case, we can make $T=1$ substitution in $\LocalF\left(L\right)$ to get a matrix factorization
of $xyz$ over $\mathbb{C}$, or an object in $\MF\left(xyz\right)$.
\end{defn}

%

\begin{prop}\label{prop:T=1}
For a regular immersed loop $L$, its mirror matrix factorization 
$\LocalF\left(L\right)$
is finite.
\end{prop}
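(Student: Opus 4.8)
The plan is to show that for a regular immersed loop $L$, only finitely many decorated strips contribute to each coefficient of a monomial in $\LocalPhi(L)$ or $\LocalPsi(L)$. Recall from Theorem~\ref{thm:lmf} that an entry of $\LocalF(L)$ between two intersection points $p, q \in L \cap \mathbb{L}$ is obtained by counting immersed strips $u$ bounded by $L$ and $\mathbb{L}$ from $p$ to $q$, each contributing a term $\pm T^{\omega(u)} x^{a} y^{b} z^{c}$ where $(a,b,c)$ records the $X,Y,Z$-corners of $u$ along its boundary on $\mathbb{L}$. Finiteness of the coefficient of a fixed monomial $x^a y^b z^c$ in a fixed matrix entry thus amounts to the claim that only finitely many such strips have a prescribed sequence (up to the $A_\infty$-sign bookkeeping) of corners of each type. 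First I would reduce to showing that for each fixed pair $(p,q)$ and each fixed triple $(a,b,c)$, there are only finitely many homotopy classes of immersed strips realizing that corner data, and that each class contributes a single strip (by the standard fact that a rigid immersed polygon in a surface is determined up to reparametrization by its boundary combinatorics).

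The key step is the following dichotomy for an immersed strip $u : \mathbb{R}\times[0,1] \to \mathcal{P}$ with boundary on $L \cup \mathbb{L}$. Its boundary arc on $\mathbb{L}$ decomposes into edges of $\mathbb{L}$ (the six segments $l_x,l_y,l_z,\tilde{l}_x,\tilde{l}_y,\tilde{l}_z$ of Section~\ref{sec:MfFromLag}), and the total number of these edges equals the number of $X,Y,Z$-corners of $u$ up to an additive constant depending only on $p,q$; hence bounding $a+b+c$ bounds the length of the $\mathbb{L}$-boundary. Symmetrically, the boundary arc on $L$ decomposes into sub-arcs of $L$, and I claim its length is also controlled. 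If it were not, the strip would wind arbitrarily many times around $L$ (equivalently, the $L$-boundary would traverse the same subarc of $L$ unboundedly often). Following Example~\ref{example:MultipleOfSeidelLagerangian}, a strip whose $L$-boundary and $\mathbb{L}$-boundary wind many times is, after restricting to a sub-cylinder, an immersed cylinder between a multiple of $L$ and a multiple of $\mathbb{L}$; but $L$ is regular, so by Definition~\ref{def:regular} no such immersed cylinder exists. The same regularity hypothesis (no immersed disks or fish-tales) rules out the degenerate ways a strip could have unbounded area with bounded corner data. Therefore for fixed corner data the area $\omega(u)$ is bounded, and since the areas of strips in $\mathcal{P}$ with boundary on the fixed curves $L, \mathbb{L}$ form a discrete set, only finitely many values of $T^{\omega(u)}$ can occur; combined with the finiteness of homotopy classes of strips with bounded boundary length (Gromov compactness, or directly the combinatorics of the surface cut along $L\cup\mathbb{L}$), we conclude that the coefficient of $x^a y^b z^c$ is a finite sum.

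I would organize this into: (1) express each matrix entry of $\LocalF(L)$ as a sum over decorated strips, grouped by their recorded monomial; (2) prove the corner count controls the $\mathbb{L}$-boundary length; (3) prove, using regularity (no immersed cylinder), that the $L$-boundary length is likewise controlled once the $\mathbb{L}$-boundary length is — this is where the cylinder-exclusion in Definition~\ref{def:regular} and the argument of Example~\ref{example:MultipleOfSeidelLagerangian} enter; (4) deduce finitely many homotopy classes, hence finitely many strips, per monomial. For $\LocalPsi(L)$ the same argument applies verbatim since $\LocalPsi$ is also computed by counting decorated strips (with reversed parity). The main obstacle is step (3): making precise why unbounded $L$-boundary length, with bounded corner data, forces an embedded sub-cylinder between multiples of $L$ and $\mathbb{L}$. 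The idea is that excess winding of the $L$-boundary must be accompanied by the strip sweeping out a region that retracts onto an annular neighborhood; bounding the corner data prevents the $\mathbb{L}$-side from "absorbing" this winding, so the strip contains an immersed cylinder joining $L^k$ to $\mathbb{L}^m$ for some $k,m \geq 1$, contradicting regularity. I expect the careful topological bookkeeping here — cutting the domain, identifying the cylinder, and checking the immersion condition survives the restriction — to be the technically heaviest part, and it is precisely what forces the regularity assumption rather than mere unobstructedness.
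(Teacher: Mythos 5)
Your approach is genuinely different from the paper's, and it contains a real gap at your step (2). You claim that bounding the corner count $(a,b,c)$ bounds the length of the $\mathbb{L}$-boundary because "the total number of these edges equals the number of $X,Y,Z$-corners of $u$ up to an additive constant." This is false: between two consecutive corners $b_i, b_{i+1}$, the boundary arc on $\mathbb{L}$ may pass through the self-intersections $X,Y,Z$ while staying on a single strand of $\mathbb{L}$, which is a smooth boundary point (not a convex corner) and is therefore not recorded in the monomial. Such an arc can wind around $\mathbb{L}$ an arbitrary number of full laps with no change in the recorded exponents; the two-chain of the strip then changes by a nonnegative chain whose boundary is a multiple of $[\mathbb{L}]$ (plus the matching change on the $L$-side). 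Controlling exactly this phenomenon is what the admissibility condition is designed for, and it is why the paper is forced to prove finiteness indirectly rather than by a direct length bound.

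The paper's route is: introduce the chain groups $C_1, C_2$, the Euler measure, and the admissibility condition of Azam--Blanchet (Definition~\ref{defn:Admissibility}); prove via a Dickson-type combinatorial lemma (Lemma~\ref{lem:UpperRight}) that an infinite moduli space of strips with fixed corner data would yield two strips whose difference is a nonnegative $2$-chain with boundary in $\mathbb{Z}[L]\oplus\mathbb{Z}[\mathbb{L}]$ and zero Euler measure, contradicting admissibility (Lemma~\ref{lem:AdmissibleFinite}, Corollary~\ref{cor:AdmissibleT=1}); observe that regular loops need not be admissible (Remark~\ref{rmk:Regularity}), but can always be deformed by finitely many type V$^{+}$ poking moves to a strongly admissible loop while remaining regular (Lemmas~\ref{lem:PreservationOfRegularity} and \ref{lem:HomotopicToAdmissible}); and finally propagate finiteness backward through the moves using the explicit matrix-reduction of Lemma~\ref{lem:HomotopyTypeV}, where regularity of the intermediate loops is exactly what guarantees the unit entry $u$ has constant term $1$ and a finite inverse. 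Your proposal tries to collapse all of this into a direct area/boundary-length bound, but the boundary-length control does not follow from the corner count alone, and your step (3) — deriving an immersed cylinder from unbounded $L$-boundary length — is left as an acknowledged gap without a mechanism. A direct argument for regular (rather than admissible) loops, if it exists, would have to replace the Dickson/Euler-measure machinery by something that simultaneously bounds the $\mathbb{L}$-winding between corners and the $L$-winding using only the no-cylinder condition; this is precisely the part your proposal does not supply.
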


\begin{remark}\label{rmk:Regularity}
It is not hard to show the converse statement that an unobstructed Lagrangian $L$ with $\LocalF\left(L\right)$ finite can be
shown to be a regular immersed loop (up to Hamiltonian isotopy).
\end{remark}

Now let's discuss the second proposition on the `\emph{homotopy invariance}' of the localized mirror functor on regular loops.
It was already shown in \cite{CHL-toric} Proposition 5.4 that the localized mirror functor
takes  Hamiltonian equivalence of the Fukaya category  to the homotopy equivalence of
the DG-category  matrix factorization (over $\Lambda$).
But a homotopy between immersed loops that we consider here, are
more general than a Hamiltonian isotopy, and also we want to
establish such homotopy equivalence over $\C$.

\begin{prop}\label{prop:Homotopy}
If two regular immersed loops $L$ and $L'$ are freely homotopic to each other and have the same total
holonomy, then their mirror matrix
factorizations $\LocalF\left(L\right) = \left(\LocalPhi\left(L\right),\LocalPsi\left(L\right)\right)$
and $\LocalF\left(L'\right) = \left(\LocalPhi\left(L'\right),\LocalPsi\left(L'\right)\right)$
over $\mathbb{C}$ are homotopically equivalent to each other in $\operatorname{MF}\left(xyz\right)$.
\end{prop}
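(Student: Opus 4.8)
\textbf{Proof plan for Proposition \ref{prop:Homotopy}.}

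The plan is to reduce the general free homotopy to a finite sequence of elementary moves and to verify homotopy invariance of $\LocalF$ on each move separately. First I would recall that any two freely homotopic immersed loops $L,L'$ in the surface $\mathcal{P}$ (with the same total holonomy) can be connected by a generic path in the space of immersed loops; by standard surface topology such a path can be realized, after a Hamiltonian perturbation keeping both endpoints regular, as a composition of (i) Hamiltonian isotopies, (ii) Reidemeister-type moves for immersed curves --- the triangle move (a strand passing over a self-intersection or over an intersection with $\mathbb{L}$), the birth/death of a bigon (two new intersection points appearing or disappearing), and the finger move --- together with moves that push a strand across a puncture. Since $L$ and $L'$ are both assumed regular, every intermediate loop can be taken regular except at finitely many times where a single one of these codimension-one degenerations occurs; at those times one perturbs slightly to stay in the regular locus on each side and the move relates the two nearby regular loops. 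Because $\LocalF(L)$ depends only on the intersection pattern with $\mathbb{L}$ and the counts of decorated polygons (by Theorem \ref{thm:T=1Homotopy} and the discussion around Figure \ref{fig:strip}), Hamiltonian isotopies that do not change the combinatorics leave $\LocalF(L)$ literally unchanged, so the content is in the bigon/triangle/finger/puncture moves.

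Next I would handle each elementary move. For the Hamiltonian-isotopy case that \emph{does} change which minimum/maximum of the Morse model realizes a given geometric intersection, one invokes \cite{CHL-toric} Proposition 5.4, which already gives that $\LocalF$ sends Hamiltonian equivalences to homotopy equivalences of matrix factorizations over $\Lambda$; after the $T=1$ substitution (legitimate on both sides by Proposition \ref{prop:T=1}) this descends to a homotopy equivalence over $\C$. For the birth/death of a bigon, the new pair of intersection points $(a,\bar a)$ with $\mathbb{L}$ contributes a $2\times 2$ block to $\LocalPhi$ of the form $\begin{psmallmatrix} u & 0 \\ * & * \end{psmallmatrix}$ with $u\in\C^\times$ a unit (the small embedded bigon contributes a scalar, exactly as in the $z^{n'-1}$ reduction explained after Remark \ref{rem:DegenerateLoopLength1} and in the ``matrix reduction'' of Lemma \ref{lem:MatrixReductionLemma} / Lemma \ref{lem:HomotopyTypeV}); a unit entry lets us cancel that row and column up to homotopy, recovering $\LocalF$ of the loop with the bigon removed. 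For the triangle (Reidemeister III) and finger moves, the count of decorated polygons bounded by $L$ and $\mathbb{L}$ changes by a controlled amount: a polygon degenerating through the triangle is replaced by the ``broken'' pair of polygons on the other side, which on the level of matrix factorizations is precisely the statement that $\operatorname{m}_1\circ(\text{homotopy})+(\text{homotopy})\circ\operatorname{m}_1$ equals the difference of the two $\LocalPhi$'s --- one writes down the explicit homotopy from the polygon that sweeps across the triangle region. Pushing a strand across a puncture changes a monomial exponent by $\pm 1$ in the way already recorded in Proposition \ref{prop:DiscCounting} and Theorem \ref{thm:MFFromLag}, but does not change the free homotopy class only when it is compensated elsewhere; for genuine homotopies the net effect is again absorbed into a change of basis plus a homotopy, since the relation $\alpha\beta\gamma=1$ on the target corresponds to adding or subtracting $(1,1,1)$ from the loop word (Lemma \ref{lem:equimove}, Lemma \ref{lem:TripleEqui}), and one checks by direct comparison of the canonical forms of Theorem \ref{thm:MFFromLag} for two such words that they are isomorphic (indeed this is exactly the computation underlying Theorem \ref{thm:MainThoerem}).

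Finally I would assemble these: a generic homotopy is a finite concatenation of the above moves, $\LocalF$ sends each to a homotopy equivalence in $\operatorname{MF}(xyz)$, and homotopy equivalences compose, so $\LocalF(L)\simeq\LocalF(L')$. One separate point to address is the total holonomy: the holonomy is concentrated at a single point of one edge of $L$, and moving that point along $L$, or sliding it through intersections, only conjugates $\LocalPhi$ by an invertible diagonal matrix, hence is a homotopy equivalence; this is why only the \emph{total} holonomy matters. I expect the main obstacle to be the bookkeeping for the triangle and finger moves --- writing the explicit chain homotopy witnessing that the two polygon counts differ by $\operatorname{m}_1$-exact terms, and checking signs in the Morse-model conventions of Seidel \cite{Se} --- rather than anything conceptually new; the bigon, Hamiltonian, and holonomy steps are essentially immediate from results already in the paper. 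A cleaner alternative, which I would pursue if the move-by-move signs become unwieldy, is to prove directly that two regular loops in the same free homotopy class can be joined by a path of \emph{unobstructed} (not necessarily regular) loops, run the $\Lambda$-coefficient homotopy-invariance of the localized mirror functor along that path wholesale (as in \cite{CHL-toric}), and only at the very end invoke Proposition \ref{prop:T=1} at the two regular endpoints to pass to $\C$; this isolates all the ``finiteness'' subtleties into Proposition \ref{prop:T=1} and leaves Proposition \ref{prop:Homotopy} as a formal consequence.
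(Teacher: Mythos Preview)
Your overall strategy---decompose a free homotopy into finitely many elementary local moves and verify $\LocalF$ is invariant (up to homotopy equivalence) under each---is exactly the paper's approach (Lemmas \ref{lem:5TypesAreEnough}, \ref{lem:HomotopyTypeIII}, \ref{lem:HomotopyTypeV}). The bigon birth/death analysis you sketch is precisely Lemma \ref{lem:HomotopyTypeV}, and your triangle/finger moves correspond to the paper's Types I--IV.

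There are, however, two genuine problems. First, ``pushing a strand across a puncture'' is \emph{not} a homotopy move in $\mathcal{P}$: it changes the free homotopy class. Your attempt to salvage this by saying it must be ``compensated elsewhere'' turns a local move into a global one, and then invoking the canonical-form comparison of Theorem \ref{thm:MFFromLag} is circular (that theorem is for specific normal representatives, and in any case depends on the very invariance you are proving). Simply drop this move; it does not appear in a correct Reidemeister-type decomposition of a homotopy in a punctured surface.

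Second, your cleaner alternative at the end does not work. The result from \cite{CHL-toric} you cite gives invariance of $\LocalF$ under \emph{Hamiltonian} isotopy over $\Lambda$, not under arbitrary free homotopy; the paper explicitly flags that Lagrangian isotopies which are not Hamiltonian do not in general yield equivalences, since the loops are non-exact. So you cannot run the $\Lambda$-level invariance ``wholesale'' along a general path and specialize only at the endpoints---the $\Lambda$-level statement you need is simply false in between. The paper avoids this by working move-by-move over $\mathbb{C}$ after $T=1$, which in turn requires ensuring every intermediate loop is regular; this is nontrivial and is handled by the detour through (strong) admissibility in Lemmas \ref{lem:PreservationOfRegularity} and \ref{lem:HomotopicToAdmissible}. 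You gloss over this step, but it is where the real content lives.
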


An overview of the proof of Propositions \ref{prop:T=1} and \ref{prop:Homotopy} is as follows. As we set $T=1$, we only concern about the signed count of holomorphic polygons with holonomy contributions, not about the area of such polygons.
Therefore, any homotopy of a Lagrangian $L$, which does not change the intersection and holonomy pattern with $\bL$, provides
exactly the same matrix factorization over $\mathbb{C}$.

Meanwhile, any change of intersection pattern with fixed $\mathbb{L}$ in the same free homotopy class
of $L$ can be made into a composition of $5$ types of homotopy moves described
in Figure \ref{fig:HomotopyMove} (Lemma \ref{lem:5TypesAreEnough}), with some holonomy movements.
But here we focus only on homotopy moves, as holonomy moves are easier to deal with. 
(Actually, it was already shown in \cite{CHL-toric} Lemma 5.3 that the gauge equivalence of Lagrangians yields an isomorphism in their mirror matrix factorizations.)

\begin{figure}[H]
  \centering
  \begin{subfigure}[t]{0.3\textwidth}
    \includegraphics[height=23mm]{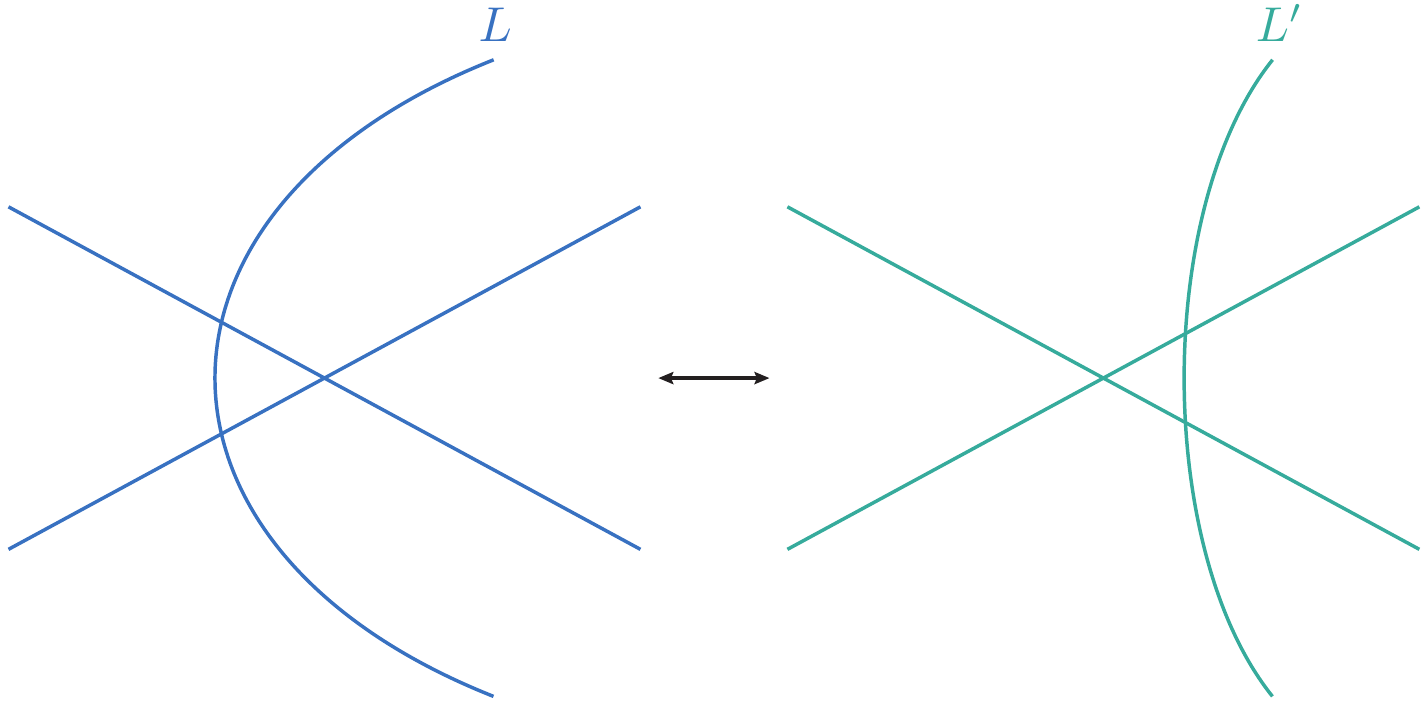}
    \centering
    \caption{Type I}
    \label{fig:HomotopyMoveI}
  \end{subfigure}
  \begin{subfigure}[t]{0.3\textwidth}
    \centering
    \includegraphics[height=23mm]{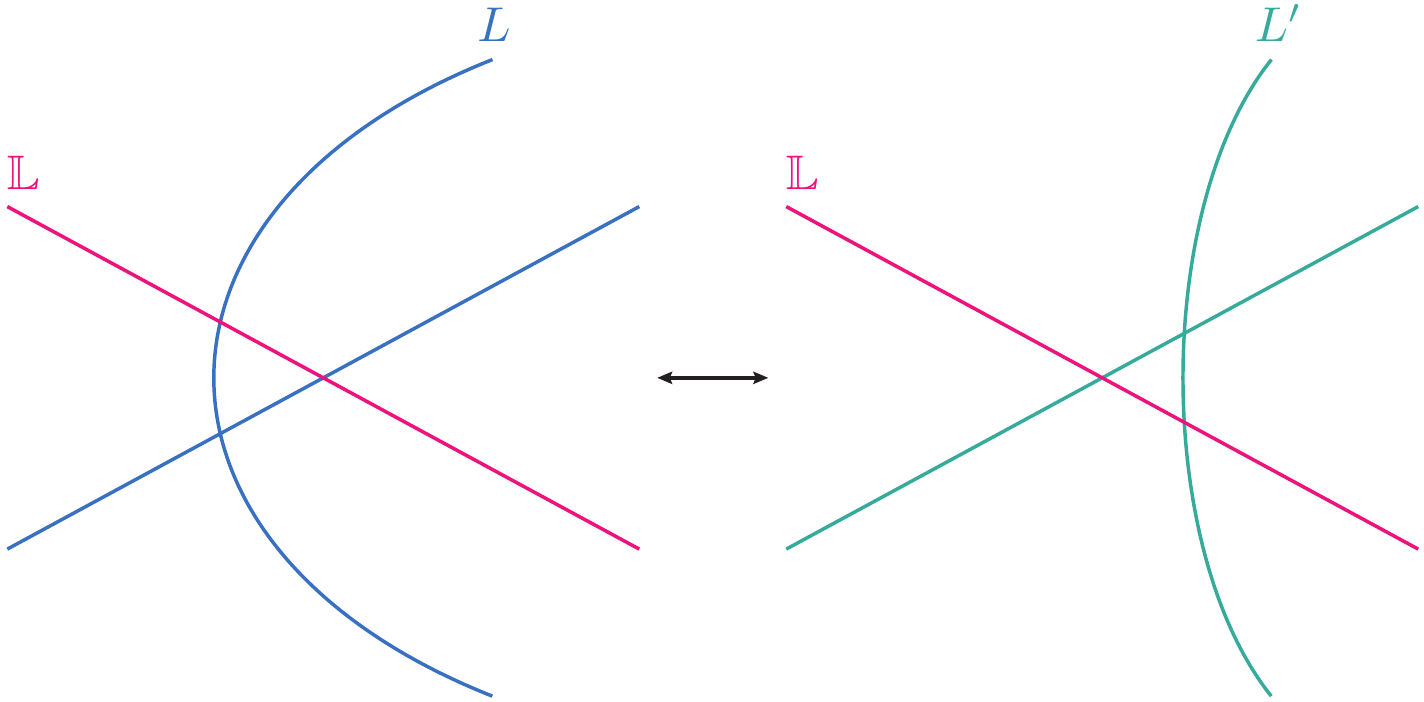}
    \caption{Type II}
    \label{fig:HomotopyMoveII}
  \end{subfigure}
  \begin{subfigure}[t]{0.3\textwidth}
    \centering
    \includegraphics[height=23mm]{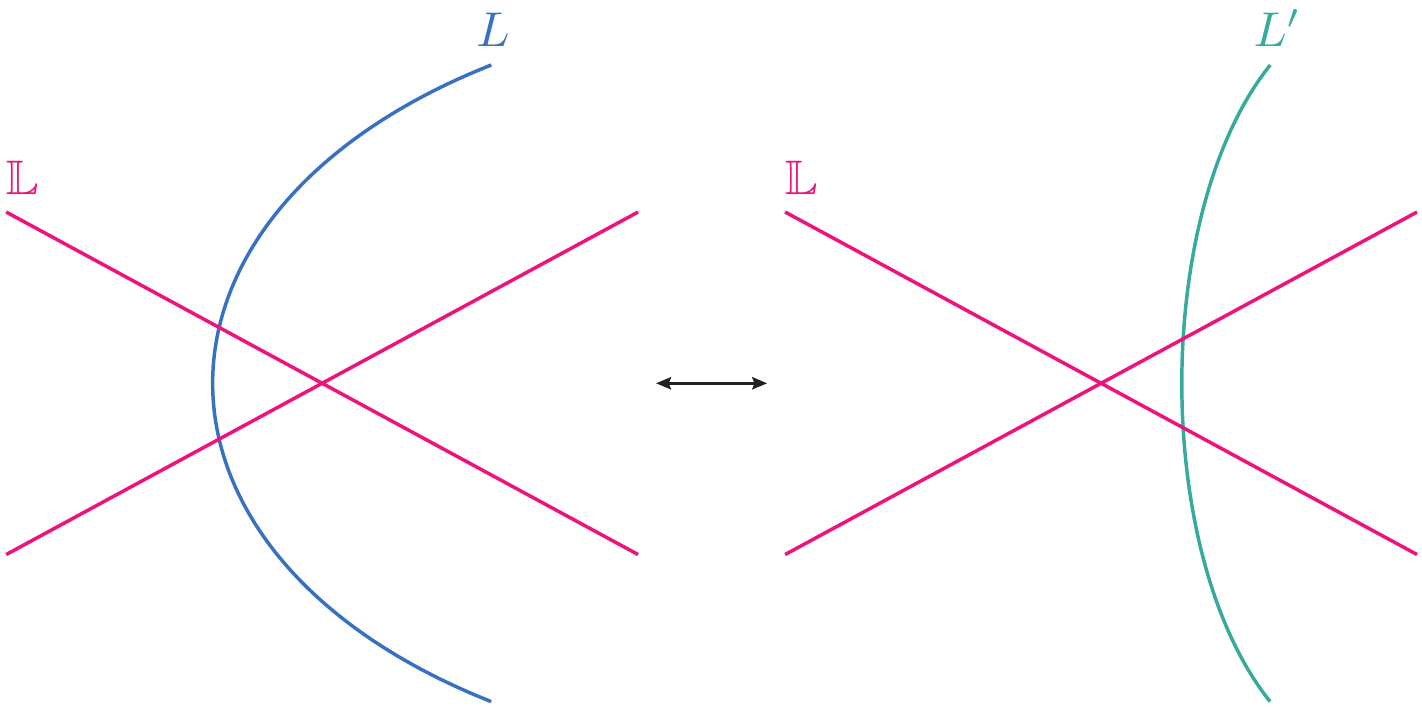}
    \caption{Type III}
    \label{fig:HomotopyMoveIII}
  \end{subfigure}
  \\[5mm]
  \begin{subfigure}[t]{0.3\textwidth}
    \centering
    \includegraphics[height=23mm]{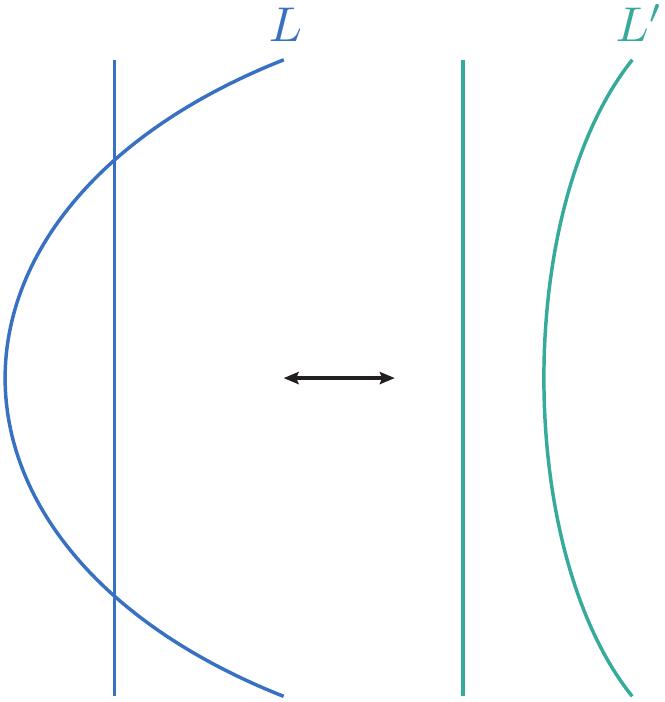}
    \caption{Type IV}
    \label{fig:HomotopyMoveIV}
  \end{subfigure}
  \begin{subfigure}[t]{0.3\textwidth}
    \centering
    \includegraphics[height=23mm]{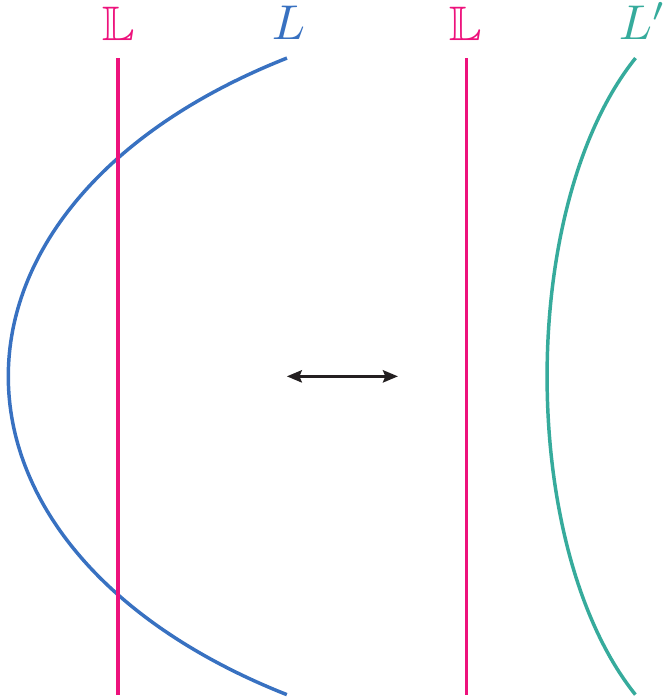}
    \caption{Type V}
    \label{fig:HomotopyMoveV}
  \end{subfigure}
  \caption{$5$ types of homotopy moves: $L$ is green and $\mathbb{L}$ is red}
  \label{fig:HomotopyMove}
\end{figure}

The type I, II or III is when $L$ slides across a self-intersection of $L$, an intersection of $L$ and $\mathbb{L}$ or a self-intersection of $\mathbb{L}$, respectively.
Later we will further divide type III into subcases III-i to III-iv as illustrated in Figure \ref{fig:HomotopyMoveIIIc},
according to the orientation patterns given to the curves, because they give different results.
The type IV or V is when $L$ unpoke out or poke in a part of $L$ or $\mathbb{L}$, respectively.
We sometimes denote by IV$^{+}$[IV$^{-}$] and V$^{+}$[V$^{-}$] for moves from the right[left] to the left[right] for convenience.

Then we look at how matrix factorizations are related under those moves.
Type I, II, III-i, III-ii and IV  moves don't change the resulting matrix factorizations at all, regardless of orientations (unless specified) on curves.
In type III-iii and III-iv moves, however, they are no longer the same, but are still isomorphic to each other by some row or column operations.
Especially, the finiteness of the matrix factorizations is preserved under type I, II, III and IV moves
(Lemma \ref{lem:HomotopyTypeIII}).

In type V moves, the resulting matrix factorizations have different sizes so they can no longer be isomorphic in $\operatorname{MF}\left(xyz\right)$.
However, we show that if $L'$ is regular and $\LocalF\left(L\right)$ is finite, then $\LocalF\left(L'\right)$ is also finite and completely determined by $\LocalF\left(L\right)$, and they are homotopic to each other in $\operatorname{MF}\left(xyz\right)$ (Lemma \ref{lem:HomotopyTypeV}).

Finally, to show Proposition \ref{prop:T=1}, we adopt the notion of `\emph{admissibility}' introduced in the paper of Azam and Blanchet \cite{AB22} and show that $\LocalF\left(L\right)$ is finite for any admissible loop $L$ (Definition \ref{defn:Admissibility}
and Corollary \ref{cor:AdmissibleT=1}).
Next we show that any regular loop $L'$ can be deformed to an admissible loop $L$ using finitely many homotopy (poking) moves of type V (Lemma \ref{lem:HomotopicToAdmissible}).
Then we deduce the finiteness of $\LocalF\left(L'\right)$ from the finiteness of $\LocalF\left(L\right)$ using Lemmas \ref{lem:HomotopyTypeIII}
and \ref{lem:HomotopyTypeV} inductively.

After the finiteness is insured for regular loops, the proof of Proposition \ref{prop:Homotopy} follows automatically from Lemmas \ref{lem:5TypesAreEnough},
\ref{lem:HomotopyTypeIII} and \ref{lem:HomotopyTypeV}.


\subsection{Matrix Transformations under Homotopy Moves}
In this subsection, we find the relations between matrix factorizations
before and after the homotopy moves.
In the process, we will have  `\emph{row/column operation}' according to    a sliding of a curve in Lemma \ref{lem:HomotopyTypeIII}.(2) and the `\emph{matrix reduction}' according to removing a bigon in Lemma \ref{lem:HomotopyTypeV}. In each type, we derive explicit isomorphisms or homotopies.
They are very useful for practical calculations as well as establishing finiteness or homotopy equivalence over $\mathbb{C}$.

\begin{lemma}\label{lem:5TypesAreEnough}
If two unobstructed loops $L$ and $L'$ are freely homotopic to each other,
the intersection pattern of $L$ with $\mathbb{L}$ can be transformed to that of $L'$ with $\mathbb{L}$
(and vice versa) by a finite composition of type I to V homotopy moves.
Moreover, if $L$ and $L'$ are regular, we can also choose all loops that appear in the process as regular loops.
\end{lemma}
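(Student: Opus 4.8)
The statement is essentially a ``relative Reidemeister theorem'': two freely homotopic unobstructed immersed loops $L$ and $L'$ in the fixed background together with $\mathbb{L}$ can be related by a finite sequence of local moves of types I--V. The plan is to reduce this to the classical fact that any two generic homotopies between immersed curves (with a second fixed immersed curve $\mathbb{L}$ in the picture) differ by a finite sequence of elementary singular-time events, and then to check that each such event is precisely one of the five moves in Figure~\ref{fig:HomotopyMove} (possibly inverted, and possibly composed with isotopies that do not change the combinatorial pattern). First I would fix a generic free homotopy $H:S^1\times[0,1]\to\mathcal{P}$ from $L$ to $L'$; by transversality (using that $L$, $L'$ and $\mathbb{L}$ are unobstructed, so in particular meet transversally and have no triple points) $H$ can be perturbed rel endpoints so that at all but finitely many times $t$ the curve $L_t=H(\cdot,t)$ is unobstructed, and at each of the finitely many exceptional times exactly one of the following codimension-one degeneracies occurs: a self-tangency of $L_t$, a tangency of $L_t$ with $\mathbb{L}$, a triple point among the branches of $L_t\cup\mathbb{L}$ (with at most one branch from $\mathbb{L}$, since $\mathbb{L}$ itself is fixed and generic), or the birth/death of a small bigon of $L_t$, of a bigon between $L_t$ and $\mathbb{L}$, or of a monogon of $L_t$.

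Second, I would match each of these events with a move from the list. A triple point with all three branches from $L_t$ is the interior triple-point move, which can be realized by two type~I slides together with an isotopy; a triple point with one branch from $\mathbb{L}$ is a type~III move (and it is exactly here that the subdivision III-i--III-iv will later matter, but for the present combinatorial statement the orientation bookkeeping is irrelevant). Birth/death of a bigon of $L_t$ across $\mathbb{L}$ or away from it is handled by type~IV (a finger move of $L$ past part of $L$ or past a puncture creates/removes such a bigon, changing only the number of $L$-self-intersections, not the $L$--$\mathbb{L}$ pattern unless an $\mathbb{L}$-edge is crossed, in which case it is again type~I or type~II followed by IV). The genuinely new phenomenon is birth/death of a bigon whose two sides are one arc of $L$ and one arc of $\mathbb{L}$: this is type~V, and it changes $L\cap\mathbb{L}$ by two points. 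A self-tangency of $L_t$ not involving $\mathbb{L}$ either does nothing to the $L\cap\mathbb{L}$ pattern or is absorbed into a type~I event; a tangency of $L_t$ with $\mathbb{L}$ is a type~II event (the intersection point slides along $\mathbb{L}$). I would organize this case analysis by first noting that only events that alter the combinatorial type of $(L_t,\mathbb{L})$ need to be recorded, and then observing that such events are in bijection with moves I--V as just described. The key point to make carefully is that crossing an $\mathbb{L}$-edge during a finger move decomposes into ``type~II (or I) then type~IV'' so that no move outside the list is ever needed.

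Third, for the ``moreover'' clause I would upgrade the homotopy so that every intermediate $L_t$ is regular, i.e.\ additionally bounds no immersed cylinder with $\mathbb{L}$. By Lemma~\ref{lem:Regular} and Example~\ref{example:MultipleOfSeidelLagerangian}, in the pair of pants a non-regular essential loop must be (freely homotopic to) a multiple of $\mathbb{L}$; since $L$ and $L'$ are regular they are not such multiples, and the only way an intermediate $L_t$ could become non-regular is by transiently developing a cylinder with $\mathbb{L}$. Such a transient event can be pushed off: whenever the homotopy is about to create a cylinder I would reroute $L_t$ by a small additional finger (type~V) move that introduces an extra pair of intersection points with $\mathbb{L}$, exactly as in the construction of regular representatives in Lemma~\ref{lem:Regular}; this breaks the would-be cylinder while keeping $L_t$ in the same free homotopy class. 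Since we only need finitely many such local corrections (one near each exceptional time), the corrected homotopy passes only through regular loops and still realizes the transition by finitely many type~I--V moves.

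\textbf{Expected main obstacle.} The routine part is the transversality/genericity setup and matching generic singular events to moves; the delicate part is the regularity upgrade in the ``moreover'' clause. One must ensure that the corrective type~V fingers used to destroy transient cylinders do not themselves create new obstructions (new triple points, new cylinders, or a non-transverse intersection) and that only finitely many corrections are ever required. I expect to control this by doing all corrections in disjoint small neighborhoods of the finitely many exceptional times and invoking the characterization of non-regular loops in the pair of pants (multiples of $\mathbb{L}$, Example~\ref{example:MultipleOfSeidelLagerangian}) to guarantee that a generic small perturbation of $L_t$ is automatically cylinder-free; making this ``generic small perturbation suffices'' statement precise, while keeping the total number of moves finite, is the crux of the argument.
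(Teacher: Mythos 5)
Your plan for the combinatorial part of the statement is consistent with the paper's treatment: the paper explicitly punts on this part, calling it ``just a classical Reidemeister move type proposition'' (see the remark immediately following the lemma statement), and your generic-homotopy/case-analysis sketch fills in that gap. One small slip: at a type III event the triple point has \emph{two} branches of $\mathbb{L}$ and one branch of $L_t$ (since $L_t$ is sliding over a self-intersection $X,Y,Z$ of $\mathbb{L}$), not ``at most one branch from $\mathbb{L}$.'' But the structure of the reduction is sound.

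Where you genuinely diverge from the paper, and where there is a conceptual problem, is the ``moreover'' clause. The paper argues globally: via Lemma~\ref{lem:HomotopicToAdmissible} it deforms \emph{both} $L$ and $L'$ (using corrective paths into a puncture chosen to work for the two loops simultaneously) to strongly admissible loops $\tilde{L},\tilde{L}'$ by type~V$^+$ moves, which preserve regularity by Lemma~\ref{lem:PreservationOfRegularity}; it then observes that if one ``enlarges the puncture'' to contain those paths, the homotopy between $\tilde{L}$ and $\tilde{L}'$ can be carried out so that every intermediate loop is strongly admissible (Definition~\ref{def:StronglyAdmissible}) --- a stable, verifiable condition that implies regularity. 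Your proposal of local corrective fingers near exceptional times is, as you wrote it, internally inconsistent. You cite Example~\ref{example:MultipleOfSeidelLagerangian} to say a non-regular unobstructed loop is freely homotopic to a multiple of $\mathbb{L}$, but then worry that an intermediate $L_t$ could ``transiently develop a cylinder.'' These are in tension: the free homotopy class of $L_t$ is constant along the homotopy and equal to that of the regular loop $L$, which is not a multiple of $\mathbb{L}$; so if you accept Example~\ref{example:MultipleOfSeidelLagerangian}, no intermediate unobstructed $L_t$ can ever bound a cylinder and your corrective fingers are never invoked. If you instead do not want to lean on that (informal) example, then the phrase ``a generic small perturbation is automatically cylinder-free'' is the wrong framing: bounding an immersed cylinder is governed by the free homotopy class, not by general position, and neither a $C^0$-small perturbation nor a V$^+$ finger changes the homotopy class. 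You should either fully commit to the free-homotopy characterization (in which case the regularity clause reduces to preserving unobstructedness, which your transversality setup already gives) or adopt the paper's strong-admissibility route, which is self-contained and does not rely on Example~\ref{example:MultipleOfSeidelLagerangian}.
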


\begin{remark}
        \begin{enumerate}
                \item Except the regularity condition, the assertion is just a classical Reidemeister move type proposition. So we only explain how to construct a homotopy sequence which preserves regularity.
                \item To prove the lemma, we need the notion of `strong admissibility' which will be introduced in Section \ref{sec:Admissibility}. So we will give the proof after proving Lemma \ref{lem:HomotopicToAdmissible}.
                
        \end{enumerate}
\end{remark}

Next we seek how the matrix factorizations transform under each type of homotopy moves.
We take the strategy to minimize direct disk countings and find non-trivial ones by $A_\infty$-relations.
It will be accomplished by finding isomorphisms in the Fukaya category between loops before and after the
move.

\begin{lemma}\label{lem:HomotopyTypeIII}
Let two unobstructed loops $L$ and $L'$ can be made into each other by one of type I to IV moves.
Then their mirror matrix factorizations
$\LocalF\left(L\right) = \left(\LocalPhi\left(L\right),\LocalPsi\left(L\right)\right)$
and $\LocalF\left(L'\right) = \left(\LocalPhi\left(L'\right),\LocalPsi\left(L'\right)\right)$ are determined
by each other by the following rules. Especially, if one is finite, so is the other.

(1) In type I, II, III-i, III-ii and IV cases, $\LocalF\left(L\right)$ and $\LocalF\left(L'\right)$ are
exactly the same.

(2) In a type III-iii case, let $\chi$ be one of $x$, $y$ or $z$ according to whether $b_0$ is $X$, $Y$ or $Z$ (in Figure \ref{fig:HomotopyMoveIII-iii}). Then $\LocalPhi\left(L'\right)$ can be obtained
from $\LocalPhi\left(L\right)$ by adding $\chi$ times the column corresponding to $p_{k-1}$ to the column corresponding to $p_k$, and $\LocalPsi\left(L'\right)$ can be obtained from $\LocalPsi\left(L\right)$
by subtracting $\chi$ times the row corresponding to $p_k$ from the row corresponding to $p_{k-1}$.
In a type III-iv case, exactly the same holds if we exchange $\LocalPhi \leftrightarrow \LocalPsi$ and
$p_i \leftrightarrow s_i$.

%

\end{lemma}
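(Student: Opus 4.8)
\textbf{Proposal of proof for Lemma \ref{lem:HomotopyTypeIII}.}

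The plan is to realize each homotopy move as an isomorphism (or, in the case we handle as a separate lemma, a chain-level comparison) \emph{inside} the Fukaya category $\WF(\mathcal P)$, and then push this isomorphism through the $\AI$-functor $\LocalF$ to obtain the stated relation between matrix factorizations. Concretely, suppose $L$ and $L'$ differ by one of the moves I--IV. In each case I would exhibit a degree-$0$ Floer cocycle $e\in CF^0(L,L')$ together with $e'\in CF^0(L',L)$ such that $\operatorm_2(e,e')$ and $\operatorm_2(e',e)$ are (cohomologous to) the respective identity elements; these are built from the obvious ``short'' intersection point(s) created by the move, and the needed identities are verified by a small local count of immersed triangles near the region where the move takes place (the same kind of count already used throughout Section \ref{sec:MfFromLag}). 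Since $\LocalF$ is an $\AI$-functor (Theorem \ref{thm:lmf}), it sends such an isomorphism $e$ to a morphism $\LocalF_1(e)\in \mathcal{MF}(M_L,M_{L'})$ which is again invertible up to homotopy, with inverse homotopic to $\LocalF_1(e')$; hence $\LocalF(L)$ and $\LocalF(L')$ are isomorphic in $\underline{\operatorname{MF}}(xyz)$. The point of the lemma, though, is the sharper \emph{chain-level} statement, so I would not merely invoke abstract invariance but read off $\LocalF_1(e)$ explicitly.

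For the claim that in types I, II, III-i, III-ii and IV the matrix factorizations are \emph{literally equal} (not just isomorphic), the argument is that after setting $T=1$ the entries of $\LocalPhi(L)$, $\LocalPsi(L)$ record only the signed-with-holonomy count of immersed decorated polygons bounded by $L$ and $\mathbb L$, and in these moves there is an obvious bijection between the decorated polygons for $L$ and those for $L'$: the move happens in a region disjoint from $\mathbb L$ (type I, IV) or across a point where the local orientation data is such that no polygon corner is created or destroyed (type II, III-i, III-ii). One must check that this bijection preserves the boundary word on $\mathbb L$ (hence the monomial), the sign, and the holonomy weight; this is a finite case check on the local pictures in Figure \ref{fig:HomotopyMove}, together with the observation that the bijection restricts to a bijection of \emph{finite} fibers, which gives the ``if one is finite so is the other'' assertion. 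I would organize this by first treating type IV as the genuinely disjoint-from-$\mathbb L$ case, then noting types I--II--III-i--III-ii reduce to a local orientation bookkeeping identical in spirit.

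The substantive part is item (2), the type III-iii and III-iv moves. Here the move slides a branch of $L$ across a self-intersection of $\mathbb L$ labelled $b_0\in\{X,Y,Z\}$, and a new family of decorated polygons appears: every polygon for $L'$ that ends at the intersection point $p_k$ is either a polygon already present for $L$, or one obtained from an $L$-polygon ending at $p_{k-1}$ by appending one extra corner at $b_0$ (contributing the monomial factor $\chi\in\{x,y,z\}$). This gives precisely ``column$_{p_k}(L') = $ column$_{p_k}(L) + \chi\cdot$column$_{p_{k-1}}(L)$'', which is the elementary column operation in the statement; the corresponding row operation on $\LocalPsi$ is forced by the matrix-factorization relation $\LocalPhi(L')\LocalPsi(L') = xyz\cdot I = \LocalPhi(L)\LocalPsi(L)$, i.e. $\LocalPsi(L')$ must be the inverse column operation applied to $\LocalPsi(L)$, which is exactly ``subtract $\chi\cdot$row$_{p_k}$ from row$_{p_{k-1}}$''. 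Cleanly, I would derive this by identifying the Floer isomorphism $e$: it is a sum of the short intersection point and a $b_0$-weighted term, and $\LocalF_1(e)$ is then exactly the elementary matrix $I + \chi E_{p_{k-1},p_k}$, so the change of basis statement is literally $\LocalPhi(L') = \LocalF_1(e)^{-1}\,\LocalPhi(L)\,(\text{identity on the other side})$ unwound. The III-iv case is the mirror-symmetric statement obtained by swapping the roles of the two families of generators ($p_i\leftrightarrow s_i$) and $\LocalPhi\leftrightarrow\LocalPsi$, and needs no new argument. The main obstacle I anticipate is purely the sign/orientation bookkeeping: getting the signs in $\operatorm_2(e,e')=\mathrm{id}$ and in the extra $b_0$-corner contribution consistent with the conventions fixed in Section 3 (and with Seidel's sign convention cited there), since a sign error would turn $+\chi$ into $-\chi$; I would pin this down by testing against the already-computed rank-one examples (Proposition \ref{prop:LagToMFRankOne}, Remark \ref{ex:nonnormal}), where a non-normal loop word differs from its normal form by exactly such moves and the two matrix factorizations are known to be related by these operations.
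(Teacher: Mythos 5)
Your strategy is essentially the paper's: realize the homotopy move as a Floer-level isomorphism between $L$ and $L'$ (the paper calls the two intersection points created by the perturbation $f\in CF^0(L',L)$ and $g\in CF^0(L,L')$; these are your $e'$ and $e$), exploit the $\AI$-functor relation $\operatorm_1^{\mathcal{MF}(xyz)}\bigl(\LocalF_1(f)\bigr)=\LocalF_1\bigl(\operatorm_1(f)\bigr)=0$ to obtain a commuting square, and read off $\LocalF_1(f)=(\LocalPhi_1(f),\LocalPsi_1(f))$ by a local polygon count. Your closing suggestion to pin down signs against the rank-one computations is reasonable, though the paper fixes them directly from the local figure.

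Two remarks, one of which is a genuine inaccuracy worth fixing. First, the Floer cochain $e$ is \emph{not} ``a sum of the short intersection point and a $b_0$-weighted term'': $CF^0(L,L')$ is generated by intersection points of $L$ with $L'$, and $b_0$ is a self-intersection of $\bL$, not a generator of this complex. In the paper $f$ is a \emph{single} intersection point; the monomial $\chi$ enters because the count $\LocalPhi_1(f)(p_k)=\operatorm_2^{0,0,b}(f,p_k)$ includes, besides the obvious small triangle giving $p_k'$, an extra triangle bounded by $L$, $L'$, $\bL$ that passes one corner at $b_0$, contributing $-\chi\, p_{k-1}'$. Conflating the two would be a category error, even though it happens to land on the correct elementary matrix $\LocalF_1(e)=I+\chi E_{p_{k-1},p_k}$. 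Second, a minor difference in route: for the easy cases I, II, III-i, III-ii, IV you propose a direct bijection of decorated strips for $L$ and $L'$; the paper instead runs the identical Floer-isomorphism machinery and simply records $\LocalPhi_1(f)=\LocalPsi_1(f)=I_k$. Both work, but the paper's route is uniform across all cases and avoids having to verify by hand that the bijection respects boundary word, sign and holonomy. Your shortcut of deducing the $\LocalPsi$-row operation from $\LocalPhi(L')\LocalPsi(L')=xyz\cdot I_k$ is valid; the paper computes $\LocalPsi_1(f)=I_k$ directly, which also yields the finiteness claim immediately since a column/row operation with polynomial coefficient preserves finiteness of entries.
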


\begin{proof}
We restrict to a type III-iii case, but most part of the proof works also for the other cases.

We can assume that $L'$ is a $C^0$-small perturbation of $L$ as shown in Figure \ref{fig:HomotopyMoveIII-iii}.
Namely, they have two intersections near the self-intersection $b_0\in\left\{X,Y,Z\right\}\subset\Hom^1\left(\mathbb{L},\mathbb{L}\right)$
of $\mathbb{L}$, denoted by $f\in\Hom^0\left(L',L\right)$ and $g\in\Hom^0\left(L,L'\right)$, so that
$b_0$ is bounded by the `\emph{small}' embedded bigon made by a perturbation $L'$ of $L$ between vertices $f$ and $g$. We further assume
that the remaining
part of $L$ is perturbed to $L'$ in the opposite direction, which is possible as the underlying surface
$\mathcal{P}$ is orientable. Therefore, we have one more `\emph{long}' immersed bigon between $f$ and $g$, only the ends of which are shown in the figure and the middle part of which might intersect with itself.

\begin{figure}[H]
  \centering
  \begin{subfigure}[t]{0.24\textwidth}
    \centering
    \includegraphics[height=40mm]{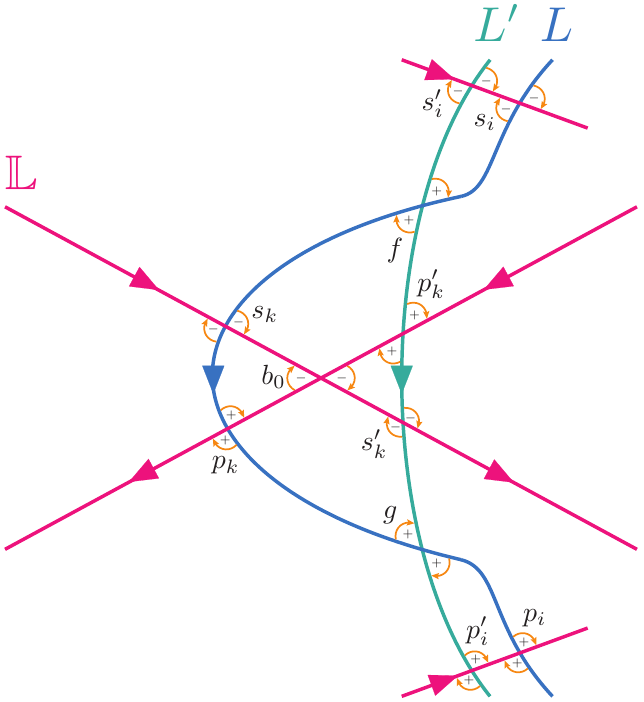}
    \caption{Type III-i}
    \label{fig:HomotopyMoveIII-i}
  \end{subfigure}
  \begin{subfigure}[t]{0.24\textwidth}
    \centering
    \includegraphics[height=40mm]{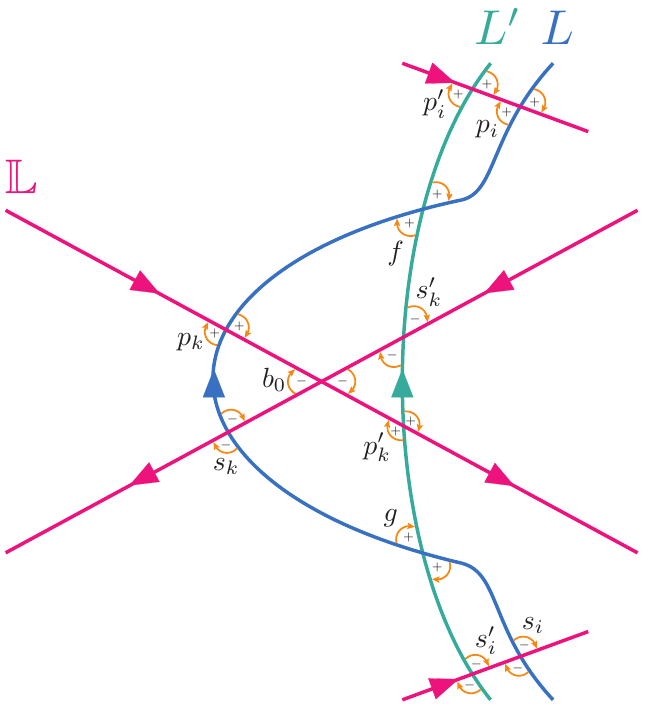}
    \caption{Type III-ii}
    \label{fig:HomotopyMoveIII-ii}
  \end{subfigure}
  \begin{subfigure}[t]{0.24\textwidth}
    \centering
    \includegraphics[height=40mm]{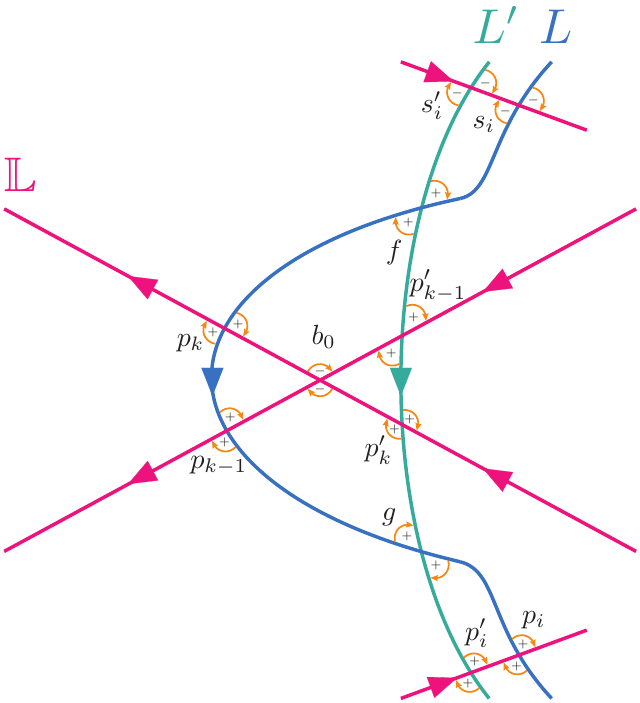}
    \caption{Type III-iii}
    \label{fig:HomotopyMoveIII-iii}
  \end{subfigure}
  \begin{subfigure}[t]{0.24\textwidth}
    \centering
    \includegraphics[height=40mm]{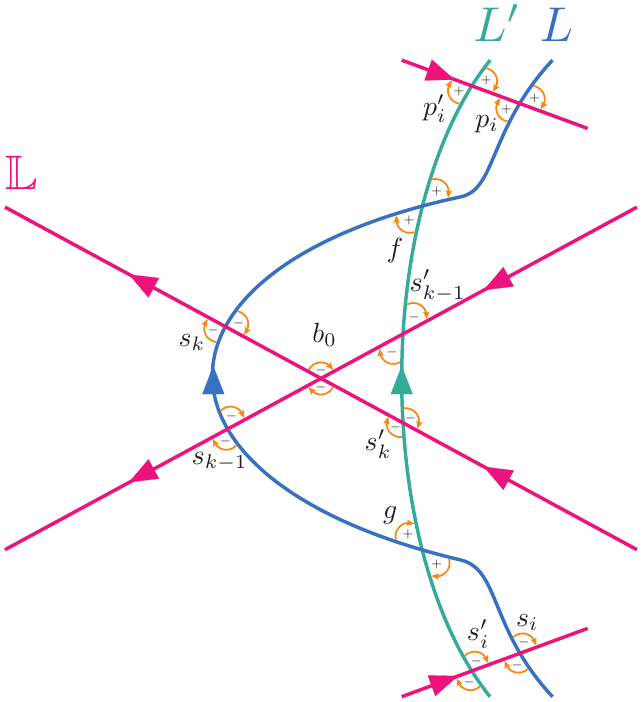}
    \caption{Type III-iv}
    \label{fig:HomotopyMoveIII-iv}
  \end{subfigure}
  \caption{Isomorphisms in type III homotopy moves}
  \label{fig:HomotopyMoveIIIc}
\end{figure}

Then the small and long bigons together give the following $4$ equations:
$$
\operatorm_1\left(f\right) =\overline{g} - \overline{g} = 0,
\quad
\operatorm_1\left(g\right) = -\overline{f} + \overline{f} = 0,
\quad
\operatorm_2\left(f,g\right) = \id_{L'}
\quad
\operatorm_2\left(g,f\right) = \id_{L}.
$$
Here $\overline{g}\in\Hom^1\left(L',L\right)$ in the first equation is the complement of $g$, where the
first and second terms come from the small and long bigons, respectively. These $4$ equations
show that $f$ and $g$ are isomorphisms between $L$ and $L'$ in the $A_\infty$-category $\operatorname{Fuk}\left(\mathcal{P}\right)$
(though we need only the first equation to prove this lemma).

Note that each intersection of $L$ and $\mathbb{L}$ is paired with an intersection of $L'$ and
$\mathbb{L}$. So we can write
$$
\arraycolsep=5pt\def\arraystretch{1}
\begin{array}{ll}
\hspace{1.2mm}
\Hom^0\left(L,\mathbb{L}\right) = \mathbb{C}\left<p_1,\dots,p_k\right>,
&
\hspace{1.2mm}
\Hom^1\left(L,\mathbb{L}\right) = \mathbb{C}\left<s_1,\dots,s_k\right>,
\\[2mm]
\Hom^0\left(L',\mathbb{L}\right) = \mathbb{C}\left<p_1',\dots,p_k'\right>,
&
\Hom^1\left(L',\mathbb{L}\right) = \mathbb{C}\left<s_1',\dots,s_k'\right>,
\end{array}
$$
where $p_{k-1}$, $p_{k-1}'$, $p_k$ and $p_k'$ denote the intersections on the boundary of the small bigon.

As $\LocalF$ is an $A_\infty$-functor, we know from Definition \ref{defn:AinftyCategory}
that
$$
\operatorm_1^{\mathcal{MF}\left(xyz\right)}\left(\LocalF_1\left(f\right)\right)
= \LocalF_1\left(\operatorm_1\left(f\right)\right)
$$
holds. Its left side is calculated as
$$
\left(\LocalPhi\left(L'\right) \circ \LocalPhi_1\left(f\right)
- \LocalPsi_1\left(f\right) \circ \LocalPhi\left(L\right)
,\ \ \
\LocalPsi\left(L'\right) \circ \LocalPsi_1\left(f\right)
- \LocalPhi_1\left(f\right) \circ \LocalPsi\left(L\right)\right)
\in
\Hom^1\left(\LocalF\left(L\right),\LocalF\left(L'\right)\right),
$$
whereas the right side is just $\left(0,0\right)$ as $\operatorm_1\left(f\right)=0$.
This means that the following diagram commutes.
$$
\begin{tikzcd}[arrow style=tikz,>=stealth,row sep=5em,column sep=5em]
\scriptstyle S\left<p_1,\dots,p_k\right>
  \arrow[r, "\LocalPhi\left(L\right)"]
  \arrow[d, "\LocalPhi_1\left(f\right)"]
&
\scriptstyle S\left<s_1,\dots,s_k\right>
  \arrow[r, "\LocalPsi\left(L\right)"]
  \arrow[d, "\LocalPsi_1\left(f\right)"]
&
\scriptstyle S\left<p_1,\dots,p_k\right>
  \arrow[d, "\LocalPhi_1\left(f\right)"]
\\
\scriptstyle S\left<p_1',\dots,p_k'\right>
  \arrow[r, "\LocalPhi\left(L'\right)"]
&
\scriptstyle S\left<s_1',\dots,s_k'\right>
  \arrow[r, "\LocalPsi\left(L'\right)"]
&
\scriptstyle S\left<p_1',\dots,p_k'\right>
\end{tikzcd}
$$

Now we perform some polygon counting containing $f$ to get
$$
\LocalPhi_1\left(f\right)\left(p_i\right)
= \operatorm_2^{0,0,b}\left(f,p_i\right)
=
\left\{
\begin{matrix}
p_i'
\ \ \ \quad\text{if }i=1,\dots,k-1,
\\
-x p_{k-1}' + p_k'
\quad
\text{if }i=k
\end{matrix}
\right.
\quad\text{and}\quad
\LocalPsi_1\left(f\right)\left(s_i\right)
= \operatorm_2^{0,0,b}\left(f,s_i\right)
= s_i' \quad\text{if }i=1,\dots,k,
$$
which are obvious from Figure \ref{fig:HomotopyMoveIII-iii}.
These yield
$$
\LocalPhi_1\left(f\right)
=
\begin{pmatrix}
I_{k-1} & -x \mathbf{e}_{k-1}
\\
0 & 1
\end{pmatrix}
\quad\text{and}\quad
\LocalPsi_1\left(f\right) = I_k
$$
as matrices of maps $S^k\rightarrow S^k$, where $\mathbf{e}_{k-1}$ is the column vector in $S^{k-1}$ whose $(k-1)$-th entry is $1$ and the rest are $0$. This proves the lemma in type III-iii case.
In type I, II, III-i, III-ii and IV cases, we have $\LocalPhi_1\left(f\right) = \LocalPsi_1\left(f\right) = I_k$, and in type III-iv
case, $\LocalPhi_1\left(f\right)$ and $\LocalPsi_1\left(f\right)$ are just swapped, which prove the lemma.
\end{proof}

We next illustrate the `\emph{matrix reduction}' process according to a type V move,
where $L'$ is obtained from $L$ by removing a bigon made by $L$ and $\mathbb{L}$.
As in Figure \ref{fig:HomotopyMoveVc}, we divided type V moves into $4$ sub-cases according to the orientation patterns.
Namely, in type V-i and V-ii cases (resp. V-iii and V-iv cases) the vertices of the bigon reverse (resp. preserve) the orientation of curves.

\begin{lemma}\label{lem:HomotopyTypeV}
Let an unobstructed loop $L'$ can be obtained from an unobstructed loop $L$ by a type V$^{-}$ homotopy
move, that is, by removing a bigon made by $L$ and $\mathbb{L}$.
Suppose that $L'$ is regular and $\LocalF\left(L\right)$ is finite so that written as
$$
\LocalF\left(L\right)
= \left(\LocalPhi\left(L\right),\LocalPsi\left(L\right):S^k\oplus S\rightarrow S^k\oplus S\right)
= \left(\begin{pmatrix} C & D \\ E^T & u \end{pmatrix}, \begin{pmatrix} F & G \\ H^T & v \end{pmatrix}\right),
$$
for some matrices $C$, $F\in S^{k\times k}$, $D$, $E$, $G$, $H\in S^{k \times 1}$ and $u$, $v\in S$,
where the last column and row of each matrix correspond to intersections $p_{k+1}$ or $s_{k+1}$ on the bigon (as in Figure \ref{fig:HomotopyMoveVc}).
Then $u$ (resp. $v$) is a unit in $S$ in type V-i or V-ii cases (resp. V-iii or V-iv cases),
and $\LocalF\left(L\right)$ reduces to the matrix factorization of $L'$ as
$$
\LocalF\left(L'\right)
= \left(\LocalPhi\left(L'\right),\LocalPsi\left(L'\right):S^k\rightarrow S^k\right)
=
\begin{cases}
\left(C-Du^{-1}E^T,\ F\right)
&
\text{in type V-i or V-ii moves, or}
\\
\left(C,\ F-Gv^{-1}H^T\right)
&
\text{in type V-iii or V-iv moves.}
\end{cases}
$$
In particular, $\LocalF\left(L'\right)$ is also finite and completely determined by $\LocalF\left(L\right)$. Moreover, they are homotopically equivalent
(over $\mathbb{C}$) to each other in $\operatorname{MF}\left(xyz\right)$.
\end{lemma}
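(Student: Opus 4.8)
The statement to prove is Lemma \ref{lem:HomotopyTypeV}: when $L'$ is obtained from an unobstructed loop $L$ by removing a bigon (a type~V$^{-}$ move) and $L'$ is regular with $\LocalF(L)$ finite, then the indicated entry $u$ (in the orientation-reversing sub-cases V-i, V-ii) or $v$ (in the orientation-preserving sub-cases V-iii, V-iv) is a unit of $S$, the matrix factorization $\LocalF(L')$ is the Schur-complement reduction of $\LocalF(L)$, and $\LocalF(L')$ and $\LocalF(L)$ are homotopy equivalent over $\mathbb{C}$ in $\operatorname{MF}(xyz)$. The plan is to run the same $A_\infty$-isomorphism argument used in Lemma \ref{lem:HomotopyTypeIII}, but now producing maps between matrix factorizations of different sizes, and then to invoke a standard Gaussian-elimination lemma for matrix factorizations.

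\textbf{Step 1: set up the local picture and the isomorphism data.} First I would fix, as in Figure \ref{fig:HomotopyMoveVc}, the two intersection points $p_{k+1}\in CF^0(L,\mathbb{L})$ (or its partner on the other side) and the adjacent point on $\mathbb{L}$, which are the vertices of the embedded bigon being removed; the self-intersection of $\mathbb{L}$ (one of $X,Y,Z$) cut off by that bigon determines the variable $\chi\in\{x,y,z\}$. As in the proof of Lemma \ref{lem:HomotopyTypeIII}, take $L'$ a $C^0$-small perturbation realizing the poke move, producing morphisms $f\in\Hom^0(L',L)$, $g\in\Hom^0(L,L')$ with $\operatorm_1(f)=\operatorm_1(g)=0$ and $\operatorm_2(f,g)=\id_{L'}$, $\operatorm_2(g,f)=\id_L$, so $f,g$ are mutually inverse isomorphisms in $\operatorname{Fuk}(\mathcal{P})$. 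Applying the $A_\infty$-functor $\LocalF$ and using that it is an $A_\infty$-functor (Definition \ref{defn:AinftyCategory}), $\LocalF_1(f)$ and $\LocalF_1(g)$ are mutually homotopy-inverse chain maps between the matrix factorizations $\LocalF(L)$ (size $k+1$) and $\LocalF(L')$ (size $k$). A short disk count exactly as in Lemma \ref{lem:HomotopyTypeIII}(2) gives the explicit form of $\LocalF_1(f)$ and $\LocalF_1(g)$: on all generators away from the bigon they are identities, and the bigon contributes either the scalar $1$ or a $\chi$-decorated term in the appropriate slot. I expect these two rectangular maps to be precisely the block row/column used in a Gaussian elimination step.

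\textbf{Step 2: identify the unit entry and perform the reduction.} Because $f$ is already a genuine $A_\infty$-isomorphism (not merely a quasi-isomorphism), $\LocalF_1(f)$ must be an honest isomorphism of matrix factorizations up to homotopy; examining the commuting square $\operatorm_1^{\mathcal{MF}}(\LocalF_1(f))=\LocalF_1(\operatorm_1(f))=0$ forces the relation that lets one solve for the $p_{k+1}$-component. Concretely, the ``small'' embedded bigon from $p_{k+1}$ to its neighbour on $\mathbb{L}$ contributes a degree-$0$ term (a unit in $\mathbb{C}^*$ after $T=1$, by regularity of $L'$ there is no cancelling ``long'' bigon), which sits as the $(k+1,k+1)$-entry $u$ of $\LocalPhi(L)$ in type V-i, V-ii (resp.\ $v$ of $\LocalPsi(L)$ in type V-iii, V-iv). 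This is where finiteness of $\LocalF(L)$ and regularity of $L'$ are genuinely used: regularity of $L'$ guarantees no immersed cylinder spoils the count, and finiteness of $\LocalF(L)$ guarantees $u$ (or $v$) is a finite sum, hence after $T=1$ a genuine element of $S$ with invertible constant term, i.e.\ a unit. Once $u$ (resp.\ $v$) is a unit, I would invoke Lemma \ref{lem:MatrixReductionLemma}, or rather its matrix-factorization analogue, with the block decompositions $\LocalPhi(L)=\spmat{C & D \\ E^T & u}$, $\LocalPsi(L)=\spmat{F & G \\ H^T & v}$: the conjugation by the elementary block-triangular invertible matrices $\spmat{I_k & -Du^{-1} \\ 0 & u^{-1}}$ etc.\ (exactly the diagram in the proof of Lemma \ref{lem:MatrixReductionLemma}) splits off a trivial rank-$1$ summand $\bigl(S\xrightarrow{u}S\xrightarrow{v}S\bigr)$, whose residue in $\underline{\operatorname{MF}}(xyz)$ is zero, leaving the Schur complement $C-Du^{-1}E^T$ (in type V-i, V-ii) or $F-Gv^{-1}H^T$ (in type V-iii, V-iv) as the $\LocalPhi$-component of $\LocalF(L')$. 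Comparing with the formulas computed from $f$ and $g$ directly confirms that this Schur complement agrees with $\LocalF(L')$. Finiteness of $\LocalF(L')$ is then automatic since all entries are polynomial combinations of finite entries of $\LocalF(L)$ and $u^{-1}\in S$.

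\textbf{Step 3: homotopy equivalence over $\mathbb{C}$.} Finally, the homotopy equivalence $\LocalF(L')\simeq\LocalF(L)$ in $\operatorname{MF}(xyz)$ over $\mathbb{C}$ follows because splitting off the contractible summand $\bigl(S\xrightarrow{u}S\xrightarrow{v}S\bigr)$ with $uv$ a unit multiple of $xyz$ — wait, more precisely $uv=xyz$ forces $u$ unit $\Rightarrow v$ an associate of $xyz$, so this rank-$1$ factor is $\bigl(S\xrightarrow{u}S\xrightarrow{u^{-1}xyz}S\bigr)$, which is the zero object in $\underline{\operatorname{MF}}(xyz)$ — is an isomorphism in the (split-closed) homotopy category, realized by the explicit invertible block matrices above, all of which have entries in $S$ (over $\mathbb{C}$, after $T=1$). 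I expect the main obstacle to be Step~2: carefully justifying that the relevant decorated-strip count gives $u$ (resp.\ $v$) with \emph{invertible constant term} rather than a non-unit, which is exactly the point where one must use that $L'$ is regular (no immersed cylinder between $L'$ and $\mathbb{L}$, no extra fish-tale) so that the ``small'' bigon is not cancelled and does not acquire higher-order corrections; this is the analogue of Remark \ref{rem:DegenerateLoopLength1} and the perturbation of $L(2,2,\dots,2)$, where regularity genuinely matters. Everything else is a bookkeeping application of Lemma \ref{lem:MatrixReductionLemma} and the $A_\infty$-functor identities, and the four sub-cases V-i through V-iv differ only in whether the splitting happens on the $\LocalPhi$- or $\LocalPsi$-side, which is recorded by whether the degree-$0$ bigon contribution lands in $\LocalPhi(L)$ or $\LocalPsi(L)$.
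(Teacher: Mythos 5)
Your overall strategy matches the paper's: set up the $A_\infty$-isomorphisms $f,g$ between $L$ and $L'$, push them through $\LocalF$ using the $A_\infty$-functor identities, and read off the Schur-complement reduction (the matrix-factorization analogue of Lemma \ref{lem:MatrixReductionLemma}, exactly as recorded in Remark \ref{rmk:MatrixReduction}).

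The gap is in Step~2, precisely where you flag "the main obstacle." You argue that the small embedded bigon contributes an invertible constant term to $u$, and that regularity of $L'$ rules out a cancelling "long bigon." But $u$ is the $(p_{k+1},s_{k+1})$-entry of $\LocalPhi(L)$, i.e.\ a count of strips bounded by $L$ (not $L'$) and $\mathbb{L}$; the lemma only assumes $L$ is unobstructed. Regularity of $L'$ says nothing directly about cylinders bounded by $L$ and $\mathbb{L}$, so it does not by itself exclude extra constant-term contributions to $u$. The paper instead controls a different count: it evaluates $\LocalPsi_2(g,f)(s_{k+1}) = w\,p_{k+1}$, a quadrangle count whose $g,f$-corners live where $L$ and $L'$ differ. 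Here the "subtract the small quadrangle from a putative second one" argument produces an immersed cylinder bounded by $L'$ and $\mathbb{L}$, which regularity of $L'$ does forbid; this forces the constant term of $w$ to be $1$. Only then, from the $A_\infty$-relation \eqref{eqn:HomotopyEquations} specialized to this slot, one reads off $wu = uw = 1$, whence $u$ is a unit; finiteness of $\LocalF(L)$ then makes $u$ a unit of $S$ and $w = u^{-1} \in S$. This detour through $w$ and the identity $wu=1$ is exactly the missing idea in your sketch — without it, the claim that $u$ has invertible constant term has no proof under the stated hypotheses. (One could try to repair your direct route by first invoking Lemma \ref{lem:PreservationOfRegularity} to upgrade $L$ to regular and then running a subtraction argument for bigons, but that is a different argument than you wrote, and the paper deliberately avoids assuming anything about $L$ beyond unobstructedness and finiteness of $\LocalF(L)$.)

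Two smaller points. First, your parenthetical $uv = xyz$ is not correct: the $(k+1,k+1)$-entry of $\LocalPhi(L)\LocalPsi(L) = xyz\,I_{k+1}$ gives $E^T G + uv = xyz$, not $uv = xyz$; the off-diagonal terms matter. Second, after the block-triangular change of basis in Remark \ref{rmk:MatrixReduction} the trivial summand that splits off is $\bigl(S\xrightarrow{\,1\,}S\xrightarrow{\,xyz\,}S\bigr)$, not $\bigl(S\xrightarrow{\,u\,}S\xrightarrow{\,u^{-1}xyz\,}S\bigr)$; these are isomorphic as matrix factorizations so your conclusion (that the summand is null-homotopic) is unaffected, but the identification you wrote is not the one the conjugation produces. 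The paper's proof also produces the homotopy equivalence $\LocalF_1(g)\circ\LocalF_1(f)\simeq\id_{\LocalF(L)}$ explicitly via the second component $\bigl(-\LocalPsi_2(g,f),-\LocalPhi_2(g,f)\bigr)$, rather than routing through a split-summand argument; both are fine, but the former gives the homotopy data for free as a by-product of the same $A_\infty$-relation used to prove $wu=1$.
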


\begin{proof}
We proceed in a similar way as in the proof of Lemma \ref{lem:HomotopyTypeIII}.
Here we only consider type V-i case.
For the V-ii type, the same can be done with a slight sign considerations.
For the other cases, we can do the same by just switching the role of $\LocalPhi$ and $\LocalPsi$.

\begin{figure}[H]
  \centering
  \begin{subfigure}[t]{0.22\textwidth}
    \centering
    \includegraphics[height=50mm]{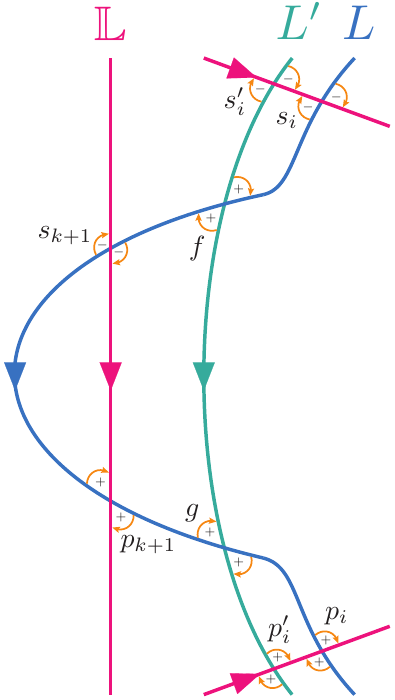}
    \caption{Type V-i}
    \label{fig:HomotopyMoveV-i}
  \end{subfigure}
  \begin{subfigure}[t]{0.22\textwidth}
    \centering
    \includegraphics[height=50mm]{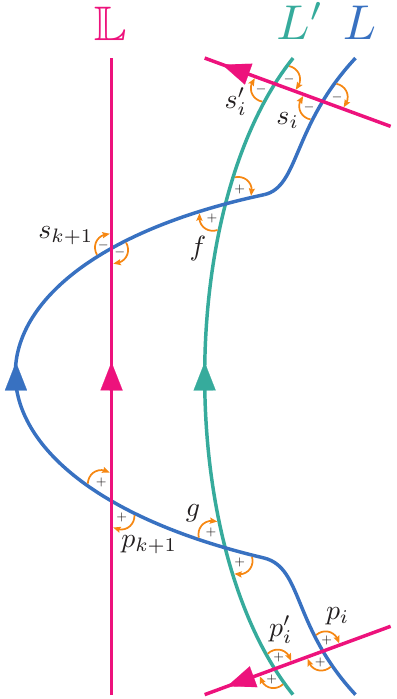}
    \caption{Type V-ii}
    \label{fig:HomotopyMoveV-ii}
  \end{subfigure}
  \begin{subfigure}[t]{0.22\textwidth}
    \centering
    \includegraphics[height=50mm]{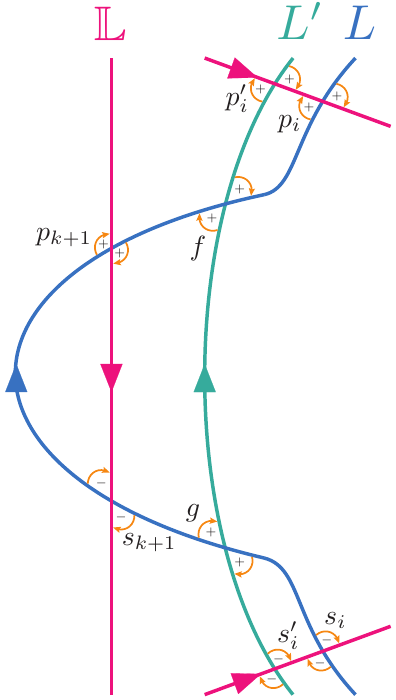}
    \caption{Type V-iii}
    \label{fig:HomotopyMoveV-iii}
  \end{subfigure}
  \begin{subfigure}[t]{0.22\textwidth}
    \centering
    \includegraphics[height=50mm]{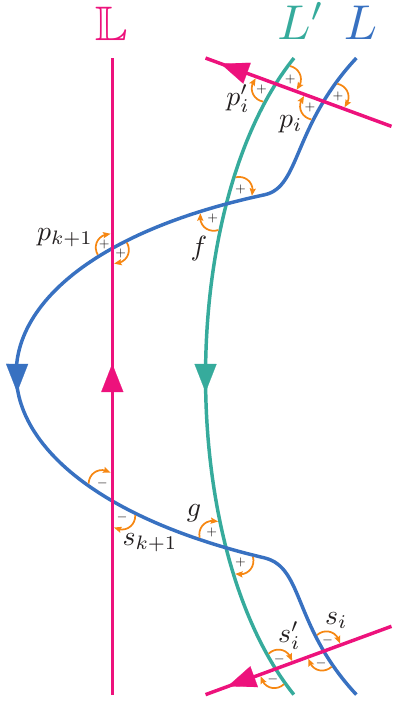}
    \caption{Type V-iv}
    \label{fig:HomotopyMoveV-iv}
  \end{subfigure}
  \caption{Isomorphisms in type V homotopy moves}
  \label{fig:HomotopyMoveVc}
\end{figure}

As in the previous lemma, $f$ and $g$ are isomorphisms between $L$ and $L'$ in $\operatorname{Fuk}\left(\mathcal{P}\right)$.
But note in this case that $L$ has $2$ more intersections with $\mathbb{L}$ than $L'$ does. So we can write
$$
\arraycolsep=5pt\def\arraystretch{1}
\begin{array}{ll}
\hspace{1.2mm}
\Hom^0\left(L,\mathbb{L}\right) = \mathbb{C}\left<p_1,\dots,p_k,p_{k+1}\right>,
&
\hspace{1.2mm}
\Hom^1\left(L,\mathbb{L}\right) = \mathbb{C}\left<s_1,\dots,s_k,s_{k+1}\right>,
\\[2mm]
\Hom^0\left(L',\mathbb{L}\right) = \mathbb{C}\left<p_1',\dots,p_k'\right>,
&
\Hom^1\left(L',\mathbb{L}\right) = \mathbb{C}\left<s_1',\dots,s_k'\right>,
\end{array}
$$
where $p_{k+1}$ and $s_{k+1}$ denote the intersections of $L$ and $\mathbb{L}$ on the boundary of the
small bigon.

Denote $\LocalPhi\left(L\right) =: \begin{pmatrix} C & D \\ E^T & u \end{pmatrix}$ and $\LocalPsi\left(L\right) =: \begin{pmatrix} F & G \\ H^T & v \end{pmatrix}$ as matrices of maps
$$
 \begin{tikzcd}[sep=40pt, every arrow/.append style={shift left}]
   S^k\oplus S \cong S\left<p_1,\dots,p_k\right>\oplus S\left<p_{k+1}\right>
     \arrow{r}{\LocalPhi\left(L\right)}
   & S^k\oplus S \cong S\left<s_1,\dots,s_k\right>\oplus S\left<s_{k+1}\right>.
     \arrow{l}{{\LocalPsi\left(L\right)\vphantom{1}}}
 \end{tikzcd}
$$
Then we have the following commuting (ignoring gray arrows) diagram from an $A_\infty$-relation on $\LocalF$
and the vanishing of $\operatorm_1\left(f\right)$ and $\operatorm_1\left(g\right)$, as in the previous lemma.

$$
\begin{tikzcd}[arrow style=tikz,>=stealth,row sep=6em,column sep=10em]
\scriptstyle S\left<p_1,\dots,p_k\right>\oplus S\left<p_{k+1}\right>
  \arrow[r, "\LocalPhi\left(L\right) = \spmat{C & D \\[1mm] E^T & u}"]
  \arrow[d, "\LocalPhi_1\left(f\right) = \spmat{I_k & 0}"]
&
\scriptstyle S\left<s_1,\dots,s_k\right>\oplus S\left<s_{k+1}\right>
  \arrow[r, "\LocalPsi\left(L\right) = \spmat{F & G \\[1mm] H^T & v}"]
  \arrow[d, "\LocalPsi_1\left(f\right) = \spmat{I_k & -Du^{-1}}"]
  \arrow[ddl, gray]
  \arrow[dl, white, swap, "\color{gray}-\LocalPsi_2\left(g{,}f\right) = \spmat{0 & 0 \\[1mm] 0 & -u^{-1}}\hspace{-18mm}"]
&
\scriptstyle S\left<p_1,\dots,p_k\right>\oplus S\left<p_{k+1}\right>
  \arrow[d, "\LocalPhi_1\left(f\right) = \spmat{I_k & 0}"]
  \arrow[ddl, gray]
  \arrow[dl, white, swap, "\color{gray}-\LocalPhi_2\left(g{,}f\right) = \spmat{0 & 0 \\[1mm] 0 & 0}\hspace{-18mm}"]
\\
\scriptstyle S\left<p_1',\dots,p_k'\right>
  \arrow[r, "\LocalPhi\left(L'\right) = C-Du^{-1}E^T"]
  \arrow[d, "\LocalPhi_1\left(g\right) = \spmat{I_k \\[1mm] -u^{-1}E^T}"]
&
\scriptstyle S\left<s_1',\dots,s_k'\right>
  \arrow[r, "\LocalPsi\left(L'\right) = F"]
  \arrow[d, "\LocalPsi_1\left(g\right) = \spmat{I_k \\[1mm] 0}"]
&
\scriptstyle S\left<p_1',\dots,p_k'\right>
  \arrow[d, "\LocalPhi_1\left(g\right) = \spmat{I_k \\[1mm] -u^{-1}E^T}"]
\\
\scriptstyle S\left<p_1,\dots,p_k\right>\oplus S\left<p_{k+1}\right>
  \arrow[r, "\LocalPhi\left(L\right) = \spmat{C & D \\[1mm] E^T & u}"]
&
\scriptstyle S\left<s_1,\dots,s_k\right>\oplus S\left<s_{k+1}\right>
  \arrow[r, "\LocalPsi\left(L\right) = \spmat{F & G \\[1mm] H^T & v}"]
&
\scriptstyle S\left<p_1,\dots,p_k\right>\oplus S\left<p_{k+1}\right>
\end{tikzcd}
$$

The second $A_\infty$-relation on $\LocalF$ from Definition \ref{defn:AinftyCategory} evaluated at $\left(g,f\right)$
is written
as
\begin{align*}
\operatorm_1^{\mathcal{MF}\left(xyz\right)}&\left(\LocalF_2\left(g,f\right)\right)
+ \operatorm_2^{\mathcal{MF}\left(xyz\right)}\left(\LocalF_1\left(g\right),\LocalF_1\left(f\right)\right)
\\
&= \LocalF_2\left(\operatorm_1\left(g\right),f\right)
+ (-1)^{\left|g\right|'}\LocalF_2\left(g,\operatorm_1\left(f\right)\right)
+ \LocalF_1\left(\operatorm_2\left(g,f\right)\right).
\end{align*}
The left hand side is calculated as
\begin{align*}
\text{(LHS)}
=
\big(&\LocalPsi\left(L\right) \circ \LocalPhi_2\left(g,f\right)
+ \LocalPsi_2\left(g,f\right) \circ \LocalPhi\left(L\right)
+ \LocalPhi_1\left(g\right) \circ \LocalPhi_1\left(f\right),
\\
&\LocalPhi\left(L\right) \circ \LocalPsi_2\left(g,f\right)
+ \LocalPhi_2\left(g,f\right) \circ \LocalPsi\left(L\right)
+ \LocalPsi_1\left(g\right) \circ \LocalPsi_1\left(f\right)
\big)
\end{align*}
and the right hand side is done as
$$
\text{(RHS)}
=
\LocalF_1\left(\id_L\right)
=
\left(\LocalPhi_1\left(\id_L\right), \LocalPsi_1\left(\id_L\right)\right)
=
\left(\operatorm_2^{0,0,b}\left(\id_L,\bullet\right),\operatorm_2^{0,0,b}\left(\id_L,\bullet\right)\right)
=
\left(I_{k+1},I_{k+1}\right),
$$
using $\operatorm_1\left(f\right)=0$, $\operatorm_1\left(g\right)=0$, $\operatorm_2\left(g,f\right)=\id_L$
and the property of the unit element $\id_L$. Thus comparing both sides yields two equations
\begin{equation}\label{eqn:HomotopyEquations}
\arraycolsep=1pt\def\arraystretch{1}
\left\{
\begin{array}{ccccccccccccc}
\LocalPhi_1\left(g\right) & \circ & \LocalPhi_1\left(f\right)
& = & I_{k+1} & + & \Psi^{\mathbb{L}}\left(L\right) & \circ & \left(-\LocalPhi_2\left(g,f\right)\right)
& + & \left(-\LocalPsi_2\left(g,f\right)\right) & \circ & \LocalPhi\left(L\right)
\\[3mm]
\LocalPsi_1\left(g\right) & \circ & \LocalPsi_1\left(f\right)
& = & I_{k+1} & +&  \LocalPhi\left(L\right) & \circ & \left(-\LocalPsi_2\left(g,f\right)\right)
& + & \left(-\LocalPhi_2\left(g,f\right)\right) & \circ & \LocalPsi\left(L\right)
\end{array}
\right..
\end{equation}

On the other hand,
by some direct polygon countings containing $f$ or $g$, we get
$$
\arraycolsep=1pt\def\arraystretch{1}
\left\{\ \
\begin{array}{ccccllrccc}
\LocalPhi_1\left(f\right)\left(p_i\right)
& = & \operatorm_2^{0,0,b}\left(f,p_i\right)
& = & \left\{ \begin{matrix*}[l] p_i' \\ 0 \end{matrix*} \right.
&
\begin{matrix*}[l]
\text{for }i=1,\dots,k, \\ \text{for }i=k+1
\end{matrix*}
& \Rightarrow &
\quad \LocalPhi_1\left(f\right) & = & \begin{pmatrix} I_k & 0 \end{pmatrix}
\\[5mm]
\LocalPsi_1\left(f\right)\left(s_i\right)
& = & \operatorm_2^{0,0,b}\left(f,s_i\right)
& = & \left. \begin{matrix*}[l] s_i' \end{matrix*} \right.
&
\text{for }i=1,\dots,k
& \Rightarrow &
\quad \LocalPsi_1\left(f\right) & = & \begin{pmatrix} I_k & * \end{pmatrix}
\\[5mm]
\LocalPhi_1\left(g\right)\left(p_i'\right)
& = & \operatorm_2^{0,0,b}\left(g,p_i'\right)
& = & \left. \begin{matrix*}[l] p_i + (*) p_{k+1} \end{matrix*} \right.
&
\text{for }i=1,\dots,k
& \Rightarrow &
\quad \LocalPhi_1\left(g\right) & = & \begin{pmatrix} I_k \\ * \end{pmatrix}
\\[5mm]
\LocalPsi_1\left(g\right)\left(s_i'\right)
& = & \operatorm_2^{0,0,b}\left(g,s_i'\right)
& = & \left. \begin{matrix*}[l] s_i \end{matrix*} \right.
&
\text{for }i=1,\dots,k
& \Rightarrow &
\quad \LocalPsi_1\left(g\right) & = & \begin{pmatrix} I_k \\ 0 \end{pmatrix}
\\[5mm]
\LocalPhi_2\left(g,f\right)\left(p_{i}\right)
& = & \operatorm_3^{0,0,b}\left(g,f,p_{i}\right)
& = & \left. \begin{matrix*}[l] 0 \end{matrix*} \right.
&
\text{for }i=1,\dots,k+1
& \quad \Rightarrow&
\quad \LocalPhi_2\left(g,f\right) & = & \begin{pmatrix} 0 & 0 \\ 0 & 0 \end{pmatrix}
\\[5mm]
\LocalPsi_2\left(g,f\right)\left(s_{i}\right)
& = & \operatorm_3^{0,0,b}\left(g,f,s_{i}\right)
& = & \left\{ \begin{matrix*}[l] 0 \\ wp_{k+1} \end{matrix*} \right.
&
\begin{matrix*}[l]
\text{for }i=1,\dots,k, \\ \text{for }i=k+1
\end{matrix*}
& \Rightarrow &
\quad \LocalPsi_2\left(g,f\right) & = & \begin{pmatrix} 0 & 0 \\ 0 & w \end{pmatrix}
\end{array}
\right.
$$
for some $w\in S$. Here, a priori, some coefficient of a monomial in $w$ might consists of an infinite sum so we cannot assume that $w$ is indeed an element of $S$. But we will demonstrate that this is not the case in the next two paragraphs, under the regularity of $L'$ and the finiteness of $\LocalF\left(L\right)$.

First, we claim that the constant term (not containing $x$, $y$ or $z$) of $w$ is just $1$ (after substituting
$T=1$).
This is equivalent to showing that 
$\operatorm_3\left(g,f,s_{k+1}\right)$ has only one term $p_{k+1}$, coming from the
obvious smallest (embedded) quadrangle $\left(\square gfs_{k+1}\overline{p_{k+1}}\right)$ in the figure with vertices $g$, $f$, $s_{k+1}$ and $\overline{p_{k+1}}$.
Indeed, if there is an another (immersed) quadrangle $\left(\tilde{\square} gfs_{k+1}\overline{p_{k+1}}\right)$ having the same vertices, it should contain $\left(\square gfs_{k+1}\overline{p_{k+1}}\right)$ at least twice at its
ends, and subtracting
one of them from one end would give an immersed cylinder bounded by $L'$ and $\mathbb{L}$, which contradicts the regularity of
$L'$.

Next, substituting the above computations into Equations \ref{eqn:HomotopyEquations} gives $wu = uw = 1$ (which should hold a priori over $\Lambda$).
Therefore, we know that the constant term of $u$ is also $1$, implying that $u$ is a unit (both in $\Lambda[[x,y,z]]$ and $S$).
Furthermore, by the finiteness of $\LocalF\left(L\right)$, each coefficient of a monomial in $u$ consists of a finite sum (in $\Lambda$ before substituting $T=1$)
and then so does $w=u^{-1}$, resulting in $w\in S$.

Equations \ref{eqn:HomotopyEquations} also fill in all other missing entries so that we have
$$
\LocalPsi_1\left(f\right) = \begin{pmatrix} I_k & -Du^{-1} \end{pmatrix},
\quad
\LocalPhi_1\left(g\right) = \begin{pmatrix} I_k \\ -u^{-1}E^T \end{pmatrix}
\quad\text{and}\quad
\LocalPsi_2\left(g,f\right) = \begin{pmatrix} 0 & 0 \\ 0 & u^{-1} \end{pmatrix}
$$
and finally, commuting of the diagram determines
$$
\LocalPhi\left(L'\right) = C-Du^{-1}E^T
\quad\text{and}\quad
\LocalPsi\left(L'\right) = F.
$$

Note that
$
\LocalF_1\left(f\right)\circ\LocalF_1\left(g\right)
$
is exactly the identity morphism of $\LocalF\left(L'\right)$,
and
$\LocalF_1\left(g\right)\circ\LocalF_1\left(f\right)$
is homotopic to the identity morphism of $\LocalF\left(L\right)$ through an explicit homotopy
$$
\left(
-\LocalPsi_2\left(g,f\right)
=
\begin{pmatrix} 0 & 0 \\ 0 & 0 \end{pmatrix},
\ \
-\LocalPhi_2\left(g,f\right)
=
\begin{pmatrix} 0 & 0 \\ 0 & -u^{-1} \end{pmatrix}\right),
$$
as described in Equations \ref{eqn:HomotopyEquations}.
That is, $\LocalF\left(L\right)$ and $\LocalF\left(L'\right)$  are homotopically equivalent.
\end{proof}

An example of the matrix reduction according to removing a bigon was given in Proposition \ref{prop:RemovingBigon}.
We remark here that actually a type III-iii or III-iv move can also be obtained by a composition of type III-i,
III-ii and V moves.
It would be a good exercise to check that the row/column operation process in Lemma \ref{lem:HomotopyTypeIII}.(2)
matches that obtained from Lemma \ref{lem:HomotopyTypeIII}.(1)
and the matrix reduction in Lemma \ref{lem:HomotopyTypeV}.

\begin{remark}\label{rmk:MatrixReduction}
According to Eisenbud's theorem, two homotopically equivalent matrix factorizations of $xyz$ should yield the same Cohen-Macaulay modules over
$A$ up to free modules. In the case of Lemma \ref{lem:HomotopyTypeV}, in particular, we can find an explicit stable isomorphism between them as below.
$$
\newcommand{\scriptverteq}{\mathrel{\rotatebox{90}{$\scriptstyle=$}}}
\tikzset{
    labl/.style={anchor=south, rotate=90, inner sep=.5mm}
}
\begin{tikzcd}[arrow style=tikz,>=stealth,row sep=5em,column sep=7em,
    execute at end picture={
    \path (\tikzcdmatrixname-1-1) -- (\tikzcdmatrixname-2-1)
    coordinate[pos=0.5] (aux1)
    (\tikzcdmatrixname-1-2) -- (\tikzcdmatrixname-2-2)
    coordinate[pos=0.5] (aux2)
    (aux1) -- (aux2) node[midway,sloped]{$\hspace{-2mm} \begin{matrix} \scriptstyle\LocalPhi\left(L'\right) \\[-1mm] \scriptverteq \end{matrix}$};
    },
    execute at end picture={
    \path (\tikzcdmatrixname-1-3) -- (\tikzcdmatrixname-2-3)
    coordinate[pos=0.5] (aux1)
    (\tikzcdmatrixname-1-2) -- (\tikzcdmatrixname-2-2)
    coordinate[pos=0.5] (aux2)
    (aux1) -- (aux2) node[midway,sloped]{$\hspace{1mm} \begin{matrix} \scriptstyle\LocalPsi\left(L'\right) \\[-1mm] \scriptverteq \end{matrix}$};
    }] 
\scriptstyle S\left<p_1,\dots,p_k,p_{k+1}\right>
  \arrow[r, "\LocalPhi\left(L\right) = \spmat{ C & D \\[1mm] E^T & u }"]
  \arrow[d, "\cong" labl, swap, "\spmat{ I_k & 0 \\[1mm] u^{-1}E^T & 1 }\hspace{3mm}"]
&
\scriptstyle S\left<s_1,\dots,s_k,s_{k+1}\right>
  \arrow[r, "\LocalPsi\left(L\right) = \spmat{F & G \\[1mm] H^T & v}"]
  \arrow[d, "\cong" labl, "\spmat{ I_k & -Du^{-1} \\[1mm] 0 & u^{-1} }"]
&
\scriptstyle S\left<p_1,\dots,p_k,p_{k+1}\right>
  \arrow[d, "\cong" labl, "\spmat{ I_k & 0 \\[1mm] u^{-1}E^T & 1 }"]
\\
\scriptstyle S\left<p_1',\dots,p_k'\right>\oplus S
  \arrow[r, "\spmat{ C-Du^{-1}E^T & 0 \\[1mm] 0 & 1 }"]
&
\scriptstyle S\left<s_1',\dots,s_k'\right>\oplus S
  \arrow[r, "\hspace{7mm}\spmat{ F & 0 \\[1mm] 0 & xyz }"]
&
\scriptstyle S\left<p_1',\dots,p_k'\right>\oplus S
\end{tikzcd}
$$
Commuting of the diagram is immediate from some matrix calculations and the fact that $\left(\LocalPhi\left(L\right),\LocalPsi\left(L\right)\right)$
is a matrix factorization of $xyz$. Also, note that the vertical maps
are all isomorphisms. From this we get isomorphisms
\begin{align*}
\cok\underline{\LocalPhi\left(L\right)}
&\cong \cok\begin{pmatrix} \underline{\LocalPhi\left(L'\right)} & 0 \\ 0 & 1 \end{pmatrix}
\cong \cok\underline{\LocalPhi\left(L'\right)}
\quad\text{and}
\\
\cok\underline{\LocalPsi\left(L\right)}
&\cong \cok\begin{pmatrix} \underline{\LocalPsi\left(L'\right)} & 0 \\ 0 & 0 \end{pmatrix}
\cong \cok\underline{\LocalPsi\left(L'\right)}\oplus A
\quad\text{in }
\operatorname{CM}\left(A\right).
\end{align*}
This is also closely related to the algebraic version of matrix reduction process used in Lemma \ref{lem:MatrixReductionLemma} which
corresponds to the removing a redundant generator of a module.
\end{remark}

\subsection{Admissibility}\label{sec:Admissibility}

In this subsection,
we recall the notion of `\emph{admissibility}' introduced in the paper of Azam and Blanchet \cite{AB22}, and prove some related lemmas that we need for our propositions. In particular, we define the notion of `{\em strong admissibility}', which is a special type of admissibility that is easier to see whether a given loop has the property.

Let $L$ be an unobstructed loop which intersects transversally with the Seidel Lagrangian $\bll$ and $L$ itself. Then $\cpp\setminus (L\cup \bll)$ and $(L\cup\bll)\setminus\{\text{intersection points}\}$ have only finitely many path connected components. Define $C_2 = C_2\left(\cpp ; \bll, L ; \bzz\right)$ as a free $\bzz$-module generated by components which do not contain punctures. Also define $C_1 = C_1\left(\cpp ; \bll, L ; \bzz\right)$ as a free $\bzz$-module generated by components of $(L\cup \bll)\setminus\{\text{intersections points}\}$. Then there is a natural boundary operator $\partial : C_2 \rightarrow C_1$. The orientations of elements in $C_1$ and $C_2$ are inherited from the orientation of $\cpp, L$ and $\bll$. Let us call an element of $C_i$ an {\em $i$-chain} and a basis element of $C_i$ an {\em $i$-basis} for $i=1, 2$. Let us denote by $\left<x, \tau\right>$ the coefficient of $\tau$ in $x$, where $\tau$ is an $i$-basis and $x$ is an $i$-chain for $i=1, 2$. Note that for any $2$-basis $\sigma$ and $1$-basis $\tau$, $\left<\partial\sigma, \tau\right>= 1, 0$, or $-1$. Also note that for each $1$-basis $\tau$, there are at most two $2$-basis $\sigma$ such that $\left<\partial\sigma, \tau\right>\neq 0$ and if there are two, then they should have opposite sign.

Note that there are exactly three component containing punctures. Let us denote them by $A, B$, and $C$. By measuring distance from the puncture, we can give a natural grading called {\em level} to each $1$- and $2$-basis as follows. First, boundary components of $A, B$ and $C$ are set to be of level $0$. If a $2$-basis has a boundary component of level $0$, then is is said to be of level $1$. For a positive integer $n$, a $1$-basis is said to be of level $n$ if it is not of level $k$ for each $k<n$, and it is a boundary of some level $n$-component. Also, a $2$-basis is said to be level $n$ if it is not of level $k$ for each $k<n$, and it has a boundary component of level $(n-1)$. Since basis is finite, level is well defined.

\begin{lemma}
        The boundary map $\partial$ is injective.
\end{lemma}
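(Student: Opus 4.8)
The statement to prove is that the boundary map $\partial \colon C_2 \to C_1$ is injective, where $C_2$ is freely generated by the bounded components of $\mathcal{P} \setminus (L \cup \mathbb{L})$ and $C_1$ is freely generated by the edges of $(L \cup \mathbb{L}) \setminus \{\text{intersection points}\}$. The plan is to show that a nonzero $2$-chain $x = \sum_\sigma a_\sigma \sigma$ always has nonzero boundary by locating a ``boundary edge'' of the support of $x$ that survives cancellation. First I would introduce the level grading recalled just before the lemma and argue by induction on the maximal level appearing in the support of $x$ with nonzero coefficient, together with a ``topmost'' choice of $2$-basis among that level; the reason to use levels is exactly to force the existence of an edge of $\sigma$ that borders a strictly lower-level region (ultimately a region adjacent to a puncture), and such an edge cannot be cancelled because the only other $2$-basis on its far side is of lower level and hence lies outside the top stratum we are examining.

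More concretely, here are the steps in order. (1) Fix $x \neq 0$ and let $N$ be the maximal level of a $2$-basis $\sigma$ with $a_\sigma \neq 0$; let $\sigma_0$ be such a basis of level $N$. (2) By definition of level, $\sigma_0$ has a boundary edge $\tau_0$ of level $N-1$ (for $N=1$ this is a boundary component of one of $A,B,C$). (3) The key local fact, already noted in the excerpt, is that each $1$-basis $\tau$ bounds at most two $2$-bases, and if two, with opposite signs. So $\langle \partial x, \tau_0\rangle = a_{\sigma_0}\langle \partial\sigma_0,\tau_0\rangle + a_{\sigma_1}\langle \partial \sigma_1, \tau_0\rangle$ where $\sigma_1$ is the other $2$-basis across $\tau_0$ (if it exists). (4) I claim $\sigma_1$, if it exists, has level $< N$: indeed $\tau_0$ has level $N-1$, so any $2$-basis having $\tau_0$ as a boundary edge and not already of smaller level would be assigned level $N$ — but a region straddling $\tau_0$ on the side closer to the puncture is of level $N-1 < N$; more carefully, one checks that between the two sides of an edge of level $N-1$ the levels of the adjacent regions are $N-1$ and $N$ (or the edge borders a puncture region, in which case there is no $\sigma_1$ at all). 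Hence $a_{\sigma_1} = 0$ by maximality of $N$, so $\langle \partial x, \tau_0\rangle = a_{\sigma_0}\langle \partial\sigma_0,\tau_0\rangle = \pm a_{\sigma_0} \neq 0$, giving $\partial x \neq 0$.

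The main obstacle I anticipate is making step (4) precise: one must verify the claimed ``level jump by exactly one across an edge'' and, in the base case, handle the interaction with the three puncture-adjacent regions $A, B, C$ and the edges of level $0$ correctly — in particular ensuring that a level-$1$ region really does have a level-$0$ boundary edge on which no cancellation occurs because the other side is the (excluded) punctured region. A secondary subtlety is that an edge $\tau_0$ might be shared by $\sigma_0$ with \emph{itself} (the same $2$-basis appearing on both sides, which cannot happen for the combinatorial reason that the two sides then have opposite induced orientations and distinct levels) or that $\sigma_0$ might have several level-$(N-1)$ boundary edges; either way one only needs \emph{one} good edge, so these cause no real difficulty. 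Once these local level bookkeeping facts are in place, the induction/extremal argument closes immediately and $\partial$ is injective.
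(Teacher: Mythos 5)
There is a genuine gap, and it comes from the extremal choice being backwards. You let $N$ be the \emph{maximal} level appearing in the support of $x$, take $\sigma_0$ of level $N$ with $a_{\sigma_0}\neq 0$, and pick a boundary edge $\tau_0$ of $\sigma_0$ of level $N-1$. As you correctly observe, the region $\sigma_1$ on the other side of $\tau_0$ then has level $N-1$. But level $N-1 < N$, and maximality of $N$ only tells you that $a_\sigma = 0$ for $2$-bases $\sigma$ of level \emph{strictly greater} than $N$. It says nothing about $a_{\sigma_1}$, which lives at level $N-1$ and may perfectly well be nonzero. So the conclusion ``$a_{\sigma_1}=0$ by maximality of $N$'' does not follow, and $\langle \partial x, \tau_0\rangle = \pm a_{\sigma_0} \pm a_{\sigma_1}$ can cancel. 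The argument does not close.

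The fix is to take the \emph{minimal} level $k$ in the support instead, which is what the paper does. If $\sigma$ has level $k$ and $a_\sigma\neq 0$, then $\sigma$ has a boundary edge $\tau$ of level $k-1$; since $\tau$ must bound some level-$(k-1)$ region and $\sigma$ is of level $k$, the region $\sigma'$ on the other side (when it is a bounded region and not one of $A,B,C$) is of level $k-1 < k$, so $a_{\sigma'}=0$ by \emph{minimality}. Hence $\langle \partial x, \tau\rangle = \pm a_\sigma \neq 0$ and $\partial$ is injective. Your local bookkeeping — the at-most-two-regions-per-edge fact with opposite signs, the existence of a level-$(k-1)$ boundary edge coming from the definition of the level of a $2$-basis, and the base case where the other side is a punctured region — is exactly what is needed; only the direction of the extremal choice needs to be reversed.
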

\begin{proof}
%
                
        Suppose that $x$ is a $2$-chain such that $\partial(x)=0$. Take a $2$-basis $\sigma$ of minimal level $k$ in $x$. Then, by definition, $\partial x$ has level $k$ boundary $1$-basis. To eliminate it, there should be $2$-basis of level $(k-1)$ in $x$, which contradicts minimality of $\sigma$. Thus $x=0$, which implies that $\partial$ is injective.        
\end{proof}

Now recall the definition of the Euler measure $e : C_2\rightarrow \bzz$. For a $2$-basis $\sigma$, define its Euler measure as $e(\sigma) = 1-\frac{1}{4}\#\{\text{boundary components of }\sigma\}$ and extend linearly to whole of $C_2$.\footnote{For a general surface, the Euler measure is defined as the total curvature of a metric through which its boundaries are geodesics and the corner has the right angle. See \cite{AB22}} Let $[\bll]$ and $[L]$ be $1$-chains obtained from the Lagrangians $\bll$ and $L$, respectively and define a set $H(L)$ consisting of $2$-chains with Euler measure $0$ whose boundary is a linear combination of $[\bll]$ and $[L]$.

\begin{defn}\label{defn:Admissibility}
        An unobstructed loop $L$ is said to be {\em admissible} if any nonzero $2$-chain in $H(L)$ has both positive and negative coefficients.
\end{defn}

\begin{remark}\label{rmk:RegularityAdmissibility}
The regular loops turn out to be a wider class than the admissible loops. Indeed, we can show with an obvious
argument that an admissible loop is always regular, but the converse is false.
\end{remark}

\begin{example}
        Let $L$ be an unobstructed loop. Then there are three components containing puncture, namely $A, B$, and $C$. Suppose that the boundary $\partial(A), \partial(B), \partial(C)$ have both positive and negative components of $[L]$. Since the boundary operator $\partial$ is injective and $A, B$, and $C$ are not $2$-basis, there are not positive or negative $2$-chain of which boundary is a linear combination of $[L]$ and $[\bll]$. Thus $L$ is admissible. This motivates the following definition.
\end{example}

\begin{defn}\label{def:StronglyAdmissible}
        An unobstructed loop $L$ is said to be {\em strongly admissible} if the boundary of punctured components $A, B, C$ have both positive and negative components.
\end{defn}

From the definition, the proposition below immediately follows. 
\begin{lemma}
        If an unobstructed loop is strongly admissible, then it is admissible.
\end{lemma}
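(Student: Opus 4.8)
The plan is simply to record, in contrapositive form, the computation already carried out in the Example preceding Definition \ref{def:StronglyAdmissible}; as the remark there indicates, the statement is essentially immediate from the definitions once the bookkeeping is made explicit. First I would assume that $L$ is strongly admissible and that, contrary to the claim, it is not admissible, so that $H(L)$ contains a nonzero $2$-chain $x = \sum_\sigma a_\sigma\sigma$ all of whose coefficients have a single sign; replacing $x$ by $-x$ if necessary, take $a_\sigma \ge 0$ for every $2$-basis $\sigma$. Since $x \in H(L)$, write $\partial x = c_1[\bll] + c_2[L]$ with $c_1, c_2 \in \bzz$.

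The key step is a double count of $\langle \partial x, \tau\rangle$ for $1$-bases $\tau$ lying on the boundary of a punctured component, say $\tau \subset \partial A$. Because $A$ is not an element of $C_2$ (and, generically, no $1$-basis separates two punctured regions), $\tau$ borders $A$ on one side and a \emph{unique} $2$-basis $\sigma(\tau)$ on the other, with opposite induced orientations; hence $\langle \partial x, \tau\rangle = a_{\sigma(\tau)}\langle \partial\sigma(\tau),\tau\rangle = -a_{\sigma(\tau)}\langle \partial A, \tau\rangle$, which is $\le 0$ when $\langle \partial A,\tau\rangle = +1$ and $\ge 0$ when $\langle\partial A,\tau\rangle = -1$, using $a_{\sigma(\tau)}\ge 0$. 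On the other hand, if $\tau$ is a segment of $L$, then reading off $\partial x = c_1[\bll]+c_2[L]$ and using that a segment of $L$ inherits its $1$-basis orientation from $L$ gives $\langle\partial x,\tau\rangle = c_2$. Strong admissibility supplies on $\partial A$ both a segment $\tau_+$ of $L$ with $\langle\partial A,\tau_+\rangle = +1$ and a segment $\tau_-$ with $\langle\partial A,\tau_-\rangle = -1$; comparing the two evaluations forces $c_2 \le 0$ and $c_2 \ge 0$, so $c_2 = 0$, and moreover $a_{\sigma(\tau_+)} = a_{\sigma(\tau_-)} = 0$. Running the same computation over $A$, $B$, $C$ kills all $L$-edge coefficients of $\partial x$; I would then conclude $c_1 = 0$ and $x = 0$ by invoking injectivity of $\partial\colon C_2 \to C_1$ (the lemma proved just above) together with the level filtration used in its proof — a positively weighted $2$-basis of minimal level must border a punctured component, the $L$-segments there have already been shown to have zero-weighted $2$-basis neighbors, and equality of weights propagates across interior $L$-segments — which is incompatible with $x \neq 0$.

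The step I expect to require the most care is precisely this last propagation from $c_2 = 0$ to $x = 0$: showing that the vanishing of the $L$-edge coefficients of $\partial x$, together with strong admissibility and the injectivity of $\partial$, leaves no room for a nonzero one-signed chain. The double-counting identity itself and the accompanying orientation conventions ($\langle\partial\sigma(\tau),\tau\rangle = -\langle\partial A,\tau\rangle$, and $\langle\partial x, \tau\rangle = c_2$ for $\tau \subset L$) are routine once fixed. In view of the remark immediately preceding the statement, however, the whole argument should reduce to a direct unwinding of Definitions \ref{defn:Admissibility} and \ref{def:StronglyAdmissible} — strong admissibility having been set up exactly so that the reasoning of the Example applies.
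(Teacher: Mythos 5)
Your derivation of $c_2 = 0$ via double-counting $\langle\partial x,\tau\rangle$ over $L$-segments $\tau\subset\partial A$ is correct, and it is essentially the argument the paper intends (the paper's own "proof" is the one sentence "from the definition\dots immediately follows" together with the Example preceding Definition~\ref{def:StronglyAdmissible}, which packs the same count into one line). The genuine gap is in the final step, and you were right to be suspicious of it. The assertion that "a positively weighted $2$-basis of minimal level must border a punctured component" is not justified: a positively weighted $2$-basis $\sigma^*$ at minimal level $k\geq 2$ has a level-$(k-1)$ boundary edge $\tau$ whose other side is a $2$-basis of weight $0$, and if $\tau$ happens to be a $\bll$-segment you only learn $|c_1|=a_{\sigma^*}$, not a contradiction and not a reduction to level $1$. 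More fundamentally, propagation across $L$-segments can never cross $\bll$, so it stays inside one component of $\cpp\setminus\bll$: from the zero-weighted $2$-bases touching $\partial A$ you reach at most the pieces of the punctured bigons $D_x,D_y,D_z$, never those of the two triangular components, and you never obtain $c_1=0$. Injectivity of $\partial$ only bites once you know $\partial x = 0$, which needs $c_1 = 0$, which is precisely what remains unproved.

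What is missing is an orientation input from the Seidel Lagrangian itself. Once $c_2=0$ and $L$-propagation (together with connectedness of the dual graph of $L\cap D_x$, so that every piece of $D_x$ is $L$-linked to the punctured piece) kill all $a_\sigma$ for $\sigma$ inside $D_x,D_y,D_z$, compare a piece $\tau$ of $l_x$ separating a zero-weighted $\sigma_1\subset D_x$ from some $\sigma_2$ in the front triangle with a piece $\tau'$ of $\tilde{l_x}$ separating a zero-weighted piece of $D_x$ from some $\sigma_2'$ in the back triangle. Reading off $c_1=\langle\partial x,\tau\rangle=\langle\partial x,\tau'\rangle$ yields $a_{\sigma_2}\langle\partial\sigma_2,\tau\rangle = a_{\sigma_2'}\langle\partial\sigma_2',\tau'\rangle$. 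Because $\bll$ traverses $l_x$ and $\tilde{l_x}$ in the \emph{same} direction between their shared endpoints $Y,Z$ (this follows from the crossing pattern of $\bll$ at $Y$ and $Z$), the two signs $\langle\partial\sigma_2,\tau\rangle$, $\langle\partial\sigma_2',\tau'\rangle$ are opposite; since both weights are $\geq 0$, both vanish, so $c_1=0$, then $\partial x=0$, and finally $x=0$ by injectivity. This sign analysis on $\bll$ is the step your proposal omits.
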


\begin{example}\label{ex:DegenerateLagrnagianStronglyAdmissible}
        The Lagrangian in Remark \ref{rem:DegenerateLagrangian} is strongly admissible, hence it is regular. As an example, the Lagrangian $L((2, 2, 2, 2, 2, 2), \lambda')$ is illustrated in Figure \ref{fig:StronglyAdmissibleL(2)}. Note that the component containing puncture $a$ has a boundary with positive and negative components.
        
        \begin{figure}[H]
                \centering
                \includegraphics[height=55mm]{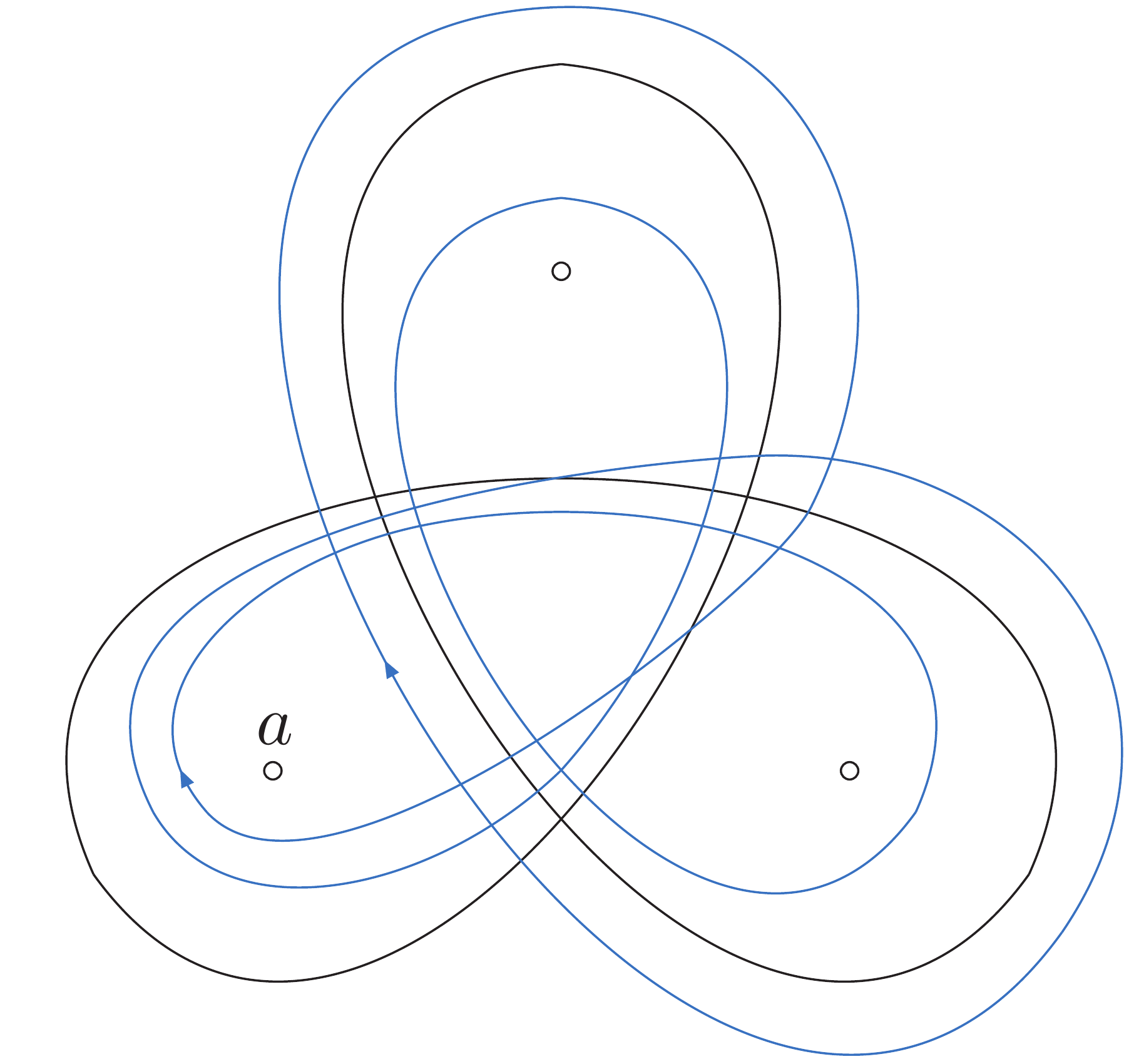}
                \caption{The Lagrangian $L(2, 2, 2, 2, 2, 2), \lambda')$}
                \label{fig:StronglyAdmissibleL(2)}
        \end{figure}
\end{example}

Let $p, q\in L\cap \bll$ and $b_1, \cdots, b_r$ be self intersection points of $\bll$, and denote the set of decorated strips whose vertices are $p, b_1, \cdots, b_r, q$ by $\mathcal{M}(p, b_1, \cdots, b_r, q)$. Note that an element of $\mathcal{M}(p, b_1, \cdots, b_r, q)$ gives a 2-chain. By mimicking the proof of Lemma $3.3$, Proposition $3.4$, and Corollary $3.5$ in \cite{AB22}, we get the following result.

\begin{lemma}\label{lem:UpperRight}
        Let $n$ be a positive integer and $\bzz_{\geq 0}$ be the set of nonnegative integers. Then for any infinite subset $S \subseteq \bzz_{\geq 0}^n$, there are distinct elements $x=(x_1, \cdots, x_n)$ and $y=(y_1, \cdots, y_n)$ such that $x_i\leq y_i$ for all $i$.
\end{lemma}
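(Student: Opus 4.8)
The statement to prove is Lemma~\ref{lem:UpperRight}: for any positive integer $n$ and any infinite subset $S\subseteq\mathbb{Z}_{\ge0}^n$, there exist distinct $x=(x_1,\dots,x_n)$ and $y=(y_1,\dots,y_n)$ in $S$ with $x_i\le y_i$ for all $i$. This is a standard combinatorial fact (a version of Dickson's lemma / the statement that $\mathbb{N}^n$ with the componentwise order is a well-quasi-order), and the plan is to prove it by induction on $n$.

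\textbf{Base case.} For $n=1$, any infinite subset $S\subseteq\mathbb{Z}_{\ge0}$ is an infinite set of nonnegative integers, hence unbounded; picking any element $x\in S$ and then any $y\in S$ with $y\ge x$ and $y\ne x$ (which exists since $S$ is infinite and $\{s\in S: s\le x\}$ is finite) gives the claim.

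\textbf{Inductive step.} Assume the statement holds for $n-1$. Given an infinite $S\subseteq\mathbb{Z}_{\ge0}^n$, consider the projection $\pi\colon\mathbb{Z}_{\ge0}^n\to\mathbb{Z}_{\ge0}$ onto the last coordinate. If $\pi(S)$ is finite, then by the pigeonhole principle some fiber $S\cap\pi^{-1}(c)$ is infinite; identifying this fiber with an infinite subset of $\mathbb{Z}_{\ge0}^{n-1}$ and applying the inductive hypothesis produces distinct $x',y'$ with $x'_i\le y'_i$ for $i\le n-1$, and appending the common last coordinate $c$ finishes this case. If $\pi(S)$ is infinite, I would build a chain: choose $x^{(1)}\in S$; among the infinitely many elements of $S$ whose last coordinate exceeds that of $x^{(1)}$ (possible since $\pi(S)$ is unbounded), the inductive hypothesis applied to the first $n-1$ coordinates either already yields a comparable pair inside $S$, or — more carefully — one extracts an infinite sequence $x^{(1)},x^{(2)},\dots$ in $S$ with strictly increasing last coordinates, then applies the inductive hypothesis to the infinite set of their first-$(n-1)$-coordinate truncations to get indices $j<k$ with the truncation of $x^{(j)}$ dominated componentwise by that of $x^{(k)}$; since the last coordinates were chosen increasing, $x^{(j)}\le x^{(k)}$ componentwise and they are distinct.

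\textbf{Main obstacle.} The only subtlety is organizing the infinite-projection case so that the inductive hypothesis is applied to a genuinely infinite set of truncations while simultaneously controlling the last coordinate; the cleanest route is the two-step extraction just described (first thin $S$ to a sequence with increasing $n$-th coordinates, then apply induction to the $(n-1)$-truncations), and one should note that if the thinning process fails to produce an infinite strictly-increasing sequence then $\pi(S)$ is bounded, reducing to the finite-projection case. Everything else is routine, so I would present it compactly.
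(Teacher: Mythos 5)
Your argument has a gap in the $\pi(S)$-infinite case. After extracting $x^{(1)},x^{(2)},\dots$ with strictly increasing last coordinate and passing to the (assumed infinite) set of first-$(n-1)$-coordinate truncations, the inductive hypothesis yields two distinct truncations $a\le b$ — say $a$ is the truncation of $x^{(j)}$ and $b$ that of $x^{(k)}$ — but it gives no control over whether $j<k$. If $j>k$, the last coordinates go the wrong way and $x^{(j)},x^{(k)}$ need not be comparable (for $n=2$: the terms $(2,5),(1,10)$ have increasing second coordinate and comparable truncations $2,1$, yet the points themselves are incomparable and the comparable truncation $1\le2$ corresponds to index $2$ before index $1$). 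Repairing this requires upgrading to the ``good-sequence'' form of the inductive statement — that any infinite \emph{sequence} in $\mathbb{Z}_{\ge0}^{n-1}$ admits indices $j<k$ with the $j$-th term componentwise $\le$ the $k$-th — which is the usual wqo formulation and does not follow directly from the set-version hypothesis as you have stated it.

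The paper avoids this entirely with a different case split: it asks whether some fiber $p_i^{-1}(m)\cap S$ is infinite for some coordinate $i$ and some value $m$. If yes, induction applies exactly as in your finite-projection case. If no (all such fibers finite), it finishes without induction: fix any $x\in S$; the set $\{z : z_i\le x_i \text{ for some } i\}$ intersected with $S$ is a finite union of finitely many finite fibers, hence finite, so any $y\in S$ outside it satisfies $y_i>x_i$ for every $i$. This direct argument handles precisely the case where your appeal to the inductive hypothesis stumbles, so the two proofs are genuinely different in structure, and yours needs the fix above to be complete.
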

\begin{proof}
        The proof is by induction on the number of components $n$. The lemma is obviously true for $n=1$. Assume the lemma is true for $n<N$. Let $p_i : (\bzz_{\geq 0})^N \rightarrow \bzz_{\geq 0}$ be the projection to the $i$-th component. Suppose that there are some index $i$ and $m\in\bzz_{\geq 0}$ such that the fiber $p_i^{-1}(m)\cap S$ is infinite. Then, by the induction hypothesis, there are distinct $x, y\in p_i^{-1}(m)$ such that each coordinate of $x$ is not larger than that of $y$.
        
        Now assume that for each index $i$ and $m\in \bzz$ the fiber $p_i^{-1}(m)$ is finite and take an element $x=(x_1, \cdots, x_n) \in S$. Then the set
        $$\{(z_1, \cdots, z_n) : z_i\leq x_i \text{ for each } i \} = \bigcup_{i=1, \cdots, n}\bigcup_{m\leq x_i}p_i^{-1}(m)$$ is finite, so there is an element $y$ in the complement on $S$. Since for each $i$, $y_i>x_i$, so the lemma is proved.
\end{proof}
\begin{lemma}\label{lem:AdmissibleFinite}
        Let $L$ be an admissible loop and $p, q \in L\cap \bll$ be intersection points. Let also $b_1, \cdots, b_r\in\{X, Y, Z\}$ be self intersections of $\mathbb{L}$. Then the moduli space $\mathcal{M}(p, b_1, \cdots, b_r, q)$ is finite. 
\end{lemma}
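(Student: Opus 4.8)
The plan is to show finiteness of $\mathcal{M}(p, b_1, \dots, b_r, q)$ by contradiction, following the strategy of Azam--Blanchet \cite{AB22} but adapted to our decorated-strip setup. Suppose the moduli space is infinite. Each element $\phi \in \mathcal{M}(p, b_1, \dots, b_r, q)$ determines a $2$-chain $\phi_* \in C_2(\mathcal{P}; \mathbb{L}, L; \mathbb{Z})$ by taking its image with multiplicities; since $\phi$ is a holomorphic strip with convex corners at the prescribed intersection points, all coefficients of $\phi_*$ are nonnegative. First I would record that the Euler measure $e(\phi_*)$ is determined entirely by the combinatorial data: the number of corners of the strip is fixed (it is $r+2$, namely $p$, $b_1,\dots,b_r$, $q$), so by the Gauss--Bonnet-type formula for the Euler measure (each corner contributes a fixed defect, and $e$ counts $1 - \tfrac14 \#\{\text{boundary arcs}\}$ up to the corner corrections), $e(\phi_*)$ takes only finitely many values as $\phi$ ranges over $\mathcal{M}(p,b_1,\dots,b_r,q)$. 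Passing to an infinite subfamily, I may assume $e(\phi_*)$ is constant, say equal to $e_0$.

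Next, index the basis elements of $C_2$ as $\sigma_1, \dots, \sigma_N$ (there are finitely many), and associate to each $\phi$ the vector $(\langle \phi_*, \sigma_1\rangle, \dots, \langle \phi_*, \sigma_N\rangle) \in \mathbb{Z}_{\ge 0}^N$. By Lemma \ref{lem:UpperRight} (Dickson's lemma, proved above), from the infinite family I can extract two distinct strips $\phi, \phi'$ with $\langle \phi_*, \sigma_i\rangle \le \langle \phi'_*, \sigma_i\rangle$ for all $i$. Then the difference $D := \phi'_* - \phi_*$ is a nonzero $2$-chain with all coefficients nonnegative. The key point is that $D$ lies in $H(L)$: the boundary $\partial(\phi_*)$ and $\partial(\phi'_*)$ are both of the form ``(a piece of $[\mathbb{L}]$) $+$ (a piece of $[L]$)'' dictated by the same corner data $p, b_1, \dots, b_r, q$, so $\partial D = \partial \phi'_* - \partial \phi_*$ is a $\mathbb{Z}$-linear combination of $[\mathbb{L}]$ and $[L]$; and $e(D) = e(\phi'_*) - e(\phi_*) = e_0 - e_0 = 0$ since the Euler measure is additive and we arranged it to be constant. (Here I would spell out carefully why the boundary arcs along $L$ and along $\mathbb{L}$ match up, using that all strips in the moduli space have the same prescribed inputs and output and the same decoration pattern on the $\mathbb{L}$-side; any discrepancy would change one of the corner labels.) Thus $D \in H(L)$ is a nonzero $2$-chain all of whose coefficients are $\ge 0$, hence it has no negative coefficient.

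This contradicts the admissibility of $L$ (Definition \ref{defn:Admissibility}), which demands that every nonzero element of $H(L)$ have both a positive and a negative coefficient. Therefore $\mathcal{M}(p, b_1, \dots, b_r, q)$ must be finite. The main obstacle I anticipate is the bookkeeping in the middle step: verifying precisely that $\partial D$ is a combination of $[\mathbb{L}]$ and $[L]$ and that $e(D) = 0$, i.e.\ that the ``difference of two strips with the same corners'' really does land in $H(L)$ rather than in some larger group. This requires being careful about how the boundary of a decorated strip decomposes into arcs of $L$ and arcs of $\mathbb{L}$ near each corner (the corner at a self-intersection $b_j$ of $\mathbb{L}$ versus the corners $p, q$ at $L \cap \mathbb{L}$ behave differently), and about the additivity of the Euler measure under chain addition when boundary arcs cancel. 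Once that is nailed down, the rest is a direct transcription of Lemmas 3.3--3.5 and Corollary 3.5 of \cite{AB22}, and the role of Dickson's lemma is exactly as in their argument.
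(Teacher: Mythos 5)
Your proposal takes essentially the same route as the paper: map strips to $2$-chains, apply Dickson's lemma to extract a componentwise-comparable pair, and show their difference lies in $H(L)$ with only nonnegative coefficients, contradicting admissibility. One step is missing, though. Before you can invoke Lemma~\ref{lem:UpperRight}, you need the \emph{image} of $\mathcal{M}(p,b_1,\dots,b_r,q)$ in $\mathbb{Z}_{\ge 0}^N$ under $\phi\mapsto\phi_*$ to be infinite --- not just the moduli space itself. Two distinct strips can a priori produce the same $2$-chain, and if the fibers of this assignment were infinite your Dickson pair could satisfy $\phi_*=\phi'_*$, making $D=0$ and yielding no contradiction. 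The paper closes this by invoking Gromov compactness: each fiber $\pi^{-1}(x)$ over a fixed $2$-chain $x$ is finite, so an infinite moduli space gives an infinite image, and then Dickson's lemma produces a pair with $D\neq 0$. You should add this compactness step.

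Two smaller remarks. First, your pigeonhole step to make $e(\phi_*)$ constant on a subfamily is harmless but unnecessary: since every strip in the moduli space has exactly $r+2$ convex corners, its Euler measure equals $1-(r+2)/4$ outright, so $e$ is literally constant and $e(D)=0$ follows immediately from additivity. Second, the paper first partitions the moduli space into positively and negatively oriented strips and works with whichever piece is infinite; you skip this. The difference of two positive $1$-chains on $L$ from $p$ to $q$ is automatically a $1$-cycle on $L$, hence a multiple of $[L]$, and the same for the $\mathbb{L}$-side --- so the restriction to one orientation class is a bookkeeping convenience rather than a logical necessity. Your approach can be made to work without it, but that is exactly the ``careful verification'' you flag, and it is worth writing out that $\partial_1(\partial\phi_*|_L)=q-p$ for every strip regardless of orientation.
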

\begin{proof}
        For a decorated strip $u \in \mathcal{M}(p, b_1, \cdots, b_r, q)$, we say it is {\em positively oriented} if the path $u \circ \eta^+$ follows the loop $L$ positively and {\em negatively oriented} otherwise and, denote by $\overrightarrow{\mathcal{M}}(p, b_1, \cdots, b_r, q)$ and $\overleftarrow{\mathcal{M}}(p, b_1, \cdots, b_r, q)$ the subset consisting of positively oriented decorated strips and negatively oriented one, respectively. If the moduli space $\mathcal{M}(p, b_1, \cdots, b_r, q)$ is infinite, then one of the subsets is also infinite. We may assume $\overrightarrow{\mathcal{M}}(p, b_1, \cdots, b_r, q)$ is infinite.
        
        Let $\pi : \overrightarrow{\mathcal{M}}(p, b_1, \cdots, b_r, q) \rightarrow C_2(\cpp, \bll, L ; \bzz)$ be the natural assignment. Then for $u, v \in \overrightarrow{\mathcal{M}}(p, b_1, \cdots, b_r, q)$, $\partial(\pi(u))-\partial(\pi(v))$ is a linear combination of $[L], [\bll]$. Also since the Euler measure is additive, $[u]-[v]$ has Euler measure zero. By Gromov compactness theorem, the fiber $\pi^{-1}(x)$ is finite for each $x\in C_2$. Thus, by Lemma \ref{lem:UpperRight}, there is some $u, v \in \overrightarrow{\mathcal{M}}(p, b_1, \cdots, b_r, q)$ such that $[u]-[v]$ has only nonnegative components. This contradicts admissibility of $L$. Thus the moduli space $\mathcal{M}(p, b_1, \cdots, b_r, q)$ should be finite.
\end{proof}

\begin{cor}\label{cor:AdmissibleT=1}
For an admissible loop $L$, its mirror matrix factorization 
$\LocalF\left(L\right) = \left(\LocalPhi\left(L\right), \LocalPsi\left(L\right)\right)$
is finite.

\end{cor}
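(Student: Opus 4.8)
The plan is to reduce the statement to Lemma \ref{lem:AdmissibleFinite}, which does all the real work; the corollary is essentially a translation of that lemma into the language of matrix factorizations. Recall from Theorem \ref{thm:lmf} that $\LocalF(L) = \big(CF(L,\mathbb{L}),\, -\operatorm_1^{0,b}\big)$, so each entry of $\LocalPhi(L)$ and $\LocalPsi(L)$ — indexed by a pair $p,q \in L\cap\mathbb{L}$ of intersection points of the appropriate parity — is the coefficient of $q$ in
$$
\operatorm_1^{0,b}(p) = \sum_{l\ge 0} \operatorm_{l+1}(p,\underbrace{b,\ldots,b}_{l}), \qquad b = xX + yY + zZ .
$$
Expanding $b$ multilinearly, the $l$-th summand is a sum over all words $(b_1,\ldots,b_l)\in\{X,Y,Z\}^l$, each weighted by the corresponding monomial in $x,y,z$ of total degree $l$, of the signed count of rigid decorated strips bounded by $L$ and $\mathbb{L}$ with convex corners at $p,b_1,\ldots,b_l$ and output at $q$; that is, of $\sum_{u\in\mathcal{M}(p,b_1,\ldots,b_l,q)} \pm T^{\omega(u)}$.

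First I would fix one entry, say the $(p,q)$-entry of $\LocalPhi(L)$, and one monomial $x^ay^bz^c$, and observe that its coefficient in $\Lambda$ receives contributions only from the index $l = a+b+c =: r$, and within that index only from the words $(b_1,\ldots,b_r)$ containing exactly $a$ letters $X$, $b$ letters $Y$, and $c$ letters $Z$. There are at most $3^{r}$ such words, hence only finitely many, and the coefficient of $x^ay^bz^c$ equals
$$
\pm\sum_{(b_1,\ldots,b_r)}\ \sum_{u\in\mathcal{M}(p,b_1,\ldots,b_r,q)} \pm\, T^{\omega(u)},
$$
the outer sum being over this finite set of admissible corner-words.

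Next I would invoke Lemma \ref{lem:AdmissibleFinite}: since $L$ is admissible, each moduli space $\mathcal{M}(p,b_1,\ldots,b_r,q)$ is finite, so every inner sum is a finite $\C$-linear combination of powers of $T$, and therefore the coefficient of $x^ay^bz^c$ is a finite sum in $\Lambda$. As this holds for every monomial and every entry of $\LocalPhi(L)$ and $\LocalPsi(L)$ — the set $L\cap\mathbb{L}$ being finite because $L$ is a compact immersed loop meeting $\mathbb{L}$ transversally away from self-intersections — the mirror matrix factorization $\LocalF(L)$ is finite in the sense of the definition preceding the corollary, as required. In particular the $T=1$ substitution then makes sense, giving an object of $\MF(xyz)$.

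The one point requiring care — and the only genuine obstacle, though a mild one — is the bookkeeping that identifies the $x^ay^bz^c$-coefficient of a given matrix entry with the finite union of the moduli spaces $\mathcal{M}(p,b_1,\ldots,b_r,q)$ appearing in Lemma \ref{lem:AdmissibleFinite}: one must be precise that the multilinear expansion of $\operatorm_1^{0,b}$ produces exactly decorated strips with prescribed corner labels and no others, and that fixing the monomial pins down both the number and the multiset of corners. Once this dictionary is in place, the corollary follows immediately from that lemma, and everything else (finiteness of $L\cap\mathbb{L}$ and of the set of corner-words of fixed length) is elementary.
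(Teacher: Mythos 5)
Your argument is correct and is essentially the paper's argument, only presented directly rather than by contraposition: you observe that a fixed monomial $x^a y^b z^c$ in a fixed entry can only arise from the finitely many corner-words $(b_1,\ldots,b_r)$ of length $r=a+b+c$ with the right multiplicities, and each $\mathcal{M}(p,b_1,\ldots,b_r,q)$ is finite by Lemma \ref{lem:AdmissibleFinite}. The paper's proof runs the same reduction in reverse (infinite coefficient $\Rightarrow$ some ordering gives an infinite moduli space $\Rightarrow$ non-admissibility), so there is no substantive difference.
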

\begin{proof}
        Suppose that some entry in the matrix factorization has a monomial $x^{i}y^{j}z^{k}$ with infinite coefficients, say from an odd point $p$ to an even point $q$. Since there are only finitely many reordering of $(x, \cdots, x, y, \cdots, y, z, \cdots, z)$, for some ordering $(b_1, \cdots, b_{i+j+k})$, the moduli space $\mathcal{M}(p, b_1, \cdots, b_{i+j+k}, q)$ is infinite. This implies non-admissibility of $L$ by Lemma \ref{lem:AdmissibleFinite}.
\end{proof}

As pointed out in Remark \ref{rmk:RegularityAdmissibility}, there are regular loops that are not admissible.
To show that the finiteness still holds for regular loops, by Lemma \ref{lem:HomotopyTypeV}, it is enough
to show that any regular loop can be obtained from an admissible loop by only some type V$^{-}$ moves
(Lemma \ref{lem:HomotopicToAdmissible}).
But we also require the intermediate loops to be regular, for the proof of Lemma \ref{lem:5TypesAreEnough}.
So we first show the following lemma.

\begin{lemma}\label{lem:PreservationOfRegularity}
        Let $L$ be a regular loop and $L'$ be a loop obtained from $L$ by type I, II, III, IV and V$^{+}$ moves. Then $L'$ is also regular.
\end{lemma}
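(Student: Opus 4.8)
\textbf{Proof plan for Lemma \ref{lem:PreservationOfRegularity}.}

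The plan is to verify that each of the four defining conditions of \emph{unobstructedness} (no triple intersection, transversal intersection with $\mathbb{L}$, no passing through a self-intersection of $\mathbb{L}$, no bounded immersed disc or fish-tale) and the additional \emph{regularity} condition (no immersed cylinder bounded by $L'$ and $\mathbb{L}$) survives each homotopy move of type I, II, III, IV and V$^{+}$. First I would observe that the first three unobstructedness conditions are generic: each of the listed moves can be realized by a $C^0$-small homotopy supported in a small disc around the relevant feature (a self-intersection of $L$, an intersection of $L$ with $\mathbb{L}$, a self-intersection of $\mathbb{L}$, a bigon, etc.), and after such a move we may further perturb $L'$ slightly --- without changing its free homotopy class or its combinatorial intersection pattern with $\mathbb{L}$ --- so that it again meets $\mathbb{L}$ transversally, avoids the self-intersection points of $\mathbb{L}$, and has no triple points. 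Since the mirror matrix factorization depends only on this combinatorial data (by Theorem \ref{thm:T=1Homotopy} and the discussion preceding it), this perturbation is harmless, so the only real content is the fourth unobstructedness condition and the regularity condition.

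Next I would treat the ``no immersed disc or fish-tale'' condition. The key point is that $L$ and $L'$ are freely homotopic, so $L'$ is not null-homotopic and cannot bound an immersed disc; hence only fish-tales need attention. Suppose $L'$ bounds an immersed fish-tale $i : D^2 \to \mathcal{P}$. The moves of types I--IV and V$^{+}$ are all local: away from a small disc $U$ containing the feature being modified, $L$ and $L'$ coincide. If the fish-tale domain meets $U$, I would analyze the finitely many local pictures (using the fact that in type V$^{+}$ a small bigon with $\mathbb{L}$ is created, in type III a branch of $L$ is pushed across a self-intersection of $\mathbb{L}$, etc.) and in each case produce, by excising or surgering the part of the fish-tale inside $U$, either an immersed fish-tale bounded by $L$ --- contradicting regularity (hence unobstructedness) of $L$ --- or a genuine bounded immersed bigon between $L$ and $\mathbb{L}$ that was already present for $L$, contradicting unobstructedness of $L$ again. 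If the fish-tale domain is disjoint from $U$, it is literally a fish-tale for $L$, the same contradiction. A parallel argument handles immersed cylinders bounded by $L'$ and $\mathbb{L}$: Example \ref{example:MultipleOfSeidelLagerangian} shows that a non-regular loop must be freely homotopic to a multiple of the Seidel Lagrangian $\mathbb{L}$; since $[L'] = [L]$ as free homotopy classes and $L$ is regular (so $[L]$ is not such a multiple), $[L']$ is not a multiple of $[\mathbb{L}]$ either, and therefore $L'$ and $\mathbb{L}$ cannot bound an immersed cylinder. This last step is clean and essentially formal once Example \ref{example:MultipleOfSeidelLagerangian} is in hand.

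The main obstacle I expect is the fish-tale analysis for the type V$^{+}$ move (poking $L$ into $\mathbb{L}$, creating a new small bigon), because a new bigon genuinely changes the local picture and one must rule out that a fish-tale can use this new bigon as its ``tail'' without a corresponding feature in $L$. The resolution is that the new bigon is embedded and its two vertices $f, g$ are intersections of $L'$ with $\mathbb{L}$, not self-intersections of $L'$; a fish-tale tail must wrap around a self-intersection of $L'$, so the new bigon cannot serve that role, and any immersed fish-tale for $L'$ involving the pushed-in region can be pushed back across the bigon to yield one for $L$. Once these local case checks are completed, assembling them into the statement that $L'$ is unobstructed and regular is routine, and the lemma follows.
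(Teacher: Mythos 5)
Your argument for the cylinder condition contains a genuine gap. You assert ``$L$ is regular (so $[L]$ is not such a multiple),'' i.e.\ that a regular loop cannot lie in the free homotopy class of a multiple of $[\mathbb{L}]$. That converse is false. Example~\ref{example:MultipleOfSeidelLagerangian} only gives the one-way implication ``bounds a cylinder with $\mathbb{L}$ $\Rightarrow$ freely homotopic to a multiple of $\mathbb{L}$''; it does not say that every curve in such a class is non-regular. Indeed, Lemma~\ref{lem:Regular} together with Remark~\ref{rem:ImmersedLagrangians}.(\ref{rem:DegenerateLagrangian}) shows that a suitable perturbation of $L((2,2,\ldots,2),\lambda')$ is a \emph{regular} loop, while a short computation in $\pi_1(\mathcal{P})$ shows its class $L[(2,2,\ldots,2)]$ is exactly a multiple of $[\mathbb{L}]$ (e.g.\ for $\tau=1$, $\alpha^2\beta^2\gamma^2$ is conjugate to $\alpha\beta\alpha^{-1}\beta^{-1}$). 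So in precisely the degenerate case --- which the lemma must cover, since Lemma~\ref{lem:HomotopicToAdmissible} applies it to \emph{all} regular loops, including these --- your argument returns no conclusion about $L'$. The step you flag as ``clean and essentially formal'' is the one that breaks.

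The paper's proof works at the level of curves rather than homotopy classes, by a local bigon surgery: a type V$^{+}$ move creates two new intersection points $f,g$ of $L'$ with $\mathbb{L}$ and two bigons $B_1,B_2$; if $L'$ and $\mathbb{L}$ bounded an immersed cylinder, the cylinder would contain one of these bigons entirely, and cutting that bigon out and gluing in the other yields an immersed cylinder bounded by $L$ and $\mathbb{L}$, contradicting the regularity of $L$. The type III case is handled by the same cut-and-paste, and types I, II, IV simply do not change the intersection pattern with $\mathbb{L}$. Your remarks on the first three unobstructedness conditions, on null-homotopy ruling out discs, and on surgering fish-tales back to $L$ are reasonable (and roughly parallel what the paper leaves implicit), but the cylinder case --- the actual content of regularity beyond unobstructedness --- needs the local surgery, not the free-homotopy-class dichotomy.
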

\begin{proof}
        As already mentioned above, type I, II and IV moves do not change intersection patters of $L$ with $\bll$, so they do not affect regularity. We restrict to the type V$^{+}$ case, but the proof also works for the type III case. Let us prove that if a perturbed Lagrangian is not regular, then so is the original one.
        
        Let $L'$ be a Type V$^{+}$-move of $L$ as in Figure \ref{fig:Move5Regularity}, and let us denote $B_1, B_2$ the bigons with vertices $f, g$ as in Figure \ref{fig:Move5RegularityBigons}. Suppose that $L'$ is not regular so that there is an immersed cylinder bounded by $L'$ and $\bll$. There are two cases depending on the direction of the cylinder as in Figure \ref{fig:Move5RegularityLeftDisc} and \ref{fig:Move5RegularityRightDisc}. Suppose that the cylinder is on the left. Then the bigon $B_1$ is totally contained in the cylinder. Thus, by cutting out $B_1$ and attaching $B_2$ to the cylinder, we get another cylinder bounded by $L$ and $\bll$. The old and new cylinders are illustrated in Figure \ref{fig:Move5RegualrityCylinders}. Similarly, if the cylinder is on the right, one can obtain a new cylinder by cutting out $B_2$ and attaching $B_1$. Hence we prove that if $L'$ is non-regular, so is $L$.
        
        \begin{figure}[H]
                \centering
                \begin{subfigure}[t]{0.22\textwidth}
                        \centering
                        \includegraphics[height=45mm]{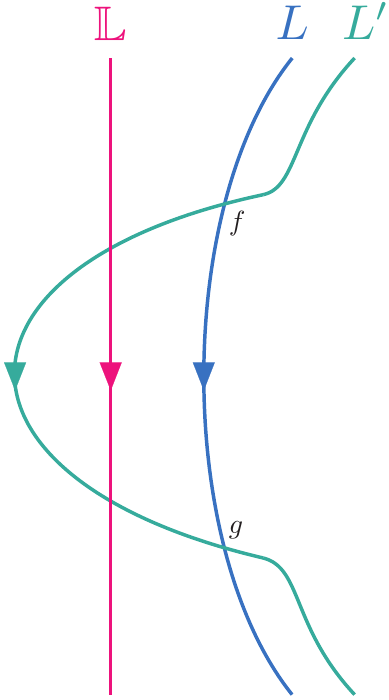}
                        \caption{Perturbation}
                        \label{fig:Move5Regularity}
                \end{subfigure}
                \begin{subfigure}[t]{0.22\textwidth}
                        \centering
                        \includegraphics[height=45mm]{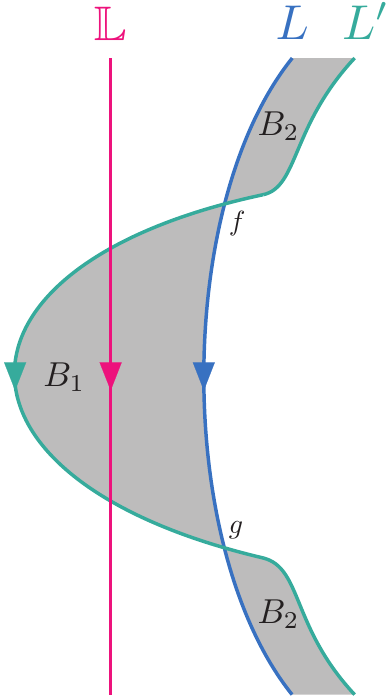}
                        \caption{Bigons}
                        \label{fig:Move5RegularityBigons}
                    \end{subfigure}
                \begin{subfigure}[t]{0.22\textwidth}
                        \centering
                        \includegraphics[height=45mm]{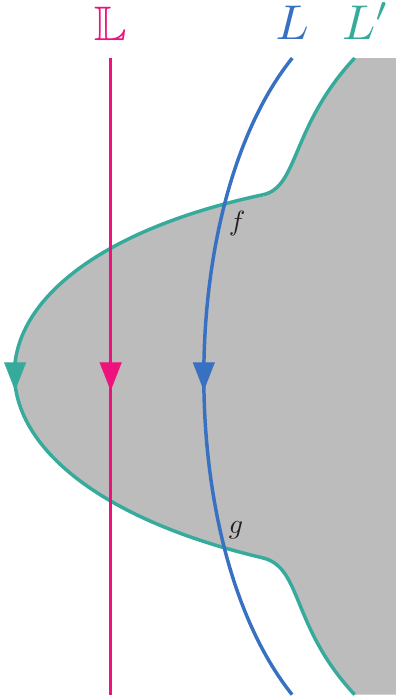}
                        \caption{Cylinder at the left side}
                        \label{fig:Move5RegularityLeftDisc}
                \end{subfigure}
                \begin{subfigure}[t]{0.22\textwidth}
                        \centering
                        \includegraphics[height=45mm]{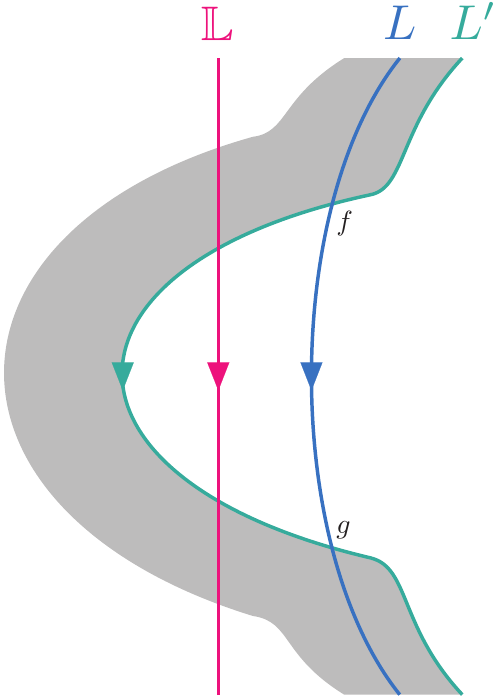}
                        \caption{Cylinder at the right disc}
                        \label{fig:Move5RegularityRightDisc}
                \end{subfigure}
                \caption{}
                \label{}
        \end{figure}
    
           \begin{figure}[H]
                \centering
                \begin{subfigure}[t]{0.30\textwidth}
                        \centering
                        \includegraphics[height=35mm]{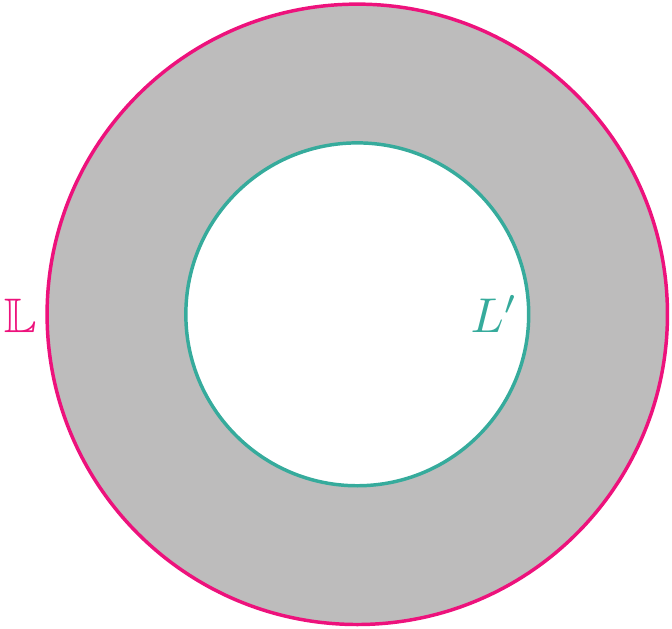}
                        \caption{Domain of the cylinder}
                \end{subfigure}
                \begin{subfigure}[t]{0.30\textwidth}
                        \centering
                        \includegraphics[height=35mm]{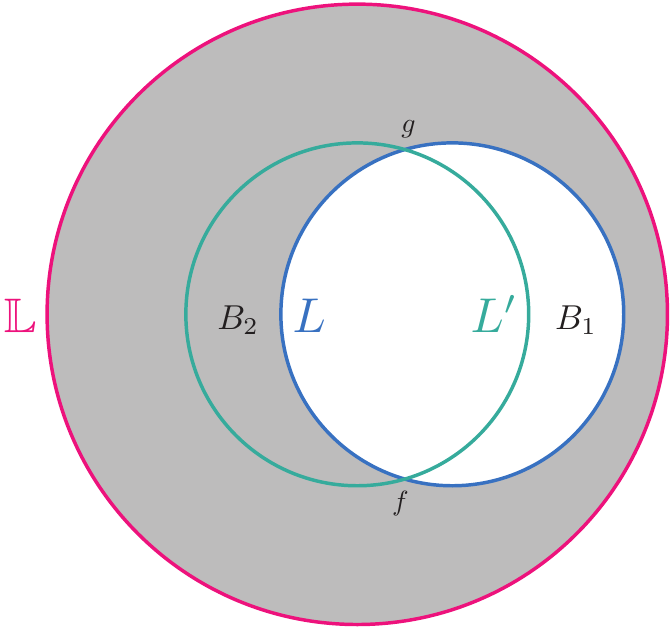}
                        \caption{Perturbed domain of the left side cylinder}
                \end{subfigure}
                \begin{subfigure}[t]{0.30\textwidth}
                        \centering
                        \includegraphics[height=35mm]{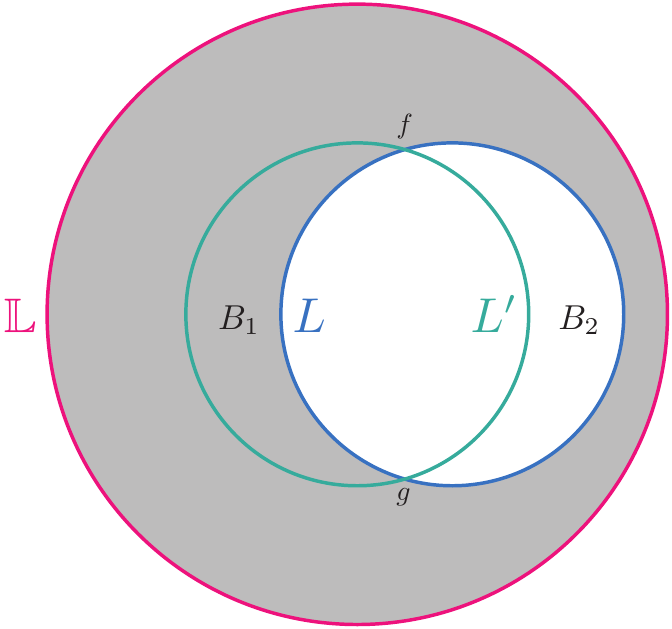}
                        \caption{Perturbed domain of the right side cylinder}
                \end{subfigure}
                \caption{Cylinders}
                \label{fig:Move5RegualrityCylinders}
        \end{figure}
\end{proof}

\begin{lemma}\label{lem:HomotopicToAdmissible}
For any regular loop $L'$, there are regular loops $L'=L^{(0)},L^{(1)},\dots,L^{(d)}=L$ satisfying the
following:
\begin{itemize}
\item
$L$ is admissible, and
\item
For $i=1,\dots,d$,
the intersection pattern of $L^{(i)}$ with $\mathbb{L}$
is obtained from that of $L^{(i-1)}$ with $\mathbb{L}$ by performing a move of type V$^{+}$.
\end{itemize}
\end{lemma}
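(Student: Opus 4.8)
\textbf{Proof plan for Lemma~\ref{lem:HomotopicToAdmissible}.}
The plan is to produce the sequence $L'=L^{(0)},L^{(1)},\dots,L^{(d)}=L$ by a greedy "inflation" procedure that keeps poking small fingers of $L$ into the cylindrical ends until the boundaries of the three punctured regions $A,B,C$ carry both signs; by Definition~\ref{def:StronglyAdmissible} and the example following Definition~\ref{defn:Admissibility}, the terminal loop will then be (strongly) admissible. First I would fix the Seidel Lagrangian $\mathbb{L}$ and work entirely with intersection patterns of loops against $\mathbb{L}$. Recall that $\mathcal{P}\setminus(L\cup\mathbb{L})$ has exactly three regions $A$, $B$, $C$ each containing one puncture, and each has a boundary $1$-chain whose $[L]$-components record how $L$ runs along the $\mathbb{L}$-edges bounding that region. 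A type~V$^{+}$ move applied near an $\mathbb{L}$-edge adjacent to a puncture inserts a new embedded bigon whose two $L$-edges appear with opposite orientations on the edges they lie over; carefully choosing the side on which we poke, we can add a $[L]$-component of a prescribed sign to the boundary chain of whichever region $A$, $B$ or $C$ we like, without disturbing the already-achieved signs in the other regions (the new finger is supported in a small disk near a chosen boundary edge). So I would first poke one finger giving a positive $[L]$-component on $\partial A$, then one giving a negative component on $\partial A$ (poking near a different edge of $A$ so it does not cancel), and likewise for $B$ and $C$: at most six type~V$^{+}$ moves in total, hence $d\le 6$.

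The regularity of every intermediate loop is immediate from Lemma~\ref{lem:PreservationOfRegularity}: $L^{(0)}=L'$ is regular by hypothesis, and each $L^{(i)}$ is obtained from $L^{(i-1)}$ by a type~V$^{+}$ move, so regularity propagates. Thus every loop in the sequence is regular, and only $L=L^{(d)}$ need be admissible. For that last point I would verify that after the poking the boundary chains $\partial A,\partial B,\partial C$ each contain both a positive and a negative $[L]$-component; then by the argument in the example before Definition~\ref{def:StronglyAdmissible} — using injectivity of $\partial\colon C_2\to C_1$ (the lemma between Definition~\ref{defn:Admissibility} and the strong-admissibility definition) together with the fact that $A,B,C$ are not $2$-bases — no nonzero element of $H(L)$ can be all-nonnegative or all-nonpositive, so $L$ is admissible.

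The one nontrivial point, and the step I expect to be the main obstacle, is checking that a type~V$^{+}$ poke can be arranged to \emph{add} a $[L]$-contribution of a chosen sign on a chosen punctured region's boundary \emph{without} cancelling or flipping signs already installed on other boundary edges. This requires two observations: (i) near a puncture of $\mathcal{P}$ the edges of $\mathbb{L}$ adjacent to that region come in a cyclic pattern, and there is always an edge we may poke over so that the new finger lies entirely inside the closure of that single region $A$ (resp. $B$, $C$), touching no edge bounding another punctured region — this is where I would draw the local picture, analogous to Figures~\ref{fig:StronglyAdmissibleL(2)} and \ref{fig:Move5Regularity}, and argue that the local model of $\mathbb{L}$ near each puncture always offers such an edge; (ii) the two $L$-edges of the inserted bigon carry opposite orientations along the $\mathbb{L}$-edge they overlap (because $\mathcal{P}$ is orientable, as already used in the proof of Lemma~\ref{lem:HomotopyTypeIII}), so by choosing which of the two we route along the boundary edge of the region we can prescribe the sign of the new $[L]$-component. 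Once these local facts are pinned down, the counting "six pokes suffice" and the invocation of the injectivity-of-$\partial$ argument are routine, so the bulk of the write-up is the local combinatorial picture of $\mathbb{L}$ near the three punctures.
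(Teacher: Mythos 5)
Your proposal follows the same basic strategy as the paper's own proof: reach strong admissibility by inserting type~V$^{+}$ pokes near the punctured regions, and invoke Lemma~\ref{lem:PreservationOfRegularity} to propagate regularity through every intermediate loop. The one substantive difference is the target of the pokes. The paper perturbs $L'$ along two auxiliary paths from a \emph{single} puncture~$a$ to the $\mathbb{L}$-edges $\ell_x$ and $\tilde{\ell}_x$ (chosen to avoid self-intersections of $L'$ and to meet $L'$ transversally), and concludes strong admissibility from that one region alone; you poke near all three punctured regions $A$, $B$, $C$ so that the two-sign condition of Definition~\ref{def:StronglyAdmissible} is met verbatim for each. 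Your version is more conservative and matches the stated definition more literally, at the cost of a few extra moves; the paper's version is shorter but relies on an argument that the single region $A$ already suffices for the injectivity-of-$\partial$ conclusion. Both are acceptable, and the remainder of your reasoning (regularity preservation, conclusion via the example before Definition~\ref{def:StronglyAdmissible}) is the same as the paper's. One small caveat: your claimed bound $d\le 6$ presumes each finger is a single type~V$^{+}$ move, but a poking path may cross $L'$ or several edges of $\mathbb{L}$, turning one finger into several elementary moves; since the lemma only asserts the existence of some finite sequence, this does not affect correctness.
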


\begin{proof}
        We show that any regular loop can be deformed into a strongly admissible one by using only type V$^{+}$ moves, which preserve regularity by Lemma \ref{lem:PreservationOfRegularity}. Choose any puncture, say $a$ and take paths from $a$ to $\ell_x$ and $\tilde{\ell}_x$. We may take the paths so that they do not go through self intersections of $L'$ and meet $L'$ transversally. Then perturb $L'$ along the path as in Figure \ref{fig:RegToAdm}, the component containing $a$ has both positive and negative boundary component of the Seidel Lagrangian, which implies that the resulting loop is strongly admissible. This proves the lemma.   
        \begin{figure}[H]
                        \centering
                        \includegraphics[height=35mm]{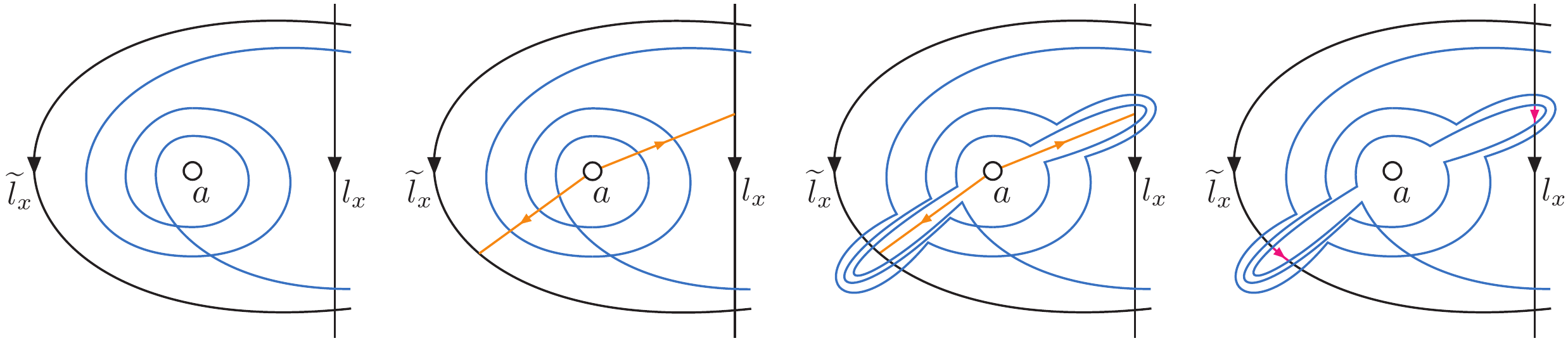}
                \caption{Deforming process}
                \label{fig:RegToAdm}
         \end{figure}
\end{proof}

Now we are ready to give the proof of Lemma \ref{lem:5TypesAreEnough}.
\begin{proof}[Proof of Lemma \ref{lem:5TypesAreEnough}]\label{proof:5TypesAreEnough}
        Suppose that homotopic unobstructed regular loops $L_0, L_1$ are given. In the proof of Lemma \ref{lem:HomotopicToAdmissible}, we may take paths from a puncture to Seidel Lagrangian so that they satisfy intersection condition for $L_0$ and $L_1$ simultaneously. Then $L_0$ and $L_1$ are deformed to strongly admissible loops $\tilde{L}_0$ and $\tilde{L_1}$, while maintaining the regularity. 
        Note that two loops $\tilde{L}_0$ and $\tilde{L}_1$ are still homotopic to each other 
        (As in Figure \ref{fig:RegToReg}, we may pretend that the puncture became larger, containing the chosen paths).  Moreover, loops appearing through the homotopy are still strongly admissible, which implies regularity. Thus we have a sequence of regular loops connecting $L_0$ and $L_1$.
                \begin{figure}[H]
                        \centering
                        \includegraphics[height=35mm]{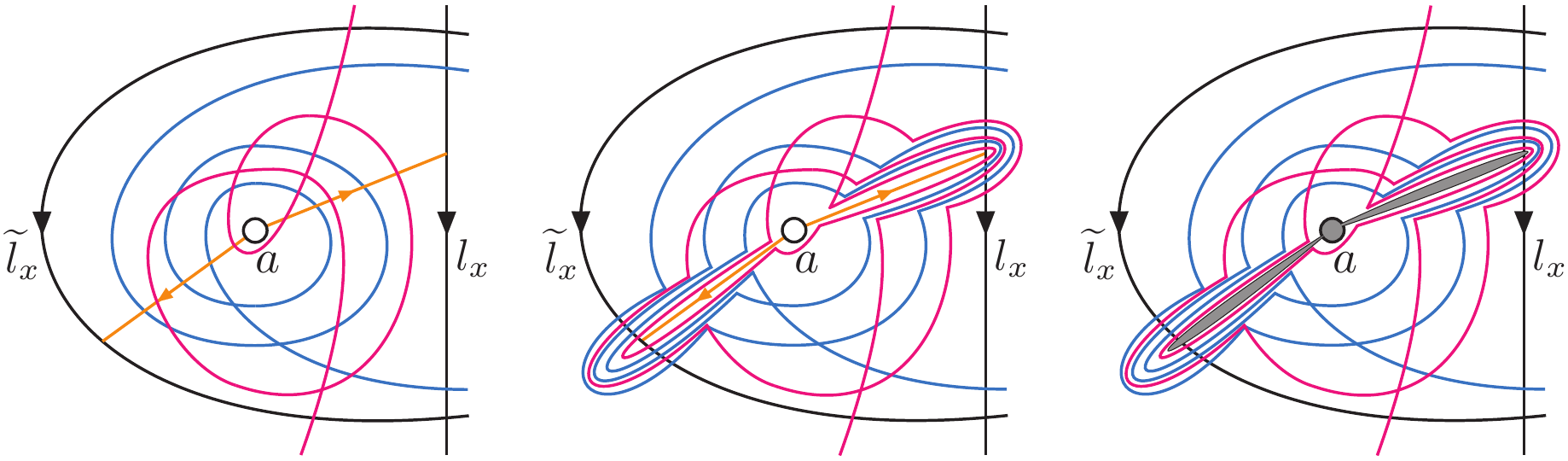}
                \caption{Deforming process}
                \label{fig:RegToReg}
        \end{figure}
\end{proof}

\bibliographystyle{amsalpha}
\bibliography{IndecomposableXYZ}

\end{document}